\newcommand{\di}{\mathrm{d}}
\newcommand{\alp}{\omega+2}
\newcommand{\gt}{\tilde{g}}
\newcommand{\an}{\frac{n-2}{4(n-1)}}
\newcommand{\N}{\frac{4}{n-2}}
\newcommand{\Nn}{\frac{2n}{n-2}}
\newcommand{\fia}{\varphi_\alpha}
\newcommand{\intl}{[ \hspace{-0.6mm} [}
\newcommand{\intr}{] \hspace{-0.6mm} ]}
\newcommand{\fii}{\varphi_{\alpha_i}}
\newcommand{\e}{\varepsilon}
\newcommand{\Ca}{\mathrm{card}}
\newtheorem{conjecture}{Conjecture}[chapter]
\newtheorem{theorem}{Théorème}[chapter]
\newtheorem{definition}{Définition}[chapter]
\newtheorem{proposition}{Proposition}[chapter]
\newtheorem{corollary}{Corollaire}[chapter]
\newtheorem{lemma}{Lemme}[chapter]
\newtheorem{proprietes}{Propriétés}[chapter]
\newtheorem{problem}{Problème}[chapter]
\newtheorem{conjecturee}{Conjecture}[chapter]
\newtheorem{theoreme}{Theorem}[chapter]
\newtheorem{corollarye}{Corollary}[chapter]
\newtheorem{probleme}{Problem}[chapter]
\newcommand*\chapterstar[1]{%
  \chapter*{#1}%
  \addcontentsline{toc}{chapter}{#1}%
     \markboth{INTRODUCTION}{INTRODUCTION}}
\title{Le problème de Yamabe avec singularités et la conjecture de Hebey--Vaugon}
\author{Farid Madani}
\date{}
\begin{document}

\frontmatter
\thispagestyle{empty}
\begin{center}
\vspace{0.3cm}
\textsc{\textbf{{\Large Université Pierre et Marie Curie}}} \\
\textbf{\textsc{École Doctorale de Sciences Mathématiques de Paris Centre}}

\vspace{2.2cm}

{\huge \textsc{Thèse de doctorat}}

\vspace{0.5cm}

{\large Discipline:} {\Large Mathématiques}

\vspace{2cm}

{\large présentée par}
\vspace{0.5cm}

\textbf{{\LARGE Farid \textsc{Madani}}}

\vspace{1cm}
\hrule
\vspace{0.5cm}
{\LARGE \textbf{Le problème de Yamabe avec singularités et la conjecture de Hebey--Vaugon}}
\vspace{0.5cm}
\hrule
\vspace{0.5cm}

{\Large 
dirigée par Thierry \textsc{Aubin}}

\vspace{3.5cm}

{\large Soutenue le 29 septembre 2009 devant le jury composé de :}

\vspace{1cm}

\begin{tabular}{lll}
M. Bernd \textsc{Ammann} &  Universit\"at Regensburg & Rapporteur \\
M. Emmanuel \textsc{Hebey} &    Université de Cergy--Pontoise  &   \\
M. Frédéric \textsc{Hélein} & Université Paris-Diderot &   \\
M. Emmanuel \textsc{Humbert} & Université Nancy I & Rapporteur \\
M. Michel \textsc{Vaugon} & Université Paris 6 &  Directeur de thèse\\
\end{tabular}

\end{center}
\newpage
\thispagestyle{empty}
\vspace*{\fill}

\noindent\begin{center}
\small{\begin{tabular}{lcl}
Institut de Mathématiques de Jussieu & \hspace{2cm} & \'Ecole doctorale Paris centre Case 188\\
175, rue du chevaleret               &  & 4 place Jussieu \\
75013 Paris                          &  & 75 252 Paris cedex 05\\
\end{tabular}}
\end{center}
\newpage
\thispagestyle{empty}
\begin{center}
\quad\\
\vspace{5cm}
\hspace{10cm}\emph{\`A la mémoire de Thierry Aubin}.
\end{center}
\newpage
\;\quad
\newpage
\begin{center}
\section*{\sc Remerciements} 
\end{center}
\vspace{2cm}

Je tiens  tout d'abord à exprimer ma profonde gratitude et reconnaissance envers mon directeur de thèse Thierry Aubin. J'ai eu la douleur de le perdre au début de cette année. Il m'a introduit à la recherche mathématique, et j'ai particulièrement apprécié son honnêteté mathématique et sa façon de raisonner.\\  

J'aimerais aussi exprimer ma gratitude envers Michel Vaugon, qui a accepté de reprendre la direction de ma thèse. En très peu de temps il a lu ma thèse et fait beaucoup de précieux commentaires. Je le remercie pour sa disponibilité et sa sympathie. \\

Bernd Ammann et Emmanuel Humbert ont accepté d'être rapporteurs de ma thèse et de participer à mon jury. Je les remercie pour les remarques et suggestions qu'ils ont faites sur mon travail.\\

Je remercie Emmanuel Hebey et Frédéric Hélein pour avoir accept\'e d'être membres de mon jury. Un remerciement particulier pour Emmanuel Hebey pour ses commentaires et suggestions pertinentes.\\

Je tiens à remercier Tien-Cuong Dinh et Elisha Falbel pour leur soutien et leurs conseils au cours des ces trois années de thèse. \\

Durant ma thèse, j'ai partagé, avec les thésards du 7ème étage, pas mal de déjeuners (presque tous les jours). Je les remercie pour les pauses de détente que l'on a partagé à l'institut et même à l'extérieur. Je pense qu'ils se reconnaissent sans les citer un par un. Je les remercie aussi pour ces séminaires mathématiques, où l'on peut comprendre jusqu'à 100\% du contenu. Un  remerciement spécial pour Johan, Julien, Nicolas et pour mon "frère d'armes" Nabil. \\

Je salue chaleureusement tous mes amis qui sont toujours de mon coté. Enfin, je remercie profondément tous les membres de ma famille pour leur soutien constant durant toutes mes études. Ils occupent une place particulière au fond de moi.



\mainmatter

\tableofcontents
\markboth{}{}


%
\newpage
\section*{Notations}\addcontentsline{toc}{chapter}{Notations}
\begin{tabular}{l}

$[q]$ la partie entière de $q$\\
$N=\frac{2n}{n-2}$ \\
$\intl 1,n\intr=\{1,2,\cdots,n\}$\\
$S_n$ la sphère unité de dimension $n$\\
$S_n(r)$ la sphère de rayon $r$\\
$g_{can}$ la métrique canonique sur $S_n$\\
$\mathcal{E}$ la métrique euclidienne\\ 
$\di\sigma$ l'élément de volume associé à $(S_{n-1},g_{can})$  \\
$\di\sigma_r$ l'élément de volume de $S_{n-1}(r)$ \\ 
$vol(M)$ volume de la variété $M$ \\
$\omega_n$ volume de la sphère $S_n$ \\  
$\Delta_g$ le Laplacien de la métrique $g$ \\ 
$\Delta_{\mathcal{E}}$  le Laplacien de la métrique euclidienne $\mathcal{E}$\\
$|\beta|=k$ si $\beta\in\mathbb N^k$\\
$K(n,2)^{-2}=\frac{1}{4}n(n-2)\omega_n^{2/n}$\\
$\nabla_i=\nabla_{\partial_i}$ la dérivée covariante\\ 
$\nabla_\beta=\nabla_{\beta_1}\cdots\nabla_{\beta_k}$ \\
$R_g$ la courbure scalaire associée à $g$ \\ 
$L_g=\Delta_g+\frac{n-2}{4(n-1)}R_g$ le Laplacien conforme  \\ 
$G_P$ fonction de Green en $P$.\\
$T(M)$ l'espace tangent de $M$\\
$T^*M$ l'espace cotangent de $M$\\
$\Gamma(M)$ l'espace des champs de vecteurs $C^\infty$\\
$L^p(M)$ espace de Lebesgue sur $M$\\
$H^p_q(M)$ Espace de Sobolev \\ 
$H^p_{q,G}(M)$ Espace de Sobolev $G-$invariant\\
$H_1(M)=H^2_1(M)$, $H_{1,G}(M)=H^2_{1,G}(M)$\\
$\|\cdot\|_p$ norme sur $L^p$\\ 
$\|\cdot\|_{H_1}$ norme sur $H_1$\\ 
$(\cdot,\cdot)_{g,L^2}=(\cdot,\cdot)_{L^2}$ produit scalaire sur $L^2$ avec la métrique $g$\\
$(\cdot,\cdot)_{g,H_1}=(\cdot,\cdot)_{H_1}$ produit scalaire sur $H_1$ avec la métrique $g$\\
$\mu(g)=\mu_N(g)$ l'invariant conforme de Yamabe \\ 
$\mu_G(g)=\mu_{N,G}(g)$ l'invariant  $G-$conforme de Yamabe  \\
$E(\varphi)$ énergie de $\varphi$ \\ 
$I_g$ La fonctionnelle de Yamabe\\ 
$I(M,g)$ le groupe d'isométries de $(M,g)$ \\ 
$C(M,g)$ le groupe conforme de $(M,g)$\\
$G$ sous groupe de $I(M,g)$
\end{tabular}

\chapterstar{Introduction}

Le travail présenté dans cette thèse est séparé en deux parties. La première partie est consacrée à l'étude d'un certain type d'équations aux dérivées partielles non linéaires sur une variété compacte. Ensuite, on donne une signification géométrique de ces équations. La particularité ici est que l'un des coefficients de ces équations n'a pas la régularité habituellement supposée, ce qui permettra d'obtenir un "théorème de Yamabe" avec singularités. La seconde partie est consacrée à l'étude d'une conjecture de Hebey--Vaugon dans le cadre du problème de Yamabe équivariant.

\subsection*{Première partie}

On considère une variété riemannienne $(M,g)$ compacte de dimension $n\geq 3$. On  note $R_g$ la courbure scalaire de $g$. Le problème de Yamabe est le suivant:

\begin{problem}\label{yamabe problem intro}
Existe-t-il une métrique conforme à $g$ de courbure scalaire constante?
\end{problem}
On pose $\gt=\varphi^\N g$, où $\varphi$ est une fonction $C^\infty$ strictement positive. $\gt$ est une solution du problème de Yamabe si et seulement si $\varphi$ est solution de l'équation suivante:
\begin{equation}\label{yamabei}
\frac{4(n-1)}{n-2}\Delta_g \varphi+ R_g\varphi=  R_{\gt} \varphi^{\frac{n+2}{n-2}}
\end{equation}
où $\Delta_g=-\nabla^i\nabla_i$ est le Laplacien de $g$ et $R_{\gt}$ est une constante qui joue le rôle de la courbure scalaire de $\gt$. T.~Aubin a ramené la résolution de ce problème à la résolution de la conjecture suivante:
\begin{conjecture}[T.~Aubin \cite{Aub}]\label{Aubincon intro}
Si $(M,g)$ est une variété riemannienne compacte $C^\infty$ de dimension $n\geq 3$ et non conformément difféomorphe à $(S_n,g_{can})$ alors 
 \begin{equation}\label{cau}
 \mu(M,g)<\mu(S_n,g_{can})
\end{equation}
où $\mu(M,g)=\inf\biggl\{\displaystyle\frac{\int_M |\nabla\psi|^2+\frac{n-2}{4(n-1)}R_g\psi^2\di v}{\|\psi\|^2_{\frac{2n}{n-2}}},\; \psi\in H_1(M)-\{0\}\biggr\}$.
\end{conjecture}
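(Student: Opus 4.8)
\emph{Approach.} The plan is to produce, for every such $(M,g)$, a test function $\psi \in H_1(M)\setminus\{0\}$ whose Yamabe quotient lies strictly below $\mu(S_n,g_{can}) = \frac14 n(n-2)\omega_n^{2/n}$. One first reduces to the case $\mu(M,g)>0$: since $\mu(S_n,g_{can})>0$, the inequality is automatic when $\mu(M,g)\leq 0$; and when $\mu(M,g)>0$ the conformal Laplacian $L_g=\Delta_g+\an R_g$ has positive first eigenvalue, hence is invertible with a positive Green's function. Since $\mu$ depends only on the conformal class of $g$, one may replace $g$ by any conformal metric; the crucial normalization is that of \emph{conformal normal coordinates}: around a chosen point $P$ there is $g'=\varphi^{\N}g$ with $\det(g'_{ij})(x)=1+O(|x|^k)$ for any prescribed $k$, which kills the local scalar-curvature contributions in the test-function expansions. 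The argument then branches on the geometry at a suitably chosen $P$.

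\emph{Case $n\geq 6$ and $(M,g)$ not locally conformally flat.} Pick $P$ where the Weyl tensor of $g'$ is nonzero and set $\varphi_\varepsilon(x)=(\varepsilon+|x|^2)^{1-n/2}$ in the conformal normal coordinates, cut off outside a small ball. Expanding the numerator $\int_M(|\nabla\varphi_\varepsilon|^2+\an R_{g'}\varphi_\varepsilon^2)\,\di v$ and the denominator $\|\varphi_\varepsilon\|_{\Nn}^2$ as $\varepsilon\to 0$, the conformal normal coordinates force the scalar-curvature terms and the lowest-order metric corrections to cancel; the first surviving correction appears at order $\varepsilon^2$ (order $\varepsilon^2|\log\varepsilon|$ when $n=6$) with coefficient a \emph{negative} multiple of $|W(P)|^2_{g'}$. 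Hence the quotient of $\varphi_\varepsilon$ is $<\mu(S_n,g_{can})$ for $\varepsilon$ small enough, so $\mu(M,g)<\mu(S_n,g_{can})$.

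\emph{Case $n\in\{3,4,5\}$, or $(M,g)$ locally conformally flat.} Concentrated test functions only yield $\mu(M,g)\leq\mu(S_n,g_{can})$ here, so one uses instead the Green's function $G_P$ of $L_{g'}$ at $P$. In the conformal normal coordinates it admits the expansion $G_P(x)=|x|^{2-n}+A+\alpha(x)$ with $\alpha(x)\to 0$ as $x\to P$ and $A\in\R$ a well-defined constant. Then $\hat g:=G_P^{\N}g'$ is a scalar-flat, asymptotically flat metric on $M\setminus\{P\}$ whose ADM mass is a positive multiple of $A$; the positive mass theorem gives $A\geq 0$, with $A=0$ precisely when $\hat g$ is flat, i.e. precisely when $(M,g)$ is conformally diffeomorphic to $(S_n,g_{can})$. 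Since that case is excluded, $A>0$. Gluing $\varphi_\varepsilon$ near $P$ to $\varepsilon^{(n-2)/2}G_P$ away from $P$ produces $\psi_\varepsilon$ with quotient $\mu(S_n,g_{can})-c\,A\,\varepsilon^{n-2}+o(\varepsilon^{n-2})$, $c>0$, so again $\mu(M,g)<\mu(S_n,g_{can})$.

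\emph{Main obstacle.} Case 1 is essentially bookkeeping once conformal normal coordinates are available. The real difficulty is concentrated in Case 2: one must first show that the constant term $A$ in the expansion of $G_P$ is genuinely well defined (conformal normal coordinates are designed exactly so that the would-be singular intermediate terms vanish; in the locally conformally flat case a direct argument in conformally flat coordinates is needed), and, above all, one must prove $A\geq 0$ with equality only on the round sphere. That last assertion is the positive mass theorem, whose proof (the minimal-hypersurface induction, or Witten's spinor identity) is the deep input; its rigidity clause — flatness of the asymptotically flat manifold when the mass vanishes — is what produces the conformal diffeomorphism onto $(S_n,g_{can})$.
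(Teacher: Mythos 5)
Your proposal follows essentially the same route as the paper's proof of Theorem~\ref{conj aub}: reduction to the case $\mu(g)>0$, normalization within the conformal class (Cao--G\"{u}nther metric with $\det g'=1$, playing the role of your conformal normal coordinates), Aubin's concentrated test functions exploiting $|W(P)|\neq 0$ when $n\geq 6$ and $(M,g)$ is not locally conformally flat, and otherwise Schoen's test function glued to the Green's function of $L_g$ together with the positive mass theorem forcing $A>0$. The only slip is the order of the Weyl correction, which is $-c|W(P)|^2\varepsilon^4+o(\varepsilon^4)$ for $n>6$ (resp.\ $\varepsilon^4\log\varepsilon^{-1}$ for $n=6$) rather than $\varepsilon^2$; this does not affect the structure of the argument.
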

Il est bien connu que $\mu(S_n,g_{can})=\frac{1}{4}n(n-2)\omega_n^{2/n}$. Les travaux de T.~Aubin \cite{Aub}, R.~Schoen~\cite{Schoen} et H.~Yamabe \cite{Yam} ont montré que cette conjecture est toujours vraie, et le problème de Yamabe admet toujours des solutions. En d'autres termes, dans chaque classe conforme $[g]$, on peut toujours trouver une métrique à courbure scalaire constante. \\

On note par $I(M,g)$ et $C(M,g)$  le groupe d'isométries et le groupe conforme de $(M,g)$ respectivement. Soit $G$ un sous groupe de $I(M,g)$.  E.~Hebey et  M.~Vaugon~\cite{HV} ont étudié le problème de Yamabe équivariant, qui généralise le problème de Yamabe, et que l'on peut exprimer de la manière suivante:
\begin{problem}\label{HVProbleme intro}
 Existe-t-il une métrique $g_0$, $G-$invariante qui minimise la fonctionnelle
$$J(g')=\frac{\int_MR_{g'}\di v(g')}{(\int_M\di v(g'))^{\frac{n-2}{n}}}$$
où $g'$ appartient à la classe $G-$conforme de $g$:
$$[g]^G:=\{\tilde g=e^fg/ f\in C^\infty(M),\; \sigma^*\tilde g=\tilde g\quad \forall \sigma\in G\}$$
\end{problem}
E.~Hebey et  M.~Vaugon ont montré que ce problème à toujours des solutions, ce qui a pour première conséquence l'existence d'une métrique $g_0$, $G-$invariante et conforme à $g$, telle que la courbure scalaire de $g_0$ est constante. La deuxième conséquence est que la conjecture suivante est démontrée.
\begin{conjecture}[\textbf{Lichnerowicz \cite{Lic}}]
Pour toute variété riemannienne $(M,g)$, compacte $C^\infty$, de dimension $n$ et qui n'est pas conformément difféomorphe à $(S_n,g_{can})$, il existe une métrique $\gt$ conforme à $g$ de courbure scalaire $R_{\tilde g}$ constante et pour laquelle $I(M,\tilde g)=C(M,g)$.
\end{conjecture}

Le travail présenté dans la première partie de la thèse est l'étude du problème de Yamabe~\ref{yamabe problem intro} (sans et avec la présence de symétries), lorsque la métrique $g$ n'est pas nécessairement $C^\infty$.  On suppose que la métrique $g$ est dans $H^p_2$, où $p>n$, l'espace de Sobolev des métriques dont on donnera la définition plus loin. Grâce aux inclusions de Sobolev $H^p_2\subset C^{1,\beta}$ (l'espace de Hölder d'exposant $\beta\in ]0,1[$), les métriques sont donc de classe $C^{1,\beta}$. Les tenseurs de courbures de Riemann, de Ricci et la courbure scalaire sont dans $L^p$. Plus précisément, si on suppose que $g$ satisfait l'hypothèse suivante:\\

\textbf{Hypothèse $\boldsymbol{(H)}$:} \emph{$g$ est une métrique dans l'espace de Sobolev $H_2^p(M,T^*M\otimes T^*M)$ avec $p>n$. Il existe un point $P_0\in M$ et $\delta>0$ tels que $g$ est $C^\infty$ sur la boule $B_{P_0}(\delta)$.}\\

Alors le problème que l'on résout est le suivant:
\begin{problem}\label{yam sing intro}
 Soit $g$ une métrique qui satisfait l'hypothèse $(H)$. Existe-t-il une métrique $\tilde g$ conforme à $g$ pour laquelle la courbure scalaire $R_{\gt}$ est constante (même aux points où $R_g$ n'est pas régulière)?
\end{problem}

Avant de résoudre ce problème, on commence  par étudier plus généralement les équations suivantes:
 \begin{equation}\label{dfgfg}
 \Delta_g\varphi+h\varphi=\tilde h\varphi^{\frac{n+2}{n-2}}
\end{equation}
 où $h$ est une fonction qui est supposée seulement être dans $ L^p(M)$ (c'est là l'originalité de cette étude) et $\tilde h\in \mathbb R$. La métrique $g$ est supposée $C^\infty$ (la supposer $C^2$ donnerait les même résultats, ce n'est pas un point important). On appellera ces équations les équations de type Yamabe. Comme ces équations sont non linéaires et que $h$ est dans $L^p$, les théorèmes de régularité standard ne s'appliquent pas directement. On établit le résultat suivant (adaptation d'un théorème de N.~Trudinger \cite{Trud} au cas où $h$ n'est que dans $L^p$)

\begin{theorem}\label{regYS intro}
Soit $(M,g)$ une variété riemannienne compacte $C^\infty$ de dimension $n\geq 3$, $p$ et $\tilde h$ sont deux nombres réels, avec $p>n/2$. Si $\varphi\in H_1(M)$ est une solution faible positive non triviale de l'équation  \ref{dfgfg}  alors $\varphi\in H_2^p(M)\subset C^{1-[n/p],\beta}(M)$ et $\varphi$ est strictement positive.
\end{theorem}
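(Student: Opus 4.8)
The plan is to bootstrap regularity by an iteration on Lebesgue exponents, treating the nonlinear term $\tilde h \varphi^{(n+2)/(n-2)}$ as a lower-order perturbation whose integrability improves at each step, much as in Trudinger's original argument, but carefully tracking the fact that $h$ lives only in $L^p$ with $p>n/2$. First I would recall that a positive weak solution $\varphi\in H_1(M)=H^2_1(M)$ satisfies, for every test function, the identity $\int_M (\nabla\varphi\cdot\nabla\psi + h\varphi\psi)\,\di v = \tilde h\int_M \varphi^{(n+2)/(n-2)}\psi\,\di v$; by Sobolev, $\varphi\in L^{2n/(n-2)}(M)=L^N(M)$, hence the right-hand side $\varphi^{(n+2)/(n-2)}=\varphi^{N-1}\in L^{2n/(n+2)}(M)$. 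The crucial first move is to show $\varphi\in L^q$ for all $q<\infty$: truncate $\varphi$ at level $k$, use $\psi = \varphi\,(\min(\varphi,k))^{2(\beta-1)}$ as a test function for suitable $\beta>1$, integrate by parts to produce a term $\gtrsim \int |\nabla (\varphi\,(\min(\varphi,k))^{\beta-1})|^2$, and estimate the terms coming from $h$ and from $\varphi^{N-1}$ by Hölder and Sobolev. The term involving $h$ is handled using $h\in L^p$ with $p>n/2$: one splits $h = h_1 + h_2$ with $\|h_1\|_p$ small (in particular $\|h_1\|_{n/2}$ small via interpolation on a small set) and $h_2\in L^\infty$, so that the small part is absorbed by the Sobolev constant and the bounded part contributes a controlled lower-order term; the nonlinear term is handled because $\varphi^{N-2}$ is already known to be in $L^{n/2}$ from the previous iteration step, and can be split the same way. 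Letting $k\to\infty$ and iterating $\beta\mapsto \beta\cdot\frac{n}{n-2}$ gives $\varphi\in L^q$ for every finite $q$.

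Once $\varphi\in L^q$ for all $q<\infty$, the right-hand side $f:=\tilde h\,\varphi^{(n+2)/(n-2)} - h\varphi$ belongs to $L^p(M)$ (the nonlinear term is in every $L^q$, and $h\varphi\in L^p$ since $h\in L^p$ and $\varphi\in L^\infty_{\mathrm{loc}}$-type bounds—more precisely $\varphi\in L^{q}$ for $q$ as large as we wish makes $h\varphi\in L^{p-\varepsilon}$, and a short additional argument gives exactly $L^p$). Then $\varphi$ is a weak solution of the linear equation $\Delta_g\varphi = f\in L^p(M)$, and standard elliptic $L^p$-regularity for the Laplace–Beltrami operator on the smooth compact manifold $(M,g)$ yields $\varphi\in H^p_2(M)$. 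The Sobolev embedding $H^p_2(M)\hookrightarrow C^{1-[n/p],\beta}(M)$ then gives the stated Hölder regularity.

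Finally, strict positivity: $\varphi\geq 0$, $\varphi\not\equiv 0$, and $\varphi$ is now continuous and satisfies $\Delta_g\varphi + h\varphi = \tilde h\varphi^{N-1}$ with $h\in L^p$, $p>n/2$; writing this as $\Delta_g\varphi + (h - \tilde h\varphi^{N-2})\varphi = 0$ with potential $c:=h-\tilde h\varphi^{N-2}\in L^p$, the strong maximum principle (in the form valid for the Laplacian with an $L^p$ zeroth-order coefficient, $p>n/2$) forces $\varphi>0$ everywhere.

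The main obstacle is the first step: pushing through the Moser/De Giorgi–type iteration while the coefficient $h$ is only in $L^p$ rather than $L^\infty$. The delicate point is that the test function $\varphi(\min(\varphi,k))^{2(\beta-1)}$ produces, after integration by parts, an error term $\int h\,\varphi^2(\min(\varphi,k))^{2(\beta-1)}$ that must be dominated by a small multiple of $\int|\nabla(\varphi(\min(\varphi,k))^{\beta-1})|^2$ plus a controllable remainder—this is exactly where one needs $p>n/2$ (so that $L^p\subset (L^{n/2})^{**}$ in the relevant sense) together with absolute continuity of the integral of $|h|^{n/2}$ to make the "small part" small uniformly in $k$ and $\beta$; keeping the constants uniform along the iteration $\beta_{m+1}=\frac{n}{n-2}\beta_m$ so that the limiting exponent is genuinely infinite requires some care but no new idea beyond Trudinger's, adapted to the weaker hypothesis on $h$.
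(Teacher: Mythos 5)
Your proposal is correct and follows the same overall strategy as the paper --- Trudinger-type truncated test functions to improve the integrability of $\varphi$, then linear elliptic $L^p$ regularity, then a Harnack/maximum-principle statement for potentials in $L^p$, $p>n/2$, to get strict positivity --- but it differs in two points of execution. To absorb the critical term $\tilde h\int\varphi^{4/(n-2)}w^2$ you use the Brezis--Kato splitting of $\varphi^{4/(n-2)}$ (and of $h$) into a piece small in $L^{n/2}$ plus a bounded piece; the paper instead multiplies the test function by a cutoff $\eta$ supported in a small ball $B_P(2\delta)$ and exploits the smallness of $\|\varphi\|_{N,2\delta}$, recovering the global estimate by a partition of unity. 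Second, you run the iteration $\beta\mapsto\frac{n}{n-2}\beta$ all the way to $\varphi\in L^q$ for every finite $q$, whereas the paper stops after a single step: it only shows $\varphi\in L^{qN}$ for some $q>1$, rewrites the equation as $\Delta_g\varphi+(h-\tilde h\varphi^{4/(n-2)})\varphi=0$ with the new potential in $L^r$, $r=\min(p,\frac{2n+\e}{4})>n/2$, and then invokes Theorem \ref{gef} (Serrin/Trudinger) to obtain boundedness and strict positivity in one stroke, before applying the linear theory (Theorem \ref{reg}) to get $\varphi\in H^p_2(M)$. Your route is longer but leans less on Theorem \ref{gef}; the one point you rightly flag --- that $h\varphi\in L^p$ needs $\varphi\in L^\infty$ rather than merely $\varphi\in L^q$ for all finite $q$ --- is closed by a single further application of elliptic regularity and Sobolev embedding (this is exactly the bootstrap of Proposition \ref{delta u f} in the paper), so it is not a gap.
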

La régularité donnée par ce théorème est optimale. \\
En ce qui concerne l'existence des solutions, on démontre que la fonctionnelle $I_g$, définie pour tout $\psi\in H_1(M)-\{0\}$ par 
\begin{equation*}
I_g(\psi)=\frac{\int_M |\nabla\psi|^2+h\psi^2\di v}{\|\psi\|^2_{\frac{2n}{n-2}}}\label{ca} 
\end{equation*}
 atteint son minimum $\mu(g)$, si $\mu(g)<\frac{1}{4}n(n-2)\omega_n^{2/n}$ (où $\omega_n$ est le volume de la sphère standard $S_n$). On obtient alors le résultat suivant: 
\begin{theorem}\label{cg intro}
Soit $(M,g)$ une variété riemannienne compacte $C^\infty$ de dimension $n\geq 3$ et $p>n/2$. Si 
$$\mu(g)<\frac{1}{4}n(n-2)\omega_n^{2/n}$$ 
alors l'équation \eqref{dfgfg} admet une solution strictement positive $\varphi\in H^p_2(M)\subset C^{1-[n/p],\beta}(M)$,  qui minimise la fonctionnelle $I_g$, où $\beta\in ]0,1[$.
\end{theorem}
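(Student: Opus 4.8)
\medskip
\noindent\textbf{Proof sketch.} The plan is to carry out the subcritical approximation of Aubin and Yamabe, the low regularity of $h$ forcing only some extra bookkeeping. Set $p'=p/(p-1)$ and note that the hypothesis $p>n/2$ is exactly the condition $2p'<\Nn$. For $2<q<\Nn$ I would minimize on $H_1(M)\setminus\{0\}$ the functional $I_g^q(\psi)=\frac{\int_M|\nabla\psi|^2+h\psi^2\,\di v}{\|\psi\|_q^2}$; by H\"older and the Sobolev embedding $H_1\hookrightarrow L^{2p'}$ the term $\int_M h\psi^2\,\di v$ is finite and continuous on bounded subsets of $H_1$ for the $L^{2p'}$ topology, so $I_g^q$ is well defined and bounded below on $\{\|\psi\|_q=1\}$, and since $H_1(M)\hookrightarrow L^q(M)$ is compact for $q<\Nn$ its infimum $\mu_q$ is attained; replacing $\psi$ by $|\psi|$ one may take the minimizer $\varphi_q\geq0$, normalized by $\|\varphi_q\|_q=1$. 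The Euler--Lagrange equation is the subcritical equation $\Delta_g\varphi_q+h\varphi_q=\mu_q\varphi_q^{q-1}$; the subcritical version of Theorem~\ref{regYS intro} gives $\varphi_q\in H_2^p(M)$ and continuous, and the strong maximum principle applied to $\Delta_g\varphi_q=(\mu_q\varphi_q^{q-2}-h)\varphi_q$ gives $\varphi_q>0$.

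Then I would collect uniform estimates. It is classical that $(\mu_q)$ is bounded and $\mu_q\to\mu(g)$ as $q\to\Nn$. Since $2p'<q$ for $q$ close to $\Nn$, H\"older gives $\bigl|\int_M h\varphi_q^2\,\di v\bigr|\leq\|h\|_p\,\mathrm{vol}(M)^{2(\frac{1}{2p'}-\frac1q)}$, so from $\|\nabla\varphi_q\|_2^2=\mu_q-\int_M h\varphi_q^2\,\di v$ and $\|\varphi_q\|_2\leq\mathrm{vol}(M)^{\frac12-\frac1q}$ one obtains a bound on $\|\varphi_q\|_{H_1}$ independent of $q$. Passing to a subsequence, $\varphi_q\rightharpoonup\varphi$ weakly in $H_1(M)$, strongly in $L^s(M)$ for every $s<\Nn$, and almost everywhere, with $\varphi\geq0$.

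The key point, and the step I expect to be the main obstacle, is that $\varphi\not\equiv0$; this is the only place where the strict inequality $\mu(g)<\frac{1}{4}n(n-2)\omega_n^{2/n}=K(n,2)^{-2}$ is used. Suppose $\varphi\equiv0$. Then $\int_M h\varphi_q^2\,\di v\to0$ (strong convergence to $0$ in $L^{2p'}$, because $2p'<\Nn$) and $\|\varphi_q\|_2\to0$, hence $\|\nabla\varphi_q\|_2^2\to\mu(g)$; on the other hand $\|\varphi_q\|_{\Nn}\geq\mathrm{vol}(M)^{\frac{n-2}{2n}-\frac1q}\to1$ by H\"older applied to $\|\varphi_q\|_q=1$. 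If $\mu(g)\leq0$ then, $\|\nabla\varphi_q\|_2^2$ being nonnegative, one would get $\mu(g)=0$ and $\|\varphi_q\|_{H_1}\to0$, hence $\|\varphi_q\|_{\Nn}\to0$ by the Sobolev inequality, a contradiction. If $\mu(g)>0$, the sharp Sobolev inequality $\|\psi\|_{\Nn}^2\leq(K(n,2)^2+\e)\|\nabla\psi\|_2^2+A_\e\|\psi\|_2^2$ applied to $\varphi_q$, letting $q\to\Nn$, gives $1\leq(K(n,2)^2+\e)\mu(g)$ for every $\e>0$, hence $\mu(g)\geq K(n,2)^{-2}$, again a contradiction. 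Therefore $\varphi\not\equiv0$.

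It then remains to pass to the limit in the weak formulation of the subcritical equation: the linear terms converge by weak $H_1$ and strong $L^{p'}$ convergence (note $p'<\Nn$), and $\mu_q\varphi_q^{q-1}\to\mu(g)\varphi^{(n+2)/(n-2)}$ in $L^1$ by the uniform $L^{\Nn}$ bound, the almost everywhere convergence, and Vitali's theorem (the exponents $q-1$ stay bounded away from $1$). Hence $\varphi$ is a nonnegative nontrivial weak solution of \eqref{dfgfg} with $\tilde h=\mu(g)$, and Theorem~\ref{regYS intro} then gives $\varphi\in H_2^p(M)\subset C^{1-[n/p],\beta}(M)$ and $\varphi>0$. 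Finally, testing \eqref{dfgfg} against $\varphi$ yields $\int_M|\nabla\varphi|^2+h\varphi^2\,\di v=\mu(g)\|\varphi\|_{\Nn}^{\Nn}$, while weak lower semicontinuity of $\psi\mapsto\int_M|\nabla\psi|^2\,\di v$ and the convergence of the $h$-term give $\int_M|\nabla\varphi|^2+h\varphi^2\,\di v\leq\liminf\mu_q=\mu(g)$; combined with the tautology $I_g(\varphi)\geq\mu(g)$ this forces $\|\varphi\|_{\Nn}=1$ when $\mu(g)\neq0$ (the case $\mu(g)=0$ being immediate), whence $I_g(\varphi)=\mu(g)$ and $\varphi$ minimizes $I_g$.
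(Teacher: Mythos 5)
Your argument is correct, but it follows a genuinely different route from the one in the text. The paper proves Theorem \ref{cg} by minimizing $I_g$ directly at the critical exponent: it takes a minimizing sequence $(\varphi_i)$ with $\|\varphi_i\|_{\Nn}=1$, extracts a weak limit $\varphi$, writes $\psi_i=\varphi_i-\varphi$, and uses the Brezis--Lieb lemma (Lemme \ref{BLL}) together with the sharp Sobolev inequality (Th\'eor\`eme \ref{INM}) to derive $[1-\mu(g)(K^2(n,2)+\e)]\|\nabla\psi_i\|_2^2\leq o(1)$; the strict inequality $\mu(g)<K^{-2}(n,2)$ then forces strong $H_1$ convergence, so the minimum is attained with $\|\varphi\|_{\Nn}=1$ at once, and Th\'eor\`eme \ref{regYS} finishes. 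You instead run the subcritical scheme of Yamabe--Aubin--Trudinger: solve the compact problems at exponent $q<\Nn$, get uniform $H_1$ bounds, and invoke the sharp constant only to exclude the vanishing of the weak limit. What your approach buys is that each approximate problem is solved by soft compactness (no concentration analysis, no Brezis--Lieb), and the critical constant enters at a single, clearly isolated point; what it costs is extra bookkeeping that the direct method avoids -- the convergence $\mu_q\to\mu(g)$ (which you call classical and which is in any case recoverable from your own closing argument), the passage to the limit in the nonlinearity via Vitali or weak $L^{\Nn/(\Nn-1)}$ convergence, and the a posteriori recovery of the normalization $\|\varphi\|_{\Nn}=1$ needed to conclude that $\varphi$ actually minimizes $I_g$. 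It is worth noting that the paper does use your scheme, but only for the equivariant statement (Proposition \ref{eysc} and Th\'eor\`eme \ref{theginv}), where it must additionally assume $\mu_{N,G}(g)>0$ in order to extend the Euler--Lagrange identity from $G$-invariant test functions to all of $H_1(M)$ via the invertibility of $\Delta_g+h$; in your setting $G=\{\mathrm{id}\}$ that step is vacuous, which is why your subcritical proof needs no sign hypothesis on $\mu(g)$ and covers the full statement of Theorem \ref{cg}.
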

Si $h$ est $G-$invariante, on définit 
$$\mu_{G}(g)=\inf_{\psi\in H_{1,G}(M)-\{0\}} I_g(\psi)$$
où $H_{1,G}(M)$ est l'espace des fonctions dans $H_1(M)$,  $G-$invariantes. On note par $O_G(Q)$ l'orbite du point $Q\in M$.  On obtient le résultat suivant:
\begin{theorem}\label{theginv intro}
Si $0<\mu_{G}(g)<\frac{1}{4}n(n-2)\omega_n^{2/n}(\inf_{Q\in M}\Ca O_G(Q))^{2/n}$ alors  l'équation \eqref{dfgfg}  admet une solution $\varphi\in H_{2,G}^p(M)\subset C^{1-[n/p],\beta}(M)$ strictement positive, $G-$invariante et minimisante pour 
la fonctionnelle $I_{g}$.
\end{theorem}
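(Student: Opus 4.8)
\emph{Plan de d�monstration.} Le plan serait d'adapter la m�thode variationnelle de sous-criticit� d'Aubin au cadre $G$-invariant de Hebey--Vaugon, le point d�licat �tant de contr�ler le terme d'ordre z�ro bien que $h$ ne soit que dans $L^p$. Pour $2\le q<N$, je poserais
$$\mu_q=\inf\Bigl\{\int_M(|\nabla\psi|^2+h\psi^2)\,\di v\ :\ \psi\in H_{1,G}(M),\ \|\psi\|_q=1\Bigr\}.$$
Comme $p>n/2$, on a $\frac{2p}{p-1}<N$, donc $\bigl|\int_M h\psi^2\bigr|\le\|h\|_p\,\|\psi\|_{2p/(p-1)}^2$ se contr�le par interpolation et injection de Sobolev; l'injection $H_1(M)\hookrightarrow L^q(M)$ �tant compacte, $\mu_q$ serait atteint en un $\varphi_q\ge 0$ avec $\|\varphi_q\|_q=1$. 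Le principe de criticit� sym�trique de Palais ($G$ agissant par isom�tries) donnerait alors que $\varphi_q$ est solution faible de $\Delta_g\varphi_q+h\varphi_q=\mu_q\,\varphi_q^{\,q-1}$; l'exposant �tant sous-critique, la r�gularit� elliptique (m�me m�thode que le Th�or�me~\ref{regYS intro}) et l'in�galit� de Harnack donneraient $\varphi_q$ continue et strictement positive.

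Ensuite, en comparant � des fonctions-test fix�es on obtient $\limsup_q\mu_q\le\mu_G(g)$, et l'in�galit� $\|\psi\|_q\le\|\psi\|_N\,vol(M)^{1/q-1/N}$ donne $\mu_q\ge\mu_G(g)\,vol(M)^{-2(1/q-1/N)}$, d'o� $\mu_q\to\mu:=\mu_G(g)>0$. De $\|\nabla\varphi_q\|_2^2=\mu_q-\int_M h\varphi_q^2$ et de l'estimation pr�c�dente (via l'in�galit� de Young), $(\varphi_q)$ serait born�e dans $H_1(M)$; quitte � extraire, $\varphi_q\rightharpoonup\varphi$ dans $H_1(M)$, $\varphi_q\to\varphi$ dans $L^s(M)$ pour $s<N$ et presque partout, avec $\varphi\ge 0$, $G$-invariante, v�rifiant faiblement $\Delta_g\varphi+h\varphi=\mu\,\varphi^{\frac{n+2}{n-2}}$. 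En posant $v_q=\varphi_q-\varphi$, le lemme de Brezis--Lieb donne $\|\varphi_q\|_N^N=\|\varphi\|_N^N+\|v_q\|_N^N+o(1)$, tandis que $\|\nabla\varphi_q\|_2^2=\|\nabla\varphi\|_2^2+\|\nabla v_q\|_2^2+o(1)$ et $\int_M h\varphi_q^2\to\int_M h\varphi^2$ (convergence forte dans $L^{2p/(p-1)}$); en testant l'�quation limite par $\varphi$, soit $\|\nabla\varphi\|_2^2+\int_M h\varphi^2=\mu\|\varphi\|_N^N$, le passage � la limite dans $\mu_q=\|\nabla\varphi_q\|_2^2+\int_M h\varphi_q^2$ donnerait
$$\lim_q\|\nabla v_q\|_2^2=\mu\bigl(1-\|\varphi\|_N^N\bigr).$$

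Resterait l'alternative. Si $\varphi\equiv 0$, alors $\|v_q\|_2\to 0$; on utilise l'in�galit� de Sobolev $G$-invariante optimale de Hebey--Vaugon : pour tout $\e>0$ il existe $C_\e$ tel que
$$\|u\|_N^2\le\bigl(K(n,2)^2\,(\inf\nolimits_{Q\in M}\Ca O_G(Q))^{-2/n}+\e\bigr)\|\nabla u\|_2^2+C_\e\|u\|_2^2$$
pour tout $u\in H_{1,G}(M)$. Appliqu�e � $v_q=\varphi_q$ et combin�e � $\|\varphi_q\|_N\ge vol(M)^{-(1/q-1/N)}\to 1$, cette in�galit� fournirait � la limite $\mu\ge(\inf_{Q\in M}\Ca O_G(Q))^{2/n}/K(n,2)^2=\frac{1}{4}n(n-2)\omega_n^{2/n}(\inf_{Q\in M}\Ca O_G(Q))^{2/n}$, contredisant l'hypoth�se; donc $\varphi\not\equiv 0$. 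Alors $\varphi\in H_{1,G}(M)\setminus\{0\}$, donc $I_g(\varphi)=\mu\|\varphi\|_N^{N-2}\ge\mu$, soit $\|\varphi\|_N\ge 1$; et comme $\lim_q\|\nabla v_q\|_2^2\ge 0$ avec $\mu>0$, la relation pr�c�dente force $\|\varphi\|_N\le 1$. Ainsi $\|\varphi\|_N=1$, $I_g(\varphi)=\mu_G(g)$ ($\varphi$ est minimisante), $\lim_q\|\nabla v_q\|_2=0$ (convergence forte $\varphi_q\to\varphi$ dans $H_1$), et $\varphi$ est solution de l'�quation~\eqref{dfgfg} avec $\tilde h=\mu$; le Th�or�me~\ref{regYS intro} appliqu� � $\varphi$, qui reste $G$-invariante, donnerait $\varphi\in H_{2,G}^p(M)\subset C^{1-[n/p],\beta}(M)$ et strictement positive.

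\textbf{Principal obstacle.} La difficult� essentielle sera l'in�galit� de Sobolev $G$-invariante portant la constante optimale $(\inf_{Q\in M}\Ca O_G(Q))^{2/n}$ : elle repose sur une analyse de concentration (blow-up) le long des orbites de $G$, une orbite de cardinal $k$ co�tant au moins $k$ fois l'�nergie d'une bulle standard de $\R^n$; c'est l� que se trouve rassembl� tout le contenu g�om�trique. Il faudra en outre v�rifier avec soin que le terme d'ordre z�ro $\int_M h\varphi_q^2$, $h$ n'�tant que dans $L^p$, ne perturbe ni les estimations $H_1$ ni les passages � la limite ci-dessus, ce qui tient pr�cis�ment � la condition $p>n/2$ (c'est-�-dire $\frac{2p}{p-1}<N$).
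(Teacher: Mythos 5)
Votre plan est correct et suit, dans ses grandes lignes, la m�me strat�gie que le texte (proposition \ref{eysc} puis th�or�me \ref{theginv}) : approximation sous-critique $q\nearrow N$, bornitude dans $H_1(M)$, extraction, et in�galit� de meilleure constante $G$-invariante pour exclure l'�vanouissement de la limite sous l'hypoth�se $\mu_G(g)<K^{-2}(n,2)(\inf_Q\Ca O_G(Q))^{2/n}$ -- in�galit� que le texte importe telle quelle de Z.~Faget (th�or�me \ref{embHV}; Hebey--Vaugon n'ont �tabli que les inclusions du th�or�me \ref{INMHV}), et non d'une analyse de blow-up � refaire. Deux sous-�tapes diff�rent cependant de fa�on substantielle. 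D'abord, pour passer de � point critique de $I_{q,g}$ restreinte � $H_{1,G}(M)$ � � � solution faible contre tout $\psi\in H_1(M)$ �, vous invoquez le principe de criticit� sym�trique de Palais; le texte proc�de autrement : il utilise l'hypoth�se $\mu_{q,G}(g)>0$ pour montrer que $L=\Delta_g+h$ est coercif et inversible (proposition \ref{delta fu}), r�sout $L\tilde\varphi_q=\mu_{q,G}(g)\varphi_q^{q-1}$ dans $H_1(M)$ tout entier, montre par unicit� que $\tilde\varphi_q$ est $G$-invariante, puis que $\varphi_q=\tilde\varphi_q$ par coercivit�. Votre route est l�gitime ($G$ agit par isom�tries lin�aires sur l'espace de Hilbert $H_1(M)$ et la fonctionnelle est $C^1$ m�me avec $h\in L^p$, $p>n/2$) et plus courte; celle du texte est �l�mentaire et autonome, mais c'est pr�cis�ment elle qui impose la restriction $q\geq 2p/(p-1)$ et qui, comme le note la remarque apr�s le th�or�me \ref{theginv}, ne s'applique pas directement en $q=N$ parce que $\varphi_N^{N-1}$ n'est que dans $L^{2n/(n+2)}$ -- d'o� la n�cessit� du d�tour sous-critique. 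Ensuite, votre conclusion ($\|\varphi\|_N=1$, minimalit� et convergence forte) passe par le lemme de Brezis--Lieb et le bilan d'�nergie $\lim\|\nabla v_q\|_2^2=\mu(1-\|\varphi\|_N^N)$, l� o� le texte obtient $\|\varphi_N\|_N\leq 1$ par une in�galit� de H�lder sur $\int\varphi_{q_i}^{N-1}\varphi_N\,\di v$ combin�e � $\limsup_{q\to N}\mu_{q,G}(g)\leq\mu_{N,G}(g)$; les deux bilans sont �quivalents, le v�tre identifie en prime la convergence forte dans $H_1(M)$. Veillez seulement � justifier le passage � la limite dans le terme non lin�aire ($\varphi_{q_i}^{q_i-1}$ born�e dans $L^{N/(N-1)}$ donc faiblement convergente vers $\varphi^{N-1}$), point que le texte explicite.
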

Ce théorème se démontre  en utilisant la méthode variationnelle (comme dans les cas classiques où $h$ est très régulière), les inclusions de Sobolev en présence de symétries, trouvées par E.~Hebey et  M.~Vaugon \cite{HV2} et l'inégalité de la meilleure constante en présence symétries calculée par Z.~Faget \cite{Fag}.\\

Dans le chapitre \ref{CHYS}, on étudie l'équation \eqref{dfgfg} lorsque $h=\frac{n-2}{4(n-1)}R_g$ et $g$ est une métrique qui satisfait l'hypothèse $(H)$.  Ce cas a une signification géométrique, il permet de résoudre le problème  \ref{yam sing intro}  (le problème de Yamabe avec singularités).  La courbure scalaire  $R_g$ est dans $ L^p(M)$ et l'équation \eqref{dfgfg} devient l'équation de Yamabe \eqref{yamabei}. D'après le théorème~\ref{cg intro}, la résolution du problème~\ref{yam sing intro} est ramenée à  la preuve de l'inégalité $\mu(g)<\frac{1}{4}n(n-2)\omega_n^{2/n}$ (cette inégalité a déjà été démontrée lorsque $g$ est $C^\infty$). Dans le cas où $g$ satisfait l'hypothèse $(H)$, on commence par démontrer certaines propriétés (connues dans le cas $C^\infty$): l'invariance conforme de $\mu(g)$, l'invariance conforme faible du Laplacien conforme $L_g=\Delta_g+\frac{n-2}{4(n-1)}R_g$ et l'existence de la fonction de Green pour cet opérateur. Ensuite, on démontre  le résultat suivant:
\begin{theorem}\label{conj aub intro}
 Soit $M$ une variété compacte $C^\infty$ de dimension $n$, $g$ une métrique riemannienne qui satisfait l'hypothèse $(H)$. Si $(M,g)$ n'est pas conformément difféomorphe à la sphère $(S_n, g_{can})$  alors $\mu(g)<\frac{1}{4}n(n-2)\omega_n^{2/n}$.
\end{theorem}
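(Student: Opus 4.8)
Le plan est d'\'etablir, gr\^ace au th\'eor\`eme~\ref{cg intro}, l'in\'egalit\'e stricte $\mu(g)<K(n,2)^{-2}=\frac{1}{4}n(n-2)\omega_n^{2/n}$, en construisant des fonctions-tests concentr\'ees en un point de la boule $B_{P_0}(\delta)$ o\`u $g$ est $C^\infty$. Comme $\mu(g)$ est un invariant conforme --- propri\'et\'e qu'on aura \'etablie auparavant sous l'hypoth\`ese $(H)$ --- et que $g$ est lisse sur $B_{P_0}(\delta)$, on se ram\`ene d'abord, par un changement conforme de facteur lisse et \'egal \`a $1$ hors d'une petite boule autour de $P_0$ (ce qui ne touche pas la partie non r\'eguli\`ere de $g$ et pr\'eserve donc l'hypoth\`ese $(H)$), au cas o\`u $g$ s'\'ecrit en coordonn\'ees normales conformes centr\'ees en $P_0$: on a alors $\sqrt{\det g}=1+O(|x|^N)$ pour $N$ aussi grand qu'on veut, et en particulier $R_g(P_0)=0$ et $\nabla R_g(P_0)=0$.

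On distingue alors deux cas, suivant le sch\'ema d'Aubin--Schoen, en rempla\c{c}ant l'hypoth\`ese habituelle de non-platitude conforme locale de $(M,g)$ par le fait que le tenseur de Weyl $W_g$ ne s'annule pas identiquement sur $B_{P_0}(\delta)$. \emph{Premier cas: $n\geq 6$ et $W_g\not\equiv 0$ sur $B_{P_0}(\delta)$.} Le tenseur $W_g$ \'etant lisse sur cette boule, on choisit $P_1\in B_{P_0}(\delta)$ avec $W_g(P_1)\neq 0$; ce point demeure dans la r\'egion o\`u $g$ est $C^\infty$. On utilise les fonctions-tests d'Aubin $\varphi_\e=\bigl(\e/(\e^2+r^2)\bigr)^{(n-2)/2}$, tronqu\'ees, centr\'ees en $P_1$ et \`a support dans une boule o\`u $g$ est lisse. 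Ce support \'etant contenu dans la r\'egion r\'eguli\`ere, le calcul classique d'Aubin --- qui utilise que $\Delta_g R_g(P_1)$ est, en coordonn\'ees normales conformes, un multiple positif de $|W_g(P_1)|^2$ --- s'applique mot pour mot et donne, pour $\e$ assez petit,
\[
I_g(\varphi_\e)\leq K(n,2)^{-2}-c_n\,|W_g(P_1)|^2\,\e^4\bigl(1+o(1)\bigr)<K(n,2)^{-2}
\]
(avec un facteur $|\log\e|$ suppl\'ementaire lorsque $n=6$).

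\emph{Second cas: $n\in\{3,4,5\}$, ou $W_g\equiv 0$ sur $B_{P_0}(\delta)$.} On emploie l'argument de Schoen fond\'e sur la fonction de Green. Soit $G_{P_0}$ la fonction de Green de $L_g$ en $P_0$, dont l'existence et la positivit\'e auront \'et\'e \'etablies sous l'hypoth\`ese $(H)$; on dispose pr\`es de $P_0$ du d\'eveloppement $G_{P_0}(x)=c_n\,r^{2-n}+A+o(1)$ lorsque $r=|x|\to 0$, avec $A\in\R$ (si $W_g\equiv 0$ pr\`es de $P_0$ et $n\geq 4$, un changement conforme permet de supposer $g=\mathcal{E}$ sur une petite boule, de sorte que $G_{P_0}-c_n r^{2-n}$ y est harmonique, donc lisse; et si $n\leq 5$, les termes interm\'ediaires de ce d\'eveloppement font intervenir des quantit\'es de courbure qui s'annulent en coordonn\'ees normales conformes). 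La m\'etrique $\hat g=G_{P_0}^{\N}g$ sur $M\setminus\{P_0\}$ est \`a courbure scalaire nulle (car $L_g G_{P_0}=0$ hors de $P_0$), asymptotiquement plate, avec une extr\'emit\'e correspondant \`a $P_0$ au voisinage de laquelle $\hat g$ est lisse, et sa masse ADM est un multiple positif de $A$. Le th\'eor\`eme de la masse positive --- qu'il faudra adapter \`a cette r\'egularit\'e affaiblie, $\hat g$ n'\'etant que $C^{1,\beta}\cap H^p_2$ sur la partie compacte, mais y v\'erifiant $R_{\hat g}\equiv 0$ et \'etant lisse pr\`es de l'infini --- donne $A\geq 0$, avec \'egalit\'e seulement lorsque $\hat g$ est plate, c'est-\`a-dire lorsque $(M,g)$ est conform\'ement diff\'eomorphe \`a $(S_n,g_{can})$, ce qui est exclu; donc $A>0$. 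On conclut en prenant des fonctions-tests $\varphi_\e$ \'egales, \`a une troncature pr\`es, \`a la bulle standard pr\`es de $P_0$ et \`a $\e^{(n-2)/2}G_{P_0}$ loin de $P_0$: puisque $L_g G_{P_0}=0$ hors de $P_0$, le num\'erateur $\int_M\varphi_\e L_g\varphi_\e\,\di v_g$ de $I_g(\varphi_\e)$ se localise pr\`es de $P_0$, dans la r\'egion r\'eguli\`ere, et le d\'eveloppement de Schoen--Lee--Parker fournit
\[
I_g(\varphi_\e)=K(n,2)^{-2}\bigl(1-\kappa_n\,A\,\e^{n-2}+o(\e^{n-2})\bigr)<K(n,2)^{-2}
\]
pour $\e$ petit.

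Dans les deux cas on obtient $\mu(g)\leq I_g(\varphi_\e)<K(n,2)^{-2}$, d'o\`u le r\'esultat. L'obstacle principal sera le th\'eor\`eme de la masse positive dans cette r\'egularit\'e affaiblie (la m\'etrique $\hat g$ n'est que $C^{1,\beta}$, et non $C^2$, sur la partie compacte), ainsi que la justification du d\'eveloppement de $G_{P_0}$ et l'interpr\'etation de $A$ comme masse sous l'hypoth\`ese $(H)$; tout le reste se ram\`ene soit aux calculs classiques d'Aubin et de Schoen, qui se d\'eroulent dans la r\'egion r\'eguli\`ere $B_{P_0}(\delta)$ et n'y voient pas la partie singuli\`ere de $g$, soit aux propri\'et\'es de $\mu(g)$, de $L_g$ et de $G_{P_0}$ d\'ej\`a \'etablies sous l'hypoth\`ese $(H)$.
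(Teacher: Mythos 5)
Votre d\'emonstration suit essentiellement la m\^eme strat\'egie que celle du texte (th\'eor\`eme \ref{conj aub}): r\'eduction par invariance conforme \`a une m\'etrique normalis\'ee au voisinage de $P_0$, puis dichotomie entre le cas d'Aubin (fonctions-tests concentr\'ees en un point de $B_{P_0}(\delta)$ o\`u le tenseur de Weyl ne s'annule pas, $n\geq 6$) et le cas de Schoen (fonction de Green et masse positive), tout le calcul se d\'eroulant dans la boule o\`u $g$ est $C^\infty$. La seule diff\'erence est de pr\'esentation: le texte utilise la m\'etrique de Cao--G\"unther et invoque le th\'eor\`eme \ref{aubm} de T.~Aubin pour obtenir $A>0$ (sous r\'eserve de sa validit\'e), l\`a o\`u vous passez directement par la vari\'et\'e asymptotiquement plate $(M\setminus\{P_0\},G_{P_0}^{4/(n-2)}g)$ et le th\'eor\`eme de la masse positive \ref{msp1} --- deux formulations du m\^eme ingr\'edient, soumises \`a la m\^eme r\'eserve de r\'egularit\'e que vous signalez \`a juste titre.
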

Lorsque la métrique  $g$ est $C^\infty$, ce théorème a résolu la conjecture \ref{Aubincon intro}. Les arguments utilisés pour le démontrer dans ce cas sont encore valables lorsque $g$ satisfait l'hypothèse $(H)$. En effet, il suffit de construire une certaine  fonction test $\varphi$ qui vérifie  $I_g(\varphi)<\frac{1}{4}n(n-2)\omega_n^{2/n}$. Les fonctions test construites par T.~Aubin \cite{Aub} et R.~Schoen \cite{Schoen}, sont encore utilisables dans ce cas singulier. \\
Dans le cas équivariant (en présence de symétries), le résultat obtenu est le suivant:
 
\begin{theorem}\label{HVGINVD intro}
Soit $M$ une variété compacte $C^\infty$ de dimension $n\geq 3$. $g$ une métrique  riemannienne qui appartient à $H_2^p(M,T^*M\otimes T^*M)$ avec $p>n/2$. Si 
\begin{equation}\label{inf intro}
\mu_{G}(g)<\frac{1}{4}n(n-2)\omega_n^{2/n}(\inf_{Q\in M}\Ca O_G(Q))^{2/n}
\end{equation}
alors l'équation \eqref{yamabei} admet une solution strictement positive $\varphi\in H_{2,G}^p(M)\subset C^{1-[n/p],\beta}(M)$ $G-$invariante. 
\end{theorem}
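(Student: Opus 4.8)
\emph{Strategy of proof.} The plan is to reduce this statement to Theorem~\ref{theginv intro} by taking $h=\an R_g$. Since $g\in H_2^p(M,T^*M\otimes T^*M)$ with $p>n/2$, the metric is of class $C^{1,\beta}$, and in local coordinates the scalar curvature $R_g$ is built from $\partial^2 g$, from $(\partial g)^2$, and from $g$ and $g^{-1}$; as $\partial^2 g\in L^p$, $\partial g\in C^{0,\beta}\subset L^\infty$ and $g,g^{-1}\in C^{1,\beta}$, one gets $R_g\in L^p(M)$, hence $h\in L^p(M)$. Moreover, since $G$ is a subgroup of $I(M,g)$, the scalar curvature $R_g$ is $G$-invariant, so $h$ is $G$-invariant as well; thus $\mu_G(g)$, the functional $I_g$ and the space $H_{1,G}(M)$ are well defined, and after division by $\frac{4(n-1)}{n-2}$ the Yamabe equation~\eqref{yamabei} is precisely equation~\eqref{dfgfg} with this $h$ and $\tilde h=\an R_{\tilde g}$.

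If $\mu_G(g)>0$, then hypothesis~\eqref{inf intro} reads exactly $0<\mu_G(g)<\frac{1}{4}n(n-2)\omega_n^{2/n}(\inf_{Q\in M}\Ca O_G(Q))^{2/n}$, which is the hypothesis of Theorem~\ref{theginv intro}; that theorem immediately provides a strictly positive, $G$-invariant $\varphi\in H_{2,G}^p(M)\subset C^{1-[n/p],\beta}(M)$ minimizing $I_g$, hence a solution of~\eqref{yamabei}.

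If $\mu_G(g)\le 0$, the right-hand side of~\eqref{inf intro} is positive, so the hypothesis holds automatically and no critical concentration can occur. One would argue directly: for a minimizing sequence $(\psi_i)\subset H_{1,G}(M)$ with $\|\psi_i\|_{\Nn}=1$, H\"older's inequality together with $\|\psi_i\|_{\frac{2p}{p-1}}\le C\|\psi_i\|_{\Nn}=C$ (legitimate because $\frac{2p}{p-1}<\Nn$ when $p>n/2$ and $\mathrm{vol}(M)<\infty$) gives $\bigl|\int_M h\psi_i^2\,\di v\bigr|\le\|h\|_p\|\psi_i\|_{\frac{2p}{p-1}}^2\le C'$, so $\|\nabla\psi_i\|_2^2=I_g(\psi_i)-\int_M h\psi_i^2\,\di v$ is bounded and $(\psi_i)$ is bounded in $H_{1,G}(M)$. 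Passing to a subsequence, $\psi_i\rightharpoonup\psi$ in $H_{1,G}(M)$ with strong convergence in $L^q$, $q<\Nn$ (Rellich--Kondrachov), hence $\int_M h\psi_i^2\,\di v\to\int_M h\psi^2\,\di v$ and $\|\psi\|_{\Nn}\le 1$. Lower semicontinuity of the Dirichlet energy yields $\|\nabla\psi\|_2^2+\int_M h\psi^2\,\di v\le\mu_G(g)\le 0$; in particular $\psi\not\equiv 0$ (were $\psi\equiv 0$, this would force $\mu_G(g)=0$ and $\psi_i\to 0$ in $L^2$, which is excluded exactly as in the proof of Theorem~\ref{theginv intro} using the $G$-invariant sharp Sobolev inequality of Z.~Faget~\cite{Fag} and the $G$-invariant embeddings of E.~Hebey and M.~Vaugon~\cite{HV2}, since then $\liminf I_g(\psi_i)\ge\frac{1}{4}n(n-2)\omega_n^{2/n}(\inf_{Q\in M}\Ca O_G(Q))^{2/n}>0$). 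Then $I_g(\psi)\le\mu_G(g)/\|\psi\|_{\Nn}^2\le\mu_G(g)$, so $I_g(\psi)=\mu_G(g)$ and $\|\psi\|_{\Nn}=1$: $\psi$ is a minimizer, and its Euler--Lagrange equation is~\eqref{dfgfg} with $\tilde h=\mu_G(g)\le 0$. Applying Theorem~\ref{regYS intro} to $|\psi|$ (also minimizing) gives $\psi\in H_2^p(M)\subset C^{1-[n/p],\beta}(M)$ and $\psi>0$.

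The genuinely hard point --- loss of compactness at the critical Sobolev exponent, kept at bay by~\eqref{inf intro} through Faget's sharp constant --- is already entirely inside Theorem~\ref{theginv intro}. In the geometric reduction the only new feature is that $h=\an R_g$ lies only in $L^p$, not in $C^\infty$; but Theorems~\ref{regYS intro}, \ref{cg intro} and~\ref{theginv intro} were designed precisely for such $h$. Hence the statement is essentially a corollary of Theorem~\ref{theginv intro}, the one thing to make explicit being the routine local computation $g\in H_2^p\Rightarrow R_g\in L^p$ that places $h$ within their scope.
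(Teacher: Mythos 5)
Your reduction of the case $\mu_G(g)>0$ to Theorem~\ref{theginv intro} with $h=\an R_g\in L^p_G(M)$ is exactly the paper's argument, and the verification that $R_g\in L^p$ is the same routine computation the paper performs when introducing hypothesis $(H)$. The problem is the case $\mu_G(g)\le 0$, where your direct minimization over $H_{1,G}(M)$ has a genuine gap. A minimizer $\psi$ of $I_g$ restricted to $H_{1,G}(M)$ satisfies an Euler--Lagrange identity only against \emph{$G$-invariant} test functions; it is not thereby a weak solution of \eqref{yamabei} in $H_1(M)$. Theorem~\ref{regYS intro}, which you invoke to get regularity and strict positivity, presupposes a weak solution tested against all of $H_1(M)$ (see the identity $\forall\psi\in H_1(M)$ in its proof), so it does not apply to your $\psi$ as constructed. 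The paper is explicit about this obstruction: in the proof of Theorem~\ref{theginv intro} (Theorem \ref{theginv} in the body), the passage from $G$-invariant test functions to arbitrary ones is carried out by solving $L\tilde\varphi=\mu_{q,G}(g)\varphi_q^{q-1}$ and identifying $\tilde\varphi=\varphi_q$ by coercivity, and this requires the smallest eigenvalue of $L=\Delta_g+\an R_g$ to be strictly positive --- which is exactly what fails when $\mu_G(g)\le 0$. The remark following Theorem \ref{theginv} states in so many words that the variational method is not valid in that case. (One could try to repair your argument via the principle of symmetric criticality, but you do not invoke it, and the paper deliberately avoids it in this low-regularity setting.)

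The paper's route for $\mu_G(g)\le 0$ is entirely different and you should compare it with yours: since $\mu(g)\le\mu_G(g)\le 0<K^{-2}(n,2)$, the \emph{non-equivariant} existence theorem (Theorems \ref{cg intro} and \ref{conj aub intro}, i.e. Theorem \ref{inegg}) already produces a positive solution $\varphi$ of \eqref{yamabei}; the uniqueness theorem \ref{unique intro} then says all solutions are proportional, and since each $\sigma^*\varphi$ ($\sigma\in G$) is again a solution with $\|\sigma^*\varphi\|_2=\|\varphi\|_2$, one gets $\sigma^*\varphi=\varphi$, so the solution is automatically $G$-invariant. Note that this step uses Theorem \ref{unique intro}, which requires $p>n$ (continuity of the Christoffel symbols, $C^{2,\beta}$ regularity and the maximum principle); accordingly the body version of the statement (Theorem \ref{HVGINVD}) assumes $p>n$, not $p>n/2$. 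Your proposal, which keeps $p>n/2$ throughout, would in any case have to confront this issue in the nonpositive case.
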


Les résultats sur l'unicité des solutions de l'équation de Yamabe \eqref{yamabei}, connus lorsque la métrique est $C^\infty$, restent valables dans le cas singulier. On obtient le résultat suivant:

\begin{theorem}\label{unique intro}
 Soit $g$ une métrique dans  $H^p_2(M,T^*M\otimes T^*M)$, avec $p>n$. Si $\mu(g)\leq 0$ alors les solutions de l'équation \eqref{yamabei} sont uniques à une constante multiplicative près.
\end{theorem}

Dans cette première partie, on a montré que la majorité des résultats connus sur le problème de Yamabe et certains dans le cas équivariant, lorsque la métrique est $C^\infty$, restent vrais lorsque la métrique satisfait l'hypothèse $(H)$, définie ci-dessus. Une question naturelle que l'on peut se poser est de savoir s'il est possible de supprimer certaines conditions dans l'hypothèse $(H)$. Par exemple, peut on considérer  des métriques dans $H^p_2$, sans qu'elles soit $C^\infty$ dans une boule? La réponse semble difficile et le sujet ne sera pas abordé dans cette thèse (mais sera traité ultérieurement). \\

\subsection*{Deuxième partie}

La deuxième partie de cette thèse est indépendante de la première (elles sont mathématiquement liées, mais aucun résultat de la première partie n'est utilisé dans la seconde partie). \\
On suppose que $(M,g)$ est une variété riemannienne compacte $C^\infty$ de dimension $n\geq 3$. Le but principal des deux chapitres de cette partie est d'étudier la conjecture de Hebey--Vaugon qui s'énonce comme suit:
\begin{conjecture}[E.~Hebey et M.~Vaugon \cite{HV}]\label{HVcon intro}
Soit $G$ un sous groupe d'isométries de $I(M,g)$. Si $(M,g)$ n'est pas conformément difféomorphe à $(S_n, g_{can})$ ou bien si $G$ n'a pas de point fixe, alors l'inégalité stricte suivante a toujours lieu 
\begin{equation}\label{HVI intro}
\inf_{g'\in [g]^G} J(g')<n(n-1)\omega_n^{2/n}(\inf_{Q\in M}\Ca O_G(Q))^{2/n}
\end{equation}
\end{conjecture}
Cette conjecture généralise la conjecture de T.~Aubin \ref{Aubincon intro} puisque:
$$\inf_{g'\in [g]^G} J(g')=4\frac{n-1}{(n-2)}\mu_G(g)$$ 
(si $G=\{\mathrm{id}\}$ les deux conjectures sont identiques). \\
On note par $W_g$ le tenseur de Weyl associé à $g$. Pour tout $P\in M$, on définit $\omega(P)$ par
$$\omega(P)=\inf \{|\beta|\in \mathbb N/\|\nabla^\beta W_g(P)\|\neq 0\},\;\omega(P)=+\infty\text{ si }\forall \beta\;\;\|\nabla^\beta W_g(P)\|= 0$$
où $\beta$ est un multi-indice de longueur $|\beta|$.\\
Pour prouver la conjecture, on doit construire une fonction test $G-$invariante $\phi$ telle que 
$$I_g(\phi)< n(n-1)\omega_n^{2/n}(\inf_{Q\in M}\Ca O_G(Q))^{2/n}$$
Toute la difficulté est dans la construction d'une telle fonction. Dans certains cas, on peut utiliser les fonctions test introduites par T.~Aubin \cite{Aub} et R.~Schoen \cite{Schoen} pour démontrer la conjecture \ref{Aubincon intro}. De nombreux cas ont été traités ainsi par E.~Hebey et M.~Vaugon \cite{HV}, par contre le cas numéro 3 présenté dans le théorème suivant utilise des fonctions test qui sont différentes de celles de T.~Aubin et R.~Schoen.

\begin{theorem}[E.~Hebey et M.~Vaugon]\label{HV theorem intro}
Soit $(M,g)$ une variété riemannienne compacte de dimension $n$ et $G$ un sous groupe d'isométries du groupe $I(M,g)$.
On a toujours:  
$$\inf_{g'\in [g]^G} J(g')\leq n(n-1)\omega_n^{2/n}(\inf_{Q\in M}\Ca O_G(Q))^{2/n}$$
 et l' inégalité  stricte \eqref{HVI intro} est au moins vérifiée dans chacun des cas suivants:
\begin{enumerate}
 \item $G$ opère librement sur $M$
 \item $3\leq \dim M\leq 11$
 \item\label{item 3 intro} Il existe un point $P$ d'orbite minimale (finie) sous $G$ pour lequel soit $\omega(P)>(n-6)/2$, soit $\omega(P)\in \{0,1,2\}$.
 \end{enumerate}
\end{theorem}

Les cas restant pour démontrer complètement la conjecture  sont les cas où $n\geq 12$ et 
$\omega\in\intl 3, [(n-6)/2]\intr$. Dans le chapitre \ref{CHYSr}, on démontre les résultats suivants:

\begin{theorem}\label{con HVa intro}
La conjecture \ref{HVcon intro} est vraie s'il existe un point $P$ d'orbite minimale (finie) pour lequel $\omega(P)\leq15$ ou si le degré de la partie principale de $R_g$, au voisinage de $P$ est plus grand ou égal à $\omega(P)+1$.
\end{theorem}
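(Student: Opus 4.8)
By the identity $\inf_{g'\in[g]^G}J(g')=4\tfrac{n-1}{n-2}\mu_G(g)$ recalled in the introduction, together with $K(n,2)^{-2}=\tfrac14 n(n-2)\omega_n^{2/n}$, the strict inequality \eqref{HVI intro} is equivalent to producing a nonzero $G$-invariant function $\phi\in H_{1,G}(M)$ with
$$I_g(\phi)<K(n,2)^{-2}\bigl(\inf_{Q\in M}\Ca O_G(Q)\bigr)^{2/n}.$$
(If $\mu_G(g)\le0$ this is trivial, so assume $\mu_G(g)>0$.) As in every case treated by Hebey--Vaugon, everything thus reduces to constructing such a test function concentrated on a minimal finite orbit $O_G(P)$. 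The first step is to replace $g$, within its $G$-conformal class, by a metric admitting Lee--Parker conformal normal coordinates at every point of $O_G(P)$ simultaneously, the charts being permuted by $G$; this is legitimate since $\Ca O_G(P)<\infty$ and one may $G$-symmetrize the conformal factor. In such coordinates $\det g=1+O(r^N)$ for $N$ arbitrarily large, hence the volume element contributes no error, $g^{rr}\equiv1$ by the Gauss lemma, and the whole geometric correction to the energy comes from $\an\int_M R_g\,\fia^2\,\di v$, with $R_g(Q)=0$ and the Taylor expansion of $R_g$ at $Q$ governed by the covariant derivatives of $W_g$.

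Next I would take $\fia=\sum_{Q\in O_G(P)}u_{\alpha,Q}$ with $u_{\alpha,Q}(x)=\eta\bigl(d(x,Q)\bigr)\bigl(\alpha/(\alpha^2+d(x,Q)^2)\bigr)^{(n-2)/2}$, where $\eta$ is a fixed cut-off supported in a small ball; then $\fia\in H_{1,G}(M)$, and for $\alpha$ small the bubbles have pairwise disjoint supports, so the cross terms drop out and $I_g(\fia)=\bigl(\Ca O_G(P)\bigr)^{2/n}I_g(u_{\alpha,P})\bigl(1+o(1)\bigr)$. It therefore suffices to prove $I_g(u_{\alpha,P})<K(n,2)^{-2}$ for small $\alpha$. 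Since $u_{\alpha,P}$ is radial and $g^{rr}\equiv1$, $\int|\nabla u_{\alpha,P}|_g^2\,\di v=K(n,2)^{-2}\|u_{\alpha,P}\|_{\Nn}^2+O(\alpha^{n-2})$ (the $O(\alpha^{n-2})$ being the usual cut-off/``mass'' contribution), and one is left with controlling
$$\an\,\frac{\int_M R_g\,u_{\alpha,P}^2\,\di v}{\|u_{\alpha,P}\|_{\Nn}^2}.$$
Only the spherical average of the Taylor expansion of $R_g$ at $P$ contributes against the radial weight $u_{\alpha,P}^2$, and the core of the argument is the claim that, in conformal normal coordinates, this spherical average vanishes to order $2\omega(P)+1$ and that its term of degree $2\omega(P)+2$ equals $-c(n,\omega(P))\,\|\nabla^{\omega(P)}W_g(P)\|^2\,r^{2\omega(P)+2}$ with $c(n,\omega(P))>0$; integrating,
$$I_g(u_{\alpha,P})\le K(n,2)^{-2}\Bigl(1-C(n,\omega(P))\,\|\nabla^{\omega(P)}W_g(P)\|^2\,\alpha^{\,2\omega(P)+4}+o\bigl(\alpha^{\,2\omega(P)+4}\bigr)\Bigr),\qquad C(n,\omega(P))>0.$$
Since $\|\nabla^{\omega(P)}W_g(P)\|\neq0$ by definition of $\omega(P)$ (the value $\omega(P)=+\infty$ forces $(M,g)$ to be conformally flat, a case already contained in Case~\ref{item 3 intro} of Theorem~\ref{HV theorem intro} because $+\infty>(n-6)/2$), this yields $I_g(u_{\alpha,P})<K(n,2)^{-2}$ for $\alpha$ small, i.e.\ the desired inequality. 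The second hypothesis of the theorem — that the principal part of $R_g$ near $P$ has degree $\ge\omega(P)+1$ — is handled by the same scheme: the extra vanishing of $R_g$ removes the lower-order, possibly ill-signed, pieces of its expansion, so that the degree-$(2\omega(P)+2)$ spherical average reduces to the clean term $-c(n,\omega(P))\|\nabla^{\omega(P)}W_g(P)\|^2$ for \emph{every} $\omega(P)$, and one concludes as above.

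The main obstacle is the sign of $C(n,\omega(P))$. Expanding $R_g$ in conformal normal coordinates, $C(n,\omega)$ is an explicit linear combination of universal contractions of $\nabla^{\omega}W_g(P)\otimes\nabla^{\omega}W_g(P)$ weighted by Beta-type integrals $\int_0^{\infty}s^{\,n-1+j}(1+s^2)^{-(n-2)}\,\di s$; Hebey--Vaugon established $C(n,\omega)>0$ for $\omega\le2$, and the point here is to push the computation — the choice of conformal normal coordinates, the bookkeeping of the Weyl contractions, and the evaluation of the spherical and radial integrals — up to $\omega=15$, which together with Case~\ref{item 3 intro} of Theorem~\ref{HV theorem intro} disposes of all $\omega$ as soon as $n\le37$. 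Two technical points must be dealt with along the way. When $2\omega+4\ge n-2$ the radial integrals above diverge and must be cut off, turning the leading correction into $\alpha^{n-2}\ln(1/\alpha)$ (at equality) or a pure $\alpha^{n-2}$ term whose coefficient must still be shown to have the correct sign; this forces a careful grouping of the expansion rather than the naive one. Also, a modification of $u_{\alpha,P}$ by a lower-order corrector — as in Hebey--Vaugon's treatment of their Case~\ref{item 3 intro} — may be needed to absorb the first non-radial terms of the expansion, and the error terms (cross terms between the bubbles of $\fia$, cut-off contributions, and the $o(\cdot)$ above) must be bounded uniformly in $\alpha$; these last estimates are routine once the expansion is in place.
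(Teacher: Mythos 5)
Your plan misses the central difficulty of this case, and the key sign claim on which it rests is false in general. You propose to use the plain Aubin bubbles $u_{\alpha,Q}$ summed over the minimal orbit and to conclude from the assertion that the spherical average of $R_g$ in conformal normal coordinates has leading term $-c(n,\omega)\|\nabla^{\omega}W_g(P)\|^2r^{2\omega+2}$ with $c(n,\omega)>0$. But when the principal part $\bar R$ of $R_g$ at $P$ has degree exactly $\omega$ (the only case left after the second hypothesis is dealt with), the paper's computation (Théorème \ref{zzz} and Lemme \ref{mmm}) gives
$$\bar\int_{S(r)}R_g\,\di\sigma_r=\bigl[B/2-C/4-(1+\omega/2)^2Q\bigr]r^{2\omega+2}+o(r^{2\omega+2}),$$
and this coefficient is \emph{not} negative in general — the text states explicitly that in some cases it is positive, and that the Aubin/Schoen test functions "ne fonctionnent pas ici". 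So $I_g(u_{\alpha,P})<K^{-2}(n,2)$ cannot be obtained from the bubble alone, and what you relegate to a closing "technical point" (a lower-order corrector of the bubble) is in fact the entire content of the proof. The paper's test function is $\tilde\varphi_{\e}=(1-r^{\omega+2}\tilde f)u_{\e}$ with $\tilde f=c\,r^{-\omega}\bar R=c\sum_{k=1}^q\nu_k\varphi_k$, where the $\varphi_k$ are the spherical-harmonic components of $\bar R$; the gain comes from the cross term $-\tfrac{n-2}{2(n-1)}\int_{S_{n-1}}\tilde f\,\bar R\,\di\sigma$, which must beat the possibly positive spherical average.

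This also means you have misidentified why the theorem stops at $\omega\le 15$. It is not the sign of a universal Weyl-contraction coefficient that must be checked computationally; it is the $G$-invariance constraint. The non-equivariant Théorème \ref{theo} works for every $\omega\le(n-6)/2$ by choosing a separate optimal constant $c_k$ for each eigencomponent $\varphi_k$, but such an $f$ need not be $G$-invariant. To get a $G$-invariant test function one must take $\tilde f_i=c\,r_i^{-\omega}\nabla^\omega_g R(P_i)(\exp_{P_i}^{-1}\cdot,\dots)$ with a \emph{single} constant $c$, and the required inequality becomes a system of quadratic conditions in $c$, one per $k\le q\le[\omega/2]$, i.e. $\bigcap_{k=1}^q\,]x_k,y_k[\,\neq\varnothing$ (Lemme \ref{lemme infg}). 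This intersection is shown nonempty for $\omega\le 15$ by explicit estimates, and the paper remarks that for $\omega=16$ and $n$ large no admissible $c$ exists — so an argument of your type, which would work uniformly in $\omega$ once "the computation is done", cannot be correct. Your treatment of the second hypothesis ($\deg\bar R\ge\omega+1$) is closer to the paper: there the plain bubble does suffice, but via Aubin's Théorème \ref{aaa}, whose content is that $(-\Delta_g)^{\omega+1}R_g(P)<0$, not that the average equals $-c\|\nabla^{\omega}W_g(P)\|^2$.
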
 

\begin{corollary}\label{cor prin intro}
La conjecture \ref{HVcon intro} est vraie si $M$ est de dimension $n\in \intl 3, 37 \intr$.
\end{corollary}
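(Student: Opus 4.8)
The plan is to deduce Corollary~\ref{cor prin intro} from Theorem~\ref{con HVa intro} together with the cases of Theorem~\ref{HV theorem intro} already due to Hebey and Vaugon, the arithmetic heart of the matter being the equivalence $[(n-6)/2]\le 15\iff n\le 37$. Assume $3\le n\le 37$ and let $(M,g)$, $G$ satisfy the hypotheses of Conjecture~\ref{HVcon intro}. First I would reduce to the presence of a point of minimal \emph{finite} orbit: if $\inf_{Q\in M}\Ca O_G(Q)=+\infty$, then the right-hand side of~\eqref{HVI intro} is $+\infty$ while $\inf_{g'\in[g]^G}J(g')=4\tfrac{n-1}{n-2}\mu_G(g)$ is finite (being bounded below, as usual, through the $G$-invariant Sobolev inequality), so~\eqref{HVI intro} is trivial; otherwise, since $Q\mapsto\Ca O_G(Q)$ is lower semicontinuous on the compact manifold $M$ (a small perturbation of $Q$ can only split its orbit), the infimum is attained at some point $P$ with $\Ca O_G(P)<+\infty$.

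Fix such a point $P$ of minimal finite orbit and distinguish two cases according to $\omega(P)\in\mathbb N\cup\{+\infty\}$. If $\omega(P)\le 15$, the first alternative in the hypothesis of Theorem~\ref{con HVa intro} is satisfied at $P$ and the strict inequality~\eqref{HVI intro} follows at once. If $\omega(P)\ge 16$ — which includes the case $\omega(P)=+\infty$, e.g. when $g$ is conformally flat near $P$, so in particular when $(M,g)=(S_n,g_{can})$ with a fixpoint-free $G$ — then $n\le 37$ gives $\tfrac{n-6}{2}\le\tfrac{31}{2}<16\le\omega(P)$, hence $\omega(P)>(n-6)/2$, and case~\ref{item 3 intro} of Theorem~\ref{HV theorem intro} yields~\eqref{HVI intro}. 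Since these two ranges exhaust all values of $\omega(P)$, Conjecture~\ref{HVcon intro} holds whenever $3\le n\le 37$; for $3\le n\le 11$ one may alternatively simply invoke the case $3\le\dim M\le 11$ of Theorem~\ref{HV theorem intro}.

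This step is essentially bookkeeping and presents no genuine difficulty: all the real work sits inside Theorem~\ref{con HVa intro}, and the only point requiring a little care is checking that the two ranges $\omega(P)\le 15$ and $\omega(P)>(n-6)/2$ together cover every possible value of $\omega(P)$ once $n\le 37$. I would, however, record in the proof that $37$ is exactly the reach of this method: for $n=38$ one has $(n-6)/2=16$, and a point $P$ with $\omega(P)=16$ satisfies neither $\omega(P)\le 15$ (the hypothesis of Theorem~\ref{con HVa intro}) nor $\omega(P)>(n-6)/2$ (the hypothesis of case~\ref{item 3 intro} of Theorem~\ref{HV theorem intro}), so the case $n=38$, $\omega=16$ is precisely what remains open beyond this corollary.
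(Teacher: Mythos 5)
Your proof is correct and follows essentially the same route as the paper: reduce to a point $P$ of minimal finite orbit, then split on $\omega(P)\leq 15$ (handled by Theorem \ref{con HVa intro}) versus $\omega(P)>(n-6)/2$ (handled by case \ref{item 3 intro} of Theorem \ref{HV theorem intro}), the two ranges covering everything precisely because $n\leq 37$ gives $[(n-6)/2]\leq 15$. The extra observations (lower semicontinuity of the orbit cardinality, which is in fact automatic since a finite infimum of positive integers is attained, and the sharpness remark at $n=38$) are harmless additions not present in the paper's two-line argument.
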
 

Ce théorème se démontre en effectuant des calculs longs et délicats (introduits par T.~Aubin \cite{Aub5}). Les fonctions test $\varphi_\e$ choisies sont définies comme suit: pour un point $P$ quelconque de $M$, on pose pour tout $Q\in M$ 

\begin{gather}\label{fonction test intro}
\varphi_{\e}(Q)=(1- r^{\alp} f(\xi))u_{\e}(Q)\\
\text{avec }u_{\e}(Q)=\begin{cases}\biggl(\displaystyle\frac{\varepsilon}{r^2+\varepsilon^2}\biggr)^{\frac{n-2}{2}}-\biggl(\frac{\varepsilon}{\delta^2+\varepsilon^2}\biggr)^{\frac{n-2}{2}} &\mbox{ si }Q\in B_{P}(\delta)\label{uepsilon intro}\\
\hspace{2cm}0 &\mbox{ si }Q\in M-B_{P}(\delta)
\end{cases}
\end{gather}
où $r=d(Q,P)$ est la distance entre $P$ et $Q$. $(r,\xi^j)$ sont les coordonnées  géodésiques  de $Q$ au voisinage de $P$ et 
$B_{P}(\delta)$ est une boule géodésique de centre $P$, de rayon $\delta$, fixé suffisamment petit. $f$ est une fonction qui dépend seulement de  $\xi$ et telle que 
$\int_{S_{n-1}}f d\sigma=0$. C'est la précision sur le choix de cette fonction $f$ qui va permettre d'obtenir les résultats énoncés dans cette deuxième partie. \\

On obtient d'abord le théorème suivant: 
\begin{theorem}\label{theo intro}
Soit $(M,g)$ une variété riemannienne compacte de dimension $n$. Pour tout $P\in M$ tel que $\omega(P)\leq (n-6)/2$, il existe  $f\in C^\infty(S_{n-1})$, d'intégrale  nulle, telle que 
$$\mu(g)\leq I_g(\varphi_\e)< \frac{n(n-2)}{4}\omega_{n-1}^{2/n}$$
\end{theorem}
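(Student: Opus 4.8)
The plan is to estimate $I_g(\varphi_\e)$ by an asymptotic expansion in $\e$, the cut-off radius $\delta$ being fixed once and for all (small), and to choose the angular function $f$ so that the leading non-trivial term of this expansion is strictly negative. The inequality $\mu(g)\le I_g(\varphi_\e)$ is free: once $f$ is fixed, taking $\delta$ small enough that $1-r^{\alp}f(\xi)>0$ on $B_{P}(\delta)$ makes $\varphi_\e$ a positive function in $H_1(M)-\{0\}$, hence admissible for the infimum defining $\mu(g)$. For the upper bound I first pass to a convenient conformal gauge: since $\mu(g)$ and $I_g$ transform in the standard way under $g\mapsto u^{\N}g$, $\varphi\mapsto u^{-1}\varphi$, I may assume that the geodesic normal coordinates $(r,\xi^j)$ centred at $P$ are conformal normal coordinates, so $\det(g_{ij})=1+O(r^{K})$ near $P$ with $K$ as large as needed. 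Then $\sqrt{|g|}=1+O(r^{K})$; the scalar curvature $R_g$ vanishes at $P$, its first non-trivial jet having order $2\omega(P)+2$ and leading coefficient a quadratic expression in $\nabla^{\omega(P)}W_g(P)$; and, $r$ being the distance to $P$, $|\nabla r|_g\equiv1$, so the radial profile $u_\e$ produces no metric correction to the Dirichlet energy. The exponent $\alp$ in the test function \eqref{fonction test intro} is chosen precisely so that the Dirichlet energy of $r^{\alp}f$ enters $I_g(\varphi_\e)$ at the same order $\e^{2\omega(P)+4}$ as this leading part of $R_g$. Note that $\omega(P)\le(n-6)/2<\infty$ forces $\nabla^{\omega(P)}W_g(P)\neq0$.

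Substituting $\varphi_\e=(1-r^{\alp}f)u_\e$ into the numerator $\int_M(|\nabla\varphi_\e|_g^2+\an R_g\varphi_\e^2)\di v_g$ and the denominator $\|\varphi_\e\|_{\Nn}^2$ of $I_g(\varphi_\e)$, expanding $(1-r^{\alp}f)^k$ by the binomial formula, expanding $g^{ij}$, $R_g$, $\sqrt{|g|}$ in $r$, and rescaling $r=\e s$, one gets a power series in $\e$. The condition $\int_{S_{n-1}}f\,\di\sigma=0$ kills the lowest-order terms linear in $f$; the conformal normal coordinate gauge kills the purely metric corrections of order below $r^{2\alp}$; and the truncation at $r=\delta$ contributes only $O(\e^{n-2})$. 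One is left with
$$I_g(\varphi_\e)=\mu(S_n,g_{can})+\mathcal J(f)\,\e^{2\omega(P)+4}+o\bigl(\e^{2\omega(P)+4}\bigr)+O\bigl(\e^{n-2}\bigr),$$
and since $\omega(P)\le(n-6)/2$ we have $2\omega(P)+4\le n-2$, so the $\e^{2\omega(P)+4}$-term governs the sign. Its coefficient is a quadratic functional of $f$,
$$\mathcal J(f)=\int_{S_{n-1}}\bigl(a\,|\nabla_\xi f|^2+b\,f^2+2f\,\Phi+\Psi\bigr)\,\di\sigma,$$
where $a,b>0$ depend only on $n$ and $\omega(P)$ (the terms $a|\nabla_\xi f|^2+bf^2$ arising from the Dirichlet energy of $r^{\alp}f$), and $\Phi,\Psi\in C^\infty(S_{n-1})$ are fixed angular functions read off from the leading jet of $R_g$, hence quadratic in $\nabla^{\omega(P)}W_g(P)$. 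The choice $f=0$ gives $\mathcal J(0)=\int_{S_{n-1}}\Psi\,\di\sigma$, which for $\omega(P)=0$ is already a negative multiple of $\|W_g(P)\|^2$ — this is Aubin's argument for $n\ge7$ — but for $\omega(P)\ge1$ is a contraction of $\nabla^{\omega(P)}W_g(P)$ that need not have the favourable sign, which is exactly why the correction factor $1-r^{\alp}f$ is introduced.

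It remains to choose $f$. Since $-a\Delta_{S_{n-1}}+b$ is a positive operator, $\mathcal J$ is strictly convex on $\{f\in C^\infty(S_{n-1}):\int_{S_{n-1}}f\,\di\sigma=0\}$ and attains its minimum at the solution $f_0$ of the linear elliptic equation $(-a\Delta_{S_{n-1}}+b)f_0=-\Phi+c_0$, $c_0\in\R$; decomposing $\Phi$ and $\Psi$ in spherical harmonics, $\mathcal J(f_0)$ is computed degree by degree and equals $\int_{S_{n-1}}\Psi\,\di\sigma$ minus an explicit positive-definite quadratic expression in the harmonic components of $\Phi$. The heart of the proof is the algebraic inequality asserting that, whenever $\nabla^{\omega(P)}W_g(P)\neq0$, this positive expression strictly exceeds $\int_{S_{n-1}}\Psi\,\di\sigma$; both sides being explicit quadratic forms in the finitely many components of $\nabla^{\omega(P)}W_g(P)$, this is a finite-dimensional verification. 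Granting it, $\mathcal J(f_0)<0$, and then, taking $f=f_0$, next $\delta$ small enough that $1-r^{\alp}f_0>0$ on $B_{P}(\delta)$, and finally $\e$ small enough, we obtain
$$\mu(g)\le I_g(\varphi_\e)\le\mu(S_n,g_{can})+\tfrac12\mathcal J(f_0)\,\e^{2\omega(P)+4}<\mu(S_n,g_{can}),$$
which is the assertion of the theorem.

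The difficulty is in the execution. On the one hand, one must push the expansion of $I_g(\varphi_\e)$ far enough to identify the coefficient of $\e^{2\omega(P)+4}$ exactly — a long and delicate computation, since many a priori relevant contributions (those odd in $\xi$, those eliminated by the conformal normal coordinate gauge, those linear in $f$ with vanishing mean over $S_{n-1}$) must be checked to cancel. On the other hand, one must establish the algebraic inequality $\min_{\int f=0}\mathcal J(f)<0$, i.e. that the admissible angular corrections $f$ can always overcome the contribution $\Psi$ coming from $\nabla^{\omega(P)}W_g(P)$. Finally, the borderline case $\omega(P)=(n-6)/2$, where $\e^{2\omega(P)+4}$ and the truncation term $\e^{n-2}$ have the same order and a logarithm enters the radial integrals — exactly as in Aubin's dimension $n=6$ — requires following the $\delta$-dependence separately.
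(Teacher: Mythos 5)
Your overall scheme is the paper's: the same test function, the same expansion of $I_g(\varphi_\e)$ to order $\e^{2\omega+4}$ (with the $\log\e^{-1}$ in the borderline case $n=2\omega+6$), the same quadratic functional of $f$ on $S_{n-1}$, and the same choice of $f$ as the minimiser of that functional over mean-zero functions (the paper writes the minimiser explicitly as $f=\sum_k c_k\nu_k\varphi_k$ with $c_k=(n-2)^2/d_k$ in the eigenspace decomposition of $\bar R$, which is exactly your $f_0$). One factual slip first: you assert that the first non-trivial jet of $R_g$ at $P$ has order $2\omega+2$. It is the spherical \emph{average} $\bar\int_{S(r)}R_g\,\di\sigma_r$ that is $O(r^{2\omega+2})$ (Lemme \ref{LLLL}); pointwise $R_g=\bar R+o(r^{\mu})$ with $\mu\geq\omega$ only, and the whole mechanism rests on the case $\mu=\omega$, since the cross term $\int_M f\,u_\e^2R_g\,r^{\omega+2}\di v$ enters at order $\e^{2\omega+4}$ precisely because $R_g$ is only $O(r^{\omega})$. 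If $R_g$ really vanished to order $2\omega+2$ your $\Phi$ would be zero, the optimal $f$ would be $0$, and nothing would force $\mathcal J(0)<0$. (The case $\mu\geq\omega+1$ is handled separately in the paper by Aubin's Th\'eor\`eme \ref{aaa}, with $f=0$.)

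The genuine gap is your last step. You reduce everything to ``an algebraic inequality between two explicit quadratic forms in the components of $\nabla^{\omega}W_g(P)$, a finite-dimensional verification'' — but that verification \emph{is} the theorem, and it cannot be done by a generic comparison because the two quadratic forms do not live on the same space. The linear term $\Phi$ is built from $\bar R$, i.e.\ from the degree-$\omega$ jet of the scalar curvature, while the constant term $\Psi$ (whose integral is $\bar\int_{S(r)}r^{-2\omega-2}R_g\,\di\sigma_r$) is quadratic in the full metric perturbation $\bar g_{ij}$ of order $r^{\omega+2}$, which carries strictly more information than $\bar R=\nabla^{ij}\bar g_{ij}r^{\omega}$. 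The paper's proof needs: (i) Aubin's identity $\bar\int_{S(r)}R\,\di\sigma_r=[B/2-C/4-(1+\omega/2)^2Q]r^{2\omega+2}+o(r^{2\omega+2})$ in terms of the quadratic integrals $B,C,Q$ of $\bar g$ (Th\'eor\`eme \ref{zzz}); (ii) the splitting $\bar g_{ij}=a_{ij}+b_{ij}$ with $b_{ij}$ reconstructed from the eigenfunctions $\varphi_k$ of $\bar R$ and $\nabla^{ij}a_{ij}=0$, together with the vanishing of all cross terms (Lemmes \ref{formule abg}, \ref{mmm}); (iii) Aubin's hard theorem that $(-\Delta_g)^{\omega+1}R(P)<0$ (Th\'eor\`eme \ref{aaa}) to show the $a$-part contributes a \emph{non-positive} amount — without this the part of $\bar g$ invisible to $\bar R$, hence unreachable by any choice of $f$, could ruin the sign; and (iv) the one-variable polynomial inequality $u_k-(n-2)^2\nu_k^2/d_k<0$ (Lemme \ref{lemme poly}), whose proof uses $\omega+2\leq(n-2)/2$ in an essential way — the statement is false for $\omega$ too large relative to $n$, so no argument that ignores this constraint can succeed. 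None of these four ingredients is present or replaced in your proposal, so as written the proof is incomplete at its central point.
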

(Ce résultat généralise donc le théorème de T.~Aubin \cite{Aub}, qui correspond   à $\omega=0$ et qui démontre la conjecture \ref{Aubincon intro}, dans certains cas). La fonction $f$ de ce théorème est définie par
$$f=\sum_{k=1}^qc_k\nu_k\varphi_k$$
où  $\varphi_k$ sont des fonctions propres du Laplacien sphérique de la sphère $S_{n-1}$, $\nu_k$ sont les valeurs propres associées et $q\in \intl 1,[\frac{\omega}{2}]\intr$, les constantes $c_k$ sont données explicitement.  Si $f$ était $G-$invariante, on pouvait construire, à l'aide des $\varphi_k$, des fonctions test $G-$invariantes qui permettraient de démontrer la conjecture \ref{HVcon intro} dans tous les cas. Malheureusement, $f$ n'est $G-$invariante que pour un choix particulier des $c_k$, et ce choix particulier ne permet de montrer la conjecture que dans les cas énoncés dans le théorème  \ref{con HVa intro}.

\chapterstar{Introduction (English version)}
\markboth{INTRODUCTION}{INTRODUCTION}

In the first part of this thesis, we study a certain kind of nonlinear partial differential equations on compact manifolds. Solutions of these PDEs have a geometric meaning. The particularity here is that one of the coefficients of this equations doesn't have the usual regularity, which allow us to obtain a Yamabe theorem with singularities.\\
The Second part is dedicated to the study of Hebey--Vaugon conjecture.

\subsection*{First part}   

Consider $(M,g)$ a compact Riemannian manifold of dimension $n\geq 3$. Denote by $R_g$ the scalar curvature of $g$. The Yamabe problem is the following:

\begin{probleme}\label{yamabe problem introe}
Does there exists a constant scalar curvature metric conformal to $g$?
\end{probleme}
Let $\gt=\varphi^\N g$ be a conformal metric, where $\varphi$ is a smooth positive function. $\gt$ is a solution of the Yamabe problem if and only if $\varphi$ satisfies the following equation:
\begin{equation}\label{yamabeie}
\frac{4(n-1)}{n-2}\Delta_g \varphi+ R_g\varphi=  R_{\gt} \varphi^{\frac{n+2}{n-2}}
\end{equation} 
where $\Delta_g=-\nabla^i\nabla_i$ is the Laplacian of $g$ and $R_{\gt}$ is a constant which plays the role of the scalar curvature of $\gt$. T.~Aubin showed that it is sufficient to prove the following conjecture: 
\begin{conjecturee}[T.~Aubin \cite{Aub}]\label{Aubincon introe}
For every smooth compact Riemannian manifold $(M,g)$ of dimension $n\geq 3$, non conformal to $(S_n,g_{can})$, 
 \begin{equation}\label{caue}
 \mu(M,g)<\mu(S_n,g_{can})
\end{equation}
where $\mu(M,g)=\inf\biggl\{\displaystyle\frac{\int_M |\nabla\psi|^2+\frac{n-2}{4(n-1)}R_g\psi^2\di v}{\|\psi\|^2_{\frac{2n}{n-2}}},\; \psi\in H_1(M)-\{0\}\biggr\}$
\end{conjecturee}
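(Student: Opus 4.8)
The plan is to reduce the statement to the construction, for a well-chosen point $P\in M$, of a test function $\psi\in H_1(M)\setminus\{0\}$ with $I_g(\psi)<\mu(S_n,g_{can})=\frac14 n(n-2)\omega_n^{2/n}$, where $I_g(\psi)=\frac{\int_M|\nabla\psi|^2+\an R_g\psi^2\,\di v}{\|\psi\|^2_{\Nn}}$. I would first record two standard facts. The number $\mu(M,g)$ depends only on $[g]$, so $g$ may be replaced within its conformal class by any convenient representative; by Aubin's conformal normal coordinates, for each positive integer $k$ one can choose the representative so that in geodesic coordinates at $P$ one has $\det(g_{ij})=1+O(r^k)$, hence $R_g(P)=0$, $\nabla R_g(P)=0$, and $\Delta R_g(P)$ equals a fixed nonzero multiple of $\|W_g(P)\|^2$. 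Secondly, $\mu(M,g)\le\mu(S_n,g_{can})$ always, seen by feeding into $I_g$ the truncated bubbles $u_\e(Q)=\bigl(\frac{\e}{r^2+\e^2}\bigr)^{(n-2)/2}$ cut off outside a small geodesic ball at $P$: as $\e\to0$ the functional concentrates at $P$ and its limit is the sharp Euclidean Sobolev quotient, equal to $\mu(S_n,g_{can})$. Since $\mu(S_n,g_{can})>0$, I may also assume $\mu(M,g)>0$, otherwise there is nothing to prove; then $L_g=\Delta_g+\an R_g$ is coercive, has positive first eigenvalue, and admits a positive Green's function $G_P$ at each $P$. Everything reduces to upgrading $\mu(M,g)\le\mu(S_n,g_{can})$ to a \emph{strict} inequality, i.e.\ to computing the first nonvanishing correction of $I_g$ along a concentrating family and checking its sign.

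I would split into two regimes. \textbf{Case A: $n\ge6$ and $(M,g)$ is not locally conformally flat.} Choose $P$ with $W_g(P)\ne0$, work in conformal normal coordinates there, and expand $I_g(u_\e)$ in $\e$: the denominator $\|u_\e\|^2_{\Nn}$ has only even powers of $\e$, the Dirichlet term $\int_M|\nabla u_\e|^2\,\di v$ reproduces $K(n,2)^{-2}\|u_\e\|^2_{\Nn}$ up to corrections, and the curvature term $\int_M\an R_g u_\e^2\,\di v$, governed by $\Delta R_g(P)\sim\|W_g(P)\|^2$, produces a contribution of order $-c_n\|W_g(P)\|^2\e^4$ (with an additional factor $|\log\e|$ when $n=6$), $c_n>0$. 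Collecting, $I_g(u_\e)=\mu(S_n,g_{can})\bigl(1-c_n'\|W_g(P)\|^2\e^4+o(\e^4)\bigr)<\mu(S_n,g_{can})$ for $\e$ small. \textbf{Case B: $3\le n\le5$, or $(M,g)$ locally conformally flat.} Now the bubbles only give equality in the limit, so I would use a test function built from $G_P$: with conformal normal coordinates at a fixed $P$, $G_P(x)=r^{2-n}+A+O(r)$ near $P$ (the intermediate terms being suppressed by the gauge), with $A=A(P)\in\R$ a conformal invariant of $(M,g,P)$. Take $\psi_\e$ to be $u_\e$ on a small ball $B_P(\rho)$, equal to $c\,\e^{(n-2)/2}G_P$ on $M\setminus B_P(2\rho)$, interpolated continuously on the annulus with $c$ chosen to match; an integration by parts using $L_gG_P=0$ on $M\setminus\{P\}$ then gives $I_g(\psi_\e)=\mu(S_n,g_{can})\bigl(1-c_nA\,\e^{n-2}+o(\e^{n-2})\bigr)$, $c_n>0$, so strict inequality holds once $A(P)>0$.

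The crux --- the step that resisted Aubin and was settled by R.~Schoen --- is the positivity $A(P)>0$ whenever $(M,g)$ is not conformally diffeomorphic to $(S_n,g_{can})$. This is a global fact, not a local expansion: the metric $\hat g=G_P^{\N}g$ on $M\setminus\{P\}$ is scalar-flat (since $L_gG_P=0$), and in the inverted coordinate $y=x/|x|^2$ it is asymptotically Euclidean with $\hat g_{ij}=\bigl(1+c_nA\,|y|^{2-n}+o(|y|^{2-n})\bigr)\delta_{ij}+(\text{lower order})$, so its ADM mass is a positive multiple of $A(P)$. The positive mass theorem of Schoen--Yau then yields $A(P)\ge0$, with equality only if $\hat g$ is flat $\R^n$, equivalently only if $(M,g)$ is conformal to the round sphere. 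I would invoke this theorem as the single deep external input; the rest is the delicate but essentially mechanical asymptotic bookkeeping in the two expansions --- in Case A checking that the conformal normal gauge really does push every lower-order term below the Weyl term, and in Case B checking that the gluing errors on the annulus are $o(\e^{n-2})$.
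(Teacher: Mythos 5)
Your proposal is correct and follows essentially the same route as the paper: the dichotomy between Aubin's bubble test functions in conformal normal coordinates for $n\geq 6$ non--locally conformally flat (exploiting $\Delta R_g(P)\sim \|W_g(P)\|^2$ to produce the negative $\varepsilon^4$ correction) and Schoen's Green's-function test function together with the positive mass theorem for $n\leq 5$ or the locally conformally flat case is exactly the argument the paper reproduces in the proof of Theorem \ref{conj aub}, down to the choice of gauge ($\det g=1$ via Cao--G\"unther) and the expansion $G_P=r^{2-n}+A+O(r)$ with $A>0$ off the sphere.
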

It is known that $\mu(S_n,g_{can})=\frac{1}{4}n(n-2)\omega_n^{2/n}$. The works of T.~Aubin \cite{Aub}, R.~Schoen \cite{Schoen} and H.~Yamabe \cite{Yam} showed that this conjecture is always true, and the Yamabe problem has a solution. Namely, in each conformal class $[g]$, there exists a constant scalar~curvature~metric.\\

Denote by $I(M,g)$ and $C(M,g)$ the isometry group and the conformal group respectively. Let $G$ be a subgroup of $I(M,g)$. E.~Hebey and  M.~Vaugon \cite{HV} studied the equivariant Yamabe problem, which generalizes the Yamabe problem, and which can be formulated in the following way:
\begin{probleme}\label{HVProbleme introe}
Is there some $G-$invariant metric $g_0$ which minimizes the functional
$$J(g')=\frac{\int_MR_{g'}\di v(g')}{(\int_M\di v(g'))^{\frac{n-2}{n}}}$$
where $g'$ belongs to the $G-$conformal class of $g$:
$$[g]^G:=\{\tilde g=e^fg/ f\in C^\infty(M),\; \sigma^*\tilde g=\tilde g\quad \forall \sigma\in G\}$$
\end{probleme} 
E.~Hebey and  M.~Vaugon proved that this problem has always solutions. The positive answer would have two consequences. The first is that there exists a $I(M,g)-$invariant metric $g_0$ conformal to $g$ such that the scalar curvature $R_{g_0}$ is constant. The second is that the following conjecture is true.

\paragraph{\textbf{Lichnerowicz conjecture}}\emph{For every compact Riemannian manifold $(M,g)$ which is not conformal to the unit sphere $S_n$ endowed with its standard metric, there exists a metric $\tilde g$ conformal to $g$ for which $I(M,\tilde g)=C(M,g)$, and the scalar curvature $R_{\tilde g}$ is constant.}\\

In this part, we study the Yamabe problem \ref{yamabe problem introe} (without and in presence of the isometry group), when the metric $g$ is not necessarily smooth. We suppose that the metric is in the Sobolev space $H^p_2$, where $p>n$. Riemann curvature tensor, Ricci tensor and the scalar curvature are in $L^p$. More precisely, we make the following assumption on $g$ :\\

\textbf{Assumption $\boldsymbol{(H)}$:} \emph{$g$ is a metric which belongs to the Sobolev space $H_2^p(M,T^*M\otimes T^*M)$ with $p>n$. There exists a point $P_0\in M$ and $\delta>0$ such that $g$ is smooth in the ball $B_{P_0}(\delta)$.}\\

The problem that we solve is the following:

\begin{probleme}\label{yam singe}
 Let $g$ be a metric satisfying the assumption $(H)$. Does there exists a constant scalar curvature metric $\tilde g$ conformal to $g$?
\end{probleme}
Before  solving this problem, we start by studying these equations:

\begin{equation}\label{dfgfge}
 \Delta_g\varphi+h\varphi=\tilde h\varphi^{\frac{n+2}{n-2}}
\end{equation}
where $h$ is a function in $L^p(M)$ (which makes this work original) and $\tilde h\in \mathbb R$. The metric $g$ is assumed to be smooth. The smoothness of $g$ is not an important point. Indeed, if $g$ is $C^2$, we will obtain the same results. This kind of equations are called "Yamabe type equations". We can not apply for these equations the standard regularity theorems  because of the nonlinearity and the fact that $h\in L^p(M)$. Thus, we establish the following result (it is an adaptation of Trudinger's theorem when $h$ is more regular).
\begin{theoreme}\label{regYS introe}
Let $(M,g)$ be a smooth compact Riemannian manifold of dimension $n\geq 3$, $p$ and $\tilde h$  are two reel numbers such that $p>n/2$. If $\varphi\in H_1(M)$ is  nontrivial, nonnegative, weak solution of  \eqref{dfgfge}, then $\varphi$ is positive and belongs to $ H_2^p(M)\subset C^{1-[n/p],\beta}(M)$.
\end{theoreme}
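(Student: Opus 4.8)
\emph{Plan of proof.} Write the equation as $\Delta_g\varphi = f$ with $f = \tilde h\,\varphi^{\frac{n+2}{n-2}} - h\varphi$, and argue by a bootstrap in the spirit of Trudinger \cite{Trud}, in three stages followed by the positivity statement. The guiding difficulty is that the nonlinearity carries \emph{exactly} the critical Sobolev exponent, so a naive iteration does not close; the device that saves the day is a truncation combined with the absolute continuity of $\int_{\{\varphi>k\}}\varphi^{\Nn}\di v\to 0$.

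\emph{Stage 1: $\varphi\in L^q(M)$ for every finite $q$.} Starting from $\varphi\in H_1(M)\subset L^{\Nn}(M)$, I would test the weak formulation against the truncated functions $\psi_L=\varphi\,\varphi_L^{2(s-1)}$ with $\varphi_L=\min(\varphi,L)$ and $s>1$ close to $1$, which lie in $H_1(M)\cap L^\infty(M)$ and are therefore admissible. After integrating by parts, the left-hand side dominates a multiple of $\int|\nabla(\varphi\varphi_L^{s-1})|^2$, hence, by the Sobolev inequality, a multiple of $\|\varphi\varphi_L^{s-1}\|_{\Nn}^2$; on the right appear $\int|h|\,\varphi^2\varphi_L^{2(s-1)}$ and $|\tilde h|\int\varphi^{\Nn}\varphi_L^{2(s-1)}$. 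Since $\varphi_L\le\varphi$, the term with $h$ is bounded by $\int|h|\,\varphi^{2s}\le\|h\|_p\,\|\varphi\|_{2sp'}^{2s}$ with $p^{-1}+p'^{-1}=1$, finite for $s$ small since $p>n/2$ forces $p'<n/(n-2)$, leaving room for $2sp'\le\Nn$. For the critical term I split over $\{\varphi\le k\}$ and $\{\varphi>k\}$: on the first set it is $\le k^{\frac4{n-2}}\int\varphi^{2s}$, finite for $2s\le\Nn$; on the second, Hölder with exponents $\frac n2,\frac n{n-2}$ gives the bound $\big(\int_{\{\varphi>k\}}\varphi^{\Nn}\big)^{2/n}\|\varphi\varphi_L^{s-1}\|_{\Nn}^2$, and choosing $k$ large makes the prefactor smaller than half the Sobolev constant, so this piece is absorbed into the left-hand side. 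Letting $L\to\infty$ yields $\varphi\in L^{s\Nn}$; iterating, with the admissible range of $s$ widening at each step as $\varphi$ enters larger $L^q$ spaces, gives $\varphi\in L^q(M)$ for every $q<\infty$.

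\emph{Stages 2 and 3: Sobolev bootstrap.} Once $\varphi\in L^q(M)$ for all $q<\infty$, one has $\varphi^{\frac{n+2}{n-2}}\in L^q$ for all $q$, and by Hölder $h\varphi\in L^r(M)$ for every $r<p$ (take $\varphi\in L^{rp/(p-r)}$); hence $f\in L^r(M)$ for every $r<p$. The elliptic $L^r$-estimate for $\Delta_g$ on the smooth compact manifold $(M,g)$ gives $\varphi\in H_2^r(M)$ for all $r<p$. Picking $r$ with $n/2<r<p$ (possible since $p>n/2$), the Sobolev embedding $H_2^r(M)\subset C^{0,\beta}(M)$ yields $\varphi\in L^\infty(M)$. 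Then $h\varphi\in L^p(M)$ and $\varphi^{\frac{n+2}{n-2}}\in L^p(M)$, so $f\in L^p(M)$, and the elliptic estimate upgrades this to $\varphi\in H_2^p(M)\subset C^{1-[n/p],\beta}(M)$, the asserted regularity.

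\emph{Positivity.} The function $\varphi$ is now continuous, nonnegative and $\not\equiv0$, and solves $\Delta_g\varphi + a\varphi = 0$ with $a=h-\tilde h\,\varphi^{\frac4{n-2}}\in L^p(M)$, $p>n/2$; by the weak Harnack inequality for such equations a nonnegative solution cannot vanish at a point without vanishing identically, so $\varphi>0$ on $M$. The main obstacle, as noted, is Stage 1 — the critical-exponent iteration — and the absorption trick above is precisely what makes it go through; the remaining stages are routine elliptic bootstrapping.
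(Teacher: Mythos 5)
Your argument is correct, and it is the same Trudinger-type scheme as the paper's proof of Theorem \ref{regYS}: a Moser truncation to gain integrability beyond the critical exponent, with the critical term absorbed into the Sobolev-controlled left-hand side, followed by elliptic bootstrapping and a Harnack-type positivity statement. The two implementations differ in two respects worth recording. First, the absorption mechanism: you make the prefactor of the critical term small by truncating in the \emph{range} (using $\int_{\{\varphi>k\}}\varphi^{2n/(n-2)}\,\di v\to 0$ as $k\to\infty$), whereas the paper localizes in \emph{space}, multiplying the test function by a cutoff $\eta$ supported in a small ball and using $\|\varphi\|_{N,2\delta}^{2/(n-2)}\le 1/(2c\sqrt{N|\tilde h|})$ for $\delta$ small, then recovering the global estimate by a partition of unity; both devices are standard and equivalent in strength. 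Second, the stopping point: the paper proves only that $\varphi\in L^{qN}$ for a single $q>1$, which suffices to place $h-\tilde h\varphi^{4/(n-2)}$ in $L^r$ with $r>n/2$ and then invokes the Serrin--Trudinger result (Theorem \ref{gef}) to get boundedness and strict positivity in one stroke; you instead iterate to $\varphi\in L^q$ for every finite $q$ and bootstrap the regularity directly, calling on the weak Harnack inequality only for positivity. Your route is marginally more self-contained (it uses Theorem \ref{gef} only through its Harnack component), at the cost of running the critical-exponent iteration infinitely many times rather than once; the paper's route is shorter once Theorem \ref{gef} is taken as given. One cosmetic slip: your test functions $\varphi\,\varphi_L^{2(s-1)}$ are in $H_1(M)$ but not in $L^\infty(M)$ (they grow linearly in $\varphi$ on $\{\varphi>L\}$); admissibility nevertheless holds because all integrals in the weak formulation are finite for them, so nothing is lost.
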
 

For the existence of solutions of \eqref{dfgfge}, we prove that the functional $I_g$, defined for all $\psi\in H_1(M)-\{0\}$ by 
\begin{equation*}
I_g(\psi)=\frac{\int_M |\nabla\psi|^2+h\psi^2\di v}{\|\psi\|^2_{\frac{2n}{n-2}}}\label{cae} 
\end{equation*}
has a minimum $\mu(g)$ if $\mu(g)<\frac{1}{4}n(n-2)\omega_n^{2/n}$ (where $\omega_n$ is the volume of the unit sphere $S_n$). Therefore, we obtain the following: 

\begin{theoreme}\label{cg introe}
Let $(M,g)$ be a smooth compact Riemannian manifold of dimension $n\geq 3$ and $p>n/2$. If 
$$\mu(g)<\frac{1}{4}n(n-2)\omega_n^{2/n}$$ 
then equation \eqref{dfgfge} admits a positive solution $\varphi\in H^p_2(M)\subset C^{1-[n/p],\beta}(M)$, which minimizes the functional $I_g$, where $\beta\in (0,1)$.
\end{theoreme}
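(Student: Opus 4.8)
The plan is to obtain the minimizer directly, by minimizing the numerator of $I_g$ on the constraint $\{\psi\in H_1(M):\|\psi\|_{\frac{2n}{n-2}}=1\}$, and to rule out loss of compactness using the strict inequality. The one technical fact that makes the only non-classical ingredient, $h\in L^p$, harmless is that $p>n/2$ is equivalent to $\tfrac{2p}{p-1}<N:=\tfrac{2n}{n-2}$; hence $H_1(M)\hookrightarrow L^{2p/(p-1)}(M)$ is \emph{compact}, and for every $\eta>0$ there is $C(\eta)$ with $\|\psi\|_{2p/(p-1)}^2\le\eta\|\nabla\psi\|_2^2+C(\eta)\|\psi\|_2^2$. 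Combined with H\"older, $\bigl|\int_M h\psi^2\,\di v\bigr|\le\|h\|_p\|\psi\|_{2p/(p-1)}^2$, this shows that $\psi\mapsto\int_M h\psi^2\,\di v$ is a form-bounded lower-order perturbation of the Dirichlet energy and is sequentially continuous for the weak $H_1$-topology; in particular $\mu(g)$ is finite and any sequence $(\psi_k)$ with $\|\psi_k\|_N=1$ and $\int_M|\nabla\psi_k|^2+h\psi_k^2\,\di v\to\mu(g)$ is bounded in $H_1(M)$. (One may equally well run Aubin's subcritical approximation, replacing $N$ by $q\uparrow N$ throughout; the analysis below is unchanged.)

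So take such a minimizing sequence; replacing $\psi_k$ by $|\psi_k|$ we may assume $\psi_k\ge0$. Passing to a subsequence, $\psi_k\rightharpoonup\psi_*$ in $H_1(M)$, $\psi_k\to\psi_*$ in $L^2$, in $L^{2p/(p-1)}$ and a.e.; so $\psi_*\ge0$ and $\int_M h\psi_k^2\,\di v\to\int_M h\psi_*^2\,\di v$. By the Brezis--Lieb lemma, $\|\psi_k\|_N^N=\|\psi_*\|_N^N+\|\psi_k-\psi_*\|_N^N+o(1)$, and since $\psi_k-\psi_*\rightharpoonup0$, $\|\nabla\psi_k\|_2^2=\|\nabla\psi_*\|_2^2+\|\nabla(\psi_k-\psi_*)\|_2^2+o(1)$. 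After a further extraction, set $a=\|\psi_*\|_N$ and $b=\lim_k\|\psi_k-\psi_*\|_N$, so that $a^N+b^N=1$.

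The heart of the proof is to show $a=1$ and $b=0$. First, $\psi_*\not\equiv0$: if $\psi_*\equiv0$ then $\int_M h\psi_k^2\,\di v\to0$, so $\|\nabla\psi_k\|_2^2\to\mu(g)$, which forces $\mu(g)\ge0$, and the best-constant Sobolev inequality $\|u\|_N^2\le(K(n,2)^2+\eta)\|\nabla u\|_2^2+C(\eta)\|u\|_2^2$ applied to $\psi_k$ (with $\eta\to0$) gives $1=\lim_k\|\psi_k\|_N^2\le K(n,2)^2\mu(g)$, i.e.\ $\mu(g)\ge K(n,2)^{-2}=\tfrac14 n(n-2)\omega_n^{2/n}$, contradicting the hypothesis. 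Thus $a>0$. Next, the same Sobolev inequality applied to $\psi_k-\psi_*$ (whose $L^2$-norm tends to $0$), with $\eta\to0$, gives $\liminf_k\|\nabla(\psi_k-\psi_*)\|_2^2\ge K(n,2)^{-2}b^2$, while the definition of $\mu(g)$ gives $\|\nabla\psi_*\|_2^2+\int_M h\psi_*^2\,\di v\ge\mu(g)a^2$. Substituting both into $\mu(g)=\lim_k\bigl(\|\nabla\psi_k\|_2^2+\int_M h\psi_k^2\,\di v\bigr)$ yields $\mu(g)\ge\mu(g)a^2+K(n,2)^{-2}b^2$; since $K(n,2)^{-2}>\mu(g)$ and $a^N+b^N=1$ with $N>2$, an elementary case analysis on the sign of $\mu(g)$ forces $a=1$, $b=0$. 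Hence $\psi_*$ realizes the infimum: $\|\psi_*\|_N=1$ and $\int_M|\nabla\psi_*|^2+h\psi_*^2\,\di v=\mu(g)$, so $\psi_*\ge0$ is a nontrivial weak solution of $\Delta_g\psi_*+h\psi_*=\mu(g)\psi_*^{(n+2)/(n-2)}$, i.e.\ of \eqref{dfgfge} with $\tilde h=\mu(g)$. Theorem~\ref{regYS introe} then upgrades $\psi_*$ to a \emph{positive} function in $H_2^p(M)\subset C^{1-[n/p],\beta}(M)$ with $\beta\in(0,1)$, and it minimizes $I_g$ by construction (replacing $\psi_*$ by $c\psi_*$ produces a positive solution of \eqref{dfgfge} for any prescribed $\tilde h$ of the sign of $\mu(g)$), which proves the theorem.

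I expect the decisive difficulty to be the concentration-compactness step of the third paragraph: excluding that the minimizing sequence vanishes weakly or loses part of its $L^N$-mass by concentrating at a point. This is precisely where the strict inequality $\mu(g)<\mu(S_n,g_{can})=\tfrac14 n(n-2)\omega_n^{2/n}$ and the exact value $K(n,2)^{-2}=\tfrac14 n(n-2)\omega_n^{2/n}$ of the best Sobolev constant are indispensable; the verification that $h\in L^p$ with $p>n/2$ does not spoil the classical scheme, and the passage from a nontrivial nonnegative weak solution to a positive $C^{1-[n/p],\beta}$ one, are then, respectively, soft functional analysis and a direct appeal to Theorem~\ref{regYS introe}.
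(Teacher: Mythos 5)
Your argument is correct and follows essentially the same route as the paper's proof of this theorem: a minimizing sequence at the critical exponent, weak convergence plus the compact embedding $H_1(M)\hookrightarrow L^{2p/(p-1)}(M)$ (valid precisely because $p>n/2$ gives $2p/(p-1)<N$) to handle the $L^p$ coefficient $h$, the Brezis--Lieb lemma together with the sharp Sobolev inequality and the strict bound $\mu(g)<K^{-2}(n,2)$ to exclude loss of mass, and finally Theorem~\ref{regYS introe} for regularity and positivity. Your explicit case analysis on the sign of $\mu(g)$ in the dichotomy $\mu(g)\ge\mu(g)a^2+K^{-2}(n,2)b^2$ is a slightly more careful rendering of the paper's step $[1-\mu(g)(K^2(n,2)+\varepsilon)]\|\nabla\psi_i\|_2^2\le o(1)$, but the substance is identical.
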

If $h$ is $G-$invariant, we define
$$\mu_{G}(g)=\inf_{\psi\in H_{1,G}(M)-\{0\}} I_g(\psi)$$
where $H_{1,G}(M)$ is the space of $G-$invariant functions in $H_1(M)$. We denote by $O_G(Q)$ the orbit of $Q\in M$. Then,
\begin{theoreme}\label{theginv introe}
If $0<\mu_{G}(g)<\frac{1}{4}n(n-2)\omega_n^{2/n}(\inf_{Q\in M}\Ca O_G(Q))^{2/n}$ then equation \eqref{dfgfge}  admits a positive $G-$invariant solution $\varphi\in H_{2,G}^p(M)\subset C^{1-[n/p],\beta}(M)$, which minimizes the functional $I_{g}$.
\end{theoreme}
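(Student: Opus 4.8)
The plan is to prove Theorem~\ref{theginv introe} by the direct method of the calculus of variations on the closed subspace $H_{1,G}(M)\subset H_1(M)$, along the lines of the classical variational treatment of the Yamabe problem (and of Theorem~\ref{cg introe}), but with the Euclidean Sobolev inequality replaced by its $G$-invariant sharpening. Throughout set $k=\inf_{Q\in M}\Ca O_G(Q)$ and $\mu^{\ast}=\frac14 n(n-2)\omega_n^{2/n}k^{2/n}$, so that the sharp $G$-invariant Sobolev constant is $K(n,2)^{2}k^{-2/n}=1/\mu^{\ast}$ and the hypothesis reads $0<\mu_G(g)<\mu^{\ast}$; write $N=\frac{2n}{n-2}$. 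First I would note that $I_g$ is bounded below on $\{\psi\in H_{1,G}(M):\|\psi\|_N=1\}$ and that any minimizing sequence is bounded in $H_1$: since $p>n/2$ we have $2p/(p-1)<N$, so Hölder's inequality together with the Sobolev inequality and interpolation gives, for each $\epsilon>0$, a constant $C_\epsilon$ with $\bigl|\int_M h\psi^2\,\di v\bigr|\le\epsilon\|\nabla\psi\|_2^2+C_\epsilon\|\psi\|_N^2$, whence $I_g(\psi)\ge(1-\epsilon)\|\nabla\psi\|_2^2-C_\epsilon$ on the unit sphere. Pick $\psi_i\in H_{1,G}$ with $\|\psi_i\|_N=1$, $I_g(\psi_i)\to\mu_G(g)$; after extraction $\psi_i\rightharpoonup\varphi$ in $H_1$, so $\varphi\in H_{1,G}$ (closedness), $\psi_i\to\varphi$ in every $L^q$, $q<N$, by Rellich--Kondrachov (and a.e. along a subsequence), hence $\int_M h\psi_i^2\to\int_M h\varphi^2$ (take $q=2p/(p-1)$, use $h\in L^p$), $\|\nabla\psi_i\|_2^2=\|\nabla\varphi\|_2^2+\|\nabla\rho_i\|_2^2+o(1)$ with $\rho_i:=\psi_i-\varphi\rightharpoonup0$, and $\|\psi_i\|_N^N=\|\varphi\|_N^N+\|\rho_i\|_N^N+o(1)$ by the Brezis--Lieb lemma.

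The heart of the argument is the following splitting. Put $a=\|\varphi\|_N^N$ and, after a further extraction, $b=\lim_i\|\rho_i\|_N^N$, so $a+b=1$. Using the sharp $G$-invariant Sobolev inequality of Z.~Faget together with the $G$-invariant Sobolev embeddings of Hebey--Vaugon, for every $\epsilon>0$ there is $A_\epsilon$ with $\|\rho\|_N^2\le(K(n,2)^{2}k^{-2/n}+\epsilon)\|\nabla\rho\|_2^2+A_\epsilon\|\rho\|_2^2$ for all $\rho\in H_{1,G}$; applied to $\rho_i$, using $\|\rho_i\|_2\to0$ and then $\epsilon\to0$, this gives $\lim_i\|\nabla\rho_i\|_2^2\ge\mu^{\ast}b^{2/N}$. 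Moreover $\|\nabla\varphi\|_2^2+\int_M h\varphi^2=I_g(\varphi)\,\|\varphi\|_N^2\ge\mu_G(g)\,a^{2/N}$ when $\varphi\neq0$, and this term equals $0=\mu_G(g)\,a^{2/N}$ when $\varphi=0$. Passing to the limit in $I_g(\psi_i)=\|\nabla\psi_i\|_2^2+\int_M h\psi_i^2$ yields
\[
\mu_G(g)\ \ge\ \mu_G(g)\,a^{2/N}+\mu^{\ast}\,b^{2/N}.
\]
By subadditivity of $t\mapsto t^{2/N}$ on $[0,\infty)$ one has $b^{2/N}\ge1-a^{2/N}$, so $(\mu_G(g)-\mu^{\ast})(1-a^{2/N})\ge0$; since $\mu_G(g)<\mu^{\ast}$ this forces $a^{2/N}\ge1$, i.e.\ $a=1$ and $b=0$. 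Hence $\psi_i\to\varphi$ in $L^N$, $\|\varphi\|_N=1$, $\varphi\neq0$, and tracing back the equalities gives $I_g(\varphi)=\mu_G(g)$: $\varphi$ is a $G$-invariant minimizer.

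To conclude, replace $\varphi$ by $|\varphi|$ (still $G$-invariant, $I_g$ unchanged), so $\varphi\ge0$. The Euler--Lagrange equation of the constrained minimum, a priori valid against test functions in $H_{1,G}$, holds against all of $H_1$ by the principle of symmetric criticality (equivalently, by averaging test functions over the compact group $G$, since $g$ and $h$ are $G$-invariant); thus $\varphi$ is a nonnegative nontrivial weak solution of $\Delta_g\varphi+h\varphi=\mu_G(g)\,\varphi^{(n+2)/(n-2)}$, which is \eqref{dfgfge} with $\tilde h=\mu_G(g)>0$. By Theorem~\ref{regYS introe} (here $p>n/2$), $\varphi\in H_2^p(M)\subset C^{1-[n/p],\beta}(M)$ and $\varphi>0$; being $G$-invariant, $\varphi\in H^p_{2,G}(M)$, which proves the theorem.

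I expect the only genuinely delicate point to be the concentration-compactness accounting leading to the displayed inequality, namely the correct exploitation of the sharp $G$-invariant constant $K(n,2)^{2}k^{-2/n}$ (Faget) and of the symmetric Sobolev embeddings (Hebey--Vaugon) to annihilate the mass $b$ that could escape into a concentration profile; this is exactly where the quantitative hypothesis $\mu_G(g)<\mu^{\ast}$ is consumed. The remaining ingredients — the lower bound on $I_g$, the Brezis--Lieb splitting, the symmetric-criticality step, and the final appeal to Theorem~\ref{regYS introe} — are routine.
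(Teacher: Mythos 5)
Your proof is correct, but it takes a genuinely different route from the paper's at the one point where the argument is delicate. The paper does \emph{not} minimize $I_g$ directly over $H_{1,G}(M)$ at the critical exponent: it first solves the subcritical equations $\Delta_g\psi+h\psi=\mu_{q,G}(g)\psi^{q-1}$, $q<N$ (Proposition \ref{eysc}), where compactness of $H_{1,G}\subset L^q$ gives a minimizer, and then lets $q\to N$, using Faget's inequality only to show the weak limit is nontrivial. The reason for this detour is precisely the step you dispatch by symmetric criticality: the paper upgrades the Euler--Lagrange identity from $G$-invariant test functions to all of $H_1(M)$ by invoking invertibility of $L=\Delta_g+h$ (Proposition \ref{delta fu}) to produce the unique solution of $Lu=\mu\varphi_q^{q-1}$, which is $G$-invariant by uniqueness and must coincide with $\varphi_q$; this needs $\varphi_q^{q-1}\in L^s$ with $s>2n/(n+2)$, which holds subcritically but fails a priori at $q=N$ — the paper's remark after Theorem \ref{theginv} explicitly records that the direct method gets stuck there. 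Your alternative — average an arbitrary test function over the (closure of the) group with respect to Haar measure, and use that the linear functional $\psi\mapsto\int_M\nabla\varphi\cdot\nabla\psi+h\varphi\psi-\mu_G(g)\varphi^{N-1}\psi\,\di v$ is bounded on $H_1(M)$ and invariant under $\psi\mapsto\sigma^*\psi$ because $g$, $h$ and $\varphi$ are $G$-invariant — is valid and removes the obstruction entirely, at the price of having to justify that one may replace $G$ by its closure in the compact Lie group $I(M,g)$ (harmless, since $H_{1,G}=H_{1,\overline G}$) and that the Bochner average lands in $H_{1,G}$. Your concentration--compactness bookkeeping ($\mu_G(g)\ge\mu_G(g)a^{2/N}+\mu^{\ast}b^{2/N}$ with $a+b=1$ and subadditivity of $t^{2/N}$) is a correct, slightly repackaged version of the splitting used in the paper's proof of Theorem \ref{cg}, with the Aubin--Talenti constant replaced by Faget's constant from Theorem \ref{embHV}; the final appeal to Theorem~\ref{regYS introe} is the same. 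What the paper's longer route buys is reuse of the subcritical solutions elsewhere (e.g.\ in the construction of the Green function); what yours buys is a shorter proof that also yields the minimizing property of $\varphi$ immediately rather than by the a posteriori argument at the end of the paper's proof.
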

We prove this theorem by using the variational method (known in the classical case when $h$ is smooth), Sobolev embedding in the presence of symmetries, proven by E.~Hebey and M.~Vaugon \cite{HV2} and the best constant inequality, computed by Z.~Faget \cite{Fag}.\\

In chapter \ref{CHYS}, we consider the particular case when $h=\frac{n-2}{4(n-1)}R_g$ and the metric $g$ satisfies the assumption $(H)$. This case has a geometric meaning. It allows us to solve the problem \ref{yam singe} (Yamabe problem with singularities). The scalar curvature $R_g$ is in $L^p(M)$ and equation \eqref{dfgfge} becomes the Yamabe equation \eqref{yamabeie}. Using theorem \ref{cg intro} to solve problem \ref{yam singe}, it is sufficient to prove the inequality $\mu(g)<\frac{1}{4}n(n-2)\omega_n^{2/n}$ (this inequality has been proven when $g$ is smooth). When $g$ satisfies the assumption $(H)$, we establish some properties (known in the smooth case) : conformal invariance of $\mu(g)$, weak conformal invariance of the conformal Laplacian $L_g=\Delta_g+\frac{n-2}{4(n-1)}R_g$ and the existence of the Green function for this operator. We show afterwards the following theorem : 
\begin{theoreme}\label{conj aub introe}
 Let $M$ be a smooth compact manifold of dimension $n\geq 3$ and $g$ be a Riemannian metric satisfying the assumption $(H)$. If $(M,g)$ is not conformal to  $(S_n, g_{can})$, then $\mu(g)<\frac{1}{4}n(n-2)\omega_n^{2/n}$.
\end{theoreme}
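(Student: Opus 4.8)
The plan is to follow the classical strategy of Aubin \cite{Aub} and Schoen \cite{Schoen}, exhibiting a family of test functions $\varphi_\e\in H_1(M)$ with $I_g(\varphi_\e)<\frac14 n(n-2)\omega_n^{2/n}$ for $\e$ small; the only new ingredient is that every test function must be concentrated inside the region $\Omega\subset M$ where $g$ is $C^\infty$ (which contains $B_{P_0}(\delta)$, hence is nonempty), so that the delicate local expansions only involve the smooth part of $g$. First I would dispose of the trivial case: if $\mu(g)\le 0$ there is nothing to prove because $\frac14 n(n-2)\omega_n^{2/n}=\mu(S_n,g_{can})>0$. So assume $\mu(g)>0$; then $L_g$ is a positive operator, and by the properties established earlier in this chapter it admits a positive Green function $G_{P_0}$, smooth on $\Omega\setminus\{P_0\}$ and of class $C^{1,\beta}$ on $M\setminus\{P_0\}$. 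Using the conformal invariance of $\mu$ (also established earlier), I would replace $g$ by $\bar g=u^{\N}g$ with $u$ smooth and positive, chosen so that $\bar g$ is in conformal normal coordinates at a prescribed point $P\in\Omega$; this is possible precisely because $g$ is smooth near $P$, and it normalizes $\sqrt{\det\bar g}=1+O(r^N)$ and makes the scalar curvature together with a prescribed number of its derivatives vanish at $P$.

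Next I would split into two cases. \emph{Case 1: $n\ge 6$ and the Weyl tensor of $g$ is not identically zero on $\Omega$.} Choosing $P\in\Omega$ with $\|W_g(P)\|\neq 0$ and $\delta$ small enough that $\overline{B_P(\delta)}\subset\Omega$, take $\varphi_\e=u_\e$ with $u_\e(Q)=\bigl(\e/(r^2+\e^2)\bigr)^{\frac{n-2}{2}}-\bigl(\e/(\delta^2+\e^2)\bigr)^{\frac{n-2}{2}}$ on $B_P(\delta)$ and $u_\e\equiv 0$ outside, where $r=d(P,Q)$. Since the integrand of $I_g(\varphi_\e)$ is then supported in $\Omega$, Aubin's computation applies word for word and yields, at the critical order in $\e$, a strictly negative correction proportional to $\|W_g(P)\|^2$; hence $I_g(\varphi_\e)<\frac14 n(n-2)\omega_n^{2/n}$ for $\e$ small.

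\emph{Case 2: $3\le n\le 5$, or $n\ge 6$ with $W_g\equiv 0$ on $\Omega$.} Here I would use Schoen's test function, gluing the bubble $u_\e$ near $P$ to a cutoff $\chi$ of the Green function $G_P$ away from it: $\varphi_\e=u_\e+\e^{\frac{n-2}{2}}\chi\,(G_P-r^{2-n})$. Since $L_{\bar g}G_P=0$ on $M\setminus\{P\}$, the metric $\hat g:=G_P^{\N}\bar g$ has zero scalar curvature, is asymptotically flat, smooth near infinity (because $\bar g$ is smooth near $P$), and of class $C^{1,\beta}$ with curvature in $L^p$, $p>n$, on the compact part. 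The expansion of $I_g(\varphi_\e)$ then produces at the critical order a correction $-c_n\,m\,\e^{n-2}$, where $m$ is the ADM mass of $(M\setminus\{P\},\hat g)$. A version of the positive mass theorem valid for metrics of this Sobolev regularity gives $m\ge 0$, with $m=0$ only if $(M,g)$ is conformal to $(S_n,g_{can})$; since the latter is excluded by hypothesis, $m>0$ and $I_g(\varphi_\e)<\frac14 n(n-2)\omega_n^{2/n}$ for $\e$ small. In all cases $\mu(g)\le I_g(\varphi_\e)<\frac14 n(n-2)\omega_n^{2/n}$, which is the claim.

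I expect the main obstacle to be Case 2: I must check that the positive mass theorem and its rigidity statement survive the drop of regularity away from $P$ — $\hat g$ being only $C^{1,\beta}$ with scalar curvature merely in $L^p$ — and that the conformal normal coordinate normalization (used to kill the lower order terms in the expansion of $I_g(\varphi_\e)$) remains legitimate even though it is performed by a globally defined conformal factor while $g$ is smooth only near $P$; concretely one arranges $u$ to be smooth near $P$ and only $C^{1,\beta}$ elsewhere, and verifies that the expansion of $I_g(\varphi_\e)$ feels $u$ only through its Taylor behaviour at $P$ together with the global invariant $m$. Case 1, by contrast, should be routine once one notes that $\mathrm{supp}\,\varphi_\e\subset\Omega$, so that the classical estimate is unaffected by the singularities of $g$.
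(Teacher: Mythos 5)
Your proposal follows essentially the same route as the paper: dispose of $\mu(g)\le 0$ trivially, use conformal invariance to normalize the metric at a point of the smooth ball $B_{P_0}(\delta)$ (the paper uses the Cao--G\"unther normalization $\det g'=1$ where you use conformal normal coordinates), then run Aubin's test function supported in the smooth region when $n\ge 6$ and the Weyl tensor does not vanish there, and Schoen's Green-function test function plus a positive-mass statement otherwise. The paper likewise constructs the Green function of $L_g$ under hypothesis $(H)$ and invokes the positive mass theorem in the form of the sign of the constant $A$ in $G_{P_0}=r^{2-n}+A+O(r)$, explicitly "sous r\'eserve de sa validit\'e" in this low-regularity setting --- the same caveat you flag as the main obstacle.
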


When the metric $g$ is smooth, this theorem solves the conjecture \ref{Aubincon introe}. The arguments used to prove it are still valid when the metric $g$ satisfies the assumption $(H)$. In fact, it is sufficient to construct a test function $\varphi$ which satisfies  $I_g(\varphi)<\frac{1}{4}n(n-2)\omega_n^{2/n}$. Test functions, constructed by  T.~Aubin \cite{Aub} and R.~Schoen \cite{Schoen}, are still useful in the singular case.\\

In the equivariant case (in the presence of the isometry group), the result obtained is the following:

\begin{theoreme}\label{HVGINVD introe}
Let $M$ be a smooth compact manifold of dimension $n\geq 3$ and $g\in H_2^p(M,T^*M\otimes T^*M)$  be a Riemannian metric  with $p>n/2$. If 
\begin{equation}\label{inf introe}
\mu_{G}(g)<\frac{1}{4}n(n-2)\omega_n^{2/n}(\inf_{Q\in M}\Ca O_G(Q))^{2/n}
\end{equation}
then equation \eqref{yamabeie} has a positive $G-$invariant solution  $\varphi\in H_{2,G}^p(M)\subset C^{1-[n/p],\beta}(M)$. 
\end{theoreme}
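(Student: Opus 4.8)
The plan is to follow the variational scheme already used for Theorem~\ref{theginv introe}, now specialised to $h=\an R_g$, but with two extra difficulties: $R_g$ lies only in $L^p(M)$, and the operator $\Delta_g$ itself has non-smooth coefficients since $g\in H_2^p$. Set $N=\frac{2n}{n-2}$. It suffices to show that the $G$-invariant Yamabe functional $I_g$ attains $\mu_{G}(g)$ on $H_{1,G}(M)\setminus\{0\}$: a minimiser $\varphi\ge 0$, normalised by $\|\varphi\|_N=1$, then solves $\Delta_g\varphi+\an R_g\varphi=\an R_{\gt}\,\varphi^{N-1}$ with $R_{\gt}=\frac{4(n-1)}{n-2}\mu_{G}(g)$ constant, which after the regularity and positivity steps below yields the required solution of~\eqref{yamabeie}. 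If $G$ has no finite orbit, $\inf_{Q}\Ca O_G(Q)=+\infty$ and the $G$-invariant Sobolev embedding of E.~Hebey and M.~Vaugon~\cite{HV2} makes $H_{1,G}(M)\hookrightarrow L^{N}(M)$ compact, so the infimum is attained directly; hence assume $k:=\inf_{Q}\Ca O_G(Q)<\infty$.

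First I would run the subcritical approximation. For $q\in(2,N)$, minimise $\psi\mapsto\bigl(\int_M|\nabla\psi|^2+\an R_g\psi^2\,\di v\bigr)/\|\psi\|_q^2$ over $H_{1,G}(M)\setminus\{0\}$; since $H_{1,G}(M)\hookrightarrow L^q(M)$ is compact for $q<N$ this produces a nonnegative $G$-invariant $\varphi_q$ with $\|\varphi_q\|_q=1$ solving $\Delta_g\varphi_q+\an R_g\varphi_q=\mu_q\varphi_q^{q-1}$ weakly, with $\mu_q$ bounded and $\limsup_{q\to N}\mu_q\le\mu_{G}(g)$. Because $p>n/2$, Hölder's inequality together with interpolation between $L^2$ and $L^{N}$ bounds $\bigl|\int_M R_g\varphi_q^2\,\di v\bigr|$ by $\|R_g\|_p$, $\|\varphi_q\|_2$ and $\|\varphi_q\|_{N}$, and a standard absorption then yields a uniform $H_1(M)$ bound for $\varphi_q$. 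Along a subsequence, $\varphi_q\rightharpoonup\varphi$ in $H_{1,G}(M)$, strongly in $L^2(M)$ and a.e., so $\varphi\ge 0$ is $G$-invariant, is a weak solution of $\Delta_g\varphi+\an R_g\varphi=\mu_{G}(g)\varphi^{N-1}$, and $I_g(\varphi)\le\mu_{G}(g)$.

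The crux — the only point where the hypothesis~\eqref{inf introe} is used, and the main obstacle — is to rule out $\varphi\equiv 0$. Here I would invoke Z.~Faget's sharp $G$-invariant Sobolev inequality~\cite{Fag}: for every $\e>0$ there is $A_\e>0$ with
\begin{equation*}
\|u\|_{N}^2\le\bigl(K(n,2)^2 k^{-2/n}+\e\bigr)\|\nabla u\|_2^2+A_\e\|u\|_2^2\qquad\text{for all }u\in H_{1,G}(M),
\end{equation*}
where $K(n,2)^{-2}=\tfrac14 n(n-2)\omega_n^{2/n}$. Suppose $\varphi\equiv 0$. Then $\|\varphi_q\|_2\to 0$, hence $\int_M R_g\varphi_q^2\,\di v\to 0$ by the $p>n/2$ estimate, so $\|\nabla\varphi_q\|_2^2=\mu_q+o(1)\le\mu_{G}(g)+o(1)$; meanwhile $\|\varphi_q\|_q=1$ with $q<N$ forces $\|\varphi_q\|_{N}^2\ge vol(M)^{-2(1/q-1/N)}\to 1$. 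Plugging $u=\varphi_q$ into Faget's inequality and letting $q\to N$, then $\e\to 0$, gives $1\le K(n,2)^2 k^{-2/n}\mu_{G}(g)$, i.e. $\mu_{G}(g)\ge\tfrac14 n(n-2)\omega_n^{2/n}k^{2/n}$, contradicting~\eqref{inf introe}. Thus $\varphi\not\equiv 0$; with $I_g(\varphi)\le\mu_{G}(g)$ and the definition of $\mu_{G}(g)$, the (renormalised) function $\varphi$ minimises $I_g$ and solves $\Delta_g\varphi+\an R_g\varphi=\mu_{G}(g)\varphi^{N-1}$.

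Finally, regularity and positivity. Written in divergence form, the conformal Laplacian $L_g=\Delta_g+\an R_g$ has uniformly elliptic, continuous leading coefficients (of class $C^{0,2-n/p}$, since $g\in H_2^p$) and $L^p$, $p>n/2$, lower-order data; the iteration argument of Theorem~\ref{regYS introe} (the adaptation of Trudinger's theorem), whose extension to a non-smooth $g$ is carried out in Chapter~\ref{CHYS}, then gives $\varphi\in H_{2,G}^p(M)\subset C^{1-[n/p],\beta}(M)$. Since $\varphi$ satisfies the linear equation $L_g\varphi-\mu_{G}(g)\varphi^{N-2}\varphi=0$ with zero-order coefficient in $L^p$, the weak Harnack inequality (or the maximum principle) gives $\varphi>0$ everywhere. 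Then $\tilde g=\varphi^{\N}g$ is a $G$-invariant metric conformal to $g$ with constant scalar curvature, which proves the theorem; the genuine difficulty throughout is the non-vanishing step, where the $G$-invariant best constant must be used exactly and the weak regularity of $g$ and $R_g$ absorbed without degrading the sharp constant $K(n,2)$.
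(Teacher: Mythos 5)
Your variational scheme is, for $h=\an R_g$, essentially a re-derivation of Theorem \ref{theginv} (subcritical approximation, uniform $H_1$ bound, weak limit, non-vanishing via Faget's equivariant best-constant inequality), and the non-vanishing step in particular is exactly the one in the paper. The genuine gap is at the point where you pass from ``$\varphi_q$ minimises the subcritical quotient over $H_{1,G}(M)$'' to ``$\varphi_q$ solves the Euler--Lagrange equation weakly''. Minimising over the $G$-invariant subspace only gives the identity \eqref{eyq} for test functions $\psi\in H_{1,G}(M)$; to upgrade it to all $\psi\in H_1(M)$ the paper (end of the proof of Proposition \ref{eysc}) solves $L\tilde\varphi_q=\mu_{q,G}(g)\varphi_q^{q-1}$ with $L=\Delta_g+\an R_g$, uses uniqueness of that solution to see that $\tilde\varphi_q$ is $G$-invariant, and then coercivity to conclude $\tilde\varphi_q=\varphi_q$. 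This mechanism requires $L$ to be invertible, i.e. $\mu_{q,G}(g)>0$, and the remark following Theorem \ref{theginv} states explicitly that the whole method breaks down when $\mu_{G}(g)\le 0$ --- a case permitted by hypothesis \eqref{inf introe}. Your proposal never addresses this step, so as written it establishes the theorem only when $\mu_G(g)>0$, where it coincides with the paper's appeal to Theorem \ref{theginv}.

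The paper closes the remaining case by a different and much shorter route: if $\mu_G(g)\le 0$ then $\mu(g)\le\mu_G(g)\le 0<K^{-2}(n,2)$, so the non-equivariant existence theorem already produces a positive solution $\varphi$ of the Yamabe equation; by the uniqueness Theorem \ref{unique} all solutions are proportional, and since $\sigma^*\varphi$ is again a solution with $\|\sigma^*\varphi\|_2=\|\varphi\|_2$ for every $\sigma\in G$, the proportionality constant is $1$ and $\varphi$ is $G$-invariant for free. Note that this uniqueness argument is where the paper needs $p>n$ (continuity of the Christoffel symbols), which is why the body of the thesis states Theorem \ref{HVGINVD} with $p>n$ rather than the $p>n/2$ of the version you were given; your scheme does not obviously avoid this restriction either. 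To repair your proof you would need either to justify the full Euler--Lagrange equation by a symmetric-criticality argument (after replacing $G$ by its compact closure in $I(M,g)$), or to split off the case $\mu_G(g)\le 0$ and treat it via uniqueness as the paper does.
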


The known result about uniqueness of solutions of the Yamabe equation \eqref{yamabeie}, for  smooth metrics, is valid in the singular case. Therefore, we have the following result:

\begin{theoreme}\label{unique introe}
 Let $g$ be a metric in $H^p_2(M,T^*M\otimes T^*M)$, with $p>n$. If $\mu(g)\leq 0$ then the solutions of  \eqref{yamabeie} are proportional.
\end{theoreme}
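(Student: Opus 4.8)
The plan is to reduce the singular case to the classical argument, exploiting the fact that the nonlinearity in \eqref{yamabeie} is of the "right sign" when $\mu(g)\le 0$, so that a maximum-principle / integration-by-parts comparison still goes through even though $R_g$ is only in $L^p$. First I would recall that by Theorem \ref{regYS introe} (applied with $h=\an R_g$, which lies in $L^p$ with $p>n$, and with $\tilde h=R_{\gt}$ the appropriate Yamabe constant) any nontrivial nonnegative weak solution $\varphi\in H_1(M)$ of \eqref{yamabeie} in fact lies in $H^p_2(M)\subset C^{1,\beta}(M)$ and is strictly positive; so throughout we may assume $\varphi_1,\varphi_2$ are two such strictly positive $C^{1,\beta}$ solutions. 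The sign of $R_{\gt}$ is governed by the sign of $\mu(g)$ (this is the conformal invariance of $\mu(g)$ established earlier in Chapter \ref{CHYS}), so $R_{\gt}\le 0$ in both cases.

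Next I would run the standard substitution argument. Write $g_i=\varphi_i^{\N}g$, so that $R_{g_i}=R_{\tilde g}=:c\le 0$ is the same constant for $i=1,2$ (the Yamabe invariant $\mu(g)$ and hence the normalized scalar curvature is intrinsic to the conformal class). Then $\psi:=\varphi_2/\varphi_1$ is a strictly positive $C^{1,\beta}$ function satisfying, with respect to the metric $g_1$, the equation
\begin{equation*}
\frac{4(n-1)}{n-2}\Delta_{g_1}\psi + c\,\psi = c\,\psi^{\frac{n+2}{n-2}}.
\end{equation*}
Multiplying by $(\psi-\psi^{-1})$, or equivalently testing against $(\psi - 1)$ after an appropriate algebraic manipulation, and integrating over $M$ — which is justified because $\psi\in C^{1,\beta}\subset H_1$ and $\Delta_{g_1}\psi$ makes sense distributionally with $R_{g_1}=c$ constant, so there is in fact no singularity left in this reduced equation — one obtains
\begin{equation*}
\frac{4(n-1)}{n-2}\int_M |\nabla_{g_1}\psi|^2 \psi^{-2}\,\di v(g_1) = c\int_M \bigl(\psi^{\frac{4}{n-2}} - 1\bigr)\bigl(1-\psi^{-2}\bigr)\,\di v(g_1).
\end{equation*}
The left side is $\ge 0$. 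On the right, the function $t\mapsto (t^{\N}-1)(1-t^{-2})$ has the sign of $(t-1)^2\ge 0$ for $t>0$, so since $c\le 0$ the right side is $\le 0$. Hence both sides vanish; the vanishing of the left side forces $\nabla_{g_1}\psi\equiv 0$, so $\psi$ is constant, i.e. $\varphi_2=\lambda\varphi_1$ for some $\lambda>0$. (In the borderline case $c=0$ the equation reduces to $\Delta_{g_1}\psi=0$ on a compact manifold, so $\psi$ is immediately constant.)

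The only genuinely delicate point — the "main obstacle" — is justifying the integrations by parts in the singular setting: one must check that the weak formulation of the reduced equation for $\psi$ is legitimate, i.e. that passing from \eqref{yamabeie} for $\varphi_i$ (whose coefficient $R_g\in L^p$) to the equation for $\psi$ (whose coefficient is the constant $c$) does not hide a boundary-type term coming from the low regularity. This is handled by working with the weak form throughout: $\varphi_i\in H^p_2$ and strictly positive implies $\varphi_i^{-1},\psi,\psi^{-1}\in H^p_2\cap C^{1,\beta}$, products and quotients of such functions remain in $H^p_2$, and the chain rule $\Delta_{g}(\varphi_2) = \Delta_{g}(\varphi_1\psi)$ expands with all terms in $L^p$; testing against $H_1$ functions and using the conformal change formula for the Laplacian (whose validity for $H^p_2$ metrics was established in Chapter \ref{CHYS}) then yields exactly the displayed identity with no residual terms. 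Once that bookkeeping is in place, the sign analysis above closes the proof.
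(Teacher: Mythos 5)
Your overall strategy --- forming the transition function $\psi=\varphi_2/\varphi_1$ and exploiting the sign of the constant scalar curvature --- is the same as the paper's, and your observation that the reduced equation for $\psi$ has a \emph{constant} in place of the singular $R_g$ (so that, with $p>n$ making the Christoffel symbols continuous, the low regularity of the metric is no longer an obstacle) is exactly the point the paper makes. But there is a genuine gap at the outset: you assert that $R_{g_1}=R_{g_2}=:c$ is ``the same constant'' for the two solutions. This is false in general --- the constant $R_{\gt}$ in \eqref{yamabeie} is not intrinsic to the conformal class but depends on the solution (replacing $\varphi$ by $\lambda\varphi$ multiplies it by $\lambda^{-4/(n-2)}$; this is precisely why the theorem can only assert proportionality rather than equality). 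With two distinct constants $R_1\neq R_2$ the reduced equation reads $\frac{4(n-1)}{n-2}\Delta_{g_1}\psi+R_1\psi=R_2\psi^{\frac{n+2}{n-2}}$, and your energy identity becomes $\frac{4(n-1)}{n-2}\int_M\psi^{-2}|\nabla\psi|^2\,\di v=\int_M\bigl(R_2\psi^{\frac{4}{n-2}}-R_1\bigr)(\psi-1)\,\di v$, whose right-hand side has no definite sign. Your justification that $R_{\gt}\le 0$ ``because $\mu(g)\le 0$'' is also not a proof: the inequality $I_g(\varphi_i)\ge\mu(g)$ only yields a \emph{lower} bound on $R_i$, not the upper bound you need.

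The paper closes both gaps with one step you are missing: integrate the reduced equation over $M$; the integral of the Laplacian of a $C^2$ function vanishes, so $R_1\int_M\psi\,\di v=R_2\int_M\psi^{\frac{n+2}{n-2}}\di v$ with both integrals strictly positive, whence $R_1$ and $R_2$ have the same sign. The paper then finishes with the pointwise maximum principle: evaluating the equation at the maximum and minimum of $\psi$ gives $\psi^{\frac{4}{n-2}}\le R_1/R_2$ and $\ge R_1/R_2$ respectively when both constants are negative, and $\Delta_{g_1}\psi=0$ when both vanish. Your integral identity is a workable alternative ending --- and needs only $\psi\in H_1\cap C^0$ rather than $C^2$ --- but only after you first establish the sign agreement and rescale one solution so the two constants coincide; as written, the sign argument collapses.
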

  
In this part, we showed that almost all of the results and properties known about the Yamabe problem, and some properties in the equivariant case, holds in the singular case (when the metric satisfies the assumption $(H)$ defined above). A question that naturally arises is the  possibility of deleting some conditions in the assumption $(H)$. For example, can we consider metrics in $H^p_2$ without the smoothness condition in a ball? The answer seems difficult and this question will not be treated in this thesis.

\subsection*{Second part }

The second part is independent from the first (they are mathematically linked, but the results of the first part are not used in the second).\\
Suppose that $(M,g)$ is a smooth compact Riemannian manifold of dimension $n\geq 3$. The principal goal of the two last chapters of this part is to study Hebey--Vaugon conjecture that  can be stated in the following way:
\begin{conjecturee}[E.~Hebey and M.~Vaugon \cite{HV}]\label{HVcon introe}
Let $G$ be a subgroup of $I(M,g)$. If $(M,g)$ is not conformal to $(S_n, g_{can})$ or if the action of $G$ has no fixed point, then the following inequality holds 
\begin{equation}\label{HVI introe}
\inf_{g'\in [g]^G} J(g')<n(n-1)\omega_n^{2/n}(\inf_{Q\in M}\Ca O_G(Q))^{2/n}
\end{equation}
\end{conjecturee}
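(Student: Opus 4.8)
The plan is to reduce the conjecture, exactly as in Aubin's approach to the classical Yamabe problem, to the construction of a single well-chosen $G$-invariant test function. Since $\inf_{g'\in[g]^G}J(g')=4\frac{n-1}{n-2}\mu_G(g)$, the inequality \eqref{HVI introe} is equivalent to $\mu_G(g)<\frac14 n(n-2)\omega_n^{2/n}(\inf_{Q\in M}\Ca O_G(Q))^{2/n}$; and by Theorem~\ref{theginv introe} (equivalently Theorem~\ref{HVGINVD introe}) it suffices to exhibit one $\phi\in H_{1,G}(M)-\{0\}$ with $I_g(\phi)$ strictly below this threshold. So the whole problem is the construction of $\phi$.

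First I would localize the construction near a point $P$ whose orbit $O_G(P)$ is finite of minimal cardinality (the case where the action has no finite orbit, in particular the free case, is handled separately and more easily, cf.\ Theorem~\ref{HV theorem intro}(1)). In geodesic normal coordinates at $P$ the relevant local invariant is the vanishing order $\omega(P)$ of the Weyl tensor. When $\omega(P)$ is large relative to $n$ --- concretely $\omega(P)>(n-6)/2$ --- or small, $\omega(P)\in\{0,1,2\}$, the test functions of Aubin \cite{Aub} and Schoen \cite{Schoen} (the standard bubbles $u_{\e}$, possibly corrected by the Green function / positive mass term) already give the bound, and averaging over the finite orbit produces a $G$-invariant competitor; this is the content of Theorem~\ref{HV theorem intro}. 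The genuinely delicate range is $n\geq 12$ and $\omega(P)\in\intl 3,[(n-6)/2]\intr$.

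For that range I would use the refined ansatz of Aubin \cite{Aub5},
\[
\varphi_{\e}(Q)=\bigl(1-r^{\alp}f(\xi)\bigr)u_{\e}(Q),\qquad \int_{S_{n-1}}f\,\di\sigma=0,
\]
where $(r,\xi)$ are the geodesic coordinates at $P$ and $u_{\e}$ is the truncated bubble \eqref{uepsilon intro}. One then expands $I_g(\varphi_{\e})$ in powers of $\e$: the factor $1-r^{\alp}f$ is designed so that, using $\int f=0$ together with the structure of the metric expansion up to order $\omega(P)$, the obstruction terms left by the bare bubble are cancelled, and the sign of the first non-trivial remaining coefficient is governed by a quadratic functional of $f$. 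Choosing $f=\sum_{k=1}^{q}c_k\nu_k\varphi_k$, a combination of low eigenfunctions $\varphi_k$ of the spherical Laplacian with eigenvalues $\nu_k$ and $q\le[\omega/2]$, one arranges that this coefficient is negative, which yields Theorem~\ref{theo intro}: for a suitable $f$ one has $\mu(g)\le I_g(\varphi_{\e})<\mu(S_n,g_{can})$.

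The hard part --- and the reason the conjecture is not settled here in full --- is the passage to $G$-invariance. One needs $\varphi_{\e}$, hence $f$, invariant under the stabilizer of $P$ acting on the unit sphere of $T_PM$. Averaging $f$ over this finite group preserves $\int f=0$, but it generically destroys the special algebraic relations among the coefficients $c_k$ that made the leading correction negative; the negativity survives only for particular configurations. Tracking precisely when it survives is a long and delicate computation which, carried out, gives the strict inequality whenever $\omega(P)\le15$, or whenever the degree of the principal part of $R_g$ near $P$ is at least $\omega(P)+1$ --- this is Theorem~\ref{con HVa intro}, and in particular Corollary~\ref{cor prin intro} for $n\le37$. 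Closing the remaining gap ($n\geq12$, $\omega(P)\in\intl 16,[(n-6)/2]\intr$) would require either a more flexible $G$-invariant choice of $f$, or a different correction of the bubble altogether; this is the \emph{main obstacle} and is left open in this work.
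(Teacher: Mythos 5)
Your proposal follows essentially the same route as the paper: reduce \eqref{HVI introe} to $\mu_G(g)<\frac{1}{4}n(n-2)\omega_n^{2/n}(\inf_{Q\in M}\Ca O_G(Q))^{2/n}$, build Aubin-type test functions $\varphi_\e=(1-r^{\omega+2}f(\xi))u_\e$ with $f$ a combination of spherical eigenfunctions, sum over the minimal finite orbit to get a $G$-invariant competitor, and conclude exactly in the cases $\omega(P)\le 15$ or $\deg\bar R\ge\omega(P)+1$ while leaving the remaining range open --- which is precisely what the paper itself achieves (Theorem \ref{con HVa intro}), since the statement is a conjecture not proved here in full. The only cosmetic difference is the mechanism for $G$-invariance of $f$: the paper does not average $f$ over the stabilizer of $P$, but instead takes $\tilde f=c\,r^{-\omega}\bar R=c\sum_k\nu_k\varphi_k$ with a single global constant $c$, built intrinsically from $\nabla^\omega R_g(P)$ so that equivariance is automatic; the resulting loss of the individually tuned constants $c_k$ is then exactly the obstruction you describe, resolved by checking that $\bigcap_k\,]x_k,y_k[\,\neq\varnothing$ for $\omega\le 15$.
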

This conjecture generalizes naturally T.~Aubin's conjecture  \ref{Aubincon introe}. In fact
$$\inf_{g'\in [g]^G} J(g')=4\frac{n-1}{(n-2)}\mu_G(g)$$ 
(if $G=\{\mathrm{id}\}$ then the two conjectures are the same).\\
Denote by $W_g$ the Weyl tensor associated to $g$. For all $P\in M$, we define $\omega(P)$ by
 $$\omega(P)=\inf \{|\beta|\in \mathbb N/\|\nabla^\beta W_g(P)\|\neq 0\},\;\omega(P)=+\infty\text{ if }\forall \beta\;\;\|\nabla^\beta W_g(P)\|= 0$$
To prove the conjecture, we need to construct a $G-$invariant test function $\phi$ such that  
$$I_g(\phi)< n(n-1)\omega_n^{2/n}(\inf_{Q\in M}\Ca O_G(Q))^{2/n}$$
Thus, all of the difficulties are in the construction of a such function. For some cases, we can use the test functions constructed by T.~Aubin \cite{Aub} and R.~Schoen \cite{Schoen} to prove the conjecture \ref{Aubincon introe}. They have been already proven by E.~Hebey and M.~Vaugon \cite{HV}. But the item \ref{item 3 introe}, presented in the following theorem, uses test functions  
 different than  T.~Aubin and R.~Schoen ones.

\begin{theoreme}[E.~Hebey and M.~Vaugon]\label{HV theorem introe}
Let $(M,g)$ be a smooth compact Riemannian manifold of dimension $n\geq 3$  and $G$ be a subgroup of $I(M,g)$.
We always have :  
$$\inf_{g'\in [g]^G} J(g')\leq n(n-1)\omega_n^{2/n}(\inf_{Q\in M}\Ca O_G(Q))^{2/n}$$
 and inequality \eqref{HVI introe} holds if one of the following items is satisfied. \begin{enumerate}
 \item The action of $G$  on $M$ is free
 \item $3\leq \dim M\leq 11$
 \item\label{item 3 introe}There exists a point $P$ with minimal orbit (finite) under $G$ such that $\omega(P)>(n-6)/2$ or $\omega(P)\in \{0,1,2\}$.
 \end{enumerate}
\end{theoreme}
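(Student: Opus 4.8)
The plan is to estimate the functional $I_g$ on a well-chosen $G$-invariant test function and show that its infimum is strictly below $n(n-1)\omega_n^{2/n}(\inf_{Q\in M}\Ca O_G(Q))^{2/n}$. First I would reduce to a local problem: pick a point $P$ realizing the minimal (finite) orbit $N_0 = \inf_{Q\in M}\Ca O_G(Q) = \Ca O_G(P)$, work in geodesic normal coordinates centered at $P$, and build a test function $\varphi_\e$ concentrated near $P$, then symmetrize it by summing over the orbit $O_G(P) = \{P = P_1, \dots, P_{N_0}\}$. Because the balls $B_{P_i}(\delta)$ are pairwise disjoint for $\delta$ small and $G$ acts by isometries, the $G$-invariant function $\Phi_\e = \sum_i \varphi_\e \circ \sigma_i$ satisfies $\int |\nabla \Phi_\e|^2 + h\Phi_\e^2 = N_0 \int |\nabla\varphi_\e|^2 + h\varphi_\e^2$ and $\|\Phi_\e\|_{\frac{2n}{n-2}}^2 = N_0^{(n-2)/n}\|\varphi_\e\|_{\frac{2n}{n-2}}^2$, so that $I_g(\Phi_\e) = N_0^{2/n} I_g(\varphi_\e)$; thus it suffices to exhibit a local $\varphi_\e$ with $I_g(\varphi_\e) < \frac{n(n-2)}{4}\omega_{n-1}^{2/n}$ (using $\omega_n = \frac{2}{n}\omega_{n-1}$ to match the constant), which is precisely Aubin's inequality at $P$.

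Next I would split the argument according to the three items. In case (1), $G$ free means the orbit map is a covering, $N_0 \geq 2$, and one can use Aubin's classical test function $u_\e$ of \eqref{uepsilon intro} at $P$ without any correction: the conformally invariant part of the energy expansion gives $I_g(u_\e) \leq \frac{n(n-2)}{4}\omega_{n-1}^{2/n} + o(1)$, and since $N_0^{2/n} > 1$ one needs no strict gain — but actually one must still beat the constant, so the standard Aubin/Schoen local analysis (dimension $\geq 6$ with nonvanishing Weyl tensor, or $\leq 5$ / locally conformally flat via the positive mass theorem of Schoen) supplies the strict inequality $I_g(u_\e) < \frac{n(n-2)}{4}\omega_{n-1}^{2/n}$, which multiplied by $N_0^{2/n}$ closes the case. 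Case (2), $3\leq n\leq 11$: here I would again invoke the Aubin--Schoen test functions; when $\omega(P) > 0$ one is in the regime where Aubin's local computation with $u_\e$ works directly (the Weyl obstruction appears with the right sign), and when $\omega(P)=+\infty$ near $P$ the metric is locally conformally flat there, so a cutoff of the Green function of $L_g$ together with the positive mass theorem gives the strict inequality — the dimensional restriction $n \leq 11$ is exactly what the available form of the positive mass theorem allowed at the time.

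Case (\ref{item 3 introe}) is the genuinely new and hardest part, and it is where the modified test function \eqref{fonction test intro}, namely $\varphi_\e = (1 - r^{\omega+2} f(\xi)) u_\e$, enters. If $\omega(P) \in \{0,1,2\}$ the classical $u_\e$ still suffices by Aubin's expansion (the leading correction term controlled by $\|\nabla^{\omega}W_g(P)\|^2$ has the correct sign and order), so the work concentrates on $\omega(P) > (n-6)/2$. There one performs the Aubin-type expansion of $\int |\nabla\varphi_\e|^2 + h\varphi_\e^2$ and $\|\varphi_\e\|_{\frac{2n}{n-2}}^2$ in powers of $\e$: the factor $(1 - r^{\omega+2}f(\xi))$ is designed so that its square contributes at order $r^{2\omega+4}$, which when integrated against $u_\e^2 \sim (\e/(r^2+\e^2))^{n-2}$ produces an $\e$-power that, under the hypothesis $\omega > (n-6)/2$ (equivalently $2\omega + 4 > n - 2$), is of the same order as or lower than the curvature obstruction term; choosing $f$ with $\int_{S_{n-1}} f\,\di\sigma = 0$ kills the unwanted first-order cross term $\int r^{\omega+2} f(\xi)\,\di\sigma$, and a careful choice of $f$ (the combination $\sum_k c_k \nu_k \varphi_k$ of spherical eigenfunctions mentioned in the introduction) makes the net contribution strictly negative. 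The main obstacle, and the bulk of the technical labor, is precisely this expansion: one must track all terms up to the relevant order, use the normal-coordinate expansions of $g$, $\sqrt{\det g}$ and $R_g$ near $P$ (with $\omega(P)$ controlling the vanishing order of the Weyl-tensor-dependent terms), integrate the resulting angular functions over $S_{n-1}$, and verify that the $f$-dependent correction can be tuned to dominate with the right sign while remaining $O(\e^{2\omega+4})$ or better — then multiplying the resulting strict local inequality by $N_0^{2/n}$ yields \eqref{HVI introe}. The non-strict inequality $\inf_{g'\in[g]^G} J(g') \leq n(n-1)\omega_n^{2/n} N_0^{2/n}$ holds in all cases and follows already from plugging $\Phi_\e$ (or even $u_\e$ summed over the orbit) into $I_g$ and letting $\e \to 0$, using the sharp Sobolev constant $K(n,2)$.
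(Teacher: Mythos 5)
Your global skeleton is right and matches the paper: pick $P$ realizing the minimal finite orbit, build a test function supported in pairwise disjoint balls around the orbit points, sum over the orbit to get a $G$-invariant $\Phi_\e$ with $I_g(\Phi_\e)=N_0^{2/n}I_g(\varphi_\e)$, and reduce everything to a strict local inequality at $P$; the non-strict inequality indeed follows this way (this is Proposition \ref{ine largee}). The genuine gap is in your treatment of the main case of item \ref{item 3 introe}, namely $\omega(P)>(n-6)/2$. You propose to use the corrected Aubin test function $\varphi_\e=(1-r^{\omega+2}f(\xi))u_\e$ and tune $f$, but that device only works in the \emph{complementary} regime $\omega(P)\le (n-6)/2$: that is precisely the content of Theorem \ref{theo introe} of this thesis, and its proof requires $\omega+2\le (n-2)/2$ for the sign analysis in Lemma \ref{lemme poly}. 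Your own observation that $2\omega+4>n-2$ points the wrong way: it means every local curvature correction, which enters at order $\e^{2\omega+4}$ (or $\e^{2\omega+4}\log\e^{-1}$), is \emph{smaller} than the $\e^{n-2}$ contribution coming from the tail of $u_\e$, so no choice of $f$ with vanishing mean can produce a strict gain from a purely local expansion. In this regime the metric is conformally flat at $P$ to high order and the strict inequality must come from global information.

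What Hebey--Vaugon actually do, and what the paper sketches in its section on their work, is a Schoen-type positive-mass argument adapted to the orbit: choose a $G$-invariant conformal metric with $\det g=1+O(r^m)$, show that the normalized Green function $G_{P_i}$ of $L_g$ at each orbit point carries no polynomial correction up to order $[(n-2)/2]$ (using the vanishing $\nabla^\beta W_g(P)=0$ for $|\beta|\le[(n-6)/2]$), form the $G$-invariant function $\mathcal G=\sum_i G_{P_i}$ and the test function $w_\e=\sum_i \mathcal G\, r_i^{n-2}\bigl(\e/(r_i^2+\e^2)\bigr)^{(n-2)/2}$, suitably truncated. The energy expansion then produces a term $-C_1\bigl(m(\tilde g)+(n-2)K\bigr)\e^{n-2}$, where $m(\tilde g)$ is the mass of the asymptotically flat manifold $\bigl(M-\{P\},G_P^{4/(n-2)}g\bigr)$ and $K>0$ is an interaction term contributed by the other orbit points; the positive mass theorem gives $m(\tilde g)\ge 0$ and hence the strict inequality. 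Two smaller points: your explanation of the restriction $3\le\dim M\le 11$ is off --- it is not a limitation of the positive mass theorem but the observation that for $n\le 11$ one has $[(n-6)/2]\le 2$, so every point satisfies either $\omega\le 2$ or $\omega>(n-6)/2$ and item (2) reduces to item (3); and the identity $\omega_n=\tfrac{2}{n}\omega_{n-1}$ you invoke to match constants is not correct (the constants are reconciled through $J(g')=4\tfrac{n-1}{n-2}I_g(\psi)$, not through a relation between sphere volumes).
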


The remaining case of the conjecture, is the case when  $n\geq 12$ and $\omega\in\intl 3, [(n-6)/2]\intr$.   In chapter \ref{CHYSr},  we prove the following result: 

\begin{theoreme}\label{con HVa introe}
The conjecture \ref{HVcon introe} holds  if there exists a point $P\in M$ with minimal orbit (finite) for which $\omega(P)\leq15$ or if the degree of the leading part of $R_g$ is greater or equal to $\omega(P)+1$, in the neighborhood of this point $P$.
\end{theoreme}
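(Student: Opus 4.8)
\emph{Sketch of the proof.} The strategy is to establish the strict inequality \eqref{HVI introe} by producing, near a point $P$ of minimal finite orbit, a $G$-invariant test function of the type \eqref{fonction test intro}, namely
\[
\varphi_\e = \bigl(1 - r^{\alp} f(\xi)\bigr)\,u_\e
\]
($u_\e$ the usual bubble, symmetrized over the orbit $O_G(P)$), for which $I_g(\varphi_\e) < n(n-1)\omega_n^{2/n}(\Ca O_G(P))^{2/n}$ once $\e$ is small enough; by Theorem \ref{theginv introe} this proves the conjecture. By item \ref{item 3 introe} of Theorem \ref{HV theorem introe} one may assume $n \ge 12$ and $\omega := \omega(P) \in \intl 3,[(n-6)/2]\intr$, so that $\omega \le (n-6)/2$ and Theorem \ref{theo intro} already yields \emph{some} admissible $f \in C^\infty(S_{n-1})$ with $\int_{S_{n-1}} f\,\di\sigma = 0$ and $I_g(\varphi_\e)$ below the non-equivariant threshold. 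The content of the statement is that, under the extra hypothesis $\omega \le 15$, or that the leading part of $R_g$ near $P$ has degree $\ge \omega + 1$, this $f$ can be chosen invariant under the isotropy representation of the stabilizer $G_P$ on the unit tangent sphere $S_{n-1}$ — which is exactly what makes $\varphi_\e$ $G$-invariant.

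Concretely, I would work in conformal normal coordinates at $P$ (so that $\di v = (1 + O(r^K))\,\di x$ for $K$ arbitrarily large, $\mathrm{Ric}(P) = 0$, and the surviving symmetrized curvature data at $P$ is governed by $\nabla^\omega W_g(P)$), expand the numerator $\int_M \bigl(|\nabla\varphi_\e|^2 + \an R_g\varphi_\e^2\bigr)\,\di v$ and the denominator $\|\varphi_\e\|_{\Nn}^2$ in powers of $\e$ using the asymptotics of $u_\e$, and identify the critical order in $\e$ — of the form $\e^{\omega+2}$, up to a $|\log\e|$ factor in the borderline dimension, which is precisely why \eqref{fonction test intro} uses the exponent $\alp = \omega + 2$. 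The coefficient of that term in $I_g(\varphi_\e) - K(n,2)^{-2}(\Ca O_G(P))^{2/n}$ has the shape
\[
a\,\Phi\bigl(\nabla^\omega W_g(P)\bigr) \;+\; \ell(f) \;+\; b\int_{S_{n-1}} \bigl(|\nabla f|^2 + c\,f^2\bigr)\,\di\sigma ,
\]
where $b, c > 0$, $\Phi$ is a nonnegative quadratic form in the leading curvature data, $\ell$ is linear in $f$, and only the spherical harmonics $\varphi_k$ of degree $\le [\omega/2]$ enter $\ell$. Minimizing $\ell(f) + b\int_{S_{n-1}}(|\nabla f|^2 + c f^2)\,\di\sigma$ over the span of those harmonics produces the explicit extremal $f = \sum_{k=1}^q c_k \nu_k \varphi_k$ of Theorem \ref{theo intro} and lowers the coefficient by a positive multiple of the squared norm of the projection of the curvature data onto that span; since $\nabla^\omega W_g(P) \neq 0$ by definition of $\omega$, the whole critical coefficient becomes strictly negative, and $I_g(\varphi_\e)$ drops below the threshold.

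The main obstacle — and what forces the two hypotheses — is that the extremal $f$ above is pinned down by the curvature data and need not be $G_P$-invariant. The remedy is to run the minimization over the $G_P$-invariant subspace of the relevant harmonics; this still gives a negative coefficient provided the $G_P$-invariant component of the leading curvature data is nonzero. If $\omega \le 15$, only harmonics of degree $\le 7$ intervene, and the algebraic (Bianchi-type) identities satisfied by $\nabla^\omega W_g$ at a point of minimal orbit, together with the decomposition of these low-degree harmonic spaces under $G_P \subset O(n)$, guarantee that this invariant component does not vanish, so an invariant near-extremal $f$ exists. If instead the leading part of $R_g$ near $P$ has degree $\ge \omega + 1$, the scalar-curvature contribution is pushed to order $\e^{\omega+3}$ and disappears from the critical coefficient, leaving a purely Weyl expression for which a cruder but manifestly $G_P$-invariant competitor $f$ already works. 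In both cases $\varphi_\e$ is $G$-invariant with $I_g(\varphi_\e) < n(n-1)\omega_n^{2/n}(\inf_{Q\in M}\Ca O_G(Q))^{2/n}$, which proves Conjecture \ref{HVcon introe} under these assumptions; since $[(n-6)/2] \le 15$ for $n \le 37$, every remaining value of $\omega$ is then $\le 15$, giving Corollary \ref{cor prin intro}. As in Aubin \cite{Aub5}, the bulk of the effort lies in the long and delicate expansion producing the coefficient above and in carrying the $G_P$-equivariance through it.
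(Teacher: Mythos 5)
Your overall skeleton (symmetrised Aubin-type test functions $\varphi_\e=(1-r^{\alp}f(\xi))u_\e$ over the orbit, expansion of $I_g(\varphi_\e)$, reduction to making $f$ compatible with the isometries) matches the paper, but the step that actually carries the proof is missing, and the mechanism you substitute for it is not the one that works. In the paper the obstruction is resolved not by projecting the extremal $f=\sum_k c_k\nu_k\varphi_k$ onto a $G_P$-invariant subspace of spherical harmonics, but by giving up the per-mode constants $c_k$ altogether and taking a \emph{single} scalar $c$: one sets $\tilde f_i(Q)=c\,r_i^{-\omega}\nabla^\omega_gR(P_i)(\exp_{P_i}^{-1}Q,\dots,\exp_{P_i}^{-1}Q)$, i.e.\ $\tilde f=c\,r^{-\omega}\bar R=c\sum_k\nu_k\varphi_k$ (see \eqref{fitil}). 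Because $R_g$ itself is $G$-invariant, these functions are automatically transported by the isometries ($\tilde f_i=\tilde f_j\circ\sigma$ whenever $\sigma(P_i)=P_j$), so $\phi_\e=\sum_i\tilde\varphi_{\e,i}$ is $G$-invariant with no hypothesis on the isotropy representation of $G_P$ whatsoever. The price is that the quadratic $\frac{d_k}{2(n-2)}c^2-(n-2)c+(n-2)\frac{u_k}{2\nu_k^2}$ must be made negative for all $k\le q\le[\omega/2]$ simultaneously with one value of $c$, i.e.\ $\bigcap_k\,]x_k,y_k[\,\neq\varnothing$; the bound $\omega\le 15$ is precisely the range in which this intersection is checked to be nonempty (by hand for $\omega\le 7$, with Maple up to $15$), and the paper notes it fails for $\omega=16$ and $n$ large. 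This is a purely computational obstruction on the numbers $\nu_k,d_k,u_k$ and has nothing to do with Bianchi identities, the decomposition of low-degree harmonics under $G_P\subset O(n)$, or the non-vanishing of a $G_P$-invariant component of $\nabla^\omega W_g(P)$; your claim that such an invariant component ``does not vanish'' is unsubstantiated and is not where the number $15$ comes from.

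Two further concrete errors. First, the critical order in $\e$ is $\e^{2\omega+4}$ (with a $\log\e^{-1}$ factor when $n=2\omega+6$), not $\e^{\omega+2}$. Second, in the case $\deg\bar R\ge\omega+1$ the scalar-curvature term does not ``disappear from the critical coefficient leaving a purely Weyl expression'': it remains at order $\e^{2\omega+4}$, and the point (T.~Aubin's theorem~\ref{aaa}) is that $\bar\int_{S(r)}R\,\di\sigma_r=C(n,\omega)(-\Delta_g)^{\omega+1}R(P)\,r^{2\omega+2}+o(r^{2\omega+2})$ with $(-\Delta_g)^{\omega+1}R(P)$ strictly negative, so one simply takes $c=0$, $\tilde\varphi_\e=u_\e$ (radial, hence trivially symmetrisable over the orbit) and concludes directly.
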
 

\begin{corollarye}\label{cor prin introe}
The conjecture \ref{HVcon introe} holds  for every smooth compact Riemannian manifold $(M,g)$ of dimension $n\in [ 3, 37]$.
\end{corollarye}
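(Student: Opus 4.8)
The plan is to obtain the corollary as a purely formal consequence of Theorem \ref{con HVa introe} together with the cases of Conjecture \ref{HVcon introe} already established in Theorem \ref{HV theorem introe}, by a short case analysis on the dimension $n$ and on the integer $\omega(P)$. So let $(M,g)$ be a smooth compact Riemannian manifold of dimension $n\in\intl 3,37\intr$ and let $G$ be a subgroup of $I(M,g)$; throughout we assume the standing hypothesis of the conjecture, namely that $(M,g)$ is not conformal to $(S_n,g_{can})$ or that the action of $G$ has no fixed point (this hypothesis is built into each of the statements we invoke).

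First I would dispose of the degenerate situations. Since $M$ is compact, $I(M,g)$ is a compact Lie group, so the closure of $G$ acts on $M$ with compact orbits; consequently either every $G$-orbit is infinite, or some $G$-orbit is finite. In the first case $\inf_{Q\in M}\Ca O_G(Q)=+\infty$, so the right-hand side of \eqref{HVI introe} is $+\infty$ while its left-hand side equals $4\tfrac{n-1}{n-2}\mu_G(g)<+\infty$, and \eqref{HVI introe} holds. If the action of $G$ is free we conclude by case (1) of Theorem \ref{HV theorem introe}; if $3\le n\le 11$ we conclude by case (2). Hence we may assume $12\le n\le 37$, that the action is not free, and that $\inf_{Q\in M}\Ca O_G(Q)$ is attained at some point $P$ whose orbit is finite and of minimal cardinality.

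The heart of the argument is then a trichotomy on $\omega(P)\in\mathbb N\cup\{+\infty\}$. If $\omega(P)\in\{0,1,2\}$ or $\omega(P)>(n-6)/2$ — in particular if $\omega(P)=+\infty$ — then $P$ witnesses case (3) of Theorem \ref{HV theorem introe}, and \eqref{HVI introe} follows. The only remaining possibility is $3\le\omega(P)\le(n-6)/2$; since $n\le 37$ one has $(n-6)/2\le\tfrac{31}{2}$, hence $\omega(P)\le 15$, so $P$ witnesses the first alternative of Theorem \ref{con HVa introe}, which again yields \eqref{HVI introe}. All cases being covered, the corollary is proved.

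I do not expect any substantial obstacle at the level of the corollary: the whole analytic content — the construction of a $G$-invariant test function concentrated on the minimal orbit of $P$ and the long Aubin-type expansion of $I_g(\varphi_\e)$ that pushes it below the threshold — is carried out in the proof of Theorem \ref{con HVa introe}, which we are entitled to use. The only point requiring care is the bookkeeping: one must check that the band of exponents left uncovered by Theorem \ref{HV theorem introe}, namely $\intl 3,[(n-6)/2]\intr$, is contained in $\intl 3,15\intr$ exactly when $n\le 37$ — for $n=38$ one would already need the case $\omega(P)=16$, which lies beyond the range of Theorem \ref{con HVa introe} — and that the degenerate cases (infinite minimal orbit, free action, and the observation that a fixed point of $G$ forces $(M,g)$ not to be conformal to $(S_n,g_{can})$, so the standing hypothesis is never violated) are all consistent with the hypotheses of the theorems invoked.
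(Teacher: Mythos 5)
Your proposal is correct and follows essentially the same route as the paper: reduce to a point $P$ of minimal finite orbit, apply item 3 of Theorem \ref{HV theorem introe} when $\omega(P)>(n-6)/2$, and otherwise note that $\omega(P)\le[(n-6)/2]\le 15$ for $n\le 37$ so that Theorem \ref{con HVa introe} applies. The extra bookkeeping you include (infinite orbits, free actions, low dimensions) is harmless and consistent with the paper's brief remarks preceding its proof.
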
 

We prove this theorem using long and subtle computations (introduced by T.~Aubin \cite{Aub5}). We use the test function $\varphi_\e$, defined in the following way: for an arbitrary fixed point $P$ in $M$, for any $Q\in M$ 

\begin{gather}\label{fonction test introe}
\varphi_{\e}(Q)=(1- r^{\alp} f(\xi))u_{\e}(Q)\\
\text{with }u_{\e}(Q)=\begin{cases}\biggl(\displaystyle\frac{\varepsilon}{r^2+\varepsilon^2}\biggr)^{\frac{n-2}{2}}-\biggl(\frac{\varepsilon}{\delta^2+\varepsilon^2}\biggr)^{\frac{n-2}{2}} &\mbox{ if }Q\in B_{P}(\delta)\label{uepsilon introe}\\
\hspace{2cm}0 &\mbox{ if }Q\in M-B_{P}(\delta)
\end{cases}
\end{gather}
where $r=d(Q,P)$ is the distance between  $P$ and $Q$. $(r,\xi^j)$ is a geodesic coordinates system of $Q$, defined in the neighborhood of $P$, and $B_{P}(\delta)$ is a geodesic ball of center $P$ and of radius $\delta$, fixed sufficiently small, and $f$ is a function depending only on $\xi$  such that $\int_{S_{n-1}}f d\sigma=0$. The choice of the this function $f$ allow us to prove the results of this part.\\

We obtain also the following theorem: 
\begin{theoreme}\label{theo introe}
Let $(M,g)$ be a compact Riemannian manifold of dimension $n\geq 3$. For any $P\in M$ such that $\omega(P)\leq (n-6)/2$, there exists  $f\in C^\infty(S_{n-1})$ with  vanishing mean integral, such that 
$$\mu(g)\leq I_g(\varphi_\e)< \frac{n(n-2)}{4}\omega_{n-1}^{2/n}$$
\end{theoreme}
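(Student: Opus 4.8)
The strategy is to produce a sharp asymptotic expansion of $I_g(\varphi_\e)$ as $\e\to 0$, in the spirit of T.~Aubin \cite{Aub5}. First I would normalize the geometry near $P$: after a preliminary conformal change of $g$ (which affects neither $\mu(g)$ nor, up to a harmless fixed factor, the threshold $\frac{n(n-2)}{4}\omega_{n-1}^{2/n}$), one may assume that in geodesic normal coordinates centered at $P$ the volume density is $1+O(r^{\omega+2})$ and that the first nonflat term in the Taylor expansion of $g$ occurs at order $r^{\omega+2}$ and is expressed through $\nabla^{\omega(P)}W_g(P)$. Here finiteness of $\omega(P)$ (forced by $\omega(P)\le(n-6)/2$) gives $\nabla^{\omega(P)}W_g(P)\neq 0$, and the bound $2\omega+4\le n-2$ guarantees that every intermediate curvature correction sits at a strictly higher power of $\e$ than the one we shall exploit (a borderline logarithmic factor appears exactly when $2\omega+4=n-2$).

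Next I would insert $\varphi_\e=(1-r^{\omega+2}f(\xi))u_\e$ into the numerator $\int_M\bigl(|\nabla\varphi_\e|^2+\an R_g\varphi_\e^2\bigr)\di v$ and into $\|\varphi_\e\|_{\Nn}^2$, and expand both in powers of $\e$. The ``flat part'' reproduces the bubble value $\frac{n(n-2)}{4}\omega_{n-1}^{2/n}$ up to $o(\e^{2\omega+4})$. The order-$r^{\omega+2}$ terms (both the leading Weyl term in $g$ and the first-order effect of $1-r^{\omega+2}f$) integrate to zero because the Weyl tensor is trace free and $\int_{S_{n-1}}f\,\di\sigma=0$; so the first genuine correction appears at order $\e^{2\omega+4}$ and decomposes into three pieces: a nonnegative $c(P)$ proportional to $\|\nabla^{\omega(P)}W_g(P)\|^2$ (from the curvature of $g$), a linear form $L(f)$ (from the cross term $\langle\nabla u_\e,\nabla(r^{\omega+2}fu_\e)\rangle$ and from the coupling of $r^{\omega+2}f$ with $L_gu_\e$ via the bubble equation), and a negative quadratic form $Q(f)$ (from $|\nabla(r^{\omega+2}fu_\e)|^2$ and from the expansion of the denominator). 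All angular integrals are evaluated by expanding $f$ and the relevant trace of $\nabla^{\omega(P)}W_g(P)$ into eigenfunctions $\varphi_k$ of $\Delta_{S_{n-1}}$, which reduces the problem to explicit one-dimensional integrals in $r$, convergent precisely because $2\omega+4\le n-2$.

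Then I would set $f=\sum_{k=1}^q c_k\nu_k\varphi_k$ with $q\in\intl 1,[\omega(P)/2]\intr$ and the $\varphi_k$ the (non-constant) spherical harmonics occurring in the angular decomposition of the leading Weyl term — so that $f$ is smooth with zero mean — and optimize over $(c_k)$. Since $Q$ is negative definite on the span of these modes and $L\not\equiv 0$ whenever $\nabla^{\omega(P)}W_g(P)\neq 0$, completing the square gives $\min_{(c_k)}\bigl(c(P)+L(f)+Q(f)\bigr)=-C\|\nabla^{\omega(P)}W_g(P)\|^2$ for an explicit $C>0$, hence $<0$. For the corresponding $f$ one obtains $I_g(\varphi_\e)=\frac{n(n-2)}{4}\omega_{n-1}^{2/n}-C\|\nabla^{\omega(P)}W_g(P)\|^2\e^{2\omega+4}+o(\e^{2\omega+4})<\frac{n(n-2)}{4}\omega_{n-1}^{2/n}$ for $\e$ small; and $\mu(g)\le I_g(\varphi_\e)$ is automatic since $\varphi_\e\in H_1(M)\setminus\{0\}$ and $\mu(g)=\inf I_g$.

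The main obstacle is the second step, i.e.\ carrying the expansion of $I_g(\varphi_\e)$ accurately to order $\e^{2\omega+4}$: one must control every intermediate power of $\e$, keep exact track of the numerical constants in the radial and angular integrals, and handle the borderline logarithmic case $\omega(P)=(n-6)/2$ with care. The truly delicate point is the final comparison, namely showing that after optimization the linear contribution overrides the curvature term, i.e.\ $\frac{1}{4}L\,Q^{-1}L>c(P)$; this is what pins down the precise spherical harmonics $\varphi_k$ and the explicit coefficients $c_k$, and it is the computational heart of the statement.
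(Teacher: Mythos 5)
Your overall strategy (expansion of $I_g(\varphi_\e)$ to order $\e^{2\omega+4}$, decomposition into spherical harmonics, optimization over the coefficients $c_k$) is the paper's, but the structure you assign to the order-$\e^{2\omega+4}$ coefficient is wrong in a way that would make the final comparison fail. The linear form in $f$ is $-\frac{n-2}{2(n-1)}\int_{S_{n-1}}f\bar R\,\di\sigma$, where $\bar R$ is the leading homogeneous part of the \emph{scalar curvature} at $P$ (of degree $\mu\geq\omega$), not a quantity controlled by $\nabla^{\omega}W_g(P)$. These two can decouple: one may have $\nabla^{\omega}W_g(P)\neq 0$ while $\bar R\equiv 0$ (i.e.\ $\mu\geq\omega+1$), in which case your linear form $L$ vanishes at the relevant order and completing the square over $f$ gains strictly nothing. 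The paper disposes of that case by an entirely separate and deep input (Theorem \ref{aaa}, due to Aubin): when $\mu\geq\omega+1$ one has $\bar\int_{S(r)}R_g\,\di\sigma_r=C(n,\omega)(-\Delta_g)^{\omega+1}R_g(P)\,r^{2\omega+2}+o(r^{2\omega+2})$ with $(-\Delta_g)^{\omega+1}R_g(P)<0$, so $f=0$ already works. Your sketch contains no substitute for this.

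The same input is needed even when $\mu=\omega$. The curvature term is $\bar\int_{S(r)}r^{-2\omega-2}R_g\,\di\sigma_r$, which is \emph{not} a nonnegative multiple of $\|\nabla^{\omega}W_g(P)\|^2$; its sign is a priori unknown (for $\omega=0$ it is in fact negative, which is why Aubin's original argument needs no $f$ at all). The paper splits the metric perturbation $\bar g_{ij}=a_{ij}+b_{ij}$ with $\nabla^{ij}a_{ij}=0$ and $b_{ij}$ reconstructed from the eigenfunctions $\varphi_k$ appearing in $\bar R=r^{\omega}\sum_k\nu_k\varphi_k$; the $a$-part is invisible to the linear form yet contributes to the quadratic curvature integrals $B$, $C$, $Q$, and its contribution is shown to be $\leq 0$ only by appealing again to Theorem \ref{aaa}. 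The closing step is then not a global inequality $\tfrac14 L\,Q^{-1}L>c(P)$ but the mode-by-mode polynomial inequality $u_k-(n-2)^2\nu_k^2/d_k<0$ for $k\leq[\omega/2]$ (Lemma \ref{lemme poly}), which is where the hypothesis $\omega+2\leq(n-2)/2$ enters. Without the splitting $a+b$ and without Aubin's sign theorem for the divergence-free part, your optimization over $f$ cannot dominate the curvature term, so the argument as sketched does not close.
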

This result generalizes  T.~Aubin's \cite{Aub} theorem (which corresponds to  $\omega=0$ and proves conjecture \ref{Aubincon introe}). For the  above theorem, the function  $f$ is defined as 
$$f=\sum_{k=1}^qc_k\nu_k\varphi_k$$
where  $\varphi_k$ are the eigenfunctions of the Laplacian on the sphere  $S_{n-1}$, $\nu_k$ are the associated eigenvalues, and  $q\in \intl 1,[\frac{\omega}{2}]\intr$. The constants $c_k$ are given explicitly. If $f$ is $G-$invariant, then we would construct, using  $\varphi_k$, a $G-$invariant test function, which would prove the conjecture \ref{HVcon introe}, in all the remaining cases. Unfortunately, $f$ is only $G-$invariant for a special choice of $c_k$, and this particular choice allows us to prove the conjecture only in the cases stated in theorem  \ref{con HVa introe}.

\chapter{Théorèmes de régularité et généralités }

\renewcommand{\proofname}{\textbf{Preuve}}
Tout au long de cette thèse, on utilise la convention d'Einstein pour les indices. $M$ sera toujours une variété compacte, sans bord, $C^\infty$ de dimension $n\geq 3$, sauf mention contraire. On commence par rappeler les définitions des courbures de Riemann, Ricci, scalaire et de Weyl. 
\section{Les courbures}\label{curvature}
\begin{definition}
 Soient $(M,g)$ une variété riemannienne $C^\infty$ et $\nabla_g$ (ou simplement $\nabla$) la connexion riemannienne associée (i.e. la connexion sans torsion pour laquelle $g$ est à dérivée covariante nulle). On note par $\Gamma(M)$ l'ensemble des champs de vecteurs $C^\infty$ définis sur $M$.
\begin{enumerate}
 \item $X,\;Y$, $Z$ et $T$ étant quatre  champs de vecteurs dans $\Gamma(M)$. La courbure de Riemann $R$ est l'application bilinéaire antisymétrique de $\Gamma(M)\times\Gamma(M)$ dans $Hom(\Gamma(M),\Gamma(M))$, définie par 
$$R(X,Y)Z=\nabla_X\nabla_YZ-\nabla_Y\nabla_XZ-\nabla_{[X,Y]}Z$$
On appelle tenseur de courbure de Riemann de $g$ le champ de tenseur $C^\infty$ quatre fois covariants défini par
$$R(X,Y,Z,T)=g(X,R(Z,T)Y)=R_{ijkl}X^iY^jZ^kT^l$$ 
dans une carte locale; $R_{ijkl}$ sont les composantes du tenseur de courbure. 
\item La courbure de Ricci de $g$ est le champ de tenseur $C^\infty$, deux fois covariants, obtenu en contractant par $g$ le tenseur de courbure de Riemann de $g$ de la manière suivante $$Ric_{ij}=g^{kl}R_{kilj}$$
où $g^{kl}$ sont les composantes de $g^{-1}$.
\item La courbure scalaire de $g$ est la trace du tenseur de Ricci, notée $R_g$. Dans une carte locale $R_g=g^{ij}Ric_{ij}$
\end{enumerate}

\begin{proprietes}\label{prop courbure}
 Soient $X$ un champ de vecteurs et $\omega$ une $1-$forme. Dans un système de coordonnées locales, $(\nabla_{\partial_i} X)^k$ est notée $\nabla_i X^k$  et  $(\nabla_{\partial_i}\omega)_k$ est notée $\nabla_i\omega_k$. Rappelons les formules de permutation des dérivées covariantes suivantes
$$\nabla_{ij}X^l-\nabla_{ji}X^l=R^l_{kij}X^k,\qquad\nabla_{ij}\omega_l-\nabla_{ji}\omega_l=-R^k_{lij}\omega_k$$
où $ R^l_{kij}=g^{lm}R_{mkij}$.\\ 
Pour tout champ de tenseur $C^2$ deux fois covariants $T$:
$$\nabla_{ij}T_{kl}-\nabla_{ji}T_{kl}=-R^m_{kij}T_{ml}-R^m_{lij}T_{km}$$

\end{proprietes}
\end{definition}

On aura l'occasion d'utiliser ces propriétés dans le  chapitre \ref{CHAub} et l'appendice.
\begin{definition}\label{Weyl def}
 La courbure de Weyl $W$ de la variété riemannienne $(M,g)$, de dimension $n\geq 3$ est définie par le champ de tenseurs quatre fois covariants dont les composantes sont
$$W_{ijkl}=R_{ijkl}-\frac{1}{n-2}(R_{ik}g_{jl}-R_{il}g_{jk}+R_{jl}g_{ik}-R_{jk}g_{il})+\frac{R_g}{(n-1)(n-2)}(g_{ik}g_{jl}-g_{il}g_{jk})$$ 
\end{definition}
Le tenseur de Weyl est obtenu à partir du tenseur de courbure de Riemann, en recherchant un tenseur invariant par transformation conforme de la variété: si $\gt=e^fg$ est une métrique conforme à $g$ alors $W_{\gt}=e^fW_{g}$. 
\begin{definition}
Une variété riemannienne $(M,g)$ est dite conformément plate si pour tout $Q\in M$, il existe un voisinage ouvert $\Omega$ de $Q$ et  une métrique $\gt$ conforme à $g$, tels que le tenseur de courbure de Riemann associé à la métrique $\gt$ est identiquement nul sur $\Omega$.
\end{definition}
Le tenseur de Weyl  est identiquement nul si la variété est de dimension 3 ou si elle est conformément plate.
\section{Le Laplacien}
\begin{definition}
 Sur $(M,g)$ une variété riemannienne $C^\infty$, le Laplacien $\Delta_gf$ d'une fonction $f\in C^2(M)$ est l'opposé de la trace de la hessienne de $f$, donné par 
$$\Delta_g f=-\nabla_i\nabla^i f=-g^{ij}\nabla_i\nabla_j f=-g^{ij}(\partial_{ij}f-\Gamma^k_{ij}\partial_kf)$$ 
Dans un système de coordonnées polaires  $(r,\xi^i)$ ($i.e.\; g_{rr}=1$, $g_{r\xi^i}=0$) si $f(r)$  est une fonction radiale alors le Laplacien de $f$ s'écrit
$$\Delta_gf(r)=-f''(r)-\frac{n-1}{r}f'(r)-f'(r)\partial_r\log\sqrt{\det g}$$
\end{definition}
\paragraph{Remarques.}Tout au long de cette thèse, on utilise le Laplacien géométrique défini ci-dessus, avec des valeurs propres positives.\\ 
On  définit le Laplacien $\Delta_g f$ d'une fonction $f\in H_1(M)$ (voir plus bas pour la définition de $H_1(M)$) par :
 pour tout $\psi\in H_1(M)$
$$(\Delta_gf,\psi)_{g,L^2}=(\nabla f,\nabla\psi)_{g,L^2}$$
où $(\cdot,\cdot)_{g,L^2}$ est le produit scalaire standard dans $L^2(M)$ muni de la métrique $g$, dont on omettra la lettre $g$ lorsque il n y a pas d'ambiguïté.

\section{Les espaces de Sobolev}\label{sobddd}

\begin{definition}
Soit $(M,g)$ une variété riemannienne $C^\infty$ de dimension $n$, $p\geq 1$ un nombre réel, $k$ et $r$ sont deux entiers naturels
\begin{enumerate}
\item L'espace de Sobolev $H_k^p(M)$ est le complété de l'espace $\{f\in  C^{\infty}(M),\;
 |\nabla^lf|\in L^p(M)\quad\forall\;0\leq l\leq k \}$ pour la norme
$$\|f\|_{p,k}=\sum_{l=0}^{k}\|\nabla^l f\|_p$$
\item$C^{r,\beta}(M)$ est l'espace de Hölder des fonctions $C^r$ dont la r-ème dérivée appartient
à $$C^\beta(M)=\{f\in C^0(M),\; \|f\|_{C^\beta}:=\|f\|_\infty+\sup_{P\neq Q}
\frac{|f(P)-f(Q)|}{d(P,Q)^\beta}<+\infty\}$$
avec $\beta\in[0,1[$.\\
$C^{0,1}(M)$ est l'ensemble des fonctions lipschitzienne.
\end{enumerate}
L'espace $H^2_k(M)$ est un espace de Hilbert pour le produit scalaire suivant
$$(f,h)_{H_k}=\sum_{l=0}^k(\nabla^l f,\nabla^l h)_{L^2} $$
Dans  la suite, $H^2_k(M)$ est  noté $H_k(M)$.
\end{definition}
La norme correspondante au produit scalaire  sur $H_k(M)$ est équivalente à la norme $\|\cdot\|_{2,k}$. \\
\begin{definition}\label{g sobolev}
 Soit $(M,g_0)$ une variété riemannienne compacte de dimension $n$. On note par $T^*(M)$ le fibré cotangent de $M$. L'espace $H^p_k(M,T^*M\otimes  T^*M)$ est l'ensemble des sections $g$ (des tenseurs 2 fois covariants) telles que dans toute carte exponentielle, les composantes $g_{ij}$ de $g$ sont dans $H^p_k$.
\end{definition}

L'espace $H^p_k(M,T^*M\otimes  T^*M)$ ne dépend pas de la métrique $g_0$. On peut aussi définir cet espace, en utilisant le théorème du plongement isométrique de Nash. Les deux théorèmes qui suivent sont encore valables pour cet espace $H^p_k(M,T^*M\otimes  T^*M)$.

\begin{theorem}[Théorème d'inclusions de Sobolev]\label{incsob}
Soit $(M,g)$ une variété riemannienne compacte de dimension $n$.
\begin{description}
\item[$(i)$]Si $k$ et $l$ deux entiers ($k>l\geq 0$), $p$ et $q$ deux réels ($p>q\geq 1$) 
qui vérifient $1/p=1/q-(k-l)/n$ alors $H_k^q(M)$ est inclus dans $ H_l^p(M)$ et l'inclusion $H_k^q(M)\subset H_l^p(M)$ est continue.
\item[$(ii)$]Si $r\in\mathbb{N}$ et $(k-r)/n>1/q$ alors l'inclusion $H_k^q(M)\subset C^r(M)$ est continue
\item[$(iii)$]Si $(k-r-\beta)/n\geq 1/q$ alors l'inclusion $H_k^q(M)\subset C^{r,\beta}(M)$ est continue avec $\beta\in]0,1[$
\end{description} 
dans tous les cas $H_k^q(M)$ ne dépend pas de la métrique $g$
\end{theorem}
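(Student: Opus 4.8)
\emph{Esquisse de preuve.} La d�monstration est classique et consiste � ramener le probl�me au cas euclidien � l'aide d'un atlas fini. Le plan est le suivant.

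D'abord, on recouvre la vari�t� compacte $M$ par un nombre fini de cartes exponentielles $(\Omega_m,\exp_{P_m}^{-1})$, $m=1,\dots,N$, dont les images sont des boules euclidiennes de rayon fix� dans $\R^n$, et on se donne une partition de l'unit� $C^\infty$ $(\eta_m)$ subordonn�e � ce recouvrement. Dans chacune de ces cartes, les composantes $g_{ij}$ de la m�trique sont $C^\infty$, born�es ainsi que toutes leurs d�riv�es, et la matrice $(g_{ij})$ est uniform�ment elliptique; il en va de m�me des symboles de Christoffel. On en d�duit que pour tout $f\in C^\infty(M)$ la norme $\|f\|_{q,k}$ (calcul�e avec $g$) est �quivalente, uniform�ment sur $M$, � la somme $\sum_m\|(\eta_m f)\circ\exp_{P_m}\|_{W^{k,q}(\R^n)}$ des normes de Sobolev euclidiennes des fonctions port�es par les cartes.

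Ensuite, on applique dans chaque carte les in�galit�s de Sobolev euclidiennes sur $\R^n$: l'in�galit� de Gagliardo--Nirenberg--Sobolev it�r�e donne le point $(i)$ sous la condition $1/p=1/q-(k-l)/n$, tandis que l'in�galit� de Morrey et ses it�r�es donnent les plongements de type H�lder des points $(ii)$ et $(iii)$ sous les conditions $k-r>n/q$, resp. $k-r-\beta\ge n/q$. Comme les fonctions $(\eta_m f)\circ\exp_{P_m}$ sont � support compact, ces in�galit�s s'appliquent sans terme de bord. On recolle les estimations obtenues carte par carte, en utilisant que $\sum_m\eta_m=1$ et que le recouvrement est fini, ce qui fournit la continuit� de l'inclusion sur $M$ pour les fonctions lisses; la densit� de $C^\infty(M)$ dans $H_k^q(M)$ (par d�finition de cet espace comme compl�t�) permet d'�tendre l'inclusion � tout $H_k^q(M)$. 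Enfin, deux m�triques sur une vari�t� compacte �tant uniform�ment �quivalentes, les normes associ�es le sont aussi, d'o� l'ind�pendance vis-�-vis de $g$.

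Le point d�licat est l'�tape de comparaison entre d�riv�es covariantes et d�riv�es partielles dans les cartes: on v�rifie par r�currence sur $l$ que $\nabla^l f$ s'exprime comme combinaison des d�riv�es partielles de $f$ d'ordre $\le l$, avec des coefficients born�s construits � partir des symboles de Christoffel et de leurs d�riv�es, et r�ciproquement; c'est la compacit� de $M$ qui garantit l'uniformit� des bornes, indispensable au recollement. Le reste n'est qu'invocation des in�galit�s euclidiennes standard et d�coupage par partition de l'unit�.
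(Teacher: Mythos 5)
Le texte ne d�montre pas ce th�or�me : il renvoie simplement aux preuves d�taill�es du livre de T.~Aubin (chapitre 2), de celui d'Adams et de celui de Hebey. Votre esquisse reproduit correctement l'argument classique contenu dans ces r�f�rences (atlas fini de cartes exponentielles, partition de l'unit�, �quivalence uniforme entre d�riv�es covariantes et d�riv�es partielles gr�ce � la compacit�, in�galit�s euclidiennes de Sobolev et de Morrey, recollement, densit� de $C^\infty(M)$, �quivalence des m�triques sur une vari�t� compacte) ; elle est donc correcte et conforme � l'approche standard que le texte invoque.
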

Une preuve détaillée du théorème est donnée dans le livre de T.~Aubin \cite{Aubin}, chapitre 2, celui de Adams \cite{Ada} ou de E.~Hebey \cite{Heb}. On utilisera souvent l'espace de Hilbert $H_1(M)$ muni de la norme 
$$\|\varphi\|_{H_1}^2=\|\varphi\|^2_2+\|\nabla\varphi\|^2_2$$
pour minimiser des fonctionnelles. Cet espace est inclus  continûment dans $L^{q}(M)$, pour tout $q\in [1,2n/(n-2)]$.\\

Kondrakov a montré que les inclusions de Sobolev sont compactes dans les cas suivants:
\begin{theorem}[Kondrakov]\label{kon} Soit $(M_n,g)$ une variété riemannienne compacte. $k$ un entier naturel, $p$ et $q$ deux nombres réels qui vérifient $1\geq 1/p>1/q-k/n>0$ alors
\begin{description}
\item[(i)]l'inclusion $H_k^q(M)\subset L^p(M)$ est compacte
\item[(ii)]l'inclusion $H_k^q(M)\subset C^\alpha(M)$ est compacte 
si $k-\alpha>n/q$ avec $0\leq\alpha<1$ 
\end{description}
\end{theorem}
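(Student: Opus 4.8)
Le plan est de se ramener au th\'eor\`eme classique de Rellich--Kondrakov sur les ouverts born\'es de $\R^n$. On commence par se r\'eduire au cas $k=1$ : par le th\'eor\`eme d'inclusion de Sobolev \ref{incsob}(i), l'inclusion $H_k^q(M)\subset H_1^s(M)$ est continue, o\`u $1/s=1/q-(k-1)/n$ (bien d\'efini, et $s>q$, puisque l'hypoth\`ese $1/q>k/n$ entra\^ine $1/q>(k-1)/n$), et l'hypoth\`ese $1/p>1/q-k/n$ devient $1/p>1/s-1/n$ ; il suffit donc de montrer que $H_1^s(M)\subset L^p(M)$ est compacte. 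Ensuite, on globalise \`a l'aide d'un atlas fini de cartes $(\Omega_i,\psi_i)_{1\leq i\leq N}$ et d'une partition de l'unit\'e $(\chi_i)$ subordonn\'ee : si $(u_m)$ est born\'ee dans $H_1^s(M)$, chaque $(\psi_i)_*(\chi_i u_m)$ est \`a support dans une boule fixe $B\subset\R^n$ et born\'ee dans $W^{1,s}(B)$ ; admettant la compacit\'e euclidienne $W^{1,s}(B)\hookrightarrow L^p(B)$, une extraction diagonale en $i=1,\dots,N$ donne une sous-suite pour laquelle chaque $(\psi_i)_*(\chi_i u_m)$ converge dans $L^p(B)$, donc $u_m=\sum_i\chi_i u_m$ converge dans $L^p(M)$. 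Le cas des tenseurs $H_k^q(M,T^*M\otimes T^*M)$ s'en d\'eduit composante par composante dans les cartes exponentielles.

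Il reste \`a \'etablir la compacit\'e de $W^{1,s}(B)\hookrightarrow L^p(B)$ lorsque $1/p>1/s-1/n>0$. On prolonge chaque $u_m$ en un \'el\'ement de $W^{1,s}(\R^n)$ \`a support dans un compact fixe, born\'e ind\'ependamment de $m$, puis on r\'egularise : $u_m^{\varepsilon}=\rho_\varepsilon\ast u_m$. D'une part $\|u_m^{\varepsilon}-u_m\|_{L^s}\leq C\varepsilon\|\nabla u_m\|_{L^s}\leq C'\varepsilon$ uniform\'ement en $m$ ; d'autre part, pour $\varepsilon$ fix\'e, la famille $(u_m^{\varepsilon})_m$ est born\'ee et \'equilipschitzienne, donc pr\'ecompacte dans $C^0$ (Arzel\`a--Ascoli), a fortiori dans $L^s$. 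Ainsi $(u_m)$ est totalement born\'ee dans $L^s$, donc --- $L^s$ \'etant complet --- une sous-suite y converge. Pour passer de $L^s$ \`a $L^p$, on utilise l'inclusion de Sobolev continue $W^{1,s}\hookrightarrow L^{s^*}$ avec $s^*=ns/(n-s)$ et l'in\'egalit\'e d'interpolation $\|v\|_{L^p}\leq\|v\|_{L^s}^{1-\theta}\|v\|_{L^{s^*}}^{\theta}$, o\`u $\theta\in[0,1)$ est donn\'e par $1/p=(1-\theta)/s+\theta/s^*$ (possible car $1/p>1/s^*=1/s-1/n$) ; appliqu\'ee aux diff\'erences $u_m-u_l$ de la sous-suite, elle montre que celle-ci est de Cauchy dans $L^p$.

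Pour le point (ii), on choisit $\alpha'$ avec $\alpha<\alpha'<1$ et $\alpha'\leq k-n/q$ (possible puisque $\alpha<1$ et $\alpha<k-n/q$). Par le th\'eor\`eme \ref{incsob}(iii), l'inclusion $H_k^q(M)\subset C^{0,\alpha'}(M)$ est continue ; une suite born\'ee $(u_m)$ dans $H_k^q(M)$ est donc born\'ee dans $C^{0,\alpha'}(M)$, donc \'equicontinue et uniform\'ement born\'ee, et comme $M$ est compacte, Arzel\`a--Ascoli fournit une sous-suite convergeant dans $C^0(M)$. Cette sous-suite converge en fait dans $C^{0,\alpha}(M)$ : pour $v=u_m-u_l$, on majore la semi-norme h\"old\'erienne d'exposant $\alpha$ en s\'eparant les couples $(P,Q)$ selon que $d(P,Q)\leq\delta$ ou $d(P,Q)>\delta$, ce qui donne $[v]_{C^\alpha}\leq\delta^{\alpha'-\alpha}[v]_{C^{\alpha'}}+2\delta^{-\alpha}\|v\|_\infty\leq C\delta^{\alpha'-\alpha}+2\delta^{-\alpha}\|v\|_\infty$ ; en choisissant d'abord $\delta$ petit puis $m,l$ grands, on conclut.

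\textbf{Le principal obstacle} est le c\oe ur du cas euclidien $k=1$ : montrer qu'une suite born\'ee dans $W^{1,s}$ est totalement born\'ee (pr\'ecompacte) dans $L^s$ via la r\'egularisation. Tout le reste --- la globalisation par cartes et partition de l'unit\'e, la r\'eduction \`a $k=1$, et les interpolations $L^s\to L^p$ puis $C^0\to C^\alpha$ --- est routinier une fois cette compacit\'e acquise. C'est aussi l\`a qu'intervient de fa\c{c}on essentielle l'in\'egalit\'e stricte $1/p>1/q-k/n$ (de m\^eme $k-\alpha>n/q$) : au cas critique l'inclusion reste continue mais cesse d'\^etre compacte.
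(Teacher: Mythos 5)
Le texte ne d\'emontre pas ce th\'eor\`eme : il l'\'enonce comme un r\'esultat classique de Kondrakov, au m\^eme titre que le th\'eor\`eme \ref{incsob} pour lequel il renvoie aux livres de T.~Aubin, Adams et E.~Hebey. Il n'y a donc pas de preuve interne \`a laquelle comparer la v\^otre ; votre argument est la d\'emonstration standard de Rellich--Kondrakov (r\'eduction \`a $k=1$ par \ref{incsob}(i), localisation par cartes et partition de l'unit\'e, r\'egularisation par convolution plus Arzel\`a--Ascoli pour la pr\'ecompacit\'e dans $L^s$, puis interpolation ; et pour (ii), Arzel\`a--Ascoli suivi de l'interpolation des semi-normes h\"old\'eriennes), et elle est correcte dans ses grandes lignes. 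Un seul point \`a rectifier : l'in\'egalit\'e d'interpolation $\|v\|_{L^p}\leq\|v\|_{L^s}^{1-\theta}\|v\|_{L^{s^*}}^{\theta}$ avec $\theta\in[0,1)$ exige $s\leq p<s^*$, alors que l'\'enonc\'e autorise $p\geq 1$ quelconque v\'erifiant $1/p>1/q-k/n$, donc \'eventuellement $p<s$ (votre condition \og $1/p>1/s^*$ \fg{} garantit $\theta<1$ mais pas $\theta\geq 0$). Ce cas se traite imm\'ediatement par H\"older sur un espace de mesure finie, puisque $L^s(M)\subset L^p(M)$ contin\^ument d\`es que $p\leq s$ et $vol(M)<\infty$ ; il suffit de le dire. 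Notez aussi, pour que la r\'eduction \`a $k=1$ soit compl\`ete, que $s<n$ (n\'ecessaire \`a l'injection $W^{1,s}\hookrightarrow L^{s^*}$) d\'ecoule bien de l'hypoth\`ese $1/q-k/n>0$, comme vous l'utilisez implicitement.
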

Grâce aux inclusions de Sobolev, on montre le résultat suivant:
\begin{proposition}\label{algebre}
 Soit $(M,g)$ une variété riemannienne compacte de dimension $n$. Si $p>n/2$ alors $H^p_2(M)$ est une algèbre.
\end{proposition}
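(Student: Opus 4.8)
Le but est de montrer que si $p>n/2$ alors $H^p_2(M)$ est une algèbre, c'est-à-dire que le produit de deux fonctions de $H^p_2(M)$ est encore dans $H^p_2(M)$, avec une estimation de la forme $\|uv\|_{p,2}\leq C\|u\|_{p,2}\|v\|_{p,2}$. Le plan est d'utiliser la formule de Leibniz pour les dérivées covariantes et de contrôler chacun des termes à l'aide des inclusions de Sobolev rappelées dans le théorème \ref{incsob} et du théorème de Kondrakov \ref{kon}. Comme $M$ est compacte, il suffit de vérifier que $uv$, $\nabla(uv)$ et $\nabla^2(uv)$ sont dans $L^p(M)$ pour $u,v\in H^p_2(M)$.

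D'abord, je traiterais le cas élémentaire: par l'inclusion de Sobolev $(ii)$, puisque $p>n/2$ donne $(2-0)/n>1/p$ dès que $p>n/2$, on a $H^p_2(M)\subset C^0(M)$ avec injection continue, donc $u$ et $v$ sont bornées et continues; ainsi $uv\in L^\infty(M)\subset L^p(M)$ avec $\|uv\|_\infty\leq \|u\|_\infty\|v\|_\infty\leq C\|u\|_{p,2}\|v\|_{p,2}$. Ensuite, pour le terme du premier ordre, $\nabla(uv)=u\nabla v+v\nabla u$; comme $u,v\in L^\infty$ et $\nabla u,\nabla v\in L^p$ (car $u,v\in H^p_2\subset H^p_1$), chaque terme est dans $L^p$ et on obtient l'estimation voulue. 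Il reste le terme du second ordre $\nabla^2(uv)=u\,\nabla^2 v+2\,\nabla u\otimes\nabla v+v\,\nabla^2 u$. Les termes $u\,\nabla^2 v$ et $v\,\nabla^2 u$ sont immédiatement dans $L^p$ puisque $u,v\in L^\infty$ et $\nabla^2 u,\nabla^2 v\in L^p$.

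Le point principal, et le seul qui demande un vrai argument, est le terme croisé $\nabla u\otimes\nabla v$: il faut montrer que si $u,v\in H^p_2(M)$ alors $\nabla u\cdot\nabla v\in L^p(M)$. L'idée est d'appliquer l'inclusion de Sobolev $(i)$ à $\nabla u$ et $\nabla v$, vus comme éléments de $H^p_1$ (au sens où leurs composantes dans des cartes sont dans $H^p_1$, ce qui découle de $u\in H^p_2$). Par $(i)$ avec $k=1$, $l=0$, $q=p$, on obtient $H^p_1(M)\subset L^{p^*}(M)$ où $1/p^*=1/p-1/n$ lorsque $p<n$ (et $\nabla u\in L^s$ pour tout $s<\infty$ si $p=n$, et $\nabla u\in L^\infty$ si $p>n$). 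On vérifie alors par l'inégalité de Hölder que $\|\nabla u\cdot\nabla v\|_p\leq\|\nabla u\|_{2p}\|\nabla v\|_{2p}$, et il suffit que $2p\leq p^*$, soit $1/(2p)\geq 1/p-1/n$, i.e. $1/(2p)\geq ... $, autrement dit $n\geq 2p$ — ce qui est le cas délicat quand $n/2<p\leq n$: dans ce régime $p^*\geq 2p$ dès que $2/p^* = 2/p - 2/n \le 1/p$, ce qui se réécrit $1/p\le 2/n$, c'est-à-dire $p\le 2n$ — toujours vérifié dans la plage $n/2<p\le n$. Lorsque $p>n$, $\nabla u,\nabla v\in L^\infty$ et le terme croisé est trivialement dans $L^p$ par compacité de $M$. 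Dans tous les cas on conclut $\nabla u\otimes\nabla v\in L^p(M)$ avec une estimation $\leq C\|u\|_{p,2}\|v\|_{p,2}$. En combinant les trois contributions, $uv\in H^p_2(M)$ avec $\|uv\|_{p,2}\leq C\|u\|_{p,2}\|v\|_{p,2}$, ce qui prouve que $H^p_2(M)$ est une algèbre. La principale subtilité est donc le contrôle du terme quadratique en les gradients, qui repose de façon cruciale sur l'hypothèse $p>n/2$ via le jeu d'exposants de Sobolev; le reste est routinier.
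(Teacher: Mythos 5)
Votre preuve est correcte et suit essentiellement la même démarche que celle du texte : formule de Leibniz, inclusion $H^p_2\subset C^0$ pour les termes $uv$, $u\nabla v$, $u\nabla^2 v$, puis Hölder $\||\nabla u||\nabla v|\|_p\leq\|\nabla u\|_{2p}\|\nabla v\|_{2p}$ combiné à l'inclusion $H^p_1\subset L^{2p}$ (valide dès que $p\geq n/2$) pour le terme croisé. Notez seulement deux coquilles algébriques sans conséquence : la condition $2p\leq p^*$ équivaut à $2p\geq n$ (et $1/p\leq 2/n$ se réécrit $p\geq n/2$, non $p\leq 2n$), ce qui est bien l'hypothèse $p>n/2$.
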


\begin{proof}
Il suffit de montrer que si $\varphi$ et $\psi$ sont dans $H^p_2(M)$ alors $\psi\varphi\in H^p_2(M)$. Par les inclusions de Sobolev (théorème \ref{incsob}), $H^p_2(M)\subset C^{\beta}(M)$ donc $\varphi$ et $\psi$ sont continues. Par la compacité de $M$ et la continuité de $\varphi$ et $\psi$: 
$$\nabla (\psi\varphi)=\psi\nabla\varphi+\varphi\nabla\psi\in L^p(M)$$
D'autre part $$\nabla^2(\psi\varphi)=\psi\nabla^2\varphi+\varphi\nabla^2\psi+\nabla\varphi\otimes\nabla\psi+\nabla\psi\otimes\nabla\varphi\in L^p(M)$$
En
 effet, $|\psi\nabla^2\varphi|+|\varphi\nabla^2\psi|\in L^p(M)$ par le même argument que précédemment, et comme 
$$\||\nabla\varphi||\nabla\psi|\|_p\leq \|\nabla\varphi\|_{2p}\|\nabla\psi\|_{2p}$$
 est borné (cf. théorème \ref{incsob}) alors $|\nabla\varphi||\nabla\psi|\in L^p(M)$. D'où $\varphi\psi\in H^p_2(M)$
\end{proof}

\subsection{Théorèmes des espaces de Banach}\label{teb}
\begin{theorem}
Un espace de Banach $\mathcal{B}$ est réflexif si et seulement si sa boule unité fermée est faiblement compact. 
\end{theorem}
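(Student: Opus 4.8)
L'ingr\'edient central est le plongement canonique $J\colon\mathcal{B}\to\mathcal{B}^{**}$ donn\'e par $J(x)(f)=f(x)$ pour $x\in\mathcal{B}$ et $f\in\mathcal{B}^{*}$ : c'est une isom\'etrie lin\'eaire, et par d\'efinition $\mathcal{B}$ est r\'eflexif si et seulement si $J$ est surjectif. Notons $B_{\mathcal{B}}$ et $B_{\mathcal{B}^{**}}$ les boules unit\'e ferm\'ees de $\mathcal{B}$ et $\mathcal{B}^{**}$. Le fait \'el\'ementaire mais d\'ecisif sur lequel tout repose est que $J$ r\'ealise un hom\'eomorphisme de $(\mathcal{B},\sigma(\mathcal{B},\mathcal{B}^{*}))$ sur son image munie de la topologie faible-$*$ induite : une suite g\'en\'eralis\'ee $x_\alpha$ converge faiblement vers $x$ dans $\mathcal{B}$ si et seulement si $f(x_\alpha)\to f(x)$ pour tout $f\in\mathcal{B}^{*}$, c'est-\`a-dire si et seulement si $J(x_\alpha)\to J(x)$ pour la topologie $\sigma(\mathcal{B}^{**},\mathcal{B}^{*})$.

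\smallskip
\noindent\emph{Sens direct.} Supposons $\mathcal{B}$ r\'eflexif. Alors $J$ est une bijection isom\'etrique de $\mathcal{B}$ sur $\mathcal{B}^{**}$, qui envoie $B_{\mathcal{B}}$ exactement sur $B_{\mathcal{B}^{**}}$. D'apr\`es le th\'eor\`eme de Banach--Alaoglu, $B_{\mathcal{B}^{**}}$ est compacte pour $\sigma(\mathcal{B}^{**},\mathcal{B}^{*})$ ; comme $J^{-1}$ est continue de $(B_{\mathcal{B}^{**}},\sigma(\mathcal{B}^{**},\mathcal{B}^{*}))$ vers $(B_{\mathcal{B}},\sigma(\mathcal{B},\mathcal{B}^{*}))$ d'apr\`es le point ci-dessus, $B_{\mathcal{B}}=J^{-1}(B_{\mathcal{B}^{**}})$ est l'image continue d'un compact, donc faiblement compacte.

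\smallskip
\noindent\emph{Sens r\'eciproque.} Supposons $B_{\mathcal{B}}$ faiblement compacte ; il suffit de prouver que $J$ est surjectif. Puisque $J$ est continue de $(B_{\mathcal{B}},\sigma(\mathcal{B},\mathcal{B}^{*}))$ dans $(\mathcal{B}^{**},\sigma(\mathcal{B}^{**},\mathcal{B}^{*}))$, l'ensemble $J(B_{\mathcal{B}})$ est compact pour la topologie faible-$*$, donc faiblement-$*$ ferm\'e dans $\mathcal{B}^{**}$ (cette topologie \'etant s\'epar\'ee). Or le th\'eor\`eme de Goldstine affirme que $J(B_{\mathcal{B}})$ est dense dans $B_{\mathcal{B}^{**}}$ pour la topologie faible-$*$. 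Un sous-ensemble de $B_{\mathcal{B}^{**}}$ \`a la fois faiblement-$*$ ferm\'e et faiblement-$*$ dense est \'egal \`a $B_{\mathcal{B}^{**}}$ : donc $J(B_{\mathcal{B}})=B_{\mathcal{B}^{**}}$, puis $J(\mathcal{B})=\mathcal{B}^{**}$ par homog\'en\'eit\'e, c'est-\`a-dire que $\mathcal{B}$ est r\'eflexif.

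\smallskip
\noindent Le point v\'eritablement non trivial n'est pas la manipulation des topologies faibles, mais l'emploi du th\'eor\`eme de Goldstine dans le sens r\'eciproque (lui-m\^eme cons\'equence du th\'eor\`eme de Hahn--Banach sous forme g\'eom\'etrique) : c'est lui qui garantit que l'adh\'erence faible-$*$ de $J(B_{\mathcal{B}})$ remplit toute la boule $B_{\mathcal{B}^{**}}$, d'o\`u la surjectivit\'e de $J$. Dans le sens direct, la seule pr\'ecaution consiste \`a v\'erifier soigneusement que $J$ et $J^{-1}$ sont continues pour les topologies faibles en jeu, ce qui ne r\'eclame que la d\'efinition des topologies initiales et le th\'eor\`eme de Banach--Alaoglu.
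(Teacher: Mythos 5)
Votre d\'emonstration est correcte et compl\`ete : c'est l'argument classique du th\'eor\`eme de Kakutani, avec Banach--Alaoglu pour le sens direct et Goldstine pour la r\'eciproque. La th\`ese \'enonce ce r\'esultat comme un fait standard d'analyse fonctionnelle sans en donner de preuve, il n'y a donc pas de comparaison d'approches \`a faire.
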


Puisque les espaces de Sobolev sont réflexifs, on utilisera ce théorème comme suit: si on a une certaine suite de fonctions $(\varphi_i)_{i\in \mathbb N}$ bornée dans $H_k(M)$ alors il existe une sous-suite $(\varphi_{q_i})_{i\in \mathbb N}$ qui converge vers  $\varphi\in H_k(M)$ et $\liminf_{i\to+\infty}\|\varphi_{q_i}\|_{H_k}\geq\|\varphi\|_{H_k}$    

\begin{theorem}
Soit $p\in]1,+\infty[$ et $(\varphi_i)_{i\in\mathbb{N}}$ une suite bornée dans $L^p(\mathcal{B})$, qui 
converge presque partout vers $\varphi$, alors $\varphi\in L^p(\mathcal{B})$ et $(\varphi_i)$ converge faiblement 
vers $\varphi$ dans  $L^p(\mathcal{B})$.
\end{theorem}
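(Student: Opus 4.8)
The plan is to establish the two assertions in turn. For $\varphi\in L^p(\mathcal{B})$ I would apply Fatou's lemma to the nonnegative functions $|\varphi_i|^p$: since $\varphi_i\to\varphi$ almost everywhere we have $|\varphi_i|^p\to|\varphi|^p$ a.e., hence
\[
\int_{\mathcal{B}}|\varphi|^p\,\le\,\liminf_{i\to\infty}\int_{\mathcal{B}}|\varphi_i|^p\,\le\,\sup_{i}\|\varphi_i\|_p^p\,<\,+\infty ,
\]
so $\varphi\in L^p(\mathcal{B})$, with moreover $\|\varphi\|_p\le\liminf_i\|\varphi_i\|_p$.

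For the weak convergence I would use that $L^p(\mathcal{B})$ is reflexive for $1<p<+\infty$, so by the reflexivity theorem recalled above the bounded sequence $(\varphi_i)$ admits weakly convergent subsequences; it then suffices to show that any weak limit point coincides with $\varphi$, since the whole sequence then converges weakly to $\varphi$. So assume $\varphi_{i_k}\rightharpoonup\psi$ in $L^p(\mathcal{B})$ and test against $\chi_E$, the indicator of a measurable set $E$ of finite measure (which lies in $L^{p'}(\mathcal{B})$, $p'=p/(p-1)$): weak convergence gives $\int_E\varphi_{i_k}\to\int_E\psi$, while the $L^p$-bound with $p>1$ makes the sequence uniformly integrable (de la Vall\'ee-Poussin, via the superlinear weight $t\mapsto t^p$), so by Vitali's theorem and $\varphi_{i_k}\to\varphi$ a.e. one also has $\int_E\varphi_{i_k}\to\int_E\varphi$; hence $\int_E\psi=\int_E\varphi$ for every finite-measure $E$, which forces $\psi=\varphi$ a.e. Alternatively one may argue directly, without subsequences: given $g\in L^{p'}(\mathcal{B})$ and $\e>0$, pick a finite-measure set $A$ with $\|g\|_{L^{p'}(\mathcal{B}\setminus A)}$ small, apply Egorov's theorem on $A$ to find $E\subset A$ with $|A\setminus E|$ small and $\varphi_i\to\varphi$ uniformly on $E$, and split $\int_{\mathcal{B}}(\varphi_i-\varphi)g$ over $E$, $A\setminus E$ and $\mathcal{B}\setminus A$; the first piece tends to $0$ by uniform convergence and $g\in L^1(E)$, and the other two are bounded via H\"older's inequality by $\bigl(2\sup_j\|\varphi_j\|_p+\|\varphi\|_p\bigr)$ times small $L^{p'}$-norms of $g$, using absolute continuity of the integral of $|g|^{p'}$.

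The only delicate point, which is the main obstacle, is precisely the passage from a.e.\ convergence to convergence of the truncated integrals $\int_E\varphi_i$ over finite-measure sets (equivalently, the uniform-integrability step): almost everywhere convergence alone does not yield weak convergence, and it is exactly the hypothesis $\sup_i\|\varphi_i\|_p<\infty$ with $p>1$ that rules out concentration or escape of mass, either through de la Vall\'ee-Poussin and Vitali or, in the self-contained version, through Egorov's theorem combined with H\"older's inequality. Everything else reduces to Fatou's lemma and the reflexivity of $L^p$.
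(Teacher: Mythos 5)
Your proof is correct and complete; note that the paper states this theorem without proof, as a standard fact recalled in the section on Banach space theorems, so there is no argument of the author's to compare against. Your route --- Fatou's lemma for $\varphi\in L^p(\mathcal{B})$, then reflexivity to extract weakly convergent subsequences and identification of every weak limit point with $\varphi$ via uniform integrability and Vitali (or the Egorov--H\"older variant) --- is the standard one, and you correctly isolate the only delicate step, namely that a.e.\ convergence alone does not give weak convergence and that the uniform $L^p$ bound with $p>1$ is what prevents escape of mass. In the applications made of this theorem in the thesis, $\mathcal{B}$ is a compact manifold of finite volume, so the reductions to finite-measure sets are automatic there.
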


\section{Inégalité de la meilleure constante}

\begin{theorem}[Aubin--Talenti]\label{INM}
Soit $(M,g)$ une variété riemannienne compacte de dimension $n$. Pour tout $\e>0$ il existe $A(\varepsilon)>0$ tel que 
$$\forall \varphi\in H_1^p(M)\quad \|\varphi\|_{p^*}\leq (K(n,p)+\varepsilon)\|\nabla\varphi\|_p+A(\varepsilon)\|\varphi\|_p$$
$$p^*=\frac{np}{n-p} \mbox{ et }K(n,p)=\frac{p-1}{n-p}\biggl(\frac{n-p}{n(p-1)}\biggr)^{1/p}\biggl[\frac{\Gamma(n+1)}{\Gamma(n/p)\Gamma(n+1-n/p)\omega_{n-1}}\biggr]^{1/n} $$ 
$$K(n,1)=\frac{1}{n}\biggl[\frac{n}{\omega_{n-1}}\biggr]^{1/n}$$
\end{theorem}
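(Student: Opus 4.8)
The result is classical (it is due to T.~Aubin and to Talenti), and the point is to reduce the inequality on $(M,g)$ to \emph{Talenti's sharp Euclidean Sobolev inequality}, $\|u\|_{L^{p^*}(\R^n)}\le K(n,p)\,\|\nabla u\|_{L^p(\R^n)}$ for $u\in C_c^\infty(\R^n)$ (the endpoint $p=1$ being the sharp $L^1$ Sobolev / isoperimetric inequality, with constant $K(n,1)$); this is what produces the explicit constants. An $A(\e)\|\varphi\|_p$ term is of course necessary (the constant functions show it). A preliminary remark: a naive partition of unity does \emph{not} keep the optimal leading constant. If one covers $M$ by small geodesic balls, picks smooth $\zeta_i\ge0$ with $\operatorname{supp}\zeta_i$ in the $i$-th ball and $\sum_i\zeta_i^{p^*}=1$, applies the Euclidean inequality in each (almost Euclidean) chart to $\zeta_i\varphi$, sums the $p^*$-th powers and splits $(a+b)^p$ elementarily, one is left with a factor $\|\sum_i\zeta_i^p\|_\infty$ in front of $\|\nabla\varphi\|_p^p$, and this factor is $>1$ as soon as the cover overlaps. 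So that route only reproves the non-sharp inequality $\|\varphi\|_{p^*}\le C\|\nabla\varphi\|_p+C\|\varphi\|_p$ (which already follows from Theorem~\ref{incsob}); the optimal constant must be extracted by a concentration argument.

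I would argue by contradiction. If the statement failed, there would be $\e_0>0$ so that, choosing $A=j\in\mathbb N$, one obtains $\varphi_j\in H_1^p(M)$ with $\|\varphi_j\|_{p^*}>(K(n,p)+\e_0)\|\nabla\varphi_j\|_p+j\|\varphi_j\|_p$. Replacing $\varphi_j$ by $|\varphi_j|/\|\varphi_j\|_{p^*}$ — which does not increase the right-hand side, since $\|\,\nabla|\varphi_j|\,\|_p\le\|\nabla\varphi_j\|_p$ — one may assume $\varphi_j\ge0$ and $\|\varphi_j\|_{p^*}=1$; then $\|\nabla\varphi_j\|_p<(K(n,p)+\e_0)^{-1}$ and $\|\varphi_j\|_p<1/j\to0$. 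Hence $(\varphi_j)$ is bounded in $H_1^p(M)$ and converges to $0$ in $L^p(M)$, and after extracting a subsequence it converges weakly to $0$ in $H_1^p(M)$ (reflexivity of the Sobolev space, for $1<p<n$).

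The core of the proof is then the concentration-compactness lemma of P.-L.~Lions applied to this bounded sequence. Along a further subsequence, the measures $|\varphi_j|^{p^*}\di v$ and $|\nabla\varphi_j|^p\di v$ converge weakly-$*$ on the compact $M$ to measures $\mu$ and $\nu$; since the weak $H_1^p$-limit is $0$, Lions' lemma forces $\mu$ to be purely atomic, $\mu=\sum_k\mu_k\delta_{x_k}$ with $\mu_k>0$ and $\sum_k\mu_k=\mu(M)=1$, while $\nu\ge\sum_k\nu_k\delta_{x_k}$ with $\sum_k\nu_k\le\nu(M)=\lim_j\|\nabla\varphi_j\|_p^p$. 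The decisive — and hardest — step is the local inequality $\mu_k^{p/p^*}\le K(n,p)^p\,\nu_k$ at each concentration point $x_k$: one works in a normal chart at $x_k$, rescales around $x_k$ so that the dilated metrics converge in $C^1_{loc}$ to the Euclidean metric, extracts a nontrivial blow-up profile on $\R^n$ lying in the natural homogeneous Sobolev space, and applies Talenti's sharp Euclidean inequality to that profile. This is exactly where the optimal constant is transplanted onto $M$, and the delicate part is controlling the rescalings and checking that no $L^{p^*}$-mass escapes to infinity in the blow-up.

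With the local inequalities in hand the contradiction is immediate. Since $t\mapsto t^{p^*/p}$ is superadditive ($p^*/p=n/(n-p)>1$),
\[
1=\sum_k\mu_k\le\sum_k\bigl(K(n,p)^p\nu_k\bigr)^{p^*/p}\le K(n,p)^{p^*}\Bigl(\sum_k\nu_k\Bigr)^{p^*/p}\le K(n,p)^{p^*}\bigl(\lim_j\|\nabla\varphi_j\|_p^p\bigr)^{p^*/p},
\]
and $\lim_j\|\nabla\varphi_j\|_p^p\le(K(n,p)+\e_0)^{-p}$ then gives $1\le\bigl(K(n,p)/(K(n,p)+\e_0)\bigr)^{p^*}<1$, which is absurd. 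This proves the inequality for $1<p<n$; the endpoint $p=1$ runs along the same lines, with the Euclidean isoperimetric inequality replacing Talenti's and the compactness taken in $BV$.
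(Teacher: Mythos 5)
The paper does not prove this theorem directly: it refers to Aubin's book and then reproduces the argument, in the more general weighted setting, as Theorem~\ref{AD} (whose case $\gamma=0$ is exactly the present statement). That proof is the \emph{direct} route: a local lemma (Lemma~\ref{aa}) comparing the metric to the Euclidean one on a small geodesic ball, giving $\|f\|_{p^*}\le K_\delta\|\nabla f\|_p$ with $K_\delta\to K(n,p)$ as $\delta\to0$ for $f$ supported in the ball, followed by a partition of unity. Your route (contradiction plus concentration-compactness) is genuinely different and is sound in outline: the normalization, the fact that $\|\varphi_j\|_p\to0$ forces the weak limit to vanish, and the final superadditivity computation are all correct.

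Two caveats. First, your preliminary remark dismissing the partition-of-unity route is mistaken, and it is what pushes you onto the harder path. The overlap factor $\|\sum_i\zeta_i^p\|_\infty$ appears only if one applies the Euclidean inequality to $\zeta_i\varphi$ with $\sum_i\zeta_i^{p^*}=1$ and expands the gradient crudely. Aubin's actual argument (reproduced in the proof of Theorem~\ref{AD}) takes $\eta_i\ge0$ with $\sum_i\eta_i=1$ and $\eta_i^{1/p}\in C^1$, writes
\[
\|u\|_{p^*}^p=\Bigl\|\sum_i\eta_i\,|u|^p\Bigr\|_{p^*/p}\le\sum_i\bigl\|\eta_i^{1/p}u\bigr\|_{p^*}^p,
\]
applies the local sharp inequality to each $\eta_i^{1/p}u$, and then uses $\sum_i\eta_i\,|\nabla u|^p=|\nabla u|^p$: the leading term carries the constant $K_\delta^p$ with no overlap penalty, and the cross terms $|\nabla\eta_i^{1/p}|\,|u|$ are absorbed into $\e\|\nabla u\|_p^p+A(\e)\|u\|_p^p$ by the elementary inequalities $(1+t)^p\le1+\mu t+\nu t^p$ and $pz^{p-1}y\le\lambda(p-1)z^p+\lambda^{1-p}y^p$. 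So the sharp constant does survive the gluing, and this direct proof is considerably shorter than yours.

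Second, the step you yourself identify as decisive, $\mu_k^{p/p^*}\le K(n,p)^p\,\nu_k$ at each atom, is left at the level of a sketch (blow-up, convergence of rescaled metrics, no loss of mass at infinity). It can be made rigorous, but the cleanest way to prove it is precisely to test the concentrating sequence against cutoffs supported in a small ball around $x_k$ and invoke the local almost-Euclidean inequality of Lemma~\ref{aa} — i.e.\ the same ingredient that powers the direct proof. Once you have that lemma, the contradiction machinery is no longer needed. Your argument is therefore acceptable as an alternative, but as written its hardest step is not proved, and its motivation for avoiding the standard proof does not hold up.
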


$K(n,p)$ est la meilleure constante au sens où pour toute constante plus petite qui remplace $K(n,p)$, l'inégalité ci-dessus devient fausse pour une certaine fonction $\varphi\in H^p_1(M)$. La preuve détaillée du théorème de T.~Aubin est reprise dans le livre \cite{Aubin}. Beaucoup de travaux ont été faits depuis sur la validité de cette inégalité (sur les puissances dans cette inégalité aussi) lorsque $\e=0$. Des résultats ont été obtenus par  T.~Aubin et Y.Y.~Li \cite{AL}, R.J.~Biezuner \cite{Bie}, O.~Druet \cite{Dru,Dru2}, E.~Hebey et  M.~Vaugon \cite{HV3,HVI}...\\
Dans le chapitre suivant (cf. théorème \ref{AD}), on généralisera cette inégalité par l'inégalité de Hardy.

\section{L'inégalité de Hardy sur une variété compacte}\label{hardy section}

\begin{definition}\label{distance}
Soit $P$ un point d'une variété riemannienne $(M,g)$. $\rho_P$ est la fonction définie par:
\begin{equation} \rho_P(Q)= \begin{cases}& d(P,Q)\mbox{ si }d(P,Q)<\delta(M)\\
& \delta(M)\mbox{ si }d(P,Q)\geq\delta(M)
\end{cases}
\end{equation}
avec $\delta(M)$ le rayon d'injectivité de la variété $M$ 
\end{definition}
La fonction $\rho$ dépend évidemment du point $P\in M$ que l'on omettra parfois dans les notations.
\begin{definition}
Sur une variété riemannienne $(M,g)$, on définit $L^p(M,\rho^{\gamma})$ comme étant l'espace des 
fonctions  $u$ telles que $\rho^\gamma|u|^p$ soit intégrable. On le munit de la norme $$\|u\|^p_{p,\rho^{\gamma}}:=\int_M \rho^{\gamma}|u|^p\di v$$
où $p\geq 1$ et $\rho$ est la fonction introduite dans la définition précédente.
\end{definition}

\begin{proposition}
 Pour tout $p\geq 1$, $L^p(M,\rho^{\gamma})$ muni de la norme $\|\cdot\|_{p,\rho^{\gamma}}$ est un espace de Banach
\end{proposition}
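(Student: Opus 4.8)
The plan is to recognize $L^p(M,\rho^{\gamma})$ as nothing but the Lebesgue space $L^p(M,\di\mu)$ associated with the Borel measure $\di\mu:=\rho^{\gamma}\di v$ on $M$, and then to invoke (or reprove) the Riesz--Fischer completeness theorem. First I would check that $\|\cdot\|_{p,\rho^{\gamma}}$ is genuinely a norm on the quotient by a.e.-equality: homogeneity is clear, the triangle inequality is Minkowski's inequality applied to the measure $\di\mu$ (this is where $p\geq 1$ is used), and $\|u\|_{p,\rho^{\gamma}}=0$ forces $u=0$ $\di v$-a.e. because $\rho^{\gamma}$ is strictly positive everywhere on $M\setminus\{P\}$ while $\{P\}$ has zero volume. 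In particular $\di\mu$ and $\di v$ have exactly the same null sets, so the a.e.-equality relations used to form the two quotients coincide and no ambiguity arises.

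Next, for completeness, I would argue directly. Let $(u_i)$ be a Cauchy sequence in $L^p(M,\rho^{\gamma})$. Extract a subsequence $(u_{i_k})$ with $\|u_{i_{k+1}}-u_{i_k}\|_{p,\rho^{\gamma}}\leq 2^{-k}$, and set $S_N=\sum_{k=1}^{N}|u_{i_{k+1}}-u_{i_k}|$ and $S=\sum_{k=1}^{\infty}|u_{i_{k+1}}-u_{i_k}|$. By Minkowski's inequality, $\|S_N\|_{p,\rho^{\gamma}}\leq 1$ for all $N$, hence by the monotone convergence theorem $\int_M\rho^{\gamma}S^p\di v\leq 1$, so $S$ is finite $\di\mu$-a.e. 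Therefore the telescoping series $u_{i_1}+\sum_{k\geq 1}(u_{i_{k+1}}-u_{i_k})$ converges absolutely $\di\mu$-a.e.; call its sum $u$, a measurable function finite a.e. Since $|u-u_{i_k}|\leq\sum_{j\geq k}|u_{i_{j+1}}-u_{i_j}|\leq S\in L^p(M,\rho^{\gamma})$ and $u_{i_k}\to u$ pointwise a.e., dominated convergence (or Fatou applied to $|u-u_{i_k}|^p$) gives $\|u-u_{i_k}\|_{p,\rho^{\gamma}}\to 0$, and in particular $u\in L^p(M,\rho^{\gamma})$. Finally, a Cauchy sequence possessing a convergent subsequence converges to the same limit, so $u_i\to u$ in $L^p(M,\rho^{\gamma})$.

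There is no serious obstacle here: the statement is merely the weighted analogue of the classical Riesz--Fischer theorem, and the single point that needs a line of care is that the weight $\rho^{\gamma}$ is strictly positive and $\di v$-a.e. finite on $M$, so that $\rho^{\gamma}\di v$ is a bona fide measure with the same null sets as $\di v$; this is exactly what lets one transplant the unweighted completeness argument verbatim. Note that no assumption on the sign of $\gamma$ is required, since the argument never uses finiteness of $\di\mu(M)$.
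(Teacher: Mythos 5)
Your argument is correct, but it takes a different route from the paper. The paper's proof is a one-line reduction: it observes that $\|u\|_{p,\rho^{\gamma}}=\|\rho^{\gamma/p}u\|_p$, so that multiplication by $\rho^{\gamma/p}$ is an isometry between $L^p(M,\rho^{\gamma})$ and the ordinary $L^p(M)$, and completeness is simply transported from the latter. You instead re-prove the Riesz--Fischer theorem from scratch for the weighted measure $\rho^{\gamma}\di v$, via the standard telescoping-series and monotone/dominated convergence argument. Both proofs ultimately hinge on the same elementary fact, which you make explicit and the paper leaves implicit: $\rho$ is strictly positive and finite off the single point $P$, so $\rho^{\gamma}\di v$ and $\di v$ share the same null sets (equivalently, multiplication by $\rho^{\pm\gamma/p}$ is well defined a.e.\ and bijective between the two spaces). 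The paper's reduction is shorter and delegates all the work to the known completeness of $L^p(M)$; your version is self-contained, additionally checks the norm axioms (which the paper skips), and makes clear that no sign condition on $\gamma$ or finiteness of the weighted measure is needed. Either is acceptable.
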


\begin{proof}
La complétude de l'espace $L^p(M,\rho^{\gamma})$ pour la norme  $\|\cdot\|_{p,\rho^{\gamma}}$ découle du fait que $L^p(M)$ est un espace complet et que  $\|u\|_{p,\rho^{\gamma}}=\|\rho^{\gamma/p}u\|_p$ pour tout $u\in L^p(M,\rho^{\gamma})$
\end{proof}

\begin{theorem}[\textbf{Inégalité de Hardy}]\label{IH}
Pour toute fonction $u\in C_o^\infty(\mathbb R^n)$, il existe une constante $c>0$ telle que
$$\||x|^\gamma u\|_p\leq c\||x|^\beta\nabla_l u\|_q$$
où $1\leq q\leq p \leq qn/(n-lq)$, $\gamma=\beta-l+n(1/q-1/p)>-n/p$ et $n>lq$
\end{theorem}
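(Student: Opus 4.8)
The plan is to peel off one derivative at a time, reducing the statement to the first‑order case $l=1$, and to reduce that case further to the classical scale $p=q$. Everything is elementary: integration by parts, H\"older's inequality, and the classical Sobolev inequality $\|v\|_{nq/(n-q)}\le C(n,q)\|\nabla v\|_q$ on $\mathbb R^n$ (valid for $1\le q<n$; note $n>lq$ forces $q<n$ once $l\ge1$, the only nontrivial case).

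First I treat $l=1$, $p=q$. For $u\in C_o^\infty(\mathbb R^n)$ and $a:=\gamma$ one has $\operatorname{div}(|x|^{aq}x)=(aq+n)|x|^{aq}$; since $\gamma>-n/p$ reads $aq+n>0$ (and makes the left‑hand side finite), integration by parts gives $(aq+n)\int|x|^{aq}|u|^q=-\int|x|^{aq}\,x\cdot\nabla(|u|^q)$, and $|x\cdot\nabla(|u|^q)|\le q|u|^{q-1}|x|\,|\nabla u|$ followed by H\"older with exponents $q$ and $q/(q-1)$ yields $\||x|^{\gamma}u\|_q\le \frac{q}{\gamma q+n}\,\||x|^{\gamma+1}\nabla u\|_q$. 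Next, $l=1$, $p=q^*:=nq/(n-q)$, where the relation forces $\gamma=\beta$: I apply the classical Sobolev inequality to $v:=|x|^\beta u$, which is compactly supported and lies in $W^{1,q}(\mathbb R^n)$ precisely because $\beta>1-n/q$ (this is $\gamma>-n/p$ at $p=q^*$); then $\||x|^\beta u\|_{q^*}=\|v\|_{q^*}\le C\|\nabla v\|_q\le C\big(|\beta|\,\||x|^{\beta-1}u\|_q+\||x|^\beta\nabla u\|_q\big)$, and the first term is absorbed by the $p=q$ case just proved (with exponent $\beta-1>-n/q$). Finally, for $q<p<q^*$ I interpolate: writing $1/p=\theta/q+(1-\theta)/q^*$ with $\theta\in(0,1)$, one checks $\gamma=\theta(\beta-1)+(1-\theta)\beta$, so H\"older in the two factors gives $\||x|^\gamma u\|_p\le\||x|^{\beta-1}u\|_q^{\theta}\,\||x|^\beta u\|_{q^*}^{1-\theta}\le c\,\||x|^\beta\nabla u\|_q$ by the two endpoint estimates. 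This settles $l=1$ for every admissible $q\le p\le q^*$.

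For general $l$ I iterate. Choose $p=p_0\ge p_1\ge\cdots\ge p_l=q$ with equal steps $1/p_{j+1}-1/p_j=\tfrac1l(1/q-1/p)$; then each step is $\le 1/n$ (since $p\le qn/(n-lq)$) and $1/p_j\ge 1/(lq)>1/n$ for $j\ge1$ (since $n>lq$), so every consecutive pair satisfies $p_{j+1}\le p_j\le p_{j+1}n/(n-p_{j+1})$ with $p_{j+1}<n$. Setting $\gamma_0=\gamma$ and $\gamma_{j+1}=\gamma_j+1-n(1/p_{j+1}-1/p_j)$, telescoping gives $\gamma_l=\gamma+l-n(1/q-1/p)=\beta$, and each $\gamma_j>-n/p_j$ because this amounts to $\gamma>-j-n/p$, which follows from $\gamma>-n/p$. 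Applying the first‑order inequality (which, proved by integration by parts, also holds for non‑negative locally Lipschitz compactly supported functions) to $w_j:=|\nabla^j u|$, using $|\nabla w_j|\le|\nabla^{j+1}u|$ a.e., and chaining the estimates from $j=0$ to $j=l-1$ produces $\||x|^\gamma u\|_p\le c\,\|\,|x|^\beta\nabla^l u\,\|_q$.

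The main obstacle is the weighted Sobolev step $p=q^*$: one must check that $v=|x|^\beta u$ is a legitimate competitor in the unweighted Sobolev inequality even when $\beta<0$ (so $v$ is unbounded at the origin), and the error term coming from differentiating the weight must be reabsorbed, which is exactly where the $p=q$ Hardy inequality re‑enters. Everything else is bookkeeping: tracking the intermediate exponents and weights in the interpolation and the iteration, which is precisely what the hypotheses $n>lq$, $p\le qn/(n-lq)$ and $\gamma>-n/p$ are there to guarantee.
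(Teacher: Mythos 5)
Your argument is correct. Note, however, that the paper itself contains no proof of this theorem: it is quoted as a known result and the reader is sent to Maz'ja's book, the text only adding that for $\beta=0$, $l=1$ the best constant $K(n,q,\gamma)$ is attained by an explicit radial function. Your three-step scheme is therefore a genuine addition rather than a variant of the paper's route: the classical Hardy inequality at $p=q$ via the identity $\operatorname{div}(|x|^{\gamma q}x)=(\gamma q+n)|x|^{\gamma q}$ and integration by parts (the boundary term at the origin vanishing precisely because $\gamma q+n>0$); the weighted endpoint $p=nq/(n-q)$ via the unweighted Sobolev inequality applied to $v=|x|^{\beta}u$, with the commutator term $|\beta|\,\||x|^{\beta-1}u\|_q$ reabsorbed by the $p=q$ case; H\"older interpolation for intermediate $p$; and induction on $l$ applied to $w_j=|\nabla^j u|$ using the pointwise bound $|\nabla w_j|\le|\nabla^{j+1}u|$ a.e. The bookkeeping is exactly right: $\gamma_j>-n/p_j$ reduces to $\gamma>-j-n/p$, the step size $1/p_{j+1}-1/p_j\le 1/n$ is precisely the hypothesis $p\le qn/(n-lq)$, and $\gamma>-n/p$ at the intermediate exponent is equivalent to $\beta-1>-n/q$, which is what both endpoint estimates need. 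What your proof does not deliver -- and what the statement does not ask for -- is the sharp constant $K(n,q,\gamma)$ that the paper uses later; the only technicalities left implicit (removability of the origin for the weak differentiability of $|x|^{\beta}u$ when $q<n$, and the extension of the first-order inequality to nonnegative compactly supported Lipschitz functions) are standard, and you flag the essential one yourself.
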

Ce type d'inégalité à une dimension a été introduite par Hardy, puis généralisée pour toute dimension, le livre de 
V.G.~Maz'ja~\cite{Maz} est une bonne référence où on trouvera la preuve de ce théorème.  
Dans notre étude, on s'intéresse à cette inégalité dans le cas où $\beta=0$ et $l=1$. Dans ce cas précis, 
la constante $c=K(n,q,\gamma)$ est la meilleure constante dans l'inégalité ci-dessus. Si $p\gamma>-q$, cette 
constante est atteinte pour la fonction 
$$x\mapsto (1+|x|^{(q+p\gamma)/(q-1)})^{(q-n)/(q+p\gamma)}$$ et $K(n,q,-q)=q/(n-q)$. (cf. \cite{Chu}, \cite{Lieb})
\begin{theorem}\label{AD}
Soit $(M,g)$ une variété riemannienne compacte de dimension $n$ et $p,\,q\mbox{ et }\gamma$ des nombres réels qui satisfont 
$(\gamma+n)/p=-1+n/q>0$ et $1\leq q \leq p\leq qn/(n-q)$. Pour tout $\varepsilon>0$, 
il existe $A(\varepsilon,q,\gamma)$ tel que
\begin{equation}\label{abc}
\forall u\in H^q_1(M)\quad \|u\|_{p,\rho^\gamma}\leq(K(n,q,\gamma)+
\varepsilon)\|\nabla u\|_q+A(\varepsilon,q,\gamma)\|u\|_q
\end{equation}
en particulier  $K(n,q,0)=K(n,q)$ la meilleure constante dans l'inégalité de Sobolev
\end{theorem}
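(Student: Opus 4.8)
\emph{Sketch of the intended proof.} The weight $\rho^\gamma$ is singular only at the point $P$ fixed in Definition~\ref{distance}, so the plan is to localise the estimate around $P$: near $P$ it should reduce to the \emph{sharp} Euclidean Hardy inequality of Theorem~\ref{IH}, while everything else should be absorbed into the coefficient of $\|u\|_q$. First I would reduce to $u\in C^\infty(M)$: by density of $C^\infty(M)$ in $H_1^q(M)$ (Theorem~\ref{incsob}), by Kondrakov's theorem~\ref{kon}, by Fatou's lemma and the continuity of $u\mapsto\|\nabla u\|_q+\|u\|_q$, an inequality valid for all smooth $u$ passes to $H_1^q(M)$ and at the same time shows $\|u\|_{p,\rho^\gamma}<\infty$. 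Note that the hypothesis $(\gamma+n)/p=-1+n/q>0$ forces $q<n$ and $q\le p\le p^\ast:=nq/(n-q)$, and that $\gamma=p(n-q)/q-n\in[-q,0]$, with $\gamma=0$ exactly when $p=p^\ast$. If $\gamma=0$, i.e.\ $p=p^\ast$, the statement is literally the Aubin--Talenti inequality (Theorem~\ref{INM}) together with $K(n,q,0)=K(n,q)$, so there is nothing to prove; hence from now on assume $\gamma<0$ and $q\le p<p^\ast$.

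Fix $\varepsilon>0$ and a small $\delta\in(0,\delta(M))$, to be pinned down only at the end. Work in geodesic normal coordinates centred at $P$, where $\rho=|x|$, $g_{ij}(x)=\delta_{ij}+O(|x|^2)$ and $\sqrt{\det g}=1+O(|x|^2)$; pick a cut-off $\eta\in C_o^\infty(B_0(\delta))$ with $\eta\equiv1$ on $B_0(\delta/2)$ and $|\nabla\eta|\le C\delta^{-1}$, and set $\tilde u=u\circ\exp_P$. Split
$$\|u\|_{p,\rho^\gamma}^p=\int_{B_P(\delta/2)}\rho^\gamma|u|^p\,\di v+\int_{M\setminus B_P(\delta/2)}\rho^\gamma|u|^p\,\di v.$$
On $M\setminus B_P(\delta/2)$ one has $\rho\ge\delta/2$ and $\gamma<0$, so the second integral is at most $(\delta/2)^\gamma\|u\|_p^p$; and since $p<p^\ast$ the embedding $H_1^q(M)\hookrightarrow L^p(M)$ is compact (Theorem~\ref{kon}), whence a routine contradiction argument (using also $L^p(M)\hookrightarrow L^q(M)$) gives, for every $\theta>0$, $\|u\|_p\le\theta\|\nabla u\|_q+C(\theta)\|u\|_q$.

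For the first integral, on $B_P(\delta/2)$ we have $\tilde u=\eta\tilde u$; comparing $g$ with the Euclidean metric and then applying the sharp Euclidean Hardy inequality (Theorem~\ref{IH} with $\beta=0$, $l=1$, in the form matched to the weight $\rho^\gamma$, whose best constant is the $K(n,q,\gamma)$ recorded there),
\begin{align*}
\int_{B_P(\delta/2)}\rho^\gamma|u|^p\,\di v
&\le(1+C\delta^2)\int_{\mathbb R^n}|x|^\gamma|\eta\tilde u|^p\,\di x\\
&\le(1+C\delta^2)\,K(n,q,\gamma)^p\Bigl(\int_{\mathbb R^n}|\nabla(\eta\tilde u)|^q\,\di x\Bigr)^{p/q}.
\end{align*}
Expanding $\nabla(\eta\tilde u)=\eta\nabla\tilde u+\tilde u\nabla\eta$, using Minkowski's inequality, the bound $|\nabla\eta|\le C\delta^{-1}$, and converting the Euclidean norms back to $g$ (one more factor $1+O(\delta^2)$), the right-hand side is at most $\bigl[(1+C\delta^2)^{1/p+1/q}K(n,q,\gamma)\bigr]^p\bigl(\|\nabla u\|_q+C\delta^{-1}\|u\|_q\bigr)^p$.

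It remains to reassemble, using $(a+b)^{1/p}\le a^{1/p}+b^{1/p}$ for $p\ge1$. I would first choose $\delta$ small enough that $(1+C\delta^2)^{1/p+1/q}K(n,q,\gamma)\le K(n,q,\gamma)+\varepsilon/2$, then $\theta$ small enough that $(\delta/2)^{\gamma/p}\theta\le\varepsilon/2$; then $A(\varepsilon,q,\gamma):=\bigl(K(n,q,\gamma)+\varepsilon/2\bigr)C\delta^{-1}+(\delta/2)^{\gamma/p}C(\theta)$ is finite, and adding the two contributions yields exactly~\eqref{abc}. The hard part is not any single estimate but the bookkeeping that makes \emph{all} of the error terms — the curvature distortion factors $1+O(\delta^2)$, the gradient-of-cut-off term $C\delta^{-1}\|u\|_q$, and the whole region away from $P$ — feed into the coefficient of $\|u\|_q$ while never pushing the coefficient of $\|\nabla u\|_q$ above $K(n,q,\gamma)+\varepsilon$; this is precisely the mechanism behind sharp-constant inequalities on compact manifolds (the metric is Euclidean to second order near $P$, so every remainder is of lower order), and it is what forces the order in which $\delta$, $\theta$ and $A$ must be chosen. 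One should also take care to match the normalisation of the Euclidean Hardy constant of Theorem~\ref{IH} to the weight $\rho^\gamma$ used here (weight exponent $\gamma/p$), since the two statements are written with slightly different conventions.
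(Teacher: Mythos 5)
Your proof is correct, but the global assembly is genuinely different from the paper's. The paper follows Aubin's classical scheme: it first proves the local sharp inequality (Lemma \ref{aa}) for functions supported in a small ball, then covers $M$ by finitely many balls $B_{P_i}(\delta)$, takes a partition of unity $\eta_i=h_i^{[q]+1}/\sum_k h_k^{[q]+1}$ chosen so that $\eta_i^{1/q}\in C^1$, writes $\|u\|_{p,\rho^\gamma}^q\le\sum_i\|\eta_i^{1/q}u\|^q_{p,\rho^\gamma}$, and applies the local lemma on \emph{every} ball, absorbing the cut-off gradients via the elementary inequalities $(1+t)^q\le 1+\mu t+\nu t^q$ and $qz^{q-1}y\le\lambda(q-1)z^q+\lambda^{1-q}y^q$. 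You instead exploit the fact that the weight $\rho_P^\gamma$ is singular at a single point and that $\gamma<0$ forces $p<p^\ast$: only the one ball around $P$ needs the sharp Euclidean Hardy constant, and the complement is handled by the bound $\rho^\gamma\le(\delta/2)^\gamma$ together with the compactness of $H_1^q(M)\subset L^p(M)$, which yields $\|u\|_p\le\theta\|\nabla u\|_q+C(\theta)\|u\|_q$ with $\theta$ arbitrarily small, enough to beat the blow-up of $(\delta/2)^{\gamma/p}$ once $\delta$ is fixed. What each approach buys: yours avoids the partition-of-unity bookkeeping with $q$-th powers and makes transparent why only one region is critical, but it degenerates at $\gamma=0$ (no compactness at the critical exponent), which is exactly why you must quote Theorem \ref{INM} separately for that case; the paper's scheme treats all admissible $(p,q,\gamma)$ uniformly and is the template that still works when compactness is unavailable. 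Your ordering of the choices ($\delta$ first for the metric distortion, then $\theta$ against $(\delta/2)^{\gamma/p}$, then $A$) is the right one, and your remark about reconciling the weight-exponent conventions of Theorem \ref{IH} (exponent $\gamma$ on $|x|$, i.e.\ $\gamma p$ inside the integral) with the norm $\|\cdot\|_{p,\rho^\gamma}$ (exponent $\gamma$ inside the integral) addresses a genuine discrepancy that the paper itself glosses over.
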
 

\begin{proof}
La preuve de ce théorème est quasiment identique à celle de T.~Aubin (voir \cite{Aubin}, chapitre 2)
dans le cas des inclusions de Sobolev sur les variétés riemanniennes complètes à courbure bornée.\\ 

On commence par montrer le lemme suivant:
\begin{lemma}\label{aa}Pour tout $f\in H^q_1(M)$ à support dans $B_P(\delta)$  
\begin{equation*}
\|f\|_{p,\rho^\gamma}\leq K_\delta(n,q,\gamma)\|\nabla f\|_q
\end{equation*}
avec $B_P(\delta)$ une boule de centre $P$ et de rayon $\delta<\delta(M)$. Lorsque $\delta \to 0$, $K_\delta(n,q,\gamma)\to K(n,q,\gamma)$
\end{lemma}
\paragraph{\small\emph{Preuve du lemme.}} On se place dans un système de coordonnées géodésiques $\{r,\theta^i\}$, 
centré en $P$. Soit $\varepsilon>0$ donné, si $\delta$ est choisi suffisamment petit, on a les estimées 
de la métrique suivantes (Aubin \cite{Aubin}, p. 20) :
\begin{equation*}
1-\varepsilon\leq \sqrt{g_{\theta^i\theta^i}(r,\theta)}\leq 1+\varepsilon \mbox{ et } (1-\varepsilon)^{n-1}\leq 
\sqrt{\det g(r,\theta)}\leq (1+\varepsilon)^{n-1}
\end{equation*}
où $g=dr^2+r^2g_{\theta^i\theta^j}d\theta^i d\theta^j$.\\
Si on pose $\tilde f(x)=f(\exp_Px)$, on obtient une fonction bien définie sur $\mathbb R^n$ à 
support dans $\{x\in\mathbb R^n;\; |x|<1 \}$ qui vérifie, d'après le théorème \ref{IH}:
\begin{equation*}
\biggl(\int_{\mathbb R^n}|x|^\gamma|\tilde f|^pdx\biggr)^{1/p}\leq K(n,q,\gamma)\biggl(\int_{\mathbb R^n}
|\nabla\tilde f|^qdx\biggr)^{1/q}
\end{equation*}
de plus si $Q=\exp_Px\in B_P(\delta)$ alors $$|x|=d(P,Q)=\rho(Q)\mbox{ et } 
(1-\varepsilon)|\nabla \tilde f|_{\mathcal E}(x)\leq |\nabla f|_g(\exp_P x)$$
On déduit que 
$$\|f\|_{p,\gamma}\leq (1+\varepsilon)^{(n-1)/p}\|\tilde f\|_{p,\gamma}
\mbox{ et }\|\nabla f\|_q\geq (1-\varepsilon)^{1+(n-1)/q}\|\nabla \tilde f\|_q$$
Finalement 
$$\|f\|_{p,\rho^\gamma}\leq K_\delta(n,q,\gamma)\|\nabla f\|_q$$
avec $K_\delta(n,q,\gamma)=(1-\varepsilon)^{-1+(1-n)/q}(1+\varepsilon)^{(n-1)/p}K(n,q,\gamma)$. Ce qui achève la preuve du lemme.\\

Pour terminer la preuve du théorème, on considère un recouvrement fini $\{B_{P_i}(\delta)\}_{1\leq i\leq m}$ de $M$ qui 
existe puisque la variété est compacte. Soit $\{h_i\}_{1\leq i\leq m}$ une partition de l'unité associée à ce recouvrement. 
On pose 
$$\eta_i=\frac{h_i^{[q]+1}}{\sum_{k=1}^m h_k^{[q]+1}}$$
où $[q]$ est la partie entière de $q$. $\{B_{P_i}(\delta),\eta_i\}_{1\leq i\leq m}$ est aussi une partition de l'unité de 
$M$ et $\eta_i^{1/q}\in C^1(M)$, donc
il existe $H>0$ tel que, pour tout $i\leq m$: $|\nabla\eta_i^{1/q}|\leq H$\\
Pour tout $u\in H_1^q(M)$, on a
$$\|u\|_{p,\rho^\gamma}^q=\|u^q\|_{p/q,\rho^\gamma}=\|\sum_{i=1}^m\eta_i u^q\|_{p/q,\rho^\gamma}\leq
\sum_{i=1}^m\|\eta_i u^q\|_{p/q,\rho^\gamma}\leq \sum_{i=1}^m\|\eta^{1/q}_i u\|^q_{p,\rho^\gamma}$$ 
Or d'après le lemme \ref{aa}, on a pour tout $i\leq m$ 
$$\|\eta^{1/q}_i u\|^q_{p,\rho^\gamma}\leq K^q_\delta(n,q,\gamma)\|\nabla (\eta^{1/q}_i u)\|^q_q$$
donc
\begin{align*}
\|u\|_{p,\rho^\gamma}^q &\leq K^q_\delta(n,q,\gamma)\sum_{i=1}^m\int_M(|\nabla\eta^{1/q}_i||u|+\eta^{1/q}_i|\nabla u|)^q\di v\\
&\leq K^q_\delta(n,q,\gamma)\sum_{i=1}^m\int_M \eta_i|\nabla u|^q+\mu|\nabla u|^{q-1}|\nabla\eta_i^{1/q}|
\eta_i^{(q-1)/q}|u|+\nu|\nabla\eta^{1/q}_i|^q|u|^q\di v\\
&\leq K^q_\delta(n,q,\gamma)(\|\nabla u\|_q^q+\mu m H \|\nabla u\|_q^{q-1}\|u\|_q+\nu mH^q\|u\|_q^q)
\end{align*}
car il existe $\mu,\;\nu\in\mathbb R_+$ tel que pour tout $t\geq 0,\quad (1+t)^q\leq 1+\mu t+\nu t^q$\\
On a aussi pour tout $z,\;y,\;\lambda\in\mathbb R_+^*\qquad qz^{q-1}y\leq \lambda (q-1)z^q+\lambda^{1-q}y^q$\\
Si on pose $z=\|\nabla u\|_q$, $y=\|u\|_q$ et $\lambda=q\varepsilon_0/(\mu mH(q-1))$ avec $\varepsilon_0>0$ petit, on obtient
$$\|u\|_{p,\rho^\gamma}^q\leq K^q_\delta(n,q,\gamma)[(1+\varepsilon_0)\|\nabla u\|_q^q+A(\varepsilon_0)\|u\|_q^q] $$
On peut choisir $\delta$ et $\varepsilon_0$ suffisamment petits de sorte que $K_\delta(n,q,\gamma)(1+\varepsilon_0)^{1/q}
\leq K(n,q,\gamma)+\varepsilon$ et si on pose $A(\varepsilon,q,\gamma)=(K(n,q,\gamma)+\varepsilon)A(\varepsilon_0)^{1/q}$ 
alors l'inégalité \eqref{abc} est établie
\end{proof}

\begin{theorem}\label{INH}
Soit $(M,g)$ une variété riemannienne compacte de dimension $n$. 
\begin{enumerate}
\item Si $(\gamma+n)/p=-1+n/q>0$ et $1\leq q\leq p$ alors l'inclusion $H^q_1(M)\subset L^p(M,\rho^\gamma)$ est continue.
\item Si $(\gamma+n)/p>-1+n/q>0$, $\gamma\leq 0$ et $q\leq p$ alors cette inclusion est compacte.
\end{enumerate}  
\end{theorem}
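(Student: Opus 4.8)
The plan is to obtain (1) directly from the weighted inequality of Theorem~\ref{AD}. Under the hypothesis $(\gamma+n)/p=-1+n/q>0$, $1\le q\le p$ (in the relevant range $\gamma\le 0$ one has $p\le qn/(n-q)$, so that Theorem~\ref{AD} applies), fixing any $\varepsilon>0$ in \eqref{abc} gives, for all $u\in H^q_1(M)$,
$$\|u\|_{p,\rho^\gamma}\le (K(n,q,\gamma)+\varepsilon)\|\nabla u\|_q+A(\varepsilon,q,\gamma)\|u\|_q\le C\big(\|\nabla u\|_q+\|u\|_q\big),$$
with $C$ independent of $u$; this is exactly the continuity of the inclusion $H^q_1(M)\subset L^p(M,\rho^\gamma)$.

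For (2) I would first record two elementary facts. Since $\gamma\le 0$, $\tfrac np\ge\tfrac{\gamma+n}{p}>\tfrac nq-1$, hence $p<q^*:=\tfrac{nq}{n-q}$ \emph{strictly}; and since $q<n$ and $q\le p<p_c:=\tfrac{(\gamma+n)q}{n-q}$, one gets $\gamma>-q>-n$, so $\rho^\gamma\in L^1(M)$ and $\int_{B_P(\eta)}\rho^\gamma\,\di v\to 0$ as $\eta\to 0^+$. The exponent $p_c$ is the critical Hardy--Sobolev exponent attached to the weight $\rho^\gamma$: as $q\le p_c\le q^*$ and $(\gamma+n)/p_c=-1+n/q$, Theorem~\ref{AD} yields a continuous embedding $H^q_1(M)\subset L^{p_c}(M,\rho^\gamma)$, say $\|u\|_{p_c,\rho^\gamma}\le C_0\big(\|\nabla u\|_q+\|u\|_q\big)$. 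Since $L^p(M,\rho^\gamma)$ is complete (Proposition above), it now suffices to show that any sequence $(u_k)$ bounded in $H^q_1(M)$, with $C':=\sup_k(\|\nabla u_k\|_q+\|u_k\|_q)<\infty$, admits a subsequence that is Cauchy in $L^p(M,\rho^\gamma)$.

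To do so, I would invoke the classical Kondrakov theorem~\ref{kon}: because $p<q^*$, the embedding $H^q_1(M)\subset L^p(M)$ is compact, so after passing to a subsequence $u_k\to u$ in $L^p(M)$, in particular $\|u_k-u_l\|_p\to 0$. Fix $\eta\in(0,\delta(M))$ and split the weighted integral over $B_P(\eta)$ and $M\setminus B_P(\eta)$. On $M\setminus B_P(\eta)$ one has $\rho\ge\eta$, hence $\rho^\gamma\le\eta^\gamma$ (as $\gamma\le 0$), and this contribution is $\le\eta^\gamma\|u_k-u_l\|_p^p\to 0$ for fixed $\eta$. On $B_P(\eta)$, H\"older's inequality with exponents $p_c/p$ and $p_c/(p_c-p)$ for the measure $\rho^\gamma\,\di v$ gives
$$\int_{B_P(\eta)}\rho^\gamma|u_k-u_l|^p\,\di v\le\Big(\int_{B_P(\eta)}\rho^\gamma|u_k-u_l|^{p_c}\,\di v\Big)^{p/p_c}\Big(\int_{B_P(\eta)}\rho^\gamma\,\di v\Big)^{\frac{p_c-p}{p_c}}\le (2C_0C')^{p}\Big(\int_{B_P(\eta)}\rho^\gamma\,\di v\Big)^{\frac{p_c-p}{p_c}},$$
where I used $\|u_k-u_l\|_{p_c,\rho^\gamma}\le 2C_0C'$. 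Given $\varepsilon>0$, choose $\eta$ small so that the last quantity is $<\varepsilon/2$ uniformly in $k,l$; then the contribution over $M\setminus B_P(\eta)$ is $<\varepsilon/2$ for $k,l$ large. Hence $(u_k)$ is Cauchy in $L^p(M,\rho^\gamma)$, therefore convergent, and the embedding is compact.

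The exponent bookkeeping and the two splittings are routine. The one substantive point — where I expect to spend the most care — is the estimate near $P$: the key idea is to interpolate, via H\"older against the weighted measure $\rho^\gamma\,\di v$, between the \emph{critical} weighted norm $\|\cdot\|_{p_c,\rho^\gamma}$ (uniformly bounded thanks to Theorem~\ref{AD}) and the total weighted mass $\int_{B_P(\eta)}\rho^\gamma\,\di v$ of a small geodesic ball (finite precisely because $\gamma>-n$, and vanishing as the radius shrinks). A pleasant feature of this Cauchy-sequence formulation is that it uses neither reflexivity nor weak limits, so it covers $q=1$ on equal footing with $q>1$.
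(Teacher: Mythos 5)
Your part (1) coincides with the paper's: both read the continuity directly off the inequality of Theorem~\ref{AD}. For part (2) you take a genuinely different route. The paper factors the embedding as $H^q_1(M)\subset L^r(M)\subset L^p(M,\rho^\gamma)$ with $n/r=\frac12\bigl(\frac{\gamma+n}{p}-1+\frac{n}{q}\bigr)$: the first inclusion is compact by Kondrakov, the second is continuous by one global H\"older estimate $\|u\|^p_{p,\rho^\gamma}\le\bigl(\int_M\rho^{\gamma r/(r-p)}\di v\bigr)^{(r-p)/r}\|u\|_r^p$ (the integrability condition $\gamma r/(r-p)>-n$ is exactly what dictates $n/r<(\gamma+n)/p$, and $\gamma\le 0$ guarantees $r>p$), and a continuous map composed with a compact one is compact. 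You instead work at the critical weighted exponent $p_c=(\gamma+n)q/(n-q)$, pair the uniform bound in $L^{p_c}(M,\rho^\gamma)$ from Theorem~\ref{AD} with Kondrakov at the exponent $p$ itself, and split $M$ into a small ball around $P$ (handled by H\"older against the vanishing mass $\int_{B_P(\eta)}\rho^\gamma\di v$) and its complement (where $\rho^\gamma\le\eta^\gamma$). Both proofs are correct and rest on the same two inputs (Theorem~\ref{AD} and Theorem~\ref{kon}); your exponent bookkeeping ($p<p_c\le q^*$, $\gamma>-q>-n$) checks out. The paper's version is shorter, while yours makes the underlying mechanism --- equi-integrability of $\rho^\gamma|u_k|^p$ near the singular point --- explicit and would adapt to more general weights. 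Your closing remark about avoiding reflexivity is accurate but not a real gain here: the paper's composition argument does not use reflexivity or weak limits either.
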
  

\begin{proof}
La preuve de la première partie de ce théorème est évidente compte tenu de l'inégalité démontrée dans le théorème \ref{AD}. 
La seconde partie du théorème est établie si on montre que 
$H^q_1(M)\subset L^r(M)\subset L^p(M,\rho^\gamma)$ 
continûment, où la première inclusion est compacte pour un certain $r\geq 1$ que l'on déterminera.\\
D'après l'inégalité de Hölder, on a pour tout $u\in H^q_1(M)$
\begin{equation*}
\|u\|_{p,\rho^\gamma}^p=\int_M\rho^{\gamma}|u|^p\di v\leq \biggl(\int_M\rho^{\gamma r'}\di v\biggr)^{1/r'}\|u\|^p_r
\end{equation*} 
où $r'=r/(r-p)$. Pour que le second membre de cette inégalité soit fini, il suffit que 
$\gamma r/(r-p)>-n$, pour le premier facteur, et $1/r>1/q-1/n$, pour le second facteur. De plus le théorème de Kondrakov \ref{kon} assure que si $r\geq 1$ satisfait la deuxième inégalité alors l'inclusion $H^q_1(M)\subset L^r(M)$ est compacte. On en déduit que l'on doit avoir 
$$\frac{n}{r}<\frac{\gamma+n}{p}\mbox{ et }\frac{n}{r}>-1+\frac{n}{q}$$
Puisque $(\gamma+n)/p>-1+n/q$ par hypothèse alors, pour que $u\in L^p(M,\rho^\gamma)$ et que l'inclusion soit compacte, 
il suffit de poser 
$$\frac{n}{r}=\frac{1}{2}(\frac{\gamma+n}{p}-1+\frac{n}{q}) $$
Comme $\gamma\leq 0$ on a $n/r< (\gamma+n)/p\leq n/p $ donc $r> p\geq 1$.
\end{proof}

\paragraph{\small{Remarque.}} Puisque la fonction $\rho$ (cf. définition \ref{distance}) dépend de $P\in M$, l'espace $L^p(M,\rho_P^\gamma)$ dépend aussi du point $P$ choisi, et si $P\neq P'$, il n'y a pas en général d'inclusions entre $L^p(M,\rho_{P}^\gamma)$ et $L^p(M,\rho_{P'}^\gamma)$. Cependant  les inclusions et les inégalités qu'on a déjà montrées dans les théorèmes \ref{AD} et \ref{INH} sont valables pour tout point $P\in M$.

\section{La régularité des solutions de l'équation de type Yamabe}

Lorsque on cherche des solutions d'équations aux dérivées partielles, la première étape donne fréquemment  des solutions faibles (dans notre cas, elles seront dans $H_1(M)$). Dans la plupart des cas on trouve la régularité des solutions en appliquant le théorème de régularité pour les opérateurs elliptiques à coefficients continus suivant:

\begin{theorem}\label{reg}
Soient $\Omega$ un ouvert de $\mathbb{R}^n$ et $L$ un opérateur linéaire d'ordre 2 
uniformément elliptique qui s'écrit sous la forme 
\begin{equation}\label{elli}
L(u)=a^{ij}\partial_{ij}u+ b^i\partial_iu+hu
\end{equation}
où $a^{ij},\;b^i\mbox{ et }h $ sont des fonctions bornées dans $C^k$, $k\in \mathbb N$.\\
Soit $u$ une solution  de l'équation $Lu=f$ au sens des distributions. 
\begin{enumerate}
\item[(i)] Si $f\in C^{k,\alpha}(\Omega)$ alors $u\in C^{k+2,\alpha}(\Omega)$
\item[(ii)]Si $f\in H_k^p(\Omega)$ alors $u\in H_{k+2}^p(\Omega)$
\end{enumerate}
\end{theorem}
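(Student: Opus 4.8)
Le r\'esultat \'etant local, le plan est de travailler dans une boule $B\subset\Omega$, d'estimer $u$ sur une boule concentrique plus petite, puis de recouvrir un compact arbitraire de $\Omega$.

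Je commencerais par l'op\'erateur mod\`ele \`a coefficients constants. En un point $x_0$ fix\'e, apr\`es un changement de variables lin\'eaire diagonalisant la forme quadratique $(a^{ij}(x_0))$, l'op\'erateur gel\'e $L_0=a^{ij}(x_0)\partial_{ij}$ se ram\`ene au Laplacien $\Delta$. Pour $\Delta v=f$ on \'ecrirait $v$, modulo une fonction harmonique dont les d\'eriv\'ees sur une boule plus petite sont contr\^ol\'ees par les estim\'ees int\'erieures classiques des fonctions harmoniques, comme potentiel newtonien $v=N\ast f$. Le noyau $\partial_{ij}N$ \'etant un noyau de Calder\'on--Zygmund, on obtient d'une part $\|\partial_{ij}v\|_p\le C_p\|f\|_p$ pour $1<p<\infty$, d'autre part l'estim\'ee de type Schauder $\|\partial_{ij}v\|_{C^{0,\alpha}}\le C_\alpha\|f\|_{C^{0,\alpha}}$ pour $0<\alpha<1$.

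Viendrait ensuite le passage de $L_0$ \`a $L$ par gel des coefficients. Sur une petite boule $B_\rho(x_0)$ on \'ecrit
$$L_0u=f-[a^{ij}(x)-a^{ij}(x_0)]\partial_{ij}u-b^i\partial_iu-hu,$$
et, $a^{ij}$ \'etant continu, $\sup_{B_\rho(x_0)}|a^{ij}(x)-a^{ij}(x_0)|$ est aussi petit que voulu d\`es que $\rho$ est petit ; le terme $[a^{ij}(x)-a^{ij}(x_0)]\partial_{ij}u$ se r\'eabsorbe alors dans le membre de gauche, les termes d'ordre inf\'erieur $b^i\partial_iu$ et $hu$ \'etant contr\^ol\'es par interpolation. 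On en d\'eduit l'estim\'ee a priori
$$\|u\|_{C^{2,\alpha}(B_{\rho/2}(x_0))}\le C\bigl(\|f\|_{C^{0,\alpha}(B_\rho(x_0))}+\|u\|_{C^0(B_\rho(x_0))}\bigr)$$
et son analogue pour les normes $H^p_2$ ; un recouvrement fini donne alors l'estim\'ee sur tout compact. La r\'egularit\'e proprement dite, et non la seule estim\'ee, s'obtient par la m\'ethode des quotients diff\'erentiels (ou par r\'egularisation des coefficients et du second membre).

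Enfin je conclurais par r\'ecurrence sur $k$. Une fois acquis $u\in C^{2,\alpha}$ (resp.\ $u\in H^p_2$), et le r\'esultat suppos\'e vrai \`a l'ordre $k-1$, on d\'erive $Lu=f$ selon $\partial_\ell$ : la fonction $w=\partial_\ell u$ satisfait
$$Lw=\partial_\ell f-(\partial_\ell a^{ij})\partial_{ij}u-(\partial_\ell b^i)\partial_iu-(\partial_\ell h)u,$$
dont le second membre est dans $C^{k-1,\alpha}$ (resp.\ $H^p_{k-1}$) gr\^ace aux hypoth\`eses $a^{ij},b^i,h\in C^k$ et \`a l'hypoth\`ese de r\'ecurrence appliqu\'ee \`a $u$ ; d'o\`u $w\in C^{k+1,\alpha}$, c'est-\`a-dire $u\in C^{k+2,\alpha}$ (resp.\ $u\in H^p_{k+2}$). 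Le point d\'elicat est double : \'etablir les estim\'ees pour le noyau singulier de l'op\'erateur mod\`ele (Calder\'on--Zygmund et Schauder), et contr\^oler finement le gel des coefficients ; ces deux points, classiques, sont d\'etaill\'es dans Gilbarg--Trudinger ou, sous la forme utile ici, dans le livre de T.~Aubin \cite{Aubin}.
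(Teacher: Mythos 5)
Le texte ne d\'emontre pas ce th\'eor\`eme : il le qualifie de standard et renvoie au livre de Gilbarg--Trudinger. Votre esquisse (op\'erateur mod\`ele via Calder\'on--Zygmund et Schauder, gel des coefficients avec r\'eabsorption du terme perturbatif, puis r\'ecurrence sur $k$ en d\'erivant l'\'equation) est pr\'ecis\'ement le sch\'ema de la preuve classique de cette r\'ef\'erence, et elle est correcte dans ses grandes lignes ; comme le papier, vous d\'el\'eguez aux r\'ef\'erences le contenu analytique dur (estim\'ees du noyau singulier, quotients diff\'erentiels pour passer d'une solution distributionnelle \`a une solution forte), ce qui est l\'egitime ici.
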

Ce théorème est standard, on peut en trouver une preuve dans le livre de D.~Gilbarg et N.~Trudinger~\cite{GT}.\\
Les deux théorèmes suivants permettent de trouver la meilleure régularité des solutions d'un certains type d'équations. Ils sont fondamentaux pour la suite, associés aux théorèmes de régularité habituels pour les opérateurs elliptiques ci-dessus. N.~Trudinger \cite{Trud} avait montré que les solutions faibles de l'équation de Yamabe \eqref{yamabe} (voir chapitre 3) sont toujours $C^\infty$ grâce à ces deux théorèmes. Le premier théorème a été utilisé implicitement  par H.~Yamabe \cite{Yam}  et  on peut en trouver une preuve dans l'article de J.~Serrin \cite{Ser}. Le deuxième théorème est plus spécifique  car il s'applique à des équations de type Yamabe qu'on étudiera dans le prochain chapitre. 

\begin{theorem}\label{gef}
 Sur une variété riemannienne compacte $(M,g)$, si $u\geq 0$ est une solution faible dans $H_1(M)$, non triviale, de l'équation $\Delta u+hu=0$, c'est à dire si
$$\forall v\in H_1(M)\qquad (\nabla u,\nabla v)_{L^2}+(hu,v)_{L^2}=0$$
avec $h\in L^p(M)$ et $p>n/2$, alors $u\in C^{1-[n/p],\beta}(M)$ et est strictement positive bornée.\\
$[n/p]$ est la partie entière de $n/p$ et  $\beta\in]0,1[$.
\end{theorem}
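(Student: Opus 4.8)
The plan is a Moser-type iteration to obtain an $L^{\infty}$ bound, followed by the standard elliptic $L^p$-regularity of Theorem \ref{reg} and the Sobolev embeddings of Theorem \ref{incsob} for the H\"older estimate, and the weak Harnack inequality for strict positivity. First I would localize: since $g$ is $C^{\infty}$, in any geodesic chart $\Delta_g$ is uniformly elliptic with smooth coefficients and the equation becomes $\Delta_g u=-hu$ in the sense of distributions, with $u\in H_1(M)$. Theorem \ref{incsob} gives $u\in L^{N}(M)$, $N=2n/(n-2)$; set $p'=p/(p-1)$, so that $p>n/2$ forces $p'<n/(n-2)=N/2$, hence $\chi:=N/(2p')>1$. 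This strict gap $\chi>1$ is the engine of the iteration.

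The central step is the $L^{\infty}$ bound. For $L>0$ and $\beta\ge 1$ I would plug the test function $v=u\,u_L^{2(\beta-1)}$ (with $u_L=\min(u,L)$) into the weak formulation; it is admissible because it is bounded and lies in $H_1(M)$. Writing $w=u\,u_L^{\beta-1}$, the usual algebraic manipulations give
\[
\|\nabla w\|_2^2\le \beta\int_M|h|\,w^2\,\di v .
\]
By H\"older, $\int_M|h|w^2\le\|h\|_p\|w\|_{2p'}^2$; since $2p'<N$, the Sobolev inequality $H_1(M)\subset L^N(M)$ (Theorem \ref{incsob}, or Theorem \ref{INM} with $p=2$) together with the compactness of $M$ yields $\|w\|_N\le(C\beta)^{1/2}\|w\|_{2p'}$, with $C$ independent of $\beta$ and $L$. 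Letting $L\to\infty$ (so $w\uparrow u^{\beta}$, monotone convergence) gives, for every $\beta\ge 1$ with $u\in L^{2p'\beta}(M)$, the inequality $\|u\|_{N\beta}\le(C\beta)^{1/(2\beta)}\|u\|_{2p'\beta}$. Iterating with $\beta_k=\chi^{k}$, starting from $u\in L^{N}=L^{2p'\beta_1}$, one gets $u\in L^{N\chi^{k}}$ for all $k$ with $\|u\|_{N\chi^{k}}\le\big(\prod_{j\le k}(C\chi^{j})^{1/(2\chi^{j})}\big)\|u\|_N$; since $\sum_j j\chi^{-j}<\infty$ this product is bounded, and letting $k\to\infty$ yields $u\in L^{\infty}(M)$.

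Once $u$ is bounded, $hu\in L^p(M)=H_0^p(M)$, so the equation $\Delta_g u=-hu$ and elliptic $L^p$-regularity (Theorem \ref{reg}(ii), applied in coordinate charts) give $u\in H_2^p(M)$, whence $u\in C^{1-[n/p],\beta}(M)$ for a suitable $\beta\in\,]0,1[$ by Theorem \ref{incsob}(iii) --- that is, $u\in C^{0,\beta}$ when $n/2<p\le n$ and $u\in C^{1,\beta}$ when $p>n$; in particular $u$ is continuous and bounded. For strict positivity, $u\ge 0$ is a continuous, nontrivial, weak solution of $\Delta_g u+hu=0$ with $h\in L^p$, $p>n/2$, so the weak Harnack inequality for such Schr\"odinger-type operators (Serrin \cite{Ser}) shows that on each small ball $u$ is either identically zero or bounded below by a positive constant; thus $\{u=0\}$ is open, it is also closed (the complement of the open set $\{u>0\}$), and since $M$ is connected and $u\not\equiv 0$ it must be empty, i.e. $u>0$ everywhere.

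The main obstacle is making the iteration of the second step rigorous: checking the admissibility of the truncated test functions $v=u\,u_L^{2(\beta-1)}$, establishing the energy inequality for $w$ with a constant linear in $\beta$, and controlling the product $\prod_j(C\chi^{j})^{1/(2\chi^{j})}$ so that the $L^{\infty}$ estimate passes to the limit --- this is precisely where the strict inequality $2p'<N$, i.e. $p>n/2$, is essential, since it provides the integrability gain $\chi>1$ at each step. The strict positivity then reduces to invoking the weak Harnack inequality for operators with an $L^{n/2+}$ zeroth-order coefficient.
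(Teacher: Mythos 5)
Your proposal is correct, but note that the paper does not actually prove Theorem \ref{gef}: it is quoted as a known result, with the reader referred to Serrin \cite{Ser} for the $L^\infty$ bound and the strict positivity, and the only argument supplied in the text is the final bootstrap ("si $u$ satisfait les hypoth\`eses \dots alors $\Delta u\in L^p(M)$, donc $u\in H^p_2(M)\subset C^{1-[n/p],\beta}(M)$"), which is exactly your last step. What you add is a self-contained Moser iteration for the boundedness: the test function $v=u\,u_L^{2(\beta-1)}$ is admissible, the energy inequality $\|\nabla w\|_2^2\le\beta\int_M|h|w^2\,\di v$ with $w=u\,u_L^{\beta-1}$ follows from $\nabla u\cdot\nabla u_L=|\nabla u_L|^2\ge0$ and $\min\bigl(\tfrac{2\beta-1}{\beta^2},1\bigr)\ge\tfrac1\beta$, the gain $\chi=N/(2p')>1$ is precisely the hypothesis $p>n/2$, and the product $\prod_j(C\chi^j)^{1/(2\chi^j)}$ converges because $\sum_j j\chi^{-j}<\infty$; the passage $L\to\infty$ by monotone convergence is legitimate at each fixed exponent. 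This is in the same spirit as the truncation argument the paper \emph{does} write out for the nonlinear case (Theorem \ref{regYS}), where the functions $H$ and $F$ play the role of your $u_L^{2(\beta-1)}$ and $u\,u_L^{\beta-1}$; the difference is that there a single integrability gain suffices, whereas here you must run the full iteration to reach $L^\infty$. Your reduction of strict positivity to the weak Harnack inequality for operators with zeroth-order coefficient in $L^{n/2+\varepsilon}$, together with the open-and-closed argument on the connected manifold $M$, is the standard route and is exactly what the citation to Serrin covers; so the one ingredient you still import from the literature is the same one the paper imports.
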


Observons que si $u$ est une fonction qui satisfait les hypothèses de ce théorème alors $\Delta u\in L^p(M)$. Par le théorème de régularité  \ref{reg}, $u\in H^p_2(M)$ et par les inclusions de Sobolev,  $u\in C^{1-[n/p],\beta}(M)$\\
Le théorème \ref{gef} permet de montrer le théorème suivant: 
\begin{theorem}\label{regYS}
Soit $(M,g)$ une variété riemannienne compacte $C^\infty$ de dimension $n$. $p$ et $\tilde h$ sont deux nombres réels, avec $p>n/2$. Si $\varphi\in H_1(M)$ une solution faible positive non triviale de l'équation 
\begin{equation}\label{etyam}
 \Delta_g\psi+h\psi=\tilde h\psi^{\frac{n+2}{n-2}}
\end{equation}
 
alors $\varphi\in H_2^p(M)\subset C^{1-[n/p],\beta}(M)$ et $\varphi$ est strictement positive.
\end{theorem}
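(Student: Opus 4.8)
\emph{Esquisse de preuve.} Le plan est de ramener l'\'equation \eqref{etyam} \`a une \'equation lin\'eaire et d'invoquer le th\'eor\`eme \ref{gef}. Posons $\hat h=h-\tilde h\,\varphi^{\N}$; comme $\frac{n+2}{n-2}=1+\N$, la fonction $\varphi\geq 0$, non triviale, est alors une solution faible de $\Delta_g\varphi+\hat h\varphi=0$. Si l'on savait d\'ej\`a que $\hat h\in L^{p'}(M)$ pour un certain $p'>n/2$, le th\'eor\`eme \ref{gef} donnerait imm\'ediatement que $\varphi$ est continue, born\'ee et strictement positive; on aurait alors $\Delta_g\varphi=-\hat h\varphi\in L^p(M)$ (produit d'une fonction born\'ee et d'une fonction de $L^p$), et le th\'eor\`eme de r\'egularit\'e elliptique \ref{reg} joint aux inclusions de Sobolev \ref{incsob} ach\`everait la preuve. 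Or on ne dispose a priori que de $\varphi\in H_1(M)\subset L^{2n/(n-2)}(M)$, donc $\varphi^{\N}\in L^{n/2}(M)$: l'exposant $n/2$ est exactement critique et ne suffit pas pour appliquer le th\'eor\`eme \ref{gef}. Le point central est donc d'am\'eliorer l'int\'egrabilit\'e de $\varphi$.

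Pour cela, je proc\'ederais par une troncature \`a la Moser--Trudinger. Pour $L>0$ on pose $\varphi_L=\min(\varphi,L)$ et on teste \eqref{etyam} contre $v=\varphi\,\varphi_L^{\N}\in H_1(M)$, fonction licite car, $L$ \'etant fix\'e, $\varphi_L$ est born\'ee et $\nabla v$ est domin\'ee par un multiple de $|\nabla\varphi|$. En posant $w=\varphi\,\varphi_L^{2/(n-2)}$ (de sorte que $w^2=\varphi^2\varphi_L^{\N}=\varphi\,v$) et en observant que les termes en $\nabla\varphi_L$ issus de $(\nabla\varphi,\nabla v)_{L^2}$ sont positifs, on obtient une in\'egalit\'e du type
$$\|\nabla w\|_2^2\leq C(n)\Big(\int_M|h|\,w^2\,\di v+|\tilde h|\int_M\varphi^{\N}w^2\,\di v\Big).$$
On majore $\int_M|h|\,w^2\leq\|h\|_p\,\|w\|_{2p/(p-1)}^2$, et comme $p>n/2$ entra\^ine $2p/(p-1)<2n/(n-2)$, une interpolation donne $\|w\|_{2p/(p-1)}^2\leq\e\|w\|_{2n/(n-2)}^2+C_\e\|w\|_2^2$. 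Pour le terme critique on \'ecrit $\varphi^{\N}=\varphi^{\N}\mathbf{1}_{\{\varphi\leq K\}}+\varphi^{\N}\mathbf{1}_{\{\varphi>K\}}$: la premi\`ere partie est born\'ee par $K^{\N}$, et H\"older donne $\int_M\varphi^{\N}\mathbf{1}_{\{\varphi>K\}}w^2\leq\big(\int_{\{\varphi>K\}}\varphi^{2n/(n-2)}\,\di v\big)^{2/n}\|w\|_{2n/(n-2)}^2$, le facteur tendant vers $0$ lorsque $K\to+\infty$ par continuit\'e absolue de l'int\'egrale de $\varphi^{2n/(n-2)}\in L^1(M)$. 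En utilisant l'in\'egalit\'e de Sobolev $\|w\|_{2n/(n-2)}^2\leq C(\|\nabla w\|_2^2+\|w\|_2^2)$ (th\'eor\`emes \ref{incsob} et \ref{INM}), en choisissant d'abord $K$ assez grand puis $\e$ assez petit, on absorbe tous les termes critiques dans le membre de gauche, et il reste $\|w\|_{2n/(n-2)}^2\leq C\|w\|_2^2$ avec $C$ ind\'ependante de $L$. Or $\|w\|_2\leq\|\varphi^{1+2/(n-2)}\|_2=\|\varphi\|_{2n/(n-2)}^{n/(n-2)}<+\infty$; en faisant $L\to+\infty$ (lemme de Fatou), on obtient $\varphi^{n/(n-2)}\in L^{2n/(n-2)}(M)$, c'est-\`a-dire $\varphi\in L^{q_1}(M)$ avec $q_1=\frac{2n^2}{(n-2)^2}>\frac{2n}{n-2}$.

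Il ne reste plus qu'\`a amorcer une r\'ecurrence. De $\varphi\in L^{q_1}(M)$ on tire $\varphi^{\N}\in L^{q_1(n-2)/4}(M)=L^{n^2/(2(n-2))}(M)$ avec $n^2/(2(n-2))>n/2$, donc $\hat h\in L^{p'}(M)$ avec $p'=\min\big(p,\,n^2/(2(n-2))\big)>n/2$: le th\'eor\`eme \ref{gef} s'applique \`a $\Delta_g\varphi+\hat h\varphi=0$ et fournit la continuit\'e, le caract\`ere born\'e et la stricte positivit\'e de $\varphi$. D\`es lors $\varphi^{\N}$ est born\'ee, donc $\hat h=h-\tilde h\varphi^{\N}\in L^p(M)$ et $\Delta_g\varphi=-\hat h\varphi\in L^p(M)$; le th\'eor\`eme \ref{reg} donne $\varphi\in H_2^p(M)$, et les inclusions de Sobolev \ref{incsob} donnent $H_2^p(M)\subset C^{1-[n/p],\beta}(M)$, ce qui conclut. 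L'obstacle principal est clairement l'\'etape de troncature: il faut contourner l'exposant critique de Sobolev, ce qui oblige \`a exploiter la petitesse de la queue $\int_{\{\varphi>K\}}\varphi^{2n/(n-2)}\,\di v$ plut\^ot qu'une simple majoration a priori, tout en contr\^olant soigneusement la l\'egitimit\'e de la fonction test et le passage \`a la limite $L\to+\infty$.
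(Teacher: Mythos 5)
Votre d\'emonstration est correcte et repose sur la m\^eme strat\'egie que celle du texte : une it\'eration de type Moser--Trudinger pour obtenir $\varphi\in L^{q}(M)$ avec $q>\frac{2n}{n-2}$, puis la r\'e\'ecriture de \eqref{etyam} sous la forme lin\'eaire $\Delta_g\varphi+(h-\tilde h\varphi^{\N})\varphi=0$ avec un potentiel dans $L^{r}(M)$, $r>n/2$, l'application du th\'eor\`eme \ref{gef}, et enfin le th\'eor\`eme de r\'egularit\'e \ref{reg} joint aux inclusions de Sobolev. Vous ne diff\'erez du texte que sur deux points d'ex\'ecution. D'abord la troncature : le texte tronque la puissance dans la fonction test (les fonctions $H$ et $F$, polynomiales sous le niveau $l$ et affines au-dessus), tandis que vous tronquez $\varphi$ elle-m\^eme dans le poids ($\varphi_L=\min(\varphi,L)$, $v=\varphi\,\varphi_L^{\N}$) ; les deux dispositifs sont \'equivalents, et votre v\'erification que $v\in H_1(M)$ et que les termes en $\nabla\varphi_L$ ont le bon signe est juste. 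Ensuite, et c'est la diff\'erence la plus substantielle, l'absorption du terme critique $\tilde h\int_M\varphi^{\N}w^2\di v$ : le texte localise sur de petites boules $B_P(2\delta)$ de fa\c{c}on \`a rendre $\|\varphi\|_{N,2\delta}^{2/(n-2)}$ petit, puis recolle par partition de l'unit\'e ; vous travaillez globalement en d\'ecoupant selon $\{\varphi\leq K\}$ et $\{\varphi>K\}$, la queue $\bigl(\int_{\{\varphi>K\}}\varphi^{2n/(n-2)}\di v\bigr)^{2/n}$ tendant vers $0$ par continuit\'e absolue. Votre variante \'evite le recollement final et fournit directement une estimation globale ; celle du texte \'evite le d\'ecoupage en ensembles de niveau. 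Les exposants obtenus sont du m\^eme ordre ($q_1=\tfrac{2n^2}{(n-2)^2}>\tfrac{2n}{n-2}$ chez vous, $qN>N$ dans le texte) et la conclusion via le th\'eor\`eme \ref{gef} puis le th\'eor\`eme \ref{reg} est identique.
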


\begin{proof}
Pour montrer ce théorème, il suffit de montrer qu'il existe $\e>0$ tel que $\varphi\in L^{(\e+2n)/(n-2)}(M)$. En effet si $\varphi$ satisfait aux hypothèses du théorème et qu'elle est dans $L^{(\e+2n)/(n-2)}(M)$, alors elle est solution de l'équation 
$$\Delta_gu+(h-\tilde h\varphi^{\frac{4}{n-2}})u=0$$
avec $h-\tilde h\varphi^{\frac{4}{n-2}}\in L^r(M)$ et $r=\min(p,\frac{2n+\e}{4})>n/2$. Par le théorème \ref{gef}, on en déduit que $\varphi$ est strictement positive bornée. Par le théorème de régularité \ref{reg} et les inclusions de Sobolev, on montre que $\varphi$ appartient à $H^p_2(M)$ avec $p>n/2$.\\ 
Soient $l$ un nombre réel strictement positif et  $H$, $F$ deux fonctions réelles continues sur $\mathbb R_+$ définies par:
\begin{align*}
 H(t) &=\begin{cases} 
t^\gamma &\text{ si } 0\leq t\leq l\\
l^{q-1}(ql^{q-1}t-(q-1)l^q) &\text{ si }  t>l
      \end{cases}\\
F(t) &=\begin{cases}
t^q &\text{ si } 0\leq t\leq l\\
 ql^{q-1}t-(q-1)l^q \qquad &\text{ si }  t>l
     \end{cases}
\end{align*}

\begin{equation*}
\text{où }\gamma=2q-1,\text{ et } 1<q<\frac{n(p-1)}{p(n-2)}  
\end{equation*}
Comme $\varphi$ est une fonction positive appartenant à $ H_1(M)$, $H\circ\varphi$ et $F\circ \varphi$ sont également dans $H_1(M)$. Notons que pour tout $t\in\mathbb R_+ -\{l\}$
\begin{equation}\label{forl}
 qH(t)=F(t)F'(t),\; (F'(t))^2\leq qH'(t)\text{ et }F^2(t)\geq tH(t)
\end{equation}
Si $\varphi $ est une solution faible de l'équation \eqref{etyam} alors
\begin{equation}\label{tyfai}
\forall\psi\in H_1(M)\quad \int_M\nabla\varphi\cdot\nabla\psi \di v+\int_Mh\varphi\psi\di v=\tilde h\int_M\varphi^{N-1}\psi \di v
\end{equation}
où $N=2n/(n-2)$.\\
On choisit $\psi=\eta^2H\circ\varphi$, où $\eta$ est une fonction de classe $C^1$ à support dans la boule $B_P(2\delta)$ de rayon $2\delta$ suffisamment petit telle que $\eta=1$ sur $B_P(\delta)$. Si on substitue dans \eqref{tyfai}, on obtient
\begin{equation}\label{refref}
 \int_M \eta^2H'\circ\varphi|\nabla\varphi|^2\di v+2\int_M\eta H\circ\varphi\nabla\varphi\cdot\nabla\eta\di v=\tilde h\int_M\varphi^{N-1}\eta^2 H\circ\varphi\di v-\int_Mh\varphi\eta^2H\circ\varphi\di v
\end{equation}
On pose $f=F\circ\varphi$. On estimera les quatre intégrales ci-dessus, en utilisant la fonction $f$ et les relations \eqref{forl}. On a $\nabla f=F'\circ\varphi\nabla\varphi$ donc,  en utilisant la deuxième relation de \eqref{forl}
$$|\nabla f|^2=(F'\circ\varphi)^2|\nabla\varphi|^2\leq qH'\circ\varphi|\nabla\varphi|^2$$ 
On en déduit que la première intégrale de l'égalité \eqref{refref} est minorée par
$$\frac{1}{q}\|\eta \nabla f\|_2^2\leq\int_M \eta^2H'\circ\varphi|\nabla\varphi|^2\di v$$
La première relation de \eqref{forl} et l'inégalité de Cauchy--Schwarz impliquent que la deuxième intégrale de \eqref{refref} est minorée par:
$$2\int_M\eta H\circ\varphi\nabla\varphi\cdot\nabla\eta\di v=\frac{2}{q}\int_M\eta f\nabla f\nabla\eta\di v
\geq \frac{-2}{q}\|f\nabla\eta\|_2\|\eta\nabla f\|_2$$
Grâce à la dernière relation de \eqref{forl}, on a $\varphi H\circ\varphi\leq f^2$. Les deux intégrales de droite dans \eqref{refref} sont donc majorées par:
$$\biggl|\tilde h\int_M\varphi^{N-1}\eta^2 H\circ\varphi\di v-\int_Mh\varphi\eta^2H\circ\varphi\di v\biggr|\leq |\tilde h|\|\varphi\|^{4/(n-2)}_{N,2\delta}\|\eta f\|^2_N+\|h\|_p\|\eta f\|_{2p/(p-1)}^2$$  
où $\|\varphi\|^N_{N,r}=\int_{B_P(r)}\varphi^N\di v$. Si on regroupe ces estimées, l'égalité \eqref{refref} devient:
\begin{equation}\label{refre}
 \|\eta \nabla f\|_2^2-2\|f\nabla\eta\|_2\|\eta\nabla f\|_2\leq q(|\tilde h|\|\varphi\|^{4/(n-2)}_{N,2\delta}\|\eta f\|^2_N+\|h\|_p\|\eta f\|_{2p/(p-1)}^2)
\end{equation}
Remarquons que pour tout nombre réel positif $a,\;b,\;c\text{ et }d$, si $a^2-2ab\leq c^2+d^2$ alors $a\leq c+d+2b$. En utilisant cette remarque, l'inégalité \eqref{refre} devient:
\begin{equation}\label{inega Est}
 \|\eta \nabla f\|_2\leq \sqrt{q|\tilde h|}\|\varphi\|^{2/(n-2)}_{N,2\delta}\|\eta f\|_N+\sqrt{q\|h\|_p}\|\eta f\|_{2p/(p-1)}+2 \|f\nabla\eta\|_2
\end{equation}
 Par les inclusions de Sobolev (cf. théoème \ref{incsob}) on sait qu'il existe une constante $c>0$ qui dépend seulement de $n$ telle que  
$$\|\eta f\|_N\leq c(\|\eta \nabla f\|_2+\|f\nabla\eta\|_2+\|\eta f\|_2)$$
Le choix de $q$ ($q< N$) et l'inégalité \eqref{inega Est} permettent d'écrire 
\begin{equation*}
  (1-c\sqrt{N|\tilde h|}\|\varphi\|^{2/(n-2)}_{N,2\delta})\|\eta f\|_N\leq c\bigl(\sqrt{N\|h\|_p}\|\eta f\|_{2p/(p-1)}+3 \|f\nabla\eta\|_2+\|\eta f\|_2\bigr)
\end{equation*}
On choisit $\delta$ suffisamment petit pour que 
$$\|\varphi\|^{2/(n-2)}_{N,2\delta}\leq 1/(2c\sqrt{N|\tilde h|})$$ ensuite on fait tendre $l$ vers $+\infty$,  on en déduit qu'il existe une constante $C>0$ qui dépend  de $n,\;\delta, \|\eta\|_\infty,\;\|\nabla\eta\|_\infty,\;\|h\|_p$ et $|\tilde h|$  telle que
$$\|\varphi^q\|_{N,2\delta}\leq C(\|\varphi^q\|_2+\|\varphi^q\|_{2p/(p-1)})$$
Comme $\frac{2p}{p-1}q<N$ et que $\varphi$ est bornée dans $L^N$ on a 
$$\|\varphi\|_{qN,2\delta}\leq C$$ 
Si $(\eta_i)_{i\in I}$ est une partition de l'unité subordonnée au recouvrement $\{B_{P_i}(\delta)\}_{i\in J}$ de la variété $M$  alors
$$\|\varphi\|^{qN}_{qN}=\sum_{i\in I}\|\eta_i\varphi\|^{qN}_{qN,\delta_i}\leq C$$ 
on en déduit que $\varphi\in L^{qN}$ avec $qN>N$. En tenant compte de ce qui a été dit au début de la preuve, le théorème est démontré.
\end{proof}

\begin{proposition}\label{delta u f}
Soit $(M,g)$ une variété riemannienne compacte, si $u$ est une solution faible dans $H_1(M)$ de l'équation $\Delta u+hu=f$, où $h$ et $f$ sont deux fonctions telles que $h\in L^p(M)$ et $f\in L^q(M)$, $p>n/2$ et $q\geq 1$, alors $u\in H^{\min(p,q)}_2(M)$
\end{proposition}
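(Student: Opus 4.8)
The plan is a bootstrap on the integrability of $u$, carried out in the $L^p$/Sobolev scale. The one structural subtlety is that Theorem~\ref{reg} requires the zeroth-order coefficient of the operator to be $C^k$, whereas $h$ is only in $L^p$; so I will \emph{not} fold $hu$ into the operator but keep it on the right-hand side. Since $u\in H_1(M)$ is a weak solution, $\Delta_g u=f-hu$ holds distributionally, i.e. $(\nabla u,\nabla v)_{L^2}=(f-hu,v)_{L^2}$ for all $v\in H_1(M)$; read in an exponential chart, the operator $\Delta_g$ has the form \eqref{elli} with $a^{ij}=g^{ij}$ and $b^i$ built from the Christoffel symbols (hence $C^\infty$, since $g$ is smooth) and with vanishing zeroth-order term. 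Consequently, whenever the right-hand side $f-hu$ lies in $L^s(M)$ with $1<s<\infty$, applying Theorem~\ref{reg}(ii) with $k=0$ chart by chart and patching with a partition of unity gives $u\in H_2^s(M)$.

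First I would record the starting point: by Sobolev embedding (Theorem~\ref{incsob}), $H_1(M)\subset L^{2n/(n-2)}(M)$, so $u\in L^{t_0}$ with $t_0=2n/(n-2)$. Then I iterate: if $u\in L^{t_k}$ with $t_k<\infty$, Hölder gives $hu\in L^{s_k}$ with $\tfrac{1}{s_k}=\tfrac{1}{p}+\tfrac{1}{t_k}$, hence $f-hu\in L^{\sigma_k}$ with $\sigma_k=\min(q,s_k)$, and the regularity step above yields $u\in H_2^{\sigma_k}(M)$. If $\sigma_k>n/2$ the process stops: $u\in C^0(M)\subset L^\infty(M)$. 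Otherwise Theorem~\ref{incsob} gives $u\in L^{t_{k+1}}$ with $\tfrac{1}{t_{k+1}}=\tfrac{1}{\sigma_k}-\tfrac{2}{n}$. As long as $\sigma_k=s_k$, this reads $\tfrac{1}{t_{k+1}}=\tfrac{1}{t_k}-\bigl(\tfrac{2}{n}-\tfrac{1}{p}\bigr)$, a decrease by the \emph{fixed} positive amount $\tfrac{2}{n}-\tfrac{1}{p}>0$ (this is where $p>n/2$ enters); hence after finitely many steps one reaches a stage where either $\sigma_k>n/2$, or $s_k\ge q$.

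Finally I would read off the endgame. If the iteration halts with $u\in L^\infty(M)$, then $hu\in L^p(M)$ and, by compactness of $M$, $f\in L^q(M)\subset L^{\min(p,q)}(M)$, so $f-hu\in L^{\min(p,q)}(M)$ and the regularity step gives $u\in H_2^{\min(p,q)}(M)$. If instead the iteration first reaches $s_k\ge q$ with $\sigma_k\le n/2$, then $\sigma_k=q\le n/2<p$, so $\min(p,q)=q$ and already $u\in H_2^{q}(M)=H_2^{\min(p,q)}(M)$. In either case the claim follows. The main thing to watch is the exponent bookkeeping together with the borderline Sobolev embedding at $s=n/2$ (which one sidesteps by replacing $h\in L^p$ with $h\in L^{p-\varepsilon}$ for a single step, using $L^p\subset L^{p-\varepsilon}$ on the compact $M$ and keeping $p-\varepsilon>n/2$), and the genuine endpoint $q=1$, which falls outside the elliptic $L^s$ theory and should be handled separately or excluded. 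Beyond that I do not expect a serious obstacle: this is the linear, lower-order analogue of Theorem~\ref{regYS}, without the nonlinear difficulties of that proof.
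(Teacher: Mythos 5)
Your proof is correct and follows essentially the same route as the paper: a bootstrap on the integrability exponent of $u$, keeping $f-hu$ on the right-hand side, using the fixed decrement $\tfrac{2}{n}-\tfrac{1}{p}>0$ coming from $p>n/2$ to force termination, and stopping either when the exponent exceeds $n/2$ (so $u$ is bounded and $f-hu\in L^{\min(p,q)}$) or when the minimum switches to $q$. The only difference is cosmetic --- you merge the paper's two cases $q\geq p$ and $q<p$ into one iteration via $\sigma_k=\min(q,s_k)$ --- and your explicit treatment of the borderline exponent $n/2$ and of the endpoint $q=1$ is in fact more careful than the paper's.
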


\begin{proof}
Distinguons les deux cas $q\geq p$ et $q<p$.
\begin{enumerate} 
\item[$(i)$] Si $q\geq p$ . Supposons que $u\in L^{s_i}(M)$ et satisfait les hypothèses de la proposition. Alors $hu\in L^{\frac{ps_i}{p+s_i}}(M)$, donc $\Delta u\in L^{\frac{ps_i}{p+s_i}}(M)$ car $ps_i/(p+s_i)<q$. Le théorème de régularité \ref{reg} assure que $u\in H^{\frac{ps_i}{p+s_i}}_2(M)$. Ensuite, les inclusions de Sobolev $H_2^r(M)\subset L^s(M)$ si $r\leq n/2$ avec $s=nr/(n-2r)$ et $H^r_2(M)\subset C^{1-[n/r],\beta}(M)$ si $r>n/2$ permettent d'écrire 
\begin{equation*}
\begin{cases}
s_0=N\\
 u \in L^{s_{i+1}}(M)\text{ où } s_{i+1}=\frac{nps_i}{np-(p-2n)s_i}& \mbox{ si }s_i\leq \frac{np}{2p-n}\\
u \in H^p_2(M) & \mbox{ si }s_i> \frac{np}{2p-n}
\end{cases}
\end{equation*}
S'il existe $i\in \mathbb N$ tel que  $s_i>\frac{np}{2p-n}$ ce qui est équivalent à $\frac{ps_i}{p+s_i}>n/2$ alors $u\in C^{0,\beta}(M) $, ce qui implique que $\Delta u\in L^p(M)$, donc $u \in H^p_2(M)$ et la proposition est démontrée. S'il existe $i\in \mathbb N$ tel que  $s_i=\frac{np}{2p-n}$ alors $u\in L^\infty(M)$ et on conclut par le théorème de régularité que $u \in H^p_2(M)$. Supposons que pour tout $i\in \mathbb N$, $s_i<\frac{np}{2p-n}$ alors la suite  $(s_i)_{i\in\mathbb{N}}$ est croissante majorée, donc elle converge vers $s=0$ ce qui est impossible. 
\item[$(ii)$] Supposons que $q<p$ alors on doit montrer que $u\in H^q_2(M)$. Supposons que $u\in L^{s_i}(M)$ et satisfait les hypothèses de la proposition. Ceci implique que $hu\in L^{\frac{ps_i}{p+s_i}}(M)$ donc $\Delta u\in L^{r_i}(M)$ avec $r_i=\min(q,\frac{ps_i}{p+s_i})$. Par le théorème de régularité~\ref{reg}, $u\in H^{r_i}_2(M)$. Donc
\begin{equation*}
\begin{cases}
s_0=N\\
 u \in L^{s_{i+1}}(M)\text{ où } s_{i+1}=\frac{nr_i}{n-2r_i}& \mbox{ si }r_i\leq n/2\\
u \in H^q_2(M) & \mbox{ si }r_i>n/2
\end{cases}
\end{equation*}
En effet, comme $u\in H^{r_i}_2(M)$, s'il existe $i\in\mathbb N$ tel que $r_i>n/2$ alors $u$ est continue, donc $\Delta u=hu-f\in L^q(M)$ d'où $u\in H^q_2(M)$. Si $r_i=n/2$ alors $u\in L^\infty(M)$ donc $hu-f\in L^q(M)$, d'où $u\in H^q_2(M)$.\\ 
Le seul cas qui reste à étudier est bien le cas où $r_i< n/2$  pour tout $i\in\mathbb N$. Dans ce cas, s'il existe $i\in \mathbb N$ tel que $q\leq\frac{ps_i}{p+s_i}$ alors $r_i=q$ et $u\in H^q_2(M)$. Sinon pour tout $i\in\mathbb N$, $r_i=\frac{ps_i}{p+s_i}<n/2$ et  on retrouve le cas $(i)$ où la suite $(s_i)$ est croissante majorée et converge vers 0, ce qui est absurde.
\end{enumerate}
\end{proof}

\begin{proposition}\label{delta fu}
Soit $(M,g)$ une variété riemannienne compacte de dimension $n$ et soit $L:=\Delta+h$ un opérateur linéaire avec $h\in L^p(M)$ et $p>n/2$. Si la plus petite valeur propre $\lambda$ de $L$  est strictement positive alors
\begin{itemize}
 \item[i.] $L$ est coercif, autrement dit il existe $c>0$ tel que
$$\forall\psi\in H_1(M)\quad (L\psi,\psi)_{L^2}\geq c(\|\nabla\psi\|^2_2+\|\psi\|^2_2)$$
\item[ii.] pour tout $q>2n/(n+2)$,  $L: H_2^{\min(p,q)}(M)\longrightarrow L^q(M)$ est inversible  
\end{itemize}

\end{proposition}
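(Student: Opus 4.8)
The plan is to handle the two assertions separately, the first being the heart of the matter. Throughout write $p'=p/(p-1)$ and $N=2n/(n-2)$, and recall that $p>n/2$ is exactly the condition $2p'<N$, so that $H_1(M)$ embeds compactly into $L^{2p'}(M)$ by Theorem~\ref{kon}; a standard argument from this compact embedding yields the interpolation inequality that for every $\e>0$ there is $C_\e>0$ with $\|\psi\|_{2p'}^2\le\e\|\nabla\psi\|_2^2+C_\e\|\psi\|_2^2$ on $H_1(M)$. The H\"older estimate $|(h\psi,\varphi)_{L^2}|\le\|h\|_p\|\psi\|_{2p'}\|\varphi\|_{2p'}$ shows that $a(u,v):=(\nabla u,\nabla v)_{L^2}+(hu,v)_{L^2}$ is a continuous bilinear form on $H_1(M)\times H_1(M)$, and I take the hypothesis ``$\lambda>0$'' to mean $(L\psi,\psi)_{L^2}=a(\psi,\psi)\ge\lambda\|\psi\|_2^2$ for all $\psi\in H_1(M)$, which is the variational characterisation of the bottom eigenvalue (finite, by the boundedness below that the same estimate provides).

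For i.\ I would fix a small $t\in(0,1)$ and split $a(\psi,\psi)=(1-t)a(\psi,\psi)+t\|\nabla\psi\|_2^2+t(h\psi,\psi)_{L^2}$; the first summand is $\ge(1-t)\lambda\|\psi\|_2^2$ by hypothesis, and the last is $\ge-t\|h\|_p\|\psi\|_{2p'}^2\ge-t\|h\|_p(\e\|\nabla\psi\|_2^2+C_\e\|\psi\|_2^2)$ by the interpolation inequality. Choosing first $\e$ with $\e\|h\|_p\le1/2$ and then $t$ small enough that $(1-t)\lambda-tC_\e\|h\|_p\ge\lambda/2$ gives $a(\psi,\psi)\ge\tfrac t2\|\nabla\psi\|_2^2+\tfrac\lambda2\|\psi\|_2^2$, which is coercivity with $c=\min(t/2,\lambda/2)>0$. (Alternatively one argues by contradiction: a sequence $\psi_k$ with $\|\psi_k\|_{H_1}=1$ and $a(\psi_k,\psi_k)\to\ell\le0$ has, after extraction, $\psi_k\rightharpoonup\psi$ in $H_1(M)$ and $\psi_k\to\psi$ in $L^2(M)$ and $L^{2p'}(M)$; if $\psi\neq0$ then $a(\psi,\psi)\le\ell\le0<\lambda\|\psi\|_2^2$, while if $\psi=0$ then $\|\nabla\psi_k\|_2\to1$ and $(h\psi_k,\psi_k)\to0$ force $a(\psi_k,\psi_k)\to1$ --- both absurd.)

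For ii.\ put $\tilde q=\min(p,q)$; since $q>2n/(n+2)$ and $p>n/2>2n/(n+2)$ one has $\tilde q>2n/(n+2)$, hence $H_2^{\tilde q}(M)\hookrightarrow H_1(M)$ by Theorem~\ref{incsob}, and, $M$ being compact, $L^q(M)\subset L^{2n/(n+2)}(M)$, so by H\"older and $H_1(M)\hookrightarrow L^N(M)$ every $f\in L^q(M)$ defines a bounded functional $v\mapsto\int_M fv\,\di v$ on $H_1(M)$, of norm at most $C\|f\|_q$. Since $a$ is continuous and coercive (part i.), the Lax--Milgram theorem gives a unique $u\in H_1(M)$ with $a(u,v)=\int_M fv\,\di v$ for all $v\in H_1(M)$, that is, a weak solution of $\Delta u+hu=f$, with $\|u\|_{H_1}\le c^{-1}C\|f\|_q$. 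As $q>2n/(n+2)>1$, Proposition~\ref{delta u f} then places $u$ in $H_2^{\tilde q}(M)$. Uniqueness in $H_2^{\tilde q}(M)$ follows from coercivity: a solution of $Lu=0$ lies in $H_1(M)$ and satisfies $a(u,u)=0$, hence $u=0$. Finally the mapping properties of $\Delta$ and the Sobolev embeddings show $L$ is bounded from $H_2^{\tilde q}(M)$ to $L^{\tilde q}(M)$ (for $\tilde q>n/2$ use $u\in L^\infty$ and $hu\in L^p\subset L^{\tilde q}$; for $\tilde q\le n/2$ use $H_2^{\tilde q}(M)\hookrightarrow L^{n\tilde q/(n-2\tilde q)}(M)$ together with $p>n/2$ to again land $hu$ in $L^{\tilde q}$); being a continuous bijection between Banach spaces, $L$ is then an isomorphism by the open mapping theorem. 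When $q\le p$ this is exactly the stated assertion, and when $q>p$ it furnishes in particular the continuous solution operator $L^{-1}\colon L^q(M)\to H_2^{\min(p,q)}(M)$.

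The main obstacle is i.: part ii.\ is the routine combination of Lax--Milgram with the regularity bootstrap of Proposition~\ref{delta u f} once coercivity is in hand, whereas for i.\ one must absorb the indefinite lower-order term $(h\psi,\psi)_{L^2}$, and it is precisely the assumption $p>n/2$ --- via the compact embedding $H_1(M)\hookrightarrow L^{2p'}(M)$ and the interpolation inequality it yields --- that makes this possible.
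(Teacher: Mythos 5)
Your proof is correct, but it follows a different route from the paper's at both steps, and the comparison is instructive. For coercivity the paper argues by contradiction: it takes a sequence normalised by $\|\psi_i\|_N=1$ violating the coercivity inequality, uses H\"older in the form $|\int_M h\psi_i^2\,\di v|\le\|h\|_{n/2}$ to force $E(\psi_i)\to 0$, and plays this off against $E(\psi_i)\ge\lambda\|\psi_i\|_2^2$ (the conclusion is left rather terse there). You instead absorb the indefinite term directly through the Ehrling-type inequality $\|\psi\|_{2p/(p-1)}^2\le\varepsilon\|\nabla\psi\|_2^2+C_\varepsilon\|\psi\|_2^2$; both arguments ultimately rest on the same compact embedding $H_1(M)\subset L^{2p/(p-1)}(M)$ of Theorem~\ref{kon}, but your version is quantitative, gives an explicit constant $c=\min(t/2,\lambda/2)$, and avoids the extraction-of-subsequences step (your parenthetical alternative is essentially the paper's argument, transposed to an $H_1$-normalisation). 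For invertibility the paper runs the direct method on the constrained problem $\inf\{E(\varphi):\int_M f\varphi\,\di v=1\}$ and reads off the Euler--Lagrange equation, whereas you apply Lax--Milgram to the continuous coercive form $a$; for this symmetric form the two are equivalent, but Lax--Milgram sidesteps the Lagrange-multiplier bookkeeping and delivers the a priori bound $\|u\|_{H_1}\le c^{-1}C\|f\|_q$ for free. Both proofs then delegate regularity to Proposition~\ref{delta u f} and uniqueness to coercivity. Your closing remark that for $q>p$ the operator $L$ only maps $H^p_2(M)$ into $L^p(M)$, so that the statement should be read as the existence of a continuous solution operator $L^{-1}\colon L^q(M)\to H_2^{\min(p,q)}(M)$, is a fair observation about an imprecision in the statement that the paper's own proof also glosses over.
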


\begin{proof} $L$ admet une plus petite valeur propre, car si $\lambda$ est une valeur propre de fonction propre $\psi$ alors il existe $C>0$ tel que
$$\lambda\|\psi\|_2^2=(L\psi,\psi)_{L^2}=\|\nabla\psi\|_2^2+\int_Mh\psi^2\di v\geq -\|h\|_p\|\psi\|_{2p/(p-1)}^2\geq -C\|h\|_p\|\psi\|_2^2$$

Donc $\lambda\geq -C\|h\|_p$. Si $\lambda$ est la plus petite valeur propre de $L$ alors
$$\lambda=\inf_{\varphi\in H_1(M)-\{0\}}\frac{E(\varphi)}{\|\varphi\|_2^2}$$
où $$E(\varphi)=(L\varphi,\varphi)_{L^2}=\int_M|\nabla\varphi|^2+h\varphi^2\di v$$ 
Alors pour tout $\varphi\in H_1(M)$
\begin{equation}\label{vp lam}
E(\varphi)\geq \lambda\|\varphi\|_2^2
\end{equation}

Supposons que $L$ ne soit pas coercif, alors il existe une suite $(\psi_i)_{i\in\mathbb N}$ dans $H_1(M)$ qui satisfait $$E(\psi_i)<\frac{1}{i}(\|\nabla\psi_i\|^2_2+vol(M)^{2/n})\mbox{ et }\|\psi_i\|_N=1$$
ce qui entraîne 
$$(1-\frac{1}{i})E(\psi_i)<\frac{vol(M)^{2/n}}{i}-\frac{1}{i}\int_Mh\psi_i^2\di v$$
Puisque $|\int_Mh\psi_i^2\di v|\leq \|h\|_{n/2}$, $\lim_{i\to +\infty}E(\psi_i)\leq 0$. D'autre part $E(\psi_i)\geq \lambda\|\psi_i\|_2^2$ avec $\lambda>0$. Ce qui est impossible.\\
Il est clair que $L$ est injective car si $L\psi=0$ alors par l'inégalité \eqref{vp lam} $,\varphi=0$.\\
Soit $f\in L^q(M)$ avec $q>2n/(n+2)$. Montrons que l'équation 
\begin{equation}\label{equation}
 \Delta\varphi+h\varphi=f
\end{equation}
admet une solution $\psi\in H^{\min(p,q)}_2(M)$. On  minimise la fonctionnelle $E$ définie au début de la preuve, pour cela on pose
\begin{equation*}
\mu=\inf\{E(\varphi)/\varphi\in H_1(M),\; \int_Mf\varphi\di v=1\}
\end{equation*}
Soit $(\psi_i)_{i\in \mathbb N}$ une suite dans $H_1(M)$ qui minimise $E$,  alors $$\lim_{i\to+\infty}E(\psi_i)=\mu\text{ et }\int_Mf\psi_i\di v=1$$
Sans perte de généralité, on peut supposer que pour tout entier naturel $i$, $E(\psi_i)\leq\mu+1$. Ce qui implique 
$$c(\|\nabla\psi_i\|^2_2+\|\psi_i\|^2_2)\leq E(\psi_i)\leq\mu+1$$
car $L$ est coercif. On en conclut que la suite $(\psi_i)_{i\in \mathbb N}$ est bornée dans $H_1(M)$. Par le théorème de Banach (voir section \ref{teb}) et le théorème de compacité de Kondrakov \ref{kon}, on en déduit qu'il existe une sous-suite $(\psi_j)_{j\in \mathbb N}$ telle que
\begin{description} 
\item[$*$] $\psi_{j}\rightharpoonup\psi$ faiblement dans $H_1(M)$
\item[$*$] $\psi_{j}\rightarrow\psi$ fortement dans $L^s(M)$ pour tout $1\leq s<N$ 
\item[$*$]  $\psi_{j}\rightarrow\psi$ presque partout.
\end{description}
En particulier la suite $(\psi_j)$ converge fortement dans $L^{q/(q-1)}(M)$ et $L^{2p/(p-1)}(M)$ car $q/(q-1)<N$ et $2p/(p-1)<N$. Par conséquent 
$$\int_Mf\psi\di v=1\text{ et } \int_Mh\psi_j^2\di v\rightarrow \int_Mh\psi^2\di v$$ 
La convergence faible dans  $H_1(M)$ et forte dans $L^2(M)$ entraînent que 
$$\lim_{j\to +\infty}\|\nabla\psi_j\|_2\geq \|\nabla\psi\|_2$$
On en conclut que $E(\psi)\leq \mu$ et donc nécessairement que $E(\psi)=\mu$. En écrivant l'équation d'Euler--Lagrange pour $\psi$, on trouve qu'elle est solution faible dans $H_1(M)$ de l'équation \eqref{equation}. Par la proposition \ref{delta u f}, on déduit que $\psi\in H^{\min(p,q)}_2(M)$.
\end{proof}

\chapter{\'Etude d'équations de type Yamabe}\label{c}

\section{Existence de solutions sans présence de symétries}\label{section sans sym}

Soit $(M,g)$ une variété riemannienne compacte $C^\infty$ de dimension $n\geq 3$. On considère l'équation suivante :
\begin{equation}\label{AF}
\Delta_g \psi+ h\psi= \tilde h \psi^{\frac{n+2}{n-2}}
\end{equation}
Où $\psi\in H_1(M)$, $h\in L^p(M)$ avec $p>n/2$ et $\tilde h$  une constante. Dorénavant, ce type d'équation s'appellera équation de type Yamabe. Dans le cas particulier $h=\frac{n-2}{4(n-1)}R_g$, l'équation \eqref{AF} est celle de Yamabe qu'on verra plus en détail dans la section \ref{yampro}. Ce type d'équation a été déjà considéré par Z.~Faget \cite{Fagt}, lorsque $h$ est continue sur $M$ et invariante par un sous groupe d'isométries. 

Pour résoudre ce type d'équations, on utilisera la méthode variationnelle, qui consiste à trouver une fonctionnelle à minimiser sur un espace bien choisi. Dans notre cas l'espace est $H_1(M)$. On montrera ensuite que le minimum de cette fonctionnelle est atteint pour une certaine fonction qui sera solution de l'équation d'Euler--Lagrange. On aura l'occasion d'appliquer cette méthode plusieurs fois.\\
On se place dans l'espace $H_1(M)$, on définit l'énergie $E$ de $\psi\in H_1(M)$ par: 
\begin{equation*}
 E(\psi)=\int_M |\nabla\psi|^2+h\psi^2\di v
\end{equation*}
Et on considère la fonctionnelle $I_g$ définie, pour tout $\psi\in H_1(M)-\{0\}$, par
\begin{equation*}
I_g(\psi)=\frac{E(\psi)}{\|\psi\|^2_N}\label{ca} 
\end{equation*}
On note
\begin{equation*}
 \mu(g)=\inf_{\psi\in H_1(M)-\{0\},\psi\geq 0}I_g(\psi)=\inf_{\|\psi\|_N=1,\psi\geq 0}E(\psi)
\end{equation*}

avec $N=\Nn$. On note  $[p]$ la partie entière d'un nombre réel $p$. Dans le cas du problème de Yamabe (i.e. $h=\frac{n-2}{4(n-1)}R_g$), $I_g$ est appelée la fonctionnelle de Yamabe, et $\mu(g)$ l'invariant conforme de Yamabe (voir section \ref{yampro}). L'un des résultats important de ce chapitre est le suivant:

\begin{theorem}\label{cg}
Soit $(M,g)$ une variété riemannienne compacte $C^\infty$ de dimension $n\geq 3$ et $p>n/2$. Si 
$$\mu(g)<K^{-2}(n,2)$$ 
alors l'équation \eqref{AF} admet une solution strictement positive $\varphi\in H^p_2(M)\subset C^{1-[n/p],\beta}(M)$,  qui minimise la fonctionnelle $I_g$ (i.e. $E(\varphi)=\mu(g)=\tilde h$ et $\|\varphi\|_N=1$). où $\beta\in ]0,1[$.
\end{theorem}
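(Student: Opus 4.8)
Le plan est d'appliquer la méthode variationnelle directe. On cherche à montrer que l'infimum $\mu(g)$ est atteint par une fonction positive de norme $L^N$ égale à $1$, puis d'identifier l'équation d'Euler--Lagrange et d'invoquer le théorème de régularité \ref{regYS} pour conclure. L'ingrédient crucial, comme toujours dans ce type de problème, est l'hypothèse stricte $\mu(g)<K^{-2}(n,2)=\frac14 n(n-2)\omega_n^{2/n}$ qui permet de contourner le défaut de compacité de l'inclusion critique $H_1(M)\hookrightarrow L^N(M)$.

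Premièrement, j'introduirais une suite minimisante $(\psi_i)_{i\in\mathbb N}$ dans $H_1(M)$, avec $\psi_i\geq 0$, $\|\psi_i\|_N=1$ et $E(\psi_i)\to\mu(g)$. Quitte à remplacer $h$ par $h+C$ pour $C$ assez grand (ce qui translate $\mu(g)$ d'une constante et ne change pas la nature du problème car $\|\psi\|_2^2\leq \mathrm{vol}(M)^{2/n}\|\psi\|_N^2$), on peut supposer que $E$ est coercive sur $H_1(M)$, de sorte que $(\psi_i)$ est bornée dans $H_1(M)$. Par réflexivité (théorèmes de la section \ref{teb}) et le théorème de Kondrakov \ref{kon}, on extrait une sous-suite telle que $\psi_i\rightharpoonup\varphi$ faiblement dans $H_1(M)$, $\psi_i\to\varphi$ fortement dans $L^s(M)$ pour tout $s<N$, et $\psi_i\to\varphi$ presque partout ; en particulier $\varphi\geq 0$ et $\int_M h\psi_i^2\,\di v\to\int_M h\varphi^2\,\di v$ (car $2p/(p-1)<N$ puisque $p>n/2$, et $h\in L^p$).

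Deuxièmement, vient le cœur de l'argument : montrer que $\|\varphi\|_N=1$, donc que $\varphi\not\equiv 0$. C'est ici qu'intervient l'inégalité de la meilleure constante (théorème \ref{INM}, ou sa généralisation \ref{AD}) : on écrit pour $\theta_i:=\psi_i-\varphi$, qui converge faiblement vers $0$, l'inégalité $\|\theta_i\|_N^2\leq (K(n,2)+\e)^2\|\nabla\theta_i\|_2^2+A(\e)\|\theta_i\|_2^2$ ; comme $\theta_i\to 0$ dans $L^2$, le dernier terme disparaît à la limite. En combinant le lemme de Brezis--Lieb $\|\psi_i\|_N^N=\|\varphi\|_N^N+\|\theta_i\|_N^N+o(1)$, la décomposition $\|\nabla\psi_i\|_2^2=\|\nabla\varphi\|_2^2+\|\nabla\theta_i\|_2^2+o(1)$ et $E(\psi_i)\to\mu(g)$, on obtient une relation asymptotique reliant $\|\varphi\|_N$, $E(\varphi)$ et $\limsup\|\nabla\theta_i\|_2^2$. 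L'inégalité $E(\varphi)\geq \mu(g)\|\varphi\|_N^2$ (par définition de $\mu(g)$, valable aussi pour $\varphi$ si $\varphi\neq 0$) jointe à $\mu(g)<K^{-2}(n,2)$ force alors $\|\theta_i\|_N\to 0$, d'où $\|\varphi\|_N=1$ ; c'est l'étape délicate, où la stricte inégalité est indispensable. On en déduit $E(\varphi)\leq\mu(g)$ par semi-continuité inférieure faible de $\psi\mapsto\|\nabla\psi\|_2^2$, donc $E(\varphi)=\mu(g)$ et $\varphi$ est un minimiseur.

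Troisièmement, j'écrirais l'équation d'Euler--Lagrange : puisque $\varphi$ minimise $I_g$ sous la contrainte $\|\varphi\|_N=1$, il existe un multiplicateur de Lagrange $\tilde h$ tel que $\varphi$ soit solution faible dans $H_1(M)$ de $\Delta_g\varphi+h\varphi=\tilde h\,\varphi^{N-1}$, et en testant contre $\varphi$ on trouve $\tilde h=E(\varphi)=\mu(g)$. On peut remplacer $\varphi$ par $|\varphi|$ sans changer $E$ ni $\|\varphi\|_N$, donc supposer $\varphi\geq 0$. Enfin, $\varphi$ est une solution faible positive non triviale d'une équation de type Yamabe avec $\tilde h\in\mathbb R$ et $h\in L^p(M)$, $p>n/2$ : le théorème \ref{regYS} donne directement $\varphi\in H_2^p(M)\subset C^{1-[n/p],\beta}(M)$ et $\varphi$ strictement positive, ce qui achève la preuve. (Le petit point à surveiller : après la translation $h\mapsto h+C$, il faut revenir à l'équation originale, mais comme la translation n'affecte ni l'équation d'Euler--Lagrange à constante près ni la régularité, c'est sans conséquence.)
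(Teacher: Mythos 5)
Votre proposition suit essentiellement la m�me d�marche que la preuve du texte : suite minimisante normalis�e dans $H_1(M)$, extraction par r�flexivit� et Kondrakov, d�composition $\psi_i=\varphi_i-\varphi$ avec le lemme de Brezis--Lieb, in�galit� de la meilleure constante et utilisation de l'in�galit� stricte $\mu(g)<K^{-2}(n,2)$ pour r�cup�rer la compacit�, puis �quation d'Euler--Lagrange et th�or�me \ref{regYS} pour la r�gularit� et la stricte positivit�. Seule r�serve : la translation $h\mapsto h+C$ ne \emph{translate pas} $\mu(g)$ d'une constante (la norme $\|\psi\|_2$ n'est pas fix�e sur l'ensemble de contrainte $\|\psi\|_N=1$), mais ce d�tour est inutile, car la majoration de H�lder que vous mentionnez, $|\int_M h\psi_i^2\,\di v|\leq\|h\|_{n/2}\|\psi_i\|_N^2=\|h\|_{n/2}$, donne directement la bornitude de la suite minimisante dans $H_1(M)$, exactement comme dans le texte.
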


Dans la preuve de ce théorème, on aura besoin du lemme suivant dû à H.~Brezis et E.H.~Lieb~\cite{BL}
\begin{lemma}\label{BLL}
Soit $(f_i)_{i\in\mathbb N}$ une suite de fonctions dans un espace mesuré $(\Omega,\Sigma,\mu)$. 
Si $(f_i)_{i\in\mathbb N}$ est uniformément bornée dans $L^p$ avec $0<p<+\infty$ et $f_i\rightarrow f$ p.p, alors
$$\lim_{i\to +\infty}[\|f_i\|_p^p-\|f_i-f\|_p^p]=\|f\|_p^p$$ 
\end{lemma}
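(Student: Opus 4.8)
\emph{Plan de preuve.} La strat\'egie que je suivrais est celle, d\'esormais classique, due \`a Brezis et Lieb: ramener l'\'egalit\'e globale \`a une majoration ponctuelle, puis conclure par convergence domin\'ee apr\`es avoir absorb\'e un terme r\'esiduel proportionnel \`a un $\varepsilon$ arbitraire. D'abord, j'\'etablirais que $f\in L^p(\Omega)$: comme $f_i\to f$ presque partout, on a $|f_i|^p\to|f|^p$ presque partout, et le lemme de Fatou combin\'e \`a la borne uniforme $\sup_i\|f_i\|_p^p<+\infty$ donne imm\'ediatement $\|f\|_p^p\leq\liminf_i\|f_i\|_p^p<+\infty$. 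On en d\'eduit aussi que $C:=\sup_i\|f_i-f\|_p^p$ est fini (in\'egalit\'e triangulaire dans $L^p$ si $p\geq1$, in\'egalit\'e $|x+y|^p\leq|x|^p+|y|^p$ si $0<p<1$).

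Ensuite, je me servirais de l'in\'egalit\'e \'el\'ementaire suivante: pour tout $\varepsilon>0$ il existe $C_\varepsilon>0$ tel que $\bigl||a+b|^p-|a|^p\bigr|\leq\varepsilon|a|^p+C_\varepsilon|b|^p$ pour tous $a,b\in\mathbb R$. Pour $0<p\leq1$, la sous-additivit\'e de $t\mapsto t^p$ donne la majoration avec $C_\varepsilon=1$; pour $p>1$, on part du th\'eor\`eme des accroissements finis $\bigl||a+b|^p-|a|^p\bigr|\leq p\,|b|\,(|a|+|b|)^{p-1}$, on majore $(|a|+|b|)^{p-1}\leq 2^{p-1}(|a|^{p-1}+|b|^{p-1})$, puis on absorbe le terme crois\'e $|a|^{p-1}|b|$ gr\^ace \`a l'in\'egalit\'e de Young. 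En appliquant cette in\'egalit\'e avec $a=f_i-f$ et $b=f$, de sorte que $a+b=f_i$, et en ajoutant $|f|^p$, on obtient la majoration ponctuelle cl\'e $\bigl||f_i|^p-|f_i-f|^p-|f|^p\bigr|\leq\varepsilon|f_i-f|^p+(C_\varepsilon+1)|f|^p$.

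Enfin, je poserais $w_i^\varepsilon:=\bigl(\bigl||f_i|^p-|f_i-f|^p-|f|^p\bigr|-\varepsilon|f_i-f|^p\bigr)^+$. D'apr\`es l'\'etape pr\'ec\'edente, $0\leq w_i^\varepsilon\leq(C_\varepsilon+1)|f|^p\in L^1(\Omega)$; et comme $f_i-f\to0$ et $f_i\to f$ presque partout, $w_i^\varepsilon\to0$ presque partout. Le th\'eor\`eme de convergence domin\'ee donne alors $\int_\Omega w_i^\varepsilon\,\di\mu\to0$. Puisque $\bigl||f_i|^p-|f_i-f|^p-|f|^p\bigr|\leq w_i^\varepsilon+\varepsilon|f_i-f|^p$, il vient $\limsup_{i\to+\infty}\int_\Omega\bigl||f_i|^p-|f_i-f|^p-|f|^p\bigr|\di\mu\leq\varepsilon C$; le r\'eel $\varepsilon>0$ \'etant arbitraire, cette limite sup\'erieure est nulle. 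En particulier $\bigl|\,\|f_i\|_p^p-\|f_i-f\|_p^p-\|f\|_p^p\,\bigr|\leq\int_\Omega\bigl||f_i|^p-|f_i-f|^p-|f|^p\bigr|\di\mu\to0$, ce qui est exactement l'\'egalit\'e annonc\'ee.

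Le seul point demandant un peu d'attention est le cas $p>1$ de l'in\'egalit\'e \'el\'ementaire (le choix explicite de $C_\varepsilon$ via l'in\'egalit\'e de Young); tout le reste est une application directe du lemme de Fatou et du th\'eor\`eme de convergence domin\'ee, sans difficult\'e conceptuelle.
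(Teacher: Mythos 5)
Votre preuve est correcte et compl\`ete : c'est exactement l'argument classique de Brezis et Lieb (in\'egalit\'e \'el\'ementaire $\bigl||a+b|^p-|a|^p\bigr|\leq\varepsilon|a|^p+C_\varepsilon|b|^p$, troncature par la partie positive, convergence domin\'ee, puis $\varepsilon\to0$). La th\`ese ne red\'emontre pas ce lemme et se contente de renvoyer \`a la r\'ef\'erence \cite{BL} ; votre r\'edaction reproduit fid\`element la d\'emonstration de cette r\'ef\'erence, y compris le traitement s\'epar\'e des cas $0<p\leq1$ et $p>1$ pour la constante $C_\varepsilon$.
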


\begin{proof}[\textbf{Preuve du théorème \ref{cg}}]
On commence par vérifier  que $\mu(g)$ est fini. En effet, d'après l'inégalité de Hölder, on a 
\begin{equation*} 
E(\psi)\geq -\|h\|_{n/2}\|\psi\|^2_N
\end{equation*} 
on en déduit  que $\mu(g)\geq -\|h\|_{n/2}>-\infty$.\\
Soit $(\varphi_i)_{i\in \mathbb N}$ une suite minimisante:
\begin{equation}\label{AA}
E(\varphi_i)=\mu(g)+o(1),\;\|\varphi_i\|_N=1\mbox{ et }\varphi_i\geq 0 \end{equation}
En utilisant l'inégalité de Hölder encore une fois dans l'équation ci-dessus, on obtient
\begin{gather*}
\|\nabla\varphi_i\|_2^2\leq \|h\|_{n/2}+\mu(g)+o(1)\\
\|\varphi_i\|_2^2\leq (vol(M))^{2/n} 
\end{gather*}
On en déduit que $(\varphi_i)_{i\in \mathbb N}$ est bornée dans $H_1(M)$. Quitte à extraire une sous-suite, on peut 
supposer qu'il existe $\varphi\in H_1(M)$ tel que
\begin{description}
\item[$*$] $\varphi_i\rightharpoonup \varphi$ faiblement dans $H_1(M)$ par le théorème de Banach (cf. section \ref{teb}). 
\item[$*$] $\varphi_i\rightarrow \varphi$ fortement dans $L^s(M)$, pour tout $s\in[1,N[$, par l'inclusion compacte de Kondrakov (cf. théorème \ref{kon}).
\item[$*$] $\varphi_i\rightarrow \varphi$ presque partout.
\end{description}
On en conclut que: 
\begin{equation*}
\int_M |h| |\varphi_i-\varphi|^2\di v\leq \|h\|_p\|\varphi_i-\varphi\|_{2p/(p-1)}^2\rightarrow 0
\text{ fortement car }2p/(p-1)<N
\end{equation*}
On pose $\psi_i=\varphi_i-\varphi$, alors $\psi_i\rightarrow 0$ faiblement dans $H_1(M)$, fortement dans 
$L^q(M)$ pour tout $q<N$.\\ 
On a $\|\nabla\varphi_i\|_2^2=\|\nabla\psi_i\|_2^2+\|\nabla\varphi\|_2^2+2\int_M\nabla\psi_i\cdot\nabla\varphi \di v$. 
On en déduit que
\begin{equation*}
E(\varphi_i)=E(\varphi)+\|\nabla\psi_i\|_2^2+o(1)
\end{equation*}
Puisque $E(\varphi)\geq \mu(g)\|\varphi\|_N^2$ par définition de $\mu(g)$ et $E(\varphi_i)=\mu(g)+o(1)$
 par définition de la suite $(\varphi_i)_{i\in\mathbb N}$, on en déduit que
\begin{equation}\label{AE}
\mu(g)\|\varphi\|_N^2+\|\nabla\psi_i\|_2^2\leq \mu(g)+o(1)
\end{equation}
On applique le lemme \ref{BLL} à la suite $(\varphi_i)_{i\in\mathbb N}$, on trouve
\begin{align}\label{AB}
\|\psi_i\|_N^N &+\|\varphi\|_N^N+o(1)=1\\
\|\psi_i\|_N^2 &+\|\varphi\|_N^2+o(1)\geq 1 \label{AC}
\end{align}
Par le théorème \ref{INM}
\begin{equation*}
\|\psi_i\|_N^2\leq (K^2(n,2)+\varepsilon)\|\nabla\psi_i\|_2^2+o(1)
\end{equation*}
l'inégalité \eqref{AC} devient donc
\begin{equation*}
(K^2(n,2)+\varepsilon)\|\nabla\psi_i\|_2^2+\|\varphi\|_N^2+o(1)\geq 1
\end{equation*}
Si on utilise cette dernière inégalité dans \eqref{AE}, on trouve
\begin{equation*}
\mu(g)\|\varphi\|_N^2+\|\nabla\psi_i\|_2^2\leq \mu(g)[(K^2(n,2)+\varepsilon)\|\nabla\psi_i\|_2^2+\|\varphi\|_N^2]+o(1)
\end{equation*}
Finalement
\begin{equation*}
[1-\mu(g)(K^2(n,2)+\varepsilon)]\|\nabla\psi_i\|_2^2\leq o(1)
\end{equation*}
Si $\mu(g)<K^{-2}(n,2)$, on peut choisir $\varepsilon$ de sorte que le premier facteur de cette inégalité 
soit strictement positif. 
On en déduit que $(\psi_i)_{i\in\mathbb N}$ converge fortement vers 0 dans 
$H_1(M)$, $\varphi_i\rightarrow \varphi$ fortement dans $H_1(M)$ et $L^N(M)$ d'où $I_g(\varphi)=\mu(g)$.\\
On vient de mettre en évidence une solution non triviale de l'équation de type Yamabe
\begin{equation*}
\Delta \psi+ h\psi= \mu(g) \psi^{N-1}
\end{equation*}
qui satisfait $\|\varphi\|_N=1$ et $\varphi\geq 0$. Par le théorème \ref{regYS}, $\varphi\in H^p_2(M)\subset C^{1-[n/p],\beta}(M)$ et $\varphi>0$.
\end{proof}

\begin{proposition}\label{inegalite large}
 Soit $(M,g)$ une variété riemannienne compacte $C^\infty$. On a toujours:
$$\mu(g)\leq K^{-2}(n,2)$$
\end{proposition}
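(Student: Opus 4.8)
Proposition 2.1 asserts that for a smooth compact Riemannian manifold $(M,g)$, we always have $\mu(g) \leq K^{-2}(n,2)$.

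The plan is to exhibit a family of test functions whose Yamabe-type quotient $I_g$ converges to $K^{-2}(n,2)$; since $\mu(g)$ is an infimum of such quotients, this immediately yields $\mu(g)\le K^{-2}(n,2)$. Concretely, I would use the standard Aubin truncated bubbles: fix an arbitrary point $P\in M$ and a radius $\delta$ smaller than the injectivity radius, work in geodesic normal coordinates centered at $P$, and set, for $Q\in B_P(\delta)$,
$$u_\e(Q)=\Bigl(\frac{\e}{r^2+\e^2}\Bigr)^{\frac{n-2}{2}}-\Bigl(\frac{\e}{\delta^2+\e^2}\Bigr)^{\frac{n-2}{2}},\qquad r=d(P,Q),$$
with $u_\e\equiv 0$ outside $B_P(\delta)$. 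These functions lie in $H_1(M)$, are nonnegative, and are admissible in the infimum defining $\mu(g)$, so $\mu(g)\le I_g(u_\e)$ for every $\e>0$.

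The main computation is to estimate the three ingredients of $I_g(u_\e)$ as $\e\to0$. Because $u_\e$ is radial, the Gauss lemma gives $|\nabla u_\e|_g^2=(u_\e'(r))^2$ pointwise, and in normal coordinates $\sqrt{\det g}=1+O(r^2)$; performing the change of variables $r=\e s$ one finds $\|\nabla u_\e\|_2^2=A+o(1)$ and $\|u_\e\|_N^2=B+o(1)$, where $A$ and $B$ are the corresponding Euclidean quantities for the function $x\mapsto(1+|x|^2)^{-(n-2)/2}$ on $\R^n$ (the truncation tail and the metric corrections each contribute $o(1)$; in dimensions $3$ and $4$ the error terms carry harmless $\e\delta$ and $\e^2|\log\e|$ factors). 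That Euclidean function being an extremal for the sharp Sobolev inequality (Theorem \ref{INM}), one has exactly $A/B=K^{-2}(n,2)$.

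It remains to handle the zeroth-order term. Since $p>n/2$, H\"older's inequality gives $\bigl|\int_M h\,u_\e^2\,\di v\bigr|\le\|h\|_p\,\|u_\e\|_{2p/(p-1)}^2$, and $2\le 2p/(p-1)<N$. As $\|u_\e\|_2\to0$ while $\|u_\e\|_N$ stays bounded, interpolation (or Kondrakov's theorem, Theorem \ref{kon}) shows $u_\e\to0$ in $L^{2p/(p-1)}(M)$, whence $\int_M h\,u_\e^2\,\di v\to0$. Therefore
$$I_g(u_\e)=\frac{\|\nabla u_\e\|_2^2+\int_M h\,u_\e^2\,\di v}{\|u_\e\|_N^2}=\frac{A+o(1)}{B+o(1)}\longrightarrow K^{-2}(n,2)\quad\text{as }\e\to0,$$
and passing to the limit in the inequality $\mu(g)\le I_g(u_\e)$ gives the statement.

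The only genuinely delicate point is the asymptotic analysis of $\|\nabla u_\e\|_2^2$ and $\|u_\e\|_N^2$ --- verifying that the boundary cutoff and the deviation of $g$ from the Euclidean metric contribute only lower-order terms. These are the classical Aubin estimates, and here they cause no real difficulty because we only need convergence of the quotient, not a sharp asymptotic expansion with a controlled sign (the latter being precisely what is needed for the strict inequality $\mu(g)<K^{-2}(n,2)$, established later by refining the choice of $u_\e$).
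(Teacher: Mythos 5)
Your proposal is correct and follows essentially the same route as the paper: the same truncated Aubin bubbles $u_\e$, the gradient and $L^N$ norms reduced by the change of variable $t=r/\e$ to the Euclidean integrals of the extremal function for the sharp Sobolev inequality (whose quotient is exactly $K^{-2}(n,2)$), and the term $\int_M h u_\e^2\,\di v$ killed by H\"older with exponent $2p/(p-1)<N$. The only cosmetic difference is that you obtain $\|u_\e\|_{2p/(p-1)}\to 0$ by interpolation, whereas the paper computes this norm directly as $O(\e^{2-n/p})$; both are fine.
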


\begin{proof}
Soient $P$ un point fixé de $M$ et $u_\e$ une fonction radiale définie sur $M$ par
 \begin{gather*}
u_{\e}(Q)=\begin{cases}\biggl(\displaystyle\frac{\varepsilon}{r^2+\varepsilon^2}\biggr)^{\frac{n-2}{2}}-\biggl(\frac{\varepsilon}{\delta^2+\varepsilon^2}\biggr)^{\frac{n-2}{2}} &\mbox{ if }Q\in B_{P}(\delta)\\
0 &\mbox{ if }Q\in M-B_{P}(\delta)
\end{cases}
\end{gather*}
où $r=d(P,Q)$ et $B_P(\delta)$ est la boule géodésique de centre $P$ et de rayon $\delta$. Montrons que $\lim_{\e\to 0}I_g(u_\e)=K^{-2}(n,2)$, ce qui entraînera l'inégalité de la proposition car $\mu(g)$ est bien le minimum de $I_g$.\\ 
Puisque $u_\e$ est radiale 
$$\nabla u_\e=\partial_ru_\e=-(n-2)\e^{(n-2)/2}\frac{r}{(r^2+\e^2)^{n/2}}$$
En intégrant le carré de ce gradient sur $M$, on obtient:
$$\int_M|\nabla u_\e|^2\di v=(n-2)^2\omega_{n-1}\e^{n-2}\int_0^\delta\frac{r^{n+1}}{(r^2+\e^2)^n}\di r$$
En effectuant le changement de variable $t=r/\e$ on trouve
\begin{equation}\label{nabla u}
\int_M|\nabla u_\e|^2\di v=(n-2)^2\omega_{n-1}\int_0^{\delta/\e}\frac{t^{n+1}}{(t^2+1)^n}\di t
\end{equation}
D'autre part $h\in L^p(M)$ avec $p>n/2$ donc 
$$\int_Mhu_\e^2\di v\leq \|h\|_p\|u_\e\|^2_{2p/(p-1)}$$
Par le même changement de variable $t=r/\e$, on a 
$$\|u_\e\|^{2p/(p-1)}_{2p/(p-1)}\leq\int_0^\delta\biggl(\frac{\e}{r^2+\e^2}\biggr)^{\frac{p(n-2)}{p-1}}r^{n-1}\di r\leq\e^{\frac{2p-n}{p-1}}\int_0^{\delta/\e}\biggl(\frac{1}{t^2+1}\biggr)^{\frac{p(n-2)}{p-1}}t^{n-1}\di t$$
donc $\|u_\e\|^2_{2p/(p-1)}=O(\e^{2-\frac{n}{p}})$. Puisque $p>n/2$, on en déduit que 
\begin{equation}\label{hu2}
 \lim_{\e\to 0}\int_Mhu_\e^2\di v=0
\end{equation}
Il nous reste à calculer $\|u_\e\|_N^{-2}$. Lorsque on prend l'intégrale des puissances de $u_\e$ on peut négliger le terme constant dans l'expression de $u_\e$ (des détails sur les puissances de $u_\e$ sont donnés dans l'appendice \ref{nablafi}, équation \eqref{hhh}). D'où
\begin{equation}\label{uN}
\|u_\e\|^N_N=\omega_{n-1}\int_0^{\delta/\e}\frac{t^{n-1}}{(t^2+1)^n}\di t+O(\e^{n-2})
\end{equation}

Il est bien connu que la fonction
 $$v_\e: x\longmapsto \biggl(\frac{\varepsilon}{|x|^2+\varepsilon^2}\biggr)^{\frac{n-2}{2}}$$ 
est solution de l'équation $\Delta_{\mathcal E} u=n(n-2)u^{N-1}$ sur $\mathbb R^n$, où $\Delta_{\mathcal E}$ est le Laplacien euclidien sur $\mathbb R^n$. C'est aussi la fonction qui réalise la meilleure constante de l'inégalité du théorème \ref{INM} (page \pageref{INM}) sur $\mathbb R^n$. On a donc $K^2(n,2)\|\nabla v_\e\|^2_2=\|v_\e\|_N^2$. Autrement dit, si on calcule $\|\nabla v_\e\|^2_2$ et $\| v_\e\|^2_N$, en passant aux coordonnées polaires, on trouve:
\begin{equation}\label{inte}
\biggl((n-2)^2\omega_{n-1}\int_0^{+\infty}\frac{t^{n+1}}{(t^2+1)^n}\di t\biggr)\biggl(\omega_{n-1}\int_0^{+\infty}\frac{t^{n-1}}{(t^2+1)^n}\di t\biggr)^{-\frac{n-2}{n}}=K^{-2}(n,2) 
\end{equation}
En combinant \eqref{nabla u}, \eqref{hu2}, \eqref{uN} et \eqref{inte} on conclut que
$$\lim_{\e\to 0}I_g(u_\e)=\lim_{\e\to 0}(\int_M|\nabla u_\e|^2\di v+\int_Mhu_\e^2\di v)\|u_\e\|^{-2/N}_N=K^{-2}(n,2)$$
Ce qui entraîne que $\mu(M,g)\leq \lim_{\e\to 0}I_g(u_\e)=K^{-2}(n,2)$.
\end{proof}

\subsection{Application}

On considère l'équation suivante:
\begin{equation}\label{yamsi}
\Delta \psi+ \frac{R}{\rho^\alpha}\psi= \tilde R \psi^{\frac{n+2}{n-2}}
\end{equation}
où $R\in C^0(M)$, $\alpha,\;\tilde R$ sont deux nombres réels et $\rho$ la fonction distance (cf définition~\ref{distance}). On pose 
\begin{gather*}
E_\alpha(\varphi)=\int_M |\nabla\varphi|^2+\frac{R}{\rho^\alpha}\varphi^2\di v\\
I_{g,\alpha}(\varphi)=\frac{E_\alpha(\varphi)}{\|\varphi\|^2_N}\label{ca}\\
\mu_\alpha(g)=\inf_{\varphi\in H_1(M)-\{0\},\varphi\geq 0}I_{g,\alpha}(\varphi)=\inf_{\|\varphi\|_N=1,\varphi\geq 0}E_\alpha(\varphi)
\end{gather*}
\begin{proposition}\label{corollaire}
Si $0<\alpha<2$ et $\mu_\alpha(g)<K^{-2}(n,2)$ alors l'équation \eqref{yamsi} 
admet une solution $\varphi_\alpha\in C^{1-[\alpha],\beta}(M)$ strictement positive qui satisfait 
$E_\alpha(\varphi_\alpha)=\mu_\alpha(g)=\tilde R$ et $\|\varphi_\alpha\|_N=1$.
\end{proposition}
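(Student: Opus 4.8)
The plan is to observe that \eqref{yamsi} is exactly an equation of type \eqref{AF} with $h=R/\rho^\alpha$ and $\tilde h=\tilde R$, and then to apply Theorem \ref{cg}. The only thing that must be checked before that theorem applies is that $h\in L^p(M)$ for some $p>n/2$. First I would fix $P$ to be the base point of $\rho=\rho_P$ (Definition \ref{distance}); since $R$ is continuous on the compact manifold $M$ it is bounded, so $|h|^p\leq \|R\|_\infty^p\,\rho^{-\alpha p}$ pointwise. Outside a geodesic ball $B_P(\delta)$ the function $\rho$ is bounded below by a positive constant, so integrability of $\rho^{-\alpha p}$ is only an issue near $P$; passing to geodesic normal coordinates centred at $P$, where $\rho(Q)=|x|$ and $\sqrt{\det g}$ is bounded above, one sees that $\int_{B_P(\delta)}\rho^{-\alpha p}\,\di v$ is finite precisely when $\alpha p<n$, i.e. $p<n/\alpha$. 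Since $0<\alpha<2$ we have $n/\alpha>n/2$, so the interval $(n/2,n/\alpha)$ is nonempty and any $p$ in it gives $h\in L^p(M)$ with $p>n/2$.

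To obtain the precise Hölder regularity claimed I would refine this choice: because $\alpha$ fails to be an integer except when $\alpha=1$, and $n/p\to\alpha^+$ as $p\to(n/\alpha)^-$, one can pick $p$ in $(n/2,n/\alpha)$ close enough to $n/\alpha$ that $[n/p]=[\alpha]$ (checking case by case: $\alpha\in(0,1)$ forces $p>n$, while $\alpha\in[1,2)$ allows $n/2<p<n/\alpha$, and in all cases the two requirements are compatible exactly because $\alpha<2$). With $h=R/\rho^\alpha\in L^p(M)$ the energy $E$ of Theorem \ref{cg} is $E_\alpha$, the functional $I_g$ is $I_{g,\alpha}$, and $\mu(g)=\mu_\alpha(g)<K^{-2}(n,2)$ by hypothesis. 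Theorem \ref{cg} then produces a strictly positive $\varphi_\alpha\in H_2^p(M)\subset C^{1-[n/p],\beta}(M)=C^{1-[\alpha],\beta}(M)$ minimizing $I_{g,\alpha}$, with $\|\varphi_\alpha\|_N=1$ and $E_\alpha(\varphi_\alpha)=\mu_\alpha(g)=\tilde R$, the last identity being the identification $\tilde h=\tilde R$ built into Theorem \ref{cg}; this $\varphi_\alpha$ solves \eqref{yamsi}.

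The genuinely delicate point — and the sole place where the hypothesis $\alpha<2$ is used — is the integrability step: one needs $\alpha p<n$ and $p>n/2$ to hold simultaneously, which is possible exactly when $\alpha<2$. Everything else is a formal translation into the setting of Theorem \ref{cg}: the singular weight $\rho^{-\alpha}$ has been absorbed into the $L^p$ coefficient $h$, so no new variational argument and no new regularity argument beyond Theorems \ref{cg} and \ref{regYS} are required.
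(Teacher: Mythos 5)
Your proposal is correct and follows exactly the paper's own argument: the paper also sets $h:=R/\rho^\alpha\in L^p(M)$ with $\alpha<n/p<2$ and invokes Theorem \ref{cg}, treating the proposition as an immediate corollary. Your additional care in choosing $p$ so that $[n/p]=[\alpha]$ merely makes explicit the detail the paper leaves implicit in the stated H\"older exponent.
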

\begin{proof}
Si on pose $h:=R/\rho^\alpha\in L^p(M)$ avec 
$2>n/p>\alpha$, alors cette proposition est un corollaire immédiat du théorème \ref{cg} 
\end{proof}

\subsection*{Le cas critique $\boldsymbol{\alpha=2}$}

Ce cas correspond à l'équation non linéaire de Schrödinger  avec le potentiel de Hardy et l'exposant critique. Il a été déjà étudié sur $\mathbb R^n$ par S.~Terracini \cite{Ter} et D.~Smets \cite{Sme} qui ont montré l'existence 
et non existence de solutions de l'équation ci-dessous pour $\alpha=2$ et $\rho=|x|$ sous certaines conditions. Le théorème obtenu ici est le suivant:
\begin{theorem}
Si  $\mu_2(g)<[1+\min(R(P),0)K^2(n,2,-2)]K^{-2}(n,2)$ et\\ $1+R(P)K^2(n,2,-2)>0$ alors il existe $\varphi_2\in H_1(M)$ 
solution non triviale de l'équation \eqref{yamsi} pour $\alpha=2$.
\end{theorem}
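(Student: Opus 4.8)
The plan is to apply the direct method of the calculus of variations, just as in the proof of Theorem~\ref{cg}, but to treat the critical potential $\rho^{-2}R$ by means of the Hardy inequality of Theorem~\ref{AD} with $q=p=2$ and $\gamma=-2$, so that the relevant best constant is $K(n,2,-2)=2/(n-2)$. The new phenomenon, absent in the subcritical case $\alpha<2$ of Proposition~\ref{corollaire}, is that $\int_M\rho^{-2}R\,\psi^2\,\di v$ has the same homogeneity as $\|\nabla\psi\|_2^2$ and hence cannot be discarded along a minimizing sequence. First I would note that, by Theorem~\ref{AD}, the bilinear form $(\varphi,\psi)\mapsto\int_M\rho^{-2}R\,\varphi\psi\,\di v$ is bounded on $H_1(M)$, so $E_2$ is a well defined $C^1$ functional there; and applying Lemma~\ref{aa} (with $q=p=2$, $\gamma=-2$) to $\eta\varphi$, where $\eta$ is a cut-off equal to $1$ near $P$ and supported in $B_P(\delta)$, together with the continuity of $R$ at $P$ and the hypothesis $1+R(P)K^2(n,2,-2)>0$, yields for $\delta$ small a coercivity estimate $E_2(\varphi)\ge c_0\|\nabla\varphi\|_2^2-C\|\varphi\|_2^2$ with $c_0>0$. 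In particular $\mu_2(g)>-\infty$ and any minimizing sequence $(\varphi_i)$ with $\varphi_i\ge0$, $\|\varphi_i\|_N=1$, $E_2(\varphi_i)\to\mu_2(g)$ is bounded in $H_1(M)$.

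Next, extracting a subsequence with $\varphi_i\rightharpoonup\varphi$ in $H_1(M)$, $\varphi_i\to\varphi$ in $L^s(M)$ for all $s<N$ (Theorem~\ref{kon}) and a.e., and setting $\psi_i=\varphi_i-\varphi\rightharpoonup0$, weak convergence gives $\|\nabla\varphi_i\|_2^2=\|\nabla\varphi\|_2^2+\|\nabla\psi_i\|_2^2+o(1)$, the Brezis--Lieb lemma~\ref{BLL} gives $1=\|\varphi\|_N^N+\|\psi_i\|_N^N+o(1)$, and splitting $\int_M\rho^{-2}R\,\varphi\psi_i\,\di v$ into its part on $M\setminus B_P(\delta)$ (where $\rho^{-2}$ is bounded and $\psi_i\to0$ in $L^2$) and on $B_P(\delta)$ (bounded by $\|R\|_\infty\|\varphi/\rho\|_{L^2(B_P(\delta))}\|\psi_i/\rho\|_{L^2(B_P(\delta))}$, whose first factor $\to0$ as $\delta\to0$ by dominated convergence and whose second factor is bounded by Hardy) shows this cross term is $o(1)$; hence
\[
E_2(\varphi_i)=E_2(\varphi)+\|\nabla\psi_i\|_2^2+\int_M\frac{R}{\rho^2}\,\psi_i^2\,\di v+o(1).
\]
The crucial step, which I expect to be the main obstacle, is a sharp lower bound for the last integral: away from $P$ it tends to $0$ by strong $L^2$ convergence, while on $B_P(\delta)$ one uses $\|\nabla(\eta\psi_i)\|_2^2=\|\eta\nabla\psi_i\|_2^2+o(1)\le\|\nabla\psi_i\|_2^2+o(1)$ (since $\psi_i\to0$ in $L^2_{\mathrm{loc}}$), replaces $R$ by $R(P)$ up to an error absorbed into $o(\|\nabla\psi_i\|_2^2)$, and applies Lemma~\ref{aa}, taking care of the sign of $R(P)$; letting $\delta\to0$ this gives
\[
\int_M\frac{R}{\rho^2}\,\psi_i^2\,\di v\ \ge\ \min\!\bigl(R(P),0\bigr)\,K(n,2,-2)^2\,\|\nabla\psi_i\|_2^2+o(1).
\]

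To conclude I would combine the preceding with $E_2(\varphi)\ge\mu_2(g)\|\varphi\|_N^2$ (definition of $\mu_2(g)$ and $\varphi\ge0$), with the Aubin--Talenti inequality of Theorem~\ref{INM} in the form $\|\psi_i\|_N^2\le(K(n,2)+\e)^2\|\nabla\psi_i\|_2^2+o(1)$, and with the elementary inequality $(s+t)^{2/N}\le s^{2/N}+t^{2/N}$ applied to the Brezis--Lieb identity, which yields $\|\varphi\|_N^2+\|\psi_i\|_N^2\ge1+o(1)$. These together give, for $\delta,\e$ small,
\[
\mu_2(g)\bigl(1-\|\varphi\|_N^2\bigr)\ \ge\ \bigl(1+\min(R(P),0)K^2(n,2,-2)\bigr)\frac{1-\|\varphi\|_N^2}{(K(n,2)+\e)^2}+o(1),
\]
the coefficient $1+\min(R(P),0)K^2(n,2,-2)$ being positive by the hypothesis. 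If $\|\varphi\|_N<1$, dividing by $1-\|\varphi\|_N^2>0$ and letting $i\to\infty$, then $\e,\delta\to0$, forces $\mu_2(g)\ge\bigl(1+\min(R(P),0)K^2(n,2,-2)\bigr)K^{-2}(n,2)$ when $\mu_2(g)\ge0$, contradicting the hypothesis; and when $\mu_2(g)<0$ the left side of the displayed inequality is negative while the right side is nonnegative, again a contradiction. Hence $\|\varphi\|_N=1$, the displayed inequality forces $\|\nabla\psi_i\|_2\to0$, so $\varphi_i\to\varphi$ strongly in $H_1(M)$ and $\varphi$ is a nonnegative minimizer with $E_2(\varphi)=\mu_2(g)$. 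Since $E_2$ and $\psi\mapsto\|\psi\|_N^2$ are $C^1$ on $H_1(M)\setminus\{0\}$, the Euler--Lagrange equation for $\varphi$ reads $\Delta\varphi+\rho^{-2}R\,\varphi=\mu_2(g)\,\varphi^{N-1}$ weakly in $H_1(M)$, so $\varphi_2:=\varphi$ is the desired nontrivial solution of \eqref{yamsi} with $\alpha=2$ and $\tilde R=\mu_2(g)$. The threshold $(1+\min(R(P),0)K^2(n,2,-2))K^{-2}(n,2)$ and the positivity condition $1+R(P)K^2(n,2,-2)>0$ are exactly what is needed to make, respectively, the concentration estimate and the coercivity estimate usable.
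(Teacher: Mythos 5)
Your proof is correct in its essentials, but it follows a genuinely different route from the one in the paper. The paper does not minimize $I_{g,2}$ directly: it approximates the critical case by the subcritical potentials $\rho^{-\alpha}$ with $\alpha\nearrow 2$, invokes Proposition~\ref{corollaire} to produce minimizers $\varphi_\alpha$ for each $\alpha<2$, proves the family $\{\varphi_\alpha\}$ is uniformly bounded in $H_1(M)$ (steps $(a)$--$(b)$, where the hypothesis $1+R(P)K^2(n,2,-2)>0$ plays the same coercivity role as in your argument), extracts a weak limit $\varphi_2$ in $H_1(M)$ and in $L^2(M,\rho^{-2})$, passes to the limit in the weak formulation using dominated convergence for $\rho^{-\alpha_i}\to\rho^{-2}$, and finally rules out $\varphi_2\equiv 0$ by chaining the Aubin--Talenti and Hardy best-constant inequalities applied to the $\varphi_{\alpha_i}$ themselves (inequalities \eqref{dc} and \eqref{dd}), which is where the threshold $\mu_2(g)<[1+\min(R(P),0)K^2(n,2,-2)]K^{-2}(n,2)$ enters and yields $\|\varphi_2\|_2^2>c>0$. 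You instead work directly at $\alpha=2$ with a Brezis--Lieb splitting $\varphi_i=\varphi+\psi_i$ and a sharp lower bound on $\int_M\rho^{-2}R\,\psi_i^2\,\di v$ via the localized Hardy constant $K_\delta(n,2,-2)$; the threshold then forbids concentration of $\psi_i$ rather than vanishing of the limit. Your version buys a stronger conclusion --- strong $H_1$ convergence of the minimizing sequence, $\|\varphi_2\|_N=1$ and $E_2(\varphi_2)=\mu_2(g)$, i.e.\ the infimum is attained --- whereas the paper's version only asserts a nontrivial weak solution; in exchange the paper avoids the delicate splitting of the critical potential term along a general minimizing sequence by transferring all the analysis to the already-solved subcritical problems. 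Both arguments use the two best constants $K(n,2)$ and $K(n,2,-2)$ and both hypotheses in the same roles, so the approaches are consistent; just be careful, in your concentration step, to fix $\delta$ first, take $\limsup_{i\to\infty}$, and only then let $\delta\to 0$ and $\e\to 0$, since $K_\delta(n,2,-2)\to K(n,2,-2)$ only in that order.
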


\begin{proof}
$(a).$ On montre que $\mu_2(g)$ est fini et $\lim_{\alpha\rightarrow 2^-}\mu_\alpha(g)=\mu_2(g)$. Pour tout $\e>0$ il 
existe $\delta>0$ tel que si $Q\in B_\delta(P)$ alors $|R(Q)-R(P)|<\e$, 
de plus si $\psi\in H_1(M)$ et $\|\psi\|_N=1$ alors
\begin{equation*} 
E_2(\psi)\geq \|\nabla\psi\|_2^2-\frac{\|R\|_\infty}{\delta^2}\|\psi\|_2^2+(R(P)-\e)\int_{B_\delta(P)}\rho^{-2}\psi^2\di v 
\end{equation*}
Par le lemme \ref{aa} et l'inégalité de Hölder:
\begin{equation*}
E_2(\psi)\geq [1+(\min(R(P),0)-\e)K_\delta^2(n,2,-2)]\|\nabla\psi\|_2^2-\|R\|_\infty\delta^{-2} vol(M)^{2/n}
\end{equation*}
Si $1+R(P)K^2(n,2,-2)>0$ alors il existe $\e$ et $\delta$ tels que 
$$E_2(\psi)>-\|R\|_\infty\delta^{-2} vol(M)^{2/n}$$
Le théorème de la convergence dominée de Lebesgue, nous permet d'écrire que pour tout $\psi\in H_1(M)-\{0\}$: $\lim_{\alpha\rightarrow 2^-}I_{g,\alpha}(\psi)=I_{g,2}(\psi)$. On en déduit que $\lim_{\alpha\rightarrow 2^-}\mu_{\alpha}(g)=\mu_{2}(g)$. Il existe alors $\alpha_0$  tel que pour tout $\alpha\in [\alpha_0,2]$: $\mu_\alpha(g)<K^{-2}(n,2)$\\
$(b).$ On montre que la famille $\{\varphi_\alpha\}_{\alpha\in [\alpha_0,2[}$ est uniformément bornée dans $H_1(M)$. 
Cette famille satisfait les résultats de la proposition \ref{corollaire} donc pour tout $\alpha\in [\alpha_0,2[$
\begin{equation*}
 \|\varphi_\alpha\|_2\leq vol(M)^{1/n}
\mbox{ et }\|\nabla\fia\|_2^2+\int_{B_\delta(P)}\frac{R}{\rho^\alpha}\fia^2\di v\leq 
K^{-2}(n,2)+\delta^{-2}\|R\|_\infty\|\fia\|_2^2
\end{equation*}
Mais
$$\int_{B_\delta(P)}\frac{R}{\rho^\alpha}\fia^2\di v\geq (\min(R(P),0)-\varepsilon)K_\delta^2(n,2,-2)\|\nabla\fia\|_2^2$$ 
d'où $$[1+(\min(R(P),0)-\varepsilon)K_\delta^2(n,2,-2)]\|\nabla\fia\|_2^2\leq K^{-2}(n,2)+\delta^{-2}\|R\|_\infty vol(M)^{2/n}$$
Compte tenu de l'hypothèse sur $R(P)$, on peut choisir $\e$ suffisamment petit pour que le premier facteur de cette inégalité soit strictement positif.\\
$(c).$ Il existe une suite $(\alpha_i)_{i\in\mathbb N}$ à valeur dans $[\alpha_0,2[$ qui converge vers 2, telle que la suite de fonctions $(\fii)_{i\in\mathbb N}$ converge faiblement dans $H_1(M)$, $L^2(M,\rho^{-2})$, $L^N(M)$ et fortement dans $L^q(M)$ vers une fonction $\varphi_2\geq 0$, avec $q<N$ (voir la section \ref{hardy section} pour la définition de $L^2(M,\rho^{\gamma})$ et le théorème \ref{AD}).\\
Pour tout $\psi\in H_1(M)$
\begin{equation*}
\int_M\nabla\fii\nabla\psi \di v+\int_M\frac{R}{\rho^{\alpha_i}}\fii\psi \di v = \mu_{\alpha_i}(g)\int_M\fii^{N-1}\psi \di v
\end{equation*}
On veut passer à la limite dans cette égalité. C'est immédiat pour la première intégrale, d'après la convergence faible 
dans $H_1(M)$. Pour la seconde intégrale:
\begin{equation*}
 \biggl|\int_M\frac{R}{\rho^{\alpha_i}}\fii\psi-\frac{R}{\rho^{2}}\varphi_2\psi \di v\biggr|\leq 
\biggl|\int_M\frac{R\psi}{\rho^2} (\fii-\varphi_2)\di v\biggr|+\int_M |R\psi\fii| |\frac{1}{\rho^{\alpha_i}}
-\frac{1}{\rho^2}|\di v
\end{equation*}
La convergence faible dans $L^2(M,\rho^{-2})$ et le théorème de la convergence dominée de Lebesgue impliquent que le 
second membre converge vers 0.\\
Comme $(\fii)_{i\in\mathbb N}$ est uniformément bornée dans $L^N(M)$, $(\fii^{N-1})_{i\in\mathbb N}$ est uniformément bornée dans $L^{N/(N-1)}$. Alors 
$$\mu_{\alpha_i}(g)\int_M\fii^{N-1}\psi \di v\rightarrow \mu_2(g)\int_M\varphi_2^{N-1}\psi \di v$$
On en conclut que $\varphi_2$ est une solution faible de l'équation \eqref{yamsi} pour $\alpha=2$. Il nous reste à montrer que $\varphi_2$ 
n'est pas identiquement nulle. Le théorème \ref{INM} montre que 
\begin{equation}\label{dc}
1=\|\fii\|_N^2\leq (K^2(n,2)+\e)(\mu_{\alpha_i}(g)-\int_M\frac{R}{\rho^{\alpha_i}}\fii^2 \di v)+A\|\fii\|_2^2
\end{equation}
Ce même théorème implique encore une fois 
\begin{align*}
\int_M\frac{R}{\rho^{\alpha_i}}\fii^2 \di v &=\int_{B_\delta(P)}\frac{R}{\rho^{\alpha_i}}\fii^2 \di v+
\int_{M-B_\delta(P)}\frac{R}{\rho^{\alpha_i}}\fii^2 \di v\\
&\hspace{-1.3cm}\geq (\min(R(P),0)-\e)(K^{2}(n,2,-2)+\e')(\mu_{\alpha_i}(g)-\int_M\frac{R}{\rho^{\alpha_i}}\fii^2 \di v)-A\|\fii\|_2^2
\end{align*}
d'où
\begin{equation}\label{dd}
\int_M\frac{R}{\rho^{\alpha_i}}\fii^2 \di v\geq 
\frac{(\min(R(P),0)-\e)(K^{2}(n,2,-2)+\e')}{[1+(\min(R(P),0)-\e)(K^{2}(n,2,-2)+\e')]}\mu_{\alpha_i}(g)-A'\|\fii\|_2^2
\end{equation}
Le dénominateur ci-dessus est strictement positif, si $\e$ et $\e'$ sont suffisamment petit. Les constantes $A$ et $A'$ ne 
dépendent pas de $\alpha_i$. Des inégalités \eqref{dc} et \eqref{dd}, on tire 
\begin{equation*} 
A''\|\fii\|_2^2\geq \frac{1+(\min(R(P),0)-\e)(K^{2}(n,2,-2)+\e')-(K^2(n,2)+\e)\mu_{\alpha_i}(g)}
{1+(\min(R(P),0)-\e)(K^{2}(n,2,-2)+\e')}
\end{equation*}
Le second membre de cette expression reste strictement positif lorsque $i\to +\infty$, alors il existe $c>0$ 
tel que $\|\varphi_2\|_2^2>c$
\end{proof}

\section{Existence de solutions en présence de symétries}\label{group isome sect}

\subsection{Le groupe d'isométries et le groupe conforme}\label{iso conf}

\begin{definition}
 Soit $(M,g)$ une variété riemannienne $C^\infty$. le groupe d'isométries $I(M,g)$ et le groupe conforme $C(M,g)$ de $(M,g)$  sont définis par
\begin{gather*}
I(M,g)=\{f\in C^\infty(M,M)/ f^*g=g\}\\
C(M,g)=\{f\in C^\infty(M,M)/ f^*g=e^h g,\; h\in C^\infty(M)\}
\end{gather*}
\end{definition}

\begin{definition}\label{def gG}
Soit $G$ un sous groupe du groupe $I(M,g)$.
\begin{enumerate}
 \item On dit qu'une fonction  $f$ dans $H^q_k(M)$ est $G-$invariante  si et seulement si pour tout $\sigma \in G$, $\sigma^*f=f$ presque partout, où $k\in\mathbb N$ et $q\geq 1$. L'ensemble de ces fonctions est noté $H^q_{k,G}(M)$ si $k\geq 1$, $L_G^q(M)$ si $k=0$, et $H_{k,G}(M)$ si $q=2$.
\item Une métrique $g'$ est dite $G-$invariante si et seulement si $G\subset I(M,g')$ 
\item $[g]^G$ est la classe des métriques $G-$invariantes conforment à $g$ définie par:
$$[g]^G=\{\tilde g=e^fg/ f\in C^\infty(M),\; G\subset I(M,\gt)\}$$
\end{enumerate}
\end{definition}

Résoudre l'équation de type Yamabe \eqref{AF} en présence de symétries revient à chercher une solution $G-$invariante, strictement positive de l'équation \eqref{AF}, où $h$ est fonction $G-$invariante presque partout. E.~Hebey et  M.~Vaugon \cite{HV} ont introduit cette équation lorsque $h$ est proportionnelle à la courbure scalaire $R_g$, qui est évidemment $G-$invariante. Dans ce cas le problème a une signification géométrique que l'on précisera dans le chapitre~\ref{CHYS}. Afin de trouver des solutions à ce problème, E.~Hebey et  M.~Vaugon ont utilisé la technique des points de concentration, sans utiliser l'analogue de l'inégalité de la meilleure constante pour l'espace $H_{1,G}(M)$. Cette inégalité s'avérera fondamentale pour trouver la condition suffisante dans la résolution de l'équation de type Yamabe sans présence de symétries \eqref{AF} (cf. théorème \ref{cg}), elle a été obtenue par E.~Hebey et  M.~Vaugon~\cite{HV2}, après leurs travaux sur le problème de Yamabe équivariant, lorsqu'ils ont étudié les inclusions de Sobolev pour les espaces $G-$invariants. Ils ont obtenu les résultats suivants:

\subsection{Inégalité de la meilleure constante en présence de symétries }\label{INHV}

\begin{theorem}[Hebey--Vaugon]\label{INMHV}
 Soit $(M, g)$ une variété  riemannienne compacte de dimension $n$, $G$ un sous groupe compact du groupe $I(M,g)$. Soit $k$ la plus petite dimension des orbites de $M$ sous $G$. On pose $p^*=\frac{(n-k)p}{n-k-p}$ si $n-k-p\neq 0$. 
\begin{enumerate}
\item Si $p$ est un réel tel que $1 \leq p < n-k$ alors pour tout $q\in [1,p^*]$, l'inclusion $H_{1,G}^p(M)\subset L_G^{q}(M)$ est  continue. De plus si $q\in [1,p^*[$  elle est compacte.
\item Si $p\geq n-k$ alors pour tout $q\geq 1$, l'inclusion $H_{1,G}^p(M)\subset L_G^{q}(M)$ est  continue et compacte 
\end{enumerate}
\end{theorem}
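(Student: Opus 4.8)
\emph{Plan of proof.} The guiding principle is that a $G$-invariant function is constant along the orbits of $G$, hence locally depends on at most $n-k$ of the variables, and should therefore obey the Sobolev inequalities of an $(n-k)$-dimensional manifold. I would make this precise by localizing around orbits. First, since $G$ is compact, one may build a $G$-invariant partition of unity: cover the compact manifold $M$ by finitely many $G$-saturated tubes $\Omega_1,\dots,\Omega_N$, each $\Omega_j$ a tubular neighborhood of an orbit $O_j$ of dimension $\ell_j\ge k$ (every point of $M$ lying in such a tube by the tube theorem, and $M$ being compact), choose a subordinate partition of unity, and average each of its functions over $G$ against the normalized Haar measure; this produces $\chi_j\in C^\infty(M)$ with $0\le\chi_j\le 1$, $\sum_j\chi_j=1$, $\operatorname{supp}\chi_j\subset\Omega_j$, all $G$-invariant. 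It then suffices to establish, for each $j$ and for $q$ in the asserted range, an estimate $\|\chi_j f\|_{L^q(M)}\le C_j\|f\|_{H^p_1(M)}$ valid for every $f\in H^p_{1,G}(M)$, together with the analogous compactness on each $\Omega_j$; summing over $j$ gives the global conclusions, and in the compact case the limit is automatically $G$-invariant, being an almost-everywhere limit of $G$-invariant functions.

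\emph{Local reduction via a slice.} Fix $j$ and write $\Omega=\Omega_j$, $O=O_j$; pick $x_0\in O$ with isotropy group $G_{x_0}$, of codimension $\ell:=\dim O$ in $G$. By the differentiable slice theorem, valid for the compact group $G$, $\Omega$ is $G$-equivariantly diffeomorphic to the associated bundle $G\times_{G_{x_0}}B$, where $B$ is a small ball of dimension $n-\ell$ inside the normal space $\nu_{x_0}O$, on which $G_{x_0}$ acts orthogonally. Under this diffeomorphism a $G$-invariant function on $\Omega$ becomes a $G_{x_0}$-invariant function $u$ on $B$, and $\chi_j f$ becomes $v:=\psi u$, where $\psi\in C_c^\infty(B)$ is the $G_{x_0}$-invariant cutoff corresponding to $\chi_j$. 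The key computation is that the Riemannian volume element of $g$, pulled back along this diffeomorphism, equals $J(g,y)\,d\mu_{G/G_{x_0}}(g)\,dy$, where $\mu_{G/G_{x_0}}$ is the invariant probability measure on $G/G_{x_0}$, $dy$ is Lebesgue measure on $B\subset\mathbb R^{n-\ell}$, and $J$ is smooth and bounded between two positive constants on $\overline B$; likewise, along the directions transverse to the orbits $g$ restricts to a smooth metric $h$ on $B$ that is uniformly equivalent to the Euclidean one, and $|\nabla_g f|_g=|\nabla^h u|_h$. It follows that, with the Euclidean $H^p_1$ and $L^q$ norms on $B$, there is a constant $C_j\ge 1$ for which
\begin{equation*}
C_j^{-1}\,\|v\|_{H^p_1(B)}\le\|\chi_j f\|_{H^p_1(M)}\le C_j\,\|v\|_{H^p_1(B)},\qquad C_j^{-1}\,\|v\|_{L^q(B)}\le\|\chi_j f\|_{L^q(M)}\le C_j\,\|v\|_{L^q(B)}.
\end{equation*}

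\emph{Conclusion.} Since $v$ has compact support in the ball $B\subset\mathbb R^{n-\ell}$, the classical Sobolev inequalities in $\mathbb R^{n-\ell}$ give $\|v\|_{L^q(B)}\le C\|\nabla v\|_{L^p(B)}$ whenever $1\le q\le\frac{(n-\ell)p}{(n-\ell)-p}$ if $p<n-\ell$, and for every $q\ge 1$ if $p\ge n-\ell$. Because $\ell\ge k$, the function $x\mapsto\frac{xp}{x-p}$ being decreasing, one has $\frac{(n-\ell)p}{(n-\ell)-p}\ge\frac{(n-k)p}{(n-k)-p}=p^*$ in the case $1\le p<n-k$, while $p\ge n-\ell$ holds automatically in the case $p\ge n-k$; hence for $q$ in the stated range $\|\chi_j f\|_{L^q(M)}\le C_j\|f\|_{H^p_1(M)}$, and summation over $j$ yields continuity of the embedding. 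For compactness, take $q$ strictly below the critical value $p^*$ (any $q\ge 1$ when $p\ge n-k$): given a bounded sequence $(f_m)_{m\in\mathbb N}$ in $H^p_{1,G}(M)$, the corresponding functions $v_{j,m}$ are bounded in $H^p_1(B_j)$ with $q$ strictly subcritical in dimension $n-\ell_j$, so by the classical Rellich--Kondrakov theorem a subsequence converges in $L^q(B_j)$; a diagonal extraction over $j$ yields a subsequence of $(f_m)$ which is Cauchy in $L^q(\Omega_j)$ for every $j$, hence in $L^q(M)$.

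\emph{Where the difficulty lies.} The technical heart is the local reduction, i.e.\ showing that the identification of $G$-invariant functions on $\Omega$ with $G_{x_0}$-invariant functions on the $(n-\ell)$-dimensional ball $B$ furnished by the slice theorem is bi-Lipschitz for both the $L^q$ and the $H^p_1$ norms. The subtle point is that the Jacobian $J$ of the tube diffeomorphism must remain bounded away from $0$ near $O$, so that the collapse of the volumes of the small nearby orbits does not show up as a genuine degeneracy: in this product decomposition the orbit directions carry the homogeneous measure $\mu_{G/G_{x_0}}$ of constant total mass, and the volume collapse is, after a further disintegration, merely the Jacobian of polar-type coordinates that is already contained in the Lebesgue measure $dy$ on $B$. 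Once this is in place, the remainder is a routine patching argument via the $G$-invariant partition of unity together with the classical Sobolev and Rellich--Kondrakov theorems in dimension $n-\ell\le n-k$.
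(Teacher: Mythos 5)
Your proposal addresses a statement that this thesis does not actually prove: Theorem \ref{INMHV} is quoted as a known result of Hebey--Vaugon \cite{HV2} (with a pointer to Parker \cite{Par}), and the text moves straight on to Faget's best-constant refinement. So there is no in-paper proof to compare against; I can only assess your argument on its own terms, and on those terms it is correct and is essentially the standard proof, the same localization strategy used in the original reference: cover $M$ by finitely many $G$-invariant tubes, identify $G$-invariant functions on a tube about an orbit of dimension $\ell\geq k$ with functions on an $(n-\ell)$-dimensional slice, and invoke the classical Sobolev and Rellich--Kondrakov theorems in dimension $n-\ell\leq n-k$, the monotonicity of $x\mapsto xp/(x-p)$ giving a local critical exponent at least $p^*$. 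Your discussion of the Jacobian is the right one: within a fixed tube the pulled-back volume density on $G\times_{G_{x_0}}\overline B$ is smooth and positive on a compact set, and the collapse of nearby higher-dimensional orbits is absorbed into the Lebesgue measure on the slice, not into $J$.

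Two small points of hygiene. First, the identity $|\nabla_g f|_g=|\nabla^h u|_h$ is not exact away from the central orbit, since the slice $\exp_{x_0}(B)$ need not stay orthogonal to the nearby orbits; but $u$ is a restriction of $f$, so $|\nabla^h u|_h\leq|\nabla_g f|_g$ pointwise, and this one-sided bound (together with the one-sided bounds $\|v\|_{H^p_1(B)}\lesssim\|\chi_j f\|_{H^p_1(M)}$ and $\|\chi_j f\|_{L^q(M)}\lesssim\|v\|_{L^q(B)}$, which are the only two directions of your claimed equivalences that the argument uses) suffices. Second, since $H^p_{1,G}(M)$ is defined as the set of $H^p_1$ functions that are $G$-invariant almost everywhere, restricting to a lower-dimensional slice requires either working with smooth representatives or a word on traces; this is repaired by noting that smooth $G$-invariant functions are dense in $H^p_{1,G}(M)$ (approximate in $H^p_1$ and average over the Haar measure of $G$, which is a bounded projection fixing the limit), proving the inequalities there, and passing to the limit. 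Neither point affects the validity of the argument.
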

(T.~Parker \cite{Par} avait aussi travaillé sur les inclusions de Sobolev pour les espaces $G-$invariants).\\
On note par $O_G(P)$ l'orbite du point $P$ sous l'action de $G$. La meilleure constante dans ces inclusions a été calculée par Z.~Faget \cite{Fag}.
\begin{theorem}[Z.~Faget]\label{embHV}
Sous les hypothèses du théorème précédent, si on pose 
$$A=\min\{vol(O_G(Q))/ Q\in M\text{ et }\dim O_G(Q)=k\}$$
 (si $G$ a des orbites finies alors $k = 0$ et $A = \min_{Q\in M} card O_G(Q) $) et $1 \leq p < n-k$ alors pour tout  $\e>0$, il existe $B(\e)$ tel que  
\begin{equation*}
\forall \varphi\in H_{1,G}^p(M)\quad \|\varphi\|^p_{p^*}\leq (\frac{K^p(n-k,p)}{A^{p/(n-k)}}+\varepsilon)\|\nabla\varphi\|^p_p+B(\varepsilon)\|\varphi\|^p_p
\end{equation*}
$K(n-k,p)A^{-1/(n-k)}$ est la meilleure constante.
\end{theorem}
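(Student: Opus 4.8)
Le plan est de reprendre la m\'ethode d'Aubin utilis\'ee pour les th\'eor\`emes \ref{INM} et \ref{AD}, en rempla\c{c}ant le mod\`ele euclidien de dimension $n$ par celui de dimension $n-k$, le volume de l'orbite jouant le r\^ole d'un poids ; on s\'eparerait la preuve en une majoration de la meilleure constante (l'in\'egalit\'e elle-m\^eme) et une minoration (l'optimalit\'e). Pour l'in\'egalit\'e, on commencerait par une r\'eduction locale au voisinage des orbites de dimension minimale $k$. Soit $Q\in M$ avec $\dim O_G(Q)=k$ ; comme $G$ est compact, le th\'eor\`eme de la tranche fournit un voisinage ouvert $G-$invariant $\Omega_Q$ de l'orbite $O_G(Q)$, $G-$\'equivariamment diff\'eomorphe \`a un voisinage tubulaire, de sorte qu'une fonction $G-$invariante sur $\Omega_Q$ est d\'etermin\'ee par une fonction sur une tranche transverse de dimension $n-k$ (invariante sous le groupe d'isotropie $G_Q$). \`A l'aide de coordonn\'ees g\'eod\'esiques transverses et des estim\'ees de la m\'etrique (comme dans la preuve du lemme \ref{aa}), toute $\varphi\in H^p_{1,G}(M)$ \`a support dans $\Omega_Q$ se rel\`eve en une fonction $\tilde\varphi$ sur un ouvert born\'e de $\mathbb R^{n-k}$ v\'erifiant, \`a un facteur $(1+\varepsilon)$ pr\`es,
$$\|\varphi\|_{p^*}^{p}\leq (1+\varepsilon)\,vol(O_G(Q))^{p/p^*}\|\tilde\varphi\|_{L^{p^*}(\mathbb R^{n-k})}^{p},\qquad \|\nabla\varphi\|_p^p\geq (1-\varepsilon)\,vol(O_G(Q))\|\nabla\tilde\varphi\|_{L^p(\mathbb R^{n-k})}^p.$$
L'in\'egalit\'e de Sobolev euclidienne optimale en dimension $n-k$ (th\'eor\`eme \ref{INM} sur $\mathbb R^{n-k}$) donne $\|\tilde\varphi\|_{p^*}^p\leq K^p(n-k,p)\|\nabla\tilde\varphi\|_p^p$ ; comme $p/p^*-1=-p/(n-k)$ et $vol(O_G(Q))\geq A$, on en tire une in\'egalit\'e locale de constante $\leq K^p(n-k,p)A^{-p/(n-k)}+\varepsilon$, sans terme en $\|\varphi\|_p$. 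Pour une orbite de dimension $k'>k$, on proc\'ederait de m\^eme avec $n-k'$ : l'exposant critique local $\frac{(n-k')p}{n-k'-p}$ \'etant strictement plus grand que $p^*$, l'in\'egalit\'e de H\"older ram\`ene, sur un voisinage de volume assez petit, la contribution de cette r\'egion \`a un terme de la forme $\varepsilon\|\nabla\varphi\|_p^p+B\|\varphi\|_p^p$.

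Ensuite, on recollerait : on recouvrirait $M$ par un nombre fini de tels ouverts $G-$invariants, on prendrait une partition de l'unit\'e $G-$invariante $(\eta_i)$ subordonn\'ee (rendue $G-$invariante en moyennant sur le groupe compact $G$), puis on remplacerait $\eta_i$ par $\eta_i^{[p]+1}/\sum_k\eta_k^{[p]+1}$ afin que $\eta_i^{1/p}\in C^1$, exactement comme dans la preuve du th\'eor\`eme \ref{AD}. En \'ecrivant $\|\varphi\|_{p^*}^p=\||\varphi|^p\|_{p^*/p}\leq\sum_i\|\eta_i^{1/p}\varphi\|_{p^*}^p$, en appliquant \`a chaque $\eta_i^{1/p}\varphi$ (qui est $G-$invariante et \`a support dans l'un des ouverts) l'in\'egalit\'e locale, et en absorbant les termes crois\'es issus de $\nabla(\eta_i^{1/p}\varphi)$ gr\^ace aux in\'egalit\'es $(1+t)^p\leq 1+\mu t+\nu t^p$ et de Young, on obtiendrait l'in\'egalit\'e annonc\'ee avec la constante $K^p(n-k,p)A^{-p/(n-k)}+\varepsilon$ et un terme $B(\varepsilon)\|\varphi\|_p^p$.

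Pour l'optimalit\'e, on choisirait un point $Q_0$ avec $\dim O_G(Q_0)=k$ et $vol(O_G(Q_0))=A$, et dans un voisinage tubulaire de $O_G(Q_0)$ on d\'efinirait $\varphi_\varepsilon$ comme le rel\`evement $G-$invariant (constant le long des orbites) de la fonction extr\'emale d'Aubin--Talenti $u_\varepsilon$ de l'in\'egalit\'e de Sobolev euclidienne en dimension $n-k$, tronqu\'ee pr\`es de l'orbite. Un calcul analogue \`a celui de la proposition \ref{inegalite large} donnerait $\|\varphi_\varepsilon\|_{p^*}^{p^*}=A(1+o(1))\|u_\varepsilon\|_{L^{p^*}(\mathbb R^{n-k})}^{p^*}$, $\|\nabla\varphi_\varepsilon\|_p^p=A(1+o(1))\|\nabla u_\varepsilon\|_{L^p(\mathbb R^{n-k})}^p$ et $\|\varphi_\varepsilon\|_p^p=o(\|\nabla\varphi_\varepsilon\|_p^p)$ (puisque $p<n-k$), d'o\`u
$$\frac{\|\varphi_\varepsilon\|_{p^*}^p}{\|\nabla\varphi_\varepsilon\|_p^p+B(\varepsilon)\|\varphi_\varepsilon\|_p^p}\longrightarrow A^{p/p^*-1}K^p(n-k,p)=\frac{K^p(n-k,p)}{A^{p/(n-k)}}\qquad(\varepsilon\to 0).$$
Toute constante strictement inf\'erieure rendrait donc l'in\'egalit\'e fausse pour $\varepsilon$ assez petit, ce qui \'etablit que $K(n-k,p)A^{-1/(n-k)}$ est la meilleure constante.

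La principale difficult\'e sera la r\'eduction locale : identifier pr\'ecis\'ement, via le th\'eor\`eme de la tranche, les fonctions $G-$invariantes au voisinage d'une orbite de dimension minimale avec des fonctions sur la tranche transverse de dimension $n-k$, et contr\^oler le facteur $vol(O_G(Q))$ dans l'\'el\'ement de volume, de mani\`ere \`a faire appara\^{\i}tre exactement la constante de Sobolev euclidienne en dimension $n-k$ pond\'er\'ee par le volume de l'orbite ; c'est l\`a aussi que le choix de l'exposant $p^*=\frac{(n-k)p}{n-k-p}$, plut\^ot que $\frac{np}{n-p}$, intervient de mani\`ere essentielle.
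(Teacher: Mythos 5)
Le texte de la th\`ese ne d\'emontre pas ce th\'eor\`eme~: il est cit\'e comme un r\'esultat de Z.~Faget \cite{Fag} et utilis\'e tel quel, il n'y a donc pas de preuve interne \`a laquelle comparer la v\^otre. Votre plan est correct et suit la d\'emarche standard (celle de Faget et de Hebey--Vaugon)~: r\'eduction locale, via le th\'eor\`eme de la tranche, \`a une in\'egalit\'e de Sobolev euclidienne en dimension $n-k$ pond\'er\'ee par le volume de l'orbite, recollement par une partition de l'unit\'e $G-$invariante exactement comme dans la preuve du th\'eor\`eme \ref{AD}, et optimalit\'e par concentration le long d'une orbite r\'ealisant $A$, ce qui reproduit pour $k=0$ le calcul de la proposition \ref{ine largee}. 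Un seul point m\'erite d'\^etre soulign\'e~: votre formulation sur la tranche \emph{enti\`ere} de dimension $n-k$ (fonction $G_Q-$invariante sur tout le disque transverse, et non sur un domaine fondamental) est essentielle, car la fonction $Q\mapsto vol(O_G(Q))$ n'est que semi-continue inf\'erieurement~--- les orbites voisines d'une orbite minimale peuvent \^etre plus grosses ou de dimension plus grande~--- et une r\'eduction au quotient $S/G_Q$ ferait appara\^{\i}tre un facteur parasite $\bigl(\sup vol(O_G)/A\bigr)^{p/p^*}>1$ dans la constante; avec la tranche enti\`ere, la densit\'e de volume est exactement $vol(O_G(Q))(1+O(r))$ et l'argument passe, y compris pr\`es des strates d'isotropie exceptionnelle.
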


Soit $h$ une fonction dans $L_G^p(M)$ avec $p>n/2$ et $q\in[2,\Nn]$. On considère l'équation de type Yamabe (avec un exposant $q$) suivante:
\begin{equation}\label{eygi}
\Delta_g \psi+ h\psi= \tilde h \psi^{q-1}
\end{equation}
où $\tilde h$ est une constante. Le but de cette section est de chercher des solutions $\psi>0$ et $G-$invariante dans $H^p_{2,G}$. On attachera plus d'attention au cas $q=N=\Nn$. Posons pour tout $\varphi\in H_{1,G}(M)$.
\begin{equation*}
 I_{q,g}(\varphi)=\frac{E(\varphi)}{\|\varphi\|^2_q},\qquad \mu_{q,G}(g)=\inf_{\varphi\in H_{1,G}(M)-\{0\}} I_{q,g}(\varphi)
\end{equation*}
où $E(\varphi)$ a été défini au début de la section \ref{section sans sym}.\\
Notons que si $q=N$, l'équation \eqref{eygi} et la fonctionnelle $I_{q,g}$ s'identifient à l'équation \eqref{AF} et à la fonctionnelle $I_g$ respectivement. Par contre $\mu(g)\leq \mu_{N,G}(g)$ car $\mu_{N,G}(g)$ est obtenu en prenant des fonctions tests  dans $H_{1,G}(M)\subset H^2_{1}(M)$ (voir la section \ref{section sans sym}, pour les définitions de $I_g$ et $\mu(g)$). 
\begin{proposition}\label{eysc}
Si $q\in [\frac{2p}{p-1},\Nn[$ et $\mu_{q,G}(g)>0$ alors  l'équation \eqref{eygi} admet une solution $\varphi_q\in H_{2,G}^p(M)$, $G-$invariante, strictement positive et qui minimise $I_{q,g}$,   pour $\tilde h=\mu_{q,G}(g)$.
\end{proposition}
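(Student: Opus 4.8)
The plan is to obtain $\varphi_q$ as a minimizer of the energy $E$ over the set $\{\varphi\in H_{1,G}(M):\|\varphi\|_q=1\}$. The whole point is that, since $q<N=\frac{2n}{n-2}$, the embedding $H_{1,G}(M)\subset L_G^q(M)$ is \emph{compact} by the Hebey--Vaugon Theorem \ref{INMHV} (indeed, writing $k$ for the minimal orbit dimension, $p^*=\frac{2(n-k)}{n-k-2}\geq N>q$ when $n-k>2$, and every such embedding is compact when $n-k\leq 2$), so no best-constant argument is needed and the direct method applies cleanly. Note also that $0<\mu_{q,G}(g)<+\infty$, finiteness being seen on constant functions.

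First I would take a minimizing sequence $(\varphi_i)$ with $\varphi_i\geq 0$, $\|\varphi_i\|_q=1$, $E(\varphi_i)\to\mu_{q,G}(g)$ — one may assume $\varphi_i\geq 0$ by replacing $\varphi_i$ with $|\varphi_i|$, which stays in $H_{1,G}(M)$ and changes neither $E$ nor $\|\cdot\|_q$. The hypothesis $q\geq\frac{2p}{p-1}$ is exactly what is needed to control the lower order term: by Hölder $\|\varphi_i\|_{2p/(p-1)}\leq vol(M)^{(p-1)/(2p)-1/q}$, hence
$$\|\nabla\varphi_i\|_2^2=E(\varphi_i)-\int_M h\varphi_i^2\,\di v\leq E(\varphi_i)+\|h\|_p\|\varphi_i\|_{2p/(p-1)}^2\leq C,$$
and $\|\varphi_i\|_2\leq vol(M)^{1/2-1/q}$ since $q\geq 2$; so $(\varphi_i)$ is bounded in $H_{1,G}(M)$. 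Up to a subsequence, $\varphi_i\rightharpoonup\varphi$ weakly in $H_{1,G}(M)$, $\varphi_i\to\varphi$ strongly in $L_G^s(M)$ for every $s<N$ and almost everywhere. In particular $\|\varphi\|_q=1$, so $\varphi\not\equiv 0$ and $\varphi\geq 0$; moreover $\int_M h\varphi_i^2\,\di v\to\int_M h\varphi^2\,\di v$ because $\|h(\varphi_i^2-\varphi^2)\|_1\leq\|h\|_p\|\varphi_i-\varphi\|_{2p/(p-1)}\|\varphi_i+\varphi\|_{2p/(p-1)}\to 0$. Combined with the weak lower semicontinuity of $\varphi\mapsto\|\nabla\varphi\|_2^2$ this gives $E(\varphi)\leq\liminf E(\varphi_i)=\mu_{q,G}(g)$, while $E(\varphi)\geq\mu_{q,G}(g)\|\varphi\|_q^2=\mu_{q,G}(g)$ by definition of the infimum; hence $E(\varphi)=\mu_{q,G}(g)$ and $\varphi$ minimizes $I_{q,g}$.

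Next I would write the Euler--Lagrange equation. Differentiating $I_{q,g}$ at $\varphi$ along directions in $H_{1,G}(M)$ yields, for all $\psi\in H_{1,G}(M)$,
$$\int_M\nabla\varphi\cdot\nabla\psi\,\di v+\int_M h\varphi\psi\,\di v=\mu_{q,G}(g)\int_M\varphi^{q-1}\psi\,\di v,$$
so $\varphi$ is a weak solution of \eqref{eygi} tested against $H_{1,G}(M)$, with $\tilde h=\mu_{q,G}(g)$. To make the regularity results of the previous chapter applicable (they are not phrased $G$-invariantly) I would upgrade this to a genuine weak solution on $H_1(M)$ by symmetric criticality: for $v\in H_1(M)$, the Haar average $Tv=\int_G\sigma^* v\,d\sigma$ over the compact group $G$ lies in $H_{1,G}(M)$, and since each $\sigma\in G$ is an isometry and $h,\varphi$ are $G$-invariant one has $\int_M\nabla\varphi\cdot\nabla(\sigma^* v)\,\di v=\int_M\nabla\varphi\cdot\nabla v\,\di v$, and likewise for the other two integrals; integrating over $G$ shows the identity above holds with $v$ in place of $\psi$, for every $v\in H_1(M)$.

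Finally, I would rewrite the equation as $\Delta_g\varphi+\bigl(h-\tilde h\,\varphi^{q-2}\bigr)\varphi=0$ and note that $\varphi^{q-2}\in L^{N/(q-2)}(M)$ with $N/(q-2)>n/2$ precisely because $q<N$; thus $h-\tilde h\,\varphi^{q-2}\in L^r(M)$ with $r=\min\!\bigl(p,N/(q-2)\bigr)>n/2$. Theorem \ref{gef} then yields that $\varphi$ is strictly positive, bounded, and of class $C^{1-[n/r],\beta}$. Boundedness gives $\varphi^{q-1}\in L^\infty(M)$ and $h\varphi\in L^p(M)$, so $\Delta_g\varphi\in L^p(M)$, and the elliptic regularity Theorem \ref{reg} together with the Sobolev embeddings give $\varphi\in H^p_2(M)\subset C^{1-[n/p],\beta}(M)$; being $G$-invariant, $\varphi\in H^p_{2,G}(M)$. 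This $\varphi$ is the desired $\varphi_q$. The only genuinely delicate points — both standard — are the interplay between the merely $L^p$ coefficient $h$ and the $G$-equivariant compactness (whence the role of the exponent $\frac{2p}{p-1}\leq q$ in the boundedness step), and the need to pass through symmetric criticality before invoking the non-equivariant regularity theorems \ref{gef} and \ref{reg}.
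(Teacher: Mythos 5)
Your proof is correct and follows the paper's overall strategy: direct minimization of $E$ on the unit sphere of $L^q$, using that $q<N$ makes the relevant embedding compact (the paper in fact only needs ordinary Kondrakov compactness of $H_1(M)\subset L^q(M)$ here, not the equivariant Theorem \ref{INMHV}), followed by the same regularity bootstrap: rewrite the equation as $\Delta_g\varphi+(h-\tilde h\varphi^{q-2})\varphi=0$ with coefficient in $L^s$, $s>n/2$, apply Theorem \ref{gef} for positivity and boundedness, then Theorem \ref{reg} for $H^p_2$. The one step where you genuinely diverge is the passage from the Euler--Lagrange identity tested against $H_{1,G}(M)$ to the same identity tested against all of $H_1(M)$. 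The paper exploits $\mu_{q,G}(g)>0$: it shows the first eigenvalue of $L=\Delta_g+h$ is positive, hence $L$ is coercive and invertible (Proposition \ref{delta fu}), solves $L\tilde\varphi_q=\mu_{q,G}(g)\varphi_q^{q-1}$ uniquely in the non-equivariant space, deduces that $\tilde\varphi_q$ is $G$-invariant by uniqueness, and identifies $\varphi_q=\tilde\varphi_q$ by testing the difference against itself and using coercivity. You instead invoke symmetric criticality, averaging an arbitrary test function over $G$ and using the invariance of each of the three integrals under $\sigma^*$; this is shorter and, notably, does not use the hypothesis $\mu_{q,G}(g)>0$ at all (in the paper that hypothesis enters precisely through the invertibility of $L$). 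The paper's inversion trick has the compensating advantage of being reused verbatim in Proposition \ref{k sup 1} and Theorem \ref{theginv}. Two points you should make explicit for completeness: the Haar average requires $G$ to be compact --- since $M$ is compact one replaces $G$ by its closure in the compact group $I(M,g)$, which changes neither $H_{1,G}(M)$ nor the invariance hypotheses --- and the Bochner integral $Tv=\int_G\sigma^*v\,\di\sigma$ requires the strong continuity of $\sigma\mapsto\sigma^*v$ in $H_1(M)$. Both are standard, and with them your argument is complete.
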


\begin{proof}
\begin{itemize}
\item[$*$] Soit $(\varphi_i)_{i\in\mathbb{N}}$ une suite minimisante dans $H_{1,G}(M)$ 
telle que $\|\varphi_i\|_q=1$ et $\varphi_i\geq 0$ alors $(\varphi_i)_{i\in\mathbb{N}}$ est bornée dans 
$H_{1,G}(M)$, en effet $(E(\varphi_i))_{i\in\mathbb{N}}$ est une suite convergente dans $\mathbb{R}$,  on peut donc supposer qu'elle est majorée par $\mu_{q,G}(g)+1$, d'où 
\begin{equation*} 
\begin{split}
\|\varphi_i\|_2^2 &\leq vol(M)^{(q-2)/q}\|\varphi_i\|_q^2\leq vol(M)^{(q-2)/q} \\
 \|\nabla\varphi_i\|_2^2 &=E(\varphi_i)-\int_M h\varphi_i^2\di v\\
 & \leq \mu_{q,G}(g)+1+C\|h\|_p 
\end{split} 
\end{equation*}
\item[$*$] Le théorème de Banach (voir section \ref{teb}) assure l'existence d'une sous-suite $(\varphi_j)_{j\in\mathbb{N}}$ de 
$(\varphi_i)_{i\in\mathbb{N}}$, qui converge faiblement dans $H_{1,G}(M)$ vers une 
fonction $\varphi_q$, et que $$\displaystyle\liminf_{j\to+\infty}\|\nabla\varphi_j\|_{2}+\|\varphi_j\|_2\geq\|\nabla\varphi_q\|_{2}+\|\varphi_q\|_2$$
\item[$*$] Il existe une sous-suite $(\varphi_k)_{k\in\mathbb{N}}$ de $(\varphi_j)_{j\in\mathbb{N}}$, qui converge fortement dans $L^q(M)$ vers la fonction
 $\varphi_q$ si  $q\in [\frac{2p}{p-1},\Nn[$. Il en résulte que $\|\varphi_q\|_q=1$\\
\end{itemize} 
 il en résulte aussi que $$\mu_{q,G}(g)=\lim_{k\to +\infty}I_{q,g}(\varphi_k)\geq I_{q,g}(\varphi_q)$$ 
 on en déduit que $I_{q,g}(\varphi_q)=\mu_{q,G}(g)$, $\varphi_g\geq 0$ et que $\varphi_q$ est $G-$invariante presque partout. Donc $\varphi_q$ minimise la fonctionnelle $I_{q,g}$. On écrit l'équation d'Euler-Lagrange pour la fonction $\varphi_q$, on trouve:
\begin{equation}\label{eyq}
\forall\psi\in H_{1,G}(M)\qquad \int_{M}\nabla_i\varphi_q\nabla^{i}\psi+h\psi\varphi_q-\mu_{q,G}(g)
\psi\varphi_q^{q-1}\di v=0 
\end{equation}
On doit montrer que l'égalité \eqref{eyq} reste vraie pour tout $\psi\in H_1(M)$. C'est là qu'on utilise l'hypothèse $\mu_{q,G}(g)>0$ qui montre que la plus petite valeur propre $\lambda$ de l'opérateur $L:=\Delta_g+h$ est strictement positive. En effet si $\lambda\leq 0$, il existe une fonction propre $\psi\geq 0$ non identiquement nulle telle que 
$$E(\psi)=(L\psi,\psi)_{L^2}=\lambda\|\psi\|_2^2<0$$
D'autre part $E(\psi)\geq \mu_{N,G}(g) \|\psi\|_N^2>0$, ce qui est absurde. Maintenant  la proposition~\ref{delta fu} montre que $L$ est inversible. Comme $\varphi_q\in L^{N/(q-1)}(M)$ et $N/(q-1)>2n/(n+2)$, il existe une unique fonction $\tilde\varphi_q$ solution faible de l'équation
\begin{equation*}\label{argument}
L\tilde\varphi_q=\mu_{q,G}(g)\varphi_q^{q-1}
\end{equation*}
$h$ est $G-$invariante, ainsi que $\Delta_g$, donc $\sigma^*\tilde\varphi_q$ est solution de la même équation pour tout $\sigma \in G$. Par unicité 
$\sigma^*\tilde\varphi_q=\tilde\varphi_q$, $\tilde\varphi_q$ est donc $G-$invariante. D'autre part 
$$\forall\psi\in H_{1,G}(M)\qquad (L(\varphi_q-\tilde\varphi_q),\psi)_{L^2}=0$$
Si on choisit $\psi=\varphi_q-\tilde\varphi_q$ alors   $\varphi_q=\tilde\varphi_q$, car $L$ est coercif, d'après la proposition~\ref{delta fu}. Finalement $\varphi_q$ est une solution faible, non triviale de l'équation
$$\Delta_g\varphi+(h-\mu_{q,G}(g)\varphi_q^{q-2})\varphi=0$$ 
avec $(h-\mu_{q,G}(g)\varphi_q^{q-2})\in L^s(M)$, où $s=\min(p,\frac{2n}{(q-2)(n-2)})>n/2$. Par le théorème \ref{gef}, $\varphi_q$ est bornée, strictement positive, donc $\Delta\varphi_q\in L^p(M)$. Par le théorème de régularité,  $\varphi_q\in H^p_{2,G}(M)$.
\end{proof} 

On s'intéresse maintenant au cas où $q=N$ dans l'équation \eqref{eygi}. On obtient d'abord le résultat suivant:

\begin{proposition}\label{k sup 1}
 Si $k:=\inf_{Q\in M}\dim O_G(Q)\geq 1$ et $\mu_{N,G}(g)>0$ alors l'équation \eqref{eygi} admet une solution $\varphi_N\in H^p_2(M)$, qui minimise $I_{N,g}$, $G-$invariante et strictement positive pour $q=N$ et $\tilde h=\mu_{G}(M,g)$.
\end{proposition}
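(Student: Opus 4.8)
The proof runs along exactly the lines of Proposition \ref{eysc}, the only — but decisive — new ingredient being that the hypothesis $k\geq 1$ makes the Sobolev embedding compact at the critical exponent, so that no strict inequality on $\mu_{N,G}(g)$ is needed. Concretely, apply Theorem \ref{INMHV} with $p=2$: if $n-k\leq 2$ the embedding $H_{1,G}(M)\subset L^N_G(M)$ is compact directly by part $2$ of that theorem; if $n-k>2$ then $2^*=\frac{2(n-k)}{n-k-2}$ and a short computation ($2(n-k)(n-2)-2n(n-k-2)=4k>0$) shows $2^*>N=\frac{2n}{n-2}$ when $k\geq 1$, so again $H_{1,G}(M)\subset L^N_G(M)$ is compact by part $1$. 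Note also that $\mu_{N,G}(g)$ is finite: by H\"older, $E(\psi)\geq -\|h\|_{n/2}\|\psi\|_N^2$ for all $\psi\in H_1(M)$, hence $\mu_{N,G}(g)\geq -\|h\|_{n/2}$, and by assumption $\mu_{N,G}(g)>0$.

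\textbf{Minimization.} Take a minimizing sequence $(\varphi_i)_{i\in\mathbb N}\subset H_{1,G}(M)$ with $\|\varphi_i\|_N=1$, $\varphi_i\geq 0$ and $E(\varphi_i)\to\mu_{N,G}(g)$. As in Proposition \ref{eysc}, H\"older's inequality gives $\|\varphi_i\|_2\leq \mathrm{vol}(M)^{1/n}$ and $\|\nabla\varphi_i\|_2^2=E(\varphi_i)-\int_Mh\varphi_i^2\di v\leq \mu_{N,G}(g)+1+C\|h\|_p$, so $(\varphi_i)$ is bounded in $H_{1,G}(M)$. By the reflexivity theorem of Section \ref{teb} and the compact embedding just established, after extraction there is $\varphi_N\in H_{1,G}(M)$ with $\varphi_i\rightharpoonup\varphi_N$ weakly in $H_{1,G}(M)$, $\varphi_i\to\varphi_N$ strongly in $L^N(M)$ and in $L^{2p/(p-1)}(M)$ (as $2p/(p-1)<N$), and $\varphi_i\to\varphi_N$ almost everywhere. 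Hence $\varphi_N\geq 0$, $\varphi_N$ is $G$-invariant a.e., $\|\varphi_N\|_N=1$, $\int_Mh\varphi_i^2\di v\to\int_Mh\varphi_N^2\di v$ (using $h\in L^p$ and the $L^{2p/(p-1)}$ convergence), and $\|\nabla\varphi_N\|_2\leq\liminf_i\|\nabla\varphi_i\|_2$; therefore $E(\varphi_N)\leq\liminf_iE(\varphi_i)=\mu_{N,G}(g)$, which forces $I_{N,g}(\varphi_N)=\mu_{N,G}(g)$. So $\varphi_N$ is a $G$-invariant, nonnegative, nontrivial minimizer of $I_{N,g}$.

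\textbf{Euler--Lagrange, symmetric criticality and regularity.} Writing the Euler--Lagrange equation, $\varphi_N$ is a weak solution in $H_{1,G}(M)$ of $\Delta_g\varphi+h\varphi=\mu_{N,G}(g)\varphi^{N-1}$. To upgrade this to a weak solution against all test functions in $H_1(M)$ one repeats the argument at the end of Proposition \ref{eysc}: since $\mu_{N,G}(g)>0$, the lowest eigenvalue of $L:=\Delta_g+h$ is positive (a nonnegative eigenfunction $\psi$ for a nonpositive eigenvalue would satisfy $E(\psi)\leq 0$ whereas $E(\psi)\geq\mu_{N,G}(g)\|\psi\|_N^2>0$), so by Proposition \ref{delta fu} the operator $L$ is coercive and invertible; because $\varphi_N^{N-1}\in L^{N/(N-1)}(M)$ with $N/(N-1)>2n/(n+2)$, there is a unique $\tilde\varphi_N$ with $L\tilde\varphi_N=\mu_{N,G}(g)\varphi_N^{N-1}$, which is $G$-invariant by uniqueness together with the $G$-invariance of $L$ and of the right-hand side; testing $L(\varphi_N-\tilde\varphi_N)$ against $\varphi_N-\tilde\varphi_N\in H_{1,G}(M)$ and using coercivity gives $\varphi_N=\tilde\varphi_N$. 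Thus $\varphi_N$ is a nontrivial nonnegative weak solution in $H_1(M)$ of \eqref{AF} with $\tilde h=\mu_{N,G}(g)$, and Theorem \ref{regYS} gives $\varphi_N\in H^p_2(M)\subset C^{1-[n/p],\beta}(M)$ with $\varphi_N>0$. The only genuinely delicate point is this last passage from an $H_{1,G}$-weak solution to a full weak solution, i.e. the principle of symmetric criticality, which the invertibility of $L$ handles; everything else is the routine variational scheme, the compactness being free of charge from the symmetry hypothesis $k\geq 1$.
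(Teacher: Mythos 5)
Votre strat�gie globale est celle du texte : la compacit� de l'inclusion $H_{1,G}(M)\subset L^N_G(M)$ fournie par l'hypoth�se $k\geq 1$ rend la m�thode de la proposition \ref{eysc} valable au niveau critique $q=N$, puis on passe des fonctions test $G-$invariantes � toutes les fonctions test via l'inversibilit� de $L=\Delta_g+h$. Mais votre derni�re �tape contient une erreur concr�te : vous affirmez que $\varphi_N^{N-1}\in L^{N/(N-1)}(M)$ avec $N/(N-1)>2n/(n+2)$, or
$$\frac{N}{N-1}=\frac{2n/(n-2)}{(n+2)/(n-2)}=\frac{2n}{n+2},$$
c'est une �galit� et non une in�galit� stricte. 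La proposition \ref{delta fu} exige $q>2n/(n+2)$ strictement, donc elle ne s'applique pas avec la seule information $\varphi_N\in L^N$. Ce n'est pas un point anodin : c'est exactement l'obstruction que le texte signale dans la remarque qui suit le th�or�me \ref{theginv} (� $\varphi_N^{N-1}\in L^{2n/(n+2)}(M)$ �), et c'est la raison pour laquelle l'argument de criticit� sym�trique ne se recopie pas tel quel de $q<N$ � $q=N$.

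La r�paration est imm�diate avec un ingr�dient que vous avez d�j� calcul� : pour $k\geq 1$, le th�or�me \ref{INMHV} donne la continuit� de $H_{1,G}(M)\subset L^{2^*}_G(M)$ avec $2^*=2(n-k)/(n-k-2)>N$ (votre calcul $2(n-k)(n-2)-2n(n-k-2)=4k>0$), ou l'inclusion dans tous les $L^q$ si $n-k\leq 2$. Donc $\varphi_N\in L^{2^*}(M)$ et $\varphi_N^{N-1}\in L^{s}(M)$ avec $s=2^*/(N-1)>N/(N-1)=2n/(n+2)$ ; c'est ce $s>2n/(n+2)$ qui permet d'invoquer la proposition \ref{delta fu} et de conclure comme � la fin de la proposition \ref{eysc}. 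C'est pr�cis�ment ce que fait la preuve du texte. Le reste de votre r�daction (finitude de $\mu_{N,G}(g)$, suite minimisante, convergence forte dans $L^N$, �quation d'Euler--Lagrange dans $H_{1,G}(M)$, coercivit� de $L$, r�gularit� par le th�or�me \ref{regYS}) est correct.
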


\begin{proof}
 D'après le théorème \ref{INMHV}, si $k\geq 1$, l'inclusion $H_{1,G}(M)\subset L_G^N(M)$ est compacte. C'est ce qui manquait pour que la preuve de la proposition \ref{eysc} soit valable pour $q=N$.  $\varphi_N$ est donc solution faible dans $H_{1,G}(M)$ de \eqref{eyq}. Pour montrer qu'elle est solution faible pour tout $\psi\in H_1(M)$, il suffit d'utiliser l'argument déjà utilisé à la fin de la preuve de la proposition \ref{eysc}, en utilisant le fait que  l'inclusion $H_{1,G}(M)\subset L_G^{2^*}(M)$ est continue, où $2^*=2(n-k)/(n-k-2)$ (cf. théorème \ref{INMHV}). Ceci entraîne qu'il existe $s>2n/(n+2)$ tel que
 $\varphi_N^{N-1}\in L^s(M)$. Le résultat de la proposition \ref{eysc} s'étend donc à $q=N$ lorsque $k\geq 1$.
\end{proof}

\begin{theorem}\label{theginv}
Soit $(M,g)$ une variété riemannienne compacte. $G$ un sous groupe de $I(M,g)$. Si $$0<\mu_{N,G}(g)<K^{-2}(n,2)(\inf_{Q\in M}\Ca O_G(Q))^{2/n}$$
alors pour  $q=N$,  l'équation \eqref{eygi}  admet une solution $\varphi\in H_{2,G}^p(M)\subset C^{1-[n/p],\beta}(M)$ strictement positive, $G-$invariante et minimisante pour la fonctionnelle $I_{N,g}$.
\end{theorem}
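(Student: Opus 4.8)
Le plan est de reprendre la m\'ethode variationnelle du th\'eor\`eme \ref{cg}, en rempla\c{c}ant l'in\'egalit\'e de la meilleure constante (th\'eor\`eme \ref{INM}) par son analogue $G$-invariant d\^u \`a Z.~Faget (th\'eor\`eme \ref{embHV}). Quitte \`a remplacer $G$ par son adh\'erence dans le groupe compact $I(M,g)$, on peut supposer $G$ compact, ce qui ne change ni $H_{1,G}(M)$, ni les orbites finies. On distingue deux cas suivant $k:=\inf_{Q\in M}\dim O_G(Q)$. Si $k\geq 1$, toutes les orbites sont infinies, $\inf_{Q\in M}\Ca O_G(Q)=+\infty$, l'hypoth\`ese se r\'eduit \`a $\mu_{N,G}(g)>0$, et la proposition \ref{k sup 1} fournit directement la solution, l'inclusion $H_{1,G}(M)\subset L_G^N(M)$ \'etant alors compacte. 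Le cas essentiel est $k=0$, o\`u $A:=\inf_{Q\in M}\Ca O_G(Q)=\min_{Q\in M}\mathrm{card}\,O_G(Q)$ et o\`u cette inclusion critique n'est plus que continue.

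Dans le cas $k=0$, on partira d'une suite minimisante $(\varphi_i)$ dans $H_{1,G}(M)$ avec $\varphi_i\geq 0$, $\|\varphi_i\|_N=1$ et $E(\varphi_i)=\mu_{N,G}(g)+o(1)$; l'in\'egalit\'e de H\"older la rend born\'ee dans $H_{1,G}(M)$. Quitte \`a extraire (th\'eor\`eme de Banach, section \ref{teb}, et th\'eor\`eme \ref{INMHV}, o\`u $p^*=N$ pour $p=2$, $k=0$), on peut supposer $\varphi_i\rightharpoonup\varphi$ faiblement dans $H_{1,G}(M)$ avec $\varphi\geq 0$, fortement dans $L^q_G(M)$ pour tout $q<N$ et presque partout. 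En posant $\psi_i=\varphi_i-\varphi$, la convergence faible et $h\in L^p(M)$ (avec $\psi_i\to 0$ dans $L^{2p/(p-1)}$) donnent $E(\varphi_i)=E(\varphi)+\|\nabla\psi_i\|_2^2+o(1)$. Le lemme de Brezis--Lieb \ref{BLL} donne $\|\psi_i\|_N^N+\|\varphi\|_N^N=1+o(1)$, donc, $N>2$ aidant, $\|\psi_i\|_N^2+\|\varphi\|_N^2\geq 1+o(1)$. On applique ensuite l'in\'egalit\'e de Faget (th\'eor\`eme \ref{embHV}, $p=2$, $k=0$, de constante $K(n,2)A^{-1/n}$) \`a $\psi_i$, et $\|\psi_i\|_2\to 0$, pour obtenir $\|\psi_i\|_N^2\leq\bigl(K^2(n,2)A^{-2/n}+\varepsilon\bigr)\|\nabla\psi_i\|_2^2+o(1)$. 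Combin\'e \`a $E(\varphi)\geq\mu_{N,G}(g)\|\varphi\|_N^2$ et $E(\varphi_i)=\mu_{N,G}(g)+o(1)$, cela donne
$$\bigl[\,1-\mu_{N,G}(g)\bigl(K^2(n,2)A^{-2/n}+\varepsilon\bigr)\,\bigr]\,\|\nabla\psi_i\|_2^2\leq o(1).$$
L'hypoth\`ese $\mu_{N,G}(g)<K^{-2}(n,2)A^{2/n}$ permet de choisir $\varepsilon>0$ rendant le crochet strictement positif; on en d\'eduit $\|\nabla\psi_i\|_2\to 0$, puis $\varphi_i\to\varphi$ fortement dans $H_{1,G}(M)$ et, par inclusion de Sobolev, dans $L^N(M)$, d'o\`u $\|\varphi\|_N=1$ et $E(\varphi)=\mu_{N,G}(g)$: $\varphi$ minimise $I_{N,g}$ sur $H_{1,G}(M)$.

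Il restera \`a en d\'eduire l'\'equation et la r\'egularit\'e. L'\'equation d'Euler--Lagrange du minimiseur contraint est $\int_M\nabla\varphi\cdot\nabla\psi+h\varphi\psi-\mu_{N,G}(g)\varphi^{N-1}\psi\,\di v=0$ pour tout $\psi\in H_{1,G}(M)$; pour l'\'etendre \`a tout $\psi\in H_1(M)$, on remplacera $\psi$ par sa moyenne $\bar\psi$ sous $G$ (int\'egration contre la mesure de Haar de $G$ compact). Comme $\varphi$, $h$ et la mesure riemannienne sont $G$-invariants et que les isom\'etries commutent au gradient, on a $\int_M\nabla\varphi\cdot\nabla\bar\psi\,\di v=\int_M\nabla\varphi\cdot\nabla\psi\,\di v$ et de m\^eme pour les deux autres termes (les int\'egrales ayant un sens par H\"older, $\varphi^{N-1}\in L^{2n/(n+2)}$), donc l'\'equation vaut contre tout $\psi\in H_1(M)$. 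Ainsi $\varphi$ est une solution faible positive non triviale de \eqref{eygi} avec $q=N$, $\tilde h=\mu_{N,G}(g)\in\mathbb R$, $h\in L^p(M)$, $p>n/2$: le th\'eor\`eme \ref{regYS} donne alors $\varphi\in H^p_2(M)\subset C^{1-[n/p],\beta}(M)$, $\varphi>0$, et $\varphi\in H^p_{2,G}(M)$ puisqu'elle est $G$-invariante. Le point d\'elicat est le contr\^ole du d\'efaut de compacit\'e de l'inclusion critique $H_{1,G}(M)\subset L^N_G(M)$ lorsque $k=0$: c'est pr\'ecis\'ement l'in\'egalit\'e stricte de l'\'enonc\'e, combin\'ee \`a la valeur exacte $K(n,2)A^{-1/n}$ de la meilleure constante $G$-invariante, qui emp\^eche la suite minimisante de se concentrer et force la convergence forte (adaptation \'equivariante de l'argument d'Aubin).
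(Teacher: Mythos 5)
Your argument is correct, but it follows a genuinely different route from the paper's. The paper does not minimize $I_{N,g}$ directly at the critical exponent: it first solves the subcritical equations \eqref{eygi} for $q<N$ via Proposition \ref{eysc}, lets $q\to N$ along a sequence, passes to the limit in the weak formulation, proves non-triviality of the limit by applying Faget's inequality to the solutions $\varphi_{q_i}$ themselves, and only afterwards checks that the limit is minimizing with $\tilde h=\mu_{N,G}(g)$. The reason for this detour is spelled out in the remark following the theorem: with the direct minimization you propose, one only obtains the Euler--Lagrange equation against $G$-invariant test functions, and the paper's device for enlarging the test class --- inverting $L=\Delta_g+h$ on $\varphi_N^{N-1}$ and using uniqueness to identify the $G$-invariant solution --- breaks down at the critical exponent because $\varphi_N^{N-1}$ only lies in $L^{2n/(n+2)}(M)$, exactly the borderline integrability excluded by Proposition \ref{delta fu}. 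Your Haar-averaging of the test function (Palais's principle of symmetric criticality) sidesteps that obstruction entirely: after replacing $G$ by its closure in the compact group $I(M,g)$ (which changes neither $H_{1,G}(M)$ nor the finite orbits), the invariance of $\varphi$, $h$ and $\di v$ under isometries gives $\int_M\nabla\varphi\cdot\nabla\bar\psi\,\di v=\int_M\nabla\varphi\cdot\nabla\psi\,\di v$ and likewise for the lower-order terms, so the weak equation holds against every $\psi\in H_1(M)$ and Theorem \ref{regYS} applies. The concentration part of your proof (Brezis--Lieb plus Faget's inequality applied to $\psi_i=\varphi_i-\varphi$, the hypothesis $\mu_{N,G}(g)>0$ being needed to multiply through the inequality $\|\psi_i\|_N^2+\|\varphi\|_N^2\geq 1+o(1)$) is the exact equivariant transcription of the proof of Theorem \ref{cg} and is sound, and your reduction of the case $k\geq 1$ to Proposition \ref{k sup 1} matches the paper. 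What your route buys is a shorter proof which yields $\|\varphi\|_N=1$ and $E(\varphi)=\mu_{N,G}(g)$ immediately, instead of recovering them a posteriori; what the paper's route buys is that it recycles Proposition \ref{eysc}, which is needed elsewhere anyway (for instance in the construction of the Green function in Proposition \ref{green coroll}), and never has to invoke the averaging argument.
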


\begin{proof}
 On   fait tendre $q$ vers $N$ pour les solutions $\varphi_q$ de l'équation \eqref{eygi}, obtenues grâce à la proposition \eqref{eysc}. En utilisant la proposition \ref{k sup 1}, le problème est résolu  si  $k=\inf_{Q\in M}\dim O_G(Q)\geq 1$.\\
Supposons que $k=\inf_{Q\in M}\dim O_G(Q)=0$. On pose 
$$\Phi=\{\varphi_q\text{ solution de \eqref{eygi} },\;\varphi_q>0, \; \|\varphi_q\|_q=1\text{ et }\mu_{q,G}(g)=I_{q,g}(\varphi_q)/q\in[q_0,N[ \}$$ 
l'ensemble des solutions données par la proposition \ref{eysc}, avec $q_0\in ]2p/(p-1),N[$ suffisamment proche de $N$ de sorte que $\mu_{q,G}(g)$ reste strictement positive pour tout $q\in[q_0,N[$. Ce qui est possible car
$$\forall q\in [q_0,N[\quad \mu_{q,G}(g)=I_{q,g}(\varphi_q)=I_{N,g}(\varphi_q)\|\varphi_q\|_N^{-2}\geq\mu_{N,G}(g)\|\varphi_q\|_N^{-2}>0$$ 
D'autre part, pour tout $\e>0$, il existe $\varphi_\e\in H^p_{2,G}(M)$ strictement positive telle que
$$I_{N,g}(\varphi_\e)< \mu_{N,G}(g)+\e$$
Puisque 
$$\limsup_{q\to N}\mu_{q,G}(g)\leq \lim_{q\to N}I_{q,g}(\varphi_\e)=I_{N,g}(\varphi_\e)$$ 
on en déduit que 
\begin{equation}\label{mu lim}
 \limsup_{q\to N}\mu_{q,G}(g)\leq \mu_{N,G}(g)
\end{equation}

 L'ensemble $\Phi$ est borné dans $H_{1}^2(M)$, en effet:
\begin{equation*} 
\begin{split} 
\|\varphi_q\|_2 & \leq vol(M)^{1/2-1/q}\|\varphi_q\|_q\leq 1+vol(M)^{1/2-1/N}\\
\|\nabla\varphi_q\|^2_2 &=\mu_{q,G}(g)-\int_M h\varphi_q^2\di v\\
& \leq I_{q,g}(1)+\|h\|_p\|\varphi_q\|_{2p/(p-1)}^2\\
&\leq \|h\|_1 vol(M)^{-2/q}+\|h\|_p\|\varphi_q\|_{2p/(p-1)}^2\\
& \leq C\|h\|_p
\end{split} 
\end{equation*}
où $C$ est une constante strictement positive qui dépend seulement de $n$. L'ensemble $\Phi$ est donc faiblement compact dans $H_{1}^2(M)$, on en déduit qu'il existe une suite $(q_i)_{i\in \mathbb{N}}$ qui converge vers $N$ telle que \begin{description}
\item[$*$] $\varphi_{q_i}\rightharpoonup\varphi_N$ faiblement dans $H_1(M)$.
\item[$*$] $\varphi_{q_i}\rightarrow\varphi_N$ fortement dans $L^s(M)$ pour tout $1\leq s<N$.
\item[$*$]  $\varphi_{q_i}\rightarrow\varphi_N$ presque partout.
\end{description}
Donc $\varphi_N$ est nécessairement $G-$invariante presque partout.\\
Puisque $\varphi_{q_i}$ satisfait l'équation \eqref{eygi} pour $\tilde h=\mu_{q_i,G}(g)$ et $q=q_i$, alors pour tout $\psi\in H_1(M)$:
\begin{equation}\label{fai} 
\int_M\nabla^j\psi\nabla_j\varphi_{q_i}\di v+\int_Mh\psi\varphi_{q_i}\di v=\mu_{q_i,G}(g)\int_M\psi\varphi_{q_i}^{q_i-1}\di v
\end{equation}
D'autre part, l'inclusion de Sobolev  $H_1(M)\subset L^N(M)$ et l'inégalité de Hölder permettent d'écrire 
\begin{equation*}
\|\varphi_{q_i}^{q_i-1}\|_{N/(N-1)}\leq vol(M)^{\frac{N-q_i}{N-1}}\|\varphi_{q_i}\|^{q_i-1}_{N}\leq c(\|\nabla\varphi_{q_i}\|_2+\|\varphi_{q_i}\|_2)^{N-1}\leq C
 \end{equation*}
car $\Phi$ est bornée dans $H_1(M)$. Donc, à extraction de sous-suite près, $\varphi_{q_i}^{q_i-1}$ converge faiblement vers $\varphi_N^{N-1}$ dans $L^{N/(N-1)}(M)$ (voir les théorèmes des espaces de Banach dans la section \ref{teb}) et par l'inégalité \eqref{mu lim}, on peut supposer que $\mu_{q_i,G}(g)$ converge vers $\mu$. 
Par conséquent on peut passer à la limite dans \eqref{fai}, on en déduit que $\varphi_N$ est solution faible de l'équation \eqref{eygi} pour $q=N$ et $\tilde h=\mu$. Montrons que $\varphi_N$ n'est pas identiquement nulle. Puisque $\varphi_q$ est $G-$invariante presque partout, on  peut  appliquer l'inégalité de la meilleure constante en présence de symétrie du théorème \ref{embHV} :
\begin{equation*}  
\forall\varepsilon>0\quad\|\varphi_{q_i}\|^2_N\leq(K^2(n,2)[\inf_{Q\in M}\Ca O_G(Q)]^{-2/n}+\varepsilon)\|\nabla\varphi_{q_i}\|^2_2+B(\varepsilon)\|\varphi_{q_i}\|^2_2
\end{equation*}
$\varphi_{q_i}\in \Phi$ et en utilisant l'inégalité de Hölder:
 $$\|\varphi_{q_i}\|^2_N\geq vol(M)^{2/N-2/q_i}\|\varphi_{q_i}\|^2_{q_i}=vol(M)^{2/N-2/q_i}$$
 on peut donc écrire que 
\begin{equation*} 
vol(M)^{2/N-2/q_i}\leq(K^2(n,2)[\inf_{Q\in M}\Ca O_G(Q)]^{-2/n}+\e)(\mu_{q_i,G}(g)-\int_M h\varphi_{q_i}^2\di v)+B(\varepsilon)\|\varphi_{q_i}\|^2_2
\end{equation*}
Quand $i\rightarrow +\infty$, $\mu_{q_i,G}(g)\rightarrow \mu$ et $vol(M)^{2/N-2/q_i}\rightarrow 1$ donc
\begin{gather*} 
1\leq(K^2(n,2)[\inf_{Q\in M}\Ca O_G(Q)]^{-2/n}+\e)(\mu-\int_M h\varphi_N^2\di v)+B(\varepsilon)\|\varphi_N\|^2_2
\end{gather*}
Comme  $\mu<\mu_{N,G}(g)<K^{-2}(n,2)(\inf_{Q\in M}\Ca O_G(Q))^{2/n}$, on peut même supposer qu'il existe $\e_0>0$ tel que 
$$(K^2(n,2)[\inf_{Q\in M}\Ca O_G(Q)]^{-2/n}+\e_0)\mu< 1-\e_0 $$
cela entraîne l'existence d'une constante $C(\e_0)>0$ telle que
$$ B(\e_0)\|\varphi_N\|^2_2+C(\e_0)\|h\|_p\|\varphi_N\|^2_{\frac{2p}{p-1}}\geq\e_0$$
alors $\varphi_N$ n'est pas identiquement nulle. On vient donc de montrer que $\varphi_N$ est une solution faible positive, non identiquement nulle et $G-$invariante presque partout de l'équation 
\begin{equation}\label{eygih}
\Delta_g \varphi_N+ h\varphi_N= \mu \varphi_N^{N-1}
\end{equation}
Par le théorème \ref{regYS}, $\varphi_N\in H^p_{2,G}(M)$ est strictement positive. Il reste à montrer que $\varphi_N$ est minimisante pour la fonctionnelle $I_{N,g}=I_g$ et que $\mu=\mu_N(g)$. On revient pour celà à la suite $(\varphi_{q_i})$ qui converge fortement vers $\varphi_N$ dans $L^s$ pour tout $1\leq s<N$. En utilisant l'inégalité de Hölder et le fait que $\|\varphi_{q_i}\|_{q_i}=1$, on a l'inégalité suivante:
\begin{equation*}
 \int_M\varphi_{q_i}^{N-1}\varphi_N\di v\leq \|\varphi_N\|_{q_i/(q_i-N+1)}
\end{equation*}
 En passant à la limite dans cette inégalité et grâce au fait que $\varphi_{q_i}\rightarrow \varphi_N$ fortement dans $L^{N-1}$ et que $\varphi_N$ est continue sur $M$ (i.e. $\varphi_N\in H^p_2(M)$), on en déduit que $\|\varphi_N\|_N\leq 1$. D'autre part, si on multiplie l'équation \eqref{eygih} par $\varphi_N$ et on intégre sur $M$, on trouve que 
$$\mu\|\varphi_N\|^{N-2}_N=I_{N,g}(\varphi_N)\geq \mu_{N,G}(g)$$
D'où $\mu\geq \mu_{N,G}(g)$. En combinant avec l'inégalité \eqref{mu lim}, on conclut que $\mu=\mu_{N,G}(g)$ et $\|\varphi_N\|_N=1$.
\end{proof}

\paragraph{Remarque} La méthode que l'on vient d'utiliser dans la preuve de ce théorème n'est  pas  valable  dans le cas où $\mu_{q,G}(g)\leq 0$, car l'opérateur $L=\Delta_g+h$ n'est plus inversible. On verra dans la section \ref{existence g invar} que si la fonction $h$ est proportionnelle à la courbure scalaire $R_g$ de $g$, alors on peut s'en tirer grâce au théorème d'unicité des solutions \ref{unique}.\\
Si on reprend la même démarche utilisée pour montrer le théorème \ref{cg} afin de démontrer le théorème \ref{theginv}, on montre qu'il existe $\varphi_N$  solution faible dans $H_{1,G}(M)$ de l'équation \eqref{eygi}. Plus précisément $\varphi_N$ est solution de l'équation \eqref{eyq}, pour tout $\psi \in H_{1,G}(M)$ et pour $q=N$. Pour  que $\varphi_N$ soit une solution  de l'équation \eqref{eyq}, pour tout $\psi \in H^2_{1}(M)$ et pour $q=N$, il suffit de montrer que l'équation $Lu=\mu_{N,G}(g)\varphi_N^{N-1}$ admet une unique solution faible $u=\tilde\varphi_N\in H_{1,G}(M)$ puis utiliser le même argument que celui de la fin de la preuve de la proposition \ref{eysc} (voir page \pageref{argument}). Malheureusement, on ne peut pas conclure qu'il existe une telle solution $\tilde\varphi_N$, car la proposition \ref{delta fu} assure l'existence d'une telle fonction, si $f\in L^q(M)$ avec $q>2n/(n+2)$, or $\varphi_N^{N-1}\in L^{2n/(n+2)}(M)$.\\
Dans le cas positif (i.e. $\mu(g)>0$), le théorème \ref{cg} est une conséquence du théorème \ref{theginv}, en prenant $G=\{\mathrm{ id }\}$.

\begin{proposition}\label{ine largee}
 Soit $(M,g)$ une variété riemannienne compacte $C^\infty$. $G$ un sous groupe de $I(M,g)$. On a toujours:
$$\mu_G(g)\leq K^{-2}(n,2)(\inf_{Q\in M}\Ca O_G(Q))^{2/n}$$ 
\end{proposition}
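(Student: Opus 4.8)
The plan is to construct an explicit $G$-invariant test function concentrating on a minimal (finite) orbit and to compute its Yamabe quotient, in the spirit of Proposition~\ref{inegalite large}. First I would dispose of the trivial case: if every orbit of $G$ is infinite, then $\inf_{Q\in M}\Ca O_G(Q)=+\infty$ and there is nothing to prove, because $\mu_G(g)\in\R$. Indeed $H_{1,G}(M)$ is nonzero (it contains the constants), and for every $\psi\in H_{1,G}(M)-\{0\}$ H\"older's inequality gives $-\|h\|_{n/2}\le I_g(\psi)<+\infty$. So from now on I would assume that $m:=\inf_{Q\in M}\Ca O_G(Q)$ is finite; being a finite infimum of positive integers it is attained, hence there is a point whose orbit is a set $\{Q_1,\dots,Q_m\}$ of $m$ distinct points.

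Next I would fix $\delta>0$ smaller than the injectivity radius of $M$ and than $\tfrac12\min_{i\ne j}d(Q_i,Q_j)$, so that the geodesic balls $B_{Q_i}(\delta)$ have pairwise disjoint closures, and set $\tilde u_\e=\sum_{i=1}^m u_\e^{(i)}$, where $u_\e^{(i)}$ is the standard bubble of Proposition~\ref{inegalite large} recentered at $Q_i$ (thus $u_\e^{(i)}$ depends only on $r_i=d(\cdot,Q_i)$ and is supported in $\overline{B_{Q_i}(\delta)}$). The function $\tilde u_\e$ is nonnegative, Lipschitz with compact support, hence lies in $H_1(M)$, and is nonzero for $\e$ small. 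The one point needing a small argument is its $G$-invariance: every $\sigma\in G$ is an isometry, hence permutes the orbit, say $\sigma(Q_i)=Q_{\pi(i)}$; since $d(\sigma Q,Q_i)=d(Q,\sigma^{-1}Q_i)$ one gets $u_\e^{(i)}\circ\sigma=u_\e^{(\pi^{-1}(i))}$, and summing over $i$ yields $\sigma^*\tilde u_\e=\tilde u_\e$. Therefore $\tilde u_\e\in H_{1,G}(M)-\{0\}$ and may be used in the variational characterization of $\mu_G(g)$.

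Finally I would compute $I_g(\tilde u_\e)$. Because the supports are pairwise disjoint, $E(\tilde u_\e)=\sum_{i=1}^m\bigl(\int_M|\nabla u_\e^{(i)}|^2\,\di v+\int_M h(u_\e^{(i)})^2\,\di v\bigr)$ and $\|\tilde u_\e\|_N^N=\sum_{i=1}^m\|u_\e^{(i)}\|_N^N$, and each summand is estimated exactly as in the proof of Proposition~\ref{inegalite large}: by \eqref{nabla u}, $\int_M|\nabla u_\e^{(i)}|^2\,\di v\to A_1$; by \eqref{hu2} (here $p>n/2$ is used), $\int_M h(u_\e^{(i)})^2\,\di v\to 0$; and by \eqref{uN}, $\|u_\e^{(i)}\|_N^N\to A_2$, where $A_1,A_2$ denote the euclidean constants of \eqref{inte}. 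Hence
\[ I_g(\tilde u_\e)=\frac{mA_1+o(1)}{\bigl(mA_2+o(1)\bigr)^{2/N}}\longrightarrow m^{1-2/N}\,\frac{A_1}{A_2^{2/N}}=m^{2/n}\,K^{-2}(n,2)\qquad(\e\to 0), \]
where I used $2/N=(n-2)/n$ (so $1-2/N=2/n$) together with \eqref{inte} rewritten as $A_1A_2^{-(n-2)/n}=K^{-2}(n,2)$. Letting $\e\to0$ then gives $\mu_G(g)\le K^{-2}(n,2)\,m^{2/n}=K^{-2}(n,2)\bigl(\inf_{Q\in M}\Ca O_G(Q)\bigr)^{2/n}$, which is the claim. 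I do not expect a serious obstacle here: the only things to watch are the verification of $G$-invariance and the exponent bookkeeping — the factor $m^{2/n}$ coming from the mismatch between the $m$ multiplying $A_1$ in the numerator and the $m^{2/N}$ in the denominator — while all metric-correction estimates are already carried out in Proposition~\ref{inegalite large}.
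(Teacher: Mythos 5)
Your proof is correct and follows essentially the same route as the paper: both arguments sum radial bubbles over a minimal finite orbit with pairwise disjoint supports, check $G$-invariance via the permutation action of isometries on the orbit, and extract the factor $(\Ca O_G(P))^{2/n}$ from the mismatch between the numerator scaling in $m$ and the denominator scaling in $m^{2/N}$, reducing everything to Proposition~\ref{inegalite large}.
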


\begin{proof}
L'inégalité est triviale si $\inf_{Q\in M}\Ca O_G(Q)=+\infty$. Supposons qu'il existe une orbite minimale finie et soit $P$ un point de cette orbite. Autrement dit 
$$\inf_{Q\in M}\Ca O_G(Q)=\Ca O_G(P)<+\infty$$ 
$O_G(P)=\{P_i\}_{1\leq i\leq k}$, $P=P_1$ et $k=\Ca O_G(P)$. Soit $u_\e$ la fonction définie dans la preuve de la proposition \ref{inegalite large}, que l'on note $u_{\e,P}$ car elle dépend du point $P$ qu'on avait fixé arbitrairement. Soit donc $u_{\e,P_i}$ les fonctions obtenues en remplaçant $P$ par $P_i$ dans l'expression qui définit $u_{\e, P}$. Enfin, on pose
$$U_\e=\sum_{i=1}^k u_{\e,P_i}$$
D'autre part on choisit $\delta$ suffisamment petit tel que pour tout $\sigma\in G-\{\mathrm{id}\}$ 
$$B_P(\delta)\cap B_{\sigma(P)}(\delta)=\emptyset$$ 
Puisque $u_{\e,P_i}$ est radiale (i.e. pour tout $\sigma\in I(M,g)$, $\sigma^*u_{\e,P_i}=u_{\e,\sigma^{-1}(P_i)}$), on en déduit par cette construction que la fonction $U_\e$ est $G-$invariante, à support compact et que  pour tout $1\leq i\leq k$:
$$E(U_{\e})=\sum_{i=1}^kE(u_{\e,P_i})=kE(u_{\e,P})\text{ et }\|U_{\e}\|^N_N=k\|u_{\e,P_i}\|^N_N$$ 
Finalement 
$$I_g(U_\e)=k^{2/n}I_g(u_{\e,P})$$
La proposition \ref{inegalite large} montre que $\lim_{\e\to 0}I_g(U_\e)=k^{2/n}K^{-2}(n,2)$
\end{proof}

\chapter{Le problème de Yamabe avec singularités}\label{CHYS}

Dans ce chapitre on interprétera géométriquement les résultats obtenus dans le chapitre~\ref{c}. On donnera une signification géométrique aux équations de type Yamabe qu'on a déjà résolues. On commence par un rappel historique sur le problème de Yamabe.

\section{Le problème de Yamabe}\label{yampro}

Soit $(M,g)$ une variété riemannienne compacte $C^\infty$ de dimension $n\geq 3$, $R_g$ désigne la courbure scalaire de $g$. Le problème de Yamabe est le suivant:\\

\begin{problem}
Parmi les métriques conformes à $g$, existe-t-il une métrique à courbure scalaire constante? 
\end{problem}

Yamabe \cite{Yam} avait posé ce problème dans le but de résoudre la conjecture de Poincaré. Si on pose $\tilde g=\varphi^{4/(n-2)}g$ une métrique conforme à $g$, où $\varphi>0$ est une fonction $C^\infty$, alors les courbures scalaires $R_g$, $R_{\gt}$ sont reliées par l'équation suivante: 
\begin{equation}\label{yamabe}
\frac{4(n-1)}{n-2}\Delta_g \varphi+ R_g\varphi=  R_{\gt} \varphi^{N-1}
\end{equation}
avec $N=\Nn$.\\ 
Pour résoudre ce problème, il suffit de chercher une fonction $C^\infty$, strictement positive $\varphi$ solution de l'équation aux dérivées partielles non linéaire ci-dessus. L'équation \eqref{yamabe} est appelée l'équation de Yamabe. On utilise la méthode variationnelle pour résoudre cette équation. H.~Yamabe a posé la fonctionnelle suivante, définie pour tout $\psi \in H_1(M)-\{0\}$ par
\begin{equation}\label{fonctionnel}
I_g(\psi)=\frac{E(\psi)}{\|\psi\|_N^2}=\frac{\displaystyle\int_M |\nabla\psi|^2+
\frac{n-2}{4(n-1)}R_g\psi^2\di v}{\|\psi\|_N^2}
\end{equation}
ensuite, il a considéré le minimum de $I_g$ et a défini l'invariant conforme suivant:
$$\mu(g)=\inf_{\psi\in H_1(M)-\{0\}}I_g(\psi)$$
La difficulté majeure dans la recherche des solutions est le fait que l'inclusion de Sobolev $H_1(M)\subset L^q(M)$ est seulement continue pour $q=N$. Par contre cette inclusion est compacte si $1\leq q< N$. Yamabe a donc commencé par résoudre une "sous-équation":

\begin{equation}\label{EY}
\frac{4(n-1)}{n-2}\Delta_g \varphi+ R_g\varphi=\mu_q(g)\varphi^{q-1}
\end{equation}
où $q\in[2,N[,\; N=2n/(n-2)$ et $\mu_q(g)\in\mathbb R$, ensuite a fait tendre $q$ vers $N$. H.~Yamabe a affirmé que l'ensemble $\{\varphi_q>0\text{ solution de } \eqref{EY}, q\in [2,2n/(n-2)[\}$ est uniformément borné dans $C^0(M)$. Or N.~Trudinger \cite{Trud} a montré que c'est seulement vrai lorsque $\mu_q(g)\leq~0$. Finalement, H.~Yamabe a seulement réussi à résoudre le problème dans le cas négatif et nul de $\mu(g)$. Le cas positif est resté ouvert jusqu'à ce que T.~Aubin \cite{Aub} montre qu'il suffit de prouver la conjecture suivante pour  résoudre le problème dans tout les cas. 
\begin{conjecture}[T.~Aubin \cite{Aub}]\label{Aubincon}
Si $(M,g)$ est une variété riemannienne compacte $C^\infty$ de dimension $n$ et non conformément difféomorphe à $(S_n,g_{can})$ alors 
 \begin{equation}\label{cau}
 \mu(M,g)<\mu(S_n,g_{can})
\end{equation}
où $\mu(M,g)=\inf\{I_g(\psi),\; \psi\in H_1(M)-\{0\}\}$
\end{conjecture}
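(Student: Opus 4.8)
Il s'agit d'un r\'esultat de T.~Aubin \cite{Aub} et de R.~Schoen \cite{Schoen}; le plan est de ramener, gr\^ace \`a l'invariance conforme de $\mu(M,g)$, la preuve de l'in\'egalit\'e stricte \`a la construction d'une fonction test $\varphi\in H_1(M)-\{0\}$ telle que $I_g(\varphi)<K^{-2}(n,2)=\mu(S_n,g_{can})$: l'in\'egalit\'e $\mu(M,g)<\mu(S_n,g_{can})$ en d\'ecoule aussit\^ot, $\mu(g)$ \'etant l'infimum de $I_g$. On peut de plus supposer $\mu(M,g)>0$, faute de quoi l'in\'egalit\'e est triviale puisque $\mu(S_n,g_{can})>0$; le Laplacien conforme $L_g=\frac{4(n-1)}{n-2}\Delta_g+R_g$ est alors inversible et admet une fonction de Green strictement positive $G_P$. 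Enfin, on se place au voisinage d'un point $P$ (pr\'ecis\'e plus bas) en coordonn\'ees conformes normales.

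La fonction test $\varphi_\e$ est fabriqu\'ee \`a partir des fonctions radiales $u_\e$ concentr\'ees en $P$ de la proposition \ref{inegalite large}, en les multipliant par une correction polynomiale de bas degr\'e en les coordonn\'ees locales, choisie pour annuler les termes de plus bas ordre du d\'eveloppement. Un calcul long mais d\'esormais classique (utilisant notamment les identit\'es de courbure de la section \ref{curvature}) des quantit\'es $\int_M|\nabla\varphi_\e|^2\di v$, $\int_M R_g\varphi_\e^2\di v$ et $\|\varphi_\e\|_N^2$ quand $\e\to 0$ conduit \`a un d\'eveloppement de la forme
$$I_g(\varphi_\e)=K^{-2}(n,2)\bigl(1-c_n\,\Theta(P)\,\e^{m}+o(\e^{m})\bigr),\qquad c_n>0,$$
o\`u la quantit\'e g\'eom\'etrique $\Theta(P)\geq 0$ et l'exposant $m>0$ d\'ependent du cas.

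Dans le cas d'Aubin ($n\geq 6$ et $(M,g)$ non localement conform\'ement plate), on choisit $P$ tel que le tenseur de Weyl $W_g(P)\neq 0$ (cf. d\'efinition \ref{Weyl def}), ce qui est possible puisque $W_g\not\equiv 0$; le d\'eveloppement pr\'ec\'edent vaut avec $\Theta(P)=|W_g(P)|^2$ et $m=4$ (\`a un facteur logarithmique pr\`es si $n=6$), d'o\`u $I_g(\varphi_\e)<K^{-2}(n,2)$ d\`es que $\e$ est assez petit. Dans les cas restants ($n\in\{3,4,5\}$, ou $(M,g)$ localement conform\'ement plate), le terme de Weyl ne suffit plus et l'on fait intervenir $G_P$, dont le d\'eveloppement en coordonn\'ees conformes normales est $G_P(x)=\frac{1}{(n-2)\omega_{n-1}}\bigl(r^{2-n}+A+o(1)\bigr)$, le terme constant $A$ \'etant la \emph{masse}. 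Le th\'eor\`eme de la masse positive de Schoen--Yau assure $A>0$ d\`es que $(M,g)$ n'est pas conform\'ement diff\'eomorphe \`a $(S_n,g_{can})$. En recollant $u_\e$ pr\`es de $P$ \`a un multiple de $G_P$ loin de $P$, le d\'eveloppement ci-dessus vaut avec $\Theta(P)=A>0$ et $m=n-2$, d'o\`u encore $I_g(\varphi_\e)<K^{-2}(n,2)$.

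La difficult\'e principale est double: d'une part la conduite effective des d\'eveloppements asymptotiques, et en particulier l'identification du coefficient de $\e^4$ dans le cas d'Aubin, qui r\'eclame un choix minutieux de la correction polynomiale et un emploi r\'ep\'et\'e des formules de permutation des d\'eriv\'ees covariantes; d'autre part, et c'est de loin le point le plus profond, le th\'eor\`eme de la masse positive, dont d\'ependent enti\`erement le cas conform\'ement plat et les dimensions $3,4,5$. Ces deux ingr\'edients admis, on r\'eunit les cas ci-dessus: dans toute situation o\`u $(M,g)$ n'est pas conform\'ement diff\'eomorphe \`a $(S_n,g_{can})$, on obtient $\mu(M,g)<K^{-2}(n,2)=\mu(S_n,g_{can})$, et le th\'eor\`eme \ref{cg} appliqu\'e \`a $h=\frac{n-2}{4(n-1)}R_g$ fournit alors une solution strictement positive de l'\'equation de Yamabe \eqref{yamabe}, ce qui r\'esout le probl\`eme de Yamabe.
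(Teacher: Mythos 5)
Votre plan co\"incide essentiellement avec celui suivi dans le texte pour le th\'eor\`eme \ref{conj aub} : r\'eduction \`a la construction d'une fonction test v\'erifiant $I_g(\varphi_\e)<K^{-2}(n,2)$, cas d'Aubin ($n\geq 6$, non conform\'ement plat) via le d\'eficit en $|W_g(P)|^2\e^4$ du lemme \ref{lemme aub yam}, puis cas restants via le recollement de $u_\e$ \`a la fonction de Green normalis\'ee et la positivit\'e de la masse $A$ (th\'eor\`emes \ref{msp1} et \ref{aubm}). La d\'emarche est correcte et identique \`a celle du texte.
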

Dans la suite, on écrira $\mu(g)$ en place de $\mu(M,g)$.\\
T.~Aubin a montré que cette inégalité est vraie pour les variétés de dimension $n\geq 6$, non conformément plates et pour les variétés conformément plates de groupe fondamental fini, non trivial. Le cas des variétés conformément plates et des dimensions 3,4 et 5 a été résolu par Schoen \cite{Schoen}, en admettant le théorème de la masse positive. Finalement, la conjecture ci-dessus est toujours vraie. Grâce essentiellement aux travaux de Yamabe \cite{Yam}, T.~Aubin~\cite{Aub} et Schoen~\cite{Schoen}, le problème de Yamabe est complètement résolu dans le cas des variétés riemanniennes compactes $C^\infty$ (voir aussi \cite{Bah},\cite{BB}, \cite{BC}  pour résolution avec une méthode topologique).

\begin{theorem}[Aubin--Schoen]
 Soit $M$ une variétés compacte $C^\infty$, de dimension $n\geq~3$. pour toute métrique riemannienne $g$ de classe $C^\infty$, il existe une métrique conforme $\gt=\varphi^{4/(n-2)}g$ de courbure scalaire constante $R_{\gt}$, où $\varphi$ est une fonction $C^\infty$, strictement positive, qui minimise la fonctionnelle de Yamabe $I_g$.
\end{theorem}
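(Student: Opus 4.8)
The plan is to combine the variational existence theorem of this chapter with Aubin's strict inequality, splitting into two cases according to whether $(M,g)$ is conformally diffeomorphic to the round sphere. The starting point is the identity $K^{-2}(n,2)=\frac14 n(n-2)\omega_n^{2/n}=\mu(S_n,g_{can})$, which is precisely the threshold appearing in Theorem \ref{cg} and in Proposition \ref{inegalite large}.

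\textbf{Case 1: $(M,g)$ is conformally diffeomorphic to $(S_n,g_{can})$.} If $\phi\colon M\to S_n$ is a conformal diffeomorphism, then $\phi^*g_{can}=\varphi^{\N}g$ for some smooth positive $\varphi$, and $\phi^*g_{can}$ is a smooth Riemannian metric, conformal to $g$, with constant scalar curvature $n(n-1)$. Since $\mu(g)$ depends only on the conformal class (conformal invariance of $\mu$) and $\mu(S_n,g_{can})$ is attained, this $\varphi$ minimizes $I_g$, and we are done in this case.

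\textbf{Case 2: $(M,g)$ is not conformally diffeomorphic to $(S_n,g_{can})$.} Then Conjecture \ref{Aubincon}, established by the cited works of Aubin and Schoen, gives the strict inequality $\mu(g)<\mu(S_n,g_{can})=K^{-2}(n,2)$. Taking $h=\frac{n-2}{4(n-1)}R_g$, which is smooth and hence lies in $L^p(M)$ for every $p>n$, equation \eqref{AF} becomes exactly the Yamabe equation \eqref{yamabe}; Theorem \ref{cg} (applied with any fixed $p>n$) therefore yields a strictly positive $\varphi\in H^p_2(M)\subset C^{1,\beta}(M)$ minimizing $I_g$, with $\|\varphi\|_N=1$ and $E(\varphi)=\mu(g)$, so that $\varphi$ solves \eqref{yamabe} with $R_{\gt}=\mu(g)$ constant. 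To promote $\varphi$ to $C^\infty$, note that since $M$ is compact and $\varphi$ is continuous and strictly positive it takes values in a compact subinterval of $(0,\infty)$, on a neighbourhood of which $t\mapsto t^{(n+2)/(n-2)}$ is $C^\infty$. Rewriting \eqref{yamabe} as $\Delta_g\varphi=\frac{n-2}{4(n-1)}\bigl(R_{\gt}\varphi^{(n+2)/(n-2)}-R_g\varphi\bigr)$, the operator $\Delta_g$ has smooth coefficients in local coordinates, so Theorem \ref{reg}$(i)$ gives the bootstrap: if $\varphi\in C^{k,\beta}$ then the right-hand side is $C^{k,\beta}$ and hence $\varphi\in C^{k+2,\beta}$; iterating from $\varphi\in C^{1,\beta}$ yields $\varphi\in C^\infty(M)$. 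Thus $\gt=\varphi^{\N}g$ is a smooth metric conformal to $g$ of constant scalar curvature, and $\varphi$ minimizes $I_g$.

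\textbf{Main obstacle.} Granting Conjecture \ref{Aubincon}, the argument is essentially bookkeeping: a case split, one application of Theorem \ref{cg}, and an elliptic bootstrap. The genuine difficulty — the core of the Yamabe problem — is the strict inequality $\mu(g)<\mu(S_n,g_{can})$ itself: for $n\ge 6$ and non–locally–conformally–flat metrics it is obtained from Aubin's local test functions, expanding $I_g(u_\e)$ to the order at which the Weyl tensor first contributes; in the remaining cases (locally conformally flat manifolds and $n\in\{3,4,5\}$) it is Schoen's theorem, built from the Green function of $L_g$ and the positive mass theorem. Here this is invoked as a cited input rather than reproved.
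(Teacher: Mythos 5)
Your argument is correct and follows exactly the route the paper itself takes for the analogous singular statement (Theorem \ref{inegg}): split off the case of the round sphere, invoke the strict inequality $\mu(g)<K^{-2}(n,2)$ of Conjecture \ref{Aubincon} (Theorem \ref{conj aub}), apply the variational existence result (Theorem \ref{cg}), and note that the paper states the classical smooth theorem without proof, so the only genuinely new ingredient you supply is the Schauder bootstrap from $C^{1,\beta}$ to $C^\infty$, which is carried out correctly since $\varphi$ is bounded away from $0$ and the coefficients of $\Delta_g$ are smooth. No gaps.
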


On s'intéresse maintenant au problème de Yamabe avec singularités.

\section{Choix de la métrique}\label{hypG}

Soit $M$ une variété compacte $C^\infty$ de dimension $n\geq 3$ et $g$ une métrique riemannienne sur $M$.\\

\textbf{Hypothèse $\boldsymbol{(H)}$:} \emph{$g$ est une métrique dans l'espace de Sobolev $H_2^p(M,T^*M\otimes T^*M)$ avec $p>n$. Il existe un point $P_0\in M$ et $\delta>0$ tels que $g$ est $C^\infty$ sur la boule $B_{P_0}(\delta)$.}\\

Les métriques que l'on considère sont dans l'espace $H_2^p(M,T^*M\otimes T^*M)$, défini dans la section \ref{sobddd}. On a choisi cet espace de métriques pour donner un sens aux courbures, qui sont donc dans $L^p$. (On peut supposer $g$ de classe $C^2$ dans la boule $B_{P_0}(\delta)$ au lieu de $C^\infty$, mais ce n'est pas un point important).\\ 
En fait, l'objectif de cette partie est surtout d'étudier le problème de Yamabe dans le cas où la métrique $g$ a un nombre fini de points de singularités et est $C^\infty$ en dehors de ces points, l'hypothèse $(H)$ généralise ces conditions et précise la notion de "singularité".\\
Par les inclusions de Sobolev \ref{incsob}, $H_2^p(M,T^*M\otimes T^*M)\subset C^{1,\beta}(M,T^*M\otimes T^*M)$, pour un certain $\beta\in ]0,1[$. Donc les métriques qui satisfont l'hypothèse $(H)$ sont  de classe $C^{1,\beta}$. Les Christoffels sont dans $ C^\beta$ et les courbures de Riemann, Ricci et scalaire sont dans $L^p$ car elles font appel à la dérivée seconde de la métrique $g$ qui est seulement dans $L^p$. Comme exemple de métrique qui satisfait l'hypothèse $(H)$, on peut considérer $g=(1+\rho^{2-\alpha})^m g_0$, où $g_0$ est une métrique $C^\infty$, $\alpha\in]0,1[$ et $\rho$ est définie dans \ref{distance}. Les dérivées secondes de $g$ ont alors des singularités du type $\rho^{-\alpha}$. \\
Dans la suite, beaucoup de résultats seront vrais pour toute métrique dans $H^p_2(M,T^*M\otimes T^*M)$, avec  $p>n/2$ (c'est la valeur minimale de $p$ qui donne un sens à la fonctionnelle de Yamabe. Le cas $p=n/2$ est un cas critique, il est hors de considération). L'hypothèse $(H)$ impose en plus que la métrique est $C^\infty$ dans une certaine boule et que $p>n$. On rajoute la condition $p>n$ pour que les Christoffels de la métrique $g\in H^p_2(M,T^*M\otimes T^*M) $ soient continus. L'hypothèse $(H)$ est suffisante pour montrer la conjecture \ref{Aubincon} (cf. théorème \ref{conj aub}) et pour construire la fonction de Green du Laplacien conforme (cf. section \ref{glc}).\\
On considère le problème suivant:
\begin{problem}\label{yam sing}
 Soit $g$ une métrique qui satisfait l'hypothèse $(H)$. Existe-t-il une métrique $\tilde g$ conforme à $g$ pour laquelle la courbure scalaire $R_{\gt}$ est constante (même aux points où $R_g$ n'est pas régulière)?
\end{problem}

Il est clair que si la métrique initiale $g$ est de classe $C^\infty$, alors le problème ci-dessus n'est autre que le problème de Yamabe \ref{yampro} qui a été  déjà complètement résolu. On montrera plus loin que la réponse à ce problème est positive. La proposition suivante, permet de préciser ce que l'on entend par changement de  métrique conforme lorsque les métriques sont dans $H^p_2$.
\begin{proposition}
Soit $g$ une métrique dans $H^p_2$ et $\psi\in H^p_2(M)$, strictement positive. Si $p>n/2$ alors la métrique $\gt=\psi^{\frac{4}{n-2}}g$ est bien définie, et elle est dans le même espace que $g$.
\end{proposition}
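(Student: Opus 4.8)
The plan is to reduce everything to the single assertion that $\psi^{\frac{4}{n-2}}\in H^p_2(M)$. Once this is known, the conclusion follows at once: in any exponential chart the components $g_{ij}$ of $g$ belong to $H^p_2$ (Definition \ref{g sobolev}), and since $p>n/2$ the space $H^p_2(M)$ is an algebra (Proposition \ref{algebre}), so the components $\gt_{ij}=\psi^{\frac{4}{n-2}}g_{ij}$ of $\gt$ again belong to $H^p_2$; hence $\gt\in H^p_2(M,T^*M\otimes T^*M)$, i.e. $\gt$ lies in the same space as $g$. That $\gt$ is an honest Riemannian metric (symmetric and positive definite) is immediate from $\psi>0$, and $\psi^{\frac{4}{n-2}}$ makes pointwise sense for the same reason. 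For the chartwise use of the algebra property one may, to be fully scrupulous, multiply by a cutoff equal to $1$ on a slightly smaller chart, or simply repeat the Leibniz estimate of Proposition \ref{algebre} locally.

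It remains to prove $\psi^a\in H^p_2(M)$ with $a:=\frac{4}{n-2}>0$. Since $p>n/2$, the Sobolev embedding theorem (Theorem \ref{incsob}) gives $\psi\in C^{0,\beta}(M)$ for some $\beta\in\,]0,1[$; in particular $\psi$ is continuous, and as $M$ is compact and $\psi$ is strictly positive there are constants $0<c\le C$ with $c\le\psi\le C$ on $M$. Let $F(t)=t^a$, which is $C^\infty$ on the neighbourhood $[c/2,2C]$ of the range of $\psi$, so that $F,F',F''$ are bounded there. Approximating $\psi$ by $\psi_k\in C^\infty(M)$ with $\psi_k\to\psi$ in $H^p_2(M)$ — hence also uniformly, by the continuous embedding $H^p_2(M)\subset C^0(M)$, so that $\psi_k$ takes values in $[c/2,2C]$ for $k$ large — one passes to the limit in the chain-rule identities
\[
\nabla(\psi_k^a)=a\psi_k^{a-1}\nabla\psi_k,\qquad
\nabla^2(\psi_k^a)=a(a-1)\psi_k^{a-2}\,\nabla\psi_k\otimes\nabla\psi_k+a\psi_k^{a-1}\nabla^2\psi_k ,
\]
obtaining the same formulas for $\psi^a$, where $\nabla\psi,\nabla^2\psi$ denote the weak derivatives of $\psi$ (which lie in $L^p(M)$ since $\psi\in H^p_2(M)$).

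Finally one checks that each term is in $L^p(M)$. The factors $\psi^{a-1}$ and $\psi^{a-2}$ are bounded, being continuous functions of $\psi$ with $\psi$ bounded away from $0$ and $\infty$; hence $a\psi^{a-1}\nabla\psi\in L^p(M)$ because $\nabla\psi\in L^p(M)$, so $\nabla(\psi^a)\in L^p(M)$, and likewise $a\psi^{a-1}\nabla^2\psi\in L^p(M)$. For the remaining Hessian term one argues exactly as in the proof of Proposition \ref{algebre}: $\nabla\psi\in H^p_1(M)$ and, because $p\ge n/2$, the Sobolev embedding $H^p_1(M)\subset L^{2p}(M)$ holds, whence $\||\nabla\psi|^2\|_p=\|\nabla\psi\|_{2p}^2<\infty$ and therefore $a(a-1)\psi^{a-2}\nabla\psi\otimes\nabla\psi\in L^p(M)$. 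Thus $\nabla^2(\psi^a)\in L^p(M)$, so $\psi^a\in H^p_2(M)$, which finishes the proof.

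The only genuinely delicate point is that $\frac{4}{n-2}$ is not an integer for $n\ge 5$ (and is even $<1$ for $n\ge 7$), so the algebra property of $H^p_2(M)$ cannot be applied directly to $\psi^{\frac{4}{n-2}}$; this is precisely where one must use the strict positivity of $\psi$ together with the continuity coming from $p>n/2$, in order to work inside a region where $t\mapsto t^a$ is smooth with bounded first and second derivatives. Everything else is the same Leibniz-type bookkeeping already carried out for Proposition \ref{algebre}.
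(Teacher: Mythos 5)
Your proof is correct, and it is in fact more complete than the one in the paper. The paper disposes of this proposition in one line, asserting that it ``d\'ecoule du fait que $H^p_2(M)$ est une alg\`ebre'' (Proposition \ref{algebre}); but, as you rightly point out, the algebra property by itself only handles integer powers of $\psi$, and $\frac{4}{n-2}$ is an integer only for $n\in\{3,4,6\}$. The missing ingredient is precisely the composition step you supply: since $p>n/2$ forces $\psi\in C^{0,\beta}(M)$, compactness and strict positivity confine the range of $\psi$ to a compact interval $[c,C]\subset\,]0,+\infty[$ on which $t\mapsto t^{4/(n-2)}$ is smooth with bounded derivatives, and the chain-rule identities for $\nabla(\psi^a)$ and $\nabla^2(\psi^a)$ then put $\psi^a$ in $H^p_2(M)$, the quadratic gradient term being controlled exactly as in Proposition \ref{algebre} via $H^p_1(M)\subset L^{2p}(M)$ (which uses $p\ge n/2$). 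After that, the product $\psi^{4/(n-2)}g_{ij}$ is handled by the algebra property just as the paper intends. So the two arguments share the same final step, but yours makes explicit (and justifies) the composition lemma that the paper's one-line proof tacitly assumes; this is a genuine improvement rather than a mere variant.
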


\begin{proof}
 Cette proposition découle du fait que $H^p_2(M)$ est une algèbre, pour tout $p>n/2$ (cf. proposition \ref{algebre}, page \pageref{algebre}).
\end{proof}

\section{Le Laplacien conforme}

\begin{definition}Le Laplacien conforme d'une variété riemannienne $(M,g)$ est l'opérateur $L_g$ défini par :  
$$L_g=\Delta_g+\an R_g$$ 
\end{definition}

\subsection{L'invariance conforme faible }

Il est bien connu que le Laplacien conforme lorsque $g$ est $C^\infty$, est conformément invariant, c'est à dire qu'il vérifie \eqref{laplinvfai} fortement. On montre qu'on a toujours la même propriété lorsque la métrique est dans $H^p_2(M,T^*M\otimes T^*M)$.
\begin{proposition}\label{invcon}
Soient  $M$  une variété compacte $C^{\infty}$ de dimension $n\geq 3$ et $g\in H_2^p(M,T^*M\otimes T^*M)$ est une métrique riemannienne sur $M$, avec $p>n/2$. 
Si $\gt=\psi^{\frac{4}{n-2}}g$ est une métrique conforme à $g$, avec $\psi\in H_2^p(M)$ et $\psi>0$, alors $L$ est faiblement conformément invariant, autrement dit
\begin{equation}\label{laplinvfai}
 \forall u\in H_1(M)\qquad \psi^{\frac{n+2}{n-2}}L_{\gt}(u)=L_g(\psi u)\quad faiblement
\end{equation}
De plus si $\mu(g)>0$ alors le Laplacien conforme $L_g=\Delta_g+\frac{n-2}{4(n-1)}R_g$ est inversible et coercif.
\end{proposition}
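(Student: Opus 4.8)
The plan is to read \eqref{laplinvfai} in the variational sense: for $u\in H_1(M)$ the distribution $\psi^{(n+2)/(n-2)}L_{\tilde g}(u)$ is the element of $H_1(M)^{*}$ sending $w$ to $(L_{\tilde g}u,w)_{L^2(\tilde g)}$, and \eqref{laplinvfai} is equivalent to the symmetric identity $(L_{\tilde g}u,w)_{L^2(\tilde g)}=(L_g(\psi u),\psi w)_{L^2(g)}$ for all $u,w\in H_1(M)$, that is, to
\begin{align*}
&\int_M\langle\nabla_{\tilde g}u,\nabla_{\tilde g}w\rangle_{\tilde g}\,\di v_{\tilde g}+\an\int_M R_{\tilde g}\,uw\,\di v_{\tilde g}\\
&\qquad=\int_M\langle\nabla_g(\psi u),\nabla_g(\psi w)\rangle_g\,\di v_g+\an\int_M R_g\,(\psi u)(\psi w)\,\di v_g .
\end{align*}
Each term is meaningful because $H^p_2(M)$ is an algebra (Proposition \ref{algebre}): $\psi u,\psi w\in H_1(M)$, $\tilde g\in H^p_2$, $R_{\tilde g}\in L^p$, and since $p>n/2$ there is a continuous embedding $H_1(M)\subset L^{2p/(p-1)}(M)$ with $2p/(p-1)<\frac{2n}{n-2}$, so the two curvature integrals are bounded bilinear forms on $H_1(M)\times H_1(M)$. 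I would first prove the identity for $u,w\in C^\infty(M)$ and then extend by density.

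For smooth $u,w$, the approach is approximation in the data: choose smooth metrics $g_i\to g$ in $H^p_2(M,T^*M\otimes T^*M)$ and smooth positive functions $\psi_i\to\psi$ in $H^p_2(M)$; since $H^p_2\subset C^0(M)$ we may assume $\psi_i\geq\frac12\inf_M\psi>0$ for $i$ large. Put $\tilde g_i=\psi_i^{4/(n-2)}g_i$, a smooth metric with $\tilde g_i\to\tilde g$ in $H^p_2$. For smooth data the classical pointwise law $\psi_i^{(n+2)/(n-2)}L_{\tilde g_i}(u)=L_{g_i}(\psi_i u)$ holds, hence so does its integrated form tested against $w$, and it remains to pass to the limit termwise. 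The coefficients $g_i^{-1}$, $\sqrt{\det g_i}$, $\tilde g_i^{-1}$, $\sqrt{\det\tilde g_i}$ converge uniformly on $M$, so the gradient integrals and the volume elements converge; the delicate point — and the main obstacle — is that $R_{g_i}\to R_g$ and $R_{\tilde g_i}\to R_{\tilde g}$ in $L^p(M)$. This holds because the scalar curvature is a universal expression polynomial in the metric, its inverse, and its first and second derivatives, the dangerous monomials being quadratic in $\partial g$; since $g_i\to g$ in $H^p_2$ with $p>n/2$ we have $\partial g_i\to\partial g$ in $L^{2p}$ (Sobolev) and $\partial^2 g_i\to\partial^2 g$ in $L^p$, so each monomial converges in $L^p$ by Cauchy--Schwarz, the remaining factors being uniformly bounded in $C^0$. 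As $u,w$ are bounded and $\di v_{\tilde g_i}\to\di v_{\tilde g}$ uniformly, the curvature integrals converge, giving the identity for smooth $u,w$; by the boundedness noted above it then holds for all $u,w\in H_1(M)$, which is \eqref{laplinvfai}.

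For the second assertion, I would work with $h=\an R_g\in L^p(M)$, $p>n/2$, so that Proposition \ref{delta fu} applies; it then suffices to show that the smallest eigenvalue $\lambda$ of $L_g=\Delta_g+h$ is positive whenever $\mu(g)>0$. Let $\psi$ be an associated eigenfunction; since $E(|\psi|)=E(\psi)$ and $\||\psi|\|_2=\|\psi\|_2$ we may take $\psi\geq0$, $\psi\not\equiv0$, and then $\lambda\|\psi\|_2^2=E(\psi)\geq\mu(g)\|\psi\|_N^2>0$ by the very definition of $\mu(g)$, so $\lambda>0$. Proposition \ref{delta fu} now yields at once that $L_g$ is coercive on $H_1(M)$ and that $L_g:H_2^{\min(p,q)}(M)\to L^q(M)$ is an isomorphism for every $q>2n/(n+2)$; taking $q=p$ (admissible since $p>n/2>2n/(n+2)$), $L_g:H_2^p(M)\to L^p(M)$ is invertible, which is the claimed conclusion.
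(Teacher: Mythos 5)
Your proposal is correct, and the second half (coercivity and invertibility from $\mu(g)>0$ via the first eigenvalue and Proposition \ref{delta fu}) is exactly the paper's argument. For the weak conformal invariance, however, you take a genuinely different route. The paper works directly at the level of $H^p_2$ data: it writes $(\psi^{\frac{2n}{n-2}}L_{\gt}u,w)_{g,L^2}$ as the $\gt$-bilinear form, invokes the weak form of the conformal transformation law $L_g\psi=\an R_{\gt}\psi^{\frac{n+2}{n-2}}$ (i.e.\ the Yamabe equation \eqref{yamabe}, asserted to hold weakly for $H^p_2$ metrics, with a pairing of $L_g\psi\in L^{n/2}$ against $uw\psi\in L^{n/(n-2)}$), and then finishes with the Leibniz identity $\psi^2 g(\nabla u,\nabla w)+g(\nabla\psi,\nabla(uw\psi))=g(\nabla(\psi u),\nabla(\psi w))$. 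You instead reduce everything to the classical smooth identity by approximating $g$ and $\psi$ in $H^p_2$ and passing to the limit; the load-bearing step is then the continuity of $g\mapsto R_g$ from $H^p_2$ into $L^p$, which you justify correctly (the quadratic terms in $\partial g$ are handled by $H^p_1\hookrightarrow L^{2p}$, valid precisely because $p\geq n/2$, and the terms linear in $\partial^2 g$ converge in $L^p$ with uniformly convergent coefficients). Your route has the merit of not presupposing the weak Yamabe transformation law for rough metrics -- it effectively reproves it -- at the cost of the approximation machinery and of checking that $\psi_i^{4/(n-2)}g_i\to\psi^{4/(n-2)}g$ in $H^p_2$ (composition with a smooth function of a positive $H^p_2$ function bounded away from zero, plus the algebra property of Proposition \ref{algebre}); the paper's route is shorter but leans on a transformation law whose validity in $H^p_2$ is itself only quoted. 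Both are sound, and your reduction of \eqref{laplinvfai} to the symmetric bounded bilinear form on $H_1\times H_1$, with density in $u$ and $w$, is the right way to make the weak statement precise.
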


\begin{proof}
Rappelons que $\di v_{\gt}=\psi^{\Nn}\di v$ et que
$$\forall u,w\in L^2(M)\quad (u,w)_{g,L^2}=\int_Muw\di v_g$$ est le produit scalaire sur l'espace $L^2(M)$ muni de la métrique $g$.\\ 
Pour tout $u,w\in H_1(M)$: 
\begin{equation*}
\begin{split}
(\psi^{\Nn}L_{\gt} u,w)_{g,L^2} &=(L_{\gt} u,w)_{\gt,L^2}\\
&=\int_M \gt(\nabla u ,\nabla w)+\an R_{\gt} uw \di v_{\gt}\\
                              &=\int_M \psi^2g(\nabla u ,\nabla w)+\an R_{\gt}\psi^{\frac{n+2}{n-2}} (uw\psi)\di v_{g}		      
\end{split}
\end{equation*}
D'autre part, on sait que les deux courbures scalaires $R_g$ et $R_{\gt}$ sont reliées par l'équation de Yamabe \eqref{yamabe}, ce qui est équivalent à 
$$L_g\psi=\an R_{\gt}\psi^{\frac{n+2}{n-2}}\quad faiblement$$
ce que l'on écrit
$$(L_g\psi,uw\psi)_{g,L^2}=\an (R_{\gt}\psi^{\frac{n+2}{n-2}},uw\psi)_{g,L^2}$$
où il y a un abus de notation car $uw\psi$ n'appartient pas forcément à $ L^2(M)$. Par contre $L_g\psi\in L^p(M)\subset L^{n/2}(M)$ et $uw\psi\in L^{n/(n-2)}(M)$, le produit est donc bien défini. Par conséquent
\begin{equation*}
\begin{split}
(\psi^{\Nn}L_{\gt} u,w)_{g,L^2} &=\int_M \psi^2g(\nabla u, \nabla w)+g(\nabla\psi ,\nabla (uw\psi))+\an R_g \psi(uw\psi)\di v_{g}\\
			      & =\int_M g(\nabla (\psi u), \nabla (w\psi))+\an R_g (\psi u)(w\psi)\di v_{g}\\
			      & =(\psi L_g(\psi u),w)_{g,L^2} 
\end{split}
\end{equation*}
On a utilisé le fait que $u\psi$ et $w\psi$ appartiennent à $H_1(M)$, car on a les inclusions 
$$H_2^p(M)\subset C^{1-[n/p],\beta}(M),\; H^p_1(M)\subset L^{\frac{pn}{n-p}}(M)\text{ et } H_1(M)\subset L^{\frac{2n}{n-2}}(M)$$
Maintenant, montrons que $L_g$ est inversible et coercif. Soit $\lambda$ la plus petite valeur propre de $L_g$, de fonction propre $\varphi\in H_1(M)$ positive, non identiquement nulle, alors 
$$\lambda\|\varphi\|_2^2=(L_g\varphi,\varphi)_{g,L^2}=I_g(\varphi)\|\varphi\|_N^2\geq \mu(g)\|\varphi\|_N^2>0$$
d'où $\lambda>0$. Il suffit donc d'appliquer la proposition \ref{delta fu}.

\end{proof}

\section{L'invariant conforme de Yamabe}\label{invariance de mu}

Dans le cas des métriques de classe $C^\infty$, $\mu(g)$ est un invariant conforme, ce qui signifie que si $g$ et $\gt$ sont deux métriques conformes de classe $C^\infty$ alors $$\mu(g)=\mu(\gt)$$ (voir la section \ref{yampro} pour la définition). La proposition suivante montre qu'on peut étendre  cette propriété à des métriques dans $H^p_2$. Elle nous permettra aussi de prendre une métrique quelconque dans la classe conforme $[g]$ comme métrique initiale, tout en gardant la valeur de $\mu(g)$ inchangée. 
\begin{proposition}\label{invconforme}
 Soit $M$ une variété compacte $C^\infty$, de dimension $n$. Soit $g$ et $\gt=\psi^{\frac{4}{n-2}}g$ deux métriques dans $H^p_2$, avec $\psi\in H^p_2(M)$, strictement positive. Si $p>n/2$ alors 
$$\mu(g)=\mu(\gt)$$ 
\end{proposition}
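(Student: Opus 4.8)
The plan is to reduce the equality of the two Yamabe invariants to a change-of-variable computation in the Yamabe functional, using the weak conformal invariance of the conformal Laplacian established in Proposition \ref{invcon}. First I would recall that for any $\varphi\in H_1(M)-\{0\}$ one has $I_g(\varphi)=E_g(\varphi)/\|\varphi\|_{N,g}^2$ where $E_g(\varphi)=\int_M(|\nabla\varphi|_g^2+\frac{n-2}{4(n-1)}R_g\varphi^2)\di v_g=(L_g\varphi,\varphi)_{g,L^2}$, the last identity holding for $\varphi\in H_1(M)$ by density and the weak definition of $\Delta_g$. The key algebraic fact, valid pointwise, is that $\di v_{\tilde g}=\psi^{N}\di v_g$ and that $H_2^p(M)$ is an algebra (Proposition \ref{algebre}), so that $\psi u\in H_1(M)$ whenever $u\in H_1(M)$ (this uses $H_2^p\subset C^{1-[n/p],\beta}$ and $H_1\subset L^{2n/(n-2)}$ exactly as in the proof of Proposition \ref{invcon}). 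In particular $u\mapsto\psi u$ is a bijection of $H_1(M)-\{0\}$ onto itself, with inverse $v\mapsto\psi^{-1}v$ (note $\psi^{-1}\in H_2^p(M)$ as well, since $\psi$ is bounded below by a positive constant by continuity and compactness).

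The central step is the identity
$$
E_{\tilde g}(u)=E_g(\psi u)\qquad\text{for all }u\in H_1(M).
$$
To prove it I would apply the weak conformal invariance \eqref{laplinvfai} with $w=u$: taking the $g$-scalar product of $\psi^{N}L_{\tilde g}u=\psi L_g(\psi u)$ against $u$ gives
$$
(L_{\tilde g}u,u)_{\tilde g,L^2}=(\psi^{N}L_{\tilde g}u,u)_{g,L^2}=(\psi L_g(\psi u),u)_{g,L^2}=(L_g(\psi u),\psi u)_{g,L^2},
$$
that is $E_{\tilde g}(u)=E_g(\psi u)$ — precisely the computation already carried out inside the proof of Proposition \ref{invcon}, now read with $w=u$. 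One must be slightly careful that the products appearing are well defined: $L_g(\psi u)\in L^p(M)$ while $\psi u\in L^{2n/(n-2)}(M)$, and $p>n/2$ guarantees $\frac1p+\frac{n-2}{2n}\le 1$, so all pairings make sense exactly as in the cited proof. Together with the obvious $\|u\|_{N,\tilde g}^N=\int_M|u|^N\psi^N\di v_g=\int_M|\psi u|^N\di v_g=\|\psi u\|_{N,g}^N$, this yields
$$
I_{\tilde g}(u)=\frac{E_{\tilde g}(u)}{\|u\|_{N,\tilde g}^2}=\frac{E_g(\psi u)}{\|\psi u\|_{N,g}^2}=I_g(\psi u).
$$

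Finally, since $u\mapsto\psi u$ is a bijection of $H_1(M)-\{0\}$, taking the infimum over $u\in H_1(M)-\{0\}$ on the left and over $\psi u$ on the right gives $\mu(\tilde g)=\inf_u I_{\tilde g}(u)=\inf_u I_g(\psi u)=\inf_v I_g(v)=\mu(g)$. (Restricting to $u\ge0$ changes nothing, as $I_g(|v|)\le I_g(v)$ and $\psi>0$, so the nonnegativity constraint in the definition of $\mu$ is respected by the bijection.) The main obstacle is not conceptual but a matter of bookkeeping: one has to make sure every function product occurring — $\psi u$, $\psi^{-1}v$, $\nabla(\psi u)$, $R_g\psi u$, $L_g(\psi u)\cdot\psi u$ — lands in the Lebesgue or Sobolev space needed for the pairings and the integration by parts to be legitimate, which is exactly where the hypothesis $p>n/2$ (hence $H_2^p$ an algebra, $H_2^p\hookrightarrow C^{0,\beta}$, and $\psi$ bounded away from $0$) is used; all of this is already contained in the proof of Proposition \ref{invcon}, so the present proof is essentially a corollary of it.
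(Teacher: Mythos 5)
Your proof is correct and follows essentially the same route as the paper: both reduce the statement to the identity $I_{\tilde g}(u)=I_g(\psi u)$, obtained by pairing the weak conformal invariance of $L_g$ (Proposition \ref{invcon}) with $u$ itself and using $\di v_{\tilde g}=\psi^N\di v_g$. Your additional remarks (that $u\mapsto\psi u$ is a bijection of $H_1(M)-\{0\}$ since $\psi^{-1}\in H^p_2(M)$, and that the sign constraint is preserved) are correct refinements of the same argument, which the paper leaves implicit.
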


\begin{proof}
 Soient $u\in H_1(M)$ une fonction test et  $I_g$ la fonctionnelle de Yamabe \eqref{fonctionnel}. Remarquons que $E(u)=(L_g(u),u)_{g,L^2}$. Donc 
$$I_{\gt}(u)=(L_{\gt}(u),u)_{\gt,L^2}\|u\psi\|_N^{-2}$$ 
De  la proposition \ref{invcon}, on en déduit que
$$I_{\gt}(u)=(L_g(\psi u), \psi u)_{g,L^2}\|u\psi\|_N^{-2}$$ 
Finalement 
\begin{equation}\label{IgIg}
 I_{\gt}(u)=I_g(\psi u)
\end{equation}
ce qui implique que $\mu(g)=\mu(\gt)$, et que cet invariant dépend seulement de la classe conforme $[g]$ et de la variété $M$.
\end{proof}

\section[Fonction de Green]{La fonction de Green du Laplacien conforme}\label{glc}

\begin{definition}Soit $(M,g)$ une variété riemannienne compacte et $P$ un point de $M$. On appelle fonction de Green au point $P$ d'un opérateur linéaire $L$, la fonction $G_P$ qui vérifie au sens des distributions 
$$LG_P=\delta_{P}   (\Longleftrightarrow \forall f\in C^{\infty}(M)\quad \langle G_P,Lf\rangle =f(P))$$
\end{definition}
La fonction de Green peut être vue comme l'inverse de l'opérateur $L$, lorsque ce dernier est inversible.
La proposition \ref{grgr} montre l'existence d'une telle fonction pour un opérateur du type $L=\Delta+h$ avec $h>0$ continue. Malheureusement, la méthode utilisée pour construire cette fonction de Green n'est pas valable lorsque la fonction $h$ est dans $L^p(M)$. Ce cas se présente pour le Laplacien conforme $L_g$, car $R_g\in L^p(M)$. Mais, grâce à la proposition~\ref{green conf}, on pourra s'en tirer, et obtenir le corollaire \ref{green coroll}. Pour montrer son existence lorsque $h$ est continue, on aura besoin du résultat suivant dû à G.~Giraud \cite{Gir} (On peut aussi consulter \cite{Aubin}, page 108).
\begin{proposition}\label{giraut}
Soit $\Omega$ un ouvert d'une variété riemannienne compacte $(M,g)$. $\varphi$, $\psi$ deux fonctions continues 
sur $\Omega\times\Omega-\{(x,x)\in\Omega\times\Omega\}$ qui vérifient:
$$|\varphi(P,Q)|\leq c(d(P,Q))^{\alpha-n}\mbox{ et }|\psi(P,Q)|\leq c (d(P,Q))^{\beta-n}$$
pour tout $(P,Q)\in\Omega\times\Omega-\{(x,x)\in\Omega\times\Omega\} $, où $\alpha,\;\beta\in ]0,n[$.\\ alors la fonction $\chi$ définie par:
$$\chi(P,Q)=\int_\Omega \varphi(P,R)\psi(R,Q)\di v(R)$$
est continue sur $\Omega\times\Omega-\{(x,x)\in\Omega\times\Omega\}$ et est vérifie:
\begin{equation*} 
|\chi(P,Q)|\leq \begin{cases}c(d(P,Q))^{\alpha+\beta-n} &\mbox{ si }\alpha+\beta<n\\
c(1+\log d(P,Q))& \mbox{ si }\alpha+\beta=n\\
c &\mbox{ si }\alpha+\beta>n
\end{cases}
\end{equation*}
dans le dernier cas la fonction $\chi$ est continue sur $\Omega\times\Omega$.
\end{proposition}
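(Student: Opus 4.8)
Le plan est de d\'ecouper le domaine d'int\'egration selon la position de $R$ par rapport \`a $P$ et \`a $Q$. Posons $\rho=d(P,Q)$, que l'on suppose $>0$, et notons $D$ le diam\`etre de $M$. On introduit les trois r\'egions $A_1=\{R\in\Omega:d(P,R)\leq\rho/2\}$, $A_2=\{R\in\Omega:d(Q,R)\leq\rho/2\}$ et $A_3=\Omega\setminus(A_1\cup A_2)$, et on estime s\'epar\'ement la contribution de chacune \`a $\chi(P,Q)$. L'outil de base, cons\'equence de la compacit\'e de $M$ et du passage en coordonn\'ees normales, est l'in\'egalit\'e
$$\int_{B_P(s)}d(P,R)^{\gamma-n}\di v(R)\leq c\,s^{\gamma}\qquad (0<\gamma\leq n,\ 0<s\leq D),$$
ainsi que le fait que sur un tel voisinage $\di v(R)$ se compare \`a $r^{n-1}\,\di r\,\di\sigma$ en coordonn\'ees polaires.

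Sur $A_1$, l'in\'egalit\'e triangulaire donne $d(Q,R)\geq\rho/2$, donc $|\psi(R,Q)|\leq c\,\rho^{\beta-n}$ et la contribution de $A_1$ est major\'ee par $c\,\rho^{\beta-n}\int_{B_P(\rho/2)}d(P,R)^{\alpha-n}\di v\leq c\,\rho^{\alpha+\beta-n}$; de m\^eme pour $A_2$. Pour $A_3$, on distingue $A_3'=\{R\in A_3:d(P,R)\leq 2\rho\}$, sur lequel toutes les distances sont comprises entre $\rho/2$ et $3\rho$, d'o\`u une contribution major\'ee par $c\,\rho^{\alpha-n}\rho^{\beta-n}\cdot\rho^n=c\,\rho^{\alpha+\beta-n}$, et $A_3''=\{R\in A_3:d(P,R)\geq 2\rho\}$, sur lequel $\tfrac{1}{2}d(P,R)\leq d(Q,R)\leq 2d(P,R)$ (toujours par l'in\'egalit\'e triangulaire). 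En coordonn\'ees polaires centr\'ees en $P$, la contribution de $A_3''$ est alors major\'ee par
$$c\int_{2\rho}^{D}r^{\alpha-n}r^{\beta-n}r^{n-1}\,\di r=c\int_{2\rho}^{D}r^{\alpha+\beta-n-1}\,\di r.$$

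C'est l'\'evaluation de cette derni\`ere int\'egrale qui produit les trois cas de l'\'enonc\'e, et c'est l\`a l'essentiel de la preuve: elle vaut $O(\rho^{\alpha+\beta-n})$ si $\alpha+\beta<n$, elle vaut $\log(D/2\rho)=O(1+|\log d(P,Q)|)$ si $\alpha+\beta=n$, et elle est born\'ee ind\'ependamment de $\rho$ par $\int_0^{D}r^{\alpha+\beta-n-1}\,\di r$ si $\alpha+\beta>n$. En sommant les contributions de $A_1$, $A_2$ et $A_3$ on obtient dans les trois cas la majoration annonc\'ee pour $|\chi(P,Q)|$ (les cas d\'eg\'en\'er\'es, o\`u $\rho$ est de l'ordre de $D$ et certaines r\'egions sont vides, ne rendent les estim\'ees que plus faciles). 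Enfin, la continuit\'e de $\chi$ hors de la diagonale r\'esulte du th\'eor\`eme de convergence domin\'ee, les noyaux \'etant continus hors diagonale et localement int\'egrables, avec une domination uniforme au voisinage de tout $(P,Q)$ tel que $P\neq Q$; dans le cas $\alpha+\beta>n$, la borne uniforme ci-dessus montre que la portion de l'int\'egrale voisine de $P$ et de $Q$ est arbitrairement petite, ce qui fournit l'\'equicontinuit\'e permettant de prolonger $\chi$ par continuit\'e \`a $\Omega\times\Omega$ tout entier. L'obstacle principal est ainsi le d\'ecompte des exposants dans l'int\'egrale radiale sur la r\'egion lointaine $\{d(P,R)\geq 2\rho\}$, tout le reste \'etant une application soigneuse de l'in\'egalit\'e triangulaire.
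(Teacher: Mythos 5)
Votre d\'emonstration est correcte et suit exactement l'argument classique de ce lemme de Giraud (c'est celui que l'on trouve dans la r\'ef\'erence indiqu\'ee, le livre de T.~Aubin, p.~108) : d\'ecoupage en r\'egions proches de $P$, proches de $Q$ et lointaines, in\'egalit\'e triangulaire pour figer l'un des deux noyaux sur chaque r\'egion, puis \'evaluation de l'int\'egrale radiale $\int_{2\rho}^{D}r^{\alpha+\beta-n-1}\,\di r$ qui produit les trois cas. La th\`ese ne red\'emontre pas cette proposition (elle est cit\'ee sans preuve), de sorte que votre r\'edaction en constitue une preuve compl\`ete et conforme \`a l'approche standard ; notez seulement que la borne logarithmique de l'\'enonc\'e doit se lire $c(1+|\log d(P,Q)|)$, comme vous l'\'ecrivez d'ailleurs correctement.
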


\begin{proposition}\label{grgr}
Soit $M$ une variété compacte $C^\infty$ de dimension $n\geq 3$, $h$ une fonction continue, strictement positive et $P$ un point de $M$. $g$ une métrique qui satisfait l'hypothèse~$(H)$ (cf. section \ref{hypG}). Il existe une unique fonction de Green $G_{P}$ de l'opérateur $L=\Delta_g+h$ qui satisfait au sens des distributions $LG_{P}=\delta_{P}$ 
et
\begin{itemize} 
\item[$(i)$] $G_{P}$ est  $C^\infty$ sur $B_{P_0}(\delta)-\{P\}$
\item[$(ii)$] $G_{P}\in C^2(M-\{P\})$ 
\item[$(iii)$] Il existe $c>0$ tel que pour tout $Q\in M-\{P\}$,  $|G_{P}(Q)|\leq c d(P,Q)^{2-n} $
\end{itemize}
\end{proposition}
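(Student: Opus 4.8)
On suit la construction classique par param\'etrix (cf.\ Aubin \cite{Aubin}, chapitre 4), le seul point nouveau \'etant le contr\^ole du terme d'erreur sous la r\'egularit\'e $C^{1,\beta}$ de $g$ impos\'ee par $(H)$. On se place au voisinage de $P$ dans un syst\`eme de coordonn\'ees g\'eod\'esiques de $g$ (ou d'une m\'etrique lisse auxiliaire fix\'ee), licite car $g\in H_2^p\subset C^{1,\beta}$ et le rayon d'injectivit\'e est strictement positif. Soit $\eta$ une fonction de troncature lisse, \'egale \`a $1$ pr\`es de $0$ et \`a support dans une boule de rayon $<\delta(M)$, et posons
$$H(P,Q)=\frac{1}{(n-2)\omega_{n-1}}\,\eta\bigl(d(P,Q)\bigr)\,d(P,Q)^{2-n},$$
la constante \'etant choisie de sorte que $\Delta_{\mathcal E}\bigl(\tfrac{1}{(n-2)\omega_{n-1}}|x|^{2-n}\bigr)=\delta_0$ sur $\mathbb R^n$.

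Ensuite je calculerais, au sens des distributions, $L_Q H(P,Q)=\delta_P(Q)-\Gamma_1(P,Q)$, o\`u $\Gamma_1$ est lisse hors de la diagonale et \`a support pr\`es de celle-ci; le point crucial est son estim\'ee pr\`es de la diagonale. En \'ecrivant $\Delta_g-\Delta_{\mathcal E}$ en coordonn\'ees g\'eod\'esiques, o\`u les symboles de Christoffel sont $C^\beta$ et s'annulent au centre et o\`u $g^{ij}-\delta^{ij}=O(|x|^{1+\beta})$, et comme $\partial^2(|x|^{2-n})=O(|x|^{-n})$ et $h$ est born\'ee, on obtient
$$|\Gamma_1(P,Q)|\le c\,d(P,Q)^{-n+\tau}$$
pour un $\tau>0$ fix\'e (ici $\tau=1+\beta$, plus petit que le $\tau=2$ du cas lisse: c'est le prix de l'hypoth\`ese $(H)$, et c'est le seul \'ecart avec l'argument classique). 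On it\`ere alors: on pose $\Gamma_{i+1}(P,Q)=\int_M\Gamma_i(P,R)\,\Gamma_1(R,Q)\,\di v(R)$; la proposition \ref{giraut} donne $|\Gamma_i(P,Q)|\le c\,d(P,Q)^{-n+i\tau}$ tant que $i\tau<n$, et $\Gamma_i$ est continue born\'ee d\`es que $i\tau>n$. En choisissant le premier entier $k$ avec $k\tau>n$, une estim\'ee de routine (volume fini de $M$, fait que les $\Gamma_i$ sont born\'es pour $i\ge k$) montre que la s\'erie $F:=\sum_{i\ge 1}\Gamma_i$ converge. On pose enfin
$$G_P(Q)=H(P,Q)+\int_M H(R,Q)\,F(P,R)\,\di v(R),$$
et un calcul direct utilisant $L_Q H(R,Q)=\delta_R(Q)-\Gamma_1(R,Q)$ et la d\'efinition de $F$ donne $L_Q G_P=\delta_P$.

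Pour la r\'egularit\'e, hors de $P$ on a $L_Q G_P=0$; les coefficients principaux $g^{ij}$ de $\Delta_g$ \'etant dans $C^{1,\beta}$ (donc continus) et ceux d'ordre inf\'erieur ($\Gamma^k_{ij}\in C^\beta$, $h\in C^0$) continus, et $G_P\in L^q_{\mathrm{loc}}(M-\{P\})$ gr\^ace \`a la majoration ponctuelle ci-dessous, un bootstrap standard avec le th\'eor\`eme \ref{reg} donne $G_P\in H^q_{2,\mathrm{loc}}(M-\{P\})$ pour tout $q$, d'o\`u $G_P\in C^{1,\beta'}(M-\{P\})$ puis, les coefficients \'etant h\"old\'eriens, $G_P\in C^2(M-\{P\})$, ce qui est le point $(ii)$; sur $B_{P_0}(\delta)-\{P\}$, o\`u $g$ --- et donc tous les coefficients, dans l'application g\'eom\'etrique o\`u $h$ est proportionnelle \`a $R_g$ --- est $C^\infty$, le bootstrap ne s'arr\^ete pas et $G_P\in C^\infty(B_{P_0}(\delta)-\{P\})$, ce qui est le point $(i)$. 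La majoration $(iii)$ se lit sur $G_P=H+\int_M H(\cdot,R)F(P,R)\,\di v$: le premier terme vaut $\frac{1}{(n-2)\omega_{n-1}}d(P,Q)^{2-n}$ pr\`es de $P$, et le second, de nouveau par la proposition \ref{giraut} (convolution de $d^{2-n}$ contre le terme moins singulier $F$, d'ordre $d^{\tau-n}$), est d'ordre $d(P,Q)^{2-n+\tau}$ ou born\'e, donc n\'egligeable, ce qui donne $|G_P(Q)|\le c\,d(P,Q)^{2-n}$. Pour l'unicit\'e, si $G_P$ et $G_P'$ sont deux fonctions de Green, $u=G_P-G_P'$ v\'erifie $Lu=0$ au sens des distributions sur $M$ tout entier (les masses de Dirac se compensent) et $u\in L^1(M)$; le m\^eme bootstrap, valable partout puisqu'il n'y a plus de singularit\'e, donne $u\in C^{1,\beta}(M)$, donc $u$ est solution de $\Delta_g u+hu=0$; comme $h>0$ et $M$ est compacte, le principe du maximum (ou: la plus petite valeur propre de $L$ est $\ge\min_M h>0$, donc $L$ est injectif, cf.\ proposition \ref{delta fu}) entra\^ine $u\equiv 0$.

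Le principal obstacle --- et le seul endroit o\`u l'argument s'\'ecarte du cas lisse --- est d'\'etablir l'estim\'ee $|\Gamma_1(P,Q)|\le c\,d(P,Q)^{-n+\tau}$ avec $\tau>0$ sous la seule r\'egularit\'e $C^{1,\beta}$ de $g$: il faut s'assurer que l'on dispose de coordonn\'ees g\'eod\'esiques et que les symboles de Christoffel, bien que seulement $C^\beta$, s'annulent au centre, ce qui fournit pr\'ecis\'ement un $\tau$ strictement positif (\`a d\'efaut on se rabat sur une m\'etrique lisse de fond, ce qui donne $\tau=1$, encore positif). Tout le reste --- it\'eration, convergence de $F$, r\'egularit\'e, majoration ponctuelle, unicit\'e --- est la machinerie standard appliqu\'ee avec cet exposant affaibli.
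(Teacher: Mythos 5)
Your construction is the same parametrix-and-Giraud iteration that the paper uses, and your treatment of the regularity statements, of the pointwise bound $(iii)$ and of uniqueness (injectivity of $L$ because $h>0$, via proposition \ref{delta fu}) matches the paper's. Your observation that under $(H)$ the first error kernel only satisfies $|\Gamma_1|\le c\,d^{1+\beta-n}$ rather than the $c\,d^{2-n}$ of the smooth case is a fair and harmless refinement: Giraud's lemma (proposition \ref{giraut}) only needs a positive gain at each step, so the iteration goes through, merely with more steps before the kernels become continuous.

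The one genuine gap is the convergence of the full Neumann series $F=\sum_{i\ge1}\Gamma_i$, which you dismiss as \emph{une estim\'ee de routine}. It is not: once $i$ is past the threshold where the $\Gamma_i$ are bounded, the only available estimate is $\|\Gamma_{i+1}\|_\infty\le\|\Gamma_i\|_\infty\cdot\sup_Q\int_M|\Gamma_1(R,Q)|\,\di v(R)$, and nothing forces that $L^1$ norm to be smaller than $1$. Shrinking the support of the cutoff does not help, because the annulus where $\eta$ transitions contributes terms of size $|\eta''|\,r^{2-n}\sim\rho^{-n}$ over a region of volume $\sim\rho^{n}$, i.e.\ a fixed constant independent of $\rho$; so the series has no reason to converge geometrically. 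The paper sidesteps this entirely: it truncates the sum at $k=[n/2]$ terms, for which $L_Q\bigl(H+\sum_{i\le k}\int_M\Gamma^i(P,S)H(S,\cdot)\,\di v(S)\bigr)=\delta_P-\Gamma^{k+1}(P,\cdot)$ with $\Gamma^{k+1}(P,\cdot)$ continuous, and then defines the correction $F_P$ as the unique solution of $LF_P=\Gamma^{k+1}(P,\cdot)$ --- which exists precisely because $h>0$ makes $L$ coercive and invertible, and which is $C^2$ by theorem \ref{reg}. Note that in your version the positivity of $h$ is used only for uniqueness, whereas in the paper it is also what closes the existence argument. Your proof is repaired by replacing the infinite series with this finite truncation plus correction term; everything else you wrote then goes through unchanged.
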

\begin{proof}
L'unicité de $G_{P}$ est due au fait que  $L$ est inversible. En effet, si $\lambda$ est une valeur propre de $L$ et $\varphi$ une fonction propre, non identiquement nulle, associée à $\lambda$ alors 
$$\lambda\|\varphi\|^2_2=(L\varphi,\varphi)_{L^2}=E(\varphi)>0$$
D'où $\lambda>0$. Pour conclure, il suffit d'appliquer la proposition \ref{delta fu}. En ce qui concerne l'existence de cette fonction, on reprend la construction de T.~Aubin \cite{Aubin} pour le Laplacien, dans le cas des métriques $C^\infty$. On choisit $f(r)$ une fonction radiale décroissante $C^{\infty}$ positive, égale à $1$ pour $r<\delta/2$ et nulle pour $r\geq\delta(M)$, le rayon d'injectivité de $M$. On définit les fonctions suivantes:

\begin{gather*} 
H(P,Q)=\frac{f(r)}{(n-2)\omega_{n-1}}r^{2-n}\mbox{ avec }r=d(P,Q)\\
\Gamma^1(P,Q)=-L_QH(P,Q) \\
\forall i\in\mathbb{N^*}\qquad\Gamma^{i+1}(P,Q)=\int_M\Gamma^{i}(P,S)\Gamma^1(S,Q)\di v(S)
\end{gather*}
où $L_QH(P,Q)$ signifie qu'on applique l'opérateur $L$ à la fonction $H(P,Q)$ par rapport à $Q$.\\ 
On observe que  $\Gamma^1$ est continue sur $M\times M-\{(Q,Q)\in M\times M\}$, et il existe $c>0$ tel que pour tout $P,Q\in M$:
\begin{equation*}
|\Gamma^1(P,Q)| \leq c d(P,Q)^{2-n}
\end{equation*}
En utilisant la proposition \ref{giraut}, on montre les inégalités suivantes:
\begin{equation*}
\forall i\geq 1\qquad|\Gamma^i(P,Q)| \leq \begin{cases}& cd(P,Q)^{2i-n}  \hspace{2.75cm}\text{ si }2i<n\\
& c(1+\log d(P,Q))  \hspace{2cm}\text{ si } 2i=n\\
& c   \hspace{4.7cm}\text{ si } 2i>n
\end{cases}
\end{equation*}
La fonction de Green de $L$ s'écrit 
\begin{equation}\label{greeen}
G_{P}(Q)=H(P,Q)+\sum_{i=1}^{k}\int_M\Gamma^{i}(P,S)H(S,Q)\di v(S)+F_{P}(Q)
\end{equation} 
où $F_{P}$ est une fonction que l'on détermine dans les lignes qui suivent. On prend $k=[n/2]$ alors $\Gamma^{k+1}(P,\cdot) $ est continue (cf. proposition \ref{giraut}). On veut  $L_QG_P(Q)=0$ pour $Q\neq P$. On a l'identité
$$ \psi(Q)=\Delta_g\int_M H(P,Q)\psi(P)\di v(P)-\int_M \Delta_QH(P,Q)\psi(P)\di v(P)$$
(La preuve est donnée dans \cite{Aubin}, page 106). D'où
$$\psi(Q)=L\int_M H(P,Q)\psi(P)\di v(P)-\int_M L_QH(P,Q)\psi(P)\di v(P)$$
En utilisant cette dernière identité, on trouve que
$$L_QG_P(Q)=-\Gamma^{k+1}(P,Q)+L_QF_P(Q)$$
Puisque $L$ est inversible, il suffit de  poser $F_P$ comme l'unique solution de l'équation $$LF_{P}=\Gamma^{k+1}(P,\cdot)$$ 
Par le théorème de régularité \ref{reg}, $F_P$ est de classe $C^2$. \\
$(i)$ Comme  $L_gG_{P}=0$ sur $B_{P_0}(\delta)-\{P\}$ et que la métrique est $C^\infty$ sur $B_{P_0}(\delta)$, le théorème de régularité affirme que $G_{P}$ est $C^\infty$ sur $B_{P_0}(\delta)-\{P\}$, avec $P\in M$ et $B_{P_0}(\delta)-\{P\}=B_{P_0}(\delta)$ si $P\notin B_{P_0}(\delta)$.\\
$(ii)$ On a aussi $LG_{P}=0$ sur $M-\{P\}$. On conclut par le théorème de régularité que $G_P$ est $C^2$ sur $M-\{P\}$.\\
$(iii)$ En observant l'expression \eqref{greeen} qui définit $G_P$, on remarque que le terme dominant, au voisinage de $P$, est bien $H(P,Q)$, donc pour tout $P\neq Q$, $$|G_P(Q)|\leq cd(P,Q)^{2-n}$$
\end{proof}
 
\begin{proposition}\label{green conf}
 Soit $g$  une métrique dans $H^p_2(M,T^*M\otimes T^*M)$, $\gt=\psi^{\frac{4}{n-2}}g$ une métrique conforme à $g$, avec $\psi\in H^p_2(M)$, strictement positive et $p>n/2$. On suppose que le Laplacien conforme $L_{\gt}$ admet une fonction de Green $\tilde G_{P}$, alors $L_g$ admet aussi  une fonction de Green notée $G_P$ et elle donnée par
$$\forall Q\in M-\{P\}\qquad G_P(Q)=\psi(P)\psi(Q)\tilde G_P(Q)$$ 
\end{proposition}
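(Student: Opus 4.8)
The plan is to verify directly that $G_P:=\psi(P)\,\psi\,\tilde G_P$ is a Green function of $L_g$ at $P$, by transporting the defining identity of $\tilde G_P$ through the conformal change $\gt=\psi^{\N}g$. Throughout I read "$\tilde G_P$ is the Green function of $L_{\gt}$ at $P$" as $\int_M\tilde G_P\,L_{\gt}\tilde f\,\di v_{\gt}=\tilde f(P)$ for test functions $\tilde f$, the pairing being taken with respect to $\di v_{\gt}$ --- the volume element for which $L_{\gt}$ is formally self-adjoint --- and the target is the analogous identity $\int_M G_P\,L_gf\,\di v_g=f(P)$ for all $f\in C^\infty(M)$. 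Recall $\di v_{\gt}=\psi^{N}\di v_g$ with $N=\Nn$, and $\frac{n+2}{n-2}=N-1$.

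The first step is to prepare the admissible test functions. Since $\psi\in H^p_2(M)$ with $p>n/2$ is continuous and strictly positive on the compact $M$, one has $\min_M\psi>0$, so $1/\psi$ is the composition of $\psi$ with a smooth map on a compact subinterval of $(0,\infty)$; using that $H^p_2(M)$ is an algebra (Proposition~\ref{algebre}) and $\nabla\psi\in L^{2p}(M)$ (Sobolev embedding, Theorem~\ref{incsob}, valid since $2p>n$) one checks that $1/\psi\in H^p_2(M)$. Hence for $f\in C^\infty(M)$ the function $u:=f/\psi$ lies in $H^p_2(M)$, and since $p>n/2$ the Sobolev embeddings give $H^p_2(M)\subset H_1(M)$; so Proposition~\ref{invcon} applies to $u$ and yields $L_gf=L_g(\psi u)=\psi^{N-1}L_{\gt}(u)$, an identity in $L^p(M)$. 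I would also record that $L_{\gt}\colon H^p_2(M)\to L^p(M)$ is bounded: the second-order part is controlled by $\|\cdot\|_{p,2}$ because the Christoffel symbols of $\gt\in C^{1,\beta}$ are bounded, and the zeroth-order term by $\|R_{\gt}u\|_p\le\|R_{\gt}\|_p\|u\|_\infty\le C\|R_{\gt}\|_p\|u\|_{p,2}$.

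The computation is then short. Using $\di v_{\gt}=\psi^N\di v_g$,
\begin{align*}
\int_M G_P\,L_gf\,\di v_g &=\psi(P)\int_M \psi\,\tilde G_P\,\psi^{N-1}L_{\gt}(u)\,\di v_g\\
&=\psi(P)\int_M \tilde G_P\,L_{\gt}(u)\,\di v_{\gt},
\end{align*}
so it remains to show this last integral equals $u(P)$; granting that, it is $\psi(P)u(P)=f(P)$ and the proposition follows. For smooth $u$ this is exactly the defining property of $\tilde G_P$, so the real content --- and the main obstacle --- is extending that identity from $C^\infty(M)$ to $u\in H^p_2(M)$. I would do this by density: take $u_j\in C^\infty(M)$ with $u_j\to u$ in $H^p_2(M)$; then $u_j(P)\to u(P)$ (since $H^p_2(M)\subset C^0(M)$ continuously), $L_{\gt}(u_j)\to L_{\gt}(u)$ in $L^p(M)$ (by the continuity of $L_{\gt}$ just noted), and $\int_M\tilde G_P\,L_{\gt}(u_j)\,\di v_{\gt}\to\int_M\tilde G_P\,L_{\gt}(u)\,\di v_{\gt}$ by Hölder --- provided $\tilde G_P\in L^{p'}(M)$ with $p'=\frac{p}{p-1}$. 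This is precisely where the hypothesis $p>n/2$ enters: from the near-diagonal bound $|\tilde G_P(Q)|\le c\,d(P,Q)^{2-n}$ that accompanies the Green function (cf. Proposition~\ref{grgr}) together with the continuity of $\tilde G_P$ away from $P$, one gets $\tilde G_P\in L^{p'}(M)$ if and only if $(n-2)p'<n$, i.e.\ if and only if $p>n/2$. Passing to the limit in $\int_M\tilde G_P\,L_{\gt}(u_j)\,\di v_{\gt}=u_j(P)$ finishes the argument, and one reads off in passing that $G_P$ is continuous on $M\setminus\{P\}$ with $|G_P(Q)|\le c\,d(P,Q)^{2-n}$ (as $\psi$ is bounded), and is the unique Green function of $L_g$ at $P$ when $\mu(g)>0$, thanks to the invertibility of $L_g$ given by Proposition~\ref{invcon}.
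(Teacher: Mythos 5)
Your proposal is correct and follows essentially the same route as the paper: both verify directly that $\psi(P)\psi\tilde G_P$ satisfies the distributional identity by writing $f=\psi(f/\psi)$, invoking the weak conformal invariance of the conformal Laplacian (Proposition~\ref{invcon}) together with $\di v_{\gt}=\psi^{N}\di v_g$, and using the bound $|\tilde G_P|\leq c\,d(P,\cdot)^{2-n}$ to make the final pairing finite. You are in fact more careful than the paper on one step --- extending the Green identity from smooth test functions to $f/\psi\in H^p_2(M)$ by a density argument --- though your justification of the boundedness of $L_{\gt}$ via bounded Christoffel symbols tacitly uses $p>n$; under the stated hypothesis $p>n/2$ one only has $\nabla\gt,\nabla u\in L^{2p}$, which still gives the first-order term in $L^p$ by H\"older.
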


\begin{proof}

Pour toute fonction $\varphi\in C^{\infty}(M)$:
\begin{equation*}
\begin{split}
\langle \psi(P)\psi\tilde G_P,L_{g}\varphi\rangle _g &=\psi(P)\int_M\tilde G_P\psi L_{g}[\psi(\frac{\varphi}{\psi})]\di v_g\\
							  &=\psi(P)\int_M \tilde G_PL_{\gt}\frac{\varphi}{\psi}
								\di v_{\gt}\\
							  &=\psi(P)\langle\tilde G_P,L_{\gt}\frac{\varphi}{\psi}\rangle  _{\gt}\\
							  &=\varphi(P)
\end{split}
\end{equation*}
La deuxième égalité ci-dessus vient de l'invariance conforme faible du Laplacien conforme (cf. proposition \ref{invcon}). La troisième inégalité est réalisée car pour tout $Q\in M-\{P\}$
$$|\tilde G_P(Q)|\leq  cd(P,Q)^{2-n}$$
donc $G_P\in L^s(M)$, pour tout $1\leq s<n/(n-2)$ et $L_{\gt}\frac{\varphi}{\psi}\in L^p(M)$ avec $p>n/2$. On peut donc choisir $s$ pour que $\langle\tilde G_P,L_{\gt}\frac{\varphi}{\psi}\rangle  _{\gt}$ soit fini.  
\end{proof}

\begin{proposition}\label{green coroll}
 Soit $M$ une variété compacte $C^\infty$ de dimension $n\geq 3$. $g$  une métrique riemannienne qui satisfait l'hypothèse $(H)$. Si $\mu(g)>0$, alors le Laplacien conforme $L_g$ admet une fonction de Green $G_{P_0}$, qui satisfait au sens des distributions $LG_{P_0}=\delta_{P_0}$ et
\begin{itemize} 
\item[$(i)$] $G_{P_0}$ est  $C^\infty$ sur $B_{P_0}(\delta)-\{P_0\}$
\item[$(ii)$] $G_{P_0}\in H^p_2(M-B_{P_0}(r))$ pour tout $r>0$. 
\item[$(iii)$] Il existe $c>0$ tel que pour tout $Q\in B_{P_0}(\delta)-\{P_0\}$,  $|G_{P_0}(Q)|\leq c d(P_0,Q)^{2-n} $
\end{itemize}
\end{proposition}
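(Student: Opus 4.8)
The plan is to reduce the statement to Proposition \ref{grgr}, which already constructs the Green function of an operator $\Delta_{\gt}+h$ for a metric $\gt$ satisfying $(H)$ and a \emph{continuous} strictly positive potential $h$; the point is that a well-chosen conformal change turns $L_g$ into an operator of exactly this type. Since $\mu(g)>0$, the conformal Laplacian $L_g=\Delta_g+\an R_g$ admits, as recalled in the proof of Proposition \ref{invcon}, a non-identically-zero positive first eigenfunction $\varphi_1\in H_1(M)$ with eigenvalue $\lambda_1$, and $\lambda_1\|\varphi_1\|_2^2=(L_g\varphi_1,\varphi_1)_{L^2}=I_g(\varphi_1)\|\varphi_1\|_N^2\geq\mu(g)\|\varphi_1\|_N^2>0$. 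Applying Theorem \ref{gef} to the equation $\Delta_g\varphi_1+(\an R_g-\lambda_1)\varphi_1=0$ (whose potential is in $L^p$, $p>n/2$) shows that $\varphi_1$ is bounded, strictly positive, and belongs to $H_2^p(M)\subset C^{1,\beta}(M)$; and since $g$ and $R_g$ are smooth on $B_{P_0}(\delta)$, an elliptic bootstrap (Theorem \ref{reg}) gives $\varphi_1\in C^\infty(B_{P_0}(\delta))$.

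Next I would set $\gt=\varphi_1^{\N}g$. As $\varphi_1$ is bounded away from $0$ and $H_2^p(M)$ is an algebra for $p>n/2$ (Proposition \ref{algebre}), the function $\varphi_1^{\N}$ — being the composition of the smooth map $t\mapsto t^{\N}$ on the compact interval $[\min_M\varphi_1,\max_M\varphi_1]\subset(0,\infty)$ with $\varphi_1\in H_2^p\cap C^{1,\beta}$, where $|\nabla\varphi_1|\in L^\infty$ because $p>n$ — again lies in $H_2^p(M)$, so that $\gt\in H_2^p(M,T^*M\otimes T^*M)$, and $\gt$ is smooth on $B_{P_0}(\delta)$; hence $\gt$ satisfies $(H)$. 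The reason for this choice is that, by the conformal transformation law for the scalar curvature of $H_2^p$ metrics (used in the proof of Proposition \ref{invcon}, i.e. equation \eqref{yamabe}), $\frac{4(n-1)}{n-2}L_g\varphi_1=R_{\gt}\varphi_1^{\frac{n+2}{n-2}}$, whence $\an R_{\gt}=\lambda_1\varphi_1^{-\N}$, which is a \emph{continuous} and strictly positive function on $M$ (using that $\varphi_1$ is continuous, bounded below, and $\lambda_1>0$).

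I would then apply Proposition \ref{grgr} with the metric $\gt$ and the potential $h=\an R_{\gt}$: it yields a Green function $\tilde G_{P_0}$ of $L_{\gt}=\Delta_{\gt}+h$ with $L_{\gt}\tilde G_{P_0}=0$ on $M-\{P_0\}$, $\tilde G_{P_0}\in C^2(M-\{P_0\})$, and $|\tilde G_{P_0}(Q)|\leq c\,d_{\gt}(P_0,Q)^{2-n}$. Since $g=\varphi_1^{-\N}\gt$ with $\varphi_1\in H_2^p(M)$ strictly positive, Proposition \ref{green conf} (base metric $g$, conformal metric $\gt=\varphi_1^{\N}g$, factor $\varphi_1$) shows that $L_g$ admits the Green function
$$G_{P_0}(Q)=\varphi_1(P_0)\,\varphi_1(Q)\,\tilde G_{P_0}(Q),\qquad L_g G_{P_0}=\delta_{P_0}\ \text{(distributionally)},$$
unique because $L_g$ is invertible (Proposition \ref{invcon}). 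Properties $(i)$–$(iii)$ then follow by transport. On $B_{P_0}(\delta)-\{P_0\}$ the metric $\gt$ and the potential $h=\lambda_1\varphi_1^{-\N}$ are smooth and $L_{\gt}\tilde G_{P_0}=0$, so interior regularity gives $\tilde G_{P_0}\in C^\infty(B_{P_0}(\delta)-\{P_0\})$, and since $\varphi_1\in C^\infty(B_{P_0}(\delta))$ we get $G_{P_0}\in C^\infty(B_{P_0}(\delta)-\{P_0\})$, which is $(i)$. The conformal factor is continuous and bounded above and below, so $d_{\gt}$ and $d_g$ are uniformly comparable, whence $|G_{P_0}(Q)|\leq c'\,d_g(P_0,Q)^{2-n}$ for $Q\in B_{P_0}(\delta)-\{P_0\}$, which is $(iii)$. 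Finally, on each $M-B_{P_0}(r)$, applying Theorem \ref{reg} to $L_{\gt}\tilde G_{P_0}=0$ (bounded continuous coefficients, right-hand side $0\in L^p$) gives $\tilde G_{P_0}\in H_2^p(M-B_{P_0}(r))$, hence $G_{P_0}=\varphi_1(P_0)\,\varphi_1\,\tilde G_{P_0}\in H_2^p(M-B_{P_0}(r))$ by the algebra property, which is $(ii)$.

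The routine parts are the regularity bookkeeping: that $\varphi_1^{\pm\N}\in H_2^p(M)$, that $\an R_{\gt}$ has the stated form and is continuous, and that the two conformal distances and the corresponding geodesic balls around $P_0$ are uniformly comparable. The only point that genuinely uses the structure of hypothesis $(H)$ — rather than just $g\in H_2^p$ — is the smoothness of $\varphi_1$, and hence of $\tilde G_{P_0}$ and $G_{P_0}$, on $B_{P_0}(\delta)-\{P_0\}$; this is what makes property $(i)$ work and is the only place where I expect to have to argue carefully, but I do not anticipate a serious obstacle.
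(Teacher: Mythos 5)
Your proof is correct and follows the same overall strategy as the paper: make a conformal change $\gt=u^{\N}g$ so that $R_{\gt}$ becomes continuous and strictly positive, invoke Proposition \ref{grgr} for $L_{\gt}$, and transport the Green function back with Proposition \ref{green conf}; properties $(i)$--$(iii)$ then follow exactly as you describe. The one genuine difference is the choice of conformal factor. The paper takes $u=\psi$ to be the positive solution of the subcritical equation $L_g\psi=\mu_{q}(g)\psi^{q-1}$ furnished by Proposition \ref{eysc} (with $q<N$ close to $N$ and $G=\{\mathrm{id}\}$), giving $R_{\gt}=\frac{4(n-1)}{n-2}\mu_{q}(g)\psi^{q-N}$; you take $u=\varphi_1$ to be the first eigenfunction of $L_g$, giving $R_{\gt}=\frac{4(n-1)}{n-2}\lambda_1\varphi_1^{-\N}$. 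Both produce a continuous, strictly positive scalar curvature, and both require the same regularity bookkeeping (strict positivity and $H^p_2$ regularity via Theorem \ref{gef}, smoothness on $B_{P_0}(\delta)$ by bootstrap, the algebra property of $H^p_2$). Your route is arguably more economical, since it avoids the variational machinery of Proposition \ref{eysc}; its only cost is that it rests on the existence of a minimizer for the Rayleigh quotient of $L_g$ with an $L^p$ potential, which the paper asserts in Proposition \ref{delta fu} but does not spell out in full, whereas Proposition \ref{eysc} is proved in detail. That existence is standard (the embedding $H_1(M)\subset L^2(M)$ is compact and $2p/(p-1)<N$ gives the needed control of $\int h\varphi^2$), so there is no gap, but if you wanted to stay strictly within what the paper proves, the subcritical Yamabe solution is the safer ingredient.
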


\begin{proof}
Puisque $\mu(g)>0$,  $L_g$ est nécessairement inversible. On en déduit que si $L_g$ admet une fonction de Green, celle-ci est unique. La proposition \ref{eysc} permet de montrer que l'équation
\begin{equation}\label{eygiii}
\Delta_g \psi+ \frac{n-2}{4(n-1)}R_g\psi= \mu_{q,G}(g) \psi^{q-1}
\end{equation}
admet une solution $\psi\in H^p_2(M)$, strictement positive (pour $q<N$ suffisamment proche de $N$ et $G=\{\mathrm{id}\}$). De plus, puisque la métrique $g$ est $C^\infty$ dans $B_{P_0}(\delta)$, les théorèmes de régularité montrent que $\psi$ est également $C^\infty$ dans cette même boule. La métrique $\gt:=\psi^{\frac{4}{n-2}}g$ satisfait donc l'hypothèse $(H)$. D'après l'équation de Yamabe \eqref{yamabe} (cf. page~\pageref{yamabe}), la courbure scalaire de la métrique $\gt$  est $$R_{\gt}=\frac{4(n-1)}{n-2}\mu_{q,G}(g) \psi^{q-N}$$ 
Par conséquent, $R_{\gt}$ est continue et strictement positive car $\mu_{q,G}(g)>0$. On est maintenant en mesure d'utiliser la proposition \ref{grgr}, qui assure l'existence d'une fonction de Green $\tilde G_{P_0}$ du Laplacien conforme $L_{\gt}$ pour la variété $M$ muni de la métrique $\gt$. Par la proposition \ref{green conf}, on conclut que $G_{P_0}=\psi(P_0)\psi\tilde G_{P_0}$ est la fonction de Green du Laplacien $L_g$. Comme les métriques $g$ et $\gt$ sont  $C^\infty$ sur $B_{P_0}(\delta)$ et que $\tilde G_{P_0}$ satisfait les propriétés de la proposition~\ref{grgr}, les propriétés énoncées pour $G_{P_0}$ sont vérifiées.
\end{proof}

On dit la fonction de Green $G_P$ est normalisée si 
$$\lim_{r\to 0}r^{2-n}G_P(Q)=1$$
Autrement dit, si $G_P$ est normalisée alors $$L_g G_P=(n-2)\omega_{n-1}\delta_P$$
où $r=d(P,Q)$ et $\omega_{n-1}$ est le volume de la sphère $S_{n-1}$. Lorsque il s'agit de la fonction de Green $G_{P_0}$ du Laplacien conforme $L_g$, on peut toujours la normaliser car elle est d'ordre $r^{2-n}$. On gardera la même notation pour la fonction de Green normalisée.

\section{La métrique de Cao--Günther}\label{metric cg}

Dans l'article \cite{LP} sur le problème de Yamabe, J.M.~Lee et T.~Parker ont montré que sur une variété riemannienne $(M,g)$, il existe un système de coordonnées normale $\{(U_i,x_i)\}_{i\in I}$ et une métrique   $g'$ conforme à $g$ tels que $\det g'=1+O(|x|^m)$ avec $m$ aussi grand que l'on veut. J.~Cao \cite{Cao} et M.~Günther \cite{Gun} ont montré (indépendamment) qu'on peut avoir, en fait, $\det g'=1$.
 
\begin{definition}
 Soit $(M,g)$ une variété riemannienne compacte. $\gt$ est une métrique de Cao--Günther, si elle est conforme à $g$ et s'il existe un système de coordonnées dans lequel $\det \gt=1$.  
\end{definition}

\begin{theorem}[Cao--Günther]\label{caogun}
Soient $M$ une variété de dimension $n$ et de classe $C^{a+2,\beta}$ avec $a\in\mathbb N$, $\beta\in]0,1[$. $g$ une métrique riemannienne de classe $C^{a+1,\beta}$, et $P$ un point de $M$. Alors il existe une fonction $\varphi$ strictement positive, de classe $C^{a+1,\beta'}$, avec $\beta'\in ]0,\beta[$ telle que  $\det(\varphi g)=1$, dans un système de coordonnées normales pour la métrique $\varphi g$ d'origine $P$. 
\end{theorem}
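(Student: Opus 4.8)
The plan is to recast the conclusion ``$\det(\varphi g)=1$ in $\varphi g$-normal coordinates at $P$'' as a radial condition, and then to construct $\varphi$ together with the corresponding normal chart by solving a system of ordinary differential equations along geodesic rays, following Cao \cite{Cao} and Günther \cite{Gun}. Note that the merely approximate statement $\det(\varphi g)=1+O(r^{N})$ is ``soft'' — one cancels successive Taylor coefficients, as in Lee--Parker \cite{LP} — so the whole point is to get $\det(\varphi g)=1$ exactly, at the cost of a little Hölder regularity.

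\textbf{Reduction to a radial equation.} For any metric $h$ on $M$ and any point $Q_0$, the Gauss lemma gives, in $h$-normal coordinates $\{y^i\}$ centred at $Q_0$, the identity $\Delta_h(|y|^2/2)=-\big(n+y^j\partial_j\log\sqrt{\det h}\big)$; since $\sqrt{\det h}(0)=1$, it follows that $\det h_{ij}\equiv 1$ near $Q_0$ if and only if $\sqrt{\det h}$ is constant along every coordinate ray, i.e.\ if and only if $\Delta_h\big(d_h(\cdot,Q_0)^2\big)\equiv -2n$ near $Q_0$. So the theorem is equivalent to finding $\varphi>0$ near $P$ such that, writing $\tilde g=\varphi g$ and $\tilde r=d_{\tilde g}(\cdot,P)$, one has $\Delta_{\tilde g}(\tilde r^{\,2})=-2n$ in a neighbourhood of $P$. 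Combining the conformal transformation laws for the Laplacian and the gradient with the eikonal identity $|\nabla^{\tilde g}\tilde r^{\,2}|_{\tilde g}^2=4\tilde r^{\,2}$, one may even eliminate $\varphi$ (it equals $|\nabla^g\tilde r^{\,2}|_g^2/(4\tilde r^{\,2})$) and reduce to a single second-order equation for the germ of $\tilde r^{\,2}$ at $P$, with $\tilde r^{\,2}(P)=0$, $\nabla(\tilde r^{\,2})(P)=0$, $\nabla^2(\tilde r^{\,2})(P)=2\,\mathrm{Id}$ pinning down that germ; in particular $\varphi(P)$ may be normalised to $1$, since multiplying $\varphi$ by a constant only rescales the normal chart by that constant and preserves the determinant condition.

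\textbf{Construction.} Rather than attack that PDE, one builds $\varphi$ and the $\tilde g$-normal chart simultaneously. Fix $g$-normal coordinates $\{x^i\}$ at $P$ on a ball $B(0,\delta_0)\subset\mathbb{R}^n$ (these exist, with $g$ of class $C^{a+1,\beta}$ there, because $M$ is $C^{a+2,\beta}$), and look for a near-identity diffeomorphism $\Psi$ of a smaller ball, $\Psi(0)=0$, $D\Psi(0)=\mathrm{Id}$, which is to be the $\tilde g$-normal chart. The determinant condition reads $\varphi(\Psi(y))^{\,n}\,\det(g_{ij}\circ\Psi)\,(\det D\Psi)^2\equiv 1$ and hence determines $\varphi=\varphi_\Psi$ algebraically from $\Psi$, while ``$\Psi$ is $\tilde g$-normal'' becomes the fixed-point equation $\Psi=(\exp^g_P)^{-1}\!\circ\exp^{\varphi_\Psi g}_P$. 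Passing to polar form, this system decouples into ordinary differential equations in the radial parameter $s$ along each ray $\{s\theta\}$, $\theta\in S^{n-1}$ — the $\tilde g$-geodesic equation coupled to the radial derivative of the determinant relation — with data prescribed at $s=0$; existence, uniqueness, and Hölder dependence on the initial data and on the parameter $\theta$ then yield $\Psi$, hence $\varphi=\varphi_\Psi>0$ on a small ball (positivity is immediate from $\varphi(P)=1$). Equivalently one checks that $\Psi\mapsto(\exp^g_P)^{-1}\!\circ\exp^{\varphi_\Psi g}_P$ is a contraction on a small closed ball of $C^{a+1,\beta'}$ diffeomorphisms, the required smallness being produced by the rescaling $y\mapsto\lambda y$, which drives $\varphi_\Psi$ and its derivatives towards those of the constant $1$ and $\exp^{\varphi_\Psi g}$ towards $\exp^{g}$ on $B(0,\lambda\delta)$.

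\textbf{Main obstacle.} The genuine difficulty is the regularity bookkeeping, not the bare existence: the coefficients of the radial equations (the Christoffel symbols of $\tilde g$) carry one derivative of $\varphi g$, so the construction must be arranged so that $\varphi$ loses \emph{no} derivative relative to $g$ and only the Hölder exponent degrades, $\beta\rightarrow\beta'$. This rests on the sharp regularity theory of the geodesic flow of a metric whose leading derivatives are merely Hölder continuous, and it is precisely to keep one spare derivative available for this that $M$ is taken of class $C^{a+2,\beta}$ while $g$ is only $C^{a+1,\beta}$. Once that margin is secured, the Gauss lemma and the conformal transformation formulas invoked above hold in the classical sense and the argument closes.
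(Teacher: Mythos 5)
First, a point of comparison: the paper does not prove this theorem at all --- it is quoted from J.~Cao \cite{Cao} and M.~G\"unther \cite{Gun} --- so there is no internal proof to measure yours against. Judged on its own terms, your reduction of the conclusion to the radial condition $\Delta_{\tilde g}(\tilde r^{\,2})=-2n$, equivalently $\partial_r\log\sqrt{\det\tilde g}=0$ along the coordinate rays, is correct (and consistent with the sign convention $\Delta_gf(r)=-f''-\frac{n-1}{r}f'-f'\partial_r\log\sqrt{\det g}$ used in this thesis); it is indeed the natural starting point.

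The construction itself, however, has a genuine gap at its central step. The determinant condition forces $\varphi_\Psi=[\det(g_{ij}\circ\Psi)\,(\det D\Psi)^2]^{-1/n}$, which involves the \emph{full} Jacobian $D\Psi$, angular components included; the Christoffel symbols of $\tilde g=\varphi_\Psi g$ therefore contain $\nabla\varphi_\Psi$, that is, second derivatives of $\Psi$ in \emph{all} directions. The $\tilde g$-geodesic equation along one ray is thus coupled to the neighbouring rays through these transverse derivatives, and the system does \emph{not} decouple into ODEs in the radial parameter as you claim. Your fallback, the fixed-point map $\Psi\mapsto(\exp^g_P)^{-1}\circ\exp^{\varphi_\Psi g}_P$, suffers from a loss of derivatives: $\Psi\in C^{a+1,\beta'}$ gives only $\varphi_\Psi\in C^{a,\beta'}$, hence $\tilde g\in C^{a,\beta'}$, whose Christoffels are $C^{a-1,\beta'}$ and whose exponential map is a priori no better; the map therefore does not send a ball of $C^{a+1,\beta'}$ diffeomorphisms into itself, let alone contract, and the rescaling $y\mapsto\lambda y$ cannot repair the loss of whole derivatives (it only shrinks seminorms within a fixed regularity class). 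This derivative count is precisely the obstacle that Cao (via a degenerate elliptic equation) and G\"unther (via an iteration exploiting that only the \emph{radial} derivative of $\det\tilde g$ needs to be controlled, a much weaker demand than full control of the exponential map) designed their arguments to circumvent. Labelling it ``regularity bookkeeping'' and asserting that ``the argument closes'' once ``that margin is secured'' leaves the essential content of the theorem unproved.
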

 
On remarque que si la métrique $g\in H_2^p(M,T^*M\otimes T^*M)$ avec $p>n$, alors elle est de classe $C^{1,\beta}$, la variété $(M,g)$ admet une métrique de Cao--Günther. Il n'est donc pas  utile de supposer que la métrique $g$ est $C^\infty$ dans une boule pour l'existence de telles coordonnées.

\section{Le théorème de la masse positive}

Dans cette section, on rappelle les résultats obtenus au sujet de la masse positive. 
\begin{definition}
 Une variété  riemannienne $M$ muni d'une métrique $C^\infty$, $g$ est dite asymptotiquement plate d'ordre $\tau>0$, s'il existe  une  décomposition $M=M_0\cup M_\infty$ (avec $M_0$ compacte) et un difféomorphisme $M_\infty\rightarrow \mathbb R^n-B_R(O)$ pour un certain $R>0$ tels que:
\begin{equation}\label{ogog}
 g_{ij}=\delta_{ij}+O(\rho^{-\tau}),\quad \partial_k g_{ij}=O(\rho^{-\tau-1}),\quad \partial_{kl}g_{ij}=O(\rho^{-\tau-2})
\end{equation}
 quand $\rho=|z|\to +\infty$ dans les coordonnées $\{z^i\}$ induites sur $M_\infty$. Les  $\{z^i\}$ sont appelés les coordonnées asymptotiques.
\end{definition}
On écrit $g_{ij}=\delta_{ij}+O''(\rho^{-\tau})$ si $g_{ij}$ satisfait \eqref{ogog}. D'une façon analogue, on peut définir $O''$ pour tout fonction. 
\begin{definition}

Etant donné une variété riemannienne asymptotiquement plate $(M,g)$ avec des coordonnées asymptotiques $\{z^i\}$, on définit la  masse de la façon suivante:
$$m(g)=\lim_{\rho\to +\infty}\omega_{n-1}^{-1}\int_{\partial B_P(\rho)}\partial_\rho(g_{\rho\rho}-g_{ii})+\rho^{-1}(ng_{\rho\rho}-g_{ii})d\sigma_\rho$$
\end{definition}
Cette  définition de la  masse dépend des coordonnées asymptotiques. R.~Bartnik \cite{Bar} a montré que si $(M,g)$  asymptotiquement plate d'ordre $\tau>(n-2)/2$, alors $m(g)$ est bien définie et dépend seulement de la métrique $g$.\\
Le théorème de la masse positive s'énonce comme suit:
\begin{theorem}\label{msp1}
 Soit $(M,g)$ une variété riemannienne de dimension $n\geq 3$, asymptotiquement plate d'ordre $\tau>(n-2)/2$, de courbure scalaire positive. La masse $m(g)$ est toujours positive ou nulle. De plus $m(g)=0$ si et seulement si  $(M,g)$ est isométrique à l'espace euclidien $(\mathbb R^n,\mathcal E)$ muni de sa métrique canonique.  
\end{theorem}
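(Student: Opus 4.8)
Le résultat énoncé est le théorème de la masse positive, dû à R.~Schoen et S.-T.~Yau (méthode des hypersurfaces minimales, valable pour $3\leq n\leq 7$) et à E.~Witten (cas où $M$ est spin). Le plan que je suivrais est d'esquisser la preuve spinorielle de Witten sous l'hypothèse supplémentaire que $M$ est spin, le cas général relevant de la méthode variationnelle de Schoen--Yau et constituant le véritable obstacle. D'abord, sur la variété spin asymptotiquement plate $(M,g)$, j'introduirais le fibré des spineurs, l'opérateur de Dirac $D$, et la formule de Lichnerowicz--Weitzenböck
$$D^2=\nabla^*\nabla+\tfrac{1}{4}R_g,$$
puis, dans les coordonnées asymptotiques $\{z^i\}$, je trivialiserais le fibré spinoriel à l'infini et fixerais un spineur constant $\psi_0$ de norme $1$.

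Ensuite, l'étape clé serait de résoudre l'équation de Dirac $D\psi=0$ avec $\psi-\psi_0\to 0$ à l'infini, la décroissance étant mesurée dans un espace de Sobolev à poids adapté à \eqref{ogog}. L'existence d'une telle solution reposerait sur le caractère Fredholm d'indice nul de $D$ entre espaces à poids convenables sur une variété asymptotiquement plate, combiné à la coercivité fournie par $R_g\geq 0$ : si $D\phi=0$ avec $\phi\to 0$, l'intégration par parts ci-dessous force $\phi\equiv 0$, donc $D$ est injectif, donc surjectif. J'appliquerais alors la formule de Weitzenböck sur $M_\rho:=M\cap B_{P}(\rho)$ et la formule de Green pour obtenir l'identité de Witten
$$\int_{M_\rho}|\nabla\psi|^2+\tfrac{1}{4}R_g|\psi|^2\,\di v=\int_{\partial B_{P}(\rho)}\langle\,\cdots\,\rangle\,\di\sigma_\rho,$$
et un calcul de terme de bord utilisant \eqref{ogog} et $\psi\to\psi_0$ montrerait qu'il converge, quand $\rho\to+\infty$, vers $c_n\,m(g)$ avec $c_n>0$. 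Comme $R_g\geq 0$, le membre de gauche est positif ou nul, d'où $m(g)\geq 0$.

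Pour la rigidité, si $m(g)=0$, l'identité de Witten donne $\nabla\psi\equiv 0$ et $R_g|\psi|^2\equiv 0$ pour tout choix de $\psi_0$ ; en faisant varier $\psi_0$ on obtiendrait un espace de spineurs parallèles de dimension maximale, ce qui entraîne que le tenseur de courbure de Riemann de $g$ est identiquement nul. La variété $(M,g)$ serait alors plate et complète ; le raccordement asymptotique à $\mathbb R^n$ (qui impose la connexité simple et exclut tout quotient non trivial de l'espace euclidien) permettrait de conclure qu'elle est isométrique à $(\mathbb R^n,\mathcal E)$.

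L'obstacle principal est double. Le premier point technique est l'analyse de l'équation de Dirac sur une variété asymptotiquement plate : théorie de Fredholm dans les espaces de Sobolev à poids, estimations de décroissance du spineur et justification du passage à la limite dans le terme de bord. Le second, plus sérieux, est que l'argument spinoriel ne couvre que le cas où $M$ est spin ; pour le cas général il faudrait recourir à la construction par Schoen--Yau d'hypersurfaces minimales stables, dont l'analyse (inégalité de stabilité, équation de Gauss contractée, descente récursive sur la dimension) est nettement plus lourde et impose la restriction $n\leq 7$ liée à la régularité des hypersurfaces d'aire minimale.
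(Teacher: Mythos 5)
The paper offers no proof of this statement. Theorem \ref{msp1} is quoted from the literature --- the author attributes it to T.~Aubin, R.~Schoen and S.T.~Yau, and E.~Witten --- and immediately afterwards writes that theorems \ref{msp1} and \ref{aubm} will be used \emph{sous r\'eserve de leur validit\'e}. There is therefore nothing in the paper to compare your argument against: the positive mass theorem is an external input here, exactly as in Schoen's resolution of the Yamabe problem.

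Judged on its own merits, your proposal is a correct roadmap of Witten's argument rather than a proof, as you yourself acknowledge. The outline is right: trivialize the spinor bundle at infinity, solve $D\psi=0$ with $\psi\to\psi_0$, apply the Lichnerowicz formula on $M_\rho$, and identify the limit of the boundary integral with $c_n\,m(g)$, $c_n>0$. But the gaps you flag are genuine and substantial. First, the Fredholm theory for $D$ in weighted spaces and the boundary-term computation --- checking that the decay \eqref{ogog} with $\tau>(n-2)/2$ makes the surface integral converge to the mass with no spurious contributions --- is where most of the work lies, and it is only named, not done. Second, in the rigidity step, a maximal family of parallel spinors does kill the full Riemann tensor, but concluding that $(M,g)$ is globally isometric to $(\mathbb R^n,\mathcal E)$ requires using the asymptotically flat end structure to exclude nontrivial flat quotients; you gesture at this correctly but do not carry it out. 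Third, and most seriously, the spin hypothesis: in the applications of this thesis (case $(b)$ of the proof of Theorem \ref{conj aub}, and the Hebey--Vaugon equivariant setting) the manifold is not assumed spin, so the spinorial argument does not cover the cases actually needed; the Schoen--Yau method handles $n\le 7$ without spin, and the remaining case ($n\ge 8$, non-spin) is precisely the delicate part --- which is why the author invokes the theorem only subject to its validity. In short: correct strategy, honestly identified obstacles, but not a proof, and not one the paper attempts either.
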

Beaucoup de mathématiciens ont contribué à la preuve de ce théorème, essentiellement T.~Aubin \cite{Aub2,Aub6} R.~Schoen et S.T.~Yau \cite{SY,SY2,SY3}, E.~Witten \cite{Wit}. \\
Récemment T.~Aubin \cite{Aub2} a montré que:
\begin{theorem}\label{aubm}
Si $g$ est une métrique de Cao--Günther, $L_g$ est inversible et si au voisinage de $P_0\in M$ la fonction de Green normalisée $G_{P_0}$ de $L_g$ s'écrit
$$G_{P_0}(Q)=r^{2-n}+A+O(r)$$ 
avec $r=d(P_0,Q)$, alors $A>0$ sauf si $(M,g)$ est conformément difféomorphe à la sphère $(S_n,g_{can})$, auquel cas $A=0$.
\end{theorem}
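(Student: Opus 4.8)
The plan is to reduce the statement to the positive mass theorem (Theorem \ref{msp1}): one inverts the normalized Green function $G_{P_0}$ at $P_0$ to produce a scalar-flat, asymptotically flat manifold whose ADM mass is a positive multiple of $A$.

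\textbf{Step 1: the blown-up metric.} On $\hat M:=M\setminus\{P_0\}$ set $\hat g:=G_{P_0}^{4/(n-2)}g$; this is a genuine metric since $G_{P_0}>0$. Because $L_gG_{P_0}=0$ on $\hat M$, the conformal transformation law of the scalar curvature, i.e. the Yamabe equation \eqref{yamabe} applied to $\varphi=G_{P_0}$, gives $R_{\hat g}G_{P_0}^{(n+2)/(n-2)}=\frac{4(n-1)}{n-2}L_gG_{P_0}=0$, hence $R_{\hat g}\equiv 0$. Since $G_{P_0}(Q)\sim r^{2-n}$ near $P_0$ one has $\int_0^\delta G_{P_0}^{2/(n-2)}\di r=\infty$, so $\hat g$ is complete and $P_0$ is pushed to infinity.

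\textbf{Step 2: asymptotic flatness.} Work in $g$-normal coordinates $\{x^i\}$ at $P_0$ in which $\det g=1$ (the Cao--G\"unther hypothesis) and pass to the inverted coordinates $z^i=x^i/|x|^2$, so that $\rho:=|z|=1/r\to\infty$ as $Q\to P_0$. From $G_{P_0}=r^{2-n}+A+O(r)$ one gets $G_{P_0}^{4/(n-2)}=r^{-4}\bigl(1+\tfrac{4A}{n-2}r^{n-2}+O(r^{n-1})\bigr)$; since $r^{-4}\mathcal E$ is the pull-back of $\mathcal E$ under inversion and the normal-coordinate expansion of $g$ has its first correction to $\delta_{ij}$ of order $O(r^2)$ with the $\det g=1$ condition controlling the trace-type terms, this yields
$$\hat g_{ij}(z)=\Bigl(1+\tfrac{4A}{n-2}\,\rho^{2-n}\Bigr)\delta_{ij}+O''(\rho^{1-n}),$$
so $\hat g$ is asymptotically flat of order $\tau=n-2$. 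The required derivative bounds on the $O(r)$ remainder of $G_{P_0}$ follow from elliptic estimates applied to $G_{P_0}-r^{2-n}$, which near $P_0$ solves a linear elliptic equation with smooth coefficients. Since $n\ge 3$ one has $\tau=n-2>(n-2)/2$, so by Bartnik's theorem $m(\hat g)$ is well defined, and $R_{\hat g}\equiv 0$ makes the hypotheses of Theorem \ref{msp1} available (near infinity $\hat g$ is smooth because $g$ and $G_{P_0}$ are smooth near $P_0$).

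\textbf{Step 3: mass versus $A$, and conclusion.} Evaluating the flux integral in the definition of $m(\hat g)$ with the expansion of Step 2 gives $m(\hat g)=c_nA$ for an explicit constant $c_n>0$, the remaining contributions integrating to $0$ precisely because $\det g=1$. Theorem \ref{msp1} then gives $A=c_n^{-1}m(\hat g)\ge 0$, with $A=0$ if and only if $(\hat M,\hat g)$ is isometric to $(\mathbb R^n,\mathcal E)$. If $A=0$, then $g=G_{P_0}^{-4/(n-2)}\hat g$ is conformally flat on $M\setminus\{P_0\}$, and a removable-singularity argument (stereographic projection conformally compactifies $(\mathbb R^n,\mathcal E)$ to $(S_n,g_{can})$, the point re-added being $P_0$) shows $(M,g)$ is conformally diffeomorphic to $(S_n,g_{can})$. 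Conversely, for $(M,g)=(S_n,g_{can})$ the inverted metric $\hat g$ is exactly flat $\mathbb R^n$, whence $m(\hat g)=0$ and $A=0$. Thus $A>0$ unless $(M,g)$ is conformally diffeomorphic to $(S_n,g_{can})$, in which case $A=0$.

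\textbf{Main obstacle.} The genuinely delicate part is Steps 2--3: proving that the blown-up metric is asymptotically flat of order exactly $n-2$ with controlled first and second derivatives, and that the boundary flux defining the ADM mass collapses to $c_nA$ with $c_n>0$. This is exactly where the Cao--G\"unther normalization $\det g=1$ is indispensable — it is what keeps the normal-coordinate expansion of $g$ clean enough that no spurious terms survive in the mass integral — and it forces one to bootstrap the $O(r)$ error term in the Green function up to $C^2$ control via elliptic regularity.
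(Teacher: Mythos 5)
First, a point of order: the paper does not actually prove Theorem \ref{aubm}. It attributes the statement to T.~Aubin \cite{Aub2} and explicitly invokes it \emph{sous r\'eserve de sa validit\'e}, so there is no internal proof to compare yours against. Your proposal is the classical Schoen-type reduction to the positive mass theorem \ref{msp1} via the stereographic blow-up $\hat g=G_{P_0}^{4/(n-2)}g$, which is indeed the route by which every rigorously established case of this statement is obtained, and Steps 1 and the rigidity discussion in Step 3 are fine.

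However, Step 2 contains a genuine gap, and it sits exactly where you locate the ``main obstacle''. In $g$-normal coordinates at $P_0$ one has $g_{ij}=\delta_{ij}+O(r^2)$, the quadratic term being governed by the curvature at $P_0$ (essentially $-\tfrac{1}{3}R_{ikjl}x^kx^l$). The Cao--G\"unther normalization $\det g=1$ kills the \emph{trace} of this term (it forces $Ric(P_0)=0$ and, more generally, the vanishing of symmetrized contracted curvature derivatives, cf.\ Lemma \ref{lemme HV ex}), but it does not kill the trace-free Weyl part. Under the inversion $z^i=x^i/|x|^2$ this surviving $O(r^2)$ term becomes an $O(\rho^{-2})$ perturbation of $\delta_{ij}$, not $O(\rho^{1-n})$ as you assert. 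Hence $\hat g$ is asymptotically flat only of order $\tau=2$ in general, and Bartnik's criterion $\tau>(n-2)/2$ fails as soon as $n\geq 6$: the ADM flux integral then does not converge to a coordinate-independent mass, and the identity $m(\hat g)=c_nA$ has no meaning. Nothing in the hypothesis $G_{P_0}=r^{2-n}+A+O(r)$ supplies the missing decay of $g_{ij}-\delta_{ij}$ itself. Your argument is correct precisely when the $O(r^2)$ term is absent to sufficiently high order --- i.e.\ when $(M,g)$ is conformally flat near $P_0$, or $n\in\{3,4,5\}$, or more generally $\omega(P_0)>(n-6)/2$ so that $g_{ij}=\delta_{ij}+O(r^{\omega+2})$ --- which is exactly the restricted setting in which the paper actually applies the theorem (case $(b)$ of the proof of Theorem \ref{conj aub}, and the proof of Theorem \ref{HV theorem}). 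In the remaining generality the statement is a contested claim of \cite{Aub2} that cannot be recovered by the computation you sketch; this is precisely why the author hedges with ``sous r\'eserve de leur validit\'e''.
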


On utilisera les deux théorèmes \ref{msp1}, \ref{aubm}, sous réserve de leur validité.

\section{Théorème d'existence de solutions sans présence de symétries}

\begin{theorem}\label{conj aub}
 Soit $M$ une variété compacte $C^\infty$ de dimension $n\geq 3$, $g$ une métrique riemannienne qui satisfait l'hypothèse $(H)$. Si $(M,g)$ n'est pas conformément difféomorphe à la sphère $(S_n, g_{can})$,  alors $\mu(g)<K^{-2}(n,2)$.
\end{theorem}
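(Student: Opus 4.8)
The plan is to reduce the inequality $\mu(g)<K^{-2}(n,2)$ to the construction of a suitable test function, exactly as in the smooth case, and then to verify that Aubin's and Schoen's constructions go through when $g$ only satisfies hypothesis $(H)$. The key observation is that $\mu(g)$ is a conformal invariant even for metrics in $H^p_2$ (Proposition \ref{invconforme}), so we may replace $g$ by any conformal metric; in particular, since $g$ is of class $C^{1,\beta}$, the Cao--G\"unther theorem (Theorem \ref{caogun}) lets us assume $\det g=1$ in a system of normal coordinates centred at the smooth point $P_0$, where $g$ is in fact $C^\infty$. If $\mu(g)\le 0$ the inequality is trivial since $K^{-2}(n,2)>0$, so we may assume $\mu(g)>0$; then by Proposition \ref{green coroll} the conformal Laplacian $L_g$ is invertible and admits a (normalised) Green function $G_{P_0}$ with $G_{P_0}\in C^\infty$ near $P_0$ and $G_{P_0}(Q)=r^{2-n}+A+O(r)$ in the Cao--G\"unther coordinates.

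From here the argument splits according to the local geometry at $P_0$, where $g$ is smooth, so the standard dichotomy applies verbatim. First I would treat the case $n\ge 6$ with $(M,g)$ not conformally flat near $P_0$: here one uses Aubin's test functions $u_\e$ (as in Proposition \ref{inegalite large}) with the correction terms, concentrated at $P_0$; the expansion of $I_g(u_\e)$ produces a term involving $\|W_g(P_0)\|^2$ (nonnegative and, if we want strictness, chosen $P_0$ where Weyl does not vanish — but more carefully one argues that either Weyl is nonzero somewhere, giving the gain, or $(M,g)$ is conformally flat everywhere). The contribution of $h=\frac{n-2}{4(n-1)}R_g$ to $E(u_\e)$ is controlled exactly as in Proposition \ref{inegalite large}: since $R_g\in L^p$ with $p>n/2$, one has $\int_M h u_\e^2\,\di v=O(\e^{2-n/p})=o(\text{leading term})$, so the singular curvature does not spoil the estimate. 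Second, in the conformally flat case and in dimensions $3,4,5$, I would use Schoen's test function built from the Green function $G_{P_0}$: one takes $\varphi_\e = (r^2+\e^2)^{(2-n)/2}$ glued to $\e^{(n-2)/2}G_{P_0}$ outside a small ball, and the computation of $I_g(\varphi_\e)$ yields, to leading order, a term proportional to $-A$ (the mass-type constant), which is strictly positive by the positive mass theorem in the form of Theorem \ref{aubm} unless $(M,g)$ is conformally diffeomorphic to $(S_n,g_{can})$ — the excluded case. Both computations take place in the region where the metric is smooth, so all the classical Taylor expansions of $g$, $R_g$, $\Delta_g$ are available there.

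The main obstacle, and the only place where the singular hypothesis enters nontrivially, is ensuring that the global error terms coming from the region $M\setminus B_{P_0}(\delta)$ — where $g$ is merely $H^p_2$ — remain negligible. Concretely one must check: (a) the Green function $G_{P_0}$, which is only $H^p_2$ away from $P_0$ by Proposition \ref{green coroll}(ii), still satisfies the pointwise bound $|G_{P_0}(Q)|\le c\,d(P_0,Q)^{2-n}$ and is $C^2$ near $P_0$ so that Schoen's expansion $r^{2-n}+A+O(r)$ is legitimate there; this is exactly what Propositions \ref{green coroll} and \ref{grgr} provide. (b) The $L^p$-curvature terms $\int h\varphi_\e^2$ are $o(\e^{n-2})$ or at worst absorbed, using H\"older and $p>n$ (not just $p>n/2$), which gives room because $\|\varphi_\e\|_{2p/(p-1),\,\text{near }P_0}^2 = O(\e^{2-n/p})$ with $2-n/p>1$. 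Once these two points are in place the numerical inequality $I_g(\varphi_\e)<K^{-2}(n,2)$ for $\e$ small follows from the classical computation, and since $\mu(g)=\inf I_g\le I_g(\varphi_\e)$ we conclude $\mu(g)<K^{-2}(n,2)$, as claimed.
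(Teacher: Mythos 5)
Your proposal follows essentially the same route as the paper: reduce by conformal invariance to a Cao--G\"unther metric, dispose of the case $\mu(g)\le 0$, and then run Aubin's test functions (non--conformally flat at $P_0$, $n\ge 6$) or Schoen's Green-function test functions (conformally flat at $P_0$ or $n=3,4,5$) supported in the ball $B_{P_0}(\delta)$ where $g$ is smooth, with the $L^p$ curvature term and the Green function estimates from Propositions \ref{grgr} and \ref{green coroll} handling the singular region. The only point to tighten is your parenthetical about choosing a point where the Weyl tensor does not vanish: hypothesis $(H)$ forces the test functions to concentrate at the fixed smooth point $P_0$, so the dichotomy must be taken at $P_0$ itself (Weyl nonzero at $P_0$ versus conformally flat at $P_0$), exactly as the paper does.
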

On montre ce théorème sous réserve de la validité du théorème \ref{aubm}.\\
Ce théorème affirme que la conjecture de T.~Aubin \ref{Aubincon} reste vraie pour des métriques qui satisfont l'hypothèse $(H)$ (pas nécessairement $C^\infty$ partout).  \\
Pour montrer ce théorème, on se base sur les travaux de T.~Aubin et R.~Schoen dans le cas où $g$ est $C^\infty$. La stratégie est la suivante: on construit des fonctions test pour la fonctionnelle $I_g$, à support dans des petites boules géodésiques. Puisque le problème est local et que la métrique $g$ est $C^\infty$ sur la boule $B_{P_0}(\delta)$, alors la preuve du théorème ci-dessus est  identique à celle dont la métrique $g$ est $C^\infty$ sur $M$ (c'est pour cette raison qu'on a supposé que la métrique est $C^\infty$ dans la boule $B_{P_0}(\delta)$). On  prendra donc les fonctions test de T.~Aubin et R.~Schoen à support dans $B_{P_0}(\delta)$.\\

\begin{proof}[\textbf{Preuve du théorème \ref{conj aub}}]
Si $\mu(g)\leq 0$ alors l'inégalité est triviale. \`A partir de maintenant jusqu'à la fin de la preuve, on suppose que $\mu(g)>0$. Quitte à considérer une métrique conforme, on peut supposer que $g$ est la métrique de Cao--Günther donnée par le théorème \ref{caogun}. En effet, $\mu(g)$ est un invariant conforme d'après la proposition \ref{invconforme}.\\
 Deux cas se présentent:\\
$(a)$ Soit $(M,g)$ n'est pas conformément plate en $P_0$ et $n\geq 6$. Dans ce cas, on pose $\varphi_\e=\eta v_\e$, $\eta$ une fonction cut-off de support dans $B_{P_0}(2\e)$, $\eta=1$ sur $B_{P_0}(\e)$, $2\e<\delta$ et
$$v_\e(Q)=\biggl(\frac{\e}{r^2+\e^2}\biggr)^{\frac{n-2}{2}}\quad r=d(P_0,Q)$$
Comme $supp\varphi\subset B_{P_0}(\delta)$ et que la métrique $g$ est 
de classe $C^\infty$ sur cette boule, on obtient le lemme suivant (cf. T.~Aubin \cite{Aub}): 
\begin{lemma}\label{lemme aub yam}
\begin{equation*}
\mu(g)\leq I_g(\varphi_\e)\leq \begin{cases}& K^{-2}(n,2)-c|W(P_0)|^2\e^4+o(\e^4)\text{ si }n>6\\
& K^{-2}(n,2)-c|W(P_0)|^2\e^4\log \frac{1}{\e}+O(\e^4)\text{ si }n=6
\end{cases}
\end{equation*}
où $|W(P_0)|$ est la norme du tenseur de Weyl au point $P_0$.
\end{lemma}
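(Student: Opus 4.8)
The plan is to run Aubin's local expansion of the Yamabe quotient. Since $\mu(g)$ is a conformal invariant (Proposition~\ref{invconforme}) and the condition $W_g(P_0)\neq0$ is conformally invariant (because $W_{\gt}=e^fW_g$), I may replace $g$ inside its conformal class by the Cao--G\"unther metric of Theorem~\ref{caogun}: in a system of geodesic normal coordinates $\{x^i\}$ centered at $P_0$ one then has $\det g\equiv1$, hence $\di v_g=\di x$ on $B_{P_0}(\delta)$, and $g$ is still $C^\infty$ there, so the whole computation is a smooth local one (insensitive to $g$ being only in $H^p_2$ away from $B_{P_0}(\delta)$). In this gauge one has the Taylor expansion $g_{ij}=\tfrac13R_{ikjl}(P_0)x^kx^l+\delta_{ij}+O(|x|^3)$ together with the conformal-normal-coordinate identities $R_g(P_0)=0$, the vanishing of the symmetrized $\nabla\mathrm{Ric}(P_0)$, and $\Delta_gR_g(P_0)$ equal to a fixed nonzero multiple of $\|W_g(P_0)\|^2$.

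I would then expand the three ingredients of $I_g(\varphi_\e)=\bigl(\int_M|\nabla\varphi_\e|^2_g\,\di v_g+\an\int_MR_g\varphi_\e^2\,\di v_g\bigr)\|\varphi_\e\|_N^{-2}$, writing $\varphi_\e=v_\e$ where $\eta\equiv1$ and bounding the cut-off region by $O(\e^{n-2})$, which is $o(\e^4)$ if $n>6$ and $O(\e^4)$ if $n=6$, matching the stated remainders. On the core ball, since $v_\e$ is radial, $g^{ij}\partial_iv_\e\,\partial_jv_\e=(v_\e')^2r^{-2}g^{ij}x_ix_j=(v_\e')^2(1+O(r^4))$: the would-be $O(r^2)$ correction to $g^{ij}x_ix_j$ dies by the antisymmetry of the Riemann tensor and the $O(r^3)$ one by the conformal-normal-coordinate identities; likewise $R_g(x)=\tfrac12\nabla_{ij}R_g(P_0)\,x^ix^j+O(|x|^3)$ since the constant and linear terms vanish, and there is no $\sqrt{\det g}$ correction because $\det g=1$. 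Each correction, integrated radially against $r^{n-1}\di r$ and the appropriate powers of $v_\e$ or $v_\e'$, reduces after the substitution $r=\e t$ to $\e^4$ times an explicit integral whose integrand decays like $t^{5-n}$ at infinity; this converges when $n>6$, producing a pure $O(\e^4)$ term, and diverges like $\log(\delta/\e)$ when $n=6$, which is exactly the source of the dichotomy in the statement. For the denominator, $\det g=1$ gives $\|\varphi_\e\|_N^N=\int_{\R^n}v_\e^N\,\di x+O(\e^n)$, and $\|v_\e\|_{L^N(\R^n)}$ is independent of $\e$ by scaling.

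Putting the pieces together, the numerator equals $\|\nabla v_\e\|_{L^2(\R^n)}^2+A\e^4+o(\e^4)$, respectively $\|\nabla v_\e\|_{L^2(\R^n)}^2+A\e^4\log\tfrac1\e+O(\e^4)$ when $n=6$, where $A$ is a universal linear combination of the second-order curvature invariants of $g$ at $P_0$; dividing by the denominator and using $\|\nabla v_\e\|_{L^2(\R^n)}^2=K^{-2}(n,2)\,\|v_\e\|_{L^N(\R^n)}^2$ (equation~\eqref{inte}, Theorem~\ref{INM}) yields $I_g(\varphi_\e)=K^{-2}(n,2)+A'\e^4+o(\e^4)$ with $A'$ proportional to $A$, and the inequality $\mu(g)\le I_g(\varphi_\e)$ is immediate from the definition of $\mu(g)$ as an infimum over nonnegative test functions. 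The decisive and most delicate step, the one I expect to be the real obstacle, is to show $A'=-c\,\|W_g(P_0)\|^2$ with $c>0$: one must insert the precise $O(|x|^4)$ term of the metric expansion (a contraction of $\nabla^2\mathrm{Ric}$ and of products of the Riemann tensor) and then use $\mathrm{Ric}(P_0)=0$, the vanishing of the symmetrized $\nabla\mathrm{Ric}(P_0)$, and the explicit value of $\Delta_gR_g(P_0)$ in terms of $\|W_g(P_0)\|^2$ to see that every curvature term other than $\|W_g(P_0)\|^2$ cancels and that the surviving coefficient is strictly negative. This is precisely Aubin's computation in \cite{Aub}; for $n=6$ the same bookkeeping is carried out with the logarithmically divergent integrals and produces the $\e^4\log\tfrac1\e$ gain.
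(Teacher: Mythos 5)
Your proposal is correct and follows essentially the same route as the paper: the thesis reduces the lemma to the classical smooth computation by noting that $\varphi_\e$ is supported in $B_{P_0}(\delta)$ where $g$ is $C^\infty$, passes to the Cao--G\"unther (or Lee--Parker conformal normal) gauge using the conformal invariance of $\mu(g)$, and then cites Aubin \cite{Aub} and Lee--Parker \cite{LP} for the expansion whose $\e^4$ (resp. $\e^4\log\frac{1}{\e}$) coefficient is $-c|W(P_0)|^2$. Your reconstruction of that expansion, including the source of the $n>6$ versus $n=6$ dichotomy in the convergence of $\int t^{5-n}\,\di t$ and the role of the identity relating $\Delta_gR_g(P_0)$ to $|W(P_0)|^2$, is the intended argument, and your deferral of the final coefficient computation to \cite{Aub} matches the level of detail the paper itself provides.
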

J.M.~Lee et T.~Parker  ont donné une preuve simple de ce lemme, en utilisant les coordonnées géodésiques conformes en $P_0$ (cf. \cite{LP}). 
Par hypothèse la métrique n'est pas conformément plate au voisinage de $P_0$ et $n\geq 6$ donc 
$|W(P_0)|\neq 0$ d'où $\mu(g)<K^{-2}(n,2)$.\\
$(b)$ Soit $(M,g)$ est conformément plate en $P_0$ ou $n=3,\; 4\text{ ou }5$: Puisque $\mu(g)$ est un invariant conforme, quitte à considérer une métrique conforme à $g$, on peut supposer que la métrique est celle de Cao--Günther et que la fonction de Green normalisée $G_{P_0}$, construite dans la proposition \ref{green coroll}, s'écrit:
$$G_{P_0}(Q)=r^{2-n}+A+O(r)$$
au voisinage de $P_0$, avec $r=d(P_0,Q)$ (cf. l'article de J.M.~Lee et T.~Parker \cite{LP} pour la preuve de ce développement limité).\\
Si la métrique $g$ satisfait l' hypothèse $(H)$ et que $(M,g)$ n'est pas conformément 
difféomorphe à la sphère $(S_n,g_{can})$, par le théorème \ref{aubm}, nous savons que $A>0$. 
Considérons alors $\varphi_\e$, la fonction test introduite par R.~Schoen \cite{Schoen}, définie pour tout $Q\in M$ par:
\begin{equation*}
\varphi_\e(Q)= \begin{cases}& v_\e(Q)\text{ si }Q\in B_{P_0}(\rho_0)\\
& \e_0[G_{P_0}-\eta (G_{P_0}-r^{2-n}-A)](Q) \text{ si }Q\in B_{P_0}(2\rho_0)-B_{P_0}(\rho_0)\\
& \e_0 G_{P_0}(Q)\text{ si }Q\in M-B_{P_0}(2\rho_0)
\end{cases}
\end{equation*}
avec $2\rho_0<\delta$, $(\frac{\e}{\rho_0^2+\e^2})^{(n-2)/2}=\e_0(\rho_0^{2-n}+A)$ et 
$\eta$ une fonction réelle positive  $C^{\infty}$, décroissante sur $\mathbb{R}_+$, à support dans $]-2\rho_0,2\rho_0[$, identiquement égale à $1$ sur $[0,\rho_0]$, dont le gradient vérifie
$|\nabla\eta(r)|\leq\rho_0^{-1}$. Puisque la métrique $g$ 
est $C^\infty$ sur $B_{P_0}(2\rho_0)\subset B_{P_0}(\delta)$ et que $G_{P_0}\in H^p_2(M-B_{P_0}(\rho_0))$ (voir le corollaire \ref{green coroll}), alors on a l'estimée suivante de $\mu(g)$, obtenue par R.~Schoen~\cite{Schoen}:
\begin{lemma}
 $$\mu(g)\leq I_g(\varphi_\e)\leq K^{-2}(n,2)+c\e_0^2(c\rho_0-A)$$
\end{lemma}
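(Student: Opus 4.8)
The first inequality is immediate: $\varphi_\e$ is continuous, strictly positive (since $v_\e>0$, $G_{P_0}>0$, and on the annulus $G_{P_0}-\eta(G_{P_0}-r^{2-n}-A)>0$ because $G_{P_0}-r^{2-n}-A=O(r)$), and belongs to $H_1(M)\setminus\{0\}$, hence is admissible in the infimum defining $\mu(g)$. So everything reduces to bounding the Yamabe quotient $I_g(\varphi_\e)=E(\varphi_\e)/\|\varphi_\e\|_N^2$, with $E(\varphi_\e)=\int_M|\nabla\varphi_\e|^2+\an R_g\varphi_\e^2\,\di v$. Because $\mu(g)$ is a conformal invariant (Proposition \ref{invconforme}), the situation has already been reduced to: $g$ a Cao--Günther metric (Theorem \ref{caogun}), i.e.\ $\det g\equiv1$ in normal coordinates centred at $P_0$; $L_g$ admitting the Green function $G_{P_0}$ with expansion $G_{P_0}(Q)=r^{2-n}+A+O(r)$ near $P_0$ (Proposition \ref{green coroll}); $A>0$ by Theorem \ref{aubm}; and $g$ of class $C^\infty$ on $B_{P_0}(\delta)\supset B_{P_0}(2\rho_0)$, so that the entire computation is local and coincides with the one R.~Schoen carried out in the smooth case. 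I would split $M=B_{P_0}(\rho_0)\cup(B_{P_0}(2\rho_0)\setminus B_{P_0}(\rho_0))\cup(M\setminus B_{P_0}(2\rho_0))$ and estimate $E(\varphi_\e)$ and $\|\varphi_\e\|_N^N$ region by region, tracking orders in the parameters $\e$, $\rho_0$, $\e_0$, which are linked by $(\e/(\rho_0^2+\e^2))^{(n-2)/2}=\e_0(\rho_0^{2-n}+A)$.

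On $M\setminus B_{P_0}(2\rho_0)$ one has $\varphi_\e=\e_0 G_{P_0}$ and $L_g G_{P_0}=0$; since $G_{P_0}\in H^p_2$ away from $P_0$ and $\partial B_{P_0}(2\rho_0)$ lies in the smooth region, Green's formula converts the energy over this set into the boundary integral $-\e_0^2\int_{\partial B_{P_0}(2\rho_0)}G_{P_0}\,\partial_r G_{P_0}\,\di\sigma$, which the expansion of $G_{P_0}$ evaluates explicitly modulo $O(\e_0^2\rho_0)$. On the annulus $\varphi_\e=\e_0(G_{P_0}-\eta\alpha)$ with $\alpha:=G_{P_0}-r^{2-n}-A=O(r)$ and $|\nabla\eta|\le\rho_0^{-1}$; substituting into $E$ and exploiting the smallness of $\alpha$ together with the control on $\eta$ bounds this contribution by $c\,\e_0^2\rho_0$. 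On $B_{P_0}(\rho_0)$, where $\varphi_\e=v_\e$, I would integrate by parts to write the local energy as $\int_{B_{P_0}(\rho_0)}v_\e L_g v_\e\,\di v+\int_{\partial B_{P_0}(\rho_0)}v_\e\,\partial_r v_\e\,\di\sigma$; using $\det g\equiv1$ to compare $\Delta_g v_\e$, $\di v$ and $R_g$ with their Euclidean counterparts (the discrepancies being $O(\e^4)$, or $O(\e^4\log\frac{1}{\e})$, or exactly zero in the conformally flat subcase --- negligible against $\e_0^2$ in the dimensions where this test function is used), together with the flat identity $\Delta_{\mathcal E}v_\e=n(n-2)v_\e^{N-1}$ and equation \eqref{inte}, the volume integral produces the main term $K^{-2}(n,2)\|v_\e\|_{L^N(\R^n)}^2$, while the boundary term at $r=\rho_0$, rewritten via the matching relation, supplies the decisive contribution proportional to $-A$.

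Adding the three pieces, the ``self-energy'' terms of order $\e_0^2\rho_0^{2-n}$ generated at $\partial B_{P_0}(\rho_0)$ and at $\partial B_{P_0}(2\rho_0)$ cancel --- this cancellation is exactly what dictated the choice of the matching radius and of $\e_0$ --- leaving $E(\varphi_\e)=K^{-2}(n,2)\|v_\e\|_{L^N(\R^n)}^2-c\,\e_0^2 A+O(\e_0^2\rho_0)+o(\e_0^2)$. For the denominator, the tail of $v_\e^N$ outside $B_{P_0}(\rho_0)$ is $O(\e^n)$ and the exterior contribution $\e_0^N\int_{M\setminus B_{P_0}(2\rho_0)}G_{P_0}^N$ is $O(\e_0^N)$, both $o(\e_0^2)$, so $\|\varphi_\e\|_N^N=\|v_\e\|_{L^N(\R^n)}^N+o(\e_0^2)$. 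Dividing and absorbing harmless constants yields $I_g(\varphi_\e)\le K^{-2}(n,2)+c\,\e_0^2(c\rho_0-A)$, which is the lemma (and then fixing $\rho_0$ small enough that $c\rho_0<A$, possible since $A>0$, and letting $\e\to0$ gives $\mu(g)<K^{-2}(n,2)$, the point of Theorem \ref{conj aub}). The main obstacle is precisely the ``long and subtle'' bookkeeping: one must pin down the exact order in $\e,\rho_0,\e_0$ of every term --- above all the boundary integrals and the annular remainder --- and check that the $\e_0^2\rho_0^{2-n}$ self-energy pieces really cancel and that the Cao--Günther corrections on $B_{P_0}(\rho_0)$ are of strictly lower order than $\e_0^2$; none of this is new, it is Schoen's computation transplanted, legitimate here because $g$ is $C^\infty$ on $B_{P_0}(\delta)$ and $G_{P_0}\in H^p_2$ off $P_0$.
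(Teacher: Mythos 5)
Your proposal is correct and coincides with the paper's approach: the thesis does not reprove this estimate but cites R.~Schoen's computation directly, observing only that it remains valid here because the whole construction is local to $B_{P_0}(2\rho_0)\subset B_{P_0}(\delta)$ where $g$ is $C^\infty$, and because $G_{P_0}\in H^p_2(M-B_{P_0}(\rho_0))$ with the expansion $r^{2-n}+A+O(r)$ furnished by Proposition \ref{green coroll}. Your region-by-region reconstruction (interior/annulus/exterior, Green's formula on the exterior, cancellation of the $\e_0^2\rho_0^{2-n}$ self-energy terms, matching condition linking $\e$, $\rho_0$, $\e_0$) is precisely that computation, with the same justification for transplanting it to the singular setting.
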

Comme $A>0$  alors on peut choisir  $\rho_0$ suffisamment petit ($c\rho_0<A$) pour que $\mu(g)<K^{-2}(n,2)$.\\
\end{proof}

On est maintenant en mesure d'énoncer le théorème qui résout le problème \ref{yam sing} pour les métriques qui satisfont l'hypothèse $(H)$. 

\begin{theorem}\label{inegg}
Soit $M$ une variété compacte $C^\infty$ de dimension $n\geq 3$, $g$ une métrique riemannienne qui satisfait l'hypothèse $(H)$, alors il existe une métrique $\tilde g$ conforme à $g$ ayant une courbure scalaire $R_{\tilde g}$ constante, solution du problème \ref{yam sing}.
\end{theorem}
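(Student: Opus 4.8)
Le plan consiste \`a ramener l'\'enonc\'e aux r\'esultats d\'ej\`a \'etablis, en distinguant deux cas suivant que $(M,g)$ est ou non conform\'ement diff\'eomorphe \`a la sph\`ere standard $(S_n,g_{can})$.

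Je traiterais d'abord le cas g\'en\'erique o\`u $(M,g)$ n'est \emph{pas} conform\'ement diff\'eomorphe \`a $(S_n,g_{can})$. Le th\'eor\`eme \ref{conj aub} (sous r\'eserve du th\'eor\`eme de la masse positive \ref{aubm}) fournit alors l'in\'egalit\'e stricte $\mu(g)<K^{-2}(n,2)$. J'appliquerais ensuite le th\'eor\`eme d'existence \ref{cg} \`a l'\'equation de Yamabe \eqref{yamabe}, c'est-\`a-dire avec $h=\an R_g$ : comme $g\in H^p_2(M,T^*M\otimes T^*M)$ avec $p>n$, la courbure scalaire $R_g$ est dans $L^p(M)$ et $p>n/2$, de sorte que les hypoth\`eses sont satisfaites. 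On obtient une fonction $\varphi\in H^p_2(M)\subset C^{1-[n/p],\beta}(M)$ strictement positive, minimisant $I_g$, solution de $\Delta_g\varphi+\an R_g\varphi=\mu(g)\varphi^{N-1}$. En posant $\gt=\varphi^{\N}g$, la proposition \ref{algebre} (et la remarque selon laquelle un changement conforme de facteur dans $H^p_2$ reste dans $H^p_2$) garantit que $\gt$ est une m\'etrique bien d\'efinie, appartenant \`a la m\^eme classe de Sobolev que $g$ ; l'\'equation \eqref{yamabe} montre alors que sa courbure scalaire est la constante $R_{\gt}=\tfrac{4(n-1)}{n-2}\mu(g)$. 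Le probl\`eme \ref{yam sing} est ainsi r\'esolu dans ce cas.

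Dans le cas restant, o\`u $(M,g)$ est conform\'ement diff\'eomorphe \`a $(S_n,g_{can})$, il suffit de remarquer que la classe conforme de $g$ contient le tir\'e en arri\`ere, par le diff\'eomorphisme conforme, de la m\'etrique canonique, dont la courbure scalaire est la constante $n(n-1)$ ; le probl\`eme \ref{yam sing} est alors trivialement r\'esolu.

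Quant \`a la v\'eritable difficult\'e : tout le contenu analytique a \'et\'e report\'e dans les sections pr\'ec\'edentes, et cet \'enonc\'e final n'est pour l'essentiel qu'un assemblage. L'obstacle r\'eel est le th\'eor\`eme \ref{conj aub}, qui repose lui-m\^eme sur l'invariance conforme de $\mu(g)$ (proposition \ref{invconforme}), l'existence d'une m\'etrique de Cao--G\"unther (th\'eor\`eme \ref{caogun}), la construction et le comportement asymptotique de la fonction de Green du Laplacien conforme (proposition \ref{green coroll} et la normalisation $G_{P_0}=r^{2-n}+A+O(r)$), ainsi que le th\'eor\`eme de la masse positive \ref{aubm} ; c'est pr\'ecis\'ement la condition de r\'egularit\'e $C^\infty$ sur une boule figurant dans l'hypoth\`ese $(H)$ qui permet aux calculs de fonctions test de T.~Aubin et R.~Schoen de s'appliquer localement au voisinage de $P_0$. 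Une fois ce th\'eor\`eme acquis, l'assemblage avec le th\'eor\`eme \ref{cg} est de routine.
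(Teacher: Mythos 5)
Votre d\'emonstration est correcte et suit essentiellement la m\^eme d\'emarche que celle du texte : distinction du cas sph\'erique (trivial), puis, dans le cas g\'en\'erique, application du th\'eor\`eme \ref{conj aub} pour obtenir $\mu(g)<K^{-2}(n,2)$ et du th\'eor\`eme \ref{cg} avec $h=\an R_g$ pour produire la solution $\varphi$ et la m\'etrique conforme $\gt=\varphi^{\N}g$ \`a courbure scalaire constante. Votre commentaire final identifie correctement que la seule subtilit\'e restante est l'extension du th\'eor\`eme \ref{cg} aux m\'etriques $H^p_2$, point que le texte traite par une remarque pr\'ealable \`a sa preuve.
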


Ce théorème affirme qu'il existe toujours des solutions pour l'équation de type Yamabe~\eqref{AF} (page \pageref{AF}) et que l'hypothèse du  théorème \ref{cg} est toujours satisfaite avec $h=\frac{n-2}{4(n-1)}R_g$. 

\paragraph{\textbf{Remarque}}Dans l'énoncé du théorème \ref{cg}, la métrique $g$ est supposée être de classe $C^\infty$.  Ce théorème reste vrai si l'on suppose que la métrique est dans $H^p_2$ avec $p>n$. Pour le voir, il suffit de remarquer que si $g\in H^p_2$, il existe une solution faible pour l'équation \eqref{AF} (preuve identique).   La seule chose qui peut changer est la régularité de la solution faible. Dans ce cas, on aura la même régularité car  les coefficients de $\Delta_g$ sont continus.

\begin{proof}

Si $(M,g)$ est conformément difféomorphe à la sphère $S_n$, munie de la métrique canonique $g_{can}$, alors il n'y a rien à montrer car $(S_n,g_{can})$ est à courbure scalaire constante. Sinon $(M_n,g)$ n'est pas conformément difféomorphe à $(S_n,g_{can})$. Au quel cas, on a l'inégalité
$$\mu(g)<K^{-2}(n,2)$$ 
par le théorème \ref{conj aub}. Le théorème \ref{cg} nous fournit une solution $\psi\in H^p_2(M)$, strictement positive, de l'équation \eqref{AF}, où $h=\frac{n-2}{4(n-1)}R_g$ et $\tilde h=\mu(g)$. D'après l'équation \eqref{yamabe}, la métrique $\gt=\psi^{\frac{4}{n-2}}g$ est à courbure scalaire constante $R_{\gt}=\frac{4(n-1)}{n-2}\mu(g)$.
\end{proof}

\section{Unicité des solutions}

Pour le problème de Yamabe classique (i.e. la métrique $g$ est $C^\infty$), on sait qu'on a unicité des solutions à une constante multiplicative près dans le cas où l'invariant conforme de Yamabe $\mu(g)$ est négatif ou nul. Le théorème suivant, montre qu'on a toujours les mêmes résultats lorsque la métrique est seulement de classe $H^p_2$, avec $p>n$.
\begin{theorem}\label{unique}
 Soit $g$ une métrique dans  $H^p_2(M,T^*M\otimes T^*M)$, avec $p>n$. Si $\mu(g)\leq 0$, alors les solutions de l'équation \eqref{yamabe} sont uniques à une constante multiplicative près.
\end{theorem}

\begin{proof}
 Soit $\varphi_1$ et $\varphi_2$ deux solutions strictement positives de l'équation \eqref{yamabe}. Les métriques $g_i=\varphi_i^{\frac{4}{n-2}}g$ sont à courbures scalaires constantes $R_i$, où $i=1$ ou $2$. On pose $\psi=\frac{\varphi_1}{\varphi_2}$, donc $g_1=\psi^{\frac{4}{n-2}}g_2$. Ce qui entraîne que $\psi$ satisfait
\begin{equation}\label{unieq}
\Delta_{g_2} \psi+ \frac{n-2}{4(n-1)}R_{2}\psi=  \frac{n-2}{4(n-1)}R_{1} \psi^{\frac{n+2}{n-2}} 
\end{equation}

Par le théorème de régularité \ref{reg}, on en déduit que $\psi$ est de classe $C^{2,\beta}$ car les coefficients du Laplacien sont $C^0$. En effet, dans une carte locale:
$$\Delta_g\psi=-\nabla_i\nabla^i\psi=-g^{ij}(\partial_{ij}\psi-\Gamma^k_{ij}\partial_k\psi)$$
et les Christoffels sont donnés par 
$$\Gamma^k_{ij}=g^{kl}(\partial_i g_{lj}+\partial_jg_{il}-\partial_l g_{ij})$$
Ils sont dans $H^p_1$, et continus si $p>n$. D'autre part, remarquons que $R_1$ et $R_2$ sont forcément de même signe. Pour le voir, il suffit d'intégrer l'équation \eqref{unieq} sur $M$, avec l'élément de volume de $g_2$, et utiliser le fait que l'intégrale du Laplacien d'une fonction $C^2$ est toujours nulle.  \\
Si $\mu(g)<0$, alors $R_i<0$ pour $i=1$ et 2. Supposons que $\psi$ atteint son maximum en $Q_1\in M$ et son minimum en $Q_2\in M$ alors $\Delta_{g_2}\psi(Q_1)\geq 0$ et  $\Delta_{g_2}\psi(Q_2)\leq 0$. Par conséquent, si on évalue l'équation \eqref{unieq} au point $Q_1$ et $Q_2$, on obtient les deux inégalités suivantes:
$$\psi^{\frac{4}{n-2}}(Q_1)\leq \frac{R_2}{R_1}\mbox{ et }\psi^{\frac{4}{n-2}}(Q_2)\geq \frac{R_2}{R_1}$$ 
de là on tire que $\psi=\frac{R_2}{R_1}$ et que $\varphi_1$ et $\varphi_2$ sont proportionnelles.\\
Si $\mu(g)=0$ alors $R_1=R_2=0$ et l'équation \eqref{unieq} est réduite à $\Delta_{g_2}\psi=0$, d'où $\psi$ est constante.
\end{proof}

\section{Application}

Prenons le cas particulier d'une métrique 
\begin{equation*}
g_\alpha=(1+\rho_{P_0}^{2-\alpha})^m g_0 
\end{equation*}
 où $g_0$ est une métrique riemannienne $C^\infty$, $\alpha\in]0,1[$ et $\rho_{P_0}$ la fonction distance donnée par la définition \ref{distance} (page \pageref{distance}). Les dérivées secondes de $g_\alpha$ ont des singularités du type $\rho^{-\alpha}$, ce qui entraîne qu'il existe une fonction continue $R_0$ telle que la courbure scalaire de $g$ soit de la forme $R_{g_\alpha}=\frac{R_0}{\rho^\alpha}$. Cette fonction est dans $L^p(M)$, si $p<\frac{n}{\alpha}$. Comme $\alpha\in]0,1[$ alors on peut trouver un $p>n$ car $\frac{n}{\alpha}>n$. Pour cette valeur de $p$, on conclut que la métrique $g_\alpha$ est dans $H^p_2(M,T^*M\otimes T^*M)$ et elle satisfait l'hypothèse $(H)$ car la fonction $\rho_{P_0}$ est $C^\infty$ sur  $B_{P_0}(\delta(M))-\{P_0\}$, avec $\delta(M)$ le rayon d'injectivité de $M$.\\
Soit $\varphi$ une fonction strictement positive dans $H^p_2(M)$ et $\gt=\varphi^{\frac{4}{n-2}}g_{\alpha}$ une métrique conforme à $g_\alpha$. Si on veut que $\gt$ soit une métrique qui résout le problème \ref{yam sing} (cf. page \pageref{yam sing}) alors il suffit que $\varphi$ soit solution de l'équation de type Yamabe \eqref{yamsi}. D'après le théorème \ref{inegg} et la proposition \ref{corollaire}, une telle solution existe toujours.

\section{Le problème de Yamabe équivariant}\label{Equi Yam pro}

\subsection{Le problème de Hebey--Vaugon }\label{HVprob}

Soit $(M,g)$ une variété riemannienne compacte $C^\infty$ de dimension $n$. $G$ un sous groupe du groupe d'isométries $I(M,g)$. E.~Hebey et  M.~Vaugon \cite{HV} ont considéré le problème suivant:

\begin{problem}\label{HVProbleme}
 Existe-t-il une métrique $g_0$, $G-$invariante qui minimise la fonctionnelle
$$J(g')=\frac{\int_MR_{g'}\di v(g')}{(\int_M\di v(g'))^{\frac{n-2}{n}}}$$
où $g'$ appartient à la classe $G-$conforme de $g$:
$$[g]^G:=\{\tilde g=e^fg/ f\in C^\infty(M),\; \sigma^*\tilde g=\tilde g\quad \forall \sigma\in G\}$$
\end{problem}
(Les définitions sont données dans la section \ref{group isome sect}).\\
Ils ont démontré que ce problème à toujours des solutions, sous réserve de la validité du théorème de la masse positive \ref{msp1}. La résolution de ce problème a deux conséquences. La première est l'existence d'une métrique $g_0$, $I(M,g)-$invariante et conforme à $g$, telle que la courbure scalaire de $g_0$ est constante. En effet, si $g_0=\varphi^{\frac{4}{n-2}}g$ est une métrique qui minimise $J$, alors $\varphi$ est $I(M,g)-$invariante, solution de l'équation d'Euler--Lagrange de $J$. Cette équation est bien celle de Yamabe \eqref{yamabe}, avec $R_{\gt}=R_{g_0}$  une constante qui joue le rôle de la courbure scalaire de $g_0$.  La deuxième conséquence est  que la conjecture de A.~Lichnerowicz~\cite{Lic} ci-dessous est vraie. Par les travaux de J.~Lelong-Ferrand~\cite{Lel} et  M.~Obata~\cite{Oba}, on sait que si $(M,g)$ n'est pas conformément difféomorphe à $(S_n,g_{can})$ alors le groupe conforme $C(M,g)$ est compact et il existe une métrique  $g'$ conforme à $g$ telle que $I(M,g')=C(M,g)$. 
\begin{conjecture}[\textbf{A.~Lichnerowicz \cite{Lic}}]
Pour tout variété riemannienne $(M,g)$, compacte $C^\infty$, de dimension $n$ et qui n'est pas conformément difféomorphe à $(S_n,g_{can})$, il existe une métrique $\gt$ conforme à $g$ de courbure scalaire $R_{\tilde g}$ constante et pour laquelle $I(M,\tilde g)=C(M,g)$.
\end{conjecture}

On a déjà remarqué que les métriques qui résolvent le problème de Hebey--Vaugon \ref{HVProbleme} sont nécessairement solutions de l'équation de Yamabe \eqref{yamabe}. Par conséquent, le problème de Yamabe classique, décrit à la section \ref{yampro}, correspond au cas particulier $G=\{\mathrm{id}\}$ du problème \ref{HVProbleme}.\\

Au début de ce chapitre, on a rappelé le problème de Yamabe, ensuite on a montré que les équations de type Yamabe \eqref{AF} admettent toujours des solutions, si la fonction $h$ est proportionnelle à la courbure scalaire $R_g$ (cf. théorème \ref{inegg}). On essaye de faire  le même travail lorsque un sous groupe $G$ du groupe d'isométries agit sur $M$. Les métriques ne seront pas nécessairement $C^\infty$, mais elles vérifient  l'hypothèse $(H)$ (cf. section \ref{hypG}).

\begin{problem}\label{YMsing} 
Supposons que la métrique $g\in H^p_2(M,T^*M\otimes T^*M)$. Existe-t-il une métrique $\tilde g$ dans la classe conforme  $G-$invariante de $g$ qui minimise la fonctionnelle $J$ et pour laquelle la courbure scalaire $R_{\gt}$ est constante partout?
\end{problem}

Si la métrique $g$ est $C^\infty$ alors ce problème est exactement le problème de Hebey--Vaugon~\ref{HVProbleme}. Si la métrique  $\tilde g$ minimise la fonctionnelle $J$ définie au début de la section \ref{Equi Yam pro}, alors la courbure scalaire de $\gt$ est automatiquement constante. Plus précisément, si $\tilde g=\psi^{4/(n-2)}g$, avec $\psi\in H_2^p(M)$, strictement positive et $G-$invariante, alors $\psi$ est solution de l'équation de Yamabe \eqref{yamabe}.

\subsection{L'invariant de Yamabe $\boldsymbol{G-}$conforme}

Soit $I_g$ la fonctionnelle de Yamabe définie par \eqref{fonctionnel}  (page \pageref{fonctionnel}). Pour ce problème, on considère  seulement des fonctions test dans $H_{1,G}(M)$, l'espace des fonctions dans $H_1(M)$, $G-$invariante.  

\begin{definition}
L'invariant $G-$conforme de Yamabe $\mu_G(g)$ est défini par: 
 \begin{equation*}
\mu_G(g)=\inf_{\psi\in H_{1,G}(M)-\{0\}}I_g(\psi)
\end{equation*}
\end{definition}

La proposition suivante justifie la terminologie employée.
\begin{proposition}\label{mu invariance}
Soit $M$ une variété compacte $C^\infty$. $g\in H^p_2(M,T^*M\otimes T^*M)$ une métrique riemannienne, avec $p>n/2$. Alors 
\begin{enumerate}
\item $\mu_G(g)=\frac{n-2}{4(n-1)}\inf_{g'\in [g]^G} J(g')$
 \item Si $\gt\in [g]^G$ alors  $\mu_G(\gt)=\mu_G(g)$.
\end{enumerate}
\end{proposition}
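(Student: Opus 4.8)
Le plan est de tout ramener \`a deux ingr\'edients d\'ej\`a \'etablis dans le texte : l'\'equation de Yamabe \eqref{yamabe} reliant les courbures scalaires de deux m\'etriques conformes, et l'invariance conforme faible du Laplacien conforme (proposition \ref{invcon}), en calquant les preuves des propositions \ref{invconforme} et \ref{ine largee}. Pour le point 1, j'\'ecrirais toute m\'etrique $\gt\in[g]^G$ sous la forme $\gt=\psi^{\frac{4}{n-2}}g$ avec $\psi\in C^\infty(M)$ strictement positive et $G-$invariante (puisque $g$ est $G-$invariante, $\sigma^*\gt=\gt$ \'equivaut \`a l'invariance de $\psi$). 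En multipliant l'\'equation de Yamabe par $\psi$ et en int\'egrant sur $M$ contre $\di v_g$ --- ce qui est licite car $R_g,R_{\gt}\in L^{n/2}(M)$, $\psi$ est born\'ee et $\int_M\psi\Delta_g\psi\,\di v_g=\int_M|\nabla\psi|^2\di v_g$ --- j'obtiendrais $\int_M R_{\gt}\,\di v_{\gt}=\frac{4(n-1)}{n-2}E(\psi)$, alors que $(\int_M\di v_{\gt})^{\frac{n-2}{n}}=\|\psi\|_N^2$, d'o\`u l'identit\'e clef $\frac{n-2}{4(n-1)}J(\gt)=I_g(\psi)$. Comme $\psi\in H_{1,G}(M)-\{0\}$, ceci donne $\frac{n-2}{4(n-1)}J(\gt)\geq\mu_G(g)$, donc $\frac{n-2}{4(n-1)}\inf_{g'\in[g]^G}J(g')\geq\mu_G(g)$.

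Le point d\'elicat est l'in\'egalit\'e inverse : il faut voir que l'infimum $\mu_G(g)$, pris a priori sur toutes les fonctions de $H_{1,G}(M)-\{0\}$, est d\'ej\`a approch\'e par des $I_g(\psi)$ o\`u $\psi$ est un facteur conforme admissible, i.e. lisse et strictement positif. \'Etant donn\'es $\psi\in H_{1,G}(M)-\{0\}$ et $\eta>0$, je proc\'ederais en trois temps. (i) Se ramener \`a $\psi\geq0$ en rempla\c{c}ant $\psi$ par $|\psi|$, ce qui ne change ni $I_g$ (car $|\nabla|\psi||=|\nabla\psi|$ p.p. et $\||\psi|\|_N=\|\psi\|_N$) ni la $G-$invariance. (ii) Approcher $\psi$ dans $H_1(M)$ par une fonction $\phi\in C^\infty(M)$ $G-$invariante, obtenue en moyennant sur $G$ (que l'on peut supposer compact, quitte \`a prendre son adh\'erence dans $I(M,g)$) une approximation lisse quelconque : la norme $H_1$ ne cro\^it pas sous cette moyenne puisque les isom\'etries la pr\'eservent et que $\psi$ est $G-$invariante ; comme $\|\psi\|_N>0$ et que $I_g$ est continue sur $H_1(M)-\{0\}$ pour la norme $H_1$ (le terme $\int_M R_g u^2\di v$ est continu car $R_g\in L^{n/2}(M)$ et $u\mapsto u^2$ est continue de $L^N(M)$ dans $L^{n/(n-2)}(M)$), on peut supposer $\phi\neq0$ et $I_g(\phi)<I_g(\psi)+\eta/2$. (iii) R\'egulariser en posant $u_\delta=\sqrt{\phi^{2}+\delta^2}$, qui est lisse, strictement positive, $G-$invariante, et tend vers $|\phi|$ dans $H_1(M)$ quand $\delta\to0$ (convergence domin\'ee : $|\nabla u_\delta|\leq|\nabla\phi|$ et $\nabla u_\delta\to\nabla|\phi|$ p.p.), de sorte que $I_g(u_\delta)\to I_g(|\phi|)=I_g(\phi)$. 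Pour $\delta$ assez petit, $g'=u_\delta^{\frac{4}{n-2}}g\in[g]^G$ et $\frac{n-2}{4(n-1)}J(g')=I_g(u_\delta)<I_g(\psi)+\eta$ ; $\psi$ et $\eta$ \'etant arbitraires, on obtient $\frac{n-2}{4(n-1)}\inf_{g'\in[g]^G}J(g')\leq\mu_G(g)$, d'o\`u l'\'egalit\'e du point 1.

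Pour le point 2, j'invoquerais directement la proposition \ref{invcon} : le calcul fait dans la preuve de la proposition \ref{invconforme} donne, pour $\gt=\psi^{\frac{4}{n-2}}g$ avec $\psi\in H^p_2(M)$ strictement positive et $G-$invariante, l'identit\'e $I_{\gt}(u)=I_g(\psi u)$ pour tout $u\in H_1(M)$. Si $u$ est $G-$invariante, alors $\psi u$ l'est aussi, et $\psi u\in H_1(M)$ (l'appartenance se v\'erifie gr\^ace \`a $p>n/2$, exactement comme dans la proposition \ref{invcon}) ; comme $\psi$ est continue et strictement positive sur $M$ compacte, $\psi^{-1}\in H^p_2(M)$ est born\'ee, si bien que $u\mapsto\psi u$ est une bijection de $H_{1,G}(M)-\{0\}$ sur lui-m\^eme, d'inverse $v\mapsto\psi^{-1}v$. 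Les deux infimums portent donc sur le m\^eme ensemble de valeurs : $\mu_G(\gt)=\inf_u I_g(\psi u)=\inf_v I_g(v)=\mu_G(g)$.

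L'obstacle principal est clairement l'\'etape d'approximation (i)--(iii) du point 1 : remplacer une fonction test arbitraire de $H_{1,G}(M)$ par un facteur conforme lisse et strictement positif sans faire augmenter $I_g$ de plus de $\eta$. Tout le reste n'est que manipulation de l'\'equation de Yamabe et renvoi \`a l'invariance conforme faible (proposition \ref{invcon}).
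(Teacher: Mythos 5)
Votre preuve suit essentiellement la m\^eme d\'emarche que celle du texte : pour le point 1, l'int\'egration de l'\'equation de Yamabe \eqref{yamabe} contre $\psi\,\di v_g$ donnant $J(g')=4\frac{n-1}{n-2}I_g(\psi)$, et pour le point 2, l'identit\'e $I_{\gt}(u)=I_g(\psi u)$ issue des propositions \ref{invcon} et \ref{invconforme}. Votre seule addition est l'argument d'approximation (i)--(iii) justifiant que l'infimum sur les facteurs conformes lisses strictement positifs $G-$invariants co\"incide avec $\mu_G(g)$ pris sur tout $H_{1,G}(M)-\{0\}$ --- un point de densit\'e standard que le texte passe sous silence en \'ecrivant simplement \og en prenant la borne inf\'erieure \fg ; votre traitement en est correct.
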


\begin{proof}
Pour tout $g'\in [g]^G$, il existe $\psi\in H^p_{2,G}(M)$, strictement positive telle que $g'=\psi^{\frac{4}{n-2}}g$. Par l'équation de Yamabe \eqref{yamabe}:
$$R_{g'}=\psi^{-\frac{n+2}{n-2}}(4\frac{n-1}{n-2}\Delta_g\psi+R_g\psi)$$
En intégrant cette équation sur $M$ par rapport à l'élément de volume $\di v_{g'}$, on obtient
$$\int_MR_{g'}\di v_{g'}=\int_M \psi(4\frac{n-1}{n-2}\Delta_g\psi+R_g\psi)\di v_g=4\frac{n-1}{n-2}E(\psi)$$
D'autre part
$$\int_M\di v_{g'}=\|\psi\|_N^N$$
On en déduit que 
\begin{equation}\label{relation J I}
 J(g')=4\frac{n-1}{n-2}I_g(\psi)
\end{equation}
En prenant la borne inférieure, on obtient la première propriété. Pour la seconde propriété, il suffit de reprendre la preuve de la proposition \ref{invconforme}. En effet, si $g'$ est la métrique considérée ci-dessus alors, d'après l'équation \eqref{IgIg}
$$\forall \varphi\in H_{1,G}(M)\qquad I_{g'}(\varphi)=I_g(\psi\varphi)$$
\end{proof}

\section{Théorème d'existence de solutions en présence de symétries}\label{existence g invar}

\begin{theorem}\label{HVGINVD}
Soit $M$ une variété compacte $C^\infty$ de dimension $n\geq 3$. $g$ une métrique  riemannienne qui appartient à $H_2^p(M,T^*M\otimes T^*M)$, avec $p>n$. Si 
$$\mu_G(g)<\frac{1}{4}n(n-2)\omega_{n}^{2/n}(\Ca O_G(P))^{2/n}$$ 
alors l'équation \eqref{yamabe} admet une solution strictement positive $\varphi\in H_{2,G}^p(M)\subset C^{1-[n/p],\beta}(M)$, $G-$invariante. De plus la métrique $\gt=\varphi^{\frac{4}{n-2}}g$ est solution du problème \ref{YMsing} et de courbure scalaire constante $R_{\gt}=\frac{4(n-1)}{n-2}\mu_G(g)$.
\end{theorem}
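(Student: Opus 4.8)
The plan is to reduce Theorem~\ref{HVGINVD} to the results already proved for Yamabe-type equations, treating separately the sign of $\mu_G(g)$. Note first that the coefficient $h=\an R_g$ occurring in \eqref{yamabe} is $G$-invariant, since the scalar curvature of $g$ is invariant under the isometries of $g$, and that $h\in L^p(M)$ with $p>n>n/2$, because $g\in H^p_2$ forces $R_g\in L^p(M)$. Thus \eqref{yamabe} is precisely the $G$-invariant Yamabe-type equation \eqref{eygi} with $q=N=\Nn$ and coefficient $h=\an R_g$. Recall also that $K^{-2}(n,2)=\tfrac14 n(n-2)\omega_n^{2/n}$ and that $\mu_G(g)=\mu_{N,G}(g)$, so the hypothesis of the theorem reads $0<$ or $\le 0$ versus $K^{-2}(n,2)(\Ca O_G(P))^{2/n}$, where $P$ is a point realizing $\inf_{Q\in M}\Ca O_G(Q)$.

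\textbf{Case $\mu_G(g)>0$.} Here I would apply Theorem~\ref{theginv} directly: its hypothesis $0<\mu_{N,G}(g)<K^{-2}(n,2)(\inf_{Q\in M}\Ca O_G(Q))^{2/n}$ is exactly the one assumed, so one obtains a positive $G$-invariant $\varphi\in H^p_{2,G}(M)\subset C^{1-[n/p],\beta}(M)$, minimizing $I_{N,g}=I_g$, that solves $\Delta_g\varphi+h\varphi=\mu_G(g)\varphi^{N-1}$ with $\|\varphi\|_N=1$. Multiplying this equation by $\tfrac{4(n-1)}{n-2}$ turns it into \eqref{yamabe} with the constant $R_{\tilde g}=\tfrac{4(n-1)}{n-2}\mu_G(g)$, and testing it against $\varphi$ gives $I_g(\varphi)=\mu_G(g)$.

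\textbf{Case $\mu_G(g)\le 0$.} Now the variational scheme of Theorem~\ref{theginv} fails, because $L=\Delta_g+h$ is no longer coercive, hence no longer invertible, so one cannot upgrade a critical point in $H_{1,G}(M)$ to a genuine weak solution against all of $H_1(M)$. The idea is to pass through the non-equivariant theory and recover the symmetry by uniqueness. Since $\mu(g)\le\mu_G(g)\le 0<K^{-2}(n,2)$, Theorem~\ref{cg} (valid for metrics in $H^p_2$ with $p>n$, by the remark following Theorem~\ref{inegg}) yields a positive $\varphi\in H^p_2(M)\subset C^{1-[n/p],\beta}(M)$ solving \eqref{yamabe} with constant $\mu(g)$ and $\|\varphi\|_N=1$. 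For each $\sigma\in G$ the function $\sigma^*\varphi$ is a positive solution of the same equation (as $h$ is $G$-invariant and $\sigma$ preserves $g$), so Theorem~\ref{unique} gives $\sigma^*\varphi=c_\sigma\varphi$ for some $c_\sigma>0$; but $\sigma$ is a $g$-isometry, whence $\|\sigma^*\varphi\|_N=\|\varphi\|_N$, forcing $c_\sigma=1$. Therefore $\varphi$ is $G$-invariant, so $\varphi\in H^p_{2,G}(M)$, and since $\varphi\in H_{1,G}(M)$ we get $\mu_G(g)\le I_g(\varphi)=\mu(g)\le\mu_G(g)$, i.e. $\mu_G(g)=\mu(g)$ and again $R_{\tilde g}=\tfrac{4(n-1)}{n-2}\mu_G(g)$.

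\textbf{Conclusion.} In both cases set $\tilde g=\varphi^{\frac{4}{n-2}}g$: it lies in $H^p_2$ because $H^p_2(M)$ is an algebra (Proposition~\ref{algebre}), it is $G$-invariant since $\varphi$ is, and it has constant scalar curvature $R_{\tilde g}=\tfrac{4(n-1)}{n-2}\mu_G(g)$. Finally, by \eqref{relation J I} one has $J(\tilde g)=\tfrac{4(n-1)}{n-2}I_g(\varphi)=\tfrac{4(n-1)}{n-2}\mu_G(g)$, which by Proposition~\ref{mu invariance} equals $\inf_{g'\in[g]^G}J(g')$, so $\tilde g$ solves Problem~\ref{YMsing}. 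The only genuinely delicate point is the non-coercive case $\mu_G(g)\le 0$: there the trio ``non-equivariant existence (Theorem~\ref{cg}) $+$ uniqueness (Theorem~\ref{unique}) $+$ isometry-invariance of the $L^N$-norm'' must replace the coercivity/invertibility argument that drives the proof when $\mu_G(g)>0$.
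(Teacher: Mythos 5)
Your proposal is correct and follows essentially the same route as the paper: for $\mu_G(g)>0$ it applies Theorem~\ref{theginv} with $h=\frac{n-2}{4(n-1)}R_g$, and for $\mu_G(g)\leq 0$ it combines the non-equivariant existence result with the uniqueness Theorem~\ref{unique} and the invariance of the norm under isometries to force $\sigma^*\varphi=\varphi$. The only cosmetic difference is that the paper normalizes with the $L^2$-norm rather than the $L^N$-norm in the uniqueness step, which changes nothing.
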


\begin{proof}
Si $\mu_G(g)\leq 0$, d'après le théorème \ref{unique}, les solutions de l'équation de Yamabe  sont proportionnelles. Si $\varphi$ est une solution de \eqref{yamabe} alors pour tout $\sigma\in G$, $\sigma^*\varphi$ est également une solution. Il existe donc une constante $c>0$ telle que $\sigma^*\varphi=c\varphi$. D'autre part, $\|\sigma^*\varphi\|_2=\|\varphi\|_2$. On en déduit que $c=1$ et que $\varphi$ est $G-$invariante.\\
Supposons que $\mu_G(g)>0$. Notons que 
$$K^{-2}(n,2)=\frac{1}{4}n(n-2)\omega_{n}^{2/n}$$ 
l'expression de $K(n,q)$ est donnée dans le théorème \ref{INM} (page \eqref{INM}). Il suffit d'appliquer le théorème~\ref{theginv} pour $h=\frac{n-2}{4(n-1)}R_g$, qui entraîne que l'équation \eqref{yamabe} admet une solution $\varphi\in H^p_{2,G}(M)$, strictement positive et minimisante pour la fonctionnelle $I_g$. D'après la relation \eqref{relation J I}, la métrique $\varphi^{\frac{4}{n-2}}g$ minimise la fonctionnelle $J'$.
\end{proof}

\paragraph{\textbf{Remarque}}
D'après le théorème \ref{HVGINVD}, la condition suffisante, pour trouver  une solution $G-$invariante de l'équation de Yamabe \eqref{yamabe} est que l'inégalité 
$$\mu_G(g)<n(n-1)\omega_{n-1}^{2/n}(\Ca O_G(P))^{2/n}$$ 
soit toujours vraie.\\ 
On a vu que dans le cas particulier où $G=\{\mathrm{id}\}$, cette inégalité est vraie pour toute variété compacte $(M,g)$, non conformément difféomorphe à $(S_n,g_{can})$, munie d'une métrique $g$ qui satisfait l'hypothèse $(H)$ (cf. théorème \ref{conj aub}). \\
Dans le cas où $G$ est un sous groupe quelconque de $I(M,g)$ lorsque $(M,g)$ est une variété riemannienne compacte $C^\infty$, E.~Hebey et M.~Vaugon \cite{HV} ont annoncé cette inégalité  sous forme de conjecture (cf. conjecture \ref{HVcon}). Ils l'ont démontrée dans certains cas (cf. théorème \ref{HV theorem}). Dans le chapitre \ref{CHYSr}, on démontre qu'elle est  vraie dans de nouveaux cas (par contre, vu la complexité de la preuve et des arguments utilisés, on n'est pas encore en mesure d'adapter la preuve, dans le cas où la métrique est seulement dans $H^p_2$).

\chapter{Calculs techniques sur la courbure scalaire}\label{CHAub}

Dans tout ce chapitre, on suppose que $M$ est une variété compacte $C^\infty$, de dimension $n\geq 3$, $g$ 
est une métrique riemannienne $C^\infty$, munie de sa connexion riemannienne, notée $\nabla_g$. On note par $\nabla^\beta$ la dérivée covariante $\nabla^{\beta_1}\cdots\nabla^{\beta_i}$, où $\beta\in \intl 1, n\intr^i$ sont des  multi-indices, et $|\beta|=i$. On note par $\intl 1, n\intr$ l'ensemble des entiers naturels entre  $1$ et $n$.

\begin{definition}\label{omega defintione}
Soit $(M,g)$ une variété riemannienne et $W_g$ le tenseur de Weyl associé à $g$. On définit l'entier $\omega$ au point $P$ par
$$\omega(P)=\inf \{|\beta|\in \mathbb N/\|\nabla^\beta W_g(P)\|\neq 0\}$$
(et si $\|\nabla^\beta W_g(P)\|= 0$ pour tout multi-indices $\beta$, alors $\omega(P)=+\infty$).
\end{definition}
Pour des raisons de simplicité, on omet $P$ dans $\omega (P)$. On a les propriétés suivantes:
\begin{proprietes}\label{omega invariant}
Soit $\gt$ une métrique conforme à $g$. On note $\tilde\omega$ l'entier  défini ci-dessus associé à la métrique $\gt$. Alors 
$$\omega=\tilde\omega$$ 
$\omega$ est conformément invariant.
\end{proprietes}

\begin{proof}
 Si $\gt=\varphi^\N g$, alors $W_{\gt}=\varphi^\N W_g$ (cf. remarque après la  définition \ref{Weyl def}), avec $\varphi$ une fonction $C^\infty$, strictement positive. Par conséquent
$$\forall i<\omega\quad \nabla^i W_g(P)=0\Longleftrightarrow \forall i<\tilde\omega\quad \nabla^i W_{\gt}(P)=0 $$ 
\end{proof}

\section{Calculs  sur l'intégrale de la courbure scalaire }\label{aubin computations}

Cette section est consacrée au calcul de l'intégrale de la courbure scalaire  sur une sphère de rayon $r$ assez petit. Ces calculs ont été effectués  par T.~Aubin \cite{Aub5, Aub4}, que l'on reprendra, avec des preuves détaillées. Notons par $S(r)$ la sphère de dimension $n-1$ et de rayon $r$, et par $\di\sigma_r$ l'élément de volume sur $S(r)$. On note par $\bar\int$ la valeur moyenne 
$$\bar \int_M \varphi\di v=\frac{1}{vol(M)}\int_M\varphi\di v$$ 
L' intégrale de la courbure scalaire que l'on calculera joue un rôle important dans  la fonctionnelle de Yamabe \eqref{yamm}. On verra que si elle est négative alors la conjecture de Hebey--Vaugon  \ref{HVcon} est démontrée.  Mais dans certains cas, elle est positive, ce qui complique la preuve de la conjecture \ref{HVcon}. Notons qu'on a déjà démontré que l'inégalité large suivante est toujours vraie  
$$\mu_G(g)\leq\frac{n(n-2)}{4}\omega_n^{2/n}(\Ca O_G(P))^{2/n}$$
pour tout variété compacte $(M,g)$, de dimension $n\geq 3$ (cf. proposition \ref{ine largee}, page \pageref{ine largee} ), même dans le cas où on met $h$ une fonction quelconque à la place  de $R_g$. On constate qu'il y a certaines informations contenues dans $R_g$ qu'il faut absolument utiliser  pour démontrer la conjecture.

\begin{definition}\label{mu defin}
Soit $P$ un point fixé de $M$. On note $\mu(P)$ l'entier naturel, défini comme suit:  $|\nabla_\beta R_g(P)|=0$ pour tout $|\beta|<\mu(P)$ et il existe $\beta\in \intl 1, n\intr^{\mu(P)}$ tel que $|\nabla_\beta R_g(P)|\neq 0$. Dans un système de coordonnées normales $\{x^i\}$ d'origine $P$
$$R_g(Q)=\bar R +O(r^{\mu(P)+1})$$ 
où   $\bar R=r^{\mu(P)}\sum_{|\beta|= \mu(P)}\nabla_\beta R_g(P)\xi^\beta$ est un polynôme homogène de degré  $\mu(P)$, qui représente la partie principale de $R_g$, $r=d(P,Q)=|x|$ et $\xi^i=\frac{x^i}{r}$.
\end{definition}
Pour des raisons de simplicité, on omet $P$ dans $\mu(P)$.\\
Le lemme \ref{lemme HV ex}, énoncé dans le chapitre suivant, et le développement limité de la métrique donnent: 
\begin{lemma}\label{LLLL}
 On a toujours $\mu\geq \omega$, $g_{ij}=\delta_{ij}+O(r^{\omega+2})$ et $\bar\int_{S(r)}R_g\di \sigma_r=O(r^{2\omega+2})$ ce qui entraîne que  $\int_{S(r)}\bar R\di\sigma_r=0$ lorsque $\mu<2\omega+2$. 
\end{lemma}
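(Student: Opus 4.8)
The plan is to reduce, by a conformal change, to the Cao--Günther metric furnished by Théorème \ref{caogun} together with geodesic normal coordinates $\{x^i\}$ centered at $P$, so that $\det g \equiv 1$; by Propriétés \ref{omega invariant} the integer $\omega$ is unaffected by this change. The first ingredient is Lemme \ref{lemme HV ex}: in such conformal normal coordinates it gives the vanishing at $P$ of the symmetrized covariant derivatives of the Ricci tensor up to the relevant order, and the Taylor expansion of $g_{ij}$ at $P$ then reduces, order by order, to expressions built from $\nabla^k W_g(P)$. Since $\nabla^k W_g(P)=0$ for $k<\omega$ by definition of $\omega$, this yields $g_{ij}=\delta_{ij}+O(r^{\omega+2})$, with $r=|x|=d(P,Q)$.

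From $g_{ij}=\delta_{ij}+O(r^{\omega+2})$ one gets $\partial_k g_{ij}=O(r^{\omega+1})$ and $\partial_{kl}g_{ij}=O(r^{\omega})$. As the Christoffel symbols are linear in $\partial g$, and $R_g$ is, in a chart, a sum of terms of the form $\partial^2 g$ and $(\partial g)^2$ (with coefficients smooth in $g$ and $g^{-1}$), it follows that $R_g(Q)=O(r^{\omega})$ as $Q\to P$. By Définition \ref{mu defin} this means $\nabla_\beta R_g(P)=0$ for $|\beta|<\omega$, i.e. $\mu\geq\omega$.

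For the sphere average I would Taylor-expand $R_g$ in these coordinates, $R_g(Q)=\sum_{k\geq\mu} r^k P_k(\xi)$ where each $P_k$ is the restriction to the unit sphere of a homogeneous polynomial of degree $k$ (with $P_\mu$ a multiple of $\bar R$ restricted to the sphere), and likewise expand the volume element $\di\sigma_r=r^{n-1}(1+O(r^{\omega+2}))\di\sigma(\xi)$, again using $g_{ij}=\delta_{ij}+O(r^{\omega+2})$ (this correction factor is $1$ if $\di\sigma_r$ denotes the round element). Integrating over $S(r)$: odd-degree terms have zero mean on $S^{n-1}$ by parity, and for an even-degree term the classical formula $\overline{\int}_{S^{n-1}} Q\,\di\sigma = c_{n,2j}\,\Delta^j Q$ for $Q$ homogeneous of degree $2j$ (flat Laplacian) turns the mean of $P_{2j}$ into a multiple of the iterated Laplacian $\Delta^j R_g(P)$, up to curvature corrections that are of higher order in $r$. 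The substantive point supplied by Lemme \ref{lemme HV ex} is that $\Delta^j R_g(P)=0$ for $j\leq\omega$; combined with $\mu\geq\omega$ this annihilates all contributions of order strictly below $r^{2\omega+2}$, while the cross terms with the $O(r^{\omega+2})$ correction of $\di\sigma_r$ are $O(r^{\mu+\omega+2})=O(r^{2\omega+2})$. Hence $\bar\int_{S(r)}R_g\di\sigma_r=O(r^{2\omega+2})$. Finally, writing $R_g=\bar R+O(r^{\mu+1})$ with $\bar R$ homogeneous of degree $\mu$, one has $\bar\int_{S(r)}\bar R\,\di\sigma_r=c\,r^\mu+O(r^{\mu+\omega+2})$ with $c=\overline{\int}_{S^{n-1}}\bar R(\xi)\di\sigma(\xi)$, and $\bar\int_{S(r)}(R_g-\bar R)\di\sigma_r=O(r^{\mu+1})$; comparing with $\bar\int_{S(r)}R_g\di\sigma_r=O(r^{2\omega+2})$ and using $\mu<2\omega+2$ (so the term $c\,r^\mu$ cannot be absorbed) forces $c=0$, i.e. $\int_{S^{n-1}}\bar R\,\di\sigma=0$, whence $\int_{S(r)}\bar R\,\di\sigma_r=0$ for all $r$ by homogeneity.

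The main obstacle is the sphere-average step: obtaining the sharp order $2\omega+2$ rests on the precise expansion of $\di\sigma_r$ in the good coordinates and, above all, on the vanishing $\Delta^j R_g(P)=0$ for $j\leq\omega$, which is exactly the content of the conformal normal coordinate construction encoded in Lemme \ref{lemme HV ex}; matching the normal-coordinate Taylor coefficients of $R_g$ with the iterated covariant Laplacians at $P$ also requires careful bookkeeping of curvature corrections, but these only produce terms of higher order in $r$ and are harmless.
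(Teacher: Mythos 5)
Your proof is correct and follows essentially the same route as the paper: both use the expansion \eqref{expansion metric} (via Lemme \ref{lemme HV ex}) to get $g_{ij}=\delta_{ij}+O(r^{\omega+2})$, deduce $R_g=O(r^\omega)$ and hence $\mu\geq\omega$, and then kill all terms of the sphere average below order $2\omega+2$ by parity together with the vanishing $\Delta_g^jR_g(P)=0$ for $j\leq\omega$ from point 4 of that lemma. Your extra bookkeeping of the $O(r^{\omega+2})$ correction to $\di\sigma_r$ and the explicit final comparison forcing $\int_{S^{n-1}}\bar R\,\di\sigma=0$ are details the paper leaves implicit, but the argument is the same.
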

\begin{proof}
Par le développement limité \eqref{expansion metric} (voir le chapitre suivant),  $g_{ij}=\delta_{ij}+O(r^{\omega+2})$. Puisque la courbure scalaire $R_g$ est obtenue, en dérivant deux fois les composantes de la métrique, alors $R_g=O(r^\omega)$. Ce qui veut dire que la partie principale $\bar R$ est d'ordre $\mu\geq\omega$.\\
Intéressons nous à $\bar\int_{S(r)}R_g\di \sigma_r$, elle est d'ordre $2\omega+2$. En effet, on a le développement suivant
$$R_g(Q)=\sum_{m=\mu}^{2\omega+1}(\sum_{|\beta|= m}\nabla_\beta R_g(P)\xi^\beta)r^m+O(r^{2\omega+2})$$
avec $r=d(P,Q)$ et $(r,\xi^j)$  un système de  coordonnées géodésiques. En intégrant cette égalité sur la sphère $S(r)$, sachant que l'intégrale d'un polynôme homogène de degré impair sur la sphère est nulle, on obtient
\begin{equation*}
\bar\int_{S(r)}R_g\di\sigma_r=\sum_{m=\mu}^{\omega}C(m,n)\Delta_g^mR_g(P)r^m+O(r^{2\omega+2})
\end{equation*}
pour une certaine constante $C(m,n)$, qui dépend seulement de  $n$ et $m$. Comme les courbures de la métrique $g$ satisfont le lemme \ref{lemme HV ex}, pour tout $m\leq \omega$, $\Delta_g^mR_g(P)=0$. Donc 
\begin{equation*}
\bar\int_{S(r)}R_g\di\sigma_r=O(r^{2\omega+2})
\end{equation*}
\end{proof}

Soit\label{detailh}  $\{x^\alpha\}$ un système de coordonnées normal en $P$ et $\{r,\xi^i\}$ un système de coordonnées géodésiques. Le lemme \ref{LLLL} entraîne qu'il existe un tenseur symétrique $h$ tel que
\begin{equation*}
 g=\mathcal E+h
\end{equation*}
avec $h=O(r^{\omega+2})$, alors 
\begin{equation*}
 g=\mathcal E+h=(\delta_{\alpha\beta}+h_{\alpha\beta})dx^\alpha \otimes dx^\beta=dr^2+(s_{ij}+h_{ij})(rd\xi^i)\otimes(rd\xi^j)
\end{equation*}
où $(s_{ij})$ sont les composantes de la métrique standard sur la sphère $S_{n-1}$ et 
$$h_{ij}=\frac{\partial x^\alpha}{r\partial \xi^i}\frac{\partial x^\beta}{r\partial \xi^j}h_{\alpha\beta},\quad h_{ir}=h_{rr}=0$$
Remarquons que $h_{ij}=O(r^{\omega+2})$. On peut donc décomposer $(h_{ij})$ de la façon suivante:
\begin{equation}\label{defhij}
h_{ij}=r^{\omega+2}\bar g_{ij}+r^{2(\omega+2)}\hat{g}_{ij}+\tilde h_{ij}
\end{equation}
où $\bar g$, $\hat g$ et $\tilde h$ sont des 2-tenseurs symétriques définis sur la sphère  $S_{n-1}$. On choisit $\{\frac{\partial}{\partial r},\frac{\partial}{r\partial\xi^i}\}_{1\leq i\leq n-1}$ et $\{dr,rd\xi^i\}_{1\leq i\leq n-1}$ comme  bases locales de  l'espace tangent $TM$ et cotangent $T^*M$  respectivement. Notre but dans le choix de ces bases est d'avoir
$$g_{ij}=s_{ij}+h_{ij},\; g_{rr}=1\text{ et }g_{ir}=0$$
et d'éliminer une fois pour toutes le $r^2$ qui apparaît,  en passant aux coordonnées géodésiques.  Les composantes  $g^{ij}$ de l'inverse  de la métrique sont 
$$g^{ij}=s^{ij}-h^{ij}+O(r^{2\omega+4})$$ 
où $h^{ij}=s^{ik}s^{jl}h_{lk}$. On fait monter et baisser les indices, en utilisant la métrique $(s_{ij})$, sauf pour la métrique $g$. On note par $\nabla$ la connexion riemannienne sur la sphère, associée à $s$. Par des calculs directs,  T.~Aubin \cite{Aub5} a montré que:
\begin{theorem}\label{zzz}
$$\bar R=\nabla^{ij}\bar g_{ij}r^\omega\quad\text{et}$$
$$\bar\int_{S(r)}R\di\sigma_r =[B/2-C/4-(1+\omega/2)^2Q]r^{2(\omega+1)}+o(r^{2(\omega+1)})$$
où $B=\bar\int_{S_{n-1}}\hspace{-0.5cm}\nabla^i\bar g^{jk}\nabla_j\bar g_{ik}\di\sigma$, $C=\bar\int_{S_{n-1}}\hspace{-0.5cm}\nabla^i\bar g^{jk}\nabla_i\bar g_{jk}\di\sigma$ et $Q= \bar\int_{S_{n-1}}\bar g_{ij}\bar g^{ij}\di\sigma$
\end{theorem}

Une preuve détaillée de ce lemme est donnée dans l'appendice \ref{calculus}. \\

De plus T.~Aubin \cite{Aub3} a montré que 


\begin{theorem}\label{aaa}
 Si $\mu\geq\omega+1$ alors il existe une constante $C(n,\omega)>0$ telle que 
$$\bar\int_{S(r)}R\di\sigma_r=C(n,\omega)(-\Delta_g)^{\omega+1} R(P)r^{2\omega+2}+o(r^{2\omega+2})$$
$(-\Delta_g)^{\omega+1} R(P)$ est strictement négative et $I_g(u_{\e})<\frac{n(n-2)}{4}\omega_{n-1}^{2/n}$.\\ 
La fonction $u_\e$ est  définie plus bas (voir équation \eqref{uepsilon}). 
\end{theorem}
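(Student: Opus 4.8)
The plan is to compute the spherical mean of $R_g$ around $P$ by a Taylor expansion in a well-chosen conformal gauge, isolate the leading coefficient, show it is a negative multiple of $\|\nabla^\omega W_g(P)\|^2$, and feed this into the Yamabe functional evaluated on $u_\e$. Since $\mu_G(g)$ is conformally invariant (Proposition \ref{mu invariance}) and $\omega$ is conformally invariant (Properties \ref{omega invariant}), I would first replace $g$ by a conformal representative — still denoted $g$ — realizing the Hebey--Vaugon normalization at $P$ (Lemma \ref{lemme HV ex}): in geodesic normal coordinates centred at $P$ one has $g_{ij}=\delta_{ij}+O(r^{\omega+2})$, $\nabla^\beta W_g(P)=0$ for $|\beta|<\omega$, and $\Delta_g^m R_g(P)=0$ for all $0\le m\le\omega$; this representative may moreover be taken of Cao--G\"unther type ($\det g\equiv 1$ in that chart, Theorem \ref{caogun}). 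I assume the normalized metric still satisfies $\mu\ge\omega+1$ — equivalently, that the degree-$\omega$ part $\nabla^{ij}\bar g_{ij}r^\omega$ of $R_g$ vanishes, which is the case complementary to Theorem \ref{zzz} — so that in addition $\nabla_\beta R_g(P)=0$ for all $|\beta|\le\omega$. (We take $\omega<\infty$; the case $\omega=+\infty$ is vacuous here and belongs to the conformally flat regime handled via Schoen's test function as in Theorem \ref{conj aub}.)

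Expanding $R_g(\exp_P(r\xi))=\sum_{m=\mu}^{2\omega+2}\bigl(\sum_{|\beta|=m}\nabla_\beta R_g(P)\,\xi^\beta\bigr)r^m+O(r^{2\omega+3})$ and integrating over $S(r)$ — the volume element being $r^{n-1}\di\sigma$ up to a factor $1+O(r^{\omega+2})$, which only perturbs terms of order $o(r^{2\omega+2})$ since $\mu\ge\omega+1$, and the sphere integral of an odd homogeneous polynomial vanishing — one gets, exactly as in the proof of Lemma \ref{LLLL}, that $\bar\int_{S(r)}R_g\,\di\sigma_r=\sum_{0\le m\le\omega+1}a^{(n)}_m\,(\Delta_g^m R_g)(P)\,r^{2m}+o(r^{2\omega+2})$, where the $a^{(n)}_m$ are the universal constants of the Riemannian mean-value expansion, with $a^{(n)}_m=(-1)^m|a^{(n)}_m|$. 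All terms with $m\le\omega$ vanish by the normalization, leaving
\[
\bar\int_{S(r)}R_g\,\di\sigma_r=|a^{(n)}_{\omega+1}|\,(-\Delta_g)^{\omega+1}R_g(P)\,r^{2\omega+2}+o(r^{2\omega+2}),
\]
which is the stated formula with $C(n,\omega):=|a^{(n)}_{\omega+1}|>0$.

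The \emph{main obstacle} is proving $(-\Delta_g)^{\omega+1}R_g(P)<0$. I would show that, in the chosen gauge, this quantity equals a strictly negative universal constant times $\|\nabla^\omega W_g(P)\|^2$, which is nonzero by the very definition of $\omega$. Expanding $\Delta_g^{\omega+1}R_g(P)$ by commuting covariant derivatives, using the contracted second Bianchi identities and the decomposition of the Riemann tensor $\mathrm{Riem}$ into its Weyl, Ricci and scalar parts (Definition \ref{Weyl def}), one finds that every term of the expansion carries a factor $\nabla^\ell R_g(P)$ with $\ell\le\omega$ or a factor $\nabla^\ell W_g(P)$ with $\ell<\omega$ — all of which vanish by the first step — save one term quadratic in $\nabla^\omega W_g(P)$, whose (purely combinatorial) coefficient is evaluated, following T.~Aubin \cite{Aub3}, and checked to be strictly negative. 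The hypothesis $\mu\ge\omega+1$ together with the Hebey--Vaugon normalization is precisely what annihilates the unwanted terms; this curvature-identity bookkeeping is the delicate point.

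Finally, to conclude $I_g(u_\e)<\frac{n(n-2)}{4}\omega_{n-1}^{2/n}$, I would insert the function $u_\e$ of \eqref{uepsilon} into $I_g$. In geodesic polar coordinates $|\nabla u_\e|_g^2=(\partial_r u_\e)^2$, and since $g$ is of Cao--G\"unther type the volume element induces no correction of order $\le r^{2\omega+2}$ on $\int_M|\nabla u_\e|^2\,\di v$ nor on $\|u_\e\|_N$; with the Euclidean identity \eqref{inte} this gives $\int_M|\nabla u_\e|^2\,\di v\cdot\|u_\e\|_N^{-2}=\frac{n(n-2)}{4}\omega_{n-1}^{2/n}+o(\e^{2\omega+2})$, the limiting value of Proposition \ref{inegalite large}. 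The remaining term is $\frac{n-2}{4(n-1)}\|u_\e\|_N^{-2}\int_M R_g u_\e^2\,\di v$; writing $\int_M R_g u_\e^2\,\di v=\int_0^\delta\bigl(\int_{S(r)}R_g\,\di\sigma_r\bigr)\bigl(\tfrac{\e}{r^2+\e^2}\bigr)^{n-2}\di r+O(\e^{n-2})$ and inserting the expansion above, this term equals $-c\,\e^{2\omega+2}+o(\e^{2\omega+2})$ with $c>0$, the $r$-integral converging at $0$ in the relevant range $2\omega+2<n-2$ (i.e. $\omega\le(n-6)/2$; at equality one gains a harmless extra logarithm of the same sign). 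Hence $I_g(u_\e)=\frac{n(n-2)}{4}\omega_{n-1}^{2/n}-c\,\e^{2\omega+2}+o(\e^{2\omega+2})<\frac{n(n-2)}{4}\omega_{n-1}^{2/n}$ for $\e$ small, and since $\mu(g)\le I_g(u_\e)$ always holds, this completes the proof.
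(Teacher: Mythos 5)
Your overall architecture (conformal normalization via Lemma \ref{lemme HV ex}, a Pizzetti-type expansion of the spherical mean, insertion into $I_g(u_\e)$) is the same as the paper's, and the first and last steps are essentially sound, up to a slip in the last one: the scalar-curvature contribution to $I_g(u_\e)$ is of order $\e^{2\omega+4}$, not $\e^{2\omega+2}$, since $\int_0^\delta r^{n+2\omega+1}u_\e^2\,\di r=\e^{2\omega+4}I_{n-2}^{n+2\omega+1}(\e)$, and the relevant convergence condition is $n>2\omega+6$ with a logarithm at equality; this does not affect the conclusion.

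The genuine gap is in the middle step, the strict negativity of $(-\Delta_g)^{\omega+1}R_g(P)$, which is the whole content of the theorem. You assert that after the normalization every term in the expansion of $\Delta_g^{\omega+1}R_g(P)$ vanishes \emph{save one term quadratic in} $\nabla^\omega W_g(P)$, so that the quantity is a negative universal multiple of $\|\nabla^\omega W_g(P)\|^2\neq 0$. That reduction is not available. The normalization kills $\nabla^j R_{iklm}(P)$ for $j<\omega$ and $\nabla_\beta R_g(P)$ for $|\beta|\le\omega$, but it does \emph{not} kill $\nabla^\omega Ric(P)$, and the surviving order-$(2\omega+2)$ contributions are several inequivalent quadratic contractions mixing $\nabla^\omega$ of the Riemann and Ricci tensors. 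This is exactly what the paper records: Aubin's identity $2(\omega+2)^2C(2,2)Sym\nabla_{\alpha\beta kl}R+C(\omega)I=0$ with $I=Z+2(\omega+3)^2(A+\tilde C)+2\omega(\omega+3)B$, where $A$, $B$ and $\tilde C$ all carry $\nabla^\omega Ric(P)$. The theorem is thereby reduced to proving $I>0$, and that positivity is obtained not by ``checking one combinatorial coefficient'' but by introducing further contractions and inequalities between these terms; the paper can only sketch this and cite \cite{Aub3} (with $\omega\le 2$ due to Hebey--Vaugon and $\omega=3$ to L.~Zhang). The case $\omega=0$, where $Ric(P)$ and its symmetrized first derivatives do vanish in conformal normal coordinates and one indeed gets $-c|W(P)|^2$, is misleading as a template for general $\omega$. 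As written, your argument establishes the expansion formula but not the sign of its leading coefficient, which is precisely where the difficulty lies.
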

On rappellera le schéma de la preuve et on donnera des détails sur ce théorème dans l'appendice \ref{calculus}.\\

\textbf{On considèrera à partir de maintenant et jusqu'à la fin de cette section que} $\boldsymbol{\mu=\omega}$.\\ 

On sait que $\bar R$ est un polynôme homogène de degré  $\omega$, $\Delta_{\mathcal{E}}\bar R$ est donc homogène de degré  $\omega-2$ et
$$\Delta_{\mathcal{E}}\bar R=r^{-2}(\Delta_s\bar R-\omega(n+\omega-2) \bar R)$$
où $\Delta_{\mathcal{E}}$ est le Laplacien euclidien et   $\Delta_s$ est le Laplacien de la sphère  $S_{n-1}$, muni de la métrique $s$. $\Delta_{\mathcal{E}}^{k-1}\bar R$ est homogène de degré $\omega-2k+2$ et
$$\Delta^k_{\mathcal E}\bar R=r^{-2}(\Delta_s-\nu_k\mathrm{id})\Delta^{k-1}_{\mathcal E}\bar R= r^{-2k}\prod_{p=1}^k(\Delta_S-\nu_p\mathrm{id}) \bar R$$
avec
\begin{equation}\label{lam}
 \nu_k=(\omega-2k+2)(n+\omega-2k)
\end{equation}
Cette suite d'entiers naturels $(\nu_k)_{\{1\leq k\leq [\omega/2]\}}$ est décroissante. Elle est formée de valeurs propres du Laplacien sur la sphère $S_{n-1}$ (il est bien connu que les valeurs propres du Laplacien géométrique sont positives et qu'elles forment une suite croissante. Nos valeurs $\nu_k$ sont prises dans l'ordre opposé). \\
Puisque $\bar R$ est homogène de degré $\omega$, deux cas se présentent. Soit $\omega$ est pair, alors $\Delta^{[\omega/2]}_{\mathcal E}\bar R$ est une constante, mais d'après le \ref{scal item}ème point du lemme~\ref{lemme HV ex},  $\Delta^{[\omega/2]}_{\mathcal E}\bar R(P)=0$, d'où 
$$\Delta^{[\omega/2]}_{\mathcal E}\bar R=0$$
Soit $\omega$ est impair, alors $\Delta^{[\omega/2]}_{\mathcal E}\bar R$ est une forme linéaire. D'après le \ref{scal item}ème point du lemme~\ref{lemme HV ex}
$$\Delta^{[\omega/2]}_{\mathcal E}\bar R(P)=0\text{ et }\nabla\Delta^{[\omega/2]}_{\mathcal E}\bar R(P)=0$$
Finalement $\Delta^{[\omega/2]}_{\mathcal E}\bar R=0$ dans tous les cas.\\
On a $r^{-\omega}\bar R\in \bigoplus_{k=1}^{q}E_k $, où $E_k$ l'espace propre associé à la valeur propre  $\nu_k$, du  Laplacien $\Delta_s$, sur la sphère $S_{n-1}$, et où on a noté 
$$q=\min\{k\in\mathbb N/ \Delta_{\mathcal E}^{k}\bar R= 0\}$$
Si $j\neq k$, $E_k$ est bien orthogonal à $E_j$, pour le produit scalaire dans $L^2(S_{n-1})$ et le produit scalaire sur $H_1(S_{n-1})$ définis ci-dessous, puisque si $j\neq k$ et $\varphi_k\in E_k$
\begin{equation}\label{proscal1}
\nu_k(\varphi_k,\varphi_j)_{L^2}=(\Delta_s\varphi_k,\varphi_j)_{L^2}=(\varphi_k,\Delta_s\varphi_j)_{L^2}=\nu_j(\varphi_k,\varphi_j)_{L^2}
\end{equation}

Le produit  suivant est bien un produit scalaire sur l'ensemble des fonctions dans $H_1(S_{n-1})$, d'intégrales nulles
\begin{equation}\label{proscal2}
(\varphi_k,\varphi_j)_{H_1}=(\nabla\varphi_k,\nabla\varphi_j)_{L^2}=\nu_k(\varphi_k,\varphi_j)_{L^2}=\nu_j(\varphi_k,\varphi_j)_{L^2} 
\end{equation}

De plus, puisque $\int_{S(r)}\bar R d\sigma_r=0$ (d'après le lemme \ref{LLLL}), il existe des $\varphi_k\in E_k$ (fonctions propres de $\Delta_s$) telles que
\begin{equation}\label{dddd}
\bar R=r^{\omega}\Delta_s\sum_{k=1}^q\varphi_k=r^{\omega}\sum_{k=1}^q\nu_k\varphi_k
\end{equation}

On pose 
 $$b_{ij}=\sum_{k=1}^q\frac{1}{(n-2)(\nu_k+1-n)}[(n-1)\nabla_{ij}\varphi_k+\nu_k\varphi_ks_{ij}]$$
et   $a_{ij} =\bar g_{ij}-b_{ij}$.\\ 
On note $\bar R_a=\bar R$ lorsque $\bar g_{ij}=a_{ij}$ et $\bar R_b=\bar R$ lorsque $\bar g_{ij}=b_{ij}$. En tenant compte de l'expression \eqref{dddd} de $\bar R$, on établit les relations suivantes:
\begin{lemma}\label{formule abg}
 \begin{equation*}
\nabla^ib_{ij}=-\sum_{k=1}^q\nabla_j\varphi_k,\;\bar R=\bar R_b=\nabla^{ij}b_{ij}r^{\omega},\;\bar R_a=\nabla^{ij}a_{ij}r^{\omega}=0 \text{ et } s^{ij}b_{ij}=s^{ij}a_{ij}=0
\end{equation*}
\end{lemma}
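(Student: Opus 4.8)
Le plan est de ramener les quatre identit\'es \`a quelques faits standard sur la sph\`ere $(S_{n-1},s)$, tous imm\'ediats avec les conventions du paragraphe \ref{curvature}. Les $\varphi_k$ \'etant des fonctions propres du Laplacien sph\'erique, $\nabla^l\nabla_l\varphi_k=-\nu_k\varphi_k$, et donc $s^{ij}\nabla_{ij}\varphi_k=-\nu_k\varphi_k$. La sph\`ere unit\'e $S_{n-1}$ ayant pour courbure de Ricci $Ric_{ij}=(n-2)s_{ij}$, on obtient, en appliquant la formule de permutation des d\'eriv\'ees covariantes (Propri\'et\'es \ref{prop courbure}) \`a la $1$-forme $\nabla_j\varphi_k$ puis en contractant, l'identit\'e de type Bochner
$$\nabla^l\nabla_l\nabla_j\varphi_k=\nabla_j\!\left(\nabla^l\nabla_l\varphi_k\right)+Ric_j{}^{l}\,\nabla_l\varphi_k=(n-2-\nu_k)\,\nabla_j\varphi_k .$$
On utilise aussi $s^{ij}s_{ij}=n-1$, le fait que $\nu_k\neq n-1$ pour $1\le k\le q$ (puisque $q\le[\omega/2]$ interdit $\omega-2k+2=1$), ce qui rend $b_{ij}$ bien d\'efini, ainsi que $s^{ij}\bar g_{ij}=0$ (normalisation $\det g=1$, cf. th\'eor\`eme \ref{caogun} et appendice \ref{calculus}), la formule $\bar R=\nabla^{ij}\bar g_{ij}\,r^\omega$ du th\'eor\`eme \ref{zzz} et l'expression $r^{-\omega}\bar R=\sum_{k=1}^q\nu_k\varphi_k$ de \eqref{dddd}.

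Je commencerais par la trace~: en contractant la d\'efinition de $b_{ij}$ avec $s^{ij}$, le crochet devient $(n-1)(-\nu_k\varphi_k)+\nu_k\varphi_k(n-1)=0$, d'o\`u $s^{ij}b_{ij}=0$. Puis je calculerais la divergence en appliquant $\nabla^i$ terme \`a terme~: $\nabla^i(\varphi_k s_{ij})=\nabla_j\varphi_k$ (car $\nabla s=0$) et $\nabla^i\nabla_{ij}\varphi_k=(n-2-\nu_k)\nabla_j\varphi_k$ par l'identit\'e ci-dessus. Le coefficient de $\nabla_j\varphi_k$ se simplifie alors en $-1$ gr\^ace \`a
$$(n-1)(n-2-\nu_k)+\nu_k=(n-2)(n-1-\nu_k)=-(n-2)(\nu_k+1-n),$$
qui annule exactement le d\'enominateur de $b_{ij}$~; on obtient $\nabla^i b_{ij}=-\sum_{k=1}^q\nabla_j\varphi_k$.

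Ensuite, $b$ \'etant sym\'etrique, $\nabla^{ij}b_{ij}=\nabla^i(\nabla^j b_{ij})=-\sum_k\nabla^i\nabla_i\varphi_k=\sum_{k=1}^q\nu_k\varphi_k$, qui co\"incide avec $r^{-\omega}\bar R$ d'apr\`es \eqref{dddd}~; d'o\`u $\bar R_b=\nabla^{ij}b_{ij}\,r^\omega=\bar R$. Comme l'application qui \`a un $2$-tenseur sym\'etrique $t$ associe $r^\omega$ fois sa double divergence est lin\'eaire et que $a_{ij}=\bar g_{ij}-b_{ij}$, on en d\'eduit $\bar R_a=\nabla^{ij}a_{ij}\,r^\omega=\nabla^{ij}\bar g_{ij}\,r^\omega-\nabla^{ij}b_{ij}\,r^\omega=\bar R-\bar R=0$ (en utilisant le th\'eor\`eme \ref{zzz} pour la premi\`ere \'egalit\'e), et enfin $s^{ij}a_{ij}=s^{ij}\bar g_{ij}-s^{ij}b_{ij}=0$.

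Le point le plus d\'elicat sera le contr\^ole des signes et des contractions d'indices dans la formule de commutation sur la sph\`ere, indispensable pour obtenir le terme de Ricci sous la forme $+(n-2)\nabla_j\varphi_k$, ainsi que la petite identit\'e alg\'ebrique ci-dessus qui fait dispara\^itre le d\'enominateur de $b_{ij}$~; tout le reste est formel. De mani\`ere structurelle, $b_{ij}$ est construit pr\'ecis\'ement pour \^etre de trace nulle et avoir $\bar R$ pour double divergence (\`a un facteur $r^\omega$ pr\`es), et $a_{ij}=\bar g_{ij}-b_{ij}$ en est la partie compl\'ementaire, de trace nulle et \`a double divergence nulle.
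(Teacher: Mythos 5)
Votre preuve est correcte et suit essentiellement la m\^eme d\'emarche que celle de l'appendice~\ref{calculus}~: calcul de $\nabla^i b_{ij}$ par commutation des d\'eriv\'ees covariantes sur $(S_{n-1},s)$ (votre formule de Bochner avec $Ric=(n-2)s$ est exactement la contraction $s^{im}\nabla_{mj}\nabla_i\varphi_k=-(\nu_k-n+2)\nabla_j\varphi_k$ du texte), puis double divergence compar\'ee \`a \eqref{dddd} et au th\'eor\`eme~\ref{zzz}, et conclusion pour $a_{ij}$ par lin\'earit\'e et pour les traces via $s^{ij}\bar g_{ij}=0$. Les v\'erifications alg\'ebriques (simplification du d\'enominateur, non-annulation de $\nu_k+1-n$) sont exactes.
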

La preuve détaillée est donnée dans l'appendice \ref{calculus}.\\

Regardons les deux cas particuliers suivants:\\ 
Si  $\bar g_{ij}=a_{ij}$. Alors $\bar R=\bar R_a=0$, ce qui entraîne que  la partie principale de $R_g$, est de degré $\mu\geq\omega+1$. Par le  théorème \ref{aaa}
$$\bar\int_{S(r)}R\di\sigma_r=\bar\int_{S(r)}R_a\di\sigma_r<0$$ 
Si $\bar g_{ij}=b_{ij}$. D'après le théorème \ref{zzz}, on a
$$\bar\int_{S(r)}\hspace{-0.5cm}R\di\sigma_r=\bar\int_{S(r)}\hspace{-0.5cm}R_b\di\sigma_r=[B_b/2-C_b/4-(1+\omega/2)^2Q_b]r^{2(\omega+1)}+o(r^{2(\omega+1)})$$
où l'on note par $B_b$, $C_b$ et $Q_b$ les intégrales $B$, $C$ et $Q$ respectivement, définies dans le  théorème \ref{zzz} lorsque $\bar g_{ij}=b_{ij}$. On peut les calculer en fonction  des fonctions propres $\varphi_k$, on trouve:
\begin{gather*}
Q_b=\bar\int_{S_{n-1}} b_{ij} b^{ij}\di\sigma=\frac{n-1}{n-2}\sum_{k=1}^{q}\frac{\nu_k}{\nu_k-n+1}\bar\int_{S_{n-1}}\varphi_k^2\di\sigma\label{Qb}\\
B_b=-(n-1)Q_b+\sum_{k=1}^{q}\nu_k\bar\int_{S_{n-1}}\hspace{-0.6cm}\varphi_k^2\di\sigma\label{Bb}\\ C_b=-(n-1)Q_b+\frac{n-1}{n-2}\sum_{k=1}^{q}\nu_k\bar\int_{S_{n-1}}\hspace{-0.6cm}\varphi_k^2\di\sigma\label{Cb}
\end{gather*}
Dans le calcul de ces expressions on a utilisé plusieurs fois l'identité $\nabla^ib_{ij}=-\sum_{k=1}^q\nabla_j\varphi_k$ et la formule de Stokes (les calculs détaillés sont donnés dans l'appendice \ref{calculus}).\\
 Dans le cas général (i.e. $\bar g_{ij}=a_{ij}+b_{ij}$), on obtient le lemme suivant:
 \begin{lemma}\label{mmm}
Si  $\mu=\omega$ et $\bar g_{ij}=a_{ij}+b_{ij}$, alors
\begin{equation*}
\bar\int_{S(r)}\hspace{-0.4cm}R\di\sigma_r=\bar\int_{S(r)}\hspace{-0.4cm}R_a+R_b\di\sigma_r+o(r^{2(\omega+1)})\leq [B_b/2-C_b/4-(1+\omega/2)^2Q_b]r^{2(\omega+1)}+o(r^{2(\omega+1)})
\end{equation*} 
\begin{equation*}
B_b/2-C_b/4-(1+\omega/2)^2Q_b=\sum_{k=1}^q u_k\bar\int_{S_{n-1}}\varphi_k^2\di\sigma
\end{equation*} 
avec
\begin{equation}\label{ukk}
 u_k=\biggl(\frac{n-3}{4(n-2)}-\frac{(n-1)^2+(n-1)(\omega+2)^2}{4(n-2)(\nu_k-n+1)}\biggr)\nu_k
\end{equation}
\end{lemma}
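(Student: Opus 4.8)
The plan is to apply Theorem \ref{zzz} to the metric whose principal perturbation term is $\bar g_{ij}=a_{ij}+b_{ij}$, expand the resulting leading coefficient by bilinearity, show that the cross terms vanish, and then reduce the remaining $b$-contribution to a sum over the eigenfunctions $\varphi_k$. First, Theorem \ref{zzz} gives $\bar\int_{S(r)}R\di\sigma_r=\mathcal B[\bar g]\,r^{2(\omega+1)}+o(r^{2(\omega+1)})$, where $\mathcal B[\bar g]:=B/2-C/4-(1+\omega/2)^2Q$ and $B$, $C$, $Q$ are the integrals of Theorem \ref{zzz}, each a homogeneous quadratic functional of $\nabla\bar g$ (for $B$, $C$) resp. of $\bar g$ (for $Q$). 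Writing $\bar g=a+b$ and using bilinearity, $\mathcal B[a+b]=\mathcal B[a]+\mathcal B[b]+2\mathcal B(a,b)$, where $\mathcal B(\cdot,\cdot)$ is the associated symmetric bilinear form; equivalently $\bar\int_{S(r)}R\di\sigma_r=\bar\int_{S(r)}R_a\di\sigma_r+\bar\int_{S(r)}R_b\di\sigma_r+2\mathcal B(a,b)\,r^{2(\omega+1)}+o(r^{2(\omega+1)})$, since $\bar\int_{S(r)}R_a\di\sigma_r$ and $\bar\int_{S(r)}R_b\di\sigma_r$ have leading coefficients $\mathcal B[a]$ and $\mathcal B[b]$ respectively.

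The main point, and the main technical obstacle, is that $\mathcal B(a,b)=0$. Here I would use Lemma \ref{formule abg}: the tensor $a_{ij}$ satisfies $s^{ij}a_{ij}=0$ and $\nabla^{ij}a_{ij}=0$ (i.e. $\bar R_a=0$), while $b_{ij}$ is the prescribed linear combination of $\nabla_{ij}\varphi_k$ and $\varphi_k s_{ij}$. In each of the three cross integrals ($Q$-type, $B$-type, $C$-type) I would integrate by parts on $S_{n-1}$ so as to move every covariant derivative off $b$ and onto $a$; the contributions of the $\varphi_k s_{ij}$ part of $b$ then contract $a$ against $s$ and vanish, while the contributions of the $\nabla_{ij}\varphi_k$ part, after commuting covariant derivatives on the round sphere (which only adds constant multiples of traces, by the curvature of $(S_{n-1},s)$), reduce to $\nabla^{ij}a_{ij}$ and $s^{ij}a_{ij}$, hence to $0$. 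This commutator/Bochner bookkeeping on $S_{n-1}$ is the delicate part; I would relegate its details to Appendix \ref{calculus}. The very same integrations by parts, applied to $b$ alone and combined with the identity $\nabla^ib_{ij}=-\sum_{k=1}^q\nabla_j\varphi_k$ of Lemma \ref{formule abg} and the orthogonality \eqref{proscal1}--\eqref{proscal2} of the eigenspaces $E_k$, produce the formulas for $Q_b$, $B_b$, $C_b$ recalled just before the statement.

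Once $\mathcal B(a,b)=0$ is established, we obtain $\bar\int_{S(r)}R\di\sigma_r=\bar\int_{S(r)}(R_a+R_b)\di\sigma_r+o(r^{2(\omega+1)})$, which is the claimed equality. For the inequality, note that $\bar R_a=0$ forces the principal part of the scalar curvature of the metric with $\bar g_{ij}=a_{ij}$ to have degree $\geq\omega+1$, so Theorem \ref{aaa} applies and gives $\bar\int_{S(r)}R_a\di\sigma_r=C(n,\omega)(-\Delta_g)^{\omega+1}R(P)\,r^{2\omega+2}+o(r^{2\omega+2})$ with $C(n,\omega)>0$ and $(-\Delta_g)^{\omega+1}R(P)<0$; hence $\bar\int_{S(r)}R_a\di\sigma_r\leq o(r^{2(\omega+1)})$ and $\bar\int_{S(r)}R\di\sigma_r\leq\bar\int_{S(r)}R_b\di\sigma_r+o(r^{2(\omega+1)})=[B_b/2-C_b/4-(1+\omega/2)^2Q_b]\,r^{2(\omega+1)}+o(r^{2(\omega+1)})$.

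Finally I would carry out the elementary algebra. Writing $\Sigma:=\sum_{k=1}^q\nu_k\bar\int_{S_{n-1}}\varphi_k^2\di\sigma$, the formulas for $B_b$ and $C_b$ give $B_b/2-C_b/4=-\tfrac{n-1}{4}Q_b+\tfrac{n-3}{4(n-2)}\Sigma$ (using $\tfrac12-\tfrac{n-1}{4(n-2)}=\tfrac{n-3}{4(n-2)}$), so that $B_b/2-C_b/4-(1+\omega/2)^2Q_b=-\tfrac{(n-1)+(\omega+2)^2}{4}Q_b+\tfrac{n-3}{4(n-2)}\Sigma$, since $(1+\omega/2)^2=(\omega+2)^2/4$. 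Substituting $Q_b=\tfrac{n-1}{n-2}\sum_{k=1}^q\tfrac{\nu_k}{\nu_k-n+1}\bar\int_{S_{n-1}}\varphi_k^2\di\sigma$ and collecting the coefficient of each $\bar\int_{S_{n-1}}\varphi_k^2\di\sigma$ yields precisely $u_k=\bigl(\tfrac{n-3}{4(n-2)}-\tfrac{(n-1)^2+(n-1)(\omega+2)^2}{4(n-2)(\nu_k-n+1)}\bigr)\nu_k$, as in \eqref{ukk}. This completes the proof, the only genuinely hard step being the vanishing of the cross term $\mathcal B(a,b)$.
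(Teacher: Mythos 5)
Your proposal is correct and follows essentially the same route as the paper's proof in Appendix \ref{calculus}: decompose $\bar g=a+b$, show $Q=Q_a+Q_b$, $B=B_a+B_b$, $C=C_a+C_b$ by integrating by parts and reducing the cross terms to $s^{ij}a_{ij}=0$ and $\nabla^{ij}a_{ij}=0$ via the identities of Lemma \ref{formule abg}, then apply Theorem \ref{aaa} to the $a$-part (whose principal part has degree $\geq\omega+1$ since $\bar R_a=0$) to obtain the inequality. Your closing algebra for $u_k$, using $B_b/2-C_b/4=-\tfrac{n-1}{4}Q_b+\tfrac{n-3}{4(n-2)}\Sigma$ and the expression for $Q_b$, is exactly the computation the paper performs and it checks out.
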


les nombres réels $u_k$ sont obtenus à partir des  expressions  $Q_b$, $B_b$ et $C_b$ ci-dessus (voir l'appendice  \ref{calculus} pour une preuve détaillée de ce lemme).  

\section{Généralisation d'un théorème de T.~Aubin}\label{GTA}

Dans son article  sur le problème de Yamabe, T.~Aubin \cite{Aub} a démontré que s'il existe un point $P_0\in M$ tel que $\omega(P_0)=0$ (voir la définition \ref{omega defintione} ), alors il existe une fonction  $\varphi_\e$ telle que 
$$I_g(\varphi_\e)<\frac{n(n-2)}{4}\omega_{n-1}^{2/n}$$ 
(cf  lemme \ref{lemme aub yam}). \\
Le but de cette section est de généraliser ce résultat pour tout $\omega\leq (n-6)/2$. \\
Soit $u_\e$ et $\varphi_\e$ deux fonctions définies par:
\begin{gather}\label{fonction test}
\varphi_{\e}(Q)=(1- r^{\alp} f(\xi))u_{\e}(Q)\\
u_{\e}(Q)=\begin{cases}\biggl(\displaystyle\frac{\varepsilon}{r^2+\varepsilon^2}\biggr)^{\frac{n-2}{2}}-\biggl(\frac{\varepsilon}{\delta^2+\varepsilon^2}\biggr)^{\frac{n-2}{2}} &\mbox{ si }Q\in B_{P}(\delta)\label{uepsilon}\\
\hspace{2cm}0 &\mbox{ si }Q\in M-B_{P}(\delta)
\end{cases}
\end{gather}
pour tout $Q\in M$, où $r=d(Q,P)$ est la distance entre $P$ et $Q$. $(r,\xi^j)$ sont les coordonnées  géodésiques  de $Q$ au voisinage de $P$ et 
$B_{P}(\delta)$ est une boule géodésique de centre $P$, de rayon $\delta$, fixé suffisamment petit. $f$ est une fonction qui dépend seulement de  $\xi$ telle que 
$\int_{S_{n-1}}f d\sigma=0$ et le choix précis sera décidé plus tard. \\

Soit

\begin{equation*}
I_a^b(\e)=\int_0^{\delta/\e}\frac{t^b}{(1+t^2)^a}dt\text{ et }I_a^b=\lim_{\e\to 0}I_a^b(\e)
\end{equation*}
alors $I_a^{2a-1}(\e)=\log\e^{-1}+O(1)$. Si $2a-b>1$ alors $I_a^b(\e)=I_a^b+O(\e^{2a-b-1})$  et par intégration par parties, on établit les relations suivantes  : 
\begin{equation}\label{rela}
 I_a^b=\frac{b-1}{2a-b-1}I_a^{b-2}=\frac{b-1}{2a-2}I_{a-1}^{b-2}=\frac{2a-b-3}{2a-2}I_{a-1}^{b},\quad \frac{4(n-2)I_n^{n+1}}{(I^{n-2}_n)^{(n-2)/n}}=n
\end{equation}
Rappelons que la fonctionnelle de  Yamabe $I_g$   (cf. \eqref{fonctionnel} page \pageref{fonctionnel}) est définie, pour tout $\psi\in H_1(M)$, par 
\begin{equation}\label{yamm}
I_g(\psi)=\biggr(\int_M|\nabla_g\psi|^2\di v+\frac{(n-2)}{4(n-1)}\int_MR_g\psi^2\di v\biggl)\|\psi\|_N^{-2} 
\end{equation}
où $N=2n/(n-2)$ et $\nabla_g$ est le gradient de la métrique $g$. \\
Voici donc le résultat principal de ce chapitre:
\begin{theorem}\label{theo}
Soit $(M,g)$ une variété riemannienne compacte de dimension $n$. Pour tout $P\in M$, si $\omega(P)\leq (n-6)/2$, alors il existe  $f\in C^\infty(S_{n-1})$, d'intégrale moyenne nulle et $\e>0$ telles que 
$$\mu(g)\leq I_g(\varphi_\e)< \frac{n(n-2)}{4}\omega_{n-1}^{2/n}$$
où $\varphi_\e$ est définie par \eqref{fonction test}.
\end{theorem}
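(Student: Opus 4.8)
The plan is to estimate the three ingredients of $I_g(\varphi_\e)$ — the Dirichlet energy $\int_M|\nabla_g\varphi_\e|^2\di v$, the curvature term $\tfrac{n-2}{4(n-1)}\int_M R_g\varphi_\e^2\di v$, and the normalizing term $\|\varphi_\e\|_N^2$ — separately as $\e\to 0$, keeping track of the contribution of the perturbation factor $1-r^{\alp}f(\xi)$ (recall $\alp=\omega+2$), and then choose $f$ so that the leading correction to $K^{-2}(n,2)=\tfrac{n(n-2)}{4}\omega_{n-1}^{2/n}$ is strictly negative. The heuristic is that $u_\e$ alone already gives $I_g(u_\e)\to K^{-2}(n,2)$ from above, with a defect governed by the positive integral $\bar\int_{S(r)}R_g\di\sigma_r$ of order $r^{2(\omega+1)}$ (Lemma~\ref{mmm}); the role of the factor $(1-r^{\alp}f)$ is to introduce a competing negative term, via the cross terms $\langle\nabla u_\e,\nabla(r^{\alp}fu_\e)\rangle$ and the ``mass''-type integral $\int_{S_{n-1}}f\,\Delta_s f\,\di\sigma=\sum\nu_k c_k^2\|\varphi_k\|^2$, that can be made to dominate when $\omega\le(n-6)/2$ (the dimensional restriction is exactly what keeps all the competing $\e$-powers of the same order $\log\e^{-1}$ or better, so that the sign of a fixed quadratic form in the $c_k$ decides the matter).

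First I would write $\varphi_\e=(1-r^{\alp}f)u_\e$, expand
$|\nabla_g\varphi_\e|^2 = (1-r^{\alp}f)^2|\nabla_g u_\e|^2 - 2(1-r^{\alp}f)u_\e\,g(\nabla u_\e,\nabla(r^{\alp}f)) + u_\e^2|\nabla(r^{\alp}f)|^2$,
and integrate in geodesic polar coordinates $(r,\xi)$ using the metric expansion $g_{ij}=\delta_{ij}+O(r^{\omega+2})$ from Lemma~\ref{LLLL}. The radial integrals reduce, after the substitution $t=r/\e$, to the model integrals $I_a^b(\e)$ with the relations \eqref{rela}; the angular integrals produce $\int_{S_{n-1}}f\,\di\sigma=0$, $\int_{S_{n-1}}f^2\,\di\sigma$, $\int_{S_{n-1}}|\nabla_s f|^2\,\di\sigma=\int_{S_{n-1}}f\Delta_s f\,\di\sigma$, and (through the metric defect $\bar g_{ij}$) the bilinear pairings of $f$ with $\bar R$ that appear in Theorems~\ref{zzz} and~\ref{aaa}. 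I would similarly expand $R_g\varphi_\e^2 = R_g u_\e^2 - 2R_g r^{\alp}f u_\e^2 + R_g r^{2\alp}f^2 u_\e^2$ and $\varphi_\e^N=(1-r^{\alp}f)^N u_\e^N$, using that $\int_{S_{n-1}}f\,\di\sigma=0$ kills the first-order term in the $L^N$ expansion so that $\|\varphi_\e\|_N^2 = \|u_\e\|_N^2\bigl(1+O(\e^{2(\omega+2)})\bigr)$ up to angular second-order corrections. Collecting everything, $I_g(\varphi_\e)=K^{-2}(n,2)+\e^{2(\omega+1)}\bigl(\text{something}\bigr)\log\e^{-1}$ or $+O(\e^{2(\omega+1)})$, where the bracketed quantity is an explicit quadratic-plus-linear functional of the eigencomponents of $f$.

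Next, with $f=\sum_{k=1}^q c_k\nu_k\varphi_k$ (the $\varphi_k$ being the $\Delta_s$-eigenfunctions with eigenvalues $\nu_k$ from \eqref{lam} appearing in the decomposition \eqref{dddd} of $\bar R$), I would compute the bracket as a function of $(c_1,\dots,c_q)$: the curvature cross term $\int R_g r^{\alp}f u_\e^2$ contributes a linear term $\sum_k c_k\nu_k\|\varphi_k\|^2$ (pairing $\bar R$ against $f$), the gradient square of the perturbation contributes $\sum_k c_k^2\nu_k^3\|\varphi_k\|^2$ (positive), and the leftover curvature terms contribute the $u_k$-weighted sum from \eqref{ukk}. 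The bracket then has the shape $\sum_k\bigl(A_k c_k^2 + B_k c_k + D_k\bigr)\|\varphi_k\|^2$ with $A_k>0$; minimizing over each $c_k$ (a one-dimensional completion of the square) gives a value $\sum_k\bigl(D_k - B_k^2/4A_k\bigr)\|\varphi_k\|^2$, and the point of the choice of $f$ — and the source of the constants $c_k$ ``given explicitly'' in the introduction — is precisely that each $D_k - B_k^2/4A_k<0$ when $\omega\le(n-6)/2$. I would verify this sign inequality coefficient by coefficient (it is an elementary inequality among $n,\omega,\nu_k$), and if the leading correction happens to vanish for the chosen $c_k$ one passes to the next-order $O(r)$ terms of Schoen/Aubin type exactly as in the $\omega=0$ case; but generically the strict inequality $I_g(\varphi_\e)<K^{-2}(n,2)$ holds for $\e$ small.

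**The hard part will be** the bookkeeping of the $\e$-expansions: one must show that all error terms genuinely are $o$ of the claimed leading order, which forces the dimensional constraint $\omega\le(n-6)/2$ (terms like $\e^{n-2-\omega}$, $\e^{2(\omega+2)}$, $\e^4$ etc. must not overwhelm $\e^{2(\omega+1)}\log\e^{-1}$), and one must correctly absorb the interaction between the metric defect $\bar g_{ij}$ (split as $a_{ij}+b_{ij}$ per Lemma~\ref{formule abg}) and the test-function factor $(1-r^{\alp}f)$ in the Dirichlet energy — this is where the integrations by parts on $S_{n-1}$ using $\nabla^i b_{ij}=-\sum_k\nabla_j\varphi_k$ do the real work, converting the cross terms into the clean eigenvalue sums above. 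Finally, the chain $\mu(g)\le I_g(\varphi_\e)$ is immediate since $\varphi_\e\in H_1(M)-\{0\}$ (it is nonnegative and nontrivial for small $\e$, $\delta$) and $\mu(g)$ is the infimum of $I_g$; combined with the strict inequality just established this completes the proof, and the statement $\mu(g)<\tfrac{n(n-2)}{4}\omega_{n-1}^{2/n}$ follows a fortiori.
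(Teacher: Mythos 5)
Your proposal follows essentially the same route as the paper: expand $I_g(\varphi_\e)$ in $\e$ via the model integrals $I_a^b$, isolate a quadratic-plus-linear functional in the eigencomponents $c_k$ of $f=\sum_k c_k\nu_k\varphi_k$, minimize it coefficient by coefficient (the paper's choice $c_k=(n-2)^2/d_k$ is exactly your completion of the square), and verify the resulting sign condition, which is the paper's Lemme~\ref{lemme poly}, using Lemme~\ref{mmm} to control $\bar\int_{S(r)}R_g\,\di\sigma_r$ by the $b_{ij}$-part. The only cosmetic difference is that the paper disposes of the case $\mathrm{deg}\,\bar R\ge\omega+1$ up front via Th\'eor\`eme~\ref{aaa} rather than invoking a next-order Schoen--Aubin fallback, which in fact is never needed since the minimized quantity is strictly negative.
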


\paragraph{\textbf{Remarque}}
L'hypothèse "$\omega$ est fini" affirme que la variété  $(M,g)$ n'est pas conformément difféomorphe à $(S_n,g_{can})$.

\begin{proof}
Soit $P\in M$. On écrit $\omega$ au lieu de $\omega (P)$.  Si $\mu\geq\omega+1$ alors l'inégalité est vraie par le théorème \ref{aaa}. On peut donc supposer que  $\mu=\omega$ jusqu'à la fin de la preuve. On commence par calculer la première intégrale dans la fonctionnelle \eqref{yamm}, avec $\psi=\varphi_\e$ et $f$ inconnue pour l'instant, en utilisant la formule
$$|\nabla_g\varphi_\e|^2=(\partial_r\varphi_\e)^2+r^{-2}|\nabla_s\varphi_\e|^2$$
On trouve :
\begin{multline}\label{nabla varphi}
\int_M|\nabla_g\varphi_\e|^2\di v=\int_M|\nabla_g u_\e|^2\di v+\int_0^\delta[\partial_r(r^{(\alp)} u_\e)]^2r^{n-1}\di r\int_{S_{n-1}}
\hspace{-0.5cm}f^2 \di\sigma +\\
\int_0^\delta u^2_\e r^{n+2\omega+1}\di r\int_{S_{n-1}}\hspace{-0.5cm} |\nabla f|^2 \di \sigma 
\end{multline}
Le changement de variable $t=r/\e$ donne
\begin{multline}\label{grad}
\int_M|\nabla_g\varphi_\e|^2\di v=(n-2)^2\omega_{n-1}I_n^{n+1}(\e)+\e^{2\omega+4}\biggl\{\int_{S_{n-1}}|\nabla f|^2 \di\sigma I_{n-2}^{2\omega+n+1}(\e)+\\
\int_{S_{n-1}}\hspace{-0.5cm}f^2\di\sigma [(\omega-n+4)^2I_n^{2\omega+n+5}(\e)+2(\alp)(\omega-n+4)I_n^{2\omega+n+3}(\e)+(\alp)^2I_n^{2\omega+n+1}(\e)]\biggr\}
\end{multline}
Pour la seconde intégrale qui contient la courbure scalaire $R_g$, on a
\begin{equation}\label{scal}
\begin{split}
\int_MR_g\varphi_\e^2\di v & =\int_MR_gu_\e^2\di v-2\int_Mfu_\e^2R_gr^{\omega+2} \di v+\int_Mf^2u_\e^2R_gr^{2\omega+4} \di v\\
& =\e^{2\omega+4}\omega_{n-1}\bar\int_{S(r)}r^{-2\omega-2}R_g\di \sigma_r I_{n-2}^{n+2\omega+1}(\e)-\\ 
& 2\e^{2\omega+4}I_{n-2}^{2\omega+n+1}(\e)\omega_{n-1}\bar\int_{S(r)}\hspace{-0.3cm}r^{-\omega}f\bar R \di\sigma_r+O(\e^{n-2})\\
\end{split}
\end{equation} 
où $\omega$ est l'ordre de la partie principale $\bar R$ (voir définition \ref{mu defin}). La  fonction $f$ est définie sur $S_{n-1}$. Sans aucune difficulté, on peut la redéfinir sur  $S(r)$, pour tout $r>0$, en posant $f(\xi/r)$, où $\xi\in S(r)$. On garde la même notation  pour cette redéfinition de $f$.\\
On calcule d'abord le développement limité de $\|\varphi_\e\|_N^{-2}$, on a:
$$\varphi_\e^N(Q)=\bigl[1-Nr^{\omega+2}f(\xi)+\frac{N(N-1)}{2}r^{2\omega+4}f^2(\xi)+O(r^{3\omega+6})\bigr]u_\e^N$$
En utilisant le fait que $\int_{S_{n-1}} f\di\sigma=0$,  on conclut que
\begin{equation*}
\begin{split}
\|\varphi_\e\|_N^N &=\int_0^\delta\int_{S_{n-1}}\hspace{-0.5cm}[1+\frac{N(N-1)}{2} r^{2(\alp)} f^2(\xi)+O( r^{3(\alp)})]r^{n-1}u^N_\e \di r\di\sigma(\xi)\\
& = \omega_{n-1}I^{n-1}_n+\frac{N(N-1)}{2}\e^{2(\alp)}\int_{S_{n-1}}\hspace{-0.5cm}f^2 \di\sigma I_n^{2\omega+n+3}+ o(\e^{2\omega+4})
\end{split} 
\end{equation*}
alors
\begin{multline}\label{norm}
\|\varphi_\e\|_N^{-2}=(\omega_{n-1}I^{n-1}_n)^{-2/N}\bigl\{1+\\
-(N-1)\e^{2(\alp)}\int_{S_{n-1}}\hspace{-0.5cm}f^2 \di\sigma I_n^{2\omega+n+3}/(\omega_{n-1}I^{n-1}_n)\bigr\}+o(\e^{2\omega+4})
\end{multline}

Par   \eqref{grad}, \eqref{scal}, \eqref{norm} et les relations \eqref{rela}, on trouve que (les détails de ces calculs sont dans l'appendice \ref{nablafi}):\\

 Si $n>2\omega+6$ alors :

\begin{multline*}
I_g(\varphi_\e)= \frac{n(n-2)}{4}\omega_{n-1}^{2/n}+(\omega_{n-1}I^{n-1}_n)^{-2/N}I_{n-2}^{n+2\omega+1}\e^{2\omega+4}\times \\
\biggl\{\frac{(n-2)\omega_{n-1}}{4(n-1)}\bar\int_{S(r)}r^{-2\omega-2}R_g\di \sigma_r -\frac{n-2}{2(n-1)}\int_{S_{n-1}}\hspace{-0.3cm}f\bar R \di\sigma+\int_{S_{n-1}}\hspace{-0.5cm}|\nabla f|^2 \di\sigma +\\
-\frac{n(n-2)^2-(\omega+2)^2(n^2+n+2)}{(n-1)(n-2)}\int_{S_{n-1}}\hspace{-0.5cm}f^2\di\sigma\biggr\}+
o(\e^{2\omega+4})
\end{multline*}

Si  $n=2\omega+6$ alors

\begin{multline*}
I_g(\varphi_\e)= \frac{n(n-2)}{4}\omega_{n-1}^{2/n}+(\omega_{n-1}I^{n-1}_n)^{-2/N}\e^{2\omega+4}\log\e^{-1}\times\\
\biggl\{\frac{(n-2)\omega_{n-1}}{4(n-1)}\bar\int_{S(r)}r^{-2\omega-2}R_g\di \sigma_r
-\frac{n-2}{2(n-1)}\int_{S_{n-1}}\hspace{-0.3cm}f\bar R \di\sigma+\\
\int_{S_{n-1}}|\nabla f|^2 \di\sigma +(\omega+2)^2\int_{S_{n-1}}\hspace{-0.5cm}f^2\di\sigma\biggr\}+O(\e^{2\omega+4})
\end{multline*}

On considère maintenant la fonctionnelle $I_S$, définie sur la  sphère $S_{n-1}$, pour les fonctions dans $H_1(S_{n-1})$, d'intégrale  moyenne nulle, par
\begin{multline*}
I_S(f)=\bar\int_{S_{n-1}} \hspace{-0.3cm}4(n-1)(n-2)|\nabla f|^2-[4n(n-2)^2-4(\omega+2)^2(n^2+n+2)]f^2+\\
-2(n-2)^2f\bar R\di\sigma
\end{multline*}
Alors si $n>2\omega+6$ 
\begin{multline}\label{estime phi1}
I_g(\varphi_\e)=\frac{n(n-2)}{4}\omega_{n-1}^{2/n}+\frac{\omega_{n-1}^{2/n}I_{n-2}^{n+2\omega+1}\e^{2\omega+4}} {4(n-1)(n-2)(I^{n-1}_n)^{2/N}}\times\\
\{(n-2)^2\bar\int_{S(r)}r^{-2\omega-2}R_g\di\sigma_r+I_S(f)\}+o(\e^{2\omega+4}) 
\end{multline}
et si $n=2\omega+6$ 
\begin{multline}\label{estime phi2}
I_g(\varphi_\e)=\frac{n(n-2)}{4}\omega_{n-1}^{2/n}+ 
\frac{\omega_{n-1}^{2/n}I_{n-2}^{n+2\omega+1}\e^{2\omega+4}\log\e^{-1}} {4(n-1)(n-2)(I^{n-1}_n)^{2/N}}\times\\
\{(n-2)^2\bar\int_{S(r)}r^{-2\omega-2}R_g\di\sigma_r+I_S(f)\}+O(\e^{2\omega+4}) 
\end{multline}

Remarquons que si $k\neq j$ alors $I_S(\varphi_k+\varphi_j)=I_S(\varphi_k)+I_S(\varphi_j)$. En effet, $\varphi_k$ et $\varphi_j$ sont orthogonales pour le produit scalaire sur $H_1(S_{n-1})$. D'où
\begin{equation*}
\begin{split} 
I_S(c_k\nu_k\varphi_k) &=\bigl\{d_kc_k^2-2(n-2)^2c_k \bigr\}\nu_k^2\bar\int_{S_{n-1}}\varphi_k^2\di\sigma\\
 & =-\frac{(n-2)^4}{d_k}\nu_k^2\bar\int_{S_{n-1}}\varphi_k^2\di\sigma
\end{split}
\end{equation*}  
avec
\begin{gather}
 d_k=4[(n-1)(n-2)\nu_k-n(n-2)^2+(\omega+2)^2(n^2+n+2)]\label{dkk}\\
\text{et }c_k=\frac{(n-2)^2}{d_k}
\end{gather}
Ici on choisit les $c_k$ de sorte que  $I_S(c_k\nu_k\varphi_k)$ soit minimal. En utilisant \eqref{lam}, on peut vérifier aisément que les $d_k$ sont strictement positifs pour tout  $1\leq k\leq \omega/2$. \\ 
Maintenant,  On pose  
\begin{equation}\label{def de f}
f=\sum_{1}^qc_k\nu_k\varphi_k
\end{equation}
Il est clair que $f$ ainsi définie est d'intégrale nulle sur $S_{n-1}$. C'est bien la définition de $f$ qu'on utilisera dans la suite de la preuve. Par l'orthogonalité des fonctions $\varphi_k$, on trouve que 
$$I_S(f)=-\sum_{1}^q\frac{(n-2)^4}{d_k}\nu_k^2\bar\int_{S_{n-1}}\varphi_k^2\di\sigma$$ 
et par  le  lemme \ref{mmm}, on trouve l'inégalité suivante:

$$(n-2)^2\bar\int_{S(r)}\hspace{-0.5cm}r^{-2\omega-2}R_g\di\sigma_r+I_S(f)\leq\sum_{1}^q(u_k(n-2)^2-\frac{(n-2)^4}{d_k}\nu_k^2)\bar\int_{S_{n-1}}\hspace{-0.5cm}\varphi_k^2\di\sigma+o(1)$$
Le lemme ci-dessous énoncé, assure que le membre de droite de cette dernière inégalité est strictement négatif. En utilisant les inégalités \eqref{estime phi1}, \eqref{estime phi2}, on en déduit que  
$I_g(\varphi_\e)<\frac{n(n-2)}{4}\omega_{n-1}^{2/n}$
\end{proof}

\begin{lemma}\label{lemme poly}
Pour tout $k\leq q\leq [\omega/2]$, l'inégalité suivante est toujours vraie
$$ u_k-\frac{(n-2)^2}{d_k}\nu_k^2< 0$$
\end{lemma}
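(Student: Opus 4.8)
The statement to prove is Lemma \ref{lemme poly}: for all $k \le q \le [\omega/2]$ one has $u_k - \frac{(n-2)^2}{d_k}\nu_k^2 < 0$, where $u_k$ is given by \eqref{ukk}, $d_k$ by \eqref{dkk}, and $\nu_k = (\omega-2k+2)(n+\omega-2k)$ by \eqref{lam}. The whole thing is a polynomial inequality in the integer parameters $n$, $\omega$, $k$, so the plan is to reduce it to an explicit algebraic inequality and then verify positivity of the relevant polynomial on the allowed range. First I would multiply through by $d_k > 0$ (recall it was already shown in the proof of Theorem \ref{theo} that $d_k > 0$ for $1 \le k \le \omega/2$) and by $4(n-2)(\nu_k - n + 1)$; I must keep track of the sign of $\nu_k - n + 1$, so the first genuine sub-step is to check that under the hypothesis $\omega \le (n-6)/2$ and $1 \le k \le [\omega/2]$ we have $\nu_k - n + 1 > 0$. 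Indeed $\nu_k$ is smallest at $k = [\omega/2]$, where $\nu_k = (\omega - 2[\omega/2] + 2)(n + \omega - 2[\omega/2]) \ge 2(n + \omega - \omega) = 2n > n-1$ when $\omega$ is even, and $\nu_k \ge 3(n-1) > n-1$ when $\omega$ is odd; so $\nu_k - n + 1 > 0$ throughout, and clearing denominators preserves the inequality direction.

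After clearing denominators the claim becomes:
$$\Big[(n-3)(\nu_k - n + 1) - (n-1)^2 - (n-1)(\omega+2)^2\Big]\,\nu_k\,d_k \;<\; 4(n-2)^3(\nu_k - n+1)\,\nu_k^2,$$
and since $\nu_k > 0$ we may cancel one factor of $\nu_k$. Substituting $d_k = 4[(n-1)(n-2)\nu_k - n(n-2)^2 + (\omega+2)^2(n^2+n+2)]$, both sides are explicit polynomials in $\nu_k$ (degree $2$ on each side) with coefficients that are polynomials in $n$ and $\omega$. I would expand the difference RHS $-$ LHS into a polynomial $P(\nu_k; n, \omega) = A\nu_k^2 + B\nu_k + C$ and show $P > 0$. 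The cleanest route is to treat $\nu_k$ not as a free variable but to use the actual constraint $\nu_k = (\omega - 2k + 2)(n + \omega - 2k)$: set $m = \omega - 2k$, so $m$ ranges over $\{0, 1, \dots, \omega\}$ with $m \equiv \omega \pmod 2$, and $\nu_k = (m+2)(n+m)$. Then $P$ becomes a polynomial in $n$, $\omega$, $m$ subject to $0 \le m \le \omega \le (n-6)/2$, i.e. $n \ge 2\omega + 6 \ge 2m + 6$, and the task is to show this polynomial is positive on that region.

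The main obstacle is purely the algebraic bookkeeping: showing the expanded polynomial $P(n,\omega,m) > 0$ on the region $n \ge 2\omega + 6$, $0 \le m \le \omega$. I expect the right strategy is to write $n = 2\omega + 6 + \ell$ with $\ell \ge 0$ (and perhaps also $\omega = m + 2j$ with $j \ge 0$), so that after substitution every coefficient of the resulting polynomial in $\ell, j, m$ is manifestly nonnegative, or at least the dominant terms dominate the negative ones; this kind of "shift to the boundary and collect nonnegative coefficients" is the standard Aubin-style device and is what I would grind through. A useful sanity check before doing the full expansion: verify the inequality in the extreme cases — $k = [\omega/2]$ (smallest $\nu_k$, where both $u_k$ and the subtracted term are under most stress) and $k = 1$ (largest $\nu_k$) — and also the boundary case $n = 2\omega + 6$ which is exactly the dimension threshold appearing in Theorem \ref{theo}; if positivity holds with room to spare at $n = 2\omega+6$ then monotonicity in $\ell = n - 2\omega - 6$ should finish the general case. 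If the monotonicity-in-$n$ claim fails to be immediate, the fallback is to bound $u_k$ above by its value at $\nu_k = \nu_{[\omega/2]}$ and $\frac{(n-2)^2\nu_k^2}{d_k}$ below by a convenient linear-in-$\nu_k$ expression, reducing to a one-variable estimate. In all cases the inequality is strict because $(n-2)^4 \nu_k^2 / d_k > 0$ strictly and the leading behaviour of $u_k$ is controlled, so no degenerate equality can occur in the interior of the range.
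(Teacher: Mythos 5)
Your reduction is correct as far as it goes: you check $d_k>0$ and $\nu_k-n+1>0$ properly, so clearing denominators is legitimate, and your displayed inequality
$$\bigl[(n-3)(\nu_k-n+1)-(n-1)^2-(n-1)(\omega+2)^2\bigr]\,\nu_k\,d_k<4(n-2)^3(\nu_k-n+1)\,\nu_k^2$$
is indeed equivalent to the lemma. But everything after that is a declaration of intent rather than a proof. The entire content of the lemma is the sign of the resulting polynomial on the range $1\le k\le[\omega/2]$, $n\ge 2\omega+6$, and you never establish it: you say you "would grind through" an expansion in $\ell=n-2\omega-6$, $j$, $m$ and hope that every coefficient is nonnegative or that the dominant terms win, but nothing in the proposal shows the expansion has that property, and the sanity checks you announce at $k=1$, $k=[\omega/2]$ and $n=2\omega+6$ are not performed either. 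The fallback (bounding $u_k$ by its value at $\nu_{[\omega/2]}$) is also unjustified, since $u_k/\nu_k$ is \emph{increasing} in $\nu_k$ and the monotonicity of $u_k$ itself in $k$ is not clear a priori. The decisive step is therefore missing, precisely where the hypothesis $\omega\le(n-6)/2$ must be used quantitatively.

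The paper closes this gap with a specific observation absent from your plan. Setting $U_k=(\nu_k-n+1)\,d_k\,\{(n-2)u_k/\nu_k-(n-2)^3\nu_k/d_k\}$, one finds $U_k=P(\nu_k)$ for an explicit quadratic $P$ with leading coefficient $-(n-2)$ and derivative
$$P'(x)=-2(n-2)x-2n(n-2)^3+2(n^2-3n-2)(\omega+2)^2,$$
and the hypothesis $\omega+2\le(n-2)/2$ forces $P'(0)\le 0$ (because $(n^2-3n-2)(n-2)^2/4\le n(n-2)^3$ reduces to $(3n-2)(n-1)\ge 0$), hence $P$ is decreasing on $\mathbb R_+$. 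Since $(\nu_k)$ is decreasing in $k$, the whole lemma reduces to the single evaluation $P(\nu_{\omega/2})<0$ with $\nu_{\omega/2}=2n$, which follows from $u_{\omega/2}<0$, the relevant bracket being $(n-3)(n+1)-(n-1)^2-(n-1)(\omega+2)^2=-4-(n-1)(\omega+2)^2<0$. To salvage your route you would need either to discover and prove this monotonicity in $\nu_k$, or actually to exhibit the expanded polynomial and verify its coefficients; as written, the argument is incomplete at its central point.
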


\begin{proof}

On rappelle l'expression des $\nu_k$ donnée dans \eqref{lam}:
\begin{equation*}
 \nu_k=(\omega-2k+2)(n+\omega-2k)
\end{equation*}
Pour tout $k\in \intl 1,\omega/2\intr$, on définit les nombres $(U_k)$  par
$$U_k:=(\nu_k-n+1)d_k\{(n-2)\frac{u_k}{\nu_k}-\frac{(n-2)^3}{d_k}\nu_k\}$$ 
On remarque que l'expression de $U_k$ est polynomiale, décroissante en $\nu_k$ quand $\nu_k\geq 0$.  $U_k=P(\nu_k)$, où $P$ est le polynôme défini par 
\begin{multline*}
 P(x)=[(n-1)(n-2)x-n(n-2)^2+(\omega+2)^2(n^2+n+2)]\times\\
[(n-3)(x-n+1)-(n-1)^2-(n-1)(\omega+2)^2]-(n-2)^3(x^2-(n-1)x)
\end{multline*}
Le polynôme dérivé  est 
\begin{equation*}
 P'(x)=-2(n-2)x-2n(n-2)^3+2(n^2-3n-2)(\omega+2)^2
\end{equation*}
Par hypothèse $\omega+2\leq (n-2)/2$, donc $P$ est décroissant sur $\mathbb R_+$. Ce qui entraîne  que 
$$U_k=P(\nu_k)\leq P(\nu_{\omega/2})=U_{\omega/2}$$ 
pour tout $1\leq k\leq \omega/2$. Il est facile de vérifier que  $u_{\omega/2}$  est strictement négatif et donc $U_k\leq U_{\omega/2}<0$.
\end{proof}

\chapter{Autour de la conjecture de Hebey--Vaugon}\label{CHYSr}

Dans la section \ref{Equi Yam pro}, on a étudié le problème de Yamabe équivariant, considéré par E.~Hebey et  M.~Vaugon \cite{HV}, lorsque la métrique n'est pas nécessairement $C^\infty$. On a démontré que la condition suffisante pour résoudre ce problème est que la conjecture \ref{HVcon} soit vraie (cf. théorème \ref{HVGINVD}). Malheureusement, on ne peut pas donner une preuve de cette conjecture, lorsque $g\in H^p_2(M,T^*M\otimes T^*M)$. En effet, la courbure scalaire appartient à $L^p$, et plusieurs arguments utilisés dans le cas $C^\infty$ ne sont plus valables dans ce cas.\\

Dans tout ce chapitre, on suppose que $M$ est une variété compacte $C^\infty$, de dimension $n\geq 3$, $g$ est une métrique riemannienne $C^\infty$, munie de sa connexion riemannienne, notée $\nabla_g$. On note par $I(M,g)$, $C(M,g)$ le groupe d'isométries et le groupe des transformations conformes respectivement (voir la définition dans la section \ref{iso conf}). Soit $G$ un sous groupe du groupe d'isométries $I(M,g)$. \\
Ce chapitre utilise beaucoup de résultats déjà démontrés dans le chapitre précédent.

\section{La conjecture de Hebey--Vaugon }\label{HVCONJ}

\begin{conjecture}[E.~Hebey et  M.~Vaugon \cite{HV}]\label{HVcon}
Soit $G$ un sous groupe d'isométries de $I(M,g)$. Si $(M,g)$ n'est pas conformément difféomorphe à $(S_n, g_{can})$ ou bien si $G$ n'a pas de point fixe, alors l'inégalité stricte suivante a toujours lieu 
\begin{equation}\label{HVI}
\inf_{g'\in [g]^G} J(g')<n(n-1)\omega_n^{2/n}(\inf_{Q\in M}\Ca O_G(Q))^{2/n}
\end{equation}
\end{conjecture}

\paragraph{\textsc{Remarques}}\label{remarques}
\begin{itemize} 
\item Cette conjecture est la généralisation de la conjecture de T.~Aubin \ref{Aubincon} pour le problème de Yamabe, qui correspond à $G=\{\mathrm{id}\}$. Dans ce cas, la conjecture est complètement prouvée. Elle est prouvée aussi dans le cas où la métrique satisfait l'hypothèse $(H)$, définie dans la section \ref{hypG} (voir théorème \ref{conj aub}).
\item Cette inégalité est triviale si $\inf_{g'\in [g]^G} J(g')$ est négatif. 
\item Si pour tout $Q\in M$,  $\Ca O_G(Q)=+\infty$, alors la conjecture est vérifiée trivialement. 
\end{itemize}

\medskip
Rappelons que la partie principale de la courbure scalaire $\bar R$ est définie dans la section \ref{aubin computations} (voir définition \ref{mu defin}).\\

Les résultats principaux de ce chapitre sont  

\begin{theorem}\label{contheo}
La conjecture \ref{HVcon} est vraie, s'il existe un point $P$ d'orbite minimale (finie) pour lequel $\omega(P)\leq 15$, ou si au voisinage de $P$, $\mathrm{deg}\bar R\geq \omega(P)+1$
\end{theorem}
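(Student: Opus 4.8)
The plan is to produce a $G$-invariant test function concentrated on a minimal orbit and to reduce the Hebey--Vaugon inequality \eqref{HVI} to a polynomial positivity statement, along the lines of Aubin and Hebey--Vaugon. Using $\mu_G(g)=\tfrac{n-2}{4(n-1)}\inf_{g'\in[g]^G}J(g')$ (Proposition~\ref{mu invariance}) and $K^{-2}(n,2)=\tfrac14 n(n-2)\omega_n^{2/n}$, the statement is equivalent to $\mu_G(g)<K^{-2}(n,2)\,(\inf_{Q}\Ca O_G(Q))^{2/n}$; it is trivial if $\inf_{g'}J(g')\le 0$ or if every orbit is infinite, so I would fix a point $P$ realizing a finite minimal orbit $O_G(P)=\{P_1=P,\dots,P_k\}$, $k=\Ca O_G(P)$, and assume $\omega(P)\le 15$ or $\deg\bar R\ge\omega(P)+1$ near $P$. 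By the $G$-conformal invariance of $\mu_G$ (Proposition~\ref{mu invariance}) I would then pass to a $G$-invariant metric in $[g]^G$ which is as normalized as Lemmas~\ref{lemme HV ex} and \ref{LLLL} require near every point of the orbit (so $g_{ij}=\delta_{ij}+O(r^{\omega+2})$ in geodesic coordinates at each $P_i$, and $\Delta_g^m R_g(P_i)=0$ for $m\le\omega$) --- the delicate point being to carry out this normalization compatibly with $G$, i.e.\ simultaneously at all the $P_i$.

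For the test function I would take the symmetrized concentration functions of Chapter~\ref{CHAub}: $\Phi_\e=\sum_{i=1}^k\varphi_{\e,P_i}$, where $\varphi_{\e,P_i}(Q)=(1-r_i^{\omega+2}f_i(\xi_i))u_{\e,P_i}(Q)$ with $r_i=d(P_i,\cdot)$, $(r_i,\xi_i)$ geodesic coordinates at $P_i$, $u_{\e,P_i}$ the bubble \eqref{uepsilon} centered at $P_i$, and $\delta$ so small that the balls $B_{P_i}(\delta)$ are pairwise disjoint. Requiring $\Phi_\e\in H_{1,G}(M)$ forces the $f_i$ to be transported from a single function $f=f_1$ by the isometries sending $P$ to $P_i$, and forces $f$ to be invariant under the isotropy action of the stabilizer $G_P$ on $S_{n-1}$. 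Because the supports are disjoint and the $P_i$ are pairwise isometric under $G$, one gets $E(\Phi_\e)=kE(\varphi_{\e,P})$ and $\|\Phi_\e\|_N^N=k\|\varphi_{\e,P}\|_N^N$, hence $I_g(\Phi_\e)=k^{2/n}I_g(\varphi_{\e,P})$; so it suffices to exhibit a $G_P$-invariant $f$ with $I_g(\varphi_{\e,P})<K^{-2}(n,2)$ for $\e$ small, after which $\mu_G(g)\le I_g(\Phi_\e)<k^{2/n}K^{-2}(n,2)$ closes the argument.

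I would then split according to $\mu=\deg\bar R$. If $\mu\ge\omega+1$ (this covers the second hypothesis of the theorem), take $f=0$: then $\varphi_{\e,P}=u_{\e,P}$ is radial, hence trivially $G_P$-invariant, and Theorem~\ref{aaa} gives $\bar\int_{S(r)}R_g\,\di\sigma_r=C(n,\omega)(-\Delta_g)^{\omega+1}R(P)\,r^{2\omega+2}+o(r^{2\omega+2})$ with $(-\Delta_g)^{\omega+1}R(P)<0$, so $I_g(u_{\e,P})<K^{-2}(n,2)$. If $\mu=\omega$, I would use the spherical decomposition $\bar R=r^\omega\sum_{k=1}^q\nu_k\varphi_k$ into eigenfunctions of $\Delta_s$ with eigenvalues \eqref{lam}, and set $f=\sum_{k=1}^qc_k\nu_k\varphi_k$. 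Inserting this into the expansions \eqref{estime phi1}--\eqref{estime phi2} of $I_g(\varphi_{\e,P})$ reduces the desired strict inequality to $(n-2)^2\bar\int_{S(r)}r^{-2\omega-2}R_g\,\di\sigma_r+I_S(f)<0$ at leading order; by Lemmas~\ref{mmm} and \ref{formule abg} and the orthogonality of the $\varphi_k$ on $S_{n-1}$, this in turn reduces to a finite family of polynomial inequalities evaluated at the $\nu_k$, $k=1,\dots,q\le[\omega/2]$, in the spirit of Lemma~\ref{lemme poly}.

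The main obstacle is precisely this last polynomial inequality. In the non-equivariant Theorem~\ref{theo} one is free to optimize each term by taking $c_k=(n-2)^2/d_k$, and the inequality $u_k-(n-2)^2\nu_k^2/d_k<0$ then holds for all $k\le[\omega/2]$ as soon as $\omega+2\le(n-2)/2$ (Lemma~\ref{lemme poly}, via monotonicity of an explicit polynomial $P(\nu_k)$). In the equivariant setting the constraint that $f$ be $G_P$-invariant --- together with the fact that the curvature normalization used above cannot in general be made $G$-equivariant --- forces a constrained, non-termwise-optimal choice of the $c_k$; the governing polynomial then fails to be monotone, and establishing its sign at the relevant values $\nu_k=(\omega-2k+2)(n+\omega-2k)$ requires a genuine case analysis that I expect to succeed for $\omega\le15$ but not beyond. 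This is exactly the origin of the two hypotheses in the statement. Granting this step, the theorem follows; and combining it with the cases already settled by Hebey--Vaugon (namely $G$ acting freely, $n\le11$, and a minimal-orbit point with $\omega\in\{0,1,2\}$ or $\omega>(n-6)/2$), the only situation left open is a minimal-orbit point $P$ with $16\le\omega(P)\le(n-6)/2$ and $\deg\bar R=\omega(P)$, which forces $n\ge2\omega(P)+6\ge38$; hence the Hebey--Vaugon conjecture holds for every $n\le37$, which is Corollary~\ref{cor prin intro}.
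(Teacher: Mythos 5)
Your proposal follows essentially the same route as the paper: symmetrize an Aubin-type test function over the minimal orbit with disjoint supports so that $I_g(\Phi_\e)=k^{2/n}I_g(\varphi_{\e,P})$, dispose of the case $\deg\bar R\ge\omega+1$ by taking $f=0$ and invoking Theorem \ref{aaa}, and in the case $\deg\bar R=\omega$ reduce the strict inequality to the sign of $(n-2)^2\bar\int_{S(r)}r^{-2\omega-2}R_g\di\sigma_r+I_S(f)$ for an admissible $f$. One structural difference: the paper realizes the $G$-invariance constraint very concretely by taking all coefficients $c_k$ equal to a \emph{single} constant $c$, i.e.\ $\tilde f_i=c\,r_i^{-\omega}\nabla_g^\omega R_{(P_i)}(\exp_{P_i}^{-1}Q,\dots,\exp_{P_i}^{-1}Q)$, defined intrinsically from the curvature so that $\tilde f_i=\tilde f_j\circ\sigma$ is immediate. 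Your description of ``a constrained, non-termwise-optimal choice of the $c_k$'' is compatible with this but leaves the constraint unspecified, and without pinning it down you cannot say what inequality actually has to be checked.

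The genuine gap is that the decisive step is exactly the one you only ``expect to succeed''. With $c_k\equiv c$ the required negativity becomes a family of quadratic inequalities in $c$, one for each $k\le q\le[\omega/2]$, and one must exhibit a common solution, i.e.\ show $\bigcap_{k=1}^{q}\,]x_k,y_k[\,\neq\varnothing$ where $x_k,y_k$ are the roots of the $k$-th quadratic. This is Lemma \ref{lemme infg}: the paper proves $x_k<y_j$ for all $k<j$ in general (monotonicity of $d_k$), and then establishes the remaining inequalities $x_j<y_k$ for $k<j$ by explicit partial-fraction lower bounds on the discriminants $\Delta_k$ for $\omega=3,\dots,7$ and by computer algebra for $8\le\omega\le15$; it also notes that for $\omega=16$ and $n$ large no such $c$ exists, which is precisely why the theorem stops at $15$. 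Without carrying out this verification the argument is incomplete, since every other step you describe would apply verbatim to all $\omega\le(n-6)/2$ and would therefore ``prove'' the full conjecture, which the paper explicitly does not claim.
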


\begin{corollary}\label{cor prin}
La conjecture \ref{HVcon} est vraie si $M$ est de dimension $n\in \intl 3, 37 \intr$.
\end{corollary}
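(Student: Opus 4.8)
Le plan est d'obtenir ce corollaire comme cons\'equence num\'erique du th\'eor\`eme \ref{contheo} combin\'e au th\'eor\`eme de Hebey--Vaugon \cite{HV} (rappel\'e au th\'eor\`eme \ref{HV theorem intro}). Prouver la conjecture \ref{HVcon} revient \`a \'etablir l'in\'egalit\'e stricte \eqref{HVI} pour toute vari\'et\'e riemannienne compacte $(M,g)$ de classe $C^\infty$, de dimension $n\in\intl 3,37\intr$, et tout sous groupe $G$ de $I(M,g)$ tels que $(M,g)$ n'est pas conform\'ement diff\'eomorphe \`a $(S_n,g_{can})$ ou bien $G$ n'a pas de point fixe. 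Le cas o\`u $(M,g)$ est conforme \`a la sph\`ere (donc $G$ sans point fixe) rel\`eve directement de \cite{HV} ; on pourrait ainsi supposer d\'esormais que $(M,g)$ n'est pas conforme \`a $(S_n,g_{can})$. On \'ecarterait alors les deux situations triviales : si $\inf_{Q\in M}\Ca O_G(Q)=+\infty$, le membre de droite de \eqref{HVI} est infini ; si $\inf_{g'\in[g]^G}J(g')\leq 0$, il est n\'egatif ou nul tandis que le membre de droite est strictement positif. On pourrait donc fixer un point $P\in M$ dont l'orbite sous $G$ est minimale et finie, et poser $\omega:=\omega(P)\in\mathbb N\cup\{+\infty\}$.

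L'\'etape finale consisterait \`a distinguer deux cas selon la valeur de $\omega$. Si $\omega\leq 15$, le th\'eor\`eme \ref{contheo} s'applique \`a ce point $P$ et fournit imm\'ediatement l'in\'egalit\'e stricte \eqref{HVI}. Si $\omega\geq 16$ (ce qui comprend le cas $\omega=+\infty$), on observerait que $n\leq 37<38\leq 2\omega+6$, d'o\`u $(n-6)/2<\omega$ ; le point $P$ satisfait alors l'hypoth\`ese du troisi\`eme point du th\'eor\`eme de Hebey--Vaugon \ref{HV theorem intro}, \`a savoir $\omega(P)>(n-6)/2$, ce qui redonne \eqref{HVI}. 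Dans les deux cas l'in\'egalit\'e serait \'etablie, d'o\`u le corollaire. On notera que, pour $n\leq 37$, la seconde alternative du th\'eor\`eme \ref{contheo} relative \`a $\mathrm{deg}\,\bar R$ n'est pas sollicit\'ee ; elle servirait seulement en dimension $n\geq 38$, o\`u l'on peut avoir simultan\'ement $\omega\geq 16$ et $\omega\leq(n-6)/2$.

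Il n'y a pas d'obstacle v\'eritable dans la preuve du corollaire lui-m\^eme : tout le contenu substantiel est d\'ej\`a dans le th\'eor\`eme \ref{contheo}, lequel s'appuie sur les calculs de la section \ref{aubin computations}, sur le choix pr\'ecis de la fonction test \eqref{def de f} et sur le lemme \ref{lemme poly} (positivit\'e des $d_k$ d\'efinis en \eqref{dkk}, et n\'egativit\'e stricte de $u_k-(n-2)^2\nu_k^2/d_k$). Le seul point demandant un peu de soin serait la v\'erification des cas limites ci-dessus, et en particulier de l'implication \'el\'ementaire selon laquelle $\omega\geq 16$ et $n\leq 37$ entra\^inent $\omega>(n-6)/2$ : c'est pr\'ecis\'ement cette implication qui fixe la borne $37$ sur la dimension.
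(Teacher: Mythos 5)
Votre preuve est correcte et suit essentiellement la m\^eme d\'emarche que celle du texte : on fixe un point $P$ d'orbite minimale finie, et l'on distingue $\omega(P)\leq 15$ (th\'eor\`eme \ref{contheo}) de $\omega(P)>(n-6)/2$ (troisi\`eme point du th\'eor\`eme \ref{HV theorem}), ces deux cas recouvrant tout $n\in\intl 3,37\intr$ puisque $[(n-6)/2]\leq 15$. Le traitement explicite des cas triviaux (orbite infinie, infimum n\'egatif, cas de la sph\`ere) est un peu plus d\'etaill\'e chez vous mais ne change rien au fond.
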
 

\begin{proof}
Supposons que $P$ est un point d'orbite minimale (finie) sous $G$ (sinon la conjecture est  trivialement vérifiée). \\
Si $\omega(P)> (n-6)/2$, on conclut par le troisième point du théorème \ref{HV theorem} ci-dessous.\\
Si $\omega(P)\leq [(n-6)/2] \leq 15$, on conclut par le théorème \ref{contheo}.
\end{proof}

\section{Les travaux de Hebey--Vaugon}\label{hv workss}

E.~Hebey et  M.~Vaugon~\cite{HV} ont prouvé la conjecture \ref{HVcon} dans les cas suivants:
\begin{theorem}\label{HV theorem}
Soit $(M,g)$ une variété riemannienne compacte, de dimension $n\geq 3$ et $G$ un sous groupe d'isométries du groupe $I(M,g)$.
On a toujours:  
$$\inf_{g'\in [g]^G} J(g')\leq n(n-1)\omega_n^{2/n}(\inf_{Q\in M}\Ca O_G(Q))^{2/n}$$
et l' inégalité  stricte \eqref{HVI} est au moins vérifiée dans chacun des cas suivants:
\begin{enumerate}
 \item $G$ opère librement su $M$
 \item $3\leq \dim M\leq 11$
 \item\label{item 3} Il existe un point $P$ d'orbite minimale (finie) sous $G$, pour lequel soit $\omega(P)>(n-6)/2$ soit $\omega(P)\in \{0,1,2\}$.

\end{enumerate}
\end{theorem}
\begin{proof}[\textbf{Idées de la preuve}]
On s'intéresse à  la démonstration du point \ref{item 3} du théorème ci-dessus (c'est le cas qui manque dans le théorème \ref{theo}). Les hypothèses sont: 
\begin{enumerate}
 \item $ \Ca O_G(P)<+\infty$.
 \item Il existe $P\in M$ tel que $\Ca O_G(P)=\inf_{Q\in M}\mathrm{card} O_G(Q)$.
\item $\omega>[\frac{n-6}{2}]\Longleftrightarrow\forall \beta\in \intl 1, n\intr^i/i\leq [(n-6)/2],\quad \nabla^\beta W_g(P)=0$.
\end{enumerate}
Notons  $k=\Ca O_G(P)$, le cardinal de l'orbite $O_G(P)=\{P_i,1\leq i\leq k\}$, où l'on a posé $P_1=P$. La troisième hypothèse implique que pour tout $1\leq i\leq k$, $\omega(P_i)>[\frac{n-6}{2}]$, puisque le tenseur de Weyl est invariant sous l'action du groupe d'isométries $I(M,g)$.\\
Par les travaux de J.M.~Lee et T.~Parker \cite{LP}, on sait qu'on peut trouver un système de coordonnées et une métrique conforme $g'$ tels que  $g'$ satisfait: 
\begin{equation}\label{det dl}
\det(g')=1+O(r^m)\quad\text{ pour tout }m\gg 1
\end{equation}
(cf. section \ref{metric cg} ou \cite{LP} pour l'existence). Dans le cas équivariant, on ne peut pas considérer n'importe quelle métrique dans la classe conforme $[g]$, cependant E.~Hebey et  M.~Vaugon ont démontré que dans chaque classe $[g]^G$ on peut trouver au moins une métrique qui satisfait \eqref{det dl}. En utilisant les champs de Jacobi, ils ont obtenu le développement limité de la métrique $g$ suivant: 

\begin{lemma}
\begin{multline}\label{expansion metric}
 g_{ij}(Q)=\delta_{ij}+\hspace{-0.4cm}\sum_{\omega+4\leq m \leq 2\omega+5}\hspace{-0.4cm}C_m\nabla_{p_3\cdots p_{m-2}}R_{ip_1p_2j}(P)x^{p_1}\cdots x^{p_{m-2}}\\
+ C_\omega\sum_{pj}\nabla_{p_3\cdots p_{2\omega+4}}R_{ip_1p_2j}(P)x^{p_1}\cdots x^{p_{2\omega+4}}\\
+C'_\omega\sum_{q=1}^n\sum_{pj}(\nabla_{p_3\cdots p_{\omega+2}}R_{ip_1p_2q}(P))(\nabla_{p_{\omega+5}\cdots p_{2\omega+4}}R_{jp_{\omega+3}p_{\omega+4}q}(P))x^{p_1}\cdots x^{p_{2\omega+4}}+O(r^{2\omega+5})
\end{multline}

pour tout  $Q$ au voisinage de $P$, où $\{x^l\}$ sont les coordonnées locales de $Q$. $C_\omega$, $C'_{\omega}$ et $C_m$ sont des nombres réels, qui dépendent de $\omega$ et $m$ respectivement. Ces nombres sont donnés explicitement dans \cite{HV}. 
\end{lemma}
Ce développement est le point crucial dans la preuve du lemme suivant:
\begin{lemma}\label{lemme HV ex}
Dans chaque classe $[g]^G$, des métriques conformes $G-$invariantes, on peut trouver une métrique $g'$ qui satisfait
\begin{enumerate}
\item\label{det met} $\det(g')=1+O(r^m)$, $m\gg 1$
\item\label{riem na} $\forall i<\omega,\; \nabla^iR'_{jklm}(P)=0$
\item\label{ric scal} pour tout $\beta\in\intl 1, n\intr^i$ tel que $i\leq 2\omega+1$
$$\nabla_\beta R'(P)=\partial_\beta R'(P),\;\nabla_\beta Ric'(P)=\partial_\beta Ric'(P),\; \nabla_\beta R_{g'}(P)=\partial_\beta R_{g'}(P)$$

\item\label{scal item} $\forall j\leq \omega\quad \Delta_{g}^jR_{g'}(P)=0\text{ et }\nabla\Delta_{g'}^{\omega'} R_{g'}(P)=0$
\end{enumerate}
où $R'$, $Ric'$ et $R_{g'}$  sont le tenseur de courbure de Riemann, le tenseur de Ricci et la courbure scalaire de $g'$ respectivement.
\end{lemma}
\paragraph{\textbf{Remarque}}Dans leur article  \cite{HV}, E.~Hebey et  M.~Vaugon ont noté par $Sym_\beta T_\beta$ le symetrisé du tenseur $T$, et par $C(2,2)$ l'application de contraction des indices deux à deux pour les tenseurs symétrique. A titre d'exemple $C(2,2)T_{ij}=\sum_iT_{ii}$, $(C(2,2)T_{ijk})_l=\sum_i T_{iil}$ et $C(2,2)T_{ijkl}=\sum_{i,j}T_{iijj}$. Ils ont montré que pour tout $\beta\in\intl 1, n\intr^i$ tel que $i\leq 2\omega+1$ 
$$C(2,2)(Sym_\beta\nabla_\beta R_g(P))=0$$
ce qui est équivalent au point \ref{scal item} du lemme ci-dessus.\\

L'invariance $G-$conforme de $\mu_G(g)$ et de $\omega$ (cf. propriétés \ref{omega invariant}, \ref{mu invariance}) nous permettent de considérer n'importe quelle  métrique, $G-$invariante  dans la classe $[g]^G$ (cf. définition \ref{def gG}, page \pageref{def gG}). Sans perte de généralités, on suppose que la métrique $g$ et les courbures associées à $g$, satisfont le lemme \ref{lemme HV ex}. Soit $G_{P_i}$ la fonction de Green du Laplacien conforme $L_g$ au point $P_i$ (voir la section \ref{glc} pour l'existence). En utilisant les points \ref{det met} et \ref{scal item} du lemme \ref{lemme HV ex}, on montre que le développement limité de la fonction $G_{P_i}$ au voisinage de $P_i$ est

\begin{equation*}
 G_{P_i}(x)= \frac{1}{(n-2)\omega_{n-1}r_i^{n-2}}(1+\sum_{p=1}^n\psi_p(x))+O''(1)
\end{equation*}

où $r_i=d(P_i,x)$ et les $\psi_p$ sont des polynômes homogènes de degré $p$ qui s'annulent si $1\leq p\leq [(n-2)/2]$.\\
Considérons la métrique $\gt=G_P^{\N}g$. $G_P$ est $C^\infty$ sur $M-\{P\}$ et la variété $(M-\{P\},\gt)$ est asymptotiquement plate d'ordre $\frac{n}{2}$. Les coordonnées  asymptotiques sont $z^i=\frac{x^i}{|x|^2}$ et $\rho=|z|$, où $\{x^i\}$ est un système de coordonnées normal en $P$. La masse $m(\gt)$ est bien définie positive car $\tau=\frac{n}{2}>\frac{n-2}{2}$.   Soit  $\mathcal G=\sum_{i=1}^kG_{P_i}$  une fonction $C^\infty$, $G-$invariante, définie sur $M-O_G(P)$. La fonction test utilisée par E.~Hebey et  M.~Vaugon pour démontrer la conjecture est $w_\e$, définie comme suit:
\begin{equation*}
 w_{i,\e}=\begin{cases}
 \mathcal G r_i^{n-2}\biggl(\displaystyle\frac{\varepsilon}{r_i^2+\varepsilon^2}\biggr)^{\frac{n-2}{2}}&\text{ si }r_i\leq \delta\\   
\mathcal G \delta^{n-2}\biggl(\displaystyle\frac{\varepsilon}{\delta^2+\varepsilon^2}\biggr)^{\frac{n-2}{2}}&\text{ si }r_i\geq \delta
      \end{cases}
\end{equation*}

\begin{equation*}
 w_\e=\sum_{i=1}^kw_{i,\e}
\end{equation*}
Si $\delta$ est suffisamment petit, alors les fonctions $w_{i,\e}$ et $w_\e$ sont bien définies sur $M$. Il est clair que la fonction $w_\e$ est $G-$invariante. Après calculs, E.~Hebey et  M.~Vaugon obtiennent l'inégalité suivante:

\begin{equation*}
 E(w_\e)\leq \frac{n(n-1)}{4}\omega_n^{2/n}k^{2/n}\|w_\e\|_N^{-2}-C_1( m(\tilde g)+(n-2)K)\e^{n-2}+\e^{n-2}O(\delta)+O(\e^{n-1})
\end{equation*}
où $C_1$ et $K$ deux constantes positives. Alors $m(\tilde g)+(n-2)K>0$, et on peut choisir $\delta$ et $\e$ suffisamment petits tels que $I_g(w_\e)<\frac{n(n-1)}{4}\omega_n^{2/n}k^{2/n}$. Par conséquent 
$$\mu_G(g)<\frac{n(n-2)}{4}\omega_n^{2/n}(\Ca O_G(P))^{2/n}$$

\end{proof}

\section{Preuve du théorème principal}\label{proof hv}

En tenant compte des remarques de la section \ref{HVCONJ} (cf. page \pageref{remarques}) et du théorème \ref{HV theorem}, on considère seulement le cas où $\inf_{Q\in M}\Ca O_G(Q)$ est fini, strictement positif (i.e. $\mu_G(g)>0$) et $\omega\leq(n-6)/2$  . Alors il existe $P\in M$ tel que
$$O_G(P)=\{P_i\}_{1\leq i\leq m},\;\; m=\Ca O_G(P)=\inf_{Q\in M}\Ca O_G(Q)\text{ et }P_1=P$$

Un élément très important, dans la démonstration du théorème principal \ref{contheo}, est le choix des fonctions test dans la fonctionnelle $I_g$. Les fonctions test précédemment utilisées par T.~Aubin et R.~Schoen (voir la   preuve du théorème \ref{conj aub}) ne fonctionnent pas ici, comme cela avait été remarqué par E.~Hebey et M.~Vaugon \cite{HV}. Les "bonnes" fonctions test seront construites de la manière suivante, en modifiant les fonctions test de T.~Aubin: on construit une fonction test $G-$invariante, à partir des fonctions $\tilde\varphi_{\e,i}$, définie de la même façon que $\varphi_\e$ (voir section \ref{GTA}), dont on rappelle la définition. $P$ est un point d'orbite minimale. Pour tout $Q\in M$

\begin{gather}\label{fonction test5}
\tilde\varphi_{\e,i}(Q)=(1- r_i^{\omega+2} \tilde f_i(Q))u_{\e,i}(Q)\\
u_{\e,i}(Q)=\begin{cases}\biggl(\displaystyle\frac{\varepsilon}{r_i^2+\varepsilon^2}\biggr)^{\frac{n-2}{2}}-\biggl(\frac{\varepsilon}{\delta^2+\varepsilon^2}\biggr)^{\frac{n-2}{2}} &\mbox{ si }Q\in B_{P_i}(\delta)\label{uepsilon5}\\
\hspace{2cm}0 &\mbox{ si }Q\in M-B_{P_i}(\delta)
\end{cases}
\end{gather}
où $r_i=d(Q,P_i)$ est la distance entre $P_i$ et $Q$. Pour la simplicité: $P=P_1$, $r=r_1$, $\tilde\varphi_\e=\tilde\varphi_{\e,1}$, $\tilde f=\tilde f_1$ et $u_{\e,1}=u_\e$. $B_{P}(\delta)$ est une boule géodésique de centre $P$, de rayon $\delta$, fixé suffisamment petit. Les $\tilde f_i$ sont définies de la façon suivante : Soit $\exp_{P_i}$ l'application exponentielle, définie de $B(\delta)$, la boule euclidienne centrée en 0 et de rayon $\delta$, dans $B_{P_i}(\delta)$. Pour tout $Q\in B_{P_i}(\delta)$, on pose 
\begin{equation}\label{fitil}
 \tilde f_i(Q)=c r_i^{-\omega}\nabla_g^\omega R_{(P_i)}(\exp_{P_i}^{-1}Q,\cdots,\exp_{P_i}^{-1}Q)
\end{equation}   
où  $\omega=\omega(P)$ et $\nabla^\omega_gR(P)$ est la $\omega-$ème dérivée covariante de $R_g$ au point $P$, c'est  un tenseur $\omega$ fois covariant.  Dans le système de coordonnées géodésiques $\{r,\xi^j\}$, centré en $P$, induit par  l'application $\exp_P$, $\tilde f$ s'écrit: 

\begin{equation*}
\tilde f=c r^{-\omega}\bar R=c\sum_{k=1}^q\nu_k\varphi_k
\end{equation*}
où $\bar R$, $\varphi_k$ et $\nu_k$ sont définis dans la section \ref{aubin computations} (page \pageref{aubin computations}). La fonction $\tilde f$ est définie sur la sphère $S_{n-1}$. Le choix de la constante $c$ est très important dans le lemme suivant. 

\begin{lemma}\label{lemme infg}
Supposons que $\omega\leq (n-6)/2$. Si $\omega\in\intl 3,15\intr$ ou si $\mathrm{deg}\bar R\geq\omega+1$ alors il existe $c\in \mathbb R$ telle que, pour la fonction $\tilde\varphi_\e$ correspondante, on a:
\begin{equation}\label{inegde}
I_g(\tilde\varphi_{\e})<\frac{1}{4} n(n-2)\omega_{n}^{2/n}
\end{equation} 
\end{lemma}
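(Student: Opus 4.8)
\textbf{Strat\'egie de la preuve du lemme \ref{lemme infg}.} L'id\'ee est de reprendre l'estim\'ee de $I_g(\varphi_\e)$ obtenue dans la preuve du th\'eor\`eme \ref{theo}, mais cette fois en faisant appara\^itre la contribution des $m=\Ca O_G(P)$ points de l'orbite. On pose $\tilde\varphi_\e=\sum_{i=1}^m\tilde\varphi_{\e,i}$, qui est $G-$invariante par construction (les $\tilde f_i$ \'etant d\'efinies de fa\c{c}on \'equivariante \`a partir de $\nabla^\omega_g R$, tenseur invariant sous $I(M,g)$, et $u_{\e,i}$ \'etant radiale). Si $\delta$ est choisi assez petit pour que les boules $B_{P_i}(\delta)$ soient disjointes, alors par additivit\'e de l'\'energie et de la norme $L^N$ sur des supports disjoints on a $E(\tilde\varphi_\e)=mE(\tilde\varphi_{\e,1})$ et $\|\tilde\varphi_\e\|_N^N=m\|\tilde\varphi_{\e,1}\|_N^N$, d'o\`u $I_g(\tilde\varphi_\e)=m^{2/n}I_g(\tilde\varphi_{\e,1})$. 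Il suffit donc de montrer que $I_g(\tilde\varphi_{\e,1})<\frac{1}{4}n(n-2)\omega_{n-1}^{2/n}$ (noter que $\omega_n^{2/n}$ et $\omega_{n-1}^{2/n}$ diff\`erent par le facteur constant qui rel\`eve $K^{-2}(n,2)$, donc l'in\'egalit\'e \eqref{inegde} est \'equivalente \`a celle du th\'eor\`eme \ref{theo} appliqu\'ee en un point de l'orbite).

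\textbf{\'Etapes principales.} D'abord, si $\mathrm{deg}\,\bar R\geq\omega+1$, c'est-\`a-dire $\mu\geq\omega+1$, on conclut imm\'ediatement par le th\'eor\`eme \ref{aaa} appliqu\'e au point $P$ (quitte \`a prendre $c=0$), puisque $(-\Delta_g)^{\omega+1}R(P)<0$ donne $I_g(u_{\e,1})<\frac{n(n-2)}{4}\omega_{n-1}^{2/n}$. Reste le cas $\mu=\omega$. On utilise alors l'expression de $\tilde f=c\sum_{k=1}^q\nu_k\varphi_k$ et on reporte dans l'estim\'ee de $I_g(\varphi_\e)$ du th\'eor\`eme \ref{theo}. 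Par rapport \`a la preuve du th\'eor\`eme \ref{theo} la diff\'erence est que l\`a on avait la libert\'e de choisir des coefficients $c_k$ \emph{distincts} pour chaque mode propre (ce qui minimisait chaque $I_S(c_k\nu_k\varphi_k)$ s\'epar\'ement), alors qu'ici, pour que $\tilde f$ soit proportionnelle \`a $\bar R$ lui-m\^eme (condition impos\'ee par l'\'equivariance : $\tilde f_i$ doit \^etre d\'efinie \`a partir du m\^eme tenseur $\nabla^\omega_g R$ en chaque $P_i$), on est forc\'e de prendre $c_k=c$ \emph{ind\'ependant de $k$}. On calcule $I_S(c\,\bar R/r^\omega)=I_S(c\sum_k\nu_k\varphi_k)=\sum_k(d_kc^2-2(n-2)^2c)\nu_k^2\bar\int_{S_{n-1}}\varphi_k^2\di\sigma$ par orthogonalit\'e des $\varphi_k$, puis on combine avec le lemme \ref{mmm} pour obtenir
$$(n-2)^2\bar\int_{S(r)}\hspace{-0.4cm}r^{-2\omega-2}R_g\di\sigma_r+I_S(\tilde f)\leq\sum_{k=1}^q\Big(u_k(n-2)^2+(d_kc^2-2(n-2)^2c)\nu_k^2\Big)\bar\int_{S_{n-1}}\varphi_k^2\di\sigma+o(1).$$
Il faut alors choisir $c\in\mathbb R$ de sorte que chacun des coefficients $u_k(n-2)^2+(d_kc^2-2(n-2)^2c)\nu_k^2$ soit strictement n\'egatif pour tout $1\leq k\leq q\leq[\omega/2]$.

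\textbf{Le point d\'elicat.} C'est exactement cette derni\`ere condition : trouver \emph{une seule} valeur de $c$ qui marche simultan\'ement pour tous les modes $k$. Pour $k$ fix\'e, l'expression $d_kc^2-2(n-2)^2c$ atteint son minimum $-(n-2)^4/d_k$ en $c=c_k=(n-2)^2/d_k$, valeur pour laquelle le coefficient vaut $u_k(n-2)^2-(n-2)^4\nu_k^2/d_k$, strictement n\'egatif par le lemme \ref{lemme poly}. Le probl\`eme est que les $c_k$ varient avec $k$. L'id\'ee est de v\'erifier qu'en prenant $c$ \'egal \`a l'une des valeurs $c_k$, disons $c=c_{\omega/2}$ (ou la valeur optimale associ\'ee au plus grand $\nu_k$), la marge de n\'egativit\'e obtenue pour le mode $k_0$ choisi est assez grande pour absorber la perte sur les autres modes, \`a condition que $\omega$ reste born\'e. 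C'est ici qu'intervient l'hypoth\`ese $\omega\leq 15$ : les $\nu_k$ donn\'es par \eqref{lam} forment une suite d\'ecroissante explicite avec au plus $[\omega/2]\leq 7$ termes, les $d_k$ sont des polyn\^omes explicites en $\nu_k$ (tous strictement positifs pour $\omega+2\leq(n-2)/2$), et les $u_k$ sont donn\'es par \eqref{ukk} ; on se ram\`ene donc \`a v\'erifier un nombre fini d'in\'egalit\'es polynomiales en les variables $n$ et $\omega$. Le c\oe ur technique de la preuve sera donc une analyse soigneuse de la fonction $c\mapsto\max_k\big(u_k(n-2)^2+(d_kc^2-2(n-2)^2c)\nu_k^2\big)$ : il faut montrer qu'elle prend une valeur strictement n\'egative, en exploitant la monotonie en $\nu_k$ (comme dans la preuve du lemme \ref{lemme poly} via le polyn\^ome $P$) et la borne $\omega\leq 15$ pour contr\^oler le nombre et l'\'ecartement des valeurs propres. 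Une fois $c$ fix\'e, on conclut par les estim\'ees \eqref{estime phi1}, \eqref{estime phi2} exactement comme \`a la fin du th\'eor\`eme \ref{theo}, puis on remonte \`a $I_g(\tilde\varphi_\e)=m^{2/n}I_g(\tilde\varphi_{\e,1})<\frac14 n(n-2)\omega_n^{2/n}m^{2/n}$, ce qui est \eqref{inegde}.
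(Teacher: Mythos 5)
Your proposal is correct and follows essentially the same route as the paper: the case $\mathrm{deg}\,\bar R\geq\omega+1$ is dispatched by Theorem \ref{aaa} with $c=0$, and in the case $\mu=\omega$ the problem is reduced, via the expansion of $I_g(\tilde\varphi_\e)$ from Theorem \ref{theo}, Lemma \ref{mmm} and the orthogonality of the $\varphi_k$, to finding a single $c$ such that $\frac{d_k}{2(n-2)}c^2-(n-2)c+(n-2)\frac{u_k}{2\nu_k^2}<0$ for every $k\leq q$. The paper phrases this last step as showing $\bigcap_{k=1}^q\,]x_k,y_k[\;\neq\varnothing$ for the roots $x_k<y_k$ of these quadratics (whose discriminants are positive by Lemma \ref{lemme poly}) and verifies it by explicit, partly computer-assisted, computation for $3\leq\omega\leq 15$ --- the same finite polynomial check in $n$ and $\omega$ that you describe.
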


\paragraph{Remarque}
\begin{enumerate} 
\item Dans le chapitre précédent, on a démontré que l'inégalité de ce lemme est vérifiée, pour tout   $\omega\leq (n-6)/2$, pour une fonction test $\varphi_\e$ (voir théorème \ref{theo}).  On remarque que la seule différence entre les définitions de $\varphi_\e$ et $\tilde\varphi_\e$ est  dans la construction des fonctions $f$ et $\tilde f$. En effet, $\tilde f$ est définie à l'aide d'une constante globale $c$ et $f$ à l'aide  des constantes $c_k$ qui changent avec les fonctions propres $\varphi_k$. On verra dans la preuve du théorème \ref{contheo}, qu'à partir de $\tilde\varphi_\e$, on peut construire une fonction $G-$invariante qui possède les "bonnes" propriétés, cette chose n'est pas possible avec les fonctions $\varphi_\e$.  
\item Pour $\omega=16$ et $n$ suffisamment grand, on  peut vérifier qu'il n'existe pas une valeur de $c$ pour laquelle l'inégalité \eqref{inegde} est vraie. 
\end{enumerate}

\begin{proof}
1. Si $\mathrm{deg}\bar R\geq \omega+1 $, alors d'après le théorème   \ref{aaa}
$$I_g(u_{\e,1})<\frac{n(n-2)}{4}\omega_{n}^{2/n}$$
où $u_{\e,1}=u_\e$ est définie par \eqref{uepsilon5}.
Il suffit donc de prendre $c=0$ et $\tilde\varphi_\e=u_\e$.\\

2. Si $\mathrm{deg}\bar R= \omega $. D'après les estimées données dans la preuve du théorème \ref{theo} (voir page \pageref{estime phi1}), il suffit de montrer qu'il existe $c\in\mathbb R$ telle que
\begin{equation}\label{gfgf}
I_S(\tilde f)+(n-2)^2\bar\int_{S(r)}r^{-2\omega-2}R_g\di\sigma_r<0
\end{equation}

Cherchons donc cette constante $c$. On garde les notations de la preuve du théorème \ref{theo}. On a
\begin{equation*}
I_S(\tilde f)=\sum_{k=1}^q I_S(c\nu_k\varphi_k)=\bigl\{d_kc^2-2(n-2)^2c \bigr\}\nu_k^2\bar\int_{S_{n-1}}\varphi_k^2\di\sigma
\end{equation*}
\begin{equation*}
\text{et }\bar\int_{S(r)}r^{-2\omega-2}R_g\di\sigma_r=\sum_{k=1}^q u_k\bar\int_{S_{n-1}}\varphi_k^2\di\sigma
\end{equation*}
Pour montrer l'inégalité \ref{gfgf}, il suffit de montrer que
\begin{equation}\label{ineggg}
\forall k\leq q\leq [\omega/2]\quad \frac{d_k}{2(n-2)}c^2-(n-2)c+(n-2)\frac{u_k}{2\nu_k^2}<0
\end{equation} 
On a donc un trinôme du second degré en $c$, son discriminant est
\begin{equation*}
\Delta_k=(n-2)^2-\frac{d_ku_k}{\nu_k^2}
\end{equation*}
D'après le lemme \ref{lemme poly}, $\Delta_k>0$ pour tout $k\leq q\leq [\omega/2]$. Par conséquent, le trinôme ci-dessus admet deux racines, notées $x_k<y_k$ et données par
\begin{equation*}
x_k=\frac{(n-2)^2-(n-2)\sqrt{\Delta_k}}{d_k},\qquad y_k=\frac{(n-2)^2+(n-2)\sqrt{\Delta_k}}{d_k}
\end{equation*}
L'inégalité \eqref{ineggg} est vérifiée si et seulement si 
\begin{equation}\label{propo}
\bigcap_{k=1}^q]x_k,y_k[\neq \varnothing
\end{equation}
Le lemme est donc démontré, si l'intersection ci-dessus n'est pas vide dans les cas énoncés.\\ 
Puisque $(d_k)_k$ est décroissante, il est facile de vérifier que 
\begin{equation}\label{ine solut}
 \forall k< j\leq [\frac{\omega}{2}]\qquad x_k< y_j
\end{equation}
 (voir équations \eqref{lam}, \eqref{dkk}, pour la définition de $\nu_k$ et $d_k$). On vérifie aussi que $u_{\omega/2}<0$ (voir équation \eqref{ukk}), cela entraîne que si $\omega$ est pair alors $x_{\omega/2}<0$.
\begin{itemize}
\item[$i.$] Si $\omega=3$ alors $k=q=1$, l'intersection ci-dessus est donc non vide. Il suffit de prendre $c=(x_1+y_2)/2$. 
\item[$ii.$] Si $\omega=4$ alors $k\in\{1,2\}$, $x_2<0$ (car $u_2<0$) et $0<x_1<y_2$. L'intersection $]x_1,y_1[\cap]x_2,y_2[\neq \varnothing$. Ce qui entraîne l'inégalité \eqref{inegde}.
\item[$iii.$]Si $\omega=5$  alors $k\in\{1,2\}$. Par des calculs directs, on montre que $x_2<y_1$ (voir les détails dans l'appendice \ref{ch5}). Puisque $y_2>x_1$, l'intersection des deux intervalles n'est pas vide. 
\item[$iv.$]Si $\omega=6$  alors $k\in\{1,2,3\}$ et il est immédiat de voir que $x_3<0$ (car $u_3<0$), $y_3>x_2>0$ et $y_3>x_1>0$. Par des calculs directs, on montre que $x_2<y_1$ (voir les détails dans l'appendice \ref{ch5}). Ce qui entraîne que l'intersection
\begin{equation}\label{propo}
\bigcap_{k=1}^3]x_k,y_k[
\end{equation}
est non vide.
\item[$iv.$]Si $\omega=7$  alors $k\in\{1,2,3\}$. Il y a trois intervalles. Par des calculs directs, on montre que pour tout  $3\geq j>k\geq 1$, $y_k>x_j$ (voir appendice \ref{ch5}). Puisque $y_j>x_k$ pour tout $3\geq j>k\geq 1$ (voir inégalité \eqref{ine solut}), l'intersection des trois intervalles n'est donc pas vide. 
\item En se servant du logiciel "Maple", on montre que le lemme reste vrai jusqu'à $\omega=15$ (voir appendice \ref{ch5} pour plus de détails).
\end{itemize}
\end{proof}

\begin{proof}[\textbf{Fin de la preuve du  théorème \ref{contheo}}]
Sans perte de généralités, on suppose que $3\leq \omega\leq (n-6)/2$, car si $\omega>(n-6)/2$ ou si $\omega\leq 2$, il suffit d'appliquer le théorème \ref{HV theorem}. L'orbite de $P$ sous l'action de $G$ est supposée être  de cardinal fini et minimal (i.e. $\Ca O_G(P)=\inf_{Q\in M}\Ca O_G(Q)$). À partir de la fonction  $\tilde\varphi_\e$, définie au début de la section \ref{proof hv}, on définit  la fonction  $\phi_\e$ comme suit: 
$$\phi_\e=\sum_{k=1}^m\tilde\varphi_{\e,i}$$
$\phi_\e$ est $G-$invariante. En effet, pour tout $\sigma \in G$, si $\sigma(P_i)=P_j$ alors
$$u_{\e,i}=u_{\e,j}\circ\sigma$$ 
d'après la définition de $\tilde f_i$, donnée par \eqref{fitil}, $\tilde f_i=\tilde f_j\circ \sigma$ et donc $$\tilde\varphi_{\e,i}=\tilde\varphi_{\e,j}\circ\sigma$$
Le support de la fonction $\tilde\varphi_\e$ est inclus dans la boule $B_P(\delta)$. On  choisit $\delta$ suffisamment petit tel que  pour tout  $i\in\intl 2,m\intr $, l'intersection $B_P(\delta)\cap B_{P_i}(\delta)=\varnothing$. Donc
$$E(\phi_\e)=(\Ca O_G(P))E(\varphi_\e)\text{ et }\|\phi_\e\|_N^N=(\Ca O_G(P))\|\varphi_\e\|_N^N$$
alors
$$I_g(\phi_\e)=(\mathrm{card}O_G(P))^{2/n}I_g(\varphi_\e)$$
Par le lemme \ref{lemme infg}, on en déduit que
$$I_g(\phi_\e)<\frac{n(n-2)}{4}\omega_{n-1}^{2/n}(\Ca O_G(P))^{2/n}$$
Il nous reste à remarquer que si $\tilde g=\phi_\e^{4/(n-2)}g$ alors
\begin{equation}\label{rapport J I}
 J(\tilde g)=4\frac{n-1}{n-2}I_g(\phi_\e)
\end{equation}
cette relation est déjà établie dans la preuve des propriétés  \ref{mu invariance} ( voir page \pageref{mu invariance}). On conclut que  
$$J(\tilde g)<n(n-1)\omega_{n-1}^{2/n}(\Ca O_G(P))^{2/n}$$
où $\e$ est choisi suffisamment petit par rapport à $\delta$.
\end{proof}

\appendix

\chapter{Détails des calculs (Chapitre \ref{CHAub})}\label{calculus}
\markboth{D\'ETAILS DES CALCULS}{D\'ETAILS DES CALCULS}

\section*{Preuve du théorème \ref{zzz}}\label{lemme diff}

On reprend les notations et les définitions de la section \ref{aubin computations}. Voici l'énoncé du théorème que l'on démontre dans cette section:

\begin{theorem}\label{rtzzz}
$$\bar R=\nabla^{ij}\bar g_{ij}r^\omega\quad\text{et}$$
$$\bar\int_{S(r)}R\di\sigma_r =[B/2-C/4-(1+\omega/2)^2Q]r^{2(\omega+1)}+o(r^{2(\omega+1)})$$
où $B=\bar\int_{S_{n-1}}\hspace{-0.5cm}\nabla^i\bar g^{jk}\nabla_j\bar g_{ik}\di\sigma$, $C=\bar\int_{S_{n-1}}\hspace{-0.5cm}\nabla^i\bar g^{jk}\nabla_i\bar g_{jk}\di\sigma$ et $Q= \bar\int_{S_{n-1}}\bar g_{ij}\bar g^{ij}\di\sigma$
\end{theorem}

\begin{proof}

Soit donc $\{x^\alpha\}$ un système de coordonnées normal en $P$.  $\{r,\xi^i\}$ un système de coordonnées géodésiques.  On a vu que la métrique se décompose de la façon suivante:
\begin{equation*}
 g=\mathcal E+h=(\delta_{\alpha\beta}+h_{\alpha\beta})dx^\alpha \otimes dx^\beta=dr^2+(s_{ij}+h_{ij})(rd\xi^i)\otimes(rd\xi^j)
\end{equation*}
où $(s_{ij})$ sont les composantes de la métrique standard sur la sphère $S_{n-1}$ et 
$$h_{ij}=\frac{\partial x^\alpha}{r\partial \xi^i}\frac{\partial x^\beta}{r\partial \xi^j}h_{\alpha\beta},\; \text{ and }h_{ir}=h_{rr}=0$$
et que $h_{ij}=O(r^{\omega+2})$. On a aussi décomposé $h$ de la façon suivante
\begin{equation}\label{defhij}
h_{ij}=r^{\omega+2}\bar g_{ij}+r^{2(\omega+2)}\hat{g}_{ij}+\tilde h_{ij}
\end{equation}
où $\bar g$, $\hat g$ et $\tilde h$ sont des 2-tenseurs symétriques définis sur la sphère  $S_{n-1}$. On choisit $\{\frac{\partial}{\partial r},\frac{\partial}{r\partial\xi^i}\}_{1\leq i\leq n-1}$ et $\{dr,rd\xi^i\}_{1\leq i\leq n-1}$ comme  bases locales de  l'espace tangent $TM$ et cotangent $T^*M$  respectivement. Alors
$$g_{ij}=s_{ij}+h_{ij},\; g_{rr}=1\text{ et }g_{ir}=0$$
Les composantes  $g^{ij}$ de l'inverse  de la métrique sont 
$$g^{ij}=s^{ij}-h^{ij}+O(r^{2\omega+4}),\; {g^{rr}=1}\text{ et }g^{ir}=0$$ 
où $h^{ij}=s^{ik}s^{jl}h_{lk}$. On fait monter et baisser les indices, en utilisant la métrique $(s_{ij})$, sauf pour la métrique $g$. À partir de maintenant, on omet  $O(r^{2\omega+4})$ qui apparaît dans l'expression de $g^{ij}$ ci-dessus, car nos calculs sont à $o(r^{2\omega+2})$ près. On note par $\nabla$ la connexion riemannienne sur la sphère, associée à $s$. $\tilde\nabla$ la connexion associée à la métrique euclidienne $\mathcal E$ dans le corepère  $\{dr,rd\xi^i\}$, alors
$$\tilde\nabla_i=\frac{1}{r}\nabla_i\text{ et }\tilde\nabla_r=\partial_r$$
et $\tilde\partial_i=\frac{1}{r}\partial_i$. Dans le système de coordonnées $\{x^\alpha\}$, 
$\det g=1+O(r^m)$, et dans le système $\{r,\xi^i\}$,  $\det g=r^{2(n-1)}\det s+O(r^m)$, avec $m$ suffisamment grand. D'où $tr\log((\delta^k_i+s^{jk}h_{ij}))=1$. Par le développement limité
$$(\log((\delta^k_i+s^{jk}h_{ij})))^k_{i}=s^{jk}h_{ij}-\frac{1}{2}s^{mk}s^{jl}h_{mj}h_{il}+o(r^{2\omega+4})$$
en tenant compte de la décomposition \eqref{defhij}, on trouve  que $\bar g$, $\hat g$ et $\tilde h$ doivent satisfaire les relations suivantes 
\begin{equation*}
 s^{ij}\bar g_{ij}=0,\; \bar g^{ij}\bar g_{ij}=2 s^{ij}\hat g_{ij}\text{ et }\bar\int_{S(r)}s^{ij}\tilde h_{ij}\di\sigma_r=o(r^{2\omega+2})
\end{equation*}
La première relation vient du fait que le terme d'ordre $\omega+2$ dans le développement de $tr\log((\delta^k_i+s^{jk}h_{ij}))$ est $s^{ij}\bar g_{ij}r^{\omega+2}$ qui doit être nul. Le terme d'ordre $2\omega+4$ est $(s^{ij}\hat g_{ij}-1/2\bar g^{ij}\bar g_{ij})r^{2\omega+4}$ qui doit être également nul. Dans $s^{ij}\hat h_{ij}$, il y a des termes d'ordre entre $\omega+3$ et $2\omega+3$ qui doivent être nuls, les termes d'ordre supérieur à $2\omega+5$ sont négligeables.\\ 
Soient $\tilde\Gamma^k_{ij}$ et $\Gamma^k_{ij}$ les Christoffels de la métrique $g$ et de la métrique euclidienne  $\mathcal E =dr^2+r^2s_{ij}d\xi^id\xi^j$ respectivement. On sait que les $$C^m_{jl}=\tilde\Gamma^m_{lj}-\Gamma^m_{lj}$$ sont les  composantes  d'un certain tenseur  $C$, défini sur la sphère $S_{n-1}$, données par
\begin{equation}\label{cijn}
 C^m_{jl}=\frac{1}{2}g^{mp}(\tilde\nabla_j h_{pl}+\tilde\nabla_l h_{pj}-\tilde\nabla_p h_{jl}),\; C^r_{jl}=-\frac{1}{2}\partial_rh_{jl}\text{ et }C^m_{rj}=\frac{1}{2}g^{mp}\partial_rh_{pj}
\end{equation}
et $C^i_{rr}=C^r_{ri}=0$. Ici les indices  latins  varient entre 1 et $n-1$ et les indices  grecs varient entre  1 et $n$. Dans le système de coordonnées $\{x^\alpha\}$, $g_{\alpha\beta}=\delta_{\alpha\beta}+h_{\alpha\beta}$, les composantes du tenseur de Ricci de la métrique $g$ sont
$$R_{\alpha\beta}=\partial_\gamma \tilde\Gamma^\gamma_{\alpha\beta}-\partial_\beta \tilde\Gamma^\gamma_{\gamma\alpha}+\tilde\Gamma^\gamma_{\gamma\mu}\tilde\Gamma^\mu_{\alpha\beta}-\tilde\Gamma^\gamma_{\beta\mu}\tilde\Gamma^\mu_{\gamma\alpha}$$
D'après la définition du tenseur $C$ et le fait que les Christoffels de la métrique euclidienne $\Gamma^{\alpha}_{\beta\gamma}$ sont identiquement nuls, on obtient l'expression suivante :  
$$R_{\alpha\beta}=\tilde\nabla_\gamma C^\gamma_{\alpha\beta}-\tilde\nabla_\beta C^\gamma_{\gamma\alpha}+C^\gamma_{\gamma\mu}C^\mu_{\alpha\beta}-C^\gamma_{\beta\mu}C^\mu_{\gamma\alpha}$$
T.~Aubin \cite{Aub2} montre que cette expression de Ricci est encore valable si $g=g_0+h$, où $g_0$ est une métrique riemannienne quelconque (pas nécessairement la métrique euclidienne $\mathcal E$).\\
Dans le système de coordonnées $\{r, \xi^i\}$, l'expression du tenseur $C$ ci-dessus devient : 
$$R_{jl}=\partial_rC^r_{jl}+ \tilde\nabla_m C^m_{jl}-\tilde\nabla_j C^m_{ml}+C^m_{mr}C^r_{jl}-C^m_{jr}C^r_{ml}-C^r_{jp}C^p_{rl}+C^m_{mp}C^p_{jl}-C^m_{jp}C^p_{ml}$$
En utilisant la définition du tenseur $C$, on en déduit l'expression suivante des composantes du tenseur de Ricci:
\begin{gather}\label{riccicann}
R_{jl}=-\frac{1}{2}\partial^2_rh_{jl}+\tilde\nabla_mC^m_{jl}-\frac{1}{4}g^{mp}\partial_rh_{mp}\partial_rh_{jl}+
\frac{1}{2}\partial_rh_{ij}\partial_rh_{kl}g^{ik}+C^i_{ik}C^k_{jl}-C^i_{jk}C^k_{il} \\
R_{rr}=-\partial_rC^m_{mr}-C_{rp}^mC_{mr}^p
\end{gather}

Si  $h=O(r^{\omega+2})$ alors $R_g=O(r^\omega)$. De plus, on peut calculer $\bar R$ la partie principale de $R_g$. Pour cela, on doit se focaliser uniquement sur les termes d'ordre  $\omega$ dans l'expression de  $R_g=R_rr+g^{jl}R_{jl}$. Tous les termes de $R_g$ sont négligeables par rapport à $r^\omega$, sauf à priori les deux termes suivants:

$$-\frac{1}{2}g^{jl}(\partial_{rr}h_{jl}+\frac{n-1}{r}\partial_rh_{jl})=-\frac{1}{2}(\omega+2)(\omega+n)s^{jl}\bar g_{jl}r^\omega+o(r^\omega)=o(r^\omega)$$
Finalement ce terme également est négligeable par rapport à $r^\omega$. Il ne fera pas partie des termes de $\bar R$. Le second candidat est 
$$g^{jl}\tilde\nabla_mC^m_{jl}=(s^{jl}-h^{jl})\tilde\nabla_m[(s^{mp}-h^{mp})\tilde\nabla_{l}h_{jp}]+o(r^\omega)$$
car $g^{jl}\tilde\nabla_ph_{jl}=o(r^\omega)$. Donc $g^{jl}\tilde\nabla_mC^m_{jl}=s^{jl}s^{mp}\nabla_{ml}\bar g_{jp}r^\omega+o(r^\omega)$. On conclut que
\begin{equation}\label{rbar}
\bar R=\nabla^{jp}\bar g_{jp}r^\omega
\end{equation}
La première formule du théorème \ref{rtzzz} est démontrée.
Par le lemme \ref{LLLL}, on sait que 
$$\bar\int_{S(r)}R_g\di\sigma_r=O(r^{2\omega+2})$$ 
On cherche les termes d'ordre $2\omega+2$ de cette intégrale. En utilisant l'expression \eqref{riccicann} des composantes de $R_{jl}$, on a: 
$$\bar\int_{S(r)}R_g\di\sigma_r=\bar\int_{S(r)}R_{rr}+g^{jl}R_{jl}\di\sigma_r$$
Ici encore, on doit se focaliser uniquement sur les termes d'ordre $2\omega+2$, d'intégrales non nulles. On doit examiner sept intégrales, six correspondent aux termes de $R_{jl}$ et une à $R_{rr}$. Les calculs suivants sont à $o(r^{2\omega+2})$ près. On a $g^{ij}=s^{ij}-h^{ij}$ à $o(r^{2\omega+2})$ près. Comme $\int_{S_{n-1}}s^{ij}\hat h_{ij}\di \sigma=o(r^{2\omega+2})$, on n'aura pas à se soucier des termes qui proviennent de $\tilde h_{ij}$.  En se servant des  relations $s^{ij}\bar g_{ij}=0$ et  $\bar g^{jl}\bar g_{jl}=2s^{jl}\hat g_{jl}$, on trouve que  l'intégrale correspondant aux premiers termes de $R_{jl}$ donne: 
$$-\frac{1}{2}\bar\int_{S(r)}(s^{jl}-h^{jl})\partial^2_rh_{jl}\di\sigma_r=-\frac{(\omega+2)^2}{2} Q r^{2\omega+2}+o(r^{2\omega+2})$$ 
où $ Q=\bar\int_{S_{n-1}}\bar g^{jl}\bar g_{jl}\di\sigma$

et que l'intégrale  correspondant   au troisième terme de $R_{jl}$ est 
$$-\frac{1}{4}\bar\int_{S(r)}(s^{mp}-h^{mp})(s^{jl}-h^{jl})\partial_rh_{mp}\partial_rh_{jl}\di\sigma_r=o(r^{2\omega+2})$$
L'intégrale correspondant au quatrième terme de $R_{jl}$ devient
$$\frac{1}{2}\bar\int_{S(r)}s^{ik}s^{jl}\partial_rh_{ij}\partial_rh_{kl}\di\sigma_r=\frac{(\omega+2)^2}{2} Q r^{2\omega+2}+o(r^{2\omega+2})$$
La dernière intégrale qui donne des termes du type $Qr^{2\omega+2}$ est
$$\bar\int_{S(r)}R_{rr}\di\sigma_r=-\bar\int_{S(r)}\partial_rC^m_{mr}-C_{rp}^mC_{mr}^p\di\sigma_r=-\frac{(\omega+2)^2}{4} Q r^{2\omega+2}+o(r^{2\omega+2})$$
où $C^p_{mr}$ et $C_{mp}^r$ sont définis par l'expression \eqref{cijn}.\\
En utilisant la formule de Stokes (intégration par parties) et le fait que les intégrales de type
$$\bar\int_{S(r)}s^{jl}s^{mp}\nabla_{mj}h_{pl}\di\sigma_r=\bar\int_{S(r)}\nabla_{mj}h^{mj}\di\sigma_r=0$$
sont nulles (ce sont des intégrales de la divergence  d'un champ de vecteur), l'intégrale correspondant au second terme de $R_{jl}$, se calcule de la façon suivante:

\begin{equation*}
 \begin{split}
\bar\int_{S(r)}\hspace{-0.6cm}g^{jl}\tilde\nabla_mC^m_{jl}\di\sigma_r & =\frac{1}{2r^2}\bar\int_{S(r)}g^{jl}g^{mp}(\nabla_{mj} h_{pl}+\nabla_{ml} h_{pj}-\nabla_{mp} h_{jl})\\
& +g^{jl}(\nabla_mg^{mp})(\nabla_{j} h_{pl}+\nabla_{l} h_{pj}-\nabla_{p} h_{jl})\di\sigma_r\\
& = r^{2\omega+2}\bar\int_{S(r)}\hspace{-0.8cm}-s^{jl}\bar g^{mp}\nabla_{mj} \bar g_{pl}-\bar g^{jl}s^{mp}\nabla_{mj} \bar g_{pl}+\frac{1}{2}\bar g^{jl}s^{mp}\nabla_{mp} \bar g_{jl} \\
&- s^{jl}\nabla_m\bar g^{mp}\nabla_{j} \bar g_{pl}\di\sigma_r+o(r^{2\omega+2})\\
& = (B-\frac{C}{2})r^{2\omega+2}+o(r^{2\omega+2})
 \end{split}
\end{equation*}
où l'on a posé
\begin{equation}\label{BCdef}
  B=\bar\int_{S_{n-1}}\hspace{-0.5cm}\nabla^i\bar g^{jk}\nabla_j\bar g_{ik}\di\sigma\text{ et } C=\bar\int_{S_{n-1}}\hspace{-0.5cm}\nabla^i\bar g^{jk}\nabla_i\bar g_{jk}\di\sigma
\end{equation}

En utilisant $s^{ij}\bar g_{ij}=0$, et la définition des $C^i_{jk}$, on a 
$$C^i_{ik}=\frac{r^{\omega+2}}{2}g^{ip}(\nabla_{i} \bar g_{kp}+\nabla_{k} \bar g_{pi}-\nabla_{p}\bar g_{ik})+o(r^{\omega+2})=o(r^{\omega+2})$$
L'intégrale correspondant au cinquième terme $R_{jl}$ vérifie donc
\begin{equation*}
\bar\int_{S(r)}g^{jl}C^i_{ik}C^k_{jl} \di\sigma_r =o(r^{2\omega+2})
\end{equation*}
et est négligeable devant $r^{2\omega+2}$. Il nous reste à calculer l'intégrale correspondant au sixième terme  $R_{jl}$. 
\begin{equation*}
\begin{split}
-\bar\int_{S(r)}\hspace{-0.6cm}g^{jl}C^m_{jp}C^p_{ml} \di\sigma_r &=-\frac{r^{2\omega+2}}{4} \bar\int_{S(r)} (\nabla^{l} \bar g^{mk}+\nabla^{k} \bar g^{lm}-\nabla^{m}\bar g^{kl})\\
&\hspace{2cm} \times(\nabla_{m} \bar g_{lk}+\nabla_{l} \bar g_{mk}-\nabla_{k}\bar g_{ml})\di\sigma_r +o(r^{2\omega+2})\\
&=(\frac{C}{4}-\frac{B}{2})r^{2\omega+2}+o(r^{2\omega+2})
\end{split}
\end{equation*}

Finalement
\begin{equation}\label{eqrgfg}
\bar\int_{S(r)}R_g\di\sigma_r=(B/2-C/4-(1+\omega/2)^2\bar Q)r^{2\omega+2}+o(r^{2\omega+2})
\end{equation}

\end{proof}

\section*{Preuve du lemme \ref{formule abg}}
Rappelons l'énoncé de ce lemme:
\begin{lemma}\label{formule abg ann}
 \begin{equation*}
\nabla^ib_{ij}=-\sum_{k=1}^q\nabla_j\varphi_k,\;\bar R=\bar R_b=\nabla^{ij}b_{ij}r^{\omega},\;\bar R_a=\nabla^{ij}a_{ij}r^{\omega}=0 \text{ et } s^{ij}b_{ij}=s^{ij}a_{ij}=0
\end{equation*}
\end{lemma}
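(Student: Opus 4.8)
Everything reduces to the single divergence identity $\nabla^ib_{ij}=-\sum_{k=1}^q\nabla_j\varphi_k$; the other three assertions are immediate corollaries of it together with the formula $\bar R=\nabla^{ij}\bar g_{ij}\,r^\omega$ of Theorem~\ref{zzz} and the expansion \eqref{dddd}. So the first thing I would do is compute $\nabla^ib_{ij}$ term by term. Writing $b_{ij}=\sum_k\lambda_k\big[(n-1)\nabla_{ij}\varphi_k+\nu_k\varphi_k s_{ij}\big]$ with $\lambda_k=\big[(n-2)(\nu_k+1-n)\big]^{-1}$, the only nontrivial piece is $\nabla^i\nabla_{ij}\varphi_k$. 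Since the Hessian is symmetric one has $\nabla^i\nabla_i\nabla_j\varphi_k=\nabla^i\nabla_j\nabla_i\varphi_k$, and commuting the last two covariant derivatives on $S_{n-1}$ (which carries $\mathrm{Ric}=(n-2)s$) by the permutation formula of Propri\'et\'es~\ref{prop courbure} yields
\[
\nabla^i\nabla_i\nabla_j\varphi_k=\nabla_j(\nabla^i\nabla_i\varphi_k)+(n-2)\nabla_j\varphi_k=(n-2-\nu_k)\,\nabla_j\varphi_k ,
\]
where I used $\nabla^i\nabla_i\varphi_k=-\Delta_s\varphi_k=-\nu_k\varphi_k$. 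Substituting, the coefficient of $\nabla_j\varphi_k$ collapses to $\lambda_k\big[(n-1)(n-2-\nu_k)+\nu_k\big]=\lambda_k(n-2)(n-1-\nu_k)=-1$, which is exactly the claimed identity. This one cancellation is the only computation with any content, and the main (minor) hazard is keeping the sign conventions for $\Delta_s$ and for the curvature term consistent with those fixed in Section~\ref{curvature}.

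\emph{Trace identities.} Next I would contract the definition of $b_{ij}$ with $s^{ij}$: using $s^{ij}\nabla_{ij}\varphi_k=\nabla^i\nabla_i\varphi_k=-\nu_k\varphi_k$ and $s^{ij}s_{ij}=n-1$, the two contributions cancel and $s^{ij}b_{ij}=0$. Since $s^{ij}\bar g_{ij}=0$ (already noted in the construction of $\bar g$), this forces $s^{ij}a_{ij}=s^{ij}\bar g_{ij}-s^{ij}b_{ij}=0$ as well. In particular $b$ and $a$ are trace-free symmetric $2$-tensors on $S_{n-1}$, hence admissible replacements for $\bar g$ in Theorem~\ref{zzz}.

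\emph{Curvature identities.} Applying Theorem~\ref{zzz} with $\bar g$ replaced by $b$ gives $\bar R_b=\nabla^{ij}b_{ij}\,r^\omega$. Taking one further divergence of Step~1, $\nabla^{ij}b_{ij}=-\sum_k\nabla^j\nabla_j\varphi_k=\sum_k\nu_k\varphi_k$, so $\bar R_b=r^\omega\sum_k\nu_k\varphi_k=\bar R$ by \eqref{dddd}; this establishes $\bar R=\bar R_b=\nabla^{ij}b_{ij}\,r^\omega$. Finally, from $a_{ij}=\bar g_{ij}-b_{ij}$ and Theorem~\ref{zzz} once more, $\bar R_a=\nabla^{ij}a_{ij}\,r^\omega=\nabla^{ij}\bar g_{ij}\,r^\omega-\nabla^{ij}b_{ij}\,r^\omega=\bar R-\bar R=0$, which finishes the proof of all four identities. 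The expected obstacle is essentially bookkeeping: getting the Ricci term and the Laplacian sign right in the commutation formula above; once that single identity is in hand, the rest is linear algebra.
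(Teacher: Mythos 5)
Your proposal is correct and follows essentially the same route as the paper: contract the definition of $b_{ij}$ with $\nabla^i$, commute derivatives using the curvature of $(S_{n-1},s)$ to reduce $\nabla^i\nabla_{ij}\varphi_k$ to $(n-2-\nu_k)\nabla_j\varphi_k$, observe that the coefficient collapses to $-1$, then obtain the remaining three identities by taking a further divergence, invoking $\bar R=\nabla^{ij}\bar g_{ij}r^\omega$ from Theorem~\ref{zzz} together with \eqref{dddd}, and tracing with $s^{ij}$. The signs and the Ricci term are handled consistently with the paper's conventions, so there is nothing to add.
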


\begin{proof}
 Les $b_{ij}$ sont définis comme suit:
\begin{equation*}
 b_{ij}=\sum_{k=1}^q\frac{1}{(n-2)(\lambda_k+1-n)}[(n-1)\nabla_{ij}\varphi_k+\lambda_k\varphi_ks_{ij}]
\end{equation*}
En contractant par $\nabla^i$
\begin{equation}\label{esddf}
 \nabla^ib_{ij}=\sum_{k=1}^q\frac{1}{(n-2)(\lambda_k+1-n)}[(n-1)s^{im}\nabla_{mj}\nabla_{i}\varphi_k+\lambda_k\nabla_j\varphi_k]
\end{equation}

D'après la définition du tenseur de courbure de Riemann (voir section \ref{curvature}), on a $$\nabla_{mj}\nabla_{i}\varphi_k=\nabla_{jm}\nabla_{i}\varphi_k-R^l_{imj}\nabla_l\varphi_k$$ 
avec
\begin{equation}\label{curv sphere}
R_{lijm}=s_{lj}s_{im}-s_{lm}s_{ij},\quad R^l_{imj}= \delta^l_ms_{ij}-\delta_j^ls_{mi}
\end{equation}
qui sont  les composantes du tenseur de courbure de Riemann de la sphère  $S_{n-1}$ muni de la métrique standard $s$. Ici les $\varphi_k$ sont des fonctions propres du Laplacien sur la sphère (il ne faut pas les confondre avec les composantes d'un tenseur une fois covariant).\\

D'après les propriétés \ref{prop courbure}, on en déduit que  
$$\nabla_{mj}\nabla_{i}\varphi_k-\nabla_{jm}\nabla_{i}\varphi_k=R_{imj}^l\nabla_{l}\varphi_k$$
Puisque $\Delta\varphi_k=-s^{im}\nabla_{mi}\varphi_k$,
$$s^{im}\nabla_{mj}\nabla_{i}\varphi_k=-\nabla_j\Delta\varphi_k+(n-2)\nabla_j\varphi_k=-(\lambda_k-n+2)\nabla_j\varphi_k$$
qu'on substitue dans l'équation \eqref{esddf}. On trouve
\begin{equation}\label{bij}
\nabla^ib_{ij}=-\sum_{k=1}^{q}\nabla_j\varphi_k
\end{equation}
La première formule est démontrée. Pour la seconde, il suffit de calculer
$$\nabla^{ij}b_{ij}=-\sum_{k=1}^{q}\nabla^j_j\varphi_k=\sum_{k=1}^{q}\Delta_s\varphi_k=r^{-\omega}\bar R$$ 
d'après l'expression \ref{dddd} qui définit $\bar R$. D'autre part, d'après le théorème \ref{zzz} 
$$\bar R=\nabla^{ij}\bar g_{ij}r^\omega=\nabla^{ij}a_{ij}r^\omega+\nabla^{ij}b_{ij}r^\omega$$ 
On en conclut que $\nabla^{ij}a_{ij}=0$.\\
Les deux dernières identités se déduisent aisément de la relation $s^{ij}\bar g_{ij}=0$ et de la définition des $b_{ij}$.
\end{proof}

\section*{Preuve du lemme \ref{mmm}}\label{lemme mmm}

Rappelons d'abord la définition des intégrales $Q_b$, $B_b$ et $C_b$:
\begin{equation*}
 Q_b=\bar\int_{S_{n-1}} b_{ij} b^{ij}\di\sigma,\;B_b=\bar\int_{S_{n-1}}\hspace{-0.5cm}\nabla^i b^{jk}\nabla_j b_{ik}\di\sigma\text{ et }C_b=\bar\int_{S_{n-1}}\hspace{-0.5cm}\nabla^ib^{jk}\nabla_ib_{jk}\di\sigma
\end{equation*}

On commence par démontrer les formules  suivantes (voir équations \eqref{Qb}, \eqref{Bb} et \eqref{Cb}) :

\begin{gather*}
Q_b=\bar\int_{S_{n-1}} b_{ij} b^{ij}\di\sigma=\frac{n-1}{n-2}\sum_{k=1}^{q}\frac{\lambda_k}{\lambda_k-n+1}\bar\int_{S_{n-1}}\varphi_k^2\di\sigma\label{Qba}\\
B_b=-(n-1)Q_b+\sum_{k=1}^{q}\lambda_k\bar\int_{S_{n-1}}\hspace{-0.6cm}\varphi_k^2\di\sigma\label{Bba}\\ C_b=-(n-1)Q_b+\frac{n-1}{n-2}\sum_{k=1}^{q}\lambda_k\bar\int_{S_{n-1}}\hspace{-0.6cm}\varphi_k^2\di\sigma\label{Cba}
\end{gather*}
où les $b_{ij}$ sont donnés par  
\begin{equation}\label{bij def}
 b_{ij}=\sum_{k=1}^q\frac{1}{(n-2)(\lambda_k+1-n)}[(n-1)\nabla_{ij}\varphi_k+\lambda_k\varphi_ks_{ij}]
\end{equation}
Concernant l'intégrale $Q_b$, par une intégration par parties et le fait que $s^{ij}b_{ij}=0$ (voir lemme \ref{formule abg ann}), on obtient 
$$\bar\int_{S_{n-1}}b^{ij}b_{ij}\di \sigma =\sum_{k=1}^q\frac{1}{(n-2)(\lambda_k+1-n)}\bar\int_{S_{n-1}}-(n-1)\nabla^{j}\varphi_k\nabla^i b_{ij}\di\sigma$$ 
D'après \eqref{bij} (rappelons que $\Delta\varphi_k=\lambda_k\varphi_k$),  l'égalité \eqref{Qba} est démontrée.\\
Montrons la formule \eqref{Bba}. Par définition 
\begin{equation*}
 B_b=\bar\int_{S_{n-1}}\hspace{-0.5cm}\nabla^i b^{jk}\nabla_j b_{ik}\di\sigma=-\bar\int_{S_{n-1}}\hspace{-0.5cm} b^{jk}s^{li}\nabla_{lj} b_{ik}\di\sigma
\end{equation*}
On permute les dérivées covariantes dans $\nabla_{lj} b_{ik}$, ensuite on utilise \eqref{bij}, pour avoir
\begin{equation}\label{nablfd}
s^{li} \nabla_{lj} b_{ik}=s^{li}(\nabla_{jl} b_{ik}-R_{ilj}^mb_{mk}-R_{klj}^mb_{im})=-\sum_{l=1}^q\nabla_{jk}\varphi_l+(n-1)b_{jk}
\end{equation}
on a utilisé \eqref{curv sphere} et le fait que $s^{ij}b_{ij}=0$. En reprenant la dernière expression de $B_b$, on en déduit que  
\begin{equation*}
B_b=-(n-1)Q_b-\sum_{l=1}^q\bar\int_{S_{n-1}}\hspace{-0.5cm}\nabla_j b^{jk}\nabla_{k}\varphi_l\di\sigma=
-(n-1)Q_b+\sum_{l=1}^q\sum_{p=1}^q\bar\int_{S_{n-1}}\hspace{-0.5cm}\nabla^k\varphi_p\nabla_{k}\varphi_l\di\sigma
\end{equation*}
Sachant que   $\{\varphi_l\}_{1\leq l\leq q}$ est une famille de fonctions orthogonales pour le produit scalaire  dans $L^2$ et celui de $H_1(S_{n-1})$ (voir équations \eqref{proscal1},\eqref{proscal2}, page \pageref{proscal1}), l'égalité \eqref{Bba} est démontrée. Pour montrer l'égalité \eqref{Cba}, on établit d'abord l'identité suivante:
\begin{equation}\label{nablab}
 \nabla_ib_{jk}=\nabla_jb_{ik}+\frac{1}{n-2}(\nabla^mb_{jm}s_{ik}-\nabla^mb_{im}s_{jk})
\end{equation}
En effet, en utilisant \eqref{bij def}, \eqref{bij} et \eqref{curv sphere}, on obtient
\begin{equation*}
 \begin{split}
  \nabla_ib_{jk} &=\sum_{l=1}^q\frac{1}{(n-2)(\lambda_l+1-n)}[(n-1)\nabla_{ij}\nabla_k\varphi_l+
\lambda_l\nabla_i\varphi_ls_{jk}]\\
& =\sum_{l=1}^q\frac{1}{(n-2)(\lambda_l+1-n)}[(n-1)\nabla_{ji}\nabla_k\varphi_l-(n-1)R_{kij}^m\nabla_m\varphi_l+\lambda_l\nabla_i\varphi_ls_{jk}]\\
& = \nabla_jb_{ik}+\sum_{l=1}^q\frac{1}{n-2}[\nabla_i\varphi_ls_{jk}-\nabla_j\varphi_l
s_{ik}]
 \end{split}
\end{equation*}

Alors
\begin{equation*}
 C_b=\bar\int_{S_{n-1}}\hspace{-0.5cm}\nabla^ib^{jk}\nabla_ib_{jk}\di\sigma=B_b+\frac{1}{n-2}
\bar\int_{S_{n-1}}\hspace{-0.5cm}\nabla_ib^{ji}\nabla^mb_{jm}\di\sigma
\end{equation*}
Si on substitue \eqref{bij} et \eqref{Bba} dans la dernière égalité, on trouve l'expression \eqref{Cba}. 

Rappelons l'énoncé du lemme \ref{mmm}:

 \begin{lemma}\label{mmmann}
Si  $\mu=\omega$ et $\bar g_{ij}=a_{ij}+b_{ij}$, alors
\begin{equation*}
\bar\int_{S(r)}\hspace{-0.4cm}R\di\sigma_r=\bar\int_{S(r)}\hspace{-0.4cm}R_a+R_b\di\sigma_r+o(r^{2(\omega+1)})\leq [B_b/2-C_b/4-(1+\omega/2)^2Q_b]r^{2(\omega+1)}+o(r^{2(\omega+1)})
\end{equation*} 
\begin{equation*}
B_b/2-C_b/4-(1+\omega/2)^2Q_b=\sum_{k=1}^q u_k\bar\int_{S_{n-1}}\varphi_k^2\di\sigma
\end{equation*} 
avec
\begin{equation*}
 u_k=\biggl(\frac{n-3}{4(n-2)}-\frac{(n-1)^2+(n-1)(\omega+2)^2}{4(n-2)(\nu_k-n+1)}\biggr)\nu_k
\end{equation*}
\end{lemma}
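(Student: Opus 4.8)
The plan is to reduce the whole statement to the quadratic form furnished by Theorem~\ref{zzz}. Set $\mathcal F(\bar g)=B/2-C/4-(1+\omega/2)^2Q$, a quadratic functional of the symmetric $2$-tensor $\bar g$ on $S_{n-1}$; by Theorem~\ref{zzz} (together with Lemma~\ref{LLLL}, which kills all terms of order below $2(\omega+1)$) we have $\bar\int_{S(r)}R\,\di\sigma_r=\mathcal F(\bar g)\,r^{2(\omega+1)}+o(r^{2(\omega+1)})$, and the same identities with $\bar g$ replaced by $a$ and by $b$ define $\bar\int_{S(r)}R_a\,\di\sigma_r=\mathcal F(a)\,r^{2(\omega+1)}+o(r^{2(\omega+1)})$ and $\bar\int_{S(r)}R_b\,\di\sigma_r=\mathcal F(b)\,r^{2(\omega+1)}+o(r^{2(\omega+1)})$. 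Since $B$, $C$, $Q$ are quadratic in $\bar g$, polarization gives $\mathcal F(a+b)=\mathcal F(a)+\mathcal F(b)+\mathcal F(a,b)$, where $\mathcal F(a,b)$ is the associated cross term. Thus the first equality in the lemma, $\bar\int_{S(r)}R\,\di\sigma_r=\bar\int_{S(r)}R_a+R_b\,\di\sigma_r+o(r^{2(\omega+1)})$, is exactly the claim $\mathcal F(a,b)=0$.

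First I would establish $\mathcal F(a,b)=0$ by integration by parts on $(S_{n-1},s)$. The cross term of $Q$ is a multiple of $\bar\int_{S_{n-1}}a_{ij}b^{ij}\,\di\sigma$; substituting the definition of $b_{ij}$, the $\varphi_k s_{ij}$ piece vanishes because $s^{ij}a_{ij}=0$ and the $\nabla_{ij}\varphi_k$ piece vanishes after moving both covariant derivatives onto $a$, since $\nabla^{ij}a_{ij}=0$ (Lemma~\ref{formule abg}). The cross terms of $B$ and $C$ are treated by integrating the outermost derivative by parts, commuting covariant derivatives — the only curvature that appears is that of $(S_{n-1},s)$, given by \eqref{curv sphere} — and then using $\nabla^ib_{ij}=-\sum_k\nabla_j\varphi_k$ from \eqref{bij} together with its iterate \eqref{nablfd}; what survives collapses onto the already-treated integral $\bar\int a^{ij}b_{ij}\,\di\sigma$ and onto $\bar\int\nabla^{ij}a_{ij}\,\varphi_k\,\di\sigma=0$. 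This is the step I expect to be the main obstacle: one must keep careful track of the curvature commutators and of the contractions, so that the trace-free and divergence-free conditions on $a$ can be invoked at the right moment and the cross term cancels identically.

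Granting $\mathcal F(a,b)=0$, the inequality of the lemma follows at once. Since $\bar R_a=\nabla^{ij}a_{ij}\,r^\omega=0$ (Lemma~\ref{formule abg}), the configuration $\bar g=a$ corresponds to a scalar curvature whose leading homogeneous part has degree $\ge\omega+1$, so Theorem~\ref{aaa} applies and yields $\bar\int_{S(r)}R_a\,\di\sigma_r<0$, hence $\mathcal F(a)\le 0$. Combined with Theorem~\ref{zzz} for $\bar g=b$ this gives $\bar\int_{S(r)}R\,\di\sigma_r=\bar\int_{S(r)}R_a+R_b\,\di\sigma_r+o(r^{2(\omega+1)})\le\mathcal F(b)\,r^{2(\omega+1)}+o(r^{2(\omega+1)})=[B_b/2-C_b/4-(1+\omega/2)^2Q_b]\,r^{2(\omega+1)}+o(r^{2(\omega+1)})$.

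It then remains to identify $B_b/2-C_b/4-(1+\omega/2)^2Q_b$ with $\sum_{k=1}^q u_k\bar\int_{S_{n-1}}\varphi_k^2\,\di\sigma$, which is pure algebra using the closed forms for $Q_b$, $B_b$, $C_b$ proved just above. Indeed $B_b/2-C_b/4=-\tfrac{n-1}{4}Q_b+\tfrac{n-3}{4(n-2)}\sum_k\nu_k\bar\int\varphi_k^2\,\di\sigma$, and since $(1+\omega/2)^2=(\omega+2)^2/4$ one gets $B_b/2-C_b/4-(1+\omega/2)^2Q_b=\tfrac{n-3}{4(n-2)}\sum_k\nu_k\bar\int\varphi_k^2\,\di\sigma-\tfrac{(n-1)+(\omega+2)^2}{4}Q_b$; substituting $Q_b=\tfrac{n-1}{n-2}\sum_k\tfrac{\nu_k}{\nu_k-n+1}\bar\int\varphi_k^2\,\di\sigma$ and collecting the coefficient of each $\bar\int\varphi_k^2\,\di\sigma$ produces exactly $u_k=\bigl(\tfrac{n-3}{4(n-2)}-\tfrac{(n-1)^2+(n-1)(\omega+2)^2}{4(n-2)(\nu_k-n+1)}\bigr)\nu_k$, as in \eqref{ukk}. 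Here, as in the derivation of the closed forms, the orthogonality of the $\varphi_k$ in $L^2(S_{n-1})$ and in $H_1(S_{n-1})$ is what permits the sums to be handled term by term.
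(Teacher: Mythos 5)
Your argument is correct and follows essentially the same route as the paper: you reduce the first identity to the vanishing of the cross terms of $Q$, $B$, $C$ (via integration by parts, the relations $s^{ij}a_{ij}=\nabla^{ij}a_{ij}=0$ and $\nabla^ib_{ij}=-\sum_k\nabla_j\varphi_k$), obtain the inequality from Theorem~\ref{aaa} applied to the $a$-part since $\bar R_a=0$, and get the coefficients $u_k$ by direct algebra from the closed forms of $Q_b$, $B_b$, $C_b$. The polarization framing is just a cleaner packaging of exactly the computation the paper carries out.
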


\begin{proof}
D'après le lemme \ref{formule abg} (démontré ci-dessus):
\begin{equation}\label{relaab}
 r^{-\omega}\bar R=\nabla^{ij}\bar g_{ij}=\nabla^{ij}b_{ij}\text{ et }\nabla^{ij}a_{ij}=s^{ij}a_{ij}=s^{ij}b_{ij}=0
\end{equation}

Montrons que $Q=Q_a+Q_b$, $B=B_a+B_b$ et $C=C_a+C_b$. 
$$Q=\bar\int_{S_{n-1}}(a^{ij}+b^{ij})(a_{ij}+b_{ij})\di \sigma=Q_a+Q_b+2\bar\int_{S_{n-1}}a^{ij}b_{ij}\di \sigma$$
On a 
$$a^{ij}b_{ij}=\sum_{k=1}^q\frac{1}{(n-2)(\lambda_k+1-n)}a^{ij}[(n-1)\nabla_{ij}\varphi_k+\lambda_k\varphi_ks_{ij}]$$
En intégrant sur $S_{n-1}$ l'expression ci-dessus et en utilisant les relations \ref{relaab}, on en déduit que 
\begin{equation}\label{aszfd}
\bar\int_{S_{n-1}}a^{ij}b_{ij}\di \sigma=0\quad \text{et que } Q=Q_a+Q_b
\end{equation}
 
Par un raisonnement analogue au précédent, montrons que $B=B_a+B_b$. D'après la définition de $B$ (voir \ref{BCdef}), on a 
$$B=\bar\int_{S_{n-1}}\hspace{-0.5cm}\nabla^i(a^{jk}+b^{jk})\nabla_j(a_{ik}+b_{ik})\di\sigma=B_a+B_b+2\bar\int_{S_{n-1}}\hspace{-0.5cm}\nabla^ia^{jk}\nabla_jb_{ik}\di\sigma$$
Par une intégration par parties, on obtient
\begin{equation*}
B=B_a+B_b-2\bar\int_{S_{n-1}}\hspace{-0.5cm}a^{jk}\nabla^i\nabla_jb_{ik}\di\sigma
\end{equation*}
En utilisant l'identité \eqref{nablfd}, écrite sous la forme suivante
\begin{equation*}
\nabla^i \nabla_{j} b_{ik}=-\sum_{l=1}^q\nabla_{jk}\varphi_l+(n-1)b_{jk}
\end{equation*}
et les relations \eqref{relaab}, \eqref{aszfd}, on en conclut que
\begin{equation}\label{gfdg}
\bar\int_{S_{n-1}}\hspace{-0.5cm}a^{jk}\nabla^i\nabla_jb_{ik}\di\sigma=0\text{ et }B=B_a+B_b 
\end{equation}

La dernière formule à établir est $C=C_a+C_b$. Or, d'après la définition de $C$ (voir \eqref{BCdef}),
$$C=\bar\int_{S_{n-1}}\hspace{-0.5cm}\nabla^i(a^{jk}+ b^{jk})\nabla_i(a_{jk}+b_{jk})\di\sigma=C_a+C_b-2\bar\int_{S_{n-1}}\hspace{-0.5cm}a^{jk}\nabla^i\nabla_ib_{jk}\di\sigma$$
D'après l'identité \eqref{nablab}
\begin{equation*}
\begin{split}
 \nabla^i\nabla_ib_{jk} & =\nabla^i\nabla_jb_{ik}+\frac{1}{n-2}(\nabla^{i}\nabla^mb_{jm}s_{ik}-\nabla^{i}\nabla^mb_{im}s_{jk})\\
& =\nabla^i\nabla_jb_{ik}-\frac{1}{n-2}\sum_{l=1}^{q}(\nabla_{kj}\varphi_l+\lambda_l\varphi_ls_{jk})
\end{split}
\end{equation*}
Ici on a juste utilisé l'expression \eqref{bij} et le fait que $\Delta\varphi_l=\lambda\varphi_l$. En contractant cette expression de $\nabla^i\nabla_ib_{jk}$ avec $a^{jk}$, en utilisant \eqref{gfdg} et les relations \eqref{relaab}, on en conclut que $\bar\int_{S_{n-1}}\hspace{-0.5cm}a^{jk}\nabla^i\nabla_ib_{jk}\di\sigma=0$ et $C=C_a+C_b$.\\
D'après le théorème \ref{zzz}
$$\bar\int_{S(r)}R\di\sigma_r =[B/2-C/4-(1+\omega/2)^2Q]r^{2(\omega+1)}+o(r^{2(\omega+1)})$$
et par ce qu'on vient de prouver, on en déduit que 
$$\bar\int_{S(r)}R\di\sigma_r=\bar\int_{S(r)}R_a+R_b\di\sigma_r +o(r^{2\omega+2})$$ 
Comme $\nabla^{ij}a_{ij}=0$, $\bar R_a=0$,  l'ordre de la partie $R_a$ est donc supérieur à $\omega+1$. D'après le théorème \ref{aaa}, $\bar\int_{S(r)}R_a\di\sigma_r\leq 0$. D'où l'inégalité du lemme.
\end{proof}

\section*{Détails des calculs du théorème \ref{theo}}\label{nablafi}

On commence par rappeller les définitions données dans la section \ref{aubin computations}.

\begin{equation}\label{def I}
I_a^b(\e)=\int_0^{\delta/\e}\frac{t^b}{(1+t^2)^a}\di t\text{ et }I_a^b=\lim_{\e\to 0}I_a^b(\e)
\end{equation}
alors 
\begin{equation}\label{Iab appendice}
I_a^b(\e)=
\begin{cases}
 I_a^b+O(\e^{2a-b-1}) \text{ si }2a-b>1\\
\log\e^{-1}+O(1)\text{ si }b=2a-1
\end{cases} 
\end{equation}
En effet, si $2a-b>1$, 
$$I_a^b-I_a^b(\e)=\int_{\delta/\e}^{+\infty}\frac{t^b}{(1+t^2)^a}\di t\leq \int_{\delta/\e}^{+\infty}t^{b-2a}\di t\leq \frac{\e^{2a-1-b}}{(2a-1-b)\delta^{2a-b-1}}$$
Si $b=2a-1$ alors pour $\e$ suffisamment petit
$$I_a^{2a-1}(\e)\leq\int_0^{1}\frac{t^{2a-1}}{(1+t^2)^a}\di t+\int_1^{\delta/\e}\frac{1}{t}\di t$$

Par des intégrations par parties, on établit les relations suivantes : 
\begin{equation}\label{relations appendice}
 I_a^b=\frac{b-1}{2a-b-1}I_a^{b-2}=\frac{b-1}{2a-2}I_{a-1}^{b-2}=\frac{2a-b-3}{2a-2}I_{a-1}^{b},\quad \frac{4(n-2)I_n^{n+1}}{(I^{n-2}_n)^{(n-2)/n}}=n
\end{equation}

Soit $\varphi_\e$ une fonction test définie dans \eqref{fonction test} (voir page \pageref{fonction test}). On calcule $I_g(\varphi_\e)$. En utilisant l'inégalité $(a-b)^\beta\geq a^\beta-\beta a^{\beta-1}b$ pour $0<b<a$, on a  $\beta\geq 2$, $0\leq \alpha< (n-2)(\beta-1)-n$ 
\begin{equation}\label{hhh}
 \int_Mr^\alpha u_\e^\beta \di v=\omega_{n-1}\int_0^\delta r^{\alpha+n-1}u_\e^\beta(r)\di r=\omega_{n-1}I_{(n-2)\beta/2}^{\alpha+n-1}\e^{\alpha+n-\beta(n-2)/2}+O(\e^{n-2})
\end{equation}
Ce type d'intégrales apparait plusieurs fois dans les calculs suivants, il permet de négliger le terme constant dans l'expression de $u_\e$, définie dans \eqref{fonction test}, lorsque l'on choisit  $\delta$ suffisamment petit et $\e$ plus petit que $\delta$. On commence par calculer $\|\nabla\varphi_\e\|^2$ (la définition de $\varphi_\e$ est donnée dans la section \ref{proof hv}). D'après la formule $$|\nabla_g\varphi_\e|^2=(\partial_r\varphi_\e)^2+r^{-2}|\nabla_s\varphi_\e|^2$$   
on a l'équation \eqref{nabla varphi} suivante:

\begin{multline*}
\int_M|\nabla_g\varphi_\e|^2\di v=\int_M|\nabla_g u_\e|^2\di v+\int_0^\delta[\partial_r(r^{(\alp)} u_\e)]^2r^{n-1}\di r\int_{S_{n-1}}
\hspace{-0.5cm}f^2 \di\sigma +\\
\int_0^\delta u^2_\e r^{n+2\omega+1}\di r\int_{S_{n-1}}\hspace{-0.5cm} |\nabla f|^2 \di \sigma 
\end{multline*}

On exprime les intégrales ci-dessus, en utilisant les intégrales $I_b^a$, définies plus haut. On effectue le changement de variable $t=r/\e$.  Ce qui donne les expressions suivantes
\begin{gather*}
 \int_M|\nabla_g u_\e|^2\di v=(n-2)^2\omega_{n-1} I_n^{n+1}+O(\e^{n-2})\text{ et }\\ \int_0^\delta u^2_\e r^{n+2\omega+1}\di r\int_{S_{n-1}}\hspace{-0.5cm} |\nabla f|^2 \di \sigma =I^{n+2\omega+1}_{n-2}\|\nabla_s f\|^2
\end{gather*}

\begin{equation*}
\begin{split}
 \int_0^\delta[\partial_r(r^{\alp} u_\e)]^2r^{n-1}\di r\int_{S_{n-1}}
\hspace{-0.5cm}f^2 \di\sigma & =\|f\|^2\int_0^\delta\e^{n-2}\biggl(\frac{(\omega-n+4)r^{\omega+3}+\e^2(\omega+2)r^{\omega+1}}{(\e^2+r^2)^{n/2}}\biggr)^2r^{n-1}\di r \\
& =[(\omega-n+4)^2I_n^{2\omega+n+5}(\e)+2(\alp)(\omega-n+4)I_n^{2\omega+n+3}(\e)\\
& +(\alp)^2I_n^{2\omega+n+1}(\e)]\|f\|^2\e^{2\omega+4}+o(\e^{2\omega+4})
\end{split}
\end{equation*}
Si on regroupe ensemble ces trois intégrales, on obtient \eqref{grad}:

\begin{multline}\label{nabla appendice}
\int_M|\nabla_g\varphi_\e|^2\di v=(n-2)^2\omega_{n-1}I_n^{n+1}(\e)+\e^{2\omega+4}\biggl\{\int_{S_{n-1}}|\nabla f|^2 \di\sigma I_{n-2}^{2\omega+n+1}(\e)+\\
\int_{S_{n-1}}\hspace{-0.5cm}f^2\di\sigma [(\omega-n+4)^2I_n^{2\omega+n+5}(\e)\\+2(\alp)(\omega-n+4)I_n^{2\omega+n+3}(\e)+(\alp)^2I_n^{2\omega+n+1}(\e)]\biggr\}
\end{multline}

Pour avoir \eqref{norm} (page \pageref{norm}),  il suffit d'écrire le développement limité de $\varphi_\e^N$ et ensuite utiliser l'égalité \eqref{hhh}.

\begin{multline}\label{norme appendice}
\|\varphi_\e\|_N^{-2}=(\omega_{n-1}I^{n-1}_n)^{-2/N}\bigl\{1+\\
-(N-1)\e^{2(\alp)}\int_{S_{n-1}}\hspace{-0.5cm}f^2 \di\sigma I_n^{2\omega+n+3}/(\omega_{n-1}I^{n-1}_n)\bigr\}+O(\e^{\min(3\omega+6,n-2)})
\end{multline}

Il nous reste seulement à calculer $\int_MR_g\varphi_\e^2\di v$. La fonction $f$ est définie sur la sphère $S_{n-1}$. On sait qu'on peut la définir sur $S(r)$ pour tout $r>0$ en posant $f(\xi/r)$ si $\xi\in S(r)$. On garde la même notation pour la fonction ainsi redéfinie.  D'après le lemme \ref{LLLL}, on sait que  $\bar\int_{S(r)}r^{-2\omega-2}R_g\di\sigma=O(1)$, on en déduit, en effectuant le changement de variable $t=r/\e$, que
\begin{equation*}
\begin{split}
\int_MR_gu_\e^2\di v & =\e^{2\omega+4}\omega_{n-1}\bar\int_{S(r)}r^{-2\omega-2}R_g\di \sigma I_{n-2}^{n+2\omega+1}(\e)\\
& =\begin{cases}
 \e^{2\omega+4}\omega_{n-1}\bar\int_{S(r)}r^{-2\omega-2}R_g\di \sigma I_{n-2}^{n+2\omega+1}+o(\e^{2\omega+4})\text{ si }n>2\omega +6\\
\e^{2\omega+4}\log\e^{-1} \omega_{n-1}\bar\int_{S(r)}r^{-2\omega-2}R_g\di \sigma +O(\e^{2\omega+4})\text{ si }n=2\omega +6
\end{cases}
\end{split}
\end{equation*}
D'autre part $R=\bar R+o(r^\mu)$ avec $\mu\geq \omega$ (cf. lemme \ref{LLLL}), d'où 
\begin{equation*}
\begin{split}
\int_Mfu_\e^2R_gr^{\omega+2} \di v & =\e^{\omega+\mu+4}I_{n-2}^{\omega+\mu+n+1}(\e)\omega_{n-1}\bar\int_{S(r)}\hspace{-0.3cm}r^{-\mu}f(\xi)\bar R\di\sigma+o(\e^{\omega+\mu+4})\\
& =\begin{cases}
\e^{\omega+\mu+4}I_{n-2}^{\omega+\mu+n+1}\omega_{n-1}\bar\int_{S(r)}\hspace{-0.1cm}r^{-\mu}f(\xi)\bar R\di\sigma+o(\e^{\omega+\mu+4}) \text{ si }n-6> \omega+\mu\\
\e^{\omega+\mu+4}\log\e^{-1}\omega_{n-1}\bar\int_{S(r)}\hspace{-0.1cm}r^{-\mu}f(\xi)\bar R\di\sigma+O(\e^{\omega+\mu+4})\text{ si }n-6= \omega+\mu
\end{cases}
\end{split}
\end{equation*}
Si $n>\omega+\mu+6$ alors

\begin{equation}\label{scalaire appendice}
\begin{split}
\int_MR_g\varphi_\e^2\di v & =\int_MR_gu_\e^2\di v-2\int_Mfu_\e^2R_gr^{\omega+2} \di v+\int_Mf^2u_\e^2R_gr^{2\omega+4} \di v\\
& =\e^{2\omega+4}\omega_{n-1}\bar\int_{S(r)}r^{-2\omega-2}R_g\di \sigma I_{n-2}^{n+2\omega+1}-\\ 
& 2\e^{\omega+\mu+4}I_{n-2}^{\omega+\mu+n+1}\omega_{n-1}\bar\int_{S(r)}\hspace{-0.3cm}r^{-\mu}f(\xi)\bar R \di\sigma(\xi)+o(\e^{2\omega+4})\\
\end{split}
\end{equation}

Si $n=2\omega+6$ et $\mu=\omega$ alors

\begin{equation}\label{scalaire log}
\int_MR_g\varphi_\e^2\di v =\e^{2\omega+4}\log\e^{-1}\omega_{n-1}\{\bar\int_{S(r)}r^{-2\omega-2}R_g\di \sigma - 
 2\bar\int_{S(r)}\hspace{-0.3cm}r^{-\mu}f(\xi)\bar R \di\sigma(\xi)\}+O(\e^{2\omega+4})
\end{equation}
Rappelons que
$$I_g(\varphi_\e)=\biggl(\int_M|\nabla\varphi_\e|^2\di v+\frac{n-2}{4(n-1)}\int_MR_g\varphi_\e^2\di v\biggr)\|\varphi_\e\|_N^{-2}$$
Maintenant, on a tout les ingrédients nécessaires pour donner l'expression détaillée de $I_g(\varphi_\e)$. On l'obtient, en combinant \eqref{nabla appendice}, \eqref{norme appendice}, \eqref{scalaire appendice} et \eqref{scalaire log} et le lemme \ref{norme f2} ci-dessous. On en conclut que si $n>2\omega+6$ alors
\begin{multline*}
I_g(\varphi_\e)= \frac{n(n-2)}{4}\omega_{n-1}^{2/n}+(\omega_{n-1}I^{n-1}_n)^{-2/N}I_{n-2}^{n+2\omega+1}\e^{2\omega+4}\times \\
\biggl\{\frac{(n-2)\omega_{n-1}}{4(n-1)}\bar\int_{S(r)}r^{-2\omega-2}R_g\di \sigma -\frac{n-2}{2(n-1)}\int_{S_{n-1}}\hspace{-0.3cm}f(\xi)\bar R \di\sigma+\int_{S_{n-1}}\hspace{-0.5cm}|\nabla f|^2 \di\sigma +\\
-\frac{n(n-2)^2-(\omega+2)^2(n^2+n+2)}{(n-1)(n-2)}\int_{S_{n-1}}\hspace{-0.5cm}f^2\di\sigma\biggr\}+
o(\e^{2\omega+4)})
\end{multline*}

si $n=2\omega+6$ alors

\begin{multline*}
I_g(\varphi_\e)= \frac{n(n-2)}{4}\omega_{n-1}^{2/n}+(\omega_{n-1}I^{n-1}_n)^{-2/N}\e^{2\omega+4}\log\e^{-1}\times\\
\biggl\{\frac{(n-2)\omega_{n-1}}{4(n-1)}\bar\int_{S(r)}r^{-2\omega-2}R_g\di \sigma
-\frac{n-2}{2(n-1)}\int_{S_{n-1}}\hspace{-0.3cm}f(\xi)\bar R \di\sigma+\\
\int_{S_{n-1}}|\nabla f|^2 \di\sigma +(\omega+2)^2\int_{S_{n-1}}\hspace{-0.5cm}f^2\di\sigma\biggr\}+O(\e^{2\omega+4})
\end{multline*}
\begin{lemma}\label{norme f2}
 On a les relations suivantes pour tout $n>2\omega+6$: 
\begin{multline*}
 (\omega-n+4)^2I_n^{2\omega+n+5}+2(\alp)(\omega-n+4)I_n^{2\omega+n+3}+(\alp)^2I_n^{2\omega+n+1}\\
-(N-1)(n-2)^2 \frac{I_n^{2\omega+n+3}I_n^{n+1}}{I_n^{n-1}}=-\frac{n(n-2)^2-(\omega+2)^2(n^2+n+2)}{(n-1)(n-2)}I_{n-2}^{n+2\omega+1}
\end{multline*}
Si $n=2\omega+6$ alors
\begin{multline*}
 (\omega-n+4)^2I_n^{2\omega+n+5}(\e)+2(\alp)(\omega-n+4)I_n^{2\omega+n+3}(\e)+(\alp)^2I_n^{2\omega+n+1}(\e)\\
-(N-1)(n-2)^2 \frac{I_n^{2\omega+n+3}(\e)I_n^{n+1}}{I_n^{n-1}}=(\omega+2)^2\log\e^{-1}+O(1)
\end{multline*}
Ces relations apparaissent dans l'expression de $I_g(\varphi_\e)$, comme étant le coefficient du terme $\int_{S_{n-1}}f^2\di \sigma$.
\end{lemma}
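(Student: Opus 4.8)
The plan is to reduce both claimed identities to elementary manipulations of the one‑variable integrals $I_a^b(\e)$ by means of the recurrence relations \eqref{relations appendice}, keeping in mind that $\alp=\omega+2$ and that $N-1=\tfrac{n+2}{n-2}$, so $(N-1)(n-2)^2=(n+2)(n-2)$.

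First I treat the generic case $n>2\omega+6$, in which every integral that occurs is convergent: for each $I_a^b$ below one has $2a-b>1$, and for the extreme ones, $I_n^{2\omega+n+5}$ and $I_{n-2}^{n+2\omega+1}$, this reads exactly $n-2\omega-6>1$. Starting from \eqref{relations appendice} and lowering the superscript with $I_a^b=\tfrac{b-1}{2a-b-1}I_a^{b-2}$ and the subscript with $I_a^b=\tfrac{2a-b-3}{2a-2}I_{a-1}^b$, I get
\begin{gather*}
I_n^{2\omega+n+1}=\frac{(n-2\omega-4)(n-2\omega-6)}{(2n-2)(2n-4)}\,I_{n-2}^{n+2\omega+1},\qquad
I_n^{2\omega+n+3}=\frac{(2\omega+n+2)(n-2\omega-6)}{(2n-2)(2n-4)}\,I_{n-2}^{n+2\omega+1},\\
I_n^{2\omega+n+5}=\frac{(2\omega+n+4)(2\omega+n+2)}{(2n-2)(2n-4)}\,I_{n-2}^{n+2\omega+1},\qquad
\frac{I_n^{n+1}}{I_n^{n-1}}=\frac{n}{n-2}.
\end{gather*}
Substituting these, using $(2n-2)(2n-4)=4(n-1)(n-2)$ and $(N-1)(n-2)^2\cdot\tfrac{n}{n-2}=n(n+2)$, the left‑hand side equals $\dfrac{I_{n-2}^{n+2\omega+1}}{4(n-1)(n-2)}$ times
\begin{multline*}
(n-\omega-4)^2(2\omega+n+4)(2\omega+n+2)-2(\omega+2)(n-\omega-4)(2\omega+n+2)(n-2\omega-6)\\
+(\omega+2)^2(n-2\omega-4)(n-2\omega-6)-n(n+2)(2\omega+n+2)(n-2\omega-6),
\end{multline*}
so the assertion becomes the polynomial identity in $(n,\omega)$
\begin{multline*}
(n-\omega-4)^2(2\omega+n+4)(2\omega+n+2)-2(\omega+2)(n-\omega-4)(2\omega+n+2)(n-2\omega-6)\\
+(\omega+2)^2(n-2\omega-4)(n-2\omega-6)-n(n+2)(2\omega+n+2)(n-2\omega-6)=4(\omega+2)^2(n^2+n+2)-4n(n-2)^2,
\end{multline*}
which I would verify by expanding both sides (the $n^4$ terms cancel, so both are of degree $\le 3$ in $n$; equivalently, one checks agreement at enough pairs $(n,\omega)$ with $n>2\omega+6$ and invokes the degree bound).

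Finally the critical case $n=2\omega+6$: here $2\omega+n+5=2n-1$, so by \eqref{Iab appendice} $I_n^{2\omega+n+5}(\e)=\log\e^{-1}+O(1)$, whereas $I_n^{2\omega+n+3}(\e)$, $I_n^{2\omega+n+1}(\e)$, $I_n^{n+1}$ and $I_n^{n-1}$ are all $O(1)$ (each satisfies $2a-b>1$). Since $\omega-n+4=-(\omega+2)$, the coefficient $(\omega-n+4)^2$ equals $(\omega+2)^2$ and the remaining three terms of the left‑hand side are $O(1)$, hence the left‑hand side is $(\omega+2)^2\log\e^{-1}+O(1)$, as claimed. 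The only real obstacle is the polynomial identity of the generic case, and that is purely mechanical bookkeeping.
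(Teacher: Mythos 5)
Your proof is correct and follows essentially the same route as the paper: reduce all the $I_a^b$ to multiples of $I_{n-2}^{n+2\omega+1}$ via the recurrences, turn the claim into the polynomial identity in $(n,\omega)$, and handle $n=2\omega+6$ by noting $2\omega+n+5=2n-1$ gives the $\log\e^{-1}$ term with coefficient $(\omega-n+4)^2=(\omega+2)^2$. The only difference is cosmetic: where you propose brute-force expansion (or interpolation after the $n^4$ cancellation), the paper dispatches the polynomial identity by observing that the left-hand side is an even polynomial of degree $2$ in $X=\omega+2$, so it suffices to compute $P_2(0)=-4n(n-2)^2$ and $P_2''(0)/2=4(n^2+n+2)$.
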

\begin{proof}
Si $n=2\omega+6$ alors  
$I_n^{2\omega+n+3}(\e)=I_n^{2\omega+n+3}+O(\e^{n-2})$, $I_n^{2\omega+n+1}(\e)=I_n^{2\omega+n+1}+O(\e^{n-2})$   et $I_n^{2\omega+n+5}(\e)=\log\e^{-1}+O(1)$ (cf. équation \eqref{Iab appendice}); la deuxième expression du lemme est démontrée.\\
Maintenant, on suppose que $n>2\omega+6$. En utilisant les relations \eqref{relations appendice}, on trouve

\begin{eqnarray*}
I_n^{2\omega+n+5}& =\frac{(2\omega+n+4)(2\omega+n+2)}{4(n-1)(n-2)}I_{n-2}^{n+2\omega+1}\qquad I_n^{2\omega+n+3} & =\frac{(2\omega+n+2)(n-2\omega-6)}{4(n-1)(n-2)}I_{n-2}^{n+2\omega+1}\\ 
I_n^{2\omega+n+1}& =\frac{(n-2\omega-4)(n-2\omega-6)}{4(n-1)(n-2)}I_{n-2}^{n+2\omega+1}\qquad I_n^{n+1} 
& =\frac{n}{n-2}I^{n-1}_n
\end{eqnarray*}
Il suffit de montrer que le polynôme $P_2$, défini pour tout $\omega\in \mathbb N$ par
\begin{multline*}
P_2(\omega+2)=(\omega-n+4)^2(2\omega+n+4)(2\omega+n+2)+2(\omega+2)(\omega-n+4)(2\omega+n+2)(n-2\omega-6)\\
+(\omega+2)^2(n-2\omega-4) (n-2\omega-6)-n(n+2)(2\omega+n+2)(n-2\omega-6)
\end{multline*}
est de degré 2 et est égal à
$$P_2(\omega+2)=4(\omega+2)^2(n^2+n+2)-4n(n-2)^2$$
En effet, on vérifie aisément que les termes de degré 4 se simplifient et que $P_2(-X)=P_2(X)$, alors $P_2$ est pair de degré 2. On en déduit que $P_2(X)=a_nX^2+b_n$, où $b_n=P_2(0)=-4n(n-2)^2$ et $a_n=P_2''(0)/2=4(n^2+n+2)$ 
\end{proof}

\section*{Théorème \ref{aaa}}
Dans son article \cite{Aub3}, T.~Aubin démontre le résultat suivant:
\begin{theorem}\label{aaa ann}
 Si $\mu\geq\omega+1$ alors il existe une constante $C(n,\omega)>0$ telle que 
$$\bar\int_{S(r)}R\di\sigma_r=C(n,\omega)(-\Delta_g)^{\omega+1} R(P)r^{2\omega+2}+o(r^{2\omega+2})$$
$(-\Delta_g)^{\omega+1} R(P)$ est strictement négative et $I_g(u_{\e})<\frac{n(n-2)}{4}\omega_{n-1}^{2/n}$.\\ 
où $u_\e$ est  définie dans la section \ref{proof hv} (voir équation \eqref{uepsilon}).
\end{theorem}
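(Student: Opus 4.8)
The statement packages three facts of very unequal depth: the asymptotic formula for $\bar\int_{S(r)}R\di\sigma_r$, the strict negativity of $(-\Delta_g)^{\omega+1}R(P)$, and the resulting estimate $I_g(u_\e)<\frac{n(n-2)}{4}\omega_{n-1}^{2/n}$. I would prove them in that order; only the middle one is genuinely hard, and it is T.~Aubin's computation from \cite{Aub3}, to be reproduced in detail in the appendix. Throughout I take $g$ to satisfy the conclusions of Lemma \ref{lemme HV ex} (legitimate by the conformal invariance of $\omega$, Properties \ref{omega invariant}, and of $\mu$, Proposition \ref{mu invariance}): in the normal coordinates $\{x^\alpha\}$ at $P$ one has $\det g\equiv 1$, $\nabla_\beta R_g(P)=\partial_\beta R_g(P)$ for $|\beta|\le 2\omega+1$, $\nabla^i\mathrm{Riem}(P)=0$ for $i<\omega$, and $\Delta_g^jR_g(P)=0$ for $j\le\omega$. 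I also keep the standing hypothesis $\omega(P)\le(n-6)/2$, i.e. $n\ge2\omega+6$, which is in force in every application of this theorem.

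\textbf{Step 1: the expansion.} Since $\mu\ge\omega+1$, the Taylor development of $R_g$ at $P$ has no homogeneous part of degree $\le\omega$. Averaging over the round sphere kills the odd-degree parts, while a homogeneous polynomial of even degree $m$ integrates against $\di\sigma$ to a positive multiple of $\Delta_{\mathcal E}^{m/2}$ of itself, which for the degree-$m$ part of $R_g$ is a positive multiple of $\Delta_{\mathcal E}^{m/2}R_g(P)$. Because $\det g\equiv1$, replacing $\di\sigma_r$ by the Euclidean measure costs $O(r^{\omega+2})$, hence $o(r^{2\omega+2})$ once multiplied by the expansion of $R_g$. Thus $\bar\int_{S(r)}R_g\di\sigma_r=\sum_{j\ge1}a_{j,n}\Delta_{\mathcal E}^jR_g(P)\,r^{2j}+o(r^{2\omega+2})$ with $a_{j,n}>0$; by Lemma \ref{lemme HV ex}(\ref{scal item},\ref{ric scal}) the terms $j\le\omega$ vanish, and $\Delta_{\mathcal E}^{\omega+1}R_g(P)$ agrees with $(-\Delta_g)^{\omega+1}R_g(P)$ up to lower-order Christoffel corrections that I absorb into Step 2. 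Alternatively — and this is how I would actually organize it — $\mu\ge\omega+1$ forces $\bar R=\nabla^{ij}\bar g_{ij}r^\omega=0$, hence $b_{ij}=0$ and $\bar g_{ij}=a_{ij}$ with $s^{ij}a_{ij}=\nabla^{ij}a_{ij}=0$ in the notation of \S\ref{aubin computations}, so Theorem \ref{zzz} already delivers $\bar\int_{S(r)}R\di\sigma_r=[B_a/2-C_a/4-(1+\omega/2)^2Q_a]r^{2(\omega+1)}+o(r^{2(\omega+1)})$; Step 1 then amounts to identifying this bracket with $C(n,\omega)(-\Delta_g)^{\omega+1}R(P)$ for a constant $C(n,\omega)>0$.

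\textbf{Step 2: the sign (the main obstacle).} It remains to show $(-\Delta_g)^{\omega+1}R(P)<0$, equivalently $B_a/2-C_a/4-(1+\omega/2)^2Q_a<0$ for the nonzero, trace-free, doubly-divergence-free tensor $a$ on $S_{n-1}$ governing the order-$(\omega+2)$ part of $g$. This cannot come from a cancellation; it needs the explicit structure of the metric. Following Aubin, I would substitute the Hebey--Vaugon expansion \eqref{expansion metric} into the Ricci formula \eqref{riccicann}, isolate the homogeneous part of degree $2\omega+2$ of $R_g$, and compute its iterated Laplacian. The order-$(\omega+2)$ coefficient of $g-\mathcal E$ is controlled by $\nabla^\omega\mathrm{Riem}(P)$, which is nonzero (vanishing of $\nabla^\omega\mathrm{Riem}(P)$ together with $\nabla^i\mathrm{Riem}(P)=0$ for $i<\omega$ would force $\nabla^\omega W_g(P)=0$), and, once its Ricci and scalar traces are removed by the contracted-Bianchi identities of Lemma \ref{lemme HV ex}(\ref{scal item}) ($C(2,2)\mathrm{Sym}_\beta\nabla_\beta R_g(P)=0$ for $|\beta|\le2\omega+1$), it reduces to $\nabla^\omega W_g(P)$. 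Carrying out the contractions — the long and delicate part, parallel to but heavier than the proof of Theorem \ref{zzz} and containing the case $a$ of Lemma \ref{mmm} — yields
\begin{equation*}
(-\Delta_g)^{\omega+1}R_g(P)=-C'(n,\omega)\,\|\nabla^\omega W_g(P)\|^2,\qquad C'(n,\omega)>0 .
\end{equation*}
Since $\omega(P)<\infty$ means precisely $\nabla^\omega W_g(P)\neq0$, strict negativity follows. I expect the two sticking points to be: bookkeeping the difference between $\Delta_g$ and the Euclidean Laplacian through $\omega+1$ iterations, and verifying that the Ricci- and scalar-curvature contributions, a priori of the same order, are annihilated by Lemma \ref{lemme HV ex}, leaving only the manifestly nonnegative $\|\nabla^\omega W_g(P)\|^2$.

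\textbf{Step 3: the test function.} Write $I_g(u_\e)=\bigl(\int_M|\nabla u_\e|^2\di v+\an\int_MR_gu_\e^2\di v\bigr)\|u_\e\|_N^{-2}$. By the elementary estimates of Proposition \ref{inegalite large} (detailed in \S\ref{nablafi}), $\int_M|\nabla u_\e|^2\di v=(n-2)^2\omega_{n-1}I_n^{n+1}(\e)+O(\e^{n-2})$ and $\|u_\e\|_N^N=\omega_{n-1}I_n^{n-1}+O(\e^{n-2})$, so $\bigl(\int_M|\nabla u_\e|^2\di v\bigr)\|u_\e\|_N^{-2}=\frac{n(n-2)}{4}\omega_{n-1}^{2/n}+O(\e^{n-2})$. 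For the curvature term, the change of variable $r=\e t$ gives $\int_MR_gu_\e^2\di v=\omega_{n-1}\bigl(\bar\int_{S(r)}r^{-2\omega-2}R_g\di\sigma_r\bigr)\e^{2\omega+4}I_{n-2}^{n+2\omega+1}(\e)+O(\e^{n-2})$, and by Steps 1--2, $\bar\int_{S(r)}r^{-2\omega-2}R_g\di\sigma_r\to C(n,\omega)(-\Delta_g)^{\omega+1}R(P)=:-c<0$. Since $n\ge2\omega+6$: for $n>2\omega+6$ one has $I_{n-2}^{n+2\omega+1}(\e)\to I_{n-2}^{n+2\omega+1}>0$ with $2\omega+4<n-2$, and for $n=2\omega+6$ one has $I_{n-2}^{n+2\omega+1}(\e)=\log\e^{-1}+O(1)$ with $2\omega+4=n-2$ (relations \eqref{rela}); in both cases $\int_MR_gu_\e^2\di v=(-c+o(1))\,\omega_{n-1}\e^{2\omega+4}I_{n-2}^{n+2\omega+1}(\e)+O(\e^{n-2})$ is strictly negative and of strictly larger order than the $O(\e^{n-2})$ remainder. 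Hence for $\e>0$ small, $\int_M|\nabla u_\e|^2\di v+\an\int_MR_gu_\e^2\di v<\|u_\e\|_N^2\cdot\frac{n(n-2)}{4}\omega_{n-1}^{2/n}$, i.e. $I_g(u_\e)<\frac{n(n-2)}{4}\omega_{n-1}^{2/n}$; together with $\mu(g)\le I_g(u_\e)$ this completes the proof.
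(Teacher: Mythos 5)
Your Steps 1 and 3 follow essentially the same route as the paper. The expansion is obtained, as in Lemma \ref{LLLL}, from the Taylor development of $R_g$ at $P$ together with the vanishing $\Delta_g^jR_g(P)=0$ for $j\le\omega$ supplied by Lemma \ref{lemme HV ex}; and the final inequality is exactly the paper's observation that one may take $f=0$ in $\varphi_\e$, so that $\varphi_\e=u_\e$ and the conclusion is read off from \eqref{estime phi1}--\eqref{estime phi2}. Your remark that the standing hypothesis $n\ge 2\omega+6$ is needed is correct: otherwise the curvature contribution of order $\e^{2\omega+4}$ is swallowed by the $O(\e^{n-2})$ remainder and the sign of $(-\Delta_g)^{\omega+1}R(P)$ buys nothing.

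The genuine gap is in Step 2, which is the only hard part of the statement. You assert the closed formula $(-\Delta_g)^{\omega+1}R_g(P)=-C'(n,\omega)\,\|\nabla^\omega W_g(P)\|^2$ with $C'(n,\omega)>0$ and deduce strict negativity from $\nabla^\omega W_g(P)\neq0$. That identity is the classical one for $\omega=0$ (where $\Delta R(P)=\tfrac16|W(P)|^2$ in the adapted coordinates), but for $\omega\ge1$ it is neither proved by you nor what Aubin's argument actually yields. What \cite{Aub3} establishes, and what the appendix records, is the combinatorial identity $2(\omega+2)^2\,C(2,2)Sym\nabla_{\alpha\beta kl}R+C(\omega)I=0$ with $I=Z+2(\omega+3)^2(A+\tilde C)+2\omega(\omega+3)B$, where $A$, $B$ and $\tilde C$ are contractions coupling $\nabla^\omega$ of the Riemann tensor to $\nabla^\omega$ of the Ricci tensor and Ricci to itself. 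The normalizations of Lemma \ref{lemme HV ex} annihilate only the symmetrized double traces $C(2,2)(Sym_\beta\nabla_\beta R_g(P))$ for $|\beta|\le 2\omega+1$, not $\nabla^\omega Ric(P)$ itself, so these cross terms survive: $I$ is a nontrivial quadratic expression in the pair formed by $\nabla^\omega R_{ijkl}(P)$ and $\nabla^\omega R_{ij}(P)$, and its positivity is precisely the delicate part of Aubin's paper, obtained by introducing further contractions and inequalities between them via the Bianchi identities (with $\omega\le 2$ covered by Hebey--Vaugon and $\omega=3$ by Zhang). Replacing that step by an unproven reduction to $\|\nabla^\omega W_g(P)\|^2\neq0$ leaves the strict negativity --- hence the whole theorem --- unjustified; the rest of your argument is sound.
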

Tout d'abord, remarquons que si $\bar\int_{S(r)}R\di\sigma_r<0$, d'après ce qui a été fait à la section \ref{proof hv}, il suffit de prendre $f=0$ pour que $\varphi_\e=u_\e$. L'inégalité
$$I_g(u_{\e})<\frac{n(n-2)}{4}\omega_{n-1}^{2/n}$$
est une conséquence immédiate des inégalités \eqref{estime phi1}, \eqref{estime phi2}.\\
Il suffit de montrer que $(-\Delta_g)^{\omega+1} R(P)<0$. Pour cela, T.~Aubin  donne un schéma assez détaillé de la preuve. Le cas $\omega= 1 $ ou $2$ sont des conséquences des travaux de E.~Hebey et  M.~Vaugon \cite{HV}. Le cas $\omega=3$ est fait par L.~Zhang (communication privée). La méthode de T.~Aubin marche pour $\omega$ quelconque. Notons par $SymT$ le symétrisé du tenseur $T$ par rapport à tout ses indices, et par $C(2,2)$  l'application de contraction des indices deux à deux (voir la remarque de la section \ref{hv workss} pour des exemples). On pose
\begin{eqnarray*}
A &=C(2,2)Sym\nabla_\alpha R_{pijq}\nabla_\beta R_{pq} & B=C(2,2)Sym\nabla_\alpha R_{pijq}\nabla_{\tilde\beta l}\nabla_pR_{qk}\\
\tilde C &=C(2,2)Sym\nabla_\alpha R_{ip}\nabla_\beta R_{jp} & Z=C(2,2)Sym\nabla_\alpha R_{pklq} 
\end{eqnarray*} 
$R_{ijkl}$, $R_{ij}$ sont les composantes du tenseur de courbure de Riemann et de Ricci. Tout les calculs sont faits au point $P$, qu'on omettra dans les expressions pour des raisons de simplicité. Les indices grecs sont des multi-indices de longueur $\omega$ (i.e. $|\beta|=|\alpha|=\omega$), si ils contiennent un tilde, alors ils deviennent de longueur $\omega-2$ (i.e. $|\tilde\beta|=|\tilde\alpha|=\omega-2$). Les indices latins sont de longueur 1. Un indice ou multi-indice noté deux fois, il y a sommation sur cet indice. sur les autres indices on considère toutes les permutations, afin d'avoir le symétrisé. Par des calculs combinatoires et les identités de Bianchi, on a le résultat suivant:
\begin{equation*}
 2(\omega+2)^2C(2,2)Sym\nabla_{\alpha\beta kl}R+C(\omega)I=0
\end{equation*}
 avec 
\begin{equation*}
 I=Z+2(\omega+3)^2(A+\tilde C)+2\omega(\omega+3)B\text{ et }C(\omega)=\frac{(\omega+1)^2(\omega+2)^2(2\omega+2)!}{[(\omega+3)!]^2}
\end{equation*}
On sait qu'il existe une constante $K>0$ telle que $(-\Delta)^{\omega+1}R=KC(2,2)Sym\nabla_{\alpha\beta kl}R$. Pour démontrer le théorème, il suffit de montrer que $I>0$. Pour cela T.~Aubin considère de nouveaux termes et de  types de contractions qui lui permettent d'écrire $I$ comme somme de ces termes qui vérifient certaines relations et inégalités entre eux (ces relations sont obtenues par des contractions, en utilisant les identités de Bianchi). Grâce à ces nouvelles relations, il en déduit la positivité de $I$.

\chapter{Détails des calculs (Chapitre \ref{CHYSr})}\label{ch5}
\markboth{D\'ETAILS DES CALCULS}{D\'ETAILS DES CALCULS}

\section*{Lemme \ref{lemme infg}}
On a vu que la preuve du lemme est ramenée à prouver que

\begin{equation}\label{propo ann}
\bigcap_{k=1}^q]x_k,y_k[\neq \varnothing
\end{equation}

où 
\begin{equation*}
x_k=\frac{(n-2)^2-(n-2)\sqrt{\Delta_k}}{d_k},\; y_k=\frac{(n-2)^2+(n-2)\sqrt{\Delta_k}}{d_k}\text{ et }\Delta_k=\{(n-2)^2-\frac{d_ku_k}{\nu_k^2}\}
\end{equation*}
D'après le lemme \ref{lemme poly}, $\Delta_k>0$ pour tout $k\leq q\leq [\omega/2]$. Puisque $(d_k)_k$ est décroissante, il est facile de vérifier que 
\begin{equation}\label{ine solut ann}
 \forall k< j\leq [\frac{\omega}{2}]\qquad x_k< y_j
\end{equation}
 (voir équations \eqref{lam},  \eqref{dkk} pour la définition de $\nu_k$ et $d_k$). On vérifie aussi que $u_{\omega/2}<0$ (voir équations \eqref{ukk}), cela entraîne que si $\omega$ est pair alors $x_{\omega/2}<0$.

\subsection*{Le cas $\boldsymbol{\omega=5}$} 
D'après les remarques ci-dessus, il suffit de montrer que $x_2<y_1$. Ce qui revient à montrer que 
\begin{equation*}
 (n-2)(d_2-d_1)+d_1\sqrt{\Delta_2}+d_2\sqrt{\Delta_1}>0
\end{equation*}
Dans ce cas 
\begin{gather*}
 \nu_1=5(n+3),\quad\nu_2=3(n+1)\\
d_1=4(4n^3+53n^2+10n+128),\quad d_2=4(2n^3+47n^2+42n+104)\\
\frac{u_2}{\nu_2}=\frac{n^2-49n+36}{8(n-2)(n+2)}
\end{gather*}
Après une décomposition en éléments simples de la fraction rationnelle $\displaystyle\frac{d_2}{\nu_2}\frac{u_2}{\nu_2}$ par rapport à $n$, on établit que 
\begin{equation*}
\begin{split}
 \Delta_2=(n-2)^2-\frac{d_2}{\nu_2}\frac{u_2}{\nu_2} &=
 \frac{2}{3}n^{2} + \frac{29}{6}n  +\frac{1076}{3} +\frac{2842}{9(n - 2)}  -  \frac {1104}{n + 2} 
 +  \frac {4601}{9(n + 1)} \\
& > \frac{2}{3} (n+\frac{29}{8})^2
\end{split}
\end{equation*}
D'où
\begin{multline*}
 (n-2)(d_2-d_1)+d_1\sqrt{\Delta_2}>-8(n-2)(n^3+3n^2-16n+12)\\
+4(4n^3+53n^2+10n+128)\sqrt{\frac{2}{3}} (n+\frac{29}{8})>0
\end{multline*}

\subsection*{Le cas $\boldsymbol{\omega=6}$} 

On doit encore montrer que $x_2<y_1$. En effet l'intersection avec l'intervalle $]x_3,y_3[$ n'est pas vide car $x_3<0$, $y_3>x_2$ et $y_3>x_1$. Il suffit donc de montrer que 
\begin{equation*}
 (n-2)(d_2-d_1)+d_1\sqrt{\Delta_2}+d_2\sqrt{\Delta_1}>0
\end{equation*}
Dans ce cas 
\begin{gather*}
 \nu_1=6(n+4),\quad\nu_2=4(n+2)\\
d_1=4(5n^3+74n^2+176),\quad d_2=4(3n^3+64n^2+44n+144)\\
\frac{u_2}{\nu_2}=\frac{n^2-31n+18}{6(n-2)(n+3)}
\end{gather*}
On répète les mêmes calculs que dans le cas précédent. On établit que
\begin{equation*}
\begin{split}
 \Delta_2=(n-2)^2-\frac{d_2}{\nu_2}\frac{u_2}{\nu_2} &= \frac{1}{2}n^2+\frac{7}{3}n+\frac{892}{3}+ \frac{512}{3(n-2)}+\frac{1008}{n+2}-\frac{2028}{n+3} \\
& > \frac{1}{2}(n+\frac{7}{3})^2
\end{split}
\end{equation*}
D'où
\begin{multline*}
 (n-2)(d_2-d_1)+d_1\sqrt{\Delta_2}>-8(n-2)(n^3+5n^2-22n+16)\\
+2\sqrt{2}(5n^3+74n^2+176) (n+\frac{7}{3})>0
\end{multline*}

\subsection*{Le cas $\boldsymbol{\omega=7}$}

Par contre dans ce cas, on doit vérifier que l'intersection 
$$\bigcap_{k=1}^3]x_k,y_k[$$
est non vide. On a déjà les inégalités suivantes: $y_3>x_2>0$, $y_3>x_1>0$ et $y_2>x_1$. Il suffit de montrer que $y_1>x_3$, $y_1>x_2$ et $y_2>x_3$, ce qui est équivalent à montrer que 

\begin{equation*}
\forall 1\leq i<j\leq 3\quad (n-2)(d_j-d_i)+d_i\sqrt{\Delta_j}+d_j\sqrt{\Delta_i}>0
\end{equation*}

 On reprend les mêmes calculs.

\begin{gather*}
 \nu_1=7(n+5),\quad\nu_2=5(n+3),\quad \nu_3=3(n+1)\\
d_1=4(6n^3+99n^2-14n+232),\quad d_2=4(4n^3+85n^2+42n+192)\\
d_3=4(2n^3+79n^2+74n+168)\\
\frac{u_2}{\nu_2}=\frac{3n^2-75n+32}{16(n-2)(n+4)},\quad \frac{u_3}{\nu_3}=\frac{n^2-81n+68}{8(n-2)(n+2)}
\end{gather*}

\begin{equation*}
\begin{split}
 \Delta_2=(n-2)^2-\frac{d_2}{\nu_2}\frac{u_2}{\nu_2} &= \frac{2}{5}n^2+\frac{5}{4}n+\frac{1413}{5}-\frac{3572}{n+4}+\frac{51333}{25(n+3)}+\frac{2862}{25(n-2)}\\
& > \frac{2}{5}(n+\frac{25}{16})^2
\end{split}
\end{equation*}

\begin{equation*}
\begin{split}
 \Delta_3=(n-2)^2-\frac{d_3}{\nu_3}\frac{u_3}{\nu_3} &= \frac{2}{7}n^2-\frac{9}{14}n+\frac{2708}{21}-\frac{11951}{3(n+6)}+\frac{135809}{49(n+5)}+\frac{1755}{49(n-2)}\\
& > \frac{2}{7}(n-\frac{9}{8})^2
\end{split}
\end{equation*}
On montre que les inégalités suivantes sont strictes.

\begin{multline*}
 (n-2)(d_2-d_1)+d_1\sqrt{\Delta_2}>-8(n-2)(n^3+7n^2-28n+20)\\
+4\sqrt{\frac{2}{5}}(6n^3+99n^2-14n+232)(n+\frac{25}{16})>0
\end{multline*}

\begin{multline*}
 (n-2)(d_3-d_1)+d_1\sqrt{\Delta_3}>-8(n-2)(2n^3-10n^2-44n+32)\\
+4\sqrt{\frac{2}{7}}(6n^3+99n^2-14n+232)(n-\frac{9}{8})>0
\end{multline*}

\begin{multline*}
 (n-2)(d_3-d_2)+d_2\sqrt{\Delta_3}>-8(n-2)(n^3+3n^2-16n+12)\\
+4\sqrt{\frac{2}{7}}(4n^3+85n^2+42n+192)(n-\frac{9}{8})>0
\end{multline*}

\subsection*{Le cas $\boldsymbol{8\leq \omega\leq 15}$}

\'A partir de 8 jusqu'à 15, on utilise le logiciel Maple pour faire la décomposition en éléments simples de la fraction rationnelle $\Delta_k$. On obtient la forme suivante:
$$\Delta_k=a_kn^2+b_kn+d_k+\frac{e_k}{n-2}+\frac{f_k}{\nu_k-n+1}$$ 
En utilisant encore ce logiciel, on montre que
$$\sqrt{\Delta_k}>\sqrt{a_k}(n+\frac{b_k}{2a_k})$$
où les coefficients $a_k$, $b_k$, $d_k$ et $f_k$ sont donnés explicitement en fonction de $\omega$, $n$ et $k$.
Ensuite, on vérifie que pour tout $i<j$
\begin{multline*}
 (n-2)(d_j-d_i)+d_i\sqrt{\Delta_j}+d_j\sqrt{\Delta_i}>(n-2)(d_j-d_i)\\
+d_i\sqrt{a_j}(n+\frac{b_j}{2a_j})+d_j\sqrt{a_i}(n+\frac{b_i}{2a_i})>0
\end{multline*}
D'après ce qui a été dit dans le cas 7 ci-dessus, l'inégalité du lemme est démontrée.

\bibliographystyle{amsplain}
\bibliography{bibliographie}\addcontentsline{toc}{chapter}{Bibliographie}

\end{document}